\documentclass[10pt,letterpaper,oneside]{amsart}
\usepackage[T1]{fontenc}
\usepackage[latin9]{inputenc}
\setcounter{tocdepth}{1}
\usepackage{mathtools}
\usepackage{amstext}
\usepackage{amsthm}
\usepackage{amssymb}
\usepackage{graphicx}
\usepackage{esint}
\usepackage[all]{xy}
\usepackage[unicode=true,pdfusetitle,
 bookmarks=true,bookmarksnumbered=false,bookmarksopen=false,
 breaklinks=false,pdfborder={0 0 1},backref=false,colorlinks=false]
 {hyperref}

\makeatletter

\pdfpageheight\paperheight
\pdfpagewidth\paperwidth

\numberwithin{equation}{section}
\theoremstyle{plain}
\newtheorem{thm}{\protect\theoremname}[section]
\theoremstyle{plain}
\newtheorem{cor}[thm]{\protect\corollaryname}
\theoremstyle{plain}
\newtheorem{lem}[thm]{\protect\lemmaname}
\theoremstyle{remark}
\newtheorem{rem}[thm]{\protect\remarkname}
\theoremstyle{plain}
\newtheorem{prop}[thm]{\protect\propositionname}

\usepackage[initials]{amsrefs}

\makeatother

\providecommand{\corollaryname}{Corollary}
\providecommand{\lemmaname}{Lemma}
\providecommand{\propositionname}{Proposition}
\providecommand{\remarkname}{Remark}
\providecommand{\theoremname}{Theorem}

\begin{document}
\title[Blow-up solutions to the self-dual CSS]{On pseudoconformal blow-up solutions to the self-dual Chern-Simons-Schr\"odinger
equation: existence, uniqueness, and instability}
\author{\noindent Kihyun Kim}
\email{khyun1215@kaist.ac.kr}
\address{Department of Mathematical Sciences, Korea Advanced Institute of Science
and Technology, 291 Daehak-ro, Yuseong-gu, Daejeon 34141, Korea}
\author{\noindent Soonsik Kwon}
\email{soonsikk@kaist.edu}
\address{Department of Mathematical Sciences, Korea Advanced Institute of Science
and Technology, 291 Daehak-ro, Yuseong-gu, Daejeon 34141, Korea}
\keywords{Chern-Simons-Schr\"odinger equation, blow-up, pseudoconformal, self-duality,
rotational instability}
\subjclass[2010]{35B44, 35Q55}
\begin{abstract}
We consider the self-dual Chern-Simons-Schr\"odinger equation (CSS),
also known as a gauged nonlinear Schr\"odinger equation (NLS). CSS
is $L^{2}$-critical, admits solitons, and has the pseudoconformal
symmetry. These features are similar to the $L^{2}$-critical NLS.
In this work, we consider pseudoconformal blow-up solutions under
$m$-equivariance, $m\geq1$. Our result is threefold. Firstly, we
construct a pseudoconformal blow-up solution $u$ with given asymptotic
profile $z^{\ast}$: 
\[
\Big[u(t,r)-\frac{1}{|t|}Q\Big(\frac{r}{|t|}\Big)e^{-i\frac{r^{2}}{4|t|}}\Big]e^{im\theta}\to z^{\ast}\qquad\text{in }H^{1}
\]
as $t\to0^{-}$, where $Q(r)e^{im\theta}$ is a static solution. Secondly,
we show that such blow-up solutions are unique in a suitable class.
Lastly, yet most importantly, we exhibit an instability mechanism
of $u$. We construct a continuous family of solutions $u^{(\eta)}$,
$0\leq\eta\ll1$, such that $u^{(0)}=u$ and for $\eta>0$, $u^{(\eta)}$
is a global scattering solution. Moreover, we exhibit a rotational
instability as $\eta\to0^{+}$: $u^{(\eta)}$ takes an abrupt spatial
rotation by the angle 
\[
\Big(\frac{m+1}{m}\Big)\pi
\]
on the time interval $|t|\lesssim\eta$.

We are inspired by works in the $L^{2}$-critical NLS. In the seminal
work of Bourgain and Wang (1997), they constructed such pseudoconformal
blow-up solutions. Merle, Rapha\"el, and Szeftel (2013) showed an
instability of Bourgain-Wang solutions. Although CSS shares many features
with NLS, there are essential differences and obstacles over NLS.
Firstly, the soliton profile to CSS shows a slow polynomial decay
$r^{-(m+2)}$. This causes many technical issues for small $m$. Secondly,
due to the nonlocal nonlinearities, there are strong long-range interactions
even between functions in far different scales. This leads to a nontrivial
correction of our blow-up ansatz. Lastly, the instability mechanism
of CSS is completely different from that of NLS. Here, the phase rotation
is the main source of the instability. On the other hand, the self-dual
structure of CSS is our sponsor to overcome these obstacles. We exploited
the self-duality in many places such as the linearization, spectral
properties, and construction of modified profiles.
\end{abstract}

\maketitle
\global\long\def\R{\mathbb{R}}%
\global\long\def\C{\mathbb{C}}%
\global\long\def\Z{\mathbb{Z}}%
\global\long\def\N{\mathbb{N}}%
\global\long\def\D{\mathbf{D}}%
\global\long\def\Im{\mathrm{Im}}%
\global\long\def\Re{\mathrm{Re}}%
\global\long\def\rad{\mathrm{rad}}%
\global\long\def\tilde#1{\widetilde{#1}}%
\global\long\def\re{\mathrm{re}}%
\global\long\def\im{\mathrm{im}}%
\global\long\def\err{\mathrm{Err}}%

\tableofcontents{}

\section{\label{sec:intro}Introduction}

The Chern-Simons-Schr\"odinger equations can be viewed as gauged
nonlinear Schr\"odinger equations that generalize the one dimensional
cubic nonlinear Schr\"odinger equation to the planar domain, with
nonvanishing Chern-Simons gauge coupling. Interesting novel features
are the \emph{self-duality} and presence of vortex solitons. The goal
of this work is to study a dynamical feature of this \emph{gauged
nonlinear Schr\"odinger equation} under self-duality. More specifically,
we study pseudoconformal blow-up solutions. We construct a pseudoconformal
blow-up solution for each given asymptotic profile (Theorem \ref{thm:BW-sol})
and show that such a solution is unique (Theorem \ref{thm:cond-uniq}).
Next, we exhibit a rotational instability of the blow-up solution
(Theorem \ref{thm:instability}). Our work is inspired by works in
the context of $L^{2}$-critical nonlinear Schr\"odinger equation,
by Bourgain and Wang \cite{BourgainWang1997}, and Merle, Rapha\"el,
and Szeftel \cite{MerleRaphaelSzeftel2013AJM}.

The Chern-Simons theory is a three dimensional topological gauge field
theory whose action is an integral of Chern-Simons 3-form \cite{ChernSimons1974Ann.Math.}.
It has applications in condensed matter physics, knot theory, and
low dimensional invariant theory.

As the Chern-Simons theory is formulated on three dimensional domain
(space-time), it is applicable to describe the dynamics of particles
confined to a spatial plane, so called \emph{planar} physical phenomena,
e.g. quantum Hall effect and high temperature superconductivity. This
is a sharp contrast to the Yang-Mills or Maxwell theory that takes
place on four dimensional space-time. In the 90\textquoteright s,
relativistic and non relativistic Chern-Simons models were introduced
to study vortex solutions in planar quantum electromagnetic dynamics.
The Chern-Simons-Schr\"odinger equations \cite{JackiwPi1990PRL,JackiwPi1990PRD,JackiwPi1991PRD,JackiwPi1992Progr.Theoret.}
are nonrelativistic quantum models describing the dynamics of a large
number of charged particles in the plane, that interact among themselves
and the electromagnetic gauge field.

\subsection{Covariant formulation}

The Chern-Simons-Schr\"odinger model is a Lagrangian field theory
on $\R^{1+2}$ associated to the action
\begin{align*}
\mathcal{S}[\phi,A] & \coloneqq\int_{\R^{1+2}}\Big(\frac{1}{2}\Im(\overline{\phi}\D_{t}\phi)+\frac{1}{2}|\D_{x}\phi|^{2}-\frac{g}{4}|\phi|^{4}\Big)+\int_{\R^{1+2}}\frac{1}{2}A\wedge dA.
\end{align*}
Here, $\phi:\R^{1+2}\to\C$ is a scalar field, $\D_{\alpha}\coloneqq\partial_{\alpha}+iA_{\alpha}$
for $\alpha\in\{0,1,2\}$ is the covariant derivative, $A\coloneqq A_{0}dt+A_{1}dx_{1}+A_{2}dx_{2}$
is the associated 1-form, $|\D_{x}\phi|^{2}\coloneqq|\D_{1}\phi|^{2}+|\D_{2}\phi|^{2}$,
and $g>0$ is the strength of the nonlinearity.

In a more abstract fashion, one can view $\phi$ as a section of a
$U(1)$ bundle over $\R^{1+2}$. As the topology of $\R^{1+2}$ is
trivial, there is a global orthonormal frame, on which a metric connection
$\D$ is viewed as $\D=d+iA$ for some real-valued 1-form $A$.

Computing the Euler-Lagrange equation, we obtain the \emph{Chern-Simons-Schr\"odinger}
equation 
\begin{equation}
\left\{ \begin{aligned}\D_{t}\phi & =i\D_{j}\D_{j}\phi+ig|\phi|^{2}\phi,\\
F_{01} & =-\Im(\overline{\phi}\D_{2}\phi),\\
F_{02} & =\Im(\overline{\phi}\D_{1}\phi),\\
F_{12} & =-\tfrac{1}{2}|\phi|^{2},
\end{aligned}
\right.\label{eq:CSS-cov}
\end{equation}
where $F_{jk}\coloneqq\partial_{j}A_{k}-\partial_{k}A_{j}$ are components
of the curvature 2-form $F\coloneqq dA$. Repeated index $j$ means
that we sum over $j\in\{1,2\}$. We note that 
\[
\D_{\alpha}\D_{\beta}-\D_{\beta}\D_{\alpha}=iF_{\alpha\beta}.
\]

\eqref{eq:CSS-cov} is gauge-invariant. Indeed, if $(\phi,A)$ is
a solution to \eqref{eq:CSS-cov} and $\chi:\R^{1+2}\to\R$ is any
function, then 
\begin{equation}
(e^{i\chi}\phi,A-d\chi)\label{eq:gauge-inv}
\end{equation}
 is also a solution to \eqref{eq:CSS-cov}.\footnote{Gauge invariance of \eqref{eq:CSS-cov} under \eqref{eq:gauge-inv}
holds for an arbitrary $\chi$. Under the additional assumption that
$\chi$ is compactly supported in $\R^{1+2}$, the action $\mathcal{S}$
is gauge-invariant from the algebra $\int_{\R^{1+2}}d\chi\wedge dA=\int_{\R^{1+2}}d(\chi\wedge dA)=0$.
Although the Chern-Simons density $\frac{1}{2}(A\wedge dA)$ is \emph{not
}gauge-invariant itself, its integral \emph{is} gauge-invariant. Note
that the remaining densities are gauge-invariant.} When $\chi$ is a constant function on $\R^{1+2}$, we have phase
rotation symmetry.

In addition to gauge invariance, \eqref{eq:CSS-cov} enjoys many symmetries.
It is invariant under space/time translations and spatial rotations.
It also has time reversal symmetry\footnote{The time reversal symmetry is \emph{not} simply given by conjugating
the scalar field $\phi$. In this paper, we use $\phi(t,x_{1},x_{2})\mapsto\overline{\phi}(-t,x_{1},-x_{2})$,
$A_{\alpha}(t,x_{1},x_{2})\mapsto A_{\alpha}(-t,x_{1},-x_{2})$ for
$\alpha\in\{0,2\}$, and $A_{1}(t,x_{1},x_{2})\mapsto-A_{1}(-t,x_{1},-x_{2})$.} and Galilean invariance. Moreover, \eqref{eq:CSS-cov} enjoys scaling
and pseudoconformal invariance. More precisely, assuming that $(\phi,A)$
is a solution to \eqref{eq:CSS-cov}, so is $(\tilde{\phi},\tilde A)$:
\begin{itemize}
\item \emph{$L^{2}$-scaling}: for any fixed $\lambda>0$, 
\begin{equation}
\left\{ \begin{aligned}\tilde{\phi}(t,x) & \coloneqq\lambda\phi(\lambda^{2}t,\lambda x),\\
\tilde A_{0}(t,x) & \coloneqq\lambda^{2}A_{0}(\lambda^{2}t,\lambda x),\\
\tilde A_{j}(t,x) & \coloneqq\lambda A_{j}(\lambda^{2}t,\lambda x).
\end{aligned}
\right.\label{eq:scaling}
\end{equation}
\item \emph{Pseudoconformal invariance}: (discrete form)
\begin{equation}
\left\{ \begin{aligned}\tilde{\phi}(t,x) & \coloneqq\tfrac{1}{t}e^{i\frac{|x|^{2}}{4t}}\phi(-\tfrac{1}{t},\tfrac{x}{t}),\\
\tilde A_{0}(t,x) & \coloneqq\tfrac{1}{t^{2}}A_{0}(-\tfrac{1}{t},\tfrac{x}{t})-\tfrac{x_{j}}{t^{2}}A_{j}(-\tfrac{1}{t},\tfrac{x}{t}),\\
\tilde A_{j}(t,x) & \coloneqq\tfrac{1}{t}A_{j}(-\tfrac{1}{t},\tfrac{x}{t}),
\end{aligned}
\right.\label{eq:pseudo-discrete}
\end{equation}
(or, in continuous form) for any fixed $a\in\R$, 
\begin{equation}
\left\{ \begin{aligned}\tilde{\phi}(t,x) & \coloneqq\tfrac{1}{1+at}e^{ia\frac{|x|^{2}}{4(1+at)}}\phi(\tfrac{t}{1+at},\tfrac{x}{1+at}),\\
\tilde A_{0}(t,x) & \coloneqq\tfrac{1}{(1+at)^{2}}A_{0}(\tfrac{t}{1+at},\tfrac{x}{1+at})-\tfrac{x_{j}}{(1+at)^{2}}A_{j}(\tfrac{t}{1+at},\tfrac{x}{1+at}),\\
\tilde A_{j}(t,x) & \coloneqq\tfrac{1}{1+at}A_{j}(\tfrac{t}{1+at},\tfrac{x}{1+at}).
\end{aligned}
\right.\label{eq:pseudo-conti}
\end{equation}
\end{itemize}
As the scaling invariance \eqref{eq:scaling} preserves the $L^{2}$-norm
of $\phi$, we say that \eqref{eq:CSS-cov} is \emph{$L^{2}$-critical}.
The pseudoconformal invariance is a special feature of \eqref{eq:CSS-cov}.
In the context of nonlinear Schr\"odinger equations, pseudoconformal
invariance holds only in the $L^{2}$-critical setting. It is natural
to view \eqref{eq:CSS-cov} as a gauged $L^{2}$-critical nonlinear
Schr\"odinger equation.

By Noether's principle, associated with phase and space-time translation
invariance, the \emph{charge} $M$, \emph{momentum} $J_{j}$, and
\emph{energy} $E$ are conserved:
\begin{align*}
M & \coloneqq\int_{\R^{2}}|\phi|^{2},\\
J_{j} & \coloneqq\int_{\R^{2}}\Im(\overline{\phi}\D_{j}\phi),\\
E & \coloneqq\int_{\R^{2}}\Big(\frac{1}{2}|\D_{x}\phi|^{2}-\frac{g}{4}|\phi|^{4}\Big).
\end{align*}
These are gauge-invariant quantities. There are other conservation
laws, such as the angular momentum and normalized center of mass,
associated with rotation and Galilean invariance. Finally, from the
scaling and pseudoconformal symmetries, we have \emph{virial identities
\begin{equation}
\left\{ \begin{aligned}\partial_{t}\Big(\int_{\R^{2}}|x|^{2}|\phi|^{2}\Big) & =8\Phi,\\
\partial_{t}\Phi & =2E,
\end{aligned}
\right.\label{eq:virial-identities}
\end{equation}
}where 
\[
\Phi\coloneqq\frac{1}{2}\int_{\R^{2}}\Im(\overline{\phi}\cdot x_{j}\D_{j}\phi).
\]
One can alternatively write \eqref{eq:virial-identities} as 
\begin{equation}
8t^{2}E(e^{i\frac{|x|^{2}}{4t}}\phi(0))=\int_{\R^{2}}|x|^{2}|\phi(t,x)|^{2}.\label{eq:virial-alt}
\end{equation}

The aforementioned conservation laws can also be obtained from the
\emph{pseudo-stress-energy tensor}, which is defined by 
\begin{align*}
T_{00} & \coloneqq\tfrac{1}{2}|\phi|^{2},\\
T_{j0}=T_{0j} & \coloneqq\Im(\overline{\phi}\D_{j}\phi),\\
T_{jk}=T_{kj} & \coloneqq2\Re(\overline{\D_{j}\phi}\D_{k}\phi)-\delta_{jk}(\Delta T_{00}+g|\phi|^{2}T_{00}).
\end{align*}
Here $\delta_{jk}$ is the Kronecker-delta and $\Delta$ is the Laplacian
on $\R^{2}$. We have local conservation laws\footnote{There is also a local conservation law for the energy. If we define
the energy current by $e_{0}\coloneqq\frac{1}{2}|\D_{x}\phi|^{2}-\frac{g}{4}|\phi|^{4}$
and $e_{j}\coloneqq-\Re(\overline{\D_{j}\phi}\D_{t}\phi)$, then we
have $\partial_{t}e_{0}+\partial_{j}e_{j}=0$.} 
\begin{equation}
\left\{ \begin{aligned}\partial_{t}T_{00}+\partial_{k}T_{0k} & =0,\\
\partial_{t}T_{j0}+\partial_{k}T_{jk} & =0.
\end{aligned}
\right.\label{eq:local-cons}
\end{equation}
Taking space-time integrals of $T_{\alpha0}$ with various weights,
we obtain conservation laws.

\subsubsection*{Bogomol'nyi operator}

Let $\tilde{\D}_{+}$ be the \emph{Bogomol'nyi operator} defined by
\begin{equation}
\tilde{\D}_{+}\coloneqq\D_{1}+i\D_{2}.\label{eq:def-Bogomolnyi}
\end{equation}
Its formal $L^{2}$-adjoint is given by $\tilde{\D}_{+}^{\ast}=-\D_{1}+i\D_{2}$.
Observe using \eqref{eq:CSS-cov} that 
\begin{equation}
\tilde{\D}_{+}^{\ast}\tilde{\D}_{+}=-\D_{j}\D_{j}+F_{12}=-\D_{j}\D_{j}-\tfrac{1}{2}|\phi|^{2}.\label{eq:obs}
\end{equation}
Now the $\phi$-evolution of \eqref{eq:CSS-cov} is rewritten as 
\begin{equation}
i\D_{t}\phi-\tilde{\D}_{+}^{\ast}\tilde{\D}_{+}\phi+(g-\tfrac{1}{2})|\phi|^{2}\phi=0.\label{eq:CSS-Bogom-phi}
\end{equation}
Moreover, we can rewrite the energy functional as 
\begin{equation}
E=\int_{\R^{2}}\Big(\frac{1}{2}|\tilde{\D}_{+}\phi|^{2}+\frac{1-g}{4}|\phi|^{4}\Big).\label{eq:energy-Bogom-phi}
\end{equation}

If $g<1$, the energy is positive-definite. Thus it is natural to
view \eqref{eq:CSS-cov} with $g<1$ as a defocusing equation, for
which the global well-posedness and scattering are expected. If $g>1$,
then \eqref{eq:CSS-cov} is viewed as a focusing equation that admits
non-scattering (and even blow-up) solutions. We are interested in
\eqref{eq:CSS-cov} with the critical coupling $g=1$, which is referred
to as the \emph{self-dual} case. From Section \ref{subsec:The-Static-Solution},
we restrict our discussions to the self-dual case.

\subsubsection*{Polar coordinates}

Before imposing equivariant symmetry, it is convenient to formulate
\eqref{eq:CSS-cov} in polar coordinates on $\R^{2}$. Let $r\coloneqq|x|$,
and define 
\[
\partial_{r}\coloneqq\frac{1}{r}(x_{1}\partial_{1}+x_{2}\partial_{2})\quad\text{and}\quad\partial_{\theta}\coloneqq-x_{2}\partial_{1}+x_{1}\partial_{2}.
\]
The corresponding covariant derivatives, connection, and curvature
components become 
\begin{align*}
\D_{r} & =\tfrac{1}{r}(x_{1}\D_{1}+x_{2}\D_{2}); & \D_{\theta} & =-x_{2}\D_{1}+x_{1}\D_{2};\\
A_{r} & =\tfrac{1}{r}(x_{1}A_{1}+x_{2}A_{2}); & A_{\theta} & =-x_{2}A_{1}+x_{1}A_{2};\\
F_{0r} & =-\tfrac{1}{r}\Im(\overline{\phi}\D_{\theta}\phi); & F_{0\theta} & =r\Im(\overline{\phi}\D_{r}\phi);\\
F_{r\theta} & =rF_{12}=-\tfrac{r}{2}|\phi|^{2}.
\end{align*}

The Bogomol'nyi operator has a simple expression in polar coordinates:
\[
\tilde{\D}_{+}=e^{i\theta}[\D_{r}+\frac{i}{r}\D_{\theta}]\quad\text{and}\quad\tilde{\D}_{+}^{\ast}=-e^{-i\theta}[\D_{r}-\frac{i}{r}\D_{\theta}].
\]
In particular, we can rewrite the covariant Laplacian $\D_{j}\D_{j}$
as 
\[
\D_{j}\D_{j}=-\tilde{\D}_{+}^{\ast}\tilde{\D}_{+}-\frac{1}{2}|\phi|^{2}=\D_{r}\D_{r}+\frac{1}{r}\D_{r}+\frac{1}{r^{2}}\D_{\theta}\D_{\theta}.
\]
Now the $\phi$-evolution of \eqref{eq:CSS-cov} reads 
\begin{equation}
i\D_{t}\phi+\Big(\D_{r}\D_{r}+\frac{1}{r}\D_{r}+\frac{1}{r^{2}}\D_{\theta}\D_{\theta}\Big)\phi+g|\phi|^{2}\phi=0.\label{eq:CSS-rad-phi}
\end{equation}
The energy takes the form 
\begin{equation}
E=\int_{\R^{2}}\Big(\frac{1}{2}|\D_{r}\phi|^{2}+\frac{1}{2r^{2}}|\D_{\theta}\phi|^{2}-\frac{g}{4}|\phi|^{4}\Big).\label{eq:energy-radial-phi}
\end{equation}

\subsection{\label{subsec:Coulomb-gauge-and}Coulomb gauge and equivariance reduction}

Recall that \eqref{eq:CSS-cov} has gauge invariance \eqref{eq:gauge-inv}.
Physically, we cannot observe $\phi$ and $A$ by themselves, we only
observe gauge-invariant quantities. Thus the evolution of $(\phi,A)$
described by \eqref{eq:CSS-cov} should be understood modulo gauge
equivalence. In order to discuss the Cauchy problem of \eqref{eq:CSS-cov},
we need to choose one representative $(\phi,A)$ from its (gauge-)equivalence
class. This is usually established by imposing a gauge condition (or,
fixing a gauge). In general, the Cauchy problem depends on the choice
of gauge.

The Cauchy problem of \eqref{eq:CSS-cov} has been studied under the
Coulomb gauge and heat gauge. Under the Coulomb gauge, Berg\'e-de
Bouard-Saut \cite{BergeDeBouardSaut1995Nonlinearity} obtained local
well-posedness in $H^{2}$. By a regularization argument, they also
obtained global existence in $H^{1}$ for $H^{1}$ data having small
charge, without uniqueness. Huh \cite{Huh2013Abstr.Appl.Anal} showed
that \eqref{eq:CSS-cov} has a unique-in-time solution for $H^{1}$
data, without continuous dependence though. Recently, Lim \cite{Lim2018JDE}
obtained $H^{1}$ local well-posedness with local-in-time weak Lipschitz
dependence. Using the heat gauge, Liu-Smith-Tataru \cite{LiuSmithTataru2014IMRN}
established local well-posedness in $H^{\epsilon}$, $\epsilon>0$,
for small $H^{\epsilon}$ data with strong Lipschitz dependence. Under
the equivariant symmetry (with Coulomb gauge), $L^{2}$-critical local
theory is easily obtained solely using the Strichartz estimates by
Liu-Smith \cite{LiuSmith2016}.

There are also works on global-in-time behaviors. Pseudoconformal
symmetry \eqref{eq:pseudo-discrete} applied to the static solution
\eqref{eq:explicit-static} directly gives a finite time blow-up solutions
in the self-dual case $g=1$ \cite{JackiwPi1990PRD,Huh2009Nonlinearity}.
On the other hand, Berg\'e-de Bouard-Saut \cite{BergeDeBouardSaut1995Nonlinearity}
gave sufficient conditions on initial data yielding finite-time blow-up,
based on virial identities \eqref{eq:virial-identities} and a convexity
argument. Oh-Pusateri \cite{OhPusateri2015} obtained small data \emph{linear}
scattering for \eqref{eq:CSS-cov} under the Coulomb gauge, by observing
a cubic null structure. Using concentration-compactness arguments,
Liu-Smith \cite{LiuSmith2016} showed global well-posedness and scattering
for \eqref{eq:CSS-cov} under equivariance, if $g<1$ or if the data
has charge less than that of minimal standing waves when $g\geq1$.

In this work, we impose the \emph{Coulomb gauge condition}:
\begin{equation}
\partial_{1}A_{1}+\partial_{2}A_{2}=0.\label{eq:coulomb}
\end{equation}
Differentiating curvature constraints \eqref{eq:CSS-cov} and using
\eqref{eq:coulomb}, we obtain 
\begin{align*}
A_{0} & =\Delta^{-1}[\epsilon_{jk}\partial_{j}\Im(\overline{\phi}\D_{k}\phi)],\\
A_{j} & =\tfrac{1}{2}\epsilon_{jk}\Delta^{-1}\partial_{k}|\phi|^{2},
\end{align*}
where $\epsilon_{jk}$ is the anti-symmetric tensor with $\epsilon_{12}=1$.
In particular, the spatial part of $A$ is given by the \emph{Biot-Savart
law}
\begin{equation}
A_{j}(t,x)=\frac{1}{4\pi}\epsilon_{jk}\int_{\R^{2}}\frac{(x_{k}-y_{k})}{|x-y|^{2}}|\phi(t,y)|^{2}dy.\label{eq:biot-savart}
\end{equation}

As recognized in \cite{JackiwPi1990PRD}, \eqref{eq:CSS-cov} under
the Coulomb gauge has a \emph{Hamiltonian structure}. Indeed, it is
a Hamiltonian flow associated to the energy $E$ with the symplectic
form $\omega(\phi,\psi)\coloneqq\Im\int_{\R^{2}}\overline{\phi}\psi$.
It is helpful to keep this structure in mind, as our duality estimates,
self-dual form of the linearized operator, and expansion of the energy
functional will all be intuitively based on the Hamiltonian structure.

\subsubsection*{Equivariance under the Coulomb gauge}

Under the Coulomb gauge condition \eqref{eq:coulomb}, we impose an
\emph{equivariance ansatz} as 
\begin{equation}
\phi(t,x)=e^{im\theta}u(t,r)\label{eq:equiv-ansatz}
\end{equation}
for $m\in\Z$, where $x=x_{1}+ix_{2}=re^{i\theta}$. We call $m$
the \emph{equivariance index}. We will often write $u$ in place of
$\phi$ when there is no confusion. For example, we denote \eqref{eq:A_theta-form}
and \eqref{eq:A_0-form} below by $A_{\theta}[u]$ and $A_{0}[u]$.
Or, the energy $E[\phi,A]=E[u,A[u]]$ is simply denoted by $E[u]$.

Under the equivariance ansatz \eqref{eq:equiv-ansatz}, the connection
components $A_{r}$, $A_{\theta}$, and $A_{0}$ have simple expressions.
Inserting \eqref{eq:equiv-ansatz} into the Biot-Savart law \eqref{eq:biot-savart},
we obtain 
\begin{equation}
A_{r}=0.\label{eq:A_r-form}
\end{equation}
Now $A_{\theta}$ can be obtained via integrating the curvature constraint
from the origin: 
\begin{equation}
A_{\theta}=-\frac{1}{2}\int_{0}^{r}|u|^{2}r'dr'.\label{eq:A_theta-form}
\end{equation}
To get $A_{0}$, we integrate the curvature constraint from the infinity
using the boundary condition $A_{0}(r)\to0$ as $r\to\infty$:
\begin{equation}
A_{0}=-\int_{r}^{\infty}(m+A_{\theta})|u|^{2}\frac{dr'}{r'}.\label{eq:A_0-form}
\end{equation}

The Bogomol'nyi operator takes the form 
\[
\tilde{\D}_{+}\phi=e^{i\theta}[\D_{r}+\frac{i}{r}\D_{\theta}]\phi=[(\partial_{r}-\frac{m+A_{\theta}}{r})u]e^{i(m+1)\theta}.
\]
Taking the radial part, we write 
\begin{equation}
\D_{+}u\coloneqq\partial_{r}u-\frac{m+A_{\theta}}{r}u.\label{eq:def-Bogomolnyi-rad}
\end{equation}
Note that its formal $L^{2}$-adjoint is given by $\D_{+}^{\ast}u=-\partial_{r}u-\frac{m+1+A_{\theta}}{r}u$.

We sum up our discussions so far. Let $A_{r}$, $A_{\theta}$, and
$A_{0}$ be given by \eqref{eq:A_r-form}, \eqref{eq:A_theta-form},
and \eqref{eq:A_0-form}, respectively. From \eqref{eq:CSS-rad-phi},
the $\phi$-evolution is given by 
\[
i\partial_{t}\phi-A_{0}\phi+(\partial_{rr}+\frac{1}{r}\partial_{r})\phi-\Big(\frac{m+A_{\theta}}{r}\Big)^{2}\phi+g|\phi|^{2}\phi=0.
\]
Separating linear and nonlinear parts, we reorganize the above as
\begin{equation}
i\partial_{t}\phi+\Delta\phi=-g|\phi|^{2}\phi+\frac{2mA_{\theta}}{r^{2}}\phi+\frac{A_{\theta}^{2}}{r^{2}}\phi+A_{0}\phi.\label{eq:CSS-coulomb-phi}
\end{equation}
The $u$-evolution is given by
\[
i\partial_{t}u+\Delta_{m}u=-g|u|^{2}u+\frac{2mA_{\theta}}{r^{2}}u+\frac{A_{\theta}^{2}}{r^{2}}u+A_{0}u,
\]
where $\Delta_{m}$ is the Laplacian adapted to $m$-equivariant functions
\begin{equation}
\Delta_{m}\coloneqq\partial_{rr}+\frac{1}{r}\partial_{r}-\frac{m^{2}}{r^{2}}.\label{eq:def-m-Lap}
\end{equation}
From \eqref{eq:energy-Bogom-phi} and \eqref{eq:energy-radial-phi},
the energy takes either the forms 
\[
E=\left\{ \begin{aligned} & \int\Big(\frac{1}{2}|\partial_{r}u|^{2}+\frac{1}{2}\Big(\frac{m+A_{\theta}}{r}\Big)^{2}|u|^{2}-\frac{g}{4}|u|^{4}\Big),\quad\text{or}\\
 & \int\Big(\frac{1}{2}|\D_{+}u|^{2}+\frac{1-g}{4}|u|^{4}\Big).
\end{aligned}
\right.
\]
Here, $\int$ denotes the integral $2\pi\int_{0}^{\infty}rdr$, as
noted in \eqref{eq:def-integral}.

\subsection{\label{subsec:The-Static-Solution}Static solution $Q$ for the self-dual
(CSS)}
\begin{center}
\emph{From now on, we restrict to the self-dual case $g=1$.}
\par\end{center}

\ 

Substituting $g=1$ into \eqref{eq:energy-Bogom-phi}, the energy
functional reads
\[
E=\int_{\R^{2}}\frac{1}{2}|\tilde{\D}_{+}\phi|^{2},
\]
provided $F_{12}=-\frac{1}{2}|\phi|^{2}$. In particular, the energy
is always nonnegative. Here we explore the connection between zero-energy
solutions and static solutions. It turns out that static solutions
have zero energy and satisfies a first-order equation, and any zero-energy
solutions are gauge equivalent to static solutions.

Let $\phi:\R^{2}\to\C$ and $A_{x}=A_{1}dx_{1}+A_{2}dx_{2}$ satisfy
$F_{12}=-\frac{1}{2}|\phi|^{2}$. Then $(\phi,A_{x})$ has zero energy
if and only if $\tilde{\D}_{+}\phi=0$. We call the \emph{Bogomol'nyi
}(or, \emph{self-dual})\emph{ equation} as 
\begin{equation}
\left\{ \begin{aligned}\tilde{\D}_{+}\phi & =0,\\
F_{12} & =-\tfrac{1}{2}|\phi|^{2}.
\end{aligned}
\right.\label{eq:def-Bogomolnyi-eqn}
\end{equation}
The self-duality can be seen from the identity \cite{JackiwPi1990PRD}
\[
\D_{j}\phi=-i\epsilon_{jk}\D_{k}\phi.
\]
If we extend $(\phi,A)$ on $\R^{1+2}$ by requiring 
\[
\partial_{t}\phi=0,\quad A_{0}=\tfrac{1}{2}|\phi|^{2},\quad\text{and}\quad\partial_{t}A=0,
\]
then the solution $(\phi,A)$ solves \eqref{eq:CSS-cov}. As $(\phi,A)$
does not depend on time, such a solution is called \emph{static}.

Consider now a solution $(\phi,A)$ to \eqref{eq:CSS-cov} having
zero energy. It should satisfy the Bogomol'nyi equation \eqref{eq:def-Bogomolnyi-eqn}
for each time. If we make a gauge transform such that $A_{0}=\frac{1}{2}|\phi|^{2}$,
then by \eqref{eq:obs} $\phi$ becomes static: 
\[
\partial_{t}\phi=-iA_{0}\phi+i\D_{j}\D_{j}\phi+i|\phi|^{2}\phi=-i\tilde{\D}_{+}^{\ast}\tilde{\D}_{+}\phi=0.
\]
Thus zero energy solutions are gauge equivalent to static solutions.

In fact, the converse direction holds. More precisely, if $(\phi,A)$
is a static solution to \eqref{eq:CSS-cov}, then it satisfies \eqref{eq:def-Bogomolnyi-eqn},
$A_{0}=\frac{1}{2}|\phi|^{2}$, and has zero energy. To see this,
(for a rigorous proof, see Huh-Seok \cite{HuhSeok2013JMP}) we take
the inner product to the static equation 
\begin{equation}
A_{0}\phi-\D_{j}\D_{j}\phi-|\phi|^{2}\phi=0\label{eq:static-eqn}
\end{equation}
with the covariant $L^{2}$-scaling vector field $(1+x_{k}\D_{k})\phi$
to conclude that $(\phi,A)$ has zero energy.\footnote{One should take care of the boundary terms arising from integration
by parts. See \cite{HuhSeok2013JMP}.} In particular, the Bogomol'nyi equation $\tilde{\D}_{+}\phi=0$ is
satisfied. Applying \eqref{eq:obs}, we have $A_{0}=\frac{1}{2}|\phi|^{2}$.

We now impose the Coulomb gauge \eqref{eq:coulomb} and equivariance
ansatz \eqref{eq:equiv-ansatz}. We also concentrate on the \emph{physically
relevant case} $m\geq0$ \cite{Dunne1995Springer} from now on.

One can solve \eqref{eq:def-Bogomolnyi-eqn} following Jackiw-Pi \cite{JackiwPi1990PRD}.
Indeed, using \eqref{eq:def-Bogomolnyi-eqn}, we have $\partial_{r}|\phi|^{2}=\frac{2m+2A_{\theta}}{r}|\phi|^{2}$.
If $|\phi(r_{0})|=0$ for some $r_{0}\in(0,\infty)$, then $|\phi|(r)=0$
for all $r\in(0,\infty)$ by the Gronwall inequality. Henceforth,
we assume $|\phi|>0$. Differentiating the curvature constraints,
one can see that the charge density $|\phi|^{2}$ solves the Liouville
equation 
\[
\Delta\log|\phi|^{2}=-|\phi|^{2}.
\]
General forms of solutions to the Liouville equation are known. There
are explicit $m$-equivariant static solutions 
\begin{equation}
\left\{ \begin{aligned}\phi^{(m)}(t,x) & =\sqrt{8}(m+1)\frac{|x|^{m}}{1+|x|^{2(m+1)}}e^{im\theta},\\
A_{j}^{(m)}(t,x) & =2(m+1)\frac{\epsilon_{jk}x_{k}|x|^{2m}}{1+|x|^{2(m+1)}},\\
A_{0}^{(m)}(t,x) & =4\Big(\frac{(m+1)|x|^{m}}{1+|x|^{2(m+1)}}\Big)^{2}.
\end{aligned}
\right.\label{eq:explicit-static}
\end{equation}
These solutions are unique up to the symmetries of \eqref{eq:CSS};
see \cite{ChouWan1994PacificJMath,ByeonHuhSeok2012JFA,ByeonHuhSeok2016JDE}.
Taking the radial part and suppressing the equivariance index $m$,
we define\footnote{As mentioned above, we abuse notations such that $Q(x)=Q(r)e^{im\theta}$
denotes the $m$-equivariant extension of $Q(r)$.} 
\begin{equation}
Q(r)\coloneqq\sqrt{8}(m+1)\frac{r^{m}}{1+r^{2(m+1)}}.\label{eq:def-Q}
\end{equation}
We have 
\begin{align*}
M[Q] & =8\pi(m+1),\\
E[Q] & =0,\\
A_{\theta}[Q](r) & =-2(m+1)\frac{r^{2(m+1)}}{1+r^{2(m+1)}}.
\end{align*}

\subsection{Pseudoconformal blow-up solutions and main results}

We restrict ourselves \eqref{eq:CSS-cov} under the Coulomb gauge
condition \eqref{eq:coulomb}, equivariance ansatz \eqref{eq:equiv-ansatz},
and the self-dual case $g=1$. Therefore, we arrive at our main equation
\begin{equation}
i\partial_{t}u+\Delta_{m}u=-|u|^{2}u+\frac{2m}{r^{2}}A_{\theta}u+\frac{A_{\theta}^{2}}{r^{2}}u+A_{0}u,\tag{CSS}\label{eq:CSS}
\end{equation}
where $\Delta_{m}$ is defined in \eqref{eq:def-m-Lap} and the connection
components are given by \eqref{eq:A_r-form}, \eqref{eq:A_theta-form},
and \eqref{eq:A_0-form}: 
\[
\begin{cases}
A_{r}=0,\\
A_{\theta}=-\frac{1}{2}\int_{0}^{r}|u|^{2}r'dr',\\
A_{0}=-\int_{r}^{\infty}(m+A_{\theta})|u|^{2}\frac{dr'}{r'}.
\end{cases}
\]
Or, we can also rewrite \eqref{eq:CSS} as a radial form 
\begin{equation}
i\partial_{t}u=(-\partial_{rr}-\frac{1}{r}\partial_{r})u+\Big(\frac{m+A_{\theta}}{r}\Big)^{2}u+A_{0}u-|u|^{2}u.\label{eq:CSS-r}
\end{equation}
Later, we will write \eqref{eq:CSS} in a self-dual form (see \eqref{eq:CSS-L*D-form})
\[
i\partial_{t}u=L_{u}^{\ast}\D_{+}^{(u)}u.
\]
The energy functional has either of the forms 
\begin{equation}
E[u]=\left\{ \begin{aligned} & \frac{1}{2}\int|\D_{+}u|^{2},\\
 & \frac{1}{2}\int|\partial_{r}u|^{2}-\frac{1}{4}\int|u|^{4}+\frac{1}{2}\int\Big(\frac{m+A_{\theta}}{r}\Big)^{2}|u|^{2}.
\end{aligned}
\right.\label{eq:energy-Bogomolnyi}
\end{equation}

In this work, we are going to construct a family of blow-up solutions.
Applying the pseudoconformal symmetry \eqref{eq:pseudo-discrete}
to the static solution $Q$, one obtains an explicit finite-time blow-up
solution to \eqref{eq:CSS} \cite{JackiwPi1990PRD,Huh2009Nonlinearity}
\begin{equation}
S(t,r)\coloneqq\frac{1}{|t|}Q\Big(\frac{r}{|t|}\Big)e^{-i\frac{r^{2}}{4|t|}},\qquad\forall t<0,\label{eq:explicit-blow-up}
\end{equation}
where we only wrote the radial part of the solution, for the sake
of simplicity. One can apply various symmetries of \eqref{eq:CSS}
to obtain other explicit blow-up solutions.

Another blow-up solutions to \eqref{eq:CSS-cov} were considered by
Berg\'e-de Bouard-Saut \cite{BergeDeBouardSaut1995Nonlinearity}.
There, they used the virial identities and a convexity argument of
Glassey \cite{Glassey1977JMP} to give sufficient conditions for solutions
of \eqref{eq:CSS-cov} to blow up in finite time (e.g. $E<0$). Unfortunately,
such a convexity argument is not so useful in the self-dual case $g=1$,
due to $E\geq0$.\footnote{S.-J. Oh observed this fact. We are indebted to him for including
this fact in our manuscript.} Indeed, if $u(t)$ is a solution to \eqref{eq:CSS} with the initial
data $u_{0}$ and satisfies $\int|x|^{2}|u(t,x)|^{2}=0$ for some
time $t\neq0$, then $E[e^{i\frac{|x|^{2}}{4t}}u_{0}]=0$ in view
of \eqref{eq:virial-alt}. By the earlier discussion, $u$ is the
pseudoconformal transform of a static solution. This argument works
in the covariant setting \eqref{eq:CSS-cov} with $g=1$.

Let us call the blow-up rate of \eqref{eq:explicit-blow-up} as the
\emph{pseudoconformal blow-up rate}. By a \emph{pseudoconformal blow-up
solution}, we mean a finite-time blow-up solution having the pseudoconformal
blow-up rate.

In this work, we study pseudoconformal blow-up solutions with prescribed
asymptotic profiles $z^{\ast}$. We show existence, uniqueness, and
instability. To state our result, let $z^{\ast}$ be an $m$-equivariant
profile satisfying the hypothesis 
\begin{equation}
\tag{H}\begin{gathered}-\text{\ensuremath{(m+2)}-equivariant function \ensuremath{\tilde z^{\ast}\coloneqq e^{-i(2m+2)\theta}z^{\ast}} satisfies}\\
\text{ \ensuremath{\|\tilde z^{\ast}\|_{H_{-(m+2)}^{k}}}<\ensuremath{\alpha^{\ast}} for some \ensuremath{k=k(m)>m+3},}
\end{gathered}
\label{eq:H}
\end{equation}
where $H_{\ell}^{k}$ is the space of $\ell$-equivariant $H^{k}(\R^{2})$
functions (see Section \ref{subsec:equiv-Sobolev-sp}).
\begin{thm}[Construction of pseudoconformal blow-up solutions]
\label{thm:BW-sol}Let $m\geq1$. Let $z^{\ast}$ be an $m$-equivariant
profile satisfying \eqref{eq:H}. If $\alpha^{\ast}>0$ is sufficiently
small, then there is an $m$-equivariant solution $u$ to \eqref{eq:CSS}
on $(-\infty,0)$ with the property 
\begin{equation}
\Big[u(t,r)-\frac{1}{|t|}Q\Big(\frac{r}{|t|}\Big)e^{-i\frac{r^{2}}{4|t|}}\Big]e^{im\theta}\to z^{\ast}\qquad\text{in }H_{m}^{1}\label{eq:BW-sol}
\end{equation}
as $t\to0^{-}$. Indeed, $u$ satisfies the following decomposition
estimates
\begin{equation}
\left\{ \begin{aligned}\|u(t,r)-\frac{1}{|t|}Q_{|t|}\Big(\frac{r}{|t|}\Big)e^{i\gamma_{\mathrm{cor}}(t)}-z(t,r)\|_{\dot{H}_{m}^{1}} & \lesssim\alpha^{\ast}|t|^{m}\quad\text{and}\\
\|u(t,r)-\frac{1}{|t|}Q_{|t|}\Big(\frac{r}{|t|}\Big)e^{i\gamma_{\mathrm{cor}}(t)}-z(t,r)\|_{L_{m}^{2}} & \lesssim\alpha^{\ast}|t|^{m+1},
\end{aligned}
\right.\label{eq:BW-sol-temp-1}
\end{equation}
where $z(t,r)$ is a solution to \eqref{eq:fCSS} with the initial
data $z(0,r)=z^{\ast}(r)$ constructed in Section \ref{subsec:Evolution-of-z}
and $\gamma_{\mathrm{cor}}(t)$ is defined in \eqref{eq:def-gamma-eta-cor}.
\end{thm}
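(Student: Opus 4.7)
The approach is the backward-in-time construction of Bourgain-Wang type. Since one cannot directly prescribe Cauchy data at the blow-up time $t=0$, the plan is to build a sequence of approximate solutions $u_{n}$, each posed from an initial time $t_{n}\in(T^{\ast},0)$ with $t_{n}\to0^{-}$, solve \eqref{eq:CSS} backward, and extract a subsequential limit $u$ defined on $(-\infty,0)$ that realizes the prescribed asymptotic profile $z^{\ast}$.

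First I would produce the components of the ansatz. Given $z^{\ast}$ satisfying \eqref{eq:H}, solve the $z$-equation referenced in the statement to obtain $z(t,r)$ on a neighborhood of $t=0$ with $z(0,\cdot)=z^{\ast}$; the regularity in \eqref{eq:H} ensures $z$ lies in the appropriate equivariant Sobolev spaces. Next, construct a modified blow-up profile $Q_{|t|}$ and a phase correction $\gamma_{\mathrm{cor}}(t)$ so that
\begin{equation*}
U(t,r)\coloneqq\frac{1}{|t|}Q_{|t|}\!\left(\frac{r}{|t|}\right)e^{-i\frac{r^{2}}{4|t|}+i\gamma_{\mathrm{cor}}(t)}+z(t,r)
\end{equation*}
solves \eqref{eq:CSS} up to an error decaying fast enough in $|t|$ as $t\to0^{-}$ to be consistent with \eqref{eq:BW-sol-temp-1}. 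The correction $Q_{|t|}-Q$ would be constructed iteratively by solving elementary inhomogeneous equations built from $\D_{+}$ and $\D_{+}^{\ast}$, the self-dual operators whose structure is visible in \eqref{eq:energy-Bogomolnyi}. The source terms are the cross interactions between the concentrated profile at scale $|t|$ and the outer profile $z$ at unit scale, transmitted through the nonlocal gauge fields $A_{\theta}[u]$ and $A_{0}[u]$; the phase $\gamma_{\mathrm{cor}}$ is then chosen to kill the projection of the residual onto the direction $iQ$.

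With $U$ in hand, for each $t_{n}\to0^{-}$ let $u_{n}$ be the solution of \eqref{eq:CSS} obtained by backward Cauchy evolution from $u_{n}(t_{n})=U(t_{n})$, which exists on a maximal interval by the equivariant $L^{2}$-critical local theory of Liu-Smith. Writing $u_{n}(t)=U(t)+\varepsilon_{n}(t)$, the heart of the proof is a bootstrap argument on $[T,t_{n}]$ with $T<0$ independent of $n$, producing estimates matching \eqref{eq:BW-sol-temp-1}. The energy input is the self-dual reformulation $i\partial_{t}u=L_{u}^{\ast}\D_{+}^{(u)}u$ of \eqref{eq:CSS}: applied to the equation for $\varepsilon_{n}$, it produces a quadratic form whose coercivity, after fixing modulation parameters associated with scale and phase, controls $\|\D_{+}\varepsilon_{n}\|_{L^{2}}$ and thus $\|\varepsilon_{n}\|_{\dot{H}_{m}^{1}}$. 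Integrating the resulting monotonicity identity backward from $t_{n}$ (where $\varepsilon_{n}(t_{n})=0$) closes the bootstrap, and a weak-compactness argument extracts a limit $u_{n}\to u$ solving \eqref{eq:CSS} on $[T,0)$; the uniform estimates pass to the limit to yield \eqref{eq:BW-sol-temp-1}, which in turn implies the $H_{m}^{1}$ convergence \eqref{eq:BW-sol}, and standard backward $H^{1}$ local well-posedness extends $u$ to all of $(-\infty,0)$.

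The main obstacle is closing the bootstrap against the slow polynomial decay $Q(r)\sim r^{-(m+2)}$ as $r\to\infty$: this decay is borderline for the weighted estimates arising naturally from the pseudoconformal transformation, and is especially delicate for small $m$, where many standard tail bounds lose by a single power. The nonlocal gauge fields $A_{\theta}[u]$ and $A_{0}[u]$ aggravate the issue since they do not decouple between the concentrated bubble at scale $|t|$ and the outer profile $z$ at unit scale --- precisely the reason one cannot take $Q_{|t|}\equiv Q$ and $\gamma_{\mathrm{cor}}\equiv0$. Sharp matching of inner and outer asymptotics in the construction of $Q_{|t|}$ is what gives the residual source term enough decay in $|t|$ to be absorbed by the bootstrap. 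Throughout, self-duality is the key tool: it provides the factorization producing a coercive energy quantity for $\varepsilon_{n}$ in spite of the intrinsic pseudoconformal instability, and it makes the linear problems defining $Q_{|t|}$ (semi-)explicitly solvable by inverting $\D_{+}^{\ast}$ on the appropriate orthogonal complement.
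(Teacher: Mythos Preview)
Your overall scheme---backward evolution from data at $t_n\to 0^-$, uniform bootstrap on $\epsilon_n$, weak-compactness limit---is the alternative route the paper itself records in Remark~\ref{rem:alt-proof}. The paper's main proof instead constructs a one-parameter family $u^{(\eta)}$ for $\eta>0$ with data prescribed at $t=0$ (using the modified profile $Q^{(\eta)}$ of Section~\ref{sec:Profile}) and sends $\eta\to 0^+$; that detour is taken because the same family is reused for the instability Theorem~\ref{thm:instability}. One correction to your ansatz: in the paper's notation $Q_{|t|}(y)=Q(y)e^{-i|t||y|^2/4}$ is merely $Q$ with the pseudoconformal phase, not a separately constructed profile. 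For $\eta=0$ no profile correction is built; once $z$ evolves under \eqref{eq:fCSS} and $\gamma_{\mathrm{cor}}$ absorbs the feedback of $z$ onto the bubble, the residual interaction $\tilde R_{Q_b,z^\flat}$ already obeys $\|\tilde R_{Q_b,z^\flat}\|_{H_m^1}\lesssim\alpha^\ast\lambda^{m+3}|\log\lambda|$ (Lemma~\ref{lem:estimate-RQ,z}), which suffices without your proposed iterative profile tweaking.

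There is, however, a genuine gap where you write ``integrating the monotonicity identity backward closes the bootstrap.'' You modulate only in scale and phase, but the paper spends a \emph{third} degree of freedom: it modulates also in the pseudoconformal parameter $b$ and imposes the law $2(\tfrac{\lambda_s}{\lambda}+b)b-(b_s+b^2)=0$, equivalently $b=|t|^{-1}\lambda^2$ (see \eqref{eq:law-fixation} and \eqref{eq:solve-law}). This is not cosmetic: it is chosen precisely to annihilate the $|y|^2Q_b$ contribution in the $\epsilon$-equation, a term that decays only like $r^{-m}$ and fails to lie in $L^{4/3}$ when $m=1$. Without that exact cancellation the $L^2$ Strichartz control on $\epsilon$ (Lemma~\ref{lem:5.16} and the remark following it) breaks down, and the virial-corrected Lyapunov argument cannot close for small $m$---exactly the obstacle you flag in your last paragraph but do not resolve. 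The Lyapunov functional also requires an averaging over the virial truncation scale to handle a borderline $\int(\Delta^2\phi_A)|\epsilon|^2$ term (Proposition~\ref{prop:lyapunov}), a further nontrivial device absent from your sketch.
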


The solution constructed in Theorem \ref{thm:BW-sol} is unique in
the following sense.
\begin{thm}[Conditional uniqueness of the pseudoconformal blow-up solutions]
\label{thm:cond-uniq}Let $m\geq1$. Let $z^{\ast}$ be an $m$-equivariant
profile satisfying \eqref{eq:H}. Assume two $H_{m}^{1}$-solutions
$u_{1}$ and $u_{2}$ to \eqref{eq:CSS} satisfy 
\begin{equation}
\|u_{j}(t,r)-\frac{1}{|t|}Q_{|t|}\Big(\frac{r}{|t|}\Big)e^{i\gamma_{\mathrm{cor}}(t)}-z(t,r)\|_{H_{m}^{1}}\leq c|t|\label{eq:BW-sol-temp}
\end{equation}
for all $j=1,2$ and $t$ near zero, for sufficiently small $\alpha^{\ast}>0$
and $c>0$. Then $u_{1}=u_{2}$.
\end{thm}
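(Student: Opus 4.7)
The plan is to compare the two solutions directly by writing $w := u_1 - u_2$ and closing a backward-in-time Gronwall estimate in self-similar variables, using the self-dual structure to control the linearization about the common profile. Throughout, set $\tilde P(t,r) := \frac{1}{|t|}Q_{|t|}(r/|t|) e^{i\gamma_{\mathrm{cor}}(t)} + z(t,r)$, so that $\epsilon_j := u_j - \tilde P$ satisfies $\|\epsilon_j(t)\|_{H_m^1} \leq c|t|$ by hypothesis, and hence $\|w(t)\|_{H_m^1} \leq 2c|t|$.

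Subtracting the two \eqref{eq:CSS} equations and using the multilinearity of $A_\theta$ and $A_0$ in $|u|^2$, write
\[
i\partial_t w + \Delta_m w = \mathcal L_{\tilde P} w + \mathcal R(w;\epsilon_1,\epsilon_2),
\]
where $\mathcal L_{\tilde P}$ is the nonlocal linearization of the CSS nonlinearity about $\tilde P$, and $\mathcal R$ collects terms at least quadratic in $(w,\epsilon_1,\epsilon_2)$, hence perturbative by the smallness of $\epsilon_j$. Pass to self-similar variables $s := -1/t$, $y := r/|t|$, and rescale via $w(t,r) = |t|^{-1} W(s,y) e^{-ir^2/(4|t|)}$. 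The hypothesis becomes $\|W(s)\|_{H_m^1} \lesssim c/s$ at large $s$, and $W$ satisfies $i\partial_s W = \mathcal L_Q W + \mathcal E(s)$ to leading order, where $\mathcal L_Q$ is the self-dual linearization about $Q$, admitting the factorization $\mathcal L_Q = L_Q^\ast \D_+^{(Q)}$ mentioned in the introduction.

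The core step is the energy estimate. Differentiating $\|W(s)\|_{L_m^2}^2$ in $s$, the Hamiltonian (self-adjoint) part of $\mathcal L_Q$ drops out after taking real parts, and what survives is controlled by pairing $\D_+^{(Q)} W$ with $W$ through commutator-type terms arising from the nonlocal gauge pieces. Coercivity of the self-dual quadratic form (which the authors presumably have already developed for the modulation analysis of $\epsilon_j$ in Theorem \ref{thm:BW-sol}), together with the smallness of $\|\epsilon_j\|_{H_m^1}$, then yields a bound of the form
\[
\Big|\frac{d}{ds}\|W(s)\|_{L_m^2}^2\Big| \leq \frac{C}{s}\|W(s)\|_{H_m^1}^2,
\]
and an analogous one for the $\dot H_m^1$ energy, valid on $[s_0,\infty)$ for $s_0$ large. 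Integrating backward from $s_1 \gg s_0$ gives
\[
\|W(s)\|_{H_m^1}^2 \leq \Big(\frac{s_1}{s}\Big)^{C}\|W(s_1)\|_{H_m^1}^2 \lesssim \frac{c^2\,s_1^{C-2}}{s^C}.
\]
If $c$ and $\alpha^\ast$ are chosen small enough that $C < 2$, sending $s_1 \to \infty$ forces $W \equiv 0$, i.e., $u_1 = u_2$ on some left neighborhood of $0$. The standard $H^1$ Coulomb-gauge uniqueness of Huh \cite{Huh2013Abstr.Appl.Anal} then propagates $u_1 = u_2$ to the entire common interval of existence.

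The main obstacle is closing the energy estimate with constant $C < 2$. The nonlocal gauge terms $A_\theta, A_0$ produce long-range interactions between $Q$ and $W$ that, unlike the purely local NLS nonlinearity, contribute at the same order as the local ones; moreover the slow decay $Q(r) \sim r^{-(m+2)}$ is borderline for the weights naturally appearing in the energy estimate, particularly for $m=1$. The self-dual factorization $L_Q^\ast\D_+^{(Q)}$ is the crucial structural input: it organizes the linearization so that the would-be singular contributions from the nonlocal terms are absorbed into $\|\D_+^{(Q)} W\|_{L_m^2}$, which in turn is coercively controlled by $\|W\|_{L_m^2}$ thanks to the zero-energy kernel structure of $Q$.
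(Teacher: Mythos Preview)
Your scheme has a genuine gap at the ``core step'': you invoke coercivity of the self-dual quadratic form for the rescaled difference $W$, but the coercivity of $\mathcal L_Q = L_Q^\ast L_Q$ (not $L_Q^\ast \D_+^{(Q)}$ as you write) established in Lemma~\ref{lem:coercivity} holds \emph{only} under transversality to $\ker L_Q = \mathrm{span}_{\R}\{\Lambda Q, iQ\}$. The raw difference $W = (u_1-u_2)^\flat$ has no reason to be orthogonal to these directions; generically it is not, since a small relative rescaling or phase shift between $u_1$ and $u_2$ produces exactly a $\Lambda Q$- or $iQ$-component in $W$. On those directions $\mathcal L_Q$ vanishes, so neither your $L^2$ nor your $\dot H_m^1$ energy identity yields any damping, and the constant $C$ in your Gronwall bound cannot be made smaller than $2$ (in fact it cannot be made finite for the $\dot H_m^1$ energy without a separate mechanism). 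Relatedly, your claim that ``the Hamiltonian part drops out after taking real parts'' is not correct here: $\mathcal L_Q$ is only $\R$-linear, so $(W,-i\mathcal L_Q W)_r$ does not vanish.

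The paper's proof (Section~\ref{sec:cond-uniq}) deals with precisely this obstruction by \emph{reintroducing modulation}: each $u_j$ is decomposed with its own parameters $(b_j,\lambda_j,\gamma_j)$ chosen so that $\epsilon_j$ satisfies the orthogonality conditions \eqref{eq:uniq-ortho}, and---crucially---both $\epsilon_1$ and $\epsilon_2$ are rescaled with the \emph{same} pair $(\lambda_1,\gamma_1)$, so that $\epsilon=\epsilon_1-\epsilon_2$ inherits orthogonality and coercivity applies. The parameter differences $\log(\lambda_1/\lambda_2)$ and $\gamma_1-\gamma_2$ are then controlled back by $\epsilon$ via modulation estimates (Lemma~\ref{lem:uniq-various-est}). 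A second essential ingredient you are missing is the special relation $b_j=|t|^{-1}\lambda_j^2$ imposed on both solutions (see \eqref{eq:uniq-special-rel}); this forces $b_1/\lambda_1^2=b_2/\lambda_2^2$ and yields the sharp difference estimate \eqref{eq:prelim-est-diff}, which is exactly what allows the Lyapunov/virial argument of Section~\ref{subsec:virial-lyapunov} to close at the borderline $m=1$. Without this algebraic gain the scheme loses a logarithm and fails---the same phenomenon that makes the naive energy estimate in your proposal borderline rather than closing.
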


Lastly, we show that the pseudoconformal blow-up solution is instable.
This immediately follows by constructing an one-parameter family of
solutions in the following sense.
\begin{thm}[Instability of pseudoconformal blow-up solutions]
\label{thm:instability}Let $m\geq1$. Assume the hypothesis of Theorem
\ref{thm:BW-sol}. Let $u$ be the solution constructed in Theorem
\ref{thm:BW-sol}. There exist $\eta^{\ast}>0$ and one-parameter
family of $H_{m}^{1}$-solutions $\{u^{(\eta)}\}_{\eta\in[0,\eta^{\ast}]}$
to \eqref{eq:CSS} with the following properties.
\begin{itemize}
\item $u^{(0)}=u$,
\item For $\eta>0$, $u^{(\eta)}$ scatters both forward and backward in
time,
\item The map $\eta\in[0,\eta^{\ast}]\mapsto u^{(\eta)}$ is continuous
in the $C_{(-\infty,0),\mathrm{loc}}H^{1-}$ topology.
\item The family $\{u^{(\eta)}\}_{\eta\in[0,\eta^{\ast}]}$ exhibits the
rotational instability near time $0$: we can write 
\[
u^{(\eta)}(t,x)=\frac{e^{im(\theta+\gamma^{(\eta)}(t))}}{\langle t\rangle}Q_{|t|}^{(\eta)}\Big(\frac{r}{\langle t\rangle}\Big)+O_{H_{m}^{1}}(\alpha^{\ast}),
\]
where $\langle t\rangle\coloneqq(|t|^{2}+\eta^{2})^{\frac{1}{2}}$,
$Q^{(\eta)}$ is some profile defined in Proposition \ref{prop:const-Q-eta},
and $\gamma^{(\eta)}$ satisfies 
\begin{align*}
|\gamma^{(0)}(-\tau)| & \lesssim\alpha^{\ast}\tau,\\
\limsup_{\eta\to0^{+}}\Big|\gamma^{(\eta)}(\tau)-\gamma^{(\eta)}(-\tau)-\Big(\frac{m+1}{m}\Big)\pi\Big| & \lesssim\alpha^{\ast}\tau,
\end{align*}
for all small $\tau>0$.
\end{itemize}
\end{thm}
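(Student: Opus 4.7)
The plan is to mirror the Merle-Rapha\"el-Szeftel construction of unstable perturbations of Bourgain-Wang solutions, adapted to the self-dual CSS, where a \emph{phase-rotation mode} plays the role of the single unstable direction.

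\textbf{Construction of the family.} First, I would invoke Proposition \ref{prop:const-Q-eta} to obtain the modified profiles $Q^{(\eta)}$: using the self-dual Bogomol'nyi structure $i\partial_{t}u=L_{u}^{\ast}\D_{+}^{(u)}u$, one solves a perturbed first-order problem (rather than a nonlinear elliptic one) to produce a smooth family with $Q^{(0)}=Q$ that freezes the blow-up scale at $\eta$. I then adopt the ansatz
\[
u^{(\eta)}(t,x)=\frac{e^{im(\theta+\gamma^{(\eta)}(t))}}{\langle t\rangle}Q_{|t|}^{(\eta)}\!\Big(\frac{r}{\langle t\rangle}\Big)+z(t,r)+\epsilon^{(\eta)}(t,x),
\]
with $\langle t\rangle=(|t|^{2}+\eta^{2})^{1/2}$. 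For $|t|\gg\eta$ this recovers the ansatz of Theorem \ref{thm:BW-sol} (up to a phase shift), while for $|t|\lesssim\eta$ the scale remains bounded below. I would take backward data at a sequence of times $-T_{n}\to-\infty$ matching $u(-T_{n})$ modulo a phase rotation of size comparable to $\eta$, run the $(\lambda,\gamma,\ldots)$-modulation decomposition forward, and close a bootstrap on $(-T_{n},0]$ using the self-dual coercivity of $\int|L_{u}^{\ast}\D_{+}^{(u)}\epsilon|^{2}$ modulo the neutral directions generated by the symmetries acting on $Q^{(\eta)}$. Compactness as $T_{n}\to\infty$ yields $u^{(\eta)}$ on $(-\infty,0]$. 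Since $u^{(\eta)}(0)$ is now a regular $H_{m}^{1}$ function at the bounded scale $\eta$, the equivariant local well-posedness of Liu-Smith continues it across $t=0$; since the total mass of $u^{(\eta)}$ is $M[Q]+M[z^{\ast}]+o(1)$, which can be arranged strictly below the minimal-mass threshold (by smallness of $\alpha^{\ast}$ and a suitable subtraction of the $Q^{(\eta)}$-bump), the Liu-Smith scattering result gives scattering both forward and backward. Continuity in $\eta$ in $C_{\mathrm{loc}}H^{1-}$ is a byproduct of the stability of the bootstrap.

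\textbf{Phase-rotation mechanism.} Pairing the equation against the phase-rotation generator $iQ^{(\eta)}$ extracts a modulation ODE of schematic form
\[
\dot\gamma^{(\eta)}(t)=\frac{c_{m}\,\eta}{|t|^{2}+\eta^{2}}+O(\alpha^{\ast}),
\]
where the spectral constant $c_{m}$ comes from the ratio of two explicit integrals of $Q$: the numerator, proportional to $M[Q]=8\pi(m+1)$, records the normalization of the phase mode, while the denominator arises from the pairing of the pseudoconformal phase $e^{-ir^{2}/(4\langle t\rangle)}$ with $Q^{(\eta)}$ and carries the $m$-dependent angular weight that produces the factor $1/m$. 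Integrating across the transition region yields
\[
\gamma^{(\eta)}(\tau)-\gamma^{(\eta)}(-\tau)=2c_{m}\arctan(\tau/\eta)+O(\alpha^{\ast}\tau)\xrightarrow{\eta\to 0^{+}}c_{m}\pi+O(\alpha^{\ast}\tau),
\]
with $c_{m}=(m+1)/m$, giving the claimed abrupt rotation. For $\eta=0$ the source term vanishes identically and only the $O(\alpha^{\ast}\tau)$ remainder survives, which accounts for the bound on $|\gamma^{(0)}(-\tau)|$.

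\textbf{Main obstacle.} The hardest step is closing the modulation bootstrap for $\epsilon^{(\eta)}$ \emph{uniformly in} $\eta$ through the transition regime $|t|\sim\eta$, where the pseudoconformal asymptotics (valid only for $|t|\gg\eta$) and the regularized near-$0$ dynamics (valid only for $|t|\lesssim\eta$) must be matched. Three features of CSS obstruct a routine argument: the slow polynomial decay $Q\sim r^{-(m+2)}$, the nonlocal long-range couplings through $A_{0}$ and $A_{\theta}$ that do not localize sharply at either scale, and the lack of a standard spectral gap in the linearization. Overcoming them requires two-scale weighted energy estimates adapted to $\langle t\rangle$, the exploitation of self-duality $E=\tfrac{1}{2}\int|\D_{+}u|^{2}\geq 0$ to extract coercivity constants independent of $\eta$, and a precise identification showing that the phase direction is the \emph{only} resonant modulation mode not already absorbed into the $z$-dynamics of Theorem \ref{thm:BW-sol}.
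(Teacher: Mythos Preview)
Your proposal has the right broad picture but contains several concrete gaps that the paper's argument handles differently.

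\textbf{Direction of the construction.} You propose to prescribe data at times $-T_{n}\to-\infty$ matching $u(-T_{n})$ up to a phase shift and then bootstrap forward. The paper does the opposite: for each $\eta>0$ it prescribes the explicit data
\[
u^{(\eta)}(0,x)=\frac{1}{\eta}Q^{(\eta)}\Big(\frac{x}{\eta}\Big)+z^{\ast}(x)
\]
at the (would-be) blow-up time $t=0$, so that $\epsilon(0)=0$, and then runs the Lyapunov/virial bootstrap \emph{backward} on $[t_{0}^{\ast},0]$. This choice is not cosmetic: the entire energy method (the monotonicity of $\mathcal{I}$) is set up to propagate smallness from $t=0$ to the past, and the special dynamical law $2(\frac{\lambda_{s}}{\lambda}+b)b-(b_{s}+b^{2}+\eta^{2})=0$ together with the initial values $(b,\lambda,\gamma)(0)=(0,\eta,0)$ are what close the uniform-in-$\eta$ estimates through the transition region $|t|\sim\eta$. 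A forward construction from $-T_{n}$ would have to control $\epsilon$ across $t=0$ without any vanishing initial condition, and you have not indicated how to do this.

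\textbf{Scattering.} Your mass-threshold argument is incorrect: the total charge of $u^{(\eta)}$ is $M[Q^{(\eta)}]+M[z^{\ast}]+o(1)$, and although $M[Q^{(\eta)}]<M[Q]$ by an $O(\eta)$ amount, adding $M[z^{\ast}]\sim(\alpha^{\ast})^{2}$ need not keep the sum below $M[Q]$. The paper instead shows that $u^{(\eta)}(t_{0}^{\ast})$ is $H^{1}$-close to the explicit profile $\frac{e^{i\gamma(t_{0}^{\ast})}}{|t_{0}^{\ast}|}Q_{|t_{0}^{\ast}|}(\tfrac{x}{|t_{0}^{\ast}|})$, which is itself a scattering solution (it is the pseudoconformal transform of $Q$), and then invokes stability of scattering. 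Forward scattering is obtained by the time-reversal symmetry applied to the same argument.

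\textbf{Continuity at $\eta=0$.} You say continuity is ``a byproduct of the stability of the bootstrap''; this is the key missing step. The bootstrap only gives uniform bounds and hence subsequential limits of $u^{(\eta_{n})}$ as $\eta_{n}\to0$. To upgrade this to genuine convergence one must show that \emph{every} such limit coincides with the blow-up solution $u$ of Theorem~\ref{thm:BW-sol}. This is exactly the role of the conditional uniqueness Theorem~\ref{thm:cond-uniq}: any limit satisfies the decomposition estimate \eqref{eq:BW-sol-temp}, and uniqueness forces it to equal $u$. Without Theorem~\ref{thm:cond-uniq} the map $\eta\mapsto u^{(\eta)}$ is not known to be continuous at $0$.

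\textbf{The constant $(m+1)/m$.} Your explanation of its origin is off. The phase parameter obeys $\gamma_{s}=\eta\theta_{\eta}$ with $\theta_{\eta}=\frac{1}{4\pi}M[Q^{(\eta)}]-(m+1)\to(m+1)$; since $\frac{ds}{dt}=\lambda^{-2}\approx\langle t\rangle^{-2}$, integration gives a phase jump of $(m+1)\pi$. The additional factor $1/m$ comes simply from the identity $e^{i\gamma}e^{im\theta}=e^{im(\theta+\gamma/m)}$ converting phase rotation to spatial rotation, not from any pairing with the pseudoconformal factor.
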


Taking the pseudoconformal transform on the family $\{u^{(\eta)}\}_{\eta\in[0,\eta^{\ast}]}$,
we get
\begin{cor}
\label{cor:corollary}Let $m\geq1$. Let $\{u^{(\eta)}\}_{\eta\in[0,\eta^{\ast}]}$
be as in Theorem \ref{thm:instability}. Let $\mathcal{C}$ be the
pseudoconformal transformation defined in \eqref{eq:pseudo-discrete}.
Then,
\begin{itemize}
\item $\mathcal{C}u^{(0)}-Q$ scatters forward in time.
\item $\mathcal{C}u^{(\eta)}$ scatters forward in time.
\end{itemize}
\end{cor}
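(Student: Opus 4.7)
The plan is to read both scattering statements off the fact that the pseudoconformal transformation $\mathcal{C}$ defined in \eqref{eq:pseudo-discrete} is an \emph{exact} symmetry of \eqref{eq:CSS} which identifies the blow-up behavior at $t=0^{-}$ with the behavior at $\tau\to+\infty$. The crucial point is that the $\phi$-component of $\mathcal{C}$ is \emph{linear} in $\phi$ (the connection components $A_{\alpha}$ are reconstructed from $\phi$ via the Coulomb gauge), so decomposition-and-perturbation arguments can be carried out cleanly.

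For the first bullet, I would first check by direct computation from \eqref{eq:pseudo-discrete} and \eqref{eq:explicit-blow-up} that $\mathcal{C}S\equiv Q$, where $S(t,r)=|t|^{-1}Q(r/|t|)e^{-ir^{2}/4|t|}$. Using the decomposition \eqref{eq:BW-sol-temp-1} of Theorem \ref{thm:BW-sol}, I write
\[
u^{(0)}(t)=S(t)+z(t)+\mathrm{Err}(t),
\]
where $\mathrm{Err}$ also absorbs the difference between $S$ and the modified profile $|t|^{-1}Q_{|t|}(\cdot/|t|)e^{i\gamma_{\mathrm{cor}}(t)}$, and obeys $\|\mathrm{Err}(t)\|_{\dot H^{1}_{m}}\lesssim\alpha^{\ast}|t|^{m}$ and $\|\mathrm{Err}(t)\|_{L^{2}_{m}}\lesssim\alpha^{\ast}|t|^{m+1}$. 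Applying the linear transformation $\mathcal{C}$ term by term yields
\[
\mathcal{C}u^{(0)}(\tau)-Q=\mathcal{C}z(\tau)+\mathcal{C}\mathrm{Err}(\tau),\qquad\tau>0,
\]
so the claim reduces to showing that each of the right-hand terms tends to a free evolution as $\tau\to+\infty$. For $\mathcal{C}z$, the change of variables $t=-1/\tau$, combined with the stationary-phase identity $e^{i\tau\Delta_{m}}f(x)\sim(4\pi i\tau)^{-1}e^{i|x|^{2}/4\tau}\widehat f(x/(2\tau))$, matches $\mathcal{C}z(\tau,x)$ at large $\tau$ to the free evolution of a fixed $H^{1}$ function determined by the trace $z(0)=z^{\ast}$. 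For $\mathcal{C}\mathrm{Err}$, the conformal invariance of $L^{2}$ and $\dot H^{1}$ in $d=2$ together with the bounds on $\mathrm{Err}$ give $\|\mathcal{C}\mathrm{Err}(\tau)\|_{\dot H^{1}}\lesssim\tau^{-m}$ and $\|\mathcal{C}\mathrm{Err}(\tau)\|_{L^{2}}\lesssim\tau^{-(m+1)}$, which for $m\geq1$ is absorbed by a Strichartz-based perturbative argument around the $Q$-background.

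For the second bullet, the key ingredient is the conformal invariance of the $L^{2}$-critical Strichartz norm $L^{4}_{t,x}$ in dimension $2$: a direct change of variables in \eqref{eq:pseudo-discrete} gives
\[
\|\mathcal{C}u^{(\eta)}\|_{L^{4}_{t,x}((0,+\infty)\times\R^{2})}=\|u^{(\eta)}\|_{L^{4}_{t,x}((-\infty,0)\times\R^{2})}.
\]
Since Theorem \ref{thm:instability} asserts that $u^{(\eta)}$ scatters both forward and backward in time for $\eta>0$, the right-hand side is finite, and the equivariant small-charge Coulomb-gauge scattering theory for \eqref{eq:CSS} (cf.\ Liu--Smith) promotes a finite $L^{4}_{t,x}$ tail into forward scattering of $\mathcal{C}u^{(\eta)}$.

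The main obstacle lies in the first bullet, specifically the quantitative identification of the scattering profile of $\mathcal{C}z$: hypothesis \eqref{eq:H} furnishes the regularity of $z^{\ast}$ needed to make the free-evolution approximation accurate uniformly in $\tau$, while the $m\geq1$ decay in \eqref{eq:BW-sol-temp-1} is exactly what is needed to close the perturbative argument around the $Q$-background. The second bullet is by contrast essentially automatic once the conformal invariance of $L^{4}_{t,x}$ is identified.
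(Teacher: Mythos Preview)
Your treatment of the second bullet is exactly the paper's: the identity
$\|\mathcal{C}u^{(\eta)}\|_{L^{4}_{t,x}((0,\infty)\times\R^{2})}=\|u^{(\eta)}\|_{L^{4}_{t,x}((-\infty,0)\times\R^{2})}$
together with the scattering criterion of Proposition~\ref{prop:L2-Cauchy} is all that is used.

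For the first bullet your decomposition $\mathcal{C}u^{(0)}-Q=\mathcal{C}z+\mathcal{C}\mathrm{Err}$ is also the paper's, but the two treatments diverge after that. The paper handles $\mathcal{C}z$ by noting that $\tilde z=e^{-i(2m+2)\theta}z$ is a small global solution to the $-(m+2)$-equivariant \eqref{eq:CSS} with finite $L^{4}_{t,x}$ norm, hence $\mathcal{C}\tilde z$ scatters under $i\partial_{t}+\Delta_{-m-2}$; Remark~\ref{rem:A-scattering-solution} then transfers this to scattering under $i\partial_{t}+\Delta_{m}$. Your Fraunhofer route---using $z(-1/\tau)\to z^{\ast}$ in $L^{2}$ and matching $\tau^{-1}e^{i|x|^{2}/4\tau}z^{\ast}(x/\tau)$ with a free evolution---is a legitimate and arguably more direct alternative that bypasses Remark~\ref{rem:A-scattering-solution} entirely.

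There is, however, a genuine error in your handling of $\mathcal{C}\mathrm{Err}$. The pseudoconformal map does \emph{not} preserve $\dot H^{1}$: a direct computation shows
\[
\|\nabla(\mathcal{C}f)(\tau)\|_{L^{2}}=\tau^{-1}\big\|\big(\nabla+i\tfrac{\tau y}{2}\big)f(-1/\tau,y)\big\|_{L^{2}_{y}},
\]
so controlling $\|\mathcal{C}\mathrm{Err}(\tau)\|_{\dot H^{1}}$ requires a weighted bound $\|y\,\mathrm{Err}\|_{L^{2}}$ which is nowhere established. Your claimed rate $\|\mathcal{C}\mathrm{Err}(\tau)\|_{\dot H^{1}}\lesssim\tau^{-m}$ is therefore unjustified. (Separately, once you absorb the phase $e^{i\gamma_{\mathrm{cor}}}$ into $\mathrm{Err}$ the $L^{2}$ rate degrades from $|t|^{m+1}$ to $|t|$, since $|\gamma_{\mathrm{cor}}(t)|\lesssim(\alpha^{\ast})^{2}|t|$; this is why the paper states only $\|\mathcal{C}u^{(0)}-Q-\mathcal{C}z\|_{L^{2}}\lesssim\alpha^{\ast}\tau^{-1}$.) Fortunately none of this matters: scattering here is an $L^{2}$ statement, the $L^{2}$ invariance of $\mathcal{C}$ \emph{is} correct, and once $\|\mathcal{C}\mathrm{Err}(\tau)\|_{L^{2}}\to0$ the conclusion follows from the triangle inequality---no ``Strichartz-based perturbative argument around the $Q$-background'' is needed. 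Drop the $\dot H^{1}$ claim and the perturbative step, and your argument goes through.
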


What can be seen at a glance is that $\mathcal{C}u^{(0)}-Q$ scatters
to a linear solution of $i\partial_{t}+\Delta_{-m-2}$, but $\mathcal{C}u^{(\eta)}$
scatters to that of $i\partial_{t}+\Delta_{m}$. However, the sense
of scattering in both cases turn out to be equivalent. See Remark
\ref{rem:A-scattering-solution}.

To our best knowledge, Theorem \ref{thm:BW-sol} provides the first
example of finite-time blow-up solutions other than $S(t)$ in the
context of \eqref{eq:CSS}. As mentioned earlier, this work is inspired
by the seminal work of Bourgain-Wang \cite{BourgainWang1997} in the
context of the nonlinear Schr\"odinger equation, in which they constructed
pseudoconformal blow-up solutions for given asymptotic profiles. As
\eqref{eq:CSS} is a $L^{2}$-critical Schr\"odinger-type equation,
it shares many features with $L^{2}$-critical NLS. It is worthwhile
to review the results in NLS and compare them with \eqref{eq:CSS}.

The mass-critical \emph{nonlinear Schr\"odinger equation} (NLS) on
two dimension is 
\begin{equation}
i\partial_{t}\psi+\Delta\psi+|\psi|^{2}\psi=0,\tag{NLS}\label{eq:NLS}
\end{equation}
where $\psi:I\times\R^{2}\to\C$. \eqref{eq:NLS} has no static solutions,
but has only standing waves of the form 
\[
(t,x)\mapsto\lambda e^{i\lambda^{2}t}R(\lambda x),\qquad\forall\lambda>0,
\]
where $R$ is the ground state satisfying 
\begin{equation}
\Delta R-R+R^{3}=0.\label{eq:NLS-ground-state}
\end{equation}
Applying the pseudoconformal symmetry \eqref{eq:pseudo-discrete},
we have an explicit pseudoconformal blow-up solution for \eqref{eq:NLS}
\begin{equation}
S_{\mathrm{NLS}}(t,x)\coloneqq\frac{1}{|t|}R\Big(\frac{x}{|t|}\Big)e^{\frac{i}{|t|}}e^{-i\frac{|x|^{2}}{4|t|}},\qquad\forall t<0.\label{eq:NLS-explicit-blow-up}
\end{equation}

Bourgain-Wang \cite{BourgainWang1997} constructed a family of pseudoconformal
blow-up solutions to \eqref{eq:NLS}:
\begin{thm}[Bourgain-Wang solutions \cite{BourgainWang1997}, stated informally]
\label{thm:BW-NLS}Let $\zeta^{\ast}:\R^{2}\to\C$ be a profile that
degenerates at the origin at \emph{large} order, i.e. $|\zeta^{\ast}(x)|\lesssim|x|^{A}$
for some large $A$, and lies in some weighted Sobolev space. Then,
there exists a (conditionally unique) solution $\psi_{\mathrm{BW}}$
to \eqref{eq:NLS} defined near time zero such that 
\[
\psi_{\mathrm{BW}}(t)-S_{\mathrm{NLS}}(t)\to\zeta^{\ast}
\]
as $t\to0$.
\end{thm}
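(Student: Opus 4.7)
The plan is to construct $\psi_{\mathrm{BW}}$ on a short interval $[-T_0,0)$ by decomposing it as $\psi_{\mathrm{BW}} = S_{\mathrm{NLS}} + \zeta$ and solving backward from $t=0$ with terminal data $\zeta(0,\cdot)=\zeta^\ast$. Substituting into \eqref{eq:NLS} yields the forced equation
\begin{equation*}
i\partial_t\zeta+\Delta\zeta=-\bigl(|S_{\mathrm{NLS}}+\zeta|^2(S_{\mathrm{NLS}}+\zeta)-|S_{\mathrm{NLS}}|^2S_{\mathrm{NLS}}\bigr),
\end{equation*}
whose right-hand side splits into the singular interactions $|S_{\mathrm{NLS}}|^2\zeta$ and $S_{\mathrm{NLS}}^2\overline{\zeta}$, the milder $S_{\mathrm{NLS}}|\zeta|^2$, and the cubic $|\zeta|^2\zeta$. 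The first two terms are troublesome because $S_{\mathrm{NLS}}(t)$ has $L^\infty$-size $|t|^{-1}$ concentrated on $|x|\lesssim|t|$. The entire argument pivots on the assumption $|\zeta^\ast(x)|\lesssim|x|^A$ with $A$ large: since $\zeta(t,\cdot)\approx\zeta^\ast+O(t)$, one has $|\zeta(t,x)|\lesssim|t|^A$ on the support of $S_{\mathrm{NLS}}(t)$, so the apparently singular term $|S_{\mathrm{NLS}}|^2\zeta$ has effective size $|t|^{A-2}$, which is integrable in $t$ for $A$ large enough.

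To realize this quantitatively I would set up a contraction mapping for the Duhamel formulation
\begin{equation*}
\zeta(t)=e^{it\Delta}\zeta^\ast+i\int_{0}^{t}e^{i(t-s)\Delta}\mathcal{F}(\zeta(s),S_{\mathrm{NLS}}(s))\,ds
\end{equation*}
on a function space $X_{T_0}\subset C([-T_0,0];H^k)$ that tracks both Sobolev regularity and the pointwise degeneracy, say
\begin{equation*}
X_{T_0}=\bigl\{\zeta:\|\zeta(t)\|_{H^k}\leq 2\|\zeta^\ast\|_{H^k},\ \bigl\||x|^{-A}(\zeta(t)-\zeta^\ast)\bigr\|_{H^{k'}}\leq C|t|\bigr\}.
\end{equation*}
Combining Strichartz estimates for $e^{it\Delta}$ with weighted bounds extracted from the degeneracy of $\zeta^\ast$, one shows the Duhamel map contracts on $X_{T_0}$ for $T_0$ small. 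Setting $\psi_{\mathrm{BW}}\coloneqq S_{\mathrm{NLS}}+\zeta$ on $[-T_0,0)$ and extending backward in time via the $H^1$-subcritical local theory of \eqref{eq:NLS} gives the solution. Conditional uniqueness is immediate from the same fixed-point analysis: any two solutions with identical $\zeta^\ast$ yield two fixed points in $X_{T_0}$ and so coincide on $[-T_0,0)$, then globally by backward well-posedness.

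The hardest step is the bookkeeping of the weighted norms: the linear Schr\"odinger group does \emph{not} preserve pointwise vanishing at the origin, so one cannot naively propagate the bound $|\zeta(t,x)|\lesssim|x|^A$. One needs either a commutator calculus for $[e^{it\Delta},|x|^{-A}]$ expanded in powers of $t$, or a spatial decomposition of $\zeta$ into a near-origin piece (handled pointwise, where the vanishing is strong enough to kill the singularity of $S_{\mathrm{NLS}}$) and a far-from-origin piece (handled by Strichartz, where $S_{\mathrm{NLS}}$ is negligibly small). A cleaner alternative I would attempt first is to conjugate by the pseudoconformal transform: under it, $S_{\mathrm{NLS}}$ becomes the time-periodic standing wave $e^{it}R$ on $[T_0,\infty)$, and the high-order vanishing of $\zeta^\ast$ at the origin turns into spatial decay of an asymptotic state $\widetilde{\zeta^\ast}$ at infinity. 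The problem then reduces to a standard asymptotic-completeness construction: build a solution scattering to $e^{it}R+e^{it\Delta}\widetilde{\zeta^\ast}$ as $t\to+\infty$, using that the free radiation disperses away from the localized profile $R$ so that the interaction $|R|^2v$ becomes short-range and a Duhamel iteration backward from $+\infty$ closes on weighted Strichartz spaces.
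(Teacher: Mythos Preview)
The paper does not prove Theorem~\ref{thm:BW-NLS}: it is quoted as background from \cite{BourgainWang1997} and stated informally, with no proof given here. So there is no ``paper's own proof'' to compare your sketch against.

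That said, your outline is a faithful summary of the original Bourgain--Wang strategy, especially the pseudoconformal alternative you mention at the end: conjugating so that $S_{\mathrm{NLS}}$ becomes the standing wave and the vanishing of $\zeta^\ast$ at the origin becomes decay at infinity is exactly how \cite{BourgainWang1997} proceeds. Your identification of the main difficulty --- that $e^{it\Delta}$ does not preserve pointwise vanishing --- and of the decoupling mechanism via $|\zeta|\lesssim|t|^A$ on the support of $S_{\mathrm{NLS}}$ are both correct.

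It is worth noting, though, that the present paper's own construction for \eqref{eq:CSS} (Theorem~\ref{thm:BW-sol}) does \emph{not} follow this route. As explained in Section~\ref{subsec:strategy} and in the discussion after Theorem~\ref{thm:BW-instability-NLS}, the authors use the modulation analysis of Merle--Rapha\"el--Szeftel \cite{MerleRaphaelSzeftel2013AJM}: they build a one-parameter family $u^{(\eta)}$ of global solutions, prove uniform bounds via a Lyapunov/virial functional rather than a contraction mapping, and extract the blow-up solution $u^{(0)}$ by compactness as $\eta\to0$. The pseudoconformal phase $e^{-ib|y|^2/4}$ is used only as an ansatz on the profile, not as a global change of variables. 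Your direct fixed-point approach would not easily accommodate the nonlocal long-range interactions in \eqref{eq:CSS} (the $A_\theta$, $A_0$ terms) that force the corrections discussed in Section~\ref{subsec:Evolution-of-z}; this is one reason the paper adopts the more robust modulation framework.
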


Our Theorem \ref{thm:BW-sol} is analogous to Theorem \ref{thm:BW-NLS}.

Pseudoconformal blow-up solutions are believed to be non-generic.
In the context of \eqref{eq:NLS}, there are two supplementary works
in this direction. Krieger-Schlag \cite{KriegerSchlag2009JEMS} showed
that for the 1D critical NLS, there is a \emph{codimension 1} manifold
of initial data in the measurable category, yielding a pseudoconformal-type
blow-up. Merle-Rapha\"el-Szeftel \cite{MerleRaphaelSzeftel2013AJM}
exhibited instability of Bourgain-Wang solutions \cite{BourgainWang1997}
as follows.
\begin{thm}[Instability of Bourgain-Wang solutions \cite{MerleRaphaelSzeftel2013AJM},
stated informally]
\label{thm:BW-instability-NLS}Let $\zeta^{\ast}$ be as in Theorem
\ref{thm:BW-NLS} and small in some weighted Sobolev space; let $\psi_{\mathrm{BW}}$
be the associated Bourgain-Wang solution. Then, there exists a continuous
family of solutions $\psi_{\eta}$ to \eqref{eq:NLS} for $\eta\in[-\delta,\delta]$
with $0<\delta\ll1$ such that
\begin{itemize}
\item $\psi_{0}=\psi_{\mathrm{BW}}$ is the Bourgain-Wang solution,
\item For $\eta>0$, $\psi_{\eta}$ is global in time and scatters both
forward and backward in time.
\item For $\eta<0$, $\psi_{\eta}$ scatters backward and blows up forward
in finite time under the log-log law \eqref{eq:log-log}.
\end{itemize}
\end{thm}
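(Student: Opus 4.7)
My plan is to construct the family $\{\psi_\eta\}$ by prescribing initial data at a small negative time $t_0<0$ and perturbing the Bourgain--Wang profile along a single ``dangerous'' spectral direction of the ground state $R$. Concretely, I would set
\[
\psi_\eta(t_0,x) = \psi_{\mathrm{BW}}(t_0,x) + \eta\,\frac{1}{|t_0|}\,\Phi\!\Big(\frac{x}{|t_0|}\Big),
\]
where $\Phi$ is a localized profile aligned with the unstable generalized eigenmode $\rho$ of the matrix linearization $\mathcal{L}$ of \eqref{eq:NLS} at $R$, i.e.\ the direction governing the Merle--Rapha\"el blow-up/dispersion dichotomy. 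The sign of $\eta$ then selects the branch of that dichotomy, while $\eta=0$ recovers $\psi_{\mathrm{BW}}$.

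For the forward evolution from $t_0$ up to the singular time, I would decompose the solution as
\[
\psi_\eta(t,x) = \frac{e^{i\gamma(t)}}{\lambda(t)}\,(R+\varepsilon)\!\Big(t,\tfrac{x-x(t)}{\lambda(t)}\Big) + \tilde\zeta(t,x),
\]
with $\tilde\zeta$ the smooth Bourgain--Wang ``outer'' radiation (essentially the $H^1$ evolution of $\zeta^\ast$) and $\varepsilon$ a small soliton-tube remainder subject to orthogonality against the generalized null space of $\mathcal{L}$. Inserting into \eqref{eq:NLS} and projecting produces modulation ODEs for $(\lambda,\gamma,x)$ coupled to a dispersive equation for $\varepsilon$. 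For $\eta>0$ the Merle--Rapha\"el ejection mechanism forces the $\rho$-component of $\varepsilon$ to grow, $\lambda(t)$ to saturate away from $0$, and the solution to exit the soliton tube at some time $t_\ast\in(t_0,0)$; past $t_\ast$ one appeals to the $L^2$-critical scattering theory near the ground-state mass threshold to conclude global existence and scattering in both time directions. For $\eta<0$ the same spectral analysis instead funnels the solution into the stable log-log regime of Merle--Rapha\"el, giving a finite blow-up time $T_\eta\in(t_0,0)$ with the announced rate $\lambda(t)\sim\sqrt{(T_\eta-t)/\log|\log(T_\eta-t)|}$.

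The backward half on $(-\infty,t_0]$ is comparatively soft: $\psi_{\mathrm{BW}}$ is already defined and scatters backward there (its pseudoconformal transform is a Schwartz-regular perturbation of $R$ that disperses in renormalized time), and the added perturbation has controlled $L^2$-mass of order $|\eta|$, so long-time Strichartz stability plus continuous dependence yields $\psi_\eta\in C_tL^2_x\cap L^4_{t,x}((-\infty,t_0])$ and backward scattering for every $\eta\in[-\delta,\delta]$. Continuity of $\eta\mapsto\psi_\eta$ in $C_{\mathrm{loc}}H^{1-}$ on $(-\infty,0)$ is then a standard consequence of local well-posedness of \eqref{eq:NLS} and continuous dependence on data.

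The principal obstacle is the sharpness of the dichotomy and its uniform behavior as $\eta\to 0^\pm$. The point $\eta=0$ sits exactly on the codimension-one threshold between dispersion and log-log blow up, and producing a genuinely \emph{continuous} one-parameter family demands a quantitative bi-Lipschitz bifurcation of one-sided perturbations. Three ingredients must mesh: (i) a one-dimensional spectral normal form identifying $\Phi$ as the unique bifurcation direction after modding out phase, scaling, and translation; (ii) bootstrap estimates that track the growth of the $\rho$-component of $\varepsilon$ driven by the Jordan-block structure of $\mathcal{L}$ and show it reaches order one well before $t=0$ whenever $\eta\neq 0$; and (iii) a matching of the Bourgain--Wang radiation $\zeta^\ast$ across the transition region $|t|\ll 1$ that does not destroy the smallness required in (i) and (ii).
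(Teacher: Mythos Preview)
This theorem is not proved in the present paper; it is quoted from \cite{MerleRaphaelSzeftel2013AJM} for context. The only indication the paper gives of the argument is the sentence following the statement: one perturbs the profile to $R-\eta\rho_{\mathrm{NLS}}$ and derives $\eta$-dependent modulation laws for the parameters. So there is no proof here to compare against beyond that summary, and your task is really to reconstruct the argument of \cite{MerleRaphaelSzeftel2013AJM}.

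Against that summary, your proposal diverges in a way that matters. You perturb the \emph{initial data} at a fixed time $t_0<0$ by adding $\eta\,|t_0|^{-1}\Phi(\cdot/|t_0|)$ and then wait for an ``ejection mechanism'' to push $\psi_\eta$ out of the soliton tube. But $\rho_{\mathrm{NLS}}$ is not an exponentially unstable eigenmode of $\mathcal{L}_{\mathrm{NLS}}$: it sits in the generalized null space (a Jordan chain, cf.\ Remark~\ref{rem:remark3.5}), so there is no exponential ejection to appeal to. The actual mechanism in \cite{MerleRaphaelSzeftel2013AJM} is different: one builds the \emph{modified profile} $R-\eta\rho_{\mathrm{NLS}}$ into the modulation ansatz itself (together with the pseudoconformal phase $e^{-ib|y|^2/4}$), and this shifts the law for the parameter $b$ from $b_s+b^2\approx 0$ to $b_s+b^2\approx -\eta$. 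The sign of $\eta$ then selects the regime directly at the level of this ODE---scattering for $\eta>0$, log-log blow-up for $\eta<0$---without any exit-time argument. Your decomposition, which leaves the profile as $R$ and puts the $\eta$-perturbation entirely into $\varepsilon$, does not produce this clean shift in the $b$-law and will instead see the effect only through slowly (algebraically) growing inner products, making the bootstrap much harder to close uniformly as $\eta\to 0$.

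A second gap: for $\eta>0$ you plan to invoke ``$L^2$-critical scattering theory near the ground-state mass threshold'' once the solution leaves the tube. But $\psi_\eta$ has mass strictly above $M[R]$ (it contains both a full soliton and the radiation $\zeta^\ast$), so subthreshold scattering does not apply. In \cite{MerleRaphaelSzeftel2013AJM} scattering for $\eta>0$ is obtained by controlling the modulation parameters all the way to a time where $\psi_\eta$ is $H^1$-close to an explicit global scattering solution (essentially a pseudoconformal transform of a rescaled $R$), and then using stability of scattering---not a mass-threshold black box.
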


They perturbed $R$ to the instability direction $\rho_{\mathrm{NLS}}$
(given in Remark \ref{rem:remark3.5}), say $R-\eta\rho_{\mathrm{NLS}}$,
and derived dynamical laws of modulation parameters depending on the
sign of $\eta$.

Although it is not present in Theorem \ref{thm:BW-instability-NLS},
it is worthnoting that they considerably relaxed the degeneracy asssumption
on $\zeta^{\ast}$ at the origin. They also improved the notion of
uniqueness of $\psi_{\mathrm{BW}}$ that is necessary for Theorem
\ref{thm:BW-instability-NLS}. Unlike \cite{BourgainWang1997}, they
rely on a robust modulation analysis and avoid the explicit use of
the pseudoconformal symmetry. However, the pseudoconformal symmetry
is implicitly used in the sense that they made approximate profiles
by conjugating the phase $e^{-ib\frac{|y|^{2}}{4}}$. Our work also
uses modulation analysis as in \cite{MerleRaphaelSzeftel2013AJM}.

Theorem \ref{thm:instability} is analogous to Theorem \ref{thm:BW-instability-NLS}.
However, the instability mechanism of \eqref{eq:CSS} turns out to
be quite different from that of \eqref{eq:NLS}; the instability of
\eqref{eq:CSS} stems from the phase rotation. This is due to the
difference of the spectral properties for each problem.

\ 

\emph{Comparison between \eqref{eq:CSS} and \eqref{eq:NLS}.}

\ 

We view \eqref{eq:CSS} as a gauged \eqref{eq:NLS}. Thus it is instructive
to compare fundamental facts and results with \eqref{eq:NLS}.

1. \emph{Symmetries and conservation laws}. All the symmetries of
\eqref{eq:CSS} (and also \eqref{eq:CSS-cov}) are valid for \eqref{eq:NLS},
including the scaling, Galilean, and pseudoconformal symmetries. Conservation
laws of \eqref{eq:CSS} are valid for \eqref{eq:NLS} as well, after
replacing the covariant derivatives by the usual derivatives. Virial
identities also hold.

2. \emph{Linearized operators around $Q$ or $R$}. As we have seen
above, \eqref{eq:CSS} admits a static solution $Q$, but \eqref{eq:NLS}
only admits a standing wave solution $e^{it}R(x)$, which is not static.
The mass term $-R$ is present in \eqref{eq:NLS-ground-state}, but
not in the static equation \eqref{eq:static-eqn}. From this, $R$
decays exponentially at spatial infinity, but $Q$ only decays as
$r^{-(m+2)}$.

The linearized dynamics of \eqref{eq:NLS} near $e^{it}R$ and that
of \eqref{eq:CSS} near $Q$ differ drastically. Denote by $\mathcal{L}_{\mathrm{NLS}}$
the linearized operator for \eqref{eq:NLS}
\begin{equation}
\mathcal{L}_{\mathrm{NLS}}f=-\Delta f+f-2R^{2}f-R^{2}\overline{f}=L_{+}\Re(f)+iL_{-}\Im(f).\label{eq:linearized-op-NLS}
\end{equation}
$\mathcal{L}_{\mathrm{NLS}}$ has continuous spectrum $[1,\infty)$
and two eigenvalues, $0$ and a negative value. On the other hand,
the linearized operator $\mathcal{L}_{Q}$ (see \eqref{eq:def-linearized-op}-\eqref{eq:L_Q-self-dual})
is non-negative definite with $\Lambda Q,iQ\in\ker\mathcal{L}_{Q}$,
where $\Lambda$ is the generator of $L^{2}$-scaling \eqref{eq:def-Lambda}.
More importantly, \eqref{eq:CSS} has the \emph{self-dual} structure.
By \eqref{eq:L_Q-self-dual}, this appears at the linearized level
as 
\[
\mathcal{L}_{Q}=L_{Q}^{\ast}L_{Q},
\]
where $L_{Q}$ is a first-order nonlocal differential operator defined
by \eqref{eq:def-L_w}. The self-duality plays a crucial role in several
places. It helps us to analyze the spectral property and coercivity
of $\mathcal{L}_{Q}$ under generic orthogonality conditions. It is
also used in the construction of the modified profiles $Q^{(\eta)}$
in Section \ref{sec:Profile}. The factorization of the linearized
operator was crucially used in blow-up problems of other evolution
equations; see for instance \cite{RodnianskiSterbenz2010Ann.Math.,RaphaelRodnianski2012Publ.Math.}
for wave maps and \cite{MerleRaphaelRodnianski2013InventMath} for
Schr\"odinger maps, and \cite{RaphaelSchweyer2013CPAM,RaphaelSchweyer2014AnalPDE}
for the energy-critical heat flow.

Let us restrict $\mathcal{L}_{\mathrm{NLS}}$ on radial functions.
It is known from \cite{Weinstein1985SIAM} that 
\[
\mathcal{L}_{\mathrm{NLS}}\Lambda R=-2R.
\]
On the other hand, the linearized operator $\mathcal{L}_{Q}$ (see
\eqref{eq:def-linearized-op} and \eqref{eq:L_Q-self-dual}) satisfies
\[
\mathcal{L}_{Q}\Lambda Q=0.
\]
This difference is intimately tied to the fact that $e^{it}R$ is
not a static solution to \eqref{eq:NLS}, but $Q$ is a static solution
to \eqref{eq:CSS}. As a consequence, the generalized null spaces
of $i\mathcal{L}_{\mathrm{NLS}}$ and $i\mathcal{L}_{Q}$ appear to
be rather different. Compare Proposition \ref{prop:gen.null.space}
with Remark \ref{rem:remark3.5}.

3. \emph{Dynamics below the threshold mass}. $R$ and $Q$ have \emph{threshold
mass}\footnote{In fact for \eqref{eq:CSS}, $M[u]$ is called charge. In this discussion,
we also call $M[u]$ mass.} in the sense that nontrivial dynamical behaviors arise from that
level of mass. Any solutions to \eqref{eq:NLS} having mass less than
that of $R$ are global-in-time \cite{Weinstein1983CMP} and scatter
\cite{Killip-Tao-Visan2009JEMS,Dodson2015AdvMath}. In the context
of \eqref{eq:CSS} (under equivariance), Liu-Smith \cite{LiuSmith2016}
proved the analogous result, where $Q$ plays the role of $R$. At
the level of the threshold mass, we have pseudoconformal blow-up solutions
$S(t)$ and $S_{\mathrm{NLS}}(t)$. There are partial results on the
threshold dynamics of \eqref{eq:NLS}; see for instance \cite{Merle1993Duke,KillipLiVisanZhang2009,LiZhang2012_d_geq_2}.
There is no related results for \eqref{eq:CSS}.

4. \emph{Stable blow-up law}. Finite-time blow-up solutions (with
negative energy) have been studied extensively in the context of \eqref{eq:NLS}.
The energy of \eqref{eq:NLS} can now attain negative values. In a
seminal series of papers by Merle-Rapha\"el \cite{MerleRaphael2005AnnMath,MerleRaphael2003GAFA,Raphael2005MathAnn,MerleRaphael2004InventMath,MerleRaphael2006JAMS,MerleRaphael2005CMP},
they obtained a sharp description of blow-up dynamics for $H^{1}$
solutions to \eqref{eq:NLS} having slightly super-critical mass and
negative energy; there exists a universal constant $c_{\ast}$ such
that $\psi(t)$ blows up in the \emph{log-log law }(almost self-similar
with the log-log correction)
\begin{equation}
\|\nabla\psi(t)\|_{L^{2}}\approx c_{\ast}\Big(\frac{\log|\log(T-t)|}{T-t}\Big)^{\frac{1}{2}}\label{eq:log-log}
\end{equation}
as $t$ approaches to the blow-up time $T$. In fact, there is a larger
open set of data yielding the log-log law. (See also a former work
by Perelman \cite{Perelman2001Ann.Henri.}) It is also known from
their results that any finite-time blow-up solutions with slightly
supercritical mass must have blow-up rate either the log-log law,
or faster than or equal to the pseudoconformal rate. For \eqref{eq:CSS-cov}
with $g>1$, when energy is negative, it is expected that a similar
log-log law holds; see Berg\'e-de Bouard-Saut \cite{BergeDeBouardSaut1995PRL}.

Note that the blow-up solution $S_{\mathrm{NLS}}$ does not follow
the log-log law. It has threshold mass and is apparently instable
with respect to shrinking its mass slightly. The solution $S$ to
\eqref{eq:CSS} also has threshold mass and is instable by the same
reason.

\ 

\emph{Comments on Theorems \ref{thm:BW-sol}-\ref{thm:instability}.}

\ 

1. \emph{Assumption \eqref{eq:H}}. Among smooth functions, (e.g.
in the space of $m$-equivariant Schwartz functions) assumption \eqref{eq:H}
is equivalent to saying that $z^{\ast}$ is small with $(\partial_{r})^{m}z^{\ast}(0)=0$.
Indeed, a smooth $m$-equivariant function $z^{\ast}$ satisfies $(\partial_{r})^{\ell}z^{\ast}(0)=0$
for either $0\leq\ell<m$ or $\ell-m$ is odd. Thus the condition
$(\partial_{r})^{m}z^{\ast}(0)=0$ can be considered as a codimension
1 condition in the space of asymptotic profiles.

Assumption \eqref{eq:H} requires $z^{\ast}$ to be degenerate at
the origin. Indeed, any smooth $m$-equivariant function $f$ exhibits
degeneracy $|f(x)|\lesssim r^{|m|}$ (see Section \ref{subsec:equiv-Sobolev-sp})
at the origin, so a $-(m+2)$-equivariant function with $m\geq1$
shows the degeneracy at least $r^{3}$. Such a degeneracy is necessary
to decouple the marginal interaction between $S$ and $z^{\ast}$
because $S(t)$ concentrates at the origin as $t\to0$. In \eqref{eq:NLS},
Bourgain-Wang \cite{BourgainWang1997} and Merle-Rapha\"el-Szeftel
\cite{MerleRaphaelSzeftel2013AJM} used the flatness of the asymptotic
profile $\zeta^{\ast}$ to construct pseudoconformal blow-up solutions.

The choice $-(m+2)$ arises naturally from the \emph{long-range interaction}
in \eqref{eq:CSS}. As opposed to the \eqref{eq:NLS} case, \eqref{eq:CSS}
has long range potential as can be easily seen from the Biot-Savart
law \eqref{eq:biot-savart} or the term 
\[
\Big(\frac{m+A_{\theta}[u]}{r}\Big)^{2}u
\]
of \eqref{eq:CSS-r}. Regardless how fast $u$ decays, the above potential
exhibits $r^{-2}$ tail in general. In our case, our blow-up ansatz
is $u\approx S+z$ (recall $S$ \eqref{eq:explicit-blow-up}) so there
is an interaction term 
\[
\Big(\frac{m+A_{\theta}[S]}{r}\Big)^{2}z.
\]
As $S$ is concentrated near the origin, we may approximate $A_{\theta}[S](r)$
by $A_{\theta}[S](+\infty)=-2(m+1)$. Now the choice $(m+2)$ of \eqref{eq:H}
arises from 
\[
(\partial_{rr}+\frac{1}{r}\partial_{r})-\Big(\frac{m-2(m+1)}{r}\Big)^{2}=(\partial_{rr}+\frac{1}{r}\partial_{r})-\Big(\frac{-(m+2)}{r}\Big)^{2}=\Delta_{-m-2}.
\]
This observation is crucial in our work and leads to a \emph{modification}
of $z$-evolution \eqref{eq:fCSS}. This will be discussed in more
detail in Section \ref{subsec:Evolution-of-z}.

2. \emph{Assumption $m\geq1$}. There are many places that the proof
does not work if $m=0$. We list a few here.

Note that the solution $S(t)$ belongs to $H^{1}$ if and only if
$m\geq1$. The assumption $m\geq1$ is best possible to construct
solutions satisfying \eqref{eq:BW-sol} without truncating the blow-up
profile. In this work, we will heavily rely on the explicit computations
coming from the pseudoconformal phase, so we hope to avoid any truncations.

There are extra simplifications coming from $m\geq1$. The equivariant
Sobolev space $\dot{H}_{m}^{1}$ (see Section \ref{subsec:equiv-Sobolev-sp})
has a nice embedding property $\|\frac{f}{r}\|_{L^{2}}+\|f\|_{L^{\infty}}\lesssim\|f\|_{\dot{H}_{m}^{1}}$,
which fails if $m=0$. Moreover, the first order operator $L_{Q}$
(given in \eqref{eq:def-L_w}) shows strong coercivity $\|L_{Q}f\|_{L^{2}}\gtrsim\|f\|_{\dot{H}_{m}^{1}}$
under suitable orthogonality conditions on $f$ for $m\geq1$. If
$m=0$, the coercivity becomes weaker; see Lemma \ref{lem:coercivity}.

Assumption $m\geq1$ reduces the bulk of the paper and make our arguments
easier to follow, both conceptually and technically. Of course, the
case $m=0$ remains as an interesting open problem.

3. \emph{Interaction of $S(t)$ and $z^{\ast}$}. A sharp contrast
to \eqref{eq:NLS} is that \eqref{eq:CSS} has the long-range interaction
even though $S(t)$ and $z^{\ast}$ live in completely different scales.
In the setup of our blow-up ansatz, we have to detect and take the
strong interaction into account. This leads to a modification of the
$z$-evolution and a correction of the phase parameter $\gamma(t)$.
This is one of the novelties of this work. Even in NLS class, if the
nonlinearity is replaced by a nonlocal nonlinearity, there could be
a long range interaction. In \cite{KriegerLenzmannRaphael2009AHP},
the authors observed a long range interaction and constructed Bourgain-Wang
type solutions for the 4D mass-critical Hartree equation.

However, it turns out that the interaction is not strong enough to
affect the blow-up rate. In general, the regularity and decay of the
asymptotic profile is related to the blow-up rate. It is an interesting
question whether one can generate exotic blow-up rates by prescribing
other types of $z^{\ast}$. There are many works on constructing exotic
blow-up solutions, for example, \cite{KriegerSchlagTataru2008Invent,KriegerSchlagTataru2009Duke,KriegerSchlag2014JMPA,DonningerHuangKriegerSchlag2014MichMathJ,MartelMerleRaphael2015AnnSci,Jendrej2017JFA}.
There are also works exhibiting strong interactions between bumps
in different scales, for instance, construction of multi-bubbles;
see \cite{MartelRaphael2018AnnSci,Jendrej2017AnalPDE,Jendrej2018AnnSc,JendrejLawrie2018Invent,Jendrej2019AJM}.

4. \emph{On uniqueness}. The time-decay condition \eqref{eq:BW-sol-temp}
of Theorem \ref{thm:cond-uniq} is tight with \eqref{eq:BW-sol-temp-1}
of Theorem \ref{thm:BW-sol} when $m=1$. \eqref{eq:BW-sol-temp}
is best from our method. When $m$ is large, another argument presented
in \cite{MerleRaphaelSzeftel2013AJM} (see also \cite{RaphaelSzeftel2011JAMS})
works similarly for \eqref{eq:CSS}. However, when $m$ is $1$ or
$2$, the argument does not work due to a slow decay of $Q$. We instead
redo the construction proof on the difference $\epsilon=\epsilon_{1}-\epsilon_{2}$
with a careful choice of modulation parameters.

5. \emph{Rotational Instability}. The source of the instability comes
from the phase rotation. This is a sharp contrast to that in \eqref{eq:NLS}.
Mathematically, the difference comes from that of spectral properties
of the linearized operators. A striking feature is that $u^{(0)}=u$
does not rotate at all, but $u^{(\eta)}$, $0<\eta\ll1$, shows an
abrupt spatial rotation in a very short time interval $|t|\lesssim\eta$,
by the angle 
\[
\Big(\frac{m+1}{m}\Big)\pi.
\]
In the energy-critical Schr\"odinger map (1-equivariant), it is believed
that there is a rotational instability of blow-up solutions. In \cite{MerleRaphaelRodnianski2013InventMath},
authors perform a slow-adiabatic ansatz both in scaling and rotation
parameters to construct smooth blow-up solutions in the codimension
1 sense. Their proof sheds light on the rotational instability of
the blow-up solutions. However, a sharp description on instability
is not fully understood.

Note that we exhibited an one-sided instability ($\eta\geq0$) of
pseudoconformal blow-up solutions. The sign condition $\eta\geq0$
is used crucially in two places of our proof: the construction of
$Q^{(\eta)}$ and Lyapunov functional method.

\subsection{\label{subsec:strategy}Strategy of the proof}

Our main theorems are to show the existence, uniqueness, and instability
of pseudoconformal blow-up solutions. We will use modulation analysis
as in \cite{MerleRaphaelSzeftel2013AJM}. Unlike \cite{BourgainWang1997},
we do not explicitly use the pseudoconformal transform, but instead
use the pseudoconformal phase $e^{-ib\frac{|y|^{2}}{4}}$.

With a prescribed asymptotic profile $z^{\ast}$, we set $t=0$ as
the blow-up time and construct $u$ on $[t_{0}^{\ast},0)$. This is
often called a \emph{backward construction}. We will show the instability
and existence at the same time by constructing a family of solutions
$\{u^{(\eta)}\}_{\eta\in(0,\eta^{\ast}]}$ as in Theorem \ref{thm:instability}.
Then, the blow-up solution $u^{(0)}$ is obtained by a limiting argument
as $\eta\to0^{+}$. The limiting argument requires two steps: obtaining
uniform estimates on $\eta$ and performing a soft compactness argument.
This kind of argument is originally due to Merle \cite{Merle1990CMP}
in the context of \eqref{eq:NLS}. Of course, there should be a separate
argument for the uniqueness of the blow-up solution to conclude the
instability (to guarantee continuity of the map $\eta\in[0,\eta^{\ast}]\mapsto u^{(\eta)}$).
From now on, we focus on the construction of $u^{(\eta)}$ (Theorem
\ref{thm:instability}).

1. \emph{Setup for the modulation analysis}. For each fixed $\eta\in(0,\eta^{\ast}]$,
we write our solution $u^{(\eta)}(t,x)$ as 
\[
u^{(\eta)}(t,x)=\frac{e^{i\gamma(t)}}{\lambda(t)}(Q_{b(t)}^{(\eta)}+\epsilon)(t,\frac{x}{\lambda(t)})+z(t,x)
\]
with the initial data 
\[
u^{(\eta)}(0,x)=\frac{1}{\eta}Q^{(\eta)}\Big(\frac{x}{\eta}\Big)+z^{\ast}(x).
\]
Here, $Q^{(\eta)}$ is some modified profile of $Q$ to be explained
soon, $f_{b}(y)\coloneqq f(y)e^{-ib\frac{|y|^{2}}{4}}$ for a function
$f$, and $z(t,x)$ is some function with $z(0,x)=z^{\ast}(x)$. We
want to construct $u^{(\eta)}$ such that 
\begin{equation}
\lambda(t)\approx\langle t\rangle=(t^{2}+\eta^{2})^{\frac{1}{2}},\quad b(t)\approx|t|,\quad\text{and}\quad\gamma(t)\approx(m+1)\tan^{-1}\Big(\frac{t}{\eta}\Big).\label{eq:strategy-choice}
\end{equation}
The motivation for the above choice of modulation parameters will
be explained later. Note that the spatial angle $(\frac{m+1}{m})\pi=\frac{1}{m}(\gamma(0+)-\gamma(0-))$
comes from the identity $e^{i\gamma}e^{im\theta}=e^{im(\theta+\frac{\gamma}{m})}$.
If we take the limit $\eta\to0^{+}$ for each fixed $t<0$, we get
the pseudoconformal regime.\footnote{In fact, since $\tan^{-1}(\frac{t}{\eta})\to-\frac{\pi}{2}$ as $\eta\to0^{+}$,
we get a phase-rotated pseudoconformal blow-up solution by the angle
$-(\frac{m+1}{2})\pi$.}

We will use the rescaled variables $(s,y)$ such that 
\[
\frac{ds}{dt}=\frac{1}{\lambda^{2}}\quad\text{and}\quad y=\frac{x}{\lambda}.
\]
Note that $Q_{b}^{(\eta)}$ and $\epsilon$ are functions of $(s,y)$,
but $u$ and $z$ are functions of $(t,x)$. We often switch between
the original variables $(t,x)$ and rescaled variables $(s,y)$ via
$\sharp$ and $\flat$ notations. For example, 
\[
\epsilon^{\sharp}(t,x)=\frac{e^{i\gamma(t)}}{\lambda(t)}\epsilon(s(t),\frac{x}{\lambda(t)})\quad\text{and}\quad z^{\flat}(s,y)=e^{-i\gamma(s)}\lambda(s)z(t(s),\lambda(s)y).
\]
For parameters $\lambda$, $\gamma$, and $b$, we abbreviated $\lambda(t(s))=\lambda(s)$
and so on. In Section \ref{subsec:Dynamic-Rescaling}, we provide
conversion formulae between the $(t,x)$-variables and $(s,y)$-variables.

2. \emph{Linearization of \eqref{eq:CSS}}. Since we use a decomposition
in the $(s,y)$-variables as 
\[
u^{\flat}=Q_{b}^{(\eta)}+\epsilon+z^{\flat},
\]
it is essential to investigate the linearized evolution around $w^{\flat}\coloneqq Q_{b}^{(\eta)}+z^{\flat}$.
As $z^{\flat}$ is assumed to be small, we may replace $w^{\flat}$
by $Q_{b}^{(\eta)}$. As $b$ and $\eta$ are small, $Q_{b}^{(\eta)}$
is a modified profile of $Q$. Thus we need to study the linearized
operator around $Q$, say $\mathcal{L}_{Q}$. In view of \emph{self-duality}
($g=1$), it turns out that the linearized operator $\mathcal{L}_{Q}$
is factorized as 
\[
\mathcal{L}_{Q}=L_{Q}^{\ast}L_{Q}
\]
for some nonlocal first-order differential operator $L_{Q}$. This
is first observed by Lawrie-Oh-Shahshahani \cite{LawrieOhShashahani_unpub}.
This factorization is crucial in analyzing the spectral properties
of $\mathcal{L}_{Q}$, including coercivity. Moreover, we are able
to invert $\mathcal{L}_{Q}$ under a suitable solvability condition,
even if $\mathcal{L}_{Q}$ is a \emph{nonlocal} second-order differential
operator. As a result, we are able to compute the generalized nullspace
of $i\mathcal{L}_{Q}$ with the relations 
\begin{align*}
i\mathcal{L}_{Q}\rho & =iQ; & i\mathcal{L}_{Q}ir^{2}Q & =4\Lambda Q;\\
i\mathcal{L}_{Q}iQ & =0; & i\mathcal{L}_{Q}\Lambda Q & =0,
\end{align*}
where $\Lambda$ is the $L^{2}$-scaling vector field defined in \eqref{eq:def-Lambda},
and $\rho$ is some profile. It is instructive to compare this with
\eqref{eq:NLS}, see Remark \ref{rem:remark3.5}.

Coming back to the nonlinear level, self-duality appears in the energy
functional as 
\[
E[u]=\frac{1}{2}\int|\D_{+}u|^{2}.
\]
In view of the Hamiltonian structure, we can write \eqref{eq:CSS}
in a more compact form 
\[
i\partial_{t}u=L_{u}^{\ast}\D_{+}^{(u)}u,
\]
where $L_{u}$ is the linearized operator of $\D_{+}u$ and $L_{u}^{\ast}$
is its dual. See \eqref{eq:CSS-L*D-form}. This compact form simplifies
the analysis technically. Moreover, it represents that we implicitly
use Hamiltonian structure throughout the analysis.

3. \emph{Profile $Q^{(\eta)}$}. Recall the expression of $S(t)$
\eqref{eq:explicit-blow-up}
\[
S(t,x)=\frac{1}{|t|}Q_{|t|}\Big(\frac{x}{|t|}\Big).
\]
Thus $S$ is an exact solution to \eqref{eq:CSS} with $\epsilon=z=0$
and $\eta=0$. Assuming $\epsilon=z=0$ for a moment, the equation
of $u^{\flat}$ and the ansatz $u^{\flat}=Q_{b(s)}$ suggest 
\[
\frac{\lambda_{s}}{\lambda}+b=0,\quad b_{s}+b^{2}=0,\quad\gamma_{s}=0.
\]
Note that $\lambda(t)=b(t)=|t|$ and $\gamma(t)=0$ indeed solve the
above ODEs.

One of novelties of this work is finding a rotational instability
mechanism by introducing the $\eta$-parameter to $\gamma_{s}$. Assume
$\frac{\lambda_{s}}{\lambda}+b=0$ and $Q_{b}^{(\eta)\sharp}$ solves
\eqref{eq:CSS}. Then, we have 
\[
L_{Q^{(\eta)}}^{\ast}\D_{+}^{(Q^{(\eta)})}Q^{(\eta)}+\gamma_{s}Q^{(\eta)}-(b_{s}+b^{2})\tfrac{|y|^{2}}{4}Q^{(\eta)}=0.
\]
It turns out that we can impose 
\[
\gamma_{s}=\eta\theta_{\eta}=\eta\Big(\frac{1}{2}\int|Q^{(\eta)}|^{2}rdr-(m+1)\Big)\quad\text{and}\quad b_{s}+b^{2}+\eta^{2}=0.
\]
See Section \ref{subsec:A-Hint-to} for the motivation. This leads
to
\[
L_{Q^{(\eta)}}^{\ast}\D_{+}^{(Q^{(\eta)})}Q^{(\eta)}+\eta\theta_{\eta}Q^{(\eta)}+\eta^{2}\tfrac{|y|^{2}}{4}Q^{(\eta)}=0.
\]
This is a \emph{nonlocal} second-order equation. Due to the presence
of $\eta\theta_{\eta}Q^{(\eta)}$, one expects $Q^{(\eta)}$ decays
exponentially. But $Q$ itself shows a polynomial decay, and hence
it is difficult to approximate $Q^{(\eta)}(r)$ by an $\eta$-expansion
of $Q(r)$ for $r$ large. Such an $\eta$-expansion is not successful
especially when $m$ is small. We thus have to search for a nonlinear
ansatz $Q^{(\eta)}$. Here, the self-duality plays a crucial role.
We essentially use the self-duality to reduce this to a first-order
equation what we call the \emph{modified Bogomol'nyi equation}
\[
\D_{+}^{(Q^{(\eta)})}P^{(\eta)}=0,\quad Q^{(\eta)}=e^{-\eta\frac{r^{2}}{4}}P^{(\eta)}.
\]
The modified profile $Q^{(\eta)}$ is found by solving this nonlocal
first-order equation. The choice \eqref{eq:strategy-choice} is obtained
by integrating the ODEs
\[
\frac{\lambda_{s}}{\lambda}+b=0,\quad\gamma_{s}=\eta\theta_{\eta},\quad b_{s}+b^{2}+\eta^{2}=0.
\]

4. \emph{Derivation of corrections}. As mentioned earlier, although
$Q_{b}^{(\eta)\sharp}$ and $z$ live on far different scales, there
are strong interactions between $Q_{b}^{(\eta)\sharp}$ and $z$ from
the nonlocal nonlinearities. Before performing a perturbative analysis,
we need to capture and incorporate them into the blow-up ansatz. Recall
\eqref{eq:CSS-r}. It turns out that there are two types of corrections:
one is from $Q_{b}^{(\eta)\sharp}$ to $z$ and the other is from
$z$ to $Q_{b}^{(\eta)\sharp}$. The former is absorbed by modifying
the $z$-evolution and the latter is absorbed by correcting the phase
parameter $\gamma_{s}$ by the amount $\theta_{z^{\flat}\to Q_{b}^{(\eta)}}$.
It is also interesting that the modified $z$-evolution becomes $-(m+2)$-equivariant
\eqref{eq:CSS}. This is a sharp contrast to \eqref{eq:NLS} since
it is from the nonlocal nonlinearities. See more details in Section
\ref{subsec:Evolution-of-z}.

5. \emph{Setup for bootstrapping}. Combining all the above information,
we arrive at the $\epsilon$-equation 
\begin{align*}
 & i\partial_{s}\epsilon-\mathcal{L}_{w^{\flat}}\epsilon+ib\Lambda\epsilon-\eta\theta_{\eta}\epsilon\\
 & =i\Big(\frac{\lambda_{s}}{\lambda}+b\Big)\Lambda(Q_{b}^{(\eta)}+\epsilon)+(\tilde{\gamma}_{s}-\eta\theta_{\eta})Q_{b}^{(\eta)}+(\gamma_{s}-\eta\theta_{\eta})\epsilon\\
 & \quad-(b_{s}+b^{2}+\eta^{2})\tfrac{|y|^{2}}{4}Q_{b}^{(\eta)}+\tilde R_{Q_{b}^{(\eta)},z^{\flat}}+V_{Q_{b}^{(\eta)}-Q_{b}}z^{\flat}+R_{u^{\flat}-w^{\flat}}.
\end{align*}
Here, $\tilde{\gamma}_{s}=\gamma_{s}+\theta_{z^{\flat}\to Q_{b}^{(\eta)}}$
and $\tilde R_{Q_{b}^{(\eta)},z^{\flat}}$ represents the interaction
between $Q_{b}^{(\eta)}$ and $z^{\flat}$ after deleting the aforementioned
strong interactions. The term $V_{Q_{b}^{(\eta)}-Q_{b}}z^{\flat}$
is originated from the difference of $Q^{(\eta)}$ and $Q$. Now $R_{u^{\flat}-w^{\flat}}$
collects the quadratic and higher terms in $\epsilon$. These are
all treated perturbatively. Among these, $\tilde R_{Q_{b}^{(\eta)},z^{\flat}}$
is estimated by exploiting the decoupling in scales of $Q_{b}^{(\eta)}$
and $z^{\flat}$.

So far, we have not specified the choice of $(\lambda,\gamma,b)$.
To fix them, we will require two orthogonality conditions and one
dynamical law governing $\lambda$ and $b$\@. We impose two orthogonality
conditions to guarantee the coercivity 
\[
(\epsilon,\mathcal{L}_{Q}\epsilon)_{r}\gtrsim\|\epsilon\|_{\dot{H}_{m}^{1}}^{2}.
\]
Because of the factorization $\mathcal{L}_{Q}=L_{Q}^{\ast}L_{Q}$,
a generic choice of orthogonality conditions suffices. For the last
one, we choose to impose a \emph{dynamical law} of $\lambda$ and
$b$:
\begin{equation}
2\Big(\frac{\lambda_{s}}{\lambda}+b\Big)b-(b_{s}+b^{2}+\eta^{2})=0.\label{eq:dyna-law-1}
\end{equation}
We are motivated to make this choice to cancel out $|y|^{2}Q_{b}^{(\eta)}$
terms of the $\epsilon$-equation since $|y|^{2}Q_{b}^{(\eta)}$ has
insufficient decay (uniformly in $\eta$) for small $m$. More precisely,
in the $\epsilon$-equation we have 
\begin{align*}
 & i\Big(\frac{\lambda_{s}}{\lambda}+b\Big)\Lambda Q_{b}^{(\eta)}-(b_{s}+b^{2}+\eta^{2})\tfrac{|y|^{2}}{4}Q_{b}^{(\eta)}\\
 & =i\Big(\frac{\lambda_{s}}{\lambda}+b\Big)[\Lambda Q^{(\eta)}]_{b}+\Big[2\Big(\frac{\lambda_{s}}{\lambda}+b\Big)b-(b_{s}+b^{2}+\eta^{2})\Big]\tfrac{|y|^{2}}{4}Q_{b}^{(\eta)}.
\end{align*}
Note that the first term with $[\Lambda Q^{(\eta)}]_{b}$ has a sufficient
decay.

Having fixed the dynamical laws of $\lambda,\gamma,b$, and $\epsilon$,
we setup the bootstrap procedure for $\lambda$, $b$, $\gamma$,
$\|\epsilon\|_{\dot{H}_{m}^{1}}$, and $\|\epsilon\|_{L^{2}}$. By
standard modulation estimates, we can control the variation of $\lambda$,
$b$, and $\gamma$ by $\epsilon$. We also need a separate argument
to control $\|\epsilon\|_{L^{2}}$ by $\|\epsilon\|_{\dot{H}_{m}^{1}}$.
The usual energy inequality does not suffice, we use the Strichartz
estimates to control $L^{2}$ norm of $\epsilon$. On the way, we
obtain estimates on $\|\epsilon\|_{L^{2}}$ by integrating in time.
For this purpose, we introduce a weighted time maximal function $\mathcal{T}_{\dot{H}_{m}^{1}}^{(s,\eta)}[\epsilon]$.

6. \emph{Lyapunov/virial functional method}. To close the bootstrap
argument, we need to control $\dot{H}_{m}^{1}$ norm of $\epsilon$.
From the choice of our initial data, we have $\epsilon(0,x)=0$. We
will propagate smallness from $t=0$ to $t=t_{0}^{\ast}$ via the
Lyapunov method. Such an energy method was first used by Martel \cite{Martel2005AJM}
in a backward construction. We will construct a functional $\mathcal{I}$
such that $\mathcal{I}$ controls the $\dot{H}_{m}^{1}$ norm of $\epsilon$,
and $\partial_{t}\mathcal{I}$ is almost nonnegative (as we evolve
backward in time). To exploit the coercivity, a natural candidate
for $\mathcal{I}$ would be the quadratic (and higher) parts of the
energy functional 
\[
E_{w^{\flat}}^{\mathrm{(qd)}}[\epsilon]=E[w^{\flat}+\epsilon]-E[w^{\flat}]-(\frac{\delta E}{\delta u}\Big|_{u=w^{\flat}},\epsilon)_{r}.
\]
Solely using $E_{w^{\flat}}^{\mathrm{(qd)}}[\epsilon]$, we cannot
have sufficient monotonicity. This is due to the scaling and mass
terms $ib\Lambda\epsilon-\eta\theta_{\eta}\epsilon$, i.e. $\epsilon$
essentially evolves under 
\[
i\partial_{s}\epsilon-\mathcal{L}_{w^{\flat}}\epsilon+ib\Lambda\epsilon-\eta\theta_{\eta}\epsilon\approx0.
\]
To incorporate $ib\Lambda\epsilon$, we add $b\Phi_{A}[\epsilon]$
to $E_{w^{\flat}}^{(\mathrm{qd})}[\epsilon]$, where $\Phi_{A}$ is
a deformation of $\Phi$ in terms of truncation weight function $\phi_{A}$
(see \eqref{eq:def-phiA}). $b\Phi_{A}$ is called a \emph{virial
correction,} which was first introduced by Rapha\"el-Szeftel \cite{RaphaelSzeftel2011JAMS}.
To incorporate $-\eta\theta_{\eta}\epsilon$, we further add $\frac{\eta\theta_{\eta}}{2}M[\epsilon]$.
As a result, we will consider 
\[
\mathcal{I}_{A}\coloneqq\lambda^{-2}(E_{w^{\flat}}^{\mathrm{(qd)}}[\epsilon]+b\Phi_{A}[\epsilon]+\tfrac{\eta\theta_{\eta}}{2}M[\epsilon]).
\]

With this functional $\mathcal{I}_{A}$, we are able to prove that
$\lambda^{2}\mathcal{I}_{A}-\frac{\eta\theta_{\eta}}{2}M[\epsilon]\sim\|\epsilon\|_{\dot{H}_{m}^{1}}^{2}$
from the coercivity of $\mathcal{L}_{Q}$. For $\lambda^{2}\partial_{s}\mathcal{I}_{A}$,
we roughly have 
\[
\lambda^{2}\partial_{s}\mathcal{I}_{A}\approx2b(E_{Q}^{(A),\mathrm{(qd)}}[\epsilon]+\tfrac{\eta\theta_{\eta}}{2}M[\epsilon]),
\]
where $E^{(A)}$ is a deformation of $E$ in terms of truncation weight
function $\phi_{A}$. Note that $b$ has positive sign. We have 
\begin{align*}
E_{Q}^{(A),\mathrm{(qd)}}[\epsilon] & \approx\frac{1}{2}\int\mathbf{1}_{r\leq A}|L_{Q}\epsilon|^{2}\\
 & \quad+\frac{1}{2}\int\mathbf{1}_{r>A}\phi_{A}''|\partial_{r}\epsilon|^{2}+\frac{1}{2}\int\mathbf{1}_{r>A}\frac{\phi_{A}'}{r}\Big(\frac{m+A_{\theta}[Q]}{r}\Big)^{2}|\epsilon|^{2}\\
 & \quad-\frac{1}{8}\int(\Delta^{2}\phi_{A})|\epsilon|^{2}+\mathrm{(bdry)},
\end{align*}
where $\mathrm{(bdry)}$ is a boundary term evaluated at $r=A$. The
first and second lines are apparently nonnegative. For the third line,
a crude estimate 
\[
\Big|\frac{1}{8}\int(\Delta^{2}\phi_{A})|\epsilon|^{2}\Big|\lesssim\|r^{-1}\epsilon\|_{L^{2}}^{2}
\]
using $|\Delta^{2}\phi_{A}|\lesssim\mathbf{1}_{r\geq A}\frac{A}{r^{3}}$
is on the borderline of acceptable error size. So is $(\mathrm{bdry})$.
More precisely, if $\epsilon$ is localized on the region $\{r\sim A\}$,
we would not have an improvement. One of the reasons causing this
difficulty is that the coercivity of $\lambda^{2}\mathcal{I}_{A}$
\emph{does not} control $L^{2}$ norm of $\epsilon$ (uniformly in
$\eta$). This is a difference from \eqref{eq:NLS}. To resolve this,
we average $\lambda^{2}\mathcal{I}_{A}$ as 
\[
\lambda^{2}\mathcal{I}=\frac{2}{\log A}\int_{A^{1/2}}^{A}\lambda^{2}\mathcal{I}_{A'}\frac{dA'}{A'}.
\]
Then, we can prove that $\lambda^{2}\partial_{s}\mathcal{I}$ is almost
nonnegative. With this $\mathcal{I}$, we can close our bootstrap
procedure. See Figure 1.

\begin{figure}
\label{fig:1}\includegraphics{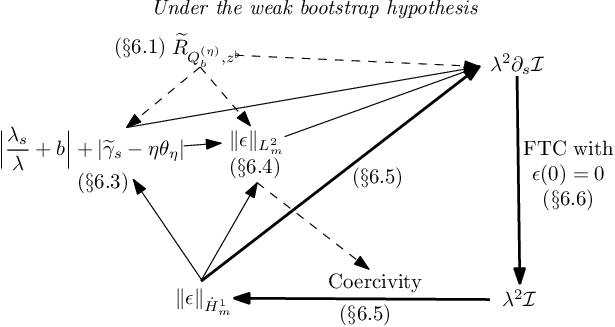}\caption{Scheme of the bootstrap}
\end{figure}

7. \emph{Conditional uniqueness}. Let $u_{1}$ and $u_{2}$ be as
in Theorem \ref{thm:cond-uniq}. We decompose each $u_{j}$ with its
modulation parameters $(b_{j},\lambda_{j},\gamma_{j})$ and $\epsilon_{j}$,
and redo the modulation analysis on $\epsilon\coloneqq\epsilon_{1}-\epsilon_{2}$.
A similar argument is used in \cite{MartelMerleRaphael2015JEMS}.
We conclude $\epsilon=0$ and $(b_{1},\lambda_{1},\gamma_{1})=(b_{2},\lambda_{2},\gamma_{2})$
again by Lyapunov/virial method. Here the crucial idea is to write
for $j\in\{1,2\}$ 
\[
u_{j}(t,x)=\frac{e^{i\gamma_{j}}}{\lambda_{j}}Q_{b_{j}}\Big(\frac{1}{\lambda_{j}}\Big)+z(t,x)+\frac{e^{i\gamma_{1}}}{\lambda_{1}}\epsilon_{j}(t,\frac{x}{\lambda_{1}}).
\]
We take the $\flat$-operation with the common parameter set $(\lambda_{1},\gamma_{1})$.
Here, the point is that $u_{1}$ and $u_{2}$ should be rescaled with
the common parameters since the hypothesis is $\|u_{1}(t)-u_{2}(t)\|_{H^{1}}\leq c|t|$.
From this, the parameter difference only appears in the difference
of $Q_{b_{1},\lambda_{1},\gamma_{2}}$ and $Q_{b_{2},\lambda_{2},\gamma_{2}}$.
This leads that estimating $\epsilon$ boils down to estimating the
parameter differences. Furthermore, we observe that \eqref{eq:dyna-law-1}
with $\eta=0$ is explicitly solvable: 
\[
\frac{b_{1}}{(\lambda_{1})^{2}}=|t|^{-1}=\frac{b_{2}}{(\lambda_{2})^{2}}.
\]
This observation improves the estimate of $Q_{b_{1},\lambda_{1},\gamma_{2}}$
and $Q_{b_{2},\lambda_{2},\gamma_{2}}$ by a logarithmic factor, which
is crucial to conclude the proof.

\subsection*{Organization of the paper}

Throughout the paper, we present analysis with the $\eta$-parameter,
$u^{(\eta)}$. The reader who is interested only in the construction
(or, anyone at first reading) may read all assuming $\eta=0$ with
a minor correction. In fact, the authors conducted this work with
$\eta=0$ first.

In Section \ref{sec:notations}, we collect notations, some properties
of equivariant Sobolev spaces, and preliminary multilinear estimates.
We also introduce time maximal functions on $[t_{0}^{\ast},0)$ and
present equivariant Cauchy theory on $L_{m}^{2}$ and $H_{m}^{s}$.
In Section \ref{sec:linearization}, we discuss the linearization
of \eqref{eq:CSS}. In Section \ref{sec:Profile}, we discuss an instability
mechanism and construct the profile $Q^{(\eta)}$. In Section \ref{sec:modulation},
we set up the modulation analysis, fix modulation parameters, and
reduce Theorems \ref{thm:BW-sol} and \ref{thm:instability} to the
main bootstrap lemma. In Section \ref{sec:Bootstrap}, we close our
bootstrap argument by employing the Lyapunov/virial functional method.
In Section \ref{sec:cond-uniq}, we prove Theorem \ref{thm:cond-uniq}.

There are two appendices. In Appendix \ref{sec:equiv-sob-sp}, we
discuss equivariant functions and Sobolev spaces for self-containedness.
In Appendix \ref{sec:Local-Theory}, we sketch the proof of equivariant
$H_{m}^{s}$-Cauchy theory (Proposition \ref{prop:Hs-Cauchy}).

\subsection*{Acknowledgement}

We appreciate A. Lawrie, S.-J. Oh, and S. Shahshahani for sharing
their manuscript \cite{LawrieOhShashahani_unpub} on the linearization
of \eqref{eq:CSS}. We are also grateful to S.-J. Oh for helpful discussions
and encouragement to this work. The authors were partially supported
by Samsung Science \& Technology Foundation BA1701-01 and National
Research Foundation of Korea NRF-2019R1A5A1028324.

\section{\label{sec:notations}Notations and preliminaries}

\subsection{Basic notations}

We mainly work with equivariant (including radial) functions on $\R^{2}$,
say $\phi:\R^{2}\to\C$. We also work with their radial part $u:(0,\infty)\to\C$.
Thus we use the integral symbol $\int$ to mean 
\begin{equation}
\int=\int_{\R^{2}}dx=2\pi\int_{0}^{\infty}rdr.\label{eq:def-integral}
\end{equation}
We also need the generator $\Lambda$ for the $L^{2}$-scaling, which
is defined by 
\begin{equation}
\Lambda f\coloneqq\frac{d}{d\lambda}\Big|_{\lambda=1}\lambda f(\lambda\cdot)=[1+r\partial_{r}]f.\label{eq:def-Lambda}
\end{equation}

The linearized operator at the static solution is only $\R$-linear.
We thus work with $L^{2}(\R^{2};\C)$ viewed as a \emph{real} Hilbert
space with the inner product 
\begin{equation}
(f,g)_{r}\coloneqq\int\Re(f\overline{g}).\label{eq:real-inner-prod}
\end{equation}
All the functional derivatives will be computed with respect to this
real inner product.

For $\eta\geq0$, we will use the $\eta$\emph{-dependent} \emph{Japanese
bracket}:
\begin{equation}
\langle t\rangle\coloneqq(t^{2}+\eta^{2})^{\frac{1}{2}}.\label{eq:def-jap-brac}
\end{equation}

In addition, we will use notations such as $L_{m}^{2}$, $\dot{H}_{m}^{1}$,
$f^{\sharp}$, $g^{\flat}$, $f_{b}$, and so on. These will be defined
on the way.

We also use the conventional notations. We say $A\lesssim B$ when
there is some implicit constant $C$ that does not depend on $A$
and $B$ satisfying $A\leq CB$. For some parameter $r$, we write
$A\lesssim_{r}B$ if the implicit constant $C$ depends on $r$. In
this paper, dependence on the equivariance index $m$ is suppressed;
we simply write $A\lesssim B$ for $A\lesssim_{m}B$.

We use (mixed) Lebesgue norms. For $1\leq p\leq\infty$, we define
$\ell^{p}$ and $L^{p}$ norms by 
\begin{align*}
\|a\|_{\ell^{p}} & \coloneqq\Big(\sum_{k\geq0}|a_{k}|^{p}\Big)^{\frac{1}{p}},\\
\|f\|_{L^{p}} & \coloneqq\Big(\int_{\R^{d}}|f(x)|^{p}dx\Big)^{\frac{1}{p}},
\end{align*}
for any sequence $a=(a_{0},a_{1},\dots)$ and function $f:\R^{d}\to\C$.
For $1\leq q\leq\infty$, an interval $I\subseteq\R$, a Banach space
$X$, and $f:I\to X$, we denote 
\begin{align*}
\|f\|_{L_{I}^{q}X} & \coloneqq\big\|\|f(t)\|_{X}\big\|_{L_{t}^{q}(I)}.
\end{align*}
In particular, we simply write $L_{I}^{q}\coloneqq L_{t}^{q}(I)$.
In case of $I=\R$, we write $L_{t}^{q}\coloneqq L_{t}^{q}(\R)$.
If $t\in I\mapsto G(t)\in X$ is continuous and bounded, we write
$G\in C_{I}X$. We also use $f\in L_{I,\mathrm{loc}}^{q}X$ or $f\in C_{I,\mathrm{loc}}X$
if the restriction of $f$ on any subintervals $J$ of $I$ belong
to $L_{J}^{q}X$ or $C_{J}X$, respectively. Finally, the mixed Lebesgue
norm $L_{I}^{q}L_{x}^{q}$ is simply written as $L_{I,x}^{q}$ in
view of Fubini's theorem.

We also need the following Sobolev norms
\[
\|f\|_{\dot{H}^{1}}\coloneqq\|\nabla f\|_{L^{2}}\quad\text{and}\quad\|f\|_{H^{1}}\coloneqq(\|f\|_{L^{2}}^{2}+\|f\|_{\dot{H}^{1}}^{2})^{\frac{1}{2}}.
\]
In some places (but not often), we need fractional Sobolev norms ($s\in\R$)
\[
\|f\|_{H^{s}}\coloneqq\|\langle D\rangle^{s}f\|_{L^{2}},
\]
where $\langle D\rangle^{s}$ is the Fourier multiplier operator with
the symbol $\langle\xi\rangle^{s}$.

\subsection{\label{subsec:equiv-Sobolev-sp}Equivariant Sobolev spaces}

Let $m\in\Z\setminus\{0\}$. In this subsection, we record some properties
of $m$-equivariant functions and associated Sobolev spaces. We confine
to $m\neq0$ for simplicity of the exposition. We also assume $m>0$
since $m$-equivariant functions with $m<0$ are merely conjugates
of $(-m)$-equivariant function. An elementary detailed exposition
is given in Appendix \ref{sec:equiv-sob-sp}.

A function $f:\R^{2}\to\C$ is said to be \emph{$m$-equivariant}
if 
\[
f(x)=g(r)e^{im\theta},\qquad\forall x\in\R^{2}\setminus\{0\}
\]
for some $g:(0,\infty)\to\C$, where we expressed $x_{1}+ix_{2}=re^{i\theta}$
with $x=(x_{1},x_{2})$.

Here is a brief discussion of smooth $m$-equivariant functions $f$
for $m\neq0$. If we require $f$ to be continuous at the origin,
we should have $f(0)=0$ because the factor $e^{im\theta}$ is not
continuous at the origin. Moreover, regularity of $f$ forces $f$
to be degenerate at the origin; in fact, Lemma \ref{lem:smooth-equiv-criterion}
says that smooth $m$-equivariant $f$ should have the expression\footnote{If $m<0$, then $f$ has expression $f(x)=h(x)\cdot(x_{1}-ix_{2})^{|m|}$.}
\[
f(x)=h(x)\cdot(x_{1}+ix_{2})^{m}
\]
for some \emph{smooth} radial function $h(x)=h(|x|)$. This explicitly
shows the degeneracy of $f$ at the origin from its regularity and
$m$.

We turn our attention to equivariant Sobolev spaces. For $s\geq0$,
we define $H_{m}^{s}$ by the set of $m$-equivariant functions lying
in the usual Sobolev space $H^{s}(\R^{2})$ with denoting $L_{m}^{2}\coloneqq H_{m}^{0}$.
We equip $H_{m}^{s}$ with the usual $H^{s}(\R^{2})$-norm. We denote
by $C_{c,m}^{\infty}(\R^{2}\setminus\{0\})$ the set of smooth $m$-equivariant
functions having compact support in $\R^{2}\setminus\{0\}$. Here
is a useful density theorem for equivariant functions (Lemma \ref{lem:density-equiv}):
the space $C_{c,m}^{\infty}(\R^{2}\setminus\{0\})$ is dense in $H_{m}^{s}$
if and only if $s\leq m+1$.

For functions in $C_{c,m}^{\infty}(\R^{2}\setminus\{0\})$, we have
\begin{align*}
\Delta f & =\partial_{rr}f+\frac{1}{r}\partial_{r}f-\frac{m^{2}}{r^{2}}f,\\
\|f\|_{\dot{H}^{1}}^{2} & =\|\nabla f\|_{L^{2}}^{2}=\|\partial_{r}f\|_{L^{2}}^{2}+m^{2}\|r^{-1}f\|_{L^{2}}^{2},\\
\|f\|_{H^{1}}^{2} & =\|f\|_{L^{2}}^{2}+\|f\|_{\dot{H}^{1}}^{2}.
\end{align*}
In particular, we have the Hardy-Sobolev inequality (Lemma \ref{lem:hardy-equiv}):
for $m\neq0$, 
\begin{equation}
\|r^{-1}f\|_{L^{2}}+\|f\|_{L^{\infty}}\lesssim\|f\|_{\dot{H}^{1}},\qquad\forall f\in C_{c,m}^{\infty}(\R^{2}\setminus\{0\}).\label{eq:hardy-Sobolev}
\end{equation}
There is also a generalization of Hardy's inequality (Lemma \ref{lem:gen-hardy-equiv}):
if $0\leq k\leq m$ is an integer, then we have 
\begin{equation}
\|r^{-k}f\|_{L^{2}}+\|r^{-(k-1)}\partial_{r}f\|_{L^{2}}+\cdots+\|\partial_{r}^{k}f\|_{L^{2}}\sim\|f\|_{\dot{H}^{k}},\qquad\forall f\in C_{c,m}^{\infty}(\R^{2}\setminus\{0\}).\label{eq:gen-hardy}
\end{equation}
Moreover, one can also have $L^{\infty}$-estimate for derivatives
of $f$; for example, we have 
\begin{equation}
\|\partial_{r}f\|_{L^{\infty}}\lesssim\|f\|_{\dot{H}^{2}},\qquad\forall m\geq2,\ f\in C_{c,m}^{\infty}(\R^{2}\setminus\{0\})\label{eq:gen-hardy-Linfty}
\end{equation}
by mimicking the proof of \eqref{eq:hardy-Sobolev}.

For an integer $0\leq k\leq m$, we define the space $\dot{H}_{m}^{k}$
by the completion of $C_{c,m}^{\infty}(\R^{2}\setminus\{0\})$ in
$L_{\mathrm{loc}}^{2}(\R^{2})$ with respect to $\dot{H}_{m}^{k}$
norm. We have the following embeddings of spaces: 
\[
C_{c,m}^{\infty}(\R^{2}\setminus\{0\})\hookrightarrow H_{m}^{k}\hookrightarrow\dot{H}_{m}^{k}\hookrightarrow L_{\mathrm{loc}}^{2}(\R^{2}).
\]
By density, it suffices to check estimates regarding $H_{m}^{k}$
or $\dot{H}_{m}^{k}$ norms for $C_{c,m}^{\infty}(\R^{2}\setminus\{0\})$
functions. In particular, \eqref{eq:hardy-Sobolev}, \eqref{eq:gen-hardy},
and \eqref{eq:gen-hardy-Linfty} hold for all functions in $\dot{H}_{m}^{1}$,
$\dot{H}_{m}^{k}$, and $\dot{H}_{m}^{2}$, respectively. In this
paper, the space $\dot{H}_{m}^{1}$ is of particular importance.

\subsection{\label{subsec:Envelopes}Time maximal functions}

In this subsection, we introduce time maximal functions with the time
variable $t$. In the proof of Theorems \ref{thm:BW-sol}-\ref{thm:instability},
we will use these for functions defined on $[t_{0}^{\ast},0)$. Several
estimates (say at time $t<0$) will be obtained by a time integral
on $[t,0)$. For example, $\|\epsilon(t)\|$ will be estimated by
a time integral of $\epsilon(t')$ on $[t,0)$ with $\epsilon(0)=0$.
Thus it is convenient to use a maximal function of time instead of
fixed-time quantities.

To motivate this, consider the following situation. If $f:[t_{0}^{\ast},0)\to\R$
is given by $f(t)\sim|t|^{p}$, $p\geq0$, then we have $\int_{t}^{0}|f(t')|dt'\sim\frac{|t|^{p+1}}{p+1}\sim_{p}|t||f(t)|$.
However, if we only know $f(t)\lesssim|t|^{p}$, then we cannot guarantee
that $\int_{t}^{0}|f(t')|dt'\lesssim_{p}|t||f(t)|$. Instead, we want
to introduce some maximal function $\mathcal{T}f$ such that $\int_{t}^{0}|f(t')|dt'\lesssim_{p}|t|[\mathcal{T}f](t)$.

Fix $\eta\geq0$. We consider a function $f:[t_{0}^{\ast},0)\to X$,
where $X$ is a Banach space and $t_{0}^{\ast}\in(-\infty,0)$. For
$t\in[t_{0}^{\ast},0)$, let us introduce 
\[
n(t)\coloneqq\begin{cases}
\lceil\max\{0,\log_{2}\tfrac{|t|}{\eta}\}\rceil, & \text{if }\eta>0,\\
\infty & \text{if }\eta=0,
\end{cases}
\]
where $\lceil\alpha\rceil$ is the smallest integer greater than or
equal to $\alpha$. For each nonnegative integer $j$, we define 
\[
a[f;t,j]\coloneqq\begin{cases}
{\displaystyle \sup_{t'\in[2^{-j}t,2^{-(j+1)}t)}\|f(t')\|_{X}} & \text{if }0\leq j<n(t),\\
{\displaystyle \sup_{t'\in[2^{-j}t,0)}\|f(t')\|_{X}} & \text{if }j\geq n(t).
\end{cases}
\]
For each $s\in\R$, we define the \emph{time maximal function }$\mathcal{T}_{X}^{(\eta,s)}[f]:[t_{0}^{\ast},0)\to[0,+\infty]$
by 
\begin{equation}
\mathcal{T}_{X}^{(\eta,s)}[f](t)\coloneqq\sum_{j=0}^{n(t)}2^{js}(j^{2}+1)^{-1}a[f;t,j].\label{eq:def-env}
\end{equation}
When $\eta=0$, we denote $\mathcal{T}_{X}^{(0,s)}[f]=\mathcal{T}_{X}^{(s)}[f]$.
When $f$ is scalar-valued, we simply write $\mathcal{T}_{X}^{(\eta,s)}[f]=\mathcal{T}^{(\eta,s)}[f]$.
The decay rate $(j^{2}+1)^{-1}$ in \eqref{eq:def-env} is a technical
factor that is required in the proof of Lemma \ref{lem:env-prop}
(see \eqref{eq:env-temp1} and \eqref{eq:env-temp2}).

It is clear that $\mathcal{T}_{X}^{(\eta,s)}$ is sub-additive. Indeed,
for any scalars $c_{n}$ and functions $f_{n}$, we have 
\[
\mathcal{T}_{X}^{(\eta,s)}\Big[\sum_{n}c_{n}f_{n}\Big]\leq\sum_{n}|c_{n}|\mathcal{T}_{X}^{(\eta,s)}[f_{n}].
\]
$\mathcal{T}_{X}^{(\eta,s)}$ preserves power-type bounds; if $\|f(t)\|_{X}\sim\langle t\rangle^{q}$
and $s\leq q$ (recall that we defined $\langle t\rangle\coloneqq(|t|^{2}+\eta^{2})^{\frac{1}{2}}$),
then we have 
\[
\mathcal{T}_{X}^{(\eta,s)}[f](t)\sim_{q}\langle t\rangle^{q}.
\]
$\mathcal{T}_{X}^{(\eta,s)}$ is idempotent in the sense of \eqref{eq:env-prop2}.
Moreover, $\mathcal{T}_{X}^{(\eta,s)}$ is designed to behave nicely
with integration in $t$; see \eqref{eq:env-prop4} below.

One can easily verify qualitative properties of $\mathcal{T}_{X}^{(\eta,s)}$.
If $\eta>0$ or $f(t)\lesssim|t|^{s}$ for $t$ near zero, then $\mathcal{T}_{X}^{(\eta,s)}[f]$
has finite values. If $\mathcal{T}_{X}^{(\eta,s)}[f](t_{0})<\infty$
for some $t_{0}\in[t_{0}^{\ast},0)$, then $\mathcal{T}_{X}^{(\eta,s)}[f](t)<\infty$
for all $t\in[t_{0},0)$. Finally, if $f$ is continuous, then so
is $\mathcal{T}_{X}^{(\eta,s)}[f]$.

We conclude this subsection with quantitative properties.
\begin{lem}[Quantitative properties of $\mathcal{T}_{X}^{(\eta,s)}$]
\label{lem:env-prop}Fix $\eta\geq0$. Let $q\in\R$, $s\geq0$,
and $1\leq p\leq\infty$. Then,
\begin{align}
\|f(t)\|_{X} & \leq\mathcal{T}_{X}^{(\eta,s)}[f](t),\label{eq:env-prop1}\\
\mathcal{T}^{(\eta,s)}[\mathcal{T}_{X}^{(\eta,s)}[f]](t) & \sim_{s}\mathcal{T}_{X}^{(\eta,s)}[f](t),\label{eq:env-prop2}\\
\mathcal{T}_{X}^{(\eta,s)}[\langle\cdot\rangle^{q}f](t) & \sim_{q}\langle t\rangle^{q}\mathcal{T}_{X}^{(\eta,s-q)}[f](t).\label{eq:env-prop3}
\end{align}
If $\frac{1}{p}+q+s>0$, we have 
\begin{equation}
\|\langle\cdot\rangle^{q}\mathcal{T}_{X}^{(\eta,s)}[f]\|_{L_{[t,0)}^{p}}\lesssim_{p,q,s}\langle t\rangle^{q+\frac{1}{p}}\mathcal{T}_{X}^{(\eta,s)}[f](t).\label{eq:env-prop4}
\end{equation}
We remark that all of the above estimates are uniform in $\eta\geq0$.
\end{lem}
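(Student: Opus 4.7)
The plan is to treat each property in sequence, with the main workhorse being two elementary facts about the dyadic structure underlying $\mathcal{T}_X^{(\eta,s)}$. First, on the $j$-th dyadic piece $I_j$ of $[t,0)$ (namely $[2^{-j}t, 2^{-(j+1)}t)$ for $j<n(t)$ and $[2^{-n(t)}t,0)$ for $j=n(t)$), one has the weight relation $\langle t'\rangle \sim 2^{-j}\langle t\rangle$ uniformly in $\eta\geq 0$; this follows by separating the cases $j<n(t)$ (where $2^{-j}|t|\gtrsim\eta$, so $\langle t'\rangle\sim|t'|\sim 2^{-j}|t|\sim 2^{-j}\langle t\rangle$) and $j=n(t)$ (where $\langle t'\rangle\sim\eta\sim 2^{-n(t)}\langle t\rangle$). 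Second, if $t'\in I_j$, then the $k$-th dyadic piece of $[t',0)$ is contained in $I_{j+k}\cup I_{j+k+1}$, hence $a[f;t',k]\leq 2\, a[f;t,j+k]$ (with the convention that indices $\geq n(t)$ are clipped to $n(t)$). Property \eqref{eq:env-prop1} is immediate: the $j=0$ term of the defining sum equals $\sup_{t'\in[t,t/2)}\|f(t')\|_X \geq \|f(t)\|_X$.

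For property \eqref{eq:env-prop2}, the lower bound $\mathcal{T}^{(\eta,s)}[\mathcal{T}_X^{(\eta,s)}[f]](t) \gtrsim \mathcal{T}_X^{(\eta,s)}[f](t)$ follows from \eqref{eq:env-prop1}. For the upper bound, the key inequality $a[f;t',k]\lesssim a[f;t,j+k]$ for $t'\in I_j$ gives $a[\mathcal{T}_X^{(\eta,s)}[f];t,j]\lesssim \sum_k 2^{ks}(k^2+1)^{-1}a[f;t,j+k]$, so
\begin{align*}
\mathcal{T}^{(\eta,s)}[\mathcal{T}_X^{(\eta,s)}[f]](t) &\lesssim \sum_{j,k\geq 0}\frac{2^{(j+k)s}}{(j^2+1)(k^2+1)}a[f;t,j+k]\\
&= \sum_{n\geq 0} 2^{ns} a[f;t,n]\sum_{j+k=n}\frac{1}{(j^2+1)(k^2+1)}.
\end{align*}
The convolution sum is $\lesssim (n^2+1)^{-1}$ by splitting according to $\min(j,k)\leq n/2$, yielding the claim. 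Property \eqref{eq:env-prop3} is a direct computation: using $\langle t'\rangle\sim 2^{-j}\langle t\rangle$ on $I_j$, we get $a[\langle\cdot\rangle^q f;t,j]\sim (2^{-j}\langle t\rangle)^q a[f;t,j]$, and summing against $2^{js}(j^2+1)^{-1}$ pulls out $\langle t\rangle^q$ and leaves $\mathcal{T}_X^{(\eta,s-q)}[f](t)$.

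The main technical step is \eqref{eq:env-prop4}. I split $\int_t^0 \langle t'\rangle^{pq}|\mathcal{T}_X^{(\eta,s)}[f](t')|^p\,dt'$ dyadically. On $I_j$, the length is $\lesssim 2^{-j}\langle t\rangle$, the weight is $\sim (2^{-j}\langle t\rangle)^{pq}$, and the sup of $\mathcal{T}_X^{(\eta,s)}[f]$ is controlled by $\sum_k 2^{ks}(k^2+1)^{-1}a[f;t,j+k]$ as above. Writing $b_n\coloneqq 2^{ns}a[f;t,n]$, one reduces to
\begin{equation*}
\|\langle\cdot\rangle^q \mathcal{T}_X^{(\eta,s)}[f]\|_{L^p([t,0))}^p \lesssim \langle t\rangle^{1+pq}\sum_{j\geq 0} 2^{-j(1+pq+ps)}\Bigl(\sum_{k\geq 0}\tfrac{b_{j+k}}{k^2+1}\Bigr)^p.
\end{equation*}
Hölder's inequality in $k$ against $(k^2+1)^{-1}$ gives $(\sum_k (k^2+1)^{-1}b_{j+k})^p \lesssim \sum_k (k^2+1)^{-1}b_{j+k}^p$, and reindexing via $n=j+k$ produces $\sum_n b_n^p \sigma_n$ with $\sigma_n = \sum_{j+k=n} 2^{-j\alpha}(k^2+1)^{-1}$, $\alpha\coloneqq 1+pq+ps=p(\frac{1}{p}+q+s)>0$. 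The hypothesis on $p,q,s$ is precisely what makes $\sum_j 2^{-j\alpha}<\infty$, and splitting by $j\lessgtr n/2$ gives $\sigma_n\lesssim (n^2+1)^{-1}$ (the $j>n/2$ part decays exponentially in $n$). Finally another application of Hölder bounds $\mathcal{T}_X^{(\eta,s)}[f](t)^p\gtrsim \sum_n (n^2+1)^{-1} b_n^p$ from below (using $\sum_n (n^2+1)^{-1}<\infty$), which collects everything into the claim.

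The main obstacle is keeping the convolution structure clean across \eqref{eq:env-prop2} and \eqref{eq:env-prop4}: the weight $(j^2+1)^{-1}$ is precisely the threshold for being summable while still allowing the convolution $\sum_{j+k=n}(j^2+1)^{-1}(k^2+1)^{-1}\lesssim (n^2+1)^{-1}$, and the failure of this decay is what forces the $(j^2+1)^{-1}$ factor (rather than no weight at all) in the very definition of $\mathcal{T}_X^{(\eta,s)}$. Minor care is needed to ensure all estimates are uniform in $\eta\geq 0$; this is handled by the unified relation $\langle t'\rangle\sim 2^{-j}\langle t\rangle$ on $I_j$, which degenerates correctly as $\eta\to 0$.
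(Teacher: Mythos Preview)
Your treatment of \eqref{eq:env-prop1}--\eqref{eq:env-prop3} follows the paper closely, with one harmless slip: from the inclusion of the $k$-th dyadic piece of $[t',0)$ in $I_{j+k}\cup I_{j+k+1}$ you can only conclude $a[f;t',k]\leq a[f;t,j+k]+a[f;t,j+k+1]$, not $\leq 2\,a[f;t,j+k]$, since the two suprema need not be comparable; the shift by one is absorbed in the subsequent sum anyway. The real problem is the final step of \eqref{eq:env-prop4}. After reducing to $\sum_n (n^2+1)^{-1}b_n^p$, you claim this is $\lesssim \mathcal{T}_X^{(\eta,s)}[f](t)^p = \bigl(\sum_n (n^2+1)^{-1}b_n\bigr)^p$ by ``another application of H\"older''. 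This inequality is \emph{false}: take $b_n=\mathbf{1}_{n=N}$ for large $N$, so the left side is $(N^2+1)^{-1}$ while the right side is $(N^2+1)^{-p}$, which for $p>1$ is much smaller. H\"older points in the opposite direction.

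The fix, staying within your framework, is to bound the inner sum \emph{before} raising to the $p$-th power: from $(n^2+1)\lesssim (j^2+1)((n-j)^2+1)$ for $n\geq j$ one gets $\sum_{k}(k^2+1)^{-1}b_{j+k}\lesssim (j^2+1)\,\mathcal{T}_X^{(\eta,s)}[f](t)$, and then $\sum_j 2^{-j\alpha}(j^2+1)^p<\infty$ (using $\alpha>0$) finishes the estimate. This is essentially the paper's argument, phrased there as a single H\"older step in the mixed norm $\ell_j^p\ell_k^1$: one factors the summand so that one factor is $B_{jk}=2^{(j+k)s}((j+k)^2+1)^{-1}a[f;t,j+k]$, whose $\ell_j^\infty\ell_k^1$ norm is exactly $\mathcal{T}_X^{(\eta,s)}[f](t)$, never passing through $b_n^p$.
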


\begin{proof}
\eqref{eq:env-prop1} is immediate from the definition \eqref{eq:def-env}.

We turn to \eqref{eq:env-prop2}. We have the $(\gtrsim)$ direction
by \eqref{eq:env-prop1}. To show the $(\lesssim)$ direction, we
use the following properties of $a[f;t,j]$: 
\begin{align}
a[a[f;\cdot,k];t,j] & \leq a[f;t,j+k]+a[f;t,j+k+1]\text{ for all }j,k\geq0,\label{eq:env-temp3}\\
j+k & \leq n(t)\text{ if }0\leq j\leq n(t)\text{ and }0\leq k\leq n(2^{-j}t).\label{eq:env-temp4}
\end{align}
By \eqref{eq:env-temp3} and \eqref{eq:env-temp4}, we have 
\begin{align*}
 & \mathcal{T}^{(\eta,s)}[\mathcal{T}_{X}^{(\eta,s)}[f]](t)\\
 & \leq\sum_{j=0}^{n(t)}\sum_{k=0}^{n(2^{-j}t)}2^{(j+k)s}(j^{2}+1)^{-1}(k^{2}+1)^{-1}a[a[f;\cdot,k];t,j]\\
 & \leq\sum_{\ell=0}^{n(t)}\Big(\sum_{j=0}^{\ell}\frac{\ell^{2}+1}{(j^{2}+1)((\ell-j)^{2}+1)}\Big)2^{\ell s}(\ell^{2}+1)^{-1}\Big(a[f;t,\ell]+a[f;t,\ell+1]\Big).
\end{align*}
Using 
\begin{equation}
\sum_{j=0}^{\ell}\frac{\ell^{2}+1}{(j^{2}+1)((\ell-j)^{2}+1)}\lesssim\sum_{j=0}^{\infty}\frac{1}{j^{2}+1}\lesssim1,\label{eq:env-temp1}
\end{equation}
the estimate \eqref{eq:env-prop2} follows.

The estimate \eqref{eq:env-prop3} easily follows from the observation
\begin{equation}
\langle t'\rangle\sim2^{-j}\langle t\rangle\quad\text{whenever }\begin{cases}
t'\in[2^{-j}t,2^{-(j+1)}t) & \text{if }j<n(t)\text{, or}\\
t'\in[2^{-j}t,0) & \text{if }j=n(t).
\end{cases}\label{eq:env-temp5}
\end{equation}

Finally, we assume $\frac{1}{p}+q+s>0$ and show \eqref{eq:env-prop4}.
We only provide the proof for $1\leq p<\infty$ as the case $p=\infty$
can be proved with an obvious modification of the argument. We estimate
the LHS of \eqref{eq:env-prop4} as 
\begin{align*}
 & \|\langle\cdot\rangle^{q}\mathcal{T}_{X}^{(\eta,s)}[f]\|_{L_{[t,0)}^{p}}\\
 & \lesssim\||2^{-j}t|^{\frac{1}{p}}\langle2^{-j}t\rangle^{q}\mathcal{T}_{X}^{(\eta,s)}[f](2^{-j}t)\|_{\ell_{0\leq j\leq n(t)}^{p}}\\
 & \lesssim_{q}|t|^{\frac{1}{p}}\langle t\rangle^{q}\big\|2^{-j(\frac{1}{p}+q)}\cdot2^{ks}(k^{2}+1)^{-1}a[f;2^{-j}t,k]\big\|_{\ell_{0\leq j\leq n(t)}^{p}\ell_{0\leq k\leq n(2^{-j}t)}^{1}},
\end{align*}
where we used \eqref{eq:env-temp5} in the last inequality. We then
estimate the above $\ell_{j}^{p}\ell_{k}^{1}$ term using H\"older's
inequality with 
\begin{align}
\big\|2^{-j(\frac{1}{p}+q+s)}((j+k)^{2}+1)(k^{2}+1)^{-2}\big\|_{\ell_{j\geq0}^{p}\ell_{k\geq0}^{\infty}} & \lesssim_{p,q,s}1,\label{eq:env-temp2}\\
\big\|2^{(j+k)s}((j+k)^{2}+1)^{-1}a[f;t,j+k]\big\|_{\ell_{0\leq j\leq n(t)}^{\infty}\ell_{0\leq k\leq n(2^{-j}t)}^{1}} & \leq\mathcal{T}_{X}^{(\eta,s)}[f](t),\nonumber 
\end{align}
where  we used $a[f;2^{-j}t,k]=a[f;t,j+k]$ and \eqref{eq:env-temp4}.
This completes the proof of \eqref{eq:env-prop4}.
\end{proof}

\subsection{Decomposition of nonlinearity and duality estimates}

\subsubsection*{Decomposition of nonlinearity}

Denote by $\mathcal{N}(u)$ the nonlinearity of \eqref{eq:CSS}: 
\begin{equation}
\mathcal{N}(u)\coloneqq-|u|^{2}u+\frac{2m}{r^{2}}A_{\theta}[u]u+\frac{A_{\theta}^{2}[u]}{r^{2}}u+A_{0}[u]u.\label{eq:nonlinearity}
\end{equation}
We introduce trilinear terms $\mathcal{N}_{3,0}$, $\mathcal{N}_{3,1}$,
$\mathcal{N}_{3,2}$ and quintilinear terms $\mathcal{N}_{5,1}$,
$\mathcal{N}_{5,2}$. For $\psi_{j}:(0,\infty)\to\C$, with abbreviations
\begin{align*}
\mathcal{N}_{3,k} & =\mathcal{N}_{3,k}(\psi_{1},\psi_{2},\psi_{3}),\quad\forall k\in\{0,1,2\}\\
\mathcal{N}_{5,k} & =\mathcal{N}_{5,k}(\psi_{1},\psi_{2},\psi_{3},\psi_{4},\psi_{5}),\quad\forall k\in\{1,2\},
\end{align*}
we define 
\begin{equation}
\left\{ \begin{aligned}\mathcal{N}_{3,0} & \coloneqq-\psi_{1}\overline{\psi_{2}}\psi_{3},\\
\mathcal{N}_{3,1} & \coloneqq-\frac{m}{r^{2}}\Big(\int_{0}^{r}\Re(\psi_{1}\overline{\psi_{2}})r'dr'\Big)\psi_{3},\\
\mathcal{N}_{3,2} & \coloneqq-\Big(\int_{r}^{\infty}\frac{m}{r'}\Re(\psi_{1}\overline{\psi_{2}})dr'\Big)\psi_{3},\\
\mathcal{N}_{5,1} & \coloneqq\frac{1}{4r^{2}}\Big(\int_{0}^{r}\Re(\psi_{1}\overline{\psi_{2}})r'dr'\Big)\Big(\int_{0}^{r}\Re(\psi_{3}\overline{\psi_{4}})r'dr'\Big)\psi_{5},\\
\mathcal{N}_{5,2} & \coloneqq\frac{1}{2}\bigg(\int_{r}^{\infty}\Big(\int_{0}^{r'}\Re(\psi_{1}\overline{\psi_{2}})r''dr''\Big)\Re(\psi_{3}\overline{\psi_{4}})\frac{dr'}{r'}\bigg)\psi_{5}.
\end{aligned}
\right.\label{eq:def-N}
\end{equation}
In case of $\psi_{j}=u$ for all $j$, we write 
\[
\mathcal{N}_{\ast}(u)\coloneqq\mathcal{N}_{\ast}(u,\dots,u)
\]
for any possible choice of $\ast$. Then we can write the nonlinearity
$\mathcal{N}$ as 
\[
\mathcal{N}(u)=[\mathcal{N}_{3,0}+\mathcal{N}_{3,1}+\mathcal{N}_{3,2}+\mathcal{N}_{5,1}+\mathcal{N}_{5,2}](u).
\]

\subsubsection*{Introduction of $\mathcal{M}_{4,0}^{(A)}$, $\mathcal{M}_{4,1}^{(A)}$,
and $\mathcal{M}_{6}^{(A)}$}

We introduce a smooth radial weight $\phi$ such that $\partial_{r}\phi$
is increasing and 
\begin{align*}
\partial_{r}\phi(r) & =\begin{cases}
r & \text{if }r\leq1,\\
3-e^{-r} & \text{if }r\geq2.
\end{cases}
\end{align*}
We write for $A\in[1,+\infty]$: 
\begin{equation}
\phi_{A}(r)\coloneqq\begin{cases}
A^{2}\phi(\frac{r}{A}), & \text{if }1\leq A<\infty,\\
\frac{1}{2}r^{2} & \text{if }A=\infty.
\end{cases}\label{eq:def-phiA}
\end{equation}
We introduce quartic forms $\mathcal{M}_{4,0}^{(A)}$, $\mathcal{M}_{4,1}^{(A)}$,
and sextic form $\mathcal{M}_{6}^{(A)}$. For $\psi_{j}:(0,\infty)\to\C$,
with abbreviations 
\begin{align*}
\mathcal{M}_{4,k}^{(A)} & =\mathcal{M}_{4,k}^{(A)}(\psi_{1},\psi_{2},\psi_{3},\psi_{4}),\quad\forall k\in\{0,1\}\\
\mathcal{M}_{6}^{(A)} & =\mathcal{M}_{6}^{(A)}(\psi_{1},\psi_{2},\psi_{3},\psi_{4},\psi_{5},\psi_{6}),
\end{align*}
we define 
\begin{equation}
\left\{ \begin{aligned}\mathcal{M}_{4,0}^{(A)} & \coloneqq\int\frac{\Delta\phi_{A}}{2}\cdot\psi_{1}\overline{\psi_{2}}\psi_{3}\overline{\psi_{4}},\\
\mathcal{M}_{4,1}^{(A)} & \coloneqq\int\frac{\phi_{A}'}{r}\cdot\frac{1}{r^{2}}\Big(\int_{0}^{r}\Re(\psi_{1}\overline{\psi_{2}})r'dr'\Big)\Re(\psi_{3}\overline{\psi_{4}}),\\
\mathcal{M}_{6}^{(A)} & \coloneqq\int\frac{\phi_{A}'}{r}\cdot\frac{1}{r^{2}}\Big(\int_{0}^{r}\Re(\psi_{1}\overline{\psi_{2}})r'dr'\Big)\Big(\int_{0}^{r}\Re(\psi_{3}\overline{\psi_{4}})r'dr'\Big)\Re(\psi_{5}\overline{\psi_{6}}).
\end{aligned}
\right.\label{eq:def-M}
\end{equation}
In case of $\psi_{k}=u$ for all $k$, we simply write as 
\[
\mathcal{M}_{\ast}^{(A)}(u)\coloneqq\mathcal{M}_{\ast}^{(A)}(u,\dots,u)
\]
for any choice of $\ast$. In case of $A=\infty$, we write as 
\[
\mathcal{M}_{\ast}\coloneqq\mathcal{M}_{\ast}^{(\infty)}.
\]
We will mostly use the case $A=\infty$; the case $1\leq A<\infty$
will only appear when we do analysis with the Lyapunov/virial functional
in Section \ref{subsec:virial-lyapunov}.

The above forms $\mathcal{M}_{\ast}$ arise naturally in the energy
functional. In particular, we can rewrite the energy functional $E[u]$
as 
\begin{equation}
E[u]=\frac{1}{2}\int\Big(|\partial_{r}u|^{2}+\frac{m^{2}}{r^{2}}|u|^{2}\Big)-\frac{1}{4}\mathcal{M}_{4,0}(u)-\frac{m}{2}\mathcal{M}_{4,1}(u)+\frac{1}{8}\mathcal{M}_{6}(u).\label{eq:energy-M-expn}
\end{equation}
If we recall that \eqref{eq:CSS} has a Hamiltonian formulation with
the Hamiltonian $E$, the nonlinearity $\mathcal{N}$ arises in the
functional derivative of the energy. We in fact have 
\begin{equation}
\left\{ \begin{aligned}(\mathcal{N}_{3,0}(\psi_{1},\psi_{2},\psi_{3}),\psi_{4})_{r} & =\mathcal{M}_{4,0}(\psi_{1},\psi_{2},\psi_{3},\psi_{4}),\\
(\mathcal{N}_{3,1}(\psi_{1},\psi_{2},\psi_{3}),\psi_{4})_{r} & =-m\mathcal{M}_{4,1}(\psi_{1},\psi_{2},\psi_{3},\psi_{4}),\\
(\mathcal{N}_{3,2}(\psi_{1},\psi_{2},\psi_{3}),\psi_{4})_{r} & =-m\mathcal{M}_{4,1}(\psi_{3},\psi_{4},\psi_{1},\psi_{2}),\\
(\mathcal{N}_{5,1}(\psi_{1},\psi_{2},\psi_{3},\psi_{4},\psi_{5}),\psi_{6})_{r} & =\tfrac{1}{4}\mathcal{M}_{6}(\psi_{1},\psi_{2},\psi_{3},\psi_{4},\psi_{5},\psi_{6}),\\
(\mathcal{N}_{5,2}(\psi_{1},\psi_{2},\psi_{3},\psi_{4},\psi_{5}),\psi_{6})_{r} & =\tfrac{1}{2}\mathcal{M}_{6}(\psi_{1},\psi_{2},\psi_{5},\psi_{6},\psi_{3},\psi_{4}).
\end{aligned}
\right.\label{eq:duality-rel}
\end{equation}
In view of duality, this says that estimates of multilinear forms
$\mathcal{M}_{\ast}$ transfer to estimates of $\mathcal{N}_{\ast}$.
This formulation simplifies multilinear estimates of $\mathcal{N}_{\ast}$.

We conclude this subsection by recording some duality estimates. The
first one is well-adapted to the fixed time energy estimate.
\begin{lem}[Duality estimates, I]
\label{lem:duality-1}Let $A\in[1,\infty]$. The following estimates
hold uniformly in $A$.
\begin{enumerate}
\item For any distinct indices $i,i'\in\{1,2,3,4\}$, we have 
\begin{equation}
|\mathcal{M}_{4,0}^{(A)}|\lesssim\|\psi_{i}\|_{L^{\infty}}\|\psi_{i'}\|_{L^{\infty}}\prod_{j\neq i,i'}\|\psi_{j}\|_{L^{2}}.\label{eq:multi-1-1}
\end{equation}
\item For any distinct indices $i,i'\in\{1,2,3,4\}$, we have 
\begin{equation}
|\mathcal{M}_{4,1}^{(A)}|\lesssim\Big\|\frac{\psi_{i}}{r}\Big\|_{L^{2}}\Big\|\frac{\psi_{i'}}{r}\Big\|_{L^{2}}\prod_{j\neq i,i'}\|\psi_{j}\|_{L^{2}}.\label{eq:multi-1-2}
\end{equation}
\item For any distinct indices $i,i'\in\{1,\dots,6\}$, we have 
\begin{equation}
|\mathcal{M}_{6}^{(A)}|\lesssim\Big\|\frac{\psi_{i}}{r}\Big\|_{L^{2}}\Big\|\frac{\psi_{i'}}{r}\Big\|_{L^{2}}\prod_{j\neq i,i'}\|\psi_{j}\|_{L^{2}}.\label{eq:multi-1-3}
\end{equation}
\end{enumerate}
\end{lem}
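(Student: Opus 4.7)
The plan is to first reduce to the case where the $A$-dependent weights are bounded multiplicative factors, then treat (1) by H\"older and treat (2)--(3) via a family of Cauchy--Schwarz estimates on the inner radial integrals $I_{ab}(r):=\int_0^r\Re(\psi_a\overline{\psi_b})r'\,dr'$.

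First, I would verify that $|\Delta\phi_A(r)|\lesssim 1$ and $\phi_A'(r)/r\lesssim 1$ uniformly in $A\in[1,\infty]$ and $r>0$. For $A<\infty$, the chain rule gives $\phi_A'(r)=A\phi'(r/A)$ and $\phi_A''(r)=\phi''(r/A)$, and the explicit definition of $\phi$ yields $\phi'(s)/s\lesssim 1$ and $\phi''(s)\lesssim 1$ (the transition region $1\leq s\leq 2$ is handled using monotonicity of $\phi'$). The case $A=\infty$ is trivial since $\phi_\infty=r^2/2$. Estimate (1) is then immediate from H\"older with $L^\infty$ on the slots $i,i'$ and $L^2$ on the remaining two.

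For (2) and (3), the key pointwise tool is a family of Cauchy--Schwarz bounds on $(0,r)$ with measure $r'\,dr'$:
\begin{align*}
|I_{ab}(r)| &\leq \|\psi_a\|_{L^2}\|\psi_b\|_{L^2},\\
|I_{ab}(r)| &\leq r\|\psi_a/r\|_{L^2}\|\psi_b\|_{L^2},\\
|I_{ab}(r)| &\leq r^2\|\psi_a/r\|_{L^2}\|\psi_b/r\|_{L^2},
\end{align*}
together with the symmetric variants in $a\leftrightarrow b$; the extra powers of $r$ are acquired by writing $|\psi_a(r')|=r'\cdot(|\psi_a(r')|/r')\leq r\cdot(|\psi_a(r')|/r')$ inside the interval $(0,r)$ before invoking Cauchy--Schwarz. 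For $\mathcal{M}_{4,1}^{(A)}$, I would choose which of these three inequalities to apply to $I_{12}$ based on which pair $\{i,i'\}$ is targeted: if $\{i,i'\}\subset\{3,4\}$, apply the first and route the $1/r^2$ from the definition onto $\psi_3,\psi_4$ via $\int|\psi_3||\psi_4|/r\,dr\leq\|\psi_3/r\|_{L^2}\|\psi_4/r\|_{L^2}$; if $\{i,i'\}\subset\{1,2\}$, apply the third, absorbing the gained $r^2$ against $1/r^2$ so that the remaining outer integral is simply $\|\psi_3\|_{L^2}\|\psi_4\|_{L^2}$; mixed pairs such as $\{1,3\}$ are handled by the second inequality, which routes one factor of $1/r$ onto each chosen slot. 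Estimate (3) for $\mathcal{M}_6^{(A)}$ follows the same recipe, now applied to both $I_{12}$ and $I_{34}$ simultaneously, balancing the total powers of $r$ against the overall $1/r^2$ so as to land on the prescribed pair $\{i,i'\}\subset\{1,\dots,6\}$.

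No step is technically deep; the only substantive point is the uniformity in $A$, which is entirely encoded in the first step. The main annoyance will be combinatorial: six cases in (2) and fifteen in (3), each a direct application of Cauchy--Schwarz after selecting the appropriate pointwise bound on the inner integrals.
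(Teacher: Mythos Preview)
Your proposal is correct and follows essentially the same approach as the paper's proof. The paper phrases the key step for (2) and (3) slightly more compactly---it writes $\tfrac{1}{r^2}=\tfrac{1}{r}\cdot\tfrac{1}{r}$ and ``moves each $\tfrac{1}{r}$ to $\psi_i$ and $\psi_{i'}$'' via the observation $\big|\tfrac{1}{r}\int_0^r\Re(\overline{\psi}\varphi)r'dr'\big|\lesssim\int_0^r\big|\tfrac{\psi}{r'}\cdot\varphi\big|r'dr'$, then applies H\"older together with the $L^\infty$ bound $\|\int_0^r\Re(\overline{\psi}\varphi)r'dr'\|_{L^\infty}\lesssim\|\psi\|_{L^2}\|\varphi\|_{L^2}$---but this is exactly your trichotomy of Cauchy--Schwarz bounds on $I_{ab}(r)$ written in a different order.
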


\begin{proof}
In each case, it suffices to consider the case with $A=\infty$ since
we have $|\Delta\phi_{A}|+|\frac{\phi_{A}'}{r}|\lesssim1$ uniformly
in $A$. The estimate \eqref{eq:multi-1-1} easily follows by H\"older's
inequality. To show \eqref{eq:multi-1-2} and \eqref{eq:multi-1-3},
decompose $\frac{1}{r^{2}}=\frac{1}{r}\cdot\frac{1}{r}$ and move
each $\frac{1}{r}$ to $\psi_{i}$ and $\psi_{i'}$. We note that
one can put $\frac{1}{r}$ inside the integral as 
\[
\Big|\frac{1}{r}\int_{0}^{r}\Re(\overline{\psi}\varphi)r'dr'\Big|\lesssim\int_{0}^{r}\Big|\frac{\psi}{r'}\cdot\varphi\Big|r'dr'.
\]
Now the estimates \eqref{eq:multi-1-2} and \eqref{eq:multi-1-3}
follow by H\"older's inequality with 
\[
\Big\|\int_{0}^{r}\Re(\overline{\psi}\varphi)r'dr'\Big\|_{L^{\infty}}\lesssim\|\psi\|_{L^{2}}\|\varphi\|_{L^{2}}.
\]
This completes the proof.
\end{proof}
\begin{lem}[$H_{m}^{1}$-estimate of $\mathcal{N}$]
\label{lem:L2-est-N}Let $m\geq1$. For any possible choice of $\ast$
in \eqref{eq:def-N} and any two distinct indices $i,i'$, we have
\begin{align}
\|\mathcal{N}_{\ast}\|_{L_{m}^{2}} & \lesssim\|\psi_{i}\|_{\dot{H}_{m}^{1}}\|\psi_{i'}\|_{\dot{H}_{m}^{1}}\prod_{j\neq i,i'}\|\psi_{j}\|_{L_{m}^{2}},\label{eq:L2-est-N}\\
\|\mathcal{N}_{\ast}\|_{H_{m}^{1}} & \lesssim\|\psi_{i}\|_{\dot{H}_{m}^{1}}\|\psi_{i'}\|_{\dot{H}_{m}^{1}}\prod_{j\neq i,i'}\|\psi_{j}\|_{H_{m}^{1}}.\label{eq:H1-est-N}
\end{align}
\end{lem}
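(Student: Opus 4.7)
The plan is to combine the duality relations (2.22) with the multilinear bounds of Lemma~\ref{lem:duality-1} and with the $m\ge1$ Hardy--Sobolev embedding $\|\psi/r\|_{L^{2}}+\|\psi\|_{L^{\infty}}\lesssim\|\psi\|_{\dot H^{1}_{m}}$. This embedding is exactly what upgrades the pointwise $L^{\infty}$ and $r^{-1}L^{2}$ norms produced by Lemma~\ref{lem:duality-1} into $\dot H^{1}_{m}$ norms, and it is the reason the hypothesis $m\ge1$ is needed.

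For the $L^{2}_{m}$ bound (2.23), I would argue by duality. Given a test function $f$ with $\|f\|_{L^{2}_{m}}\le1$, the pairing $(\mathcal{N}_{*},f)_{r}$ is identified via (2.22) with one of the forms $\mathcal{M}_{4,0},\mathcal{M}_{4,1},\mathcal{M}_{6}$, with $f$ occupying the final slot. Applying Lemma~\ref{lem:duality-1} with the distinguished pair $\{i,i'\}$ chosen only among the $\psi_{1},\dots,\psi_{2k+1}$ slots (so that $f$ stays on an $L^{2}$ slot) gives either $\|\psi_{i}\|_{L^{\infty}}\|\psi_{i'}\|_{L^{\infty}}$ (for $\mathcal{N}_{3,0}$) or $\|\psi_{i}/r\|_{L^{2}}\|\psi_{i'}/r\|_{L^{2}}$ (for the other four terms), multiplied by the $L^{2}$ norms of the remaining $\psi_{j}$'s and of $f$. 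Hardy--Sobolev followed by the supremum over $f$ then yields (2.23).

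For the $H^{1}_{m}$ bound (2.24) I would use $\|g\|_{H^{1}_{m}}\sim\|g\|_{L^{2}_{m}}+\|\partial_{r}g\|_{L^{2}_{m}}+\|g/r\|_{L^{2}_{m}}$ and reduce to controlling the last two pieces, the $L^2$ piece being already supplied by (2.23). The $r^{-1}\mathcal{N}_{*}$ piece is obtained by absorbing the extra $1/r$ into one of the slots $\psi_{j^{\flat}}$ (choosing $j^{\flat}\in\{i,i'\}$ whenever possible) and picking up a factor $\|\psi_{j^{\flat}}/r\|_{L^{2}}\lesssim\|\psi_{j^{\flat}}\|_{\dot H^{1}_{m}}$. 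The $\partial_{r}\mathcal{N}_{*}$ piece is obtained by Leibniz: for $\mathcal{N}_{3,0}=-\psi_{1}\bar\psi_{2}\psi_{3}$ one places $\partial_{r}\psi_{j^{*}}$ in $L^{2}$ and the two remaining factors in $L^{\infty}$, all controlled via $H^{1}_{m}$ or $\dot H^{1}_{m}$. For $\mathcal{N}_{3,1},\mathcal{N}_{3,2},\mathcal{N}_{5,1},\mathcal{N}_{5,2}$ the derivatives either hit the algebraic factor (treated as above) or the nonlocal kernel $r^{-2}\int_{0}^{r}\cdots$ or $\int_{r}^{\infty}\cdots$; the latter either collapses to a local cubic/quintic expression or carries an additional $r^{-1}$ weight.

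The only delicate point will be that extra-$r^{-1}$ case, for instance the term $\tfrac{2m}{r^{3}}\bigl(\int_{0}^{r}\Re(\psi_{1}\bar\psi_{2})r'\,dr'\bigr)\psi_{3}$ arising in $\partial_{r}\mathcal{N}_{3,1}$. Splitting $r^{-3}=r^{-1}\cdot r^{-2}$ and using the pointwise bound $\frac{1}{r^{2}}\bigl|\int_{0}^{r}\Re(\psi_{1}\bar\psi_{2})r'\,dr'\bigr|\lesssim\|\psi_{1}\|_{L^{\infty}}\|\psi_{2}\|_{L^{\infty}}$ reduces this to $\|\psi_{3}/r\|_{L^{2}}$, which is admissible via Hardy--Sobolev precisely because $m\ge1$. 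The quintic analogues are handled identically using Lemma~\ref{lem:duality-1}(3), and once this worst case is absorbed the remaining derivative contributions combine routinely to produce (2.24).
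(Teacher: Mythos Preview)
Your proposal is correct and, for the $L^{2}_{m}$ estimate \eqref{eq:L2-est-N}, identical to the paper's argument: duality via \eqref{eq:duality-rel}, Lemma~\ref{lem:duality-1}, and the Hardy--Sobolev embedding.

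For the $H^{1}_{m}$ estimate \eqref{eq:H1-est-N} your route is slightly different from the paper's, though both succeed. You treat the action of $\partial_{r}$ and $r^{-1}$ case by case, letting $\partial_{r}$ collapse the nonlocal integrals to boundary terms and handling the resulting local expressions with pointwise bounds. The paper instead makes the structural observation that after applying $\partial_{r}$ or $r^{-1}$, every term that arises is again of one of the types $\mathcal{N}_{3,0},\mathcal{N}_{3,1},\mathcal{N}_{3,2}$ (respectively $\mathcal{N}_{5,1},\mathcal{N}_{5,2}$) with exactly one argument $\psi_{j}$ replaced by $\partial_{r}\psi_{j}$ or $r^{-1}\psi_{j}$; this reduces \eqref{eq:H1-est-N} directly to \eqref{eq:L2-est-N}. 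For the quintic terms the paper achieves this by rewriting the boundary contribution as
\[
\partial_{r}\Big(\int_{0}^{r}\psi_{1}\overline{\psi_{2}}\,r'dr'\Big)=\int_{0}^{r}\Big((\partial_{r}\psi_{1})\overline{\psi_{2}}+\psi_{1}(\partial_{r}\overline{\psi_{2}})+\tfrac{1}{r'}\psi_{1}\overline{\psi_{2}}\Big)r'dr',
\]
which preserves the nonlocal form. Your collapse-and-estimate approach reaches the same conclusion but requires separate bookkeeping for each term; the paper's reduction is cleaner and makes transparent why the same two distinguished indices $i,i'$ can always be kept.
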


\begin{proof}
The estimate \eqref{eq:L2-est-N} easily follows from the duality
relations \eqref{eq:duality-rel}, duality estimates (Lemma \ref{lem:duality-1}),
and embeddings \eqref{eq:hardy-Sobolev}.

To estimate \eqref{eq:H1-est-N}, we act $\partial_{r}$ and $\frac{1}{r}$
on $\mathcal{N}_{3,0},\mathcal{N}_{3,1},\mathcal{N}_{3,2}$. These
consist of linear combinations of the following expressions:
\begin{align*}
 & (\partial_{r}\psi_{1})\overline{\psi_{2}}\psi_{3};\quad\psi_{1}(\partial_{r}\overline{\psi_{2}})\psi_{3};\quad\psi_{1}\overline{\psi_{2}}(\partial_{r}\psi_{3});\quad\psi_{1}\overline{\psi_{2}}\tfrac{\psi_{3}}{r};\\
 & {\textstyle (\int_{r}^{\infty}\psi_{1}\overline{\psi_{2}}\frac{dr'}{r'})\frac{\psi_{3}}{r};\quad\frac{1}{r^{2}}(\int_{0}^{r}\psi_{1}\overline{\psi_{2}}r'dr')\frac{\psi_{3}}{r}.}
\end{align*}
These expressions are of types $\mathcal{N}_{3,0}$, $\mathcal{N}_{3,1}$,
and $\mathcal{N}_{3,2}$, where we replace exactly one $\psi_{j}$
by $\partial_{r}\psi_{j}$ or $r^{-1}\psi_{j}$. Thus we can apply
\eqref{eq:L2-est-N} and \eqref{eq:hardy-Sobolev} to estimate all
the above terms by 
\[
\|\psi_{1}\|_{\dot{H}_{m}^{1}}\|\psi_{2}\|_{\dot{H}_{m}^{1}}\|\psi_{3}\|_{\dot{H}_{m}^{1}}.
\]
We turn to $\mathcal{N}_{5,1}$ and $\mathcal{N}_{5,2}$. As before,
we will investigate what expressions arise when $\partial_{r}$ and
$\frac{1}{r}$ act on $\mathcal{N}_{5,1}$ and $\mathcal{N}_{5,2}$.
We first note the following useful tricks: 
\begin{align*}
\partial_{r}\Big(\int_{0}^{r}\psi_{1}\overline{\psi_{2}}r'dr'\Big) & =\int_{0}^{r}\Big((\partial_{r}\psi_{1})\overline{\psi_{2}}+\psi_{1}(\partial_{r}\overline{\psi_{2}})+\frac{1}{r'}\psi_{1}\overline{\psi_{2}}\Big)r'dr',\\
\partial_{r}\Big(\int_{r}^{\infty}\psi_{1}\overline{\psi_{2}}\frac{dr'}{r'}\Big) & =\int_{r}^{\infty}\Big((\partial_{r}\psi_{1})\overline{\psi_{2}}+\psi_{1}(\partial_{r}\overline{\psi_{2}})-\frac{1}{r'}\psi_{1}\overline{\psi_{2}}\Big)\frac{dr'}{r'}.
\end{align*}
Therefore, $\partial_{r}$ and $\frac{1}{r}$ acted on $\mathcal{N}_{5,1}$
and $\mathcal{N}_{5,2}$ consist of expressions of types $\mathcal{N}_{5,1}$
and $\mathcal{N}_{5,2}$, where we replace exactly one $\psi_{j}$
by $\partial_{r}\psi_{j}$ or $r^{-1}\psi_{j}$. Applying \eqref{eq:L2-est-N}
completes the proof.
\end{proof}
We also need the following duality estimate adapted to the Strichartz
estimates. It will be used mostly in Section \ref{subsec:Transfer-H1-to-L2}
in a crucial way.
\begin{lem}[Duality estimates, II]
\label{lem:duality-2}Let $A\in[1,\infty]$. The following estimates
hold uniformly in $A$.
\begin{enumerate}
\item For any distinct indices $i,i'\in\{1,\dots,4\}$, we have 
\begin{equation}
|\mathcal{M}_{4,0}^{(A)}|\lesssim\|\psi_{i}\|_{L^{\infty}}\|\psi_{i'}\|_{L^{4}}\prod_{j\neq i,i'}\|\psi_{j}\|_{L^{p_{j}}}\label{eq:multi-2-1}
\end{equation}
for any $p_{j}\in\{2,4\}$ ($j\neq i,i')$ satisfying \eqref{eq:multi-2-scal-rel}.
\item For any distinct indices $i,i'\in\{1,\dots,4\}$, there exist $p_{j}\in\{2,4\}$
($j\neq i,i'$) such that 
\begin{equation}
|\mathcal{M}_{4,1}^{(A)}|\lesssim\Big\|\frac{\psi_{i}}{r}\Big\|_{L^{2}}\|\psi_{i'}\|_{L^{4}}\prod_{j\neq i,i'}\|\psi_{j}\|_{L^{p_{j}}}.\label{eq:multi-2-2}
\end{equation}
\item For any distinct indices $i,i'\in\{1,\dots,6\}$, there exist $p_{j}\in\{2,4\}$
($j\neq i,i'$) such that 
\begin{equation}
|\mathcal{M}_{6}^{(A)}|\lesssim\Big\|\frac{\psi_{i}}{r}\Big\|_{L^{2}}\|\psi_{i'}\|_{L^{4}}\prod_{j\neq i,i'}\|\psi_{j}\|_{L^{p_{j}}}.\label{eq:multi-2-3}
\end{equation}
\end{enumerate}
In the above estimates, $p_{j}$'s satisfy the scaling conditions
\begin{equation}
\sum_{j\neq i,i'}\frac{1}{p_{j}}=\begin{cases}
\frac{3}{4} & \text{for \eqref{eq:multi-2-1} and \eqref{eq:multi-2-2},}\\
\frac{7}{4} & \text{for \eqref{eq:multi-2-3}.}
\end{cases}\label{eq:multi-2-scal-rel}
\end{equation}
\end{lem}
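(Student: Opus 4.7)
My plan is to parallel the proof of Lemma~\ref{lem:duality-1}, with H\"older's inequality now tuned to the Strichartz exponents $\{2,4\}$ rather than $\{2,\infty\}$. First I would reduce to the case $A=\infty$: from the definition of $\phi$, the weights $\tfrac{|\Delta\phi_A|}{2}$ and $\tfrac{|\phi_A'|}{r}$ are bounded uniformly in $A\in[1,\infty]$, so they can be absorbed into the implicit constant in all three forms.

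The estimate \eqref{eq:multi-2-1} for $\mathcal{M}_{4,0}^{(A)}$ follows immediately from H\"older applied to $\int|\psi_1\psi_2\psi_3\psi_4|\,dx$: placing $\psi_i$ in $L^\infty$, $\psi_{i'}$ in $L^4$, and the remaining two factors in $L^{p_j}$, the scaling constraint $\sum_{j\neq i,i'}1/p_j=3/4$ comes from $\tfrac{1}{\infty}+\tfrac{1}{4}+\sum_{j\neq i,i'}\tfrac{1}{p_j}=1$; both admissible pairs $(p_j,p_{j'})\in\{(2,4),(4,2)\}$ satisfy this.

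For \eqref{eq:multi-2-2} and \eqref{eq:multi-2-3}, the key tool will be the Hardy-type pointwise identity already exploited in Lemma~\ref{lem:duality-1},
\[
\Big|\frac{1}{r}\int_0^r \Re(\psi_a\bar\psi_b)\,r'\,dr'\Big| \leq \int_0^r \bigl|\tfrac{\psi_a}{r'}\psi_b\bigr|\,r'\,dr',
\]
which combined with 2D Cauchy--Schwarz over $B_r$ yields the uniform bound
\[
\Big\|\frac{1}{r}\int_0^r \Re(\psi_a\bar\psi_b)\,r'\,dr'\Big\|_{L^\infty_r} \lesssim \|\psi_a/r\|_{L^2}\|\psi_b\|_{L^2}.
\]
This is the source of the norm $\|\psi_i/r\|_{L^2}$ whenever we apply it with $\psi_a=\psi_i$. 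For $\mathcal{M}_{4,1}^{(A)}$ I would split $\tfrac{1}{r^2}=\tfrac{1}{r}\cdot\tfrac{1}{r}$, apply the Hardy identity to the nonlocal integral (after a Fubini exchange when $i\in\{3,4\}$) to extract $\|\psi_i/r\|_{L^2}$, and close the outer integration by H\"older on $\R^2$ with $\psi_{i'}\in L^4$ and the remaining factor in $L^{p_j}\in\{L^2,L^4\}$. For $\mathcal{M}_6^{(A)}$ the same trick would be applied to exactly one of the two nonlocal integrals (routing $\psi_i$ there, possibly via Fubini), the other being bounded crudely by $|\int_0^r\Re(\psi_a\bar\psi_b)r'\,dr'|\leq\|\psi_a\|_{L^2}\|\psi_b\|_{L^2}$, and the outer integration is again closed by H\"older with $\psi_{i'}\in L^4$.

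The exponent constraints $\sum 1/p_j=3/4$ and $\sum 1/p_j=7/4$ are forced by scaling: under $\psi\mapsto\lambda\psi(\lambda\cdot)$, the forms $\mathcal{M}_{4,0}^{(\infty)}$, $\mathcal{M}_{4,1}^{(\infty)}$, $\mathcal{M}_6^{(\infty)}$ all scale as $\lambda^2$, while $\|\psi\|_{L^p}$ scales as $\lambda^{1-2/p}$ and $\|\psi/r\|_{L^2}$ scales as $\lambda$, and matching scalings on the two sides produces the stated relations. The hard part will be the case analysis for the various placements of $i,i'$ among the interior and outer slots of the nonlocal structure, especially when $i,i'$ lie on opposite sides of the two interior integrals of $\mathcal{M}_6^{(A)}$. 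The bookkeeping there---routing $\psi_i$ into the right interior integral via Fubini, and then distributing $L^2$- and $L^4$-factors in the outer H\"older so that the scaling constraint is realized for some choice of $p_j\in\{2,4\}$---is where I expect the bulk of the proof to lie, even though each individual case reduces to the same two tools.
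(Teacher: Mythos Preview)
Your reduction to $A=\infty$ and the treatment of $\mathcal{M}_{4,0}^{(A)}$ are correct and match the paper. For $\mathcal{M}_{4,1}^{(A)}$ and $\mathcal{M}_6^{(A)}$, however, there is a real gap: relying only on $L^\infty$ bounds for the averaging integrals does not close. Take for instance $\mathcal{M}_{4,1}$ with $(i,i')=(1,3)$. Your scheme gives $\big\|\tfrac{1}{r}\int_0^r\psi_1\overline{\psi_2}\,r'dr'\big\|_{L^\infty}\lesssim\|\psi_1/r\|_{L^2}\|\psi_2\|_{L^2}$ and leaves $\int_{\R^2}\tfrac{1}{r}|\psi_3\psi_4|$, which you then want to bound by $\|\psi_3\|_{L^4}\|\psi_4\|_{L^{p_4}}$ with $p_4\in\{2,4\}$. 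That would force $\tfrac{1}{r}\in L^{4}(\R^2)$ or $\tfrac{1}{r}\in L^2(\R^2)$, and both fail. The same obstruction appears in $\mathcal{M}_6$ whenever $\psi_{i'}$ sits outside the inner integral carrying $\psi_i$; and when $i,i'$ share an inner integral (e.g.\ $(i,i')=(1,2)$), the Hardy trick pairs $\psi_2$ with $\psi_1/r$ in Cauchy--Schwarz and outputs $\|\psi_2\|_{L^2}$, not the required $\|\psi_2\|_{L^4}$.

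The paper supplies the missing tool: the $L^{4/3}\to L^4$ bound
\[
\Big\|\frac{1}{r}\int_0^r |f|\,r'dr'\Big\|_{L^4}\lesssim\|f\|_{L^{4/3}},
\]
obtained by interpolating $\big\|\int_0^r|f|\,r'dr'\big\|_{L^\infty}\lesssim\|f\|_{L^1}$ with the Hardy inequality \eqref{eq:B.2}. This lets one place exactly one averaging factor in $L^4$ rather than $L^\infty$, so the outer H\"older runs with exponents $(\infty,4,\tfrac{4}{3})$ and every required $L^4$ slot can be filled. For example, with $(i,i')=(1,5)$ in $\mathcal{M}_6$ one takes $\int_0^r|\tfrac{\psi_1}{r'}\psi_2|r'dr'$ in $L^\infty$, $\tfrac{1}{r}\int_0^r|\psi_3\psi_4|r'dr'$ in $L^4$ via the new bound (yielding $\|\psi_3\|_{L^4}\|\psi_4\|_{L^2}$), and $\psi_5\psi_6$ in $L^{4/3}$. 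The case analysis you anticipate is then short---by symmetry it suffices to check $(i,i')\in\{(1,2),(1,3),(1,5),(5,1),(5,6)\}$---and no Fubini exchange is needed.
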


\begin{rem}
Due to expressions of $\mathcal{M}_{4,1}$ and $\mathcal{M}_{6}$,
we cannot choose arbitrary $p_{j}$'s satisfying the scaling condition
\eqref{eq:multi-2-scal-rel}. The choice of $p_{j}$'s \emph{depends}
on the pair $(i,i')$.
\end{rem}

\begin{proof}[Proof of Lemma \ref{lem:duality-2}]
In each case, it suffices to consider the case with $A=\infty$ since
we have $|\Delta\phi_{A}|+|\frac{\phi_{A}'}{r}|\lesssim1$ uniformly
in $A$. The estimate \eqref{eq:multi-2-1} clearly follows by H\"older's
inequality. We omit the proof of \eqref{eq:multi-2-2} and only show
\eqref{eq:multi-2-3}, as the former is easier than the latter. Henceforth,
we focus on \eqref{eq:multi-2-3} with $A=\infty$. Due to symmetry,
it suffices to consider five cases: $(i,i')\in\{(1,2),(1,3),(1,5),(5,1),(5,6)\}$.
We will rely on 
\[
\Big\|\frac{1}{r}\int_{0}^{r}|f|r'dr'\Big\|_{L^{4}}\lesssim\|f\|_{L^{\frac{4}{3}}}\quad\text{and}\quad\Big\|\int_{0}^{r}|f|r'dr'\Big\|_{L^{\infty}}\lesssim\|f\|_{L^{1}},
\]
where the first inequality is obtained by interpolating the second
one and \eqref{eq:B.2}.

In case of $(i,i')=(1,2)$, we estimate as 
\begin{align*}
|\mathcal{M}_{6}| & \lesssim\Big\|\frac{1}{r}\int_{0}^{r}\Big|\frac{\psi_{1}}{r'}\cdot\psi_{2}\Big|r'dr'\Big\|_{L^{4}}\Big\|\int_{0}^{r}|\psi_{3}\psi_{4}|r'dr'\Big\|_{L^{\infty}}\|\psi_{5}\psi_{6}\|_{L^{\frac{4}{3}}}\\
 & \lesssim\|r^{-1}\psi_{1}\|_{L^{2}}\|\psi_{2}\|_{L^{4}}\|\psi_{3}\|_{L^{2}}\|\psi_{4}\|_{L^{2}}\|\psi_{5}\|_{L^{4}}\|\psi_{6}\|_{L^{2}}.
\end{align*}
In case of $(i,i')\in\{(1,3),(1,5)\}$, we estimate as 
\begin{align*}
|\mathcal{M}_{6}| & \lesssim\Big\|\int_{0}^{r}\Big|\frac{\psi_{1}}{r'}\cdot\psi_{2}\Big|r'dr'\Big\|_{L^{\infty}}\Big\|\frac{1}{r}\int_{0}^{r}|\psi_{3}\psi_{4}|r'dr'\Big\|_{L^{4}}\|\psi_{5}\psi_{6}\|_{L^{\frac{4}{3}}}\\
 & \lesssim\|r^{-1}\psi_{1}\|_{L^{2}}\|\psi_{2}\|_{L^{2}}\|\psi_{3}\|_{L^{4}}\|\psi_{4}\|_{L^{2}}\|\psi_{5}\|_{L^{4}}\|\psi_{6}\|_{L^{2}}.
\end{align*}
In case of $(i,i')\in\{(5,1),(5,6)\}$, we estimate as 
\begin{align*}
|\mathcal{M}_{6}| & \lesssim\Big\|\frac{1}{r}\int_{0}^{r}|\psi_{1}\psi_{2}|r'dr'\Big\|_{L^{4}}\Big\|\int_{0}^{r}|\psi_{3}\psi_{4}|r'dr'\Big\|_{L^{\infty}}\Big\|\frac{\psi_{5}}{r}\cdot\psi_{6}\Big\|_{L^{\frac{4}{3}}}\\
 & \lesssim\|\psi_{1}\|_{L^{4}}\|\psi_{2}\|_{L^{2}}\|\psi_{3}\|_{L^{2}}\|\psi_{4}\|_{L^{2}}\|r^{-1}\psi_{5}\|_{L^{2}}\|\psi_{6}\|_{L^{4}}.
\end{align*}
This completes the proof.
\end{proof}
\begin{lem}[$L^{\frac{4}{3}}$-estimate of $\mathcal{N}$]
\label{lem:4/3-est-N}Let $m\geq1$. For any possible choice of $\ast$
in \eqref{eq:def-N} and any index $i$, there exist $p_{j}\in\{2,4\}$
($j\neq i$) depending on $\ast$ and $i$ satisfying 
\begin{align}
\|\mathcal{N}_{3,\ast}\|_{L^{\frac{4}{3}}} & \lesssim\|\psi_{i}\|_{\dot{H}_{m}^{1}}\prod_{j\neq i}\|\psi_{j}\|_{L^{p_{j}}},\label{eq:N-est-3}\\
\|\mathcal{N}_{5,\ast}\|_{L^{\frac{4}{3}}} & \lesssim\|\psi_{i}\|_{\dot{H}_{m}^{1}}\prod_{j\neq i}\|\psi_{j}\|_{L^{p_{j}}},\label{eq:N-est-4}
\end{align}
and the scaling conditions 
\begin{equation}
\sum_{j\neq i}\frac{1}{p_{j}}=\begin{cases}
\frac{3}{4} & \text{for \eqref{eq:N-est-3}},\\
\frac{7}{4} & \text{for }\eqref{eq:N-est-4}.
\end{cases}\label{eq:multi-2-scal-rel-1}
\end{equation}
\end{lem}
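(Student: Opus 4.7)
The plan is to derive both estimates by an $L^{4/3}$--$L^{4}$ duality argument that reduces matters directly to the multilinear duality bounds of Lemma \ref{lem:duality-2}. First, for any fixed choice of arguments I would write
\[
\|\mathcal{N}_{3,\ast}(\psi_{1},\psi_{2},\psi_{3})\|_{L^{4/3}} \;=\; \sup_{\|\varphi\|_{L^{4}}\leq 1}\, (\mathcal{N}_{3,\ast}(\psi_{1},\psi_{2},\psi_{3}),\varphi)_{r},
\]
and likewise for $\mathcal{N}_{5,\ast}$ with five arguments, absorbing the $L^{4/3}$ norm into a bilinear pairing with a test function $\varphi\in L^{4}$. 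By the duality relations \eqref{eq:duality-rel}, this pairing is, up to a sign and an at most a permutation of the arguments, a value of $\mathcal{M}_{4,0}$, $\mathcal{M}_{4,1}$, or $\mathcal{M}_{6}$ in which $\varphi$ occupies one specific slot determined by the particular $\mathcal{N}_{\ast}$.

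I would then invoke Lemma \ref{lem:duality-2}, choosing the distinguished indices $(i,i')$ so that $i'$ is the slot of $\varphi$ and $i$ is the slot of the given $\psi_{i}$ (both after the permutation). The resulting bound contains $\|\psi_{i'}\|_{L^{4}}=\|\varphi\|_{L^{4}}\leq 1$, which the supremum absorbs; the distinguished factor at slot $i$ is either $\|\psi_{i}\|_{L^{\infty}}$ (when the pairing produces $\mathcal{M}_{4,0}$) or $\|\psi_{i}/r\|_{L^{2}}$ (when it produces $\mathcal{M}_{4,1}$ or $\mathcal{M}_{6}$). The Hardy-Sobolev embedding \eqref{eq:hardy-Sobolev}, valid since $m\geq 1$, converts either quantity into $\|\psi_{i}\|_{\dot{H}_{m}^{1}}$. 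The remaining factors $\prod_{j\neq i,i'}\|\psi_{j}\|_{L^{p_{j}}}$ have $p_{j}\in\{2,4\}$ with $\sum_{j\neq i,i'}1/p_{j}$ equal to $3/4$ in the trilinear case and $7/4$ in the quintilinear case by \eqref{eq:multi-2-scal-rel}; this is exactly the scaling identity \eqref{eq:multi-2-scal-rel-1} for the product over $j\neq i$ demanded by Lemma \ref{lem:4/3-est-N}, since $\varphi$ no longer appears on the right-hand side.

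There is no substantive obstacle; the argument is mechanical once Lemma \ref{lem:duality-2} is available, in direct parallel with how Lemma \ref{lem:L2-est-N} was deduced from Lemma \ref{lem:duality-1}. The only point requiring care is that the identities \eqref{eq:duality-rel} reorder the arguments for $\mathcal{N}_{3,2}$ and $\mathcal{N}_{5,2}$, so one must track which slot $\varphi$ lands in (slot $2$ for $\mathcal{N}_{3,2}$, slot $6$ for $\mathcal{N}_{5,2}$) and set $i'$ accordingly before invoking Lemma \ref{lem:duality-2}. Since that lemma allows an arbitrary distinct pair $(i,i')$, this reshuffling is harmless, and the $p_{j}$'s of Lemma \ref{lem:4/3-est-N} are inherited directly from those produced by Lemma \ref{lem:duality-2}.
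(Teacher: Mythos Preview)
Your proposal is correct and follows essentially the same route as the paper: the paper's own proof simply says the result follows from Lemma~\ref{lem:duality-2} and the duality relations~\eqref{eq:duality-rel}, omitting the details you have spelled out. One tiny bookkeeping slip: for $\mathcal{N}_{5,2}$ the test function $\varphi=\psi_{6}$ lands in slot~$4$ of $\mathcal{M}_{6}$ (not slot~$6$), but as you note this is harmless since Lemma~\ref{lem:duality-2} allows any distinct pair $(i,i')$.
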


\begin{proof}
The proof follows from Lemma \ref{lem:duality-2} and duality relations
\eqref{eq:duality-rel}. We omit the details.
\end{proof}

\subsection{\label{subsec:Dynamic-Rescaling}Dynamic rescaling}

In works of modulation analysis, one introduces modulation parameters
$\lambda(t)$, $\gamma(t)$, and rescale the spacetime variables $(t,x)$
to $(s,y)$. We use $\lambda(t)$ as a scaling parameter function
and define the rescaled variables $(s,y)$ by\footnote{Note that $s=s(t)$ is defined up to addition of constants. Only the
difference $s(t_{1})-s(t_{2})$ is of importance.} 
\[
\frac{ds}{dt}=\frac{1}{\lambda^{2}(t)};\qquad y\coloneqq\frac{x}{\lambda(t)}.
\]

We work with solutions $u(t,x)$ such that 
\[
u(t,x)\approx\frac{1}{\lambda(t)}Q\Big(\frac{x}{\lambda(t)}\Big)e^{i\gamma(t)}
\]
for some fixed profile $Q$, and time-dependent scaling and phase
parameters $\lambda(t)$ and $\gamma(t)$. Note that $u$ is a function
of $(t,x)$ but $Q$ is a function of $y$. It is thus convenient
to introduce notations switching between the $(t,x)$ and $(s,y)$-variables.

\subsubsection*{Introduction of $\sharp$ and $\flat$}

Let $\lambda$ and $\gamma$ be given. When $f$ is a function of
$y$, we use the \emph{raising} operation $\sharp$ to convert $f$
to a function of $x$. Similarly, for a function $g(x)$, we use the
\emph{lowering} operation $\flat$ to convert $g$ to a function of
$y$. We define $\sharp$ and $\flat$ via the formulae\footnote{Musical notations $\sharp$ and $\flat$ are standard in tensor calculus.
In our setting, we use $\sharp$ and $\flat$ with completely different
meaning; we use them to indicate on which scales we view our functions.
But we think that $\sharp$ and $\flat$ in our setting still shares
same spirit with those used in tensor calculus.} 
\begin{align*}
f^{\sharp}(x) & \coloneqq\frac{1}{\lambda}f\Big(\frac{x}{\lambda}\Big)e^{i\gamma},\\
g^{\flat}(y) & \coloneqq\lambda g(\lambda y)e^{-i\gamma}.
\end{align*}
The rasing/lowering operations respect $L^{2}$-scalings in the following
sense: 
\begin{align*}
\Lambda f^{\sharp} & =[\Lambda f]^{\sharp}; & if^{\sharp} & =[if]^{\sharp}; & \|f^{\sharp}\|_{L^{2}} & =\|f\|_{L^{2}};\\
\Lambda g^{\flat} & =[\Lambda g]^{\flat}; & ig^{\flat} & =[ig]^{\flat}; & \|g^{\flat}\|_{L^{2}} & =\|g\|_{L^{2}}.
\end{align*}
Moreover, we have 
\begin{align*}
\Delta f^{\sharp} & =\frac{1}{\lambda^{2}}[\Delta f]^{\sharp}; & \mathcal{N}(f^{\sharp}) & =\frac{1}{\lambda^{2}}[\mathcal{N}(f)]^{\sharp}; & \mathcal{M}(f^{\sharp}) & =\frac{1}{\lambda^{2}}\mathcal{M}(f);\\
\Delta g^{\flat} & =\lambda^{2}[\Delta g]^{\flat}; & \mathcal{N}(g^{\flat}) & =\lambda^{2}[\mathcal{N}(g)]^{\flat}; & \mathcal{M}(g^{\flat}) & =\lambda^{2}\mathcal{M}(g).
\end{align*}

In Sections \ref{sec:modulation} and \ref{sec:Bootstrap}, we also
use the notation 
\[
f_{b}(y)\coloneqq f(y)e^{-ib\frac{|y|^{2}}{4}}.
\]
Note that $[f_{b}]^{\sharp}\neq[f^{\sharp}]_{b}$ in general. As we
will not use $[f^{\sharp}]_{b}$ in any places, we will use the notation
\[
f_{b}^{\sharp}\coloneqq[f_{b}]^{\sharp}.
\]

\subsubsection*{Introduction of the rescaled variables $(s,y)$}

Define the rescaled variables $(s,y)$ by 
\[
\frac{ds}{dt}=\frac{1}{\lambda^{2}(t)};\qquad y\coloneqq\frac{x}{\lambda(t)}.
\]
We will work both on the $(t,x)$ and $(s,y)$-variables. When we
apply $\sharp$ or $\flat$ operations on time-dependent functions,
we presume that the time variables $t$ or $s$ are changed in a suitable
fashion. For example, $\sharp$ acts on $f(s,y)$ as 
\[
f^{\sharp}(t,x)=\frac{1}{\lambda(t)}f(s(t),\frac{x}{\lambda(t)})e^{i\gamma(t)}.
\]
The following formulae are useful for converting evolutions on $(t,x)$-variables
to those on $(s,y)$-variables, and vice versa.
\begin{align*}
\partial_{t}f^{\sharp} & =\frac{1}{\lambda^{2}}\Big[\partial_{s}f-\frac{\lambda_{s}}{\lambda}\Lambda f+i\gamma_{s}f\Big]^{\sharp},\\
\partial_{s}g^{\flat} & =\lambda^{2}\Big[\partial_{t}g+\frac{\lambda_{t}}{\lambda}\Lambda g-i\gamma_{t}g\Big]^{\flat}.
\end{align*}
For example, if $u(t,x)$ solves \eqref{eq:CSS}, then we have 
\begin{align*}
i\partial_{t}u+\Delta_{m}u & =\mathcal{N}(u),\\
i\partial_{s}u^{\flat}+\Delta_{m}u^{\flat} & =\mathcal{N}(u^{\flat})+i\frac{\lambda_{s}}{\lambda}\Lambda u^{\flat}+\gamma_{s}u^{\flat}.
\end{align*}

\subsection{Local theory of \eqref{eq:CSS} under equivariance}

The discussion in this subsection holds for any $g\in\R$ and $m\in\Z$.
Under the equivariance, $L^{2}$-critical local well-posedness of
\eqref{eq:CSS-coulomb-phi} is shown in Liu-Smith \cite[Section 2]{LiuSmith2016}
using a standard application of Strichartz estimates. Note that without
equivariance ansatz, local theory of \eqref{eq:CSS-cov} was developed
by varioius authors \cite{BergeDeBouardSaut1995Nonlinearity,Huh2013Abstr.Appl.Anal,Lim2018JDE}
(under Coulomb gauge) and \cite{LiuSmithTataru2014IMRN} (under heat
gauge). The aim of this subsection is to record the Strichartz estimates
and Cauchy theory of \eqref{eq:CSS-coulomb-phi} under the equivariance.
Specializing to the self-dual case $g=1$, the local theory of \eqref{eq:CSS}
follows.

A pair $(q,r)$ is said to be \emph{admissible} if $2\leq q,r\leq\infty$,
$\frac{1}{q}+\frac{1}{r}=\frac{1}{2}$, and $(q,r)\neq(2,\infty)$.
We have the well-known Strichartz estimates:
\begin{lem}[Strichartz estimates]
\label{lem:Strichartz}Let $\phi:I\times\R^{2}\to\C$ solve 
\[
\left\{ \begin{aligned}i\partial_{t}\phi+\Delta\phi & =F,\\
\phi(0) & =\phi_{0},
\end{aligned}
\right.
\]
in the sense of Duhamel. For any admissible $(q,r)$ and $(\tilde q,\tilde r)$,
we have 
\begin{equation}
\|\phi\|_{L_{I}^{q}L_{x}^{r}}\lesssim_{q,r,\tilde q,\tilde r}\|\phi_{0}\|_{L^{2}}+\|F\|_{L_{I}^{\tilde q'}L_{x}^{\tilde r'}},\label{eq:Strichartz}
\end{equation}
where $\tilde q'$ and $\tilde r'$ are conjugate Lebesgue exponents
for $\tilde q$ and $\tilde r$, respectively.
\end{lem}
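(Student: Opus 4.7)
The plan is to invoke the standard Strichartz theory for the free Schr\"odinger equation on $\R^2$, since the statement does not depend on the equivariance structure; the operator is just $i\partial_t+\Delta$ and the inhomogeneity $F$ is arbitrary. Because the endpoint pair $(q,r)=(2,\infty)$ is excluded, there is no need for the Keel--Tao machinery and the classical argument of Ginibre--Velo / Strichartz / Yajima suffices. In fact I would simply cite this as a well-known result, but if one wants a self-contained sketch, the following three ingredients give the whole lemma.

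First, the dispersive estimate. The free propagator on $\R^2$ has the explicit kernel
\[
(e^{it\Delta}\phi_0)(x)=\frac{1}{4\pi i t}\int_{\R^2}e^{i|x-y|^2/4t}\phi_0(y)\,dy,
\]
which gives the pointwise decay $\|e^{it\Delta}\phi_0\|_{L^\infty_x}\lesssim |t|^{-1}\|\phi_0\|_{L^1_x}$. Interpolating with the $L^2$-isometry $\|e^{it\Delta}\phi_0\|_{L^2_x}=\|\phi_0\|_{L^2_x}$ yields
\[
\|e^{it\Delta}\phi_0\|_{L^r_x}\lesssim |t|^{-(1-\tfrac{2}{r})}\|\phi_0\|_{L^{r'}_x},\qquad 2\le r\le\infty.
\]

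Second, the homogeneous Strichartz estimate. A standard $TT^*$ argument reduces $\|e^{it\Delta}\phi_0\|_{L^q_tL^r_x}\lesssim\|\phi_0\|_{L^2}$ to the bound
\[
\Bigl\|\int e^{i(t-s)\Delta}F(s)\,ds\Bigr\|_{L^q_tL^r_x}\lesssim \|F\|_{L^{q'}_tL^{r'}_x}.
\]
Using the above dispersive decay inside the $s$-integral and applying the one-dimensional Hardy--Littlewood--Sobolev inequality in $t$ establishes this for every admissible pair with $q>2$, which is exactly the allowed range.

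Third, the inhomogeneous estimate with two distinct admissible pairs. By the homogeneous Strichartz bound and duality one obtains
\[
\Bigl\|\int_0^t e^{i(t-s)\Delta}F(s)\,ds\Bigr\|_{L^q_tL^r_x}\lesssim \|F\|_{L^{\tilde q'}_tL^{\tilde r'}_x}
\]
when $(q,r)=(\tilde q,\tilde r)$; the extension to \emph{different} admissible pairs follows from the Christ--Kiselev lemma, again using $q,\tilde q>2$. Combined with the homogeneous piece applied to $\phi_0$, Duhamel's formula delivers \eqref{eq:Strichartz}. The only mild subtlety is checking the Christ--Kiselev hypothesis $q>\tilde q'$, but this is automatic away from the endpoint. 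No genuine obstacle arises, so in the paper it is entirely reasonable to state the lemma and refer the reader to the standard references.
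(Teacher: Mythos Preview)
Your proposal is correct, and indeed the paper does exactly what you anticipate at the end: it states the lemma without proof, treating it as the classical Strichartz estimate for the free Schr\"odinger equation on $\R^2$ and only adding a remark (with references) about the endpoint case under equivariance. Your sketch via dispersive decay, $TT^*$, Hardy--Littlewood--Sobolev, and Christ--Kiselev is the standard route and would serve perfectly well if a proof were to be included.
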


\begin{rem}[Endpoint Strichartz estimate]
If we further assume that $\phi$ is equivariant, then the Strichartz
estimates \eqref{eq:Strichartz} hold for the endpoint pairs $(q,r)=(2,\infty)$
or $(\tilde q,\tilde r)=(2,\infty)$. See the discussion \cite[Section 2]{LiuSmith2016},
which is based on \cite{Stefanov2001Proc.AMS,Tao2000CommPDE}.
\end{rem}

Until the end of this subsection, $\phi_{0}^{(n)}$ (with $n\in\N$)
and $\phi_{0}$ will always denote initial data. Corresponding maximal
lifespan evolution guaranteed by Propositions \ref{prop:L2-Cauchy}
and \ref{prop:Hs-Cauchy} with the initial data $\phi_{0}^{(n)}$
and $\phi_{0}$ will be denoted by $\phi^{(n)}:I_{n}\times\R^{2}\to\C$
and $\phi:I\times\R^{2}\to\C$, respectively.
\begin{prop}[{$L^{2}$-critical Cauchy theory \cite[Section 2]{LiuSmith2016}\footnote{Compared to the statement given in \cite{LiuSmith2016}, we give a
qualitative version of continuous dependence. We also include stability
of scattering solutions.}}]
\label{prop:L2-Cauchy}Let $m\in\Z$ and $t_{0}\in\R$.
\begin{enumerate}
\item (Local existence and uniqueness) For any $\phi_{0}\in L_{m}^{2}$,
there exists a unique maximal lifespan solution $\phi:I\times\R^{2}\to\C$
in $L_{I}^{\infty}L_{m}^{2}\cap L_{I,\mathrm{loc}}^{4}L_{x}^{4}$
to \eqref{eq:CSS-coulomb-phi} with the initial data $\phi(t_{0})=\phi_{0}$.
Here, $I$ is an open interval containing $t_{0}$. Moreover, $\phi$
indeed lies in $C_{I,\mathrm{loc}}L_{m}^{2}$ and $L_{I,\mathrm{loc}}^{q}L_{x}^{r}$
for all admissible pairs $(q,r)$.
\item (Scattering criterion) Solution $u$ scatters forward in time if and
only if $\|\phi\|_{L_{[t_{0},\sup I)}^{4}L_{x}^{4}}<\infty$. If this
is the case, then we have $\sup I=+\infty$. Analogous statement holds
for scattering backward in time.
\item (Small data scattering) There exists $\epsilon>0$ such that if $\|\phi_{0}\|_{L_{x}^{2}}\leq\epsilon$,
then the solution $\phi$ satisfies $\|\phi\|_{L_{t,x}^{4}}<\infty$.
In particular, $u$ is global and scatters in both time directions.
\item (Continuous dependence and stability of scattering solutions) Assume
that $\{\phi_{0}^{(n)}\}_{n\in\N}\subset L_{m}^{2}$ converges to
$\phi_{0}$ strongly in $L_{m}^{2}$. Then, $\liminf_{n\to\infty}I_{n}\supseteq I$.
Moreover, $\phi^{(n)}\to\phi$ in $C_{I,\mathrm{loc}}L_{m}^{2}$ and
$L_{I,\mathrm{loc}}^{q}L_{x}^{r}$ topology for all admissible pairs
$(q,r)$. If furthermore $\phi$ scatters forward in time in $L_{m}^{2}$,
then so does $\phi^{(n)}$ for all large $n$. Analogous statements
holds for scattering backward in time.
\end{enumerate}
\end{prop}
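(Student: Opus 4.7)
The plan is to run a standard Strichartz-based fixed point argument in the equivariant class, handling the nonlocal/quintilinear structure of $\mathcal{N}$ via duality estimates analogous to Lemma~\ref{lem:duality-2}, but re-cast for $L^4_{t,x}$ rather than $L^{4/3}_x$. First, I would record the Duhamel formulation
\[
\phi(t) = e^{i(t-t_0)\Delta_m}\phi_0 - i\int_{t_0}^t e^{i(t-\tau)\Delta_m}\mathcal{N}(\phi)(\tau)\, d\tau,
\]
and set up the contraction in the space
\[
X(I) \coloneqq \{\phi \in C_I L^2_m \cap L^4_I L^4_x : \|\phi\|_{X(I)} \coloneqq \|\phi\|_{L^\infty_I L^2_x} + \|\phi\|_{L^4_{I,x}} < \infty\},
\]
using the (endpoint) Strichartz estimates available under equivariance. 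The central analytic input is a spacetime multilinear bound
\[
\|\mathcal{N}(\phi_1,\ldots,\phi_k)\|_{L^{4/3}_{I,x}} \lesssim \prod_{j=1}^{k} \|\phi_j\|_{L^4_{I,x}} \cdot (\text{scale-matching nonlocal factor}),
\]
which for the trilinear piece $\mathcal{N}_{3,0} = -|\phi|^2\phi$ is immediate from H\"older, and for $\mathcal{N}_{3,1},\mathcal{N}_{3,2},\mathcal{N}_{5,1},\mathcal{N}_{5,2}$ follows exactly as in the proofs of Lemmas~\ref{lem:duality-2}--\ref{lem:4/3-est-N}, by distributing the $\frac{1}{r}$ factors inside the integrals and using $\|r^{-1}\int_0^r \cdot \, r' dr'\|_{L^p} \lesssim \|\cdot\|_{L^{p}}$-type inequalities combined with H\"older in time.

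Given this, parts (1)--(3) follow the usual mass-critical template. For (1), choose $I = [t_0, t_0+T]$ small enough that $\|e^{i(t-t_0)\Delta_m}\phi_0\|_{L^4_{I,x}}$ is small (possible by Strichartz plus density, e.g.\ approximating $\phi_0$ by Schwartz data and using dominated convergence), and close the contraction on the ball of $X(I)$; the same argument in reverse on $[t_0-T, t_0]$ yields the open interval $I$. Uniqueness is then inherited from the contraction on any subinterval via a continuity/bootstrap argument on $\|\phi\|_{L^4_{J,x}}$. For (2), standard blow-up alternative: if $\|\phi\|_{L^4_{[t_0,\sup I), x}} < \infty$, Strichartz gives uniform control in $L^\infty L^2_m \cap L^q_t L^r_x$ on $[t_0, \sup I)$, so $\phi(\sup I^-)$ exists in $L^2_m$ and the local theory extends past $\sup I$, contradicting maximality unless $\sup I = +\infty$; then $L^4_{t,x}$ smallness at $+\infty$ gives scattering by comparing with the free evolution through Strichartz. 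For (3), take $I = \mathbb R$ and absorb the Duhamel term when $\|\phi_0\|_{L^2}$ is sufficiently small, so that $\|\phi\|_{L^4_{t,x}(\mathbb R \times \mathbb R^2)}$ is finite, and then (2) gives global existence and scattering.

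For (4), continuous dependence reduces to a stability lemma: if $\tilde\phi$ solves the same equation with initial data $\tilde\phi_0$ close to $\phi_0$ and $\phi$ has finite $L^4_{I,x}$ norm on a compact subinterval $I' \Subset I$, partition $I'$ into finitely many subintervals on which $\|\phi\|_{L^4}$ is small (depending only on the multilinear constants), and on each subinterval run a Duhamel difference estimate using the multilinear bound applied to $\phi - \tilde\phi$ times two or four factors of $(\phi, \tilde\phi)$. Iterating from $t_0$ forward (and backward) propagates the closeness; hence $I_n \supseteq I'$ eventually and $\phi^{(n)} \to \phi$ in $L^4_{I',x} \cap C_{I'} L^2_m$. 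Since $I'$ is arbitrary, this gives $\liminf I_n \supseteq I$ and the claimed local convergence. The stability-of-scattering assertion is then the same stability lemma applied on $[t_0, +\infty)$, using finiteness of $\|\phi\|_{L^4_{t,x}}$ there.

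The main technical obstacle, and the only point where one must go beyond Lemma~\ref{lem:duality-2} as stated, is the spacetime multilinear estimate for the quintilinear nonlocal pieces $\mathcal{N}_{5,1}, \mathcal{N}_{5,2}$: their $L^{4/3}_{I,x}$ bound requires a bit of care in distributing $L^4_x$ and $L^2_x$ factors so that the total time-integrability matches $L^{4/3}_t$ after H\"older in $t$ (using $\frac{1}{4/3} = \frac{3}{4} = 3\cdot\frac{1}{4}$ in the trilinear case and, for the quintic terms, that two factors are placed in $L^\infty_t L^2_x$ and three in $L^4_{t,x}$). Once that bookkeeping is done, everything else is routine.
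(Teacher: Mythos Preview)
Your plan is correct and is the standard mass-critical contraction argument; the paper itself does not prove this proposition but cites Liu--Smith, recording only the key nonlinear estimate as Lemma~\ref{lem:L2-nonlinearity-est} in Appendix~\ref{sec:Local-Theory}. Two small corrections to your write-up are worth making. First, Lemmas~\ref{lem:duality-2}--\ref{lem:4/3-est-N} as stated place a $\dot H^1_m$ norm on one factor and are therefore not the right reference for a purely $L^2$-level theory; the relevant bound is Lemma~\ref{lem:L2-nonlinearity-est}, which estimates $\|\mathcal N(\phi)\|_{L^{4/3}_{t,x}}$ directly through $A_\theta$ and $A_0$ using the Hardy-type inequalities \eqref{eq:B.1}--\eqref{eq:B.3}. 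Second, the pointwise inequality you invoke should be $\|r^{-2}\int_0^r f\,r'\,dr'\|_{L^2(\R^2)}\lesssim\|f\|_{L^2(\R^2)}$ (this is \eqref{eq:B.2}), not $r^{-1}$; the $r^{-1}$ version fails in $L^2(\R^2)$. With that fixed, your scaling bookkeeping---three $L^4_{t,x}$ factors for the cubic pieces, and two $L^\infty_t L^2_x$ plus three $L^4_{t,x}$ factors for the quintic pieces---exactly reproduces Lemma~\ref{lem:L2-nonlinearity-est}, and parts (1)--(4) then follow by the standard template you outline.
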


We will also need $H_{m}^{s}$-subcritical local theory for $s>0$.
As the proof is not explicitly given in \cite{LiuSmith2016}, we provide
a brief sketch of the proof in Appendix \ref{sec:Local-Theory}.
\begin{prop}[$H_{m}^{s}$-subcritical Cauchy theory]
\label{prop:Hs-Cauchy}Let $m\in\Z$, $t_{0}\in\R$, and $s>0$.
\begin{enumerate}
\item (Local existence, uniqueness, and persistence of regularity) For any
$\phi_{0}\in H_{m}^{s}$, there exists a unique maximal lifespan solution
$\phi:I\times\R^{2}\to\C$ in $L_{I,\mathrm{loc}}^{\infty}H_{m}^{s}\cap L_{I,\mathrm{loc}}^{4}B_{4,2}^{s}$
to \eqref{eq:CSS-coulomb-phi} with the initial data $\phi(t_{0})=\phi_{0}$.
Here, $I$ is an open interval containing $t_{0}$ and equal to that
given in Proposition \ref{prop:L2-Cauchy}. Moreover, $\phi$ indeed
lies in $C_{I,\mathrm{loc}}H_{m}^{s}$ and $L_{I,\mathrm{loc}}^{q}B_{r,2}^{s}$
for any admissible pairs $(q,r)$.
\item (Local existence in subcritical sense) For any $R>0$, there exists
$\delta>0$ such that we can guarantee $I\supseteq[t_{0}-\delta,t_{0}+\delta]$
whenever the initial data satisfies $\|\phi_{0}\|_{H_{m}^{s}}\leq R$.
\item (Continuous dependence and stability of scattering solutions) The
corresponding statement in Proposition \ref{prop:L2-Cauchy} holds
with $L_{m}^{2}$, $C_{I,\mathrm{loc}}L_{m}^{2}$, and $L_{I,\mathrm{loc}}^{q}L_{x}^{r}$
replaced by $H_{m}^{s}$, $C_{I,\mathrm{loc}}H_{m}^{s}$, and $L_{I,\mathrm{loc}}^{q}B_{r,2}^{s}$,
respectively.
\item (Finite-time blowup criterion) If $\sup I<+\infty$, then $\|\phi(t)\|_{H_{m}^{s}}\to\infty$
as $t\to\sup I$. One can replace $\sup I$ by $\inf I$ to obtain
the analogous statement.
\end{enumerate}
\end{prop}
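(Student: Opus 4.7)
The plan is to bootstrap the $L^2$-critical theory of Proposition \ref{prop:L2-Cauchy} up to the $H_m^s$-subcritical level, using fractional Strichartz estimates together with Besov-refined versions of the multilinear bounds already established in Lemmas \ref{lem:L2-est-N} and \ref{lem:4/3-est-N}. The central point is that for $s>0$ the regularity $H_m^s$ is \emph{subcritical} relative to the scaling of \eqref{eq:CSS}, so a contraction mapping in a Strichartz-type space should yield a local solution on a time interval whose length depends only on $\|\phi_0\|_{H_m^s}$, and then persistence of regularity will force the $H_m^s$-lifespan to coincide with the $L_m^2$-lifespan $I$ from Proposition \ref{prop:L2-Cauchy}.

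First I would establish the nonlinear estimate in Besov form: for any $s>0$ there exist admissible pairs $(q,r),(\tilde q,\tilde r)$ (with the equivariant endpoint $(2,\infty)$ available) such that
\begin{equation*}
\|\mathcal{N}(u)\|_{L_I^{\tilde q'} B^{s}_{\tilde r',2}} \lesssim \|u\|_{L_I^\infty H_m^s}^{a}\,\|u\|_{L_I^{4} B^{s}_{4,2}}^{b}\,\|u\|_{L_{I}^{4} L_x^4}^{c},
\end{equation*}
with $a+b+c$ matching the trilinear/quintilinear degree of $\mathcal{N}$. The proof of this bound reduces, via the duality relations \eqref{eq:duality-rel}, to a fractional Leibniz computation on the multilinear forms $\mathcal{M}_{4,0}$, $\mathcal{M}_{4,1}$, $\mathcal{M}_{6}$; since these expressions involve the nonlocal operators $\tfrac{1}{r}\int_0^r(\cdot)r'dr'$ and $\int_r^\infty(\cdot)\tfrac{dr'}{r'}$, which are bounded on $L^p$ for $1<p<\infty$ (essentially one-dimensional Hardy operators), one moves the fractional derivative onto whichever factor is placed in $B^s_{r,2}$, exactly as in Lemmas \ref{lem:duality-1} and \ref{lem:duality-2}, paying no extra cost from the nonlocality. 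The main technical obstacle, and where most of the work sits, is verifying this Leibniz rule for the quintic terms $\mathcal{N}_{5,1},\mathcal{N}_{5,2}$ together with a genuine gain of a positive power of the interval length $|I|$ (through the subcritical Hölder exponent), so that the contraction closes for intervals of length depending only on $\|\phi_0\|_{H_m^s}$.

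With the above estimate in hand, part (1) and part (2) follow by a standard contraction argument in
\begin{equation*}
X_I^s \coloneqq \{u\in C_I H_m^s : \|u\|_{L_I^\infty H_m^s} + \|u\|_{L_I^4 B_{4,2}^s}<\infty\},
\end{equation*}
combined with endpoint Strichartz \eqref{eq:Strichartz} for equivariant data. Persistence of regularity is then obtained by patching: on the $L_m^2$-maximal interval $I$ from Proposition \ref{prop:L2-Cauchy} one has $\|\phi\|_{L_{J}^4 L_x^4}<\infty$ on every compact $J\Subset I$, which by the Besov estimate above together with a standard Grönwall-in-Strichartz argument upgrades to $\|\phi\|_{L_J^4 B_{4,2}^s}<\infty$ and $\phi\in C_J H_m^s$; the two maximal intervals therefore agree.

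Finally, part (3) is proved by running the same contraction for the difference $\phi^{(n)}-\phi$, using that $\phi_0^{(n)}\to\phi_0$ in $H_m^s$ implies (via part (4) of Proposition \ref{prop:L2-Cauchy}) convergence in the $L^4_{J,x}$ norm on any $J\Subset I$, so the Besov difference estimate closes and gives convergence in $C_{I,\mathrm{loc}}H_m^s\cap L^q_{I,\mathrm{loc}}B^s_{r,2}$. Stability of scattering is then immediate from the $L^4_{t,x}$-scattering criterion of Proposition \ref{prop:L2-Cauchy}: if $\phi$ scatters, then $\|\phi\|_{L_{[t_0,\infty)}^4 L_x^4}<\infty$, and applying the Besov estimate globally forward in time gives $\phi\in L_{[t_0,\infty)}^4 B_{4,2}^s$, from which a perturbation of $\phi^{(n)}$ against $\phi$ on $[t_0,\infty)$ for $n$ large produces the same conclusion for $\phi^{(n)}$. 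For part (4), if $\sup I<+\infty$ yet $\limsup_{t\to\sup I}\|\phi(t)\|_{H_m^s}<\infty$ were finite, the subcritical existence statement in part (2) applied at a time close to $\sup I$ would extend the solution past $\sup I$, contradicting maximality.
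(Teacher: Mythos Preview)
Your overall architecture---contraction in a Strichartz--Besov space, subcritical lifespan via a H\"older-in-time gain, persistence of regularity by bootstrapping on the $L^2$-maximal interval, and the blow-up criterion by contradiction---is the same as the paper's.  The gap is in the justification of the nonlinear Besov estimate.

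You claim that because the nonlocal pieces $\frac{1}{r^2}\int_0^r(\cdot)r'\,dr'$ and $\int_r^\infty(\cdot)\frac{dr'}{r'}$ are $L^p$-bounded Hardy-type operators, one can ``move the fractional derivative onto whichever factor is placed in $B^s_{r,2}$ \dots\ paying no extra cost from the nonlocality.''  This is not warranted: $L^p$-boundedness of an operator $T$ says nothing about how $P_k\circ T$ interacts with the frequency supports of the inputs, and a fractional Leibniz rule requires precisely that $T$ not produce high-frequency output from purely low-frequency input.  For the local cubic term $|\phi|^2\phi$ this is standard paraproduct calculus, but for $\frac{1}{r^2}A_\theta[\phi]$ and $A_0[\phi]$ it is a genuine issue.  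The paper flags exactly this point: ``it is not obvious that there is no $\text{(low)}\times\text{(low)}\to\text{(high)}$ interaction in $\frac{1}{r^2}A_\theta$.''

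The paper's resolution (following Liu--Smith) is not a Leibniz rule at all but explicit Fourier formulae for the nonlocal pieces: for instance
\[
\mathcal{F}\Big[\frac{1}{r^2}A_\theta[\psi_1,\psi_2]\Big](\rho)=-\tfrac12\int_\rho^\infty \frac{1}{\rho'}\,\mathcal{F}\big[\Re(\overline{\psi_1}\psi_2)\big](\rho')\,d\rho',
\]
and similarly $A_0^{(2)}=\Delta^{-1}\Im Q_{12}(\overline\phi,\phi)$ with $Q_{12}$ a null form.  These formulae make the frequency support condition $\max(j_1,j_2)\geq k$ manifest after Littlewood--Paley localization, and it is this structural fact---not mere $L^p$-boundedness---that lets the paraproduct sum close in $\ell_k^2$ with the weight $2^{ks}$.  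Without it (or an equivalent commutator estimate for $[P_k,\frac{1}{r^2}\int_0^r(\cdot)r'\,dr']$, which you have not supplied), the step where you ``move the fractional derivative'' through the nonlocal operator is unjustified.  The rest of your outline is fine once this estimate is in hand.
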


\begin{rem}
The space $B_{r,2}^{s}$ is the Besov space, whose definition is given
in Appendix \ref{sec:Local-Theory}. Such a choice of the solution
norm is not important in this paper.
\end{rem}

\section{\label{sec:linearization}Linearization of \eqref{eq:CSS} under
equivariance}

In this section, we collect information on the linearization of \eqref{eq:CSS}.
For $g=1$, the self-dual case, it turns out that the linearized operator
at static solution $Q$ is also written as a self-dual form 
\begin{equation}
\mathcal{L}_{Q}=L_{Q}^{\ast}L_{Q}.\label{eq:self-dual}
\end{equation}
This is naturally expected in view of the Hamiltonian formulation.

Linearization under the equivariance symmetry was conducted by Lawrie-Oh-Shahshahani
\cite{LawrieOhShashahani_unpub} and they observed that linearized
operator has the form \eqref{eq:self-dual}. More precisely, they
linearized \eqref{eq:CSS} in the form 
\[
i\partial_{t}\epsilon-L_{Q}^{\ast}L_{Q}\epsilon=\text{(h.o.t.)},
\]
where $u=Q+\epsilon$ solves \eqref{eq:CSS}.

In the following, we start with Bogomol'nyi operator and its linearization,
and use them to write \eqref{eq:CSS} in a self-dual form. Then, we
can linearize \eqref{eq:CSS} and observe that the linearized operator
at $Q$ is of the self-dual form. We study the generalized null space
of $i\mathcal{L}_{Q}$ and coercivity of $\mathcal{L}_{Q}$.

\subsection{Linearization of Bogomol'nyi operator}

Because of the Hamiltonian and self-dual structures, it is natural
to linearize the Bogomol'nyi operator first. For later use, however,
we will linearize at arbitrary profile $w$. In order to avoid confusion,
let us write 
\begin{align*}
\D_{+}^{(u)} & \coloneqq\partial_{r}-\tfrac{1}{r}(m+A_{\theta}[u]),\\
\D_{+}^{(u)\ast} & \coloneqq-\partial_{r}-\tfrac{1}{r}(1+m+A_{\theta}[u]).
\end{align*}

We assume the decomposition 
\[
u=w+\epsilon.
\]
We first observe the operator identities 
\begin{align*}
\D_{+}^{(u)} & =\D_{+}^{(w)}+(B_{w}\epsilon)+\tfrac{1}{2}(B_{\epsilon}\epsilon),\\
\D_{+}^{(u)\ast} & =\D_{+}^{(w)\ast}+(B_{w}\epsilon)+\tfrac{1}{2}(B_{\epsilon}\epsilon),
\end{align*}
where $(B_{w}\epsilon)$ (and so is $\frac{1}{2}(B_{\epsilon}\epsilon)$)
is interpreted as multiplication operators by real functions, whose
explicit formulae are given by
\begin{align*}
B_{f}g & \coloneqq\frac{1}{r}\int_{0}^{r}\Re(\overline{f}g)r'dr'.
\end{align*}
We note that 
\[
A_{\theta}[u]=-\tfrac{1}{2}rB_{u}u.
\]
We thus have 
\begin{equation}
\D_{+}^{(u)}u=\D_{+}^{(w)}w+L_{w}\epsilon+N_{w}(\epsilon),\label{eq:lin-Bogom}
\end{equation}
where
\begin{align}
L_{w} & \coloneqq\D_{+}^{(w)}+wB_{w},\label{eq:def-L_w}\\
N_{w}(\epsilon) & \coloneqq\epsilon B_{w}\epsilon+\tfrac{1}{2}wB_{\epsilon}\epsilon+\tfrac{1}{2}\epsilon B_{\epsilon}\epsilon.\nonumber 
\end{align}
Here $N_{w}(\epsilon)$ contains the quadratic and cubic parts of
$\epsilon$ that are all nonlocal due to integration in $r$. We remark
that the operators $B_{w}$ and $L_{w}$ are only $\R$-linear, but
\emph{not} $\C$-linear. We also record the formal real adjoints:\footnote{By real adjoint, we view $L^{2}(\R^{2};\C)$ as $\R$-Hilbert space
equipped with the inner product $(u,v)_{r}=\int\Re(u\overline{v})$.}
\begin{align}
B_{w}^{\ast}f & =w\int_{r}^{\infty}(\Re f)dr',\nonumber \\
L_{w}^{\ast}f & =\D_{+}^{(w)\ast}f+B_{w}^{\ast}(\overline{w}f).\label{eq:def-L^ast_w}
\end{align}
Note that 
\[
A_{0}[u]u=-B_{u}^{\ast}[\tfrac{m}{r}|u|^{2}-\tfrac{1}{2}|u|^{2}B_{u}u].
\]

In particular when $w=Q$, we use $\D_{+}^{(Q)}Q=0$ to have 
\begin{align*}
E[Q+\epsilon] & =\frac{1}{2}\int|\D_{+}^{(Q+\epsilon)}(Q+\epsilon)|^{2}=\frac{1}{2}\int|L_{Q}\epsilon+N_{Q}(\epsilon)|^{2}.
\end{align*}
Therefore, we can extract the quadratic part of the energy 
\begin{equation}
E[Q+\epsilon]=\frac{1}{2}\int|L_{Q}\epsilon|^{2}+\text{(h.o.t.)}.\label{eq:energy-Q-expn}
\end{equation}

\subsection{Linearization of \eqref{eq:CSS}}

In the self-dual case, it is possible to write \eqref{eq:CSS} using
the (linearized) Bogomol'nyi operator. This is due to the Hamiltonian
formulation and a special form of energy \eqref{eq:energy-Bogomolnyi}.
\begin{prop}[\eqref{eq:CSS} in the self-dual form]
\label{prop:prop3.1}\eqref{eq:CSS} is equivalent to 
\begin{equation}
i\partial_{t}u=L_{u}^{\ast}\D_{+}^{(u)}u.\label{eq:CSS-L*D-form}
\end{equation}
\end{prop}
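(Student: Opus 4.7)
The plan is to verify the identity by direct computation, unfolding both sides and matching them. The key input is the operator identity $\tilde{\mathbf{D}}_+^{\ast}\tilde{\mathbf{D}}_+ = -\mathbf{D}_j\mathbf{D}_j - \tfrac{1}{2}|\phi|^2$ from \eqref{eq:obs}, which, specialized to $m$-equivariant $\phi = e^{im\theta}u(r)$ under the Coulomb gauge (so $A_r = 0$), yields the pointwise radial relation
\[
\mathbf{D}_+^{(u)\ast}\mathbf{D}_+^{(u)}u \;=\; -\partial_{rr}u - \tfrac{1}{r}\partial_r u + \Big(\tfrac{m+A_\theta[u]}{r}\Big)^{\!2}u - \tfrac{1}{2}|u|^2 u.
\]
Combined with the radial form \eqref{eq:CSS-r} of \eqref{eq:CSS}, this rewrites the equation as
\[
i\partial_t u \;=\; \mathbf{D}_+^{(u)\ast}\mathbf{D}_+^{(u)}u + A_0[u]\,u - \tfrac{1}{2}|u|^2 u.
\]
So the task reduces to checking that the extra, zeroth-order piece $B_u^{\ast}(\bar u\,\mathbf{D}_+^{(u)}u)$ appearing in the decomposition $L_u^{\ast}\mathbf{D}_+^{(u)}u = \mathbf{D}_+^{(u)\ast}\mathbf{D}_+^{(u)}u + B_u^{\ast}(\bar u\,\mathbf{D}_+^{(u)}u)$ (which comes straight from the definition \eqref{eq:def-L^ast_w} of $L_u^{\ast}$) equals $A_0[u]u - \tfrac{1}{2}|u|^2 u$.

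This last equality is a one-line computation. Using $B_u^{\ast}f = u\int_r^\infty \Re(f)\,dr'$ and the identity
\[
\Re\bigl(\bar u\,\mathbf{D}_+^{(u)}u\bigr) \;=\; \tfrac{1}{2}\partial_r|u|^2 - \tfrac{m+A_\theta[u]}{r}|u|^2,
\]
the first summand integrates (under mild decay, e.g. $u \in H_m^1$) to $-\tfrac{1}{2}|u(r)|^2$, while the second summand matches the defining integral \eqref{eq:A_0-form} for $A_0[u](r)$. Multiplying by $u$ then gives $B_u^{\ast}(\bar u\,\mathbf{D}_+^{(u)}u) = A_0[u]\,u - \tfrac{1}{2}|u|^2 u$, as needed.

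I do not anticipate any real obstacle here — the verification is bookkeeping organized by self-duality. A conceptual remark worth recording is that the same identity follows from the Hamiltonian formulation: since $E[u] = \tfrac{1}{2}\int|\mathbf{D}_+^{(u)}u|^2$ by \eqref{eq:energy-Bogomolnyi}, linearizing via \eqref{eq:lin-Bogom} yields $\frac{\delta E}{\delta u} = L_u^{\ast}\mathbf{D}_+^{(u)}u$ with respect to the real inner product $(\cdot,\cdot)_r$, and the Hamiltonian flow associated with the symplectic form $\omega(\phi,\psi) = \Im\int\bar\phi\psi$ is exactly $i\partial_t u = L_u^{\ast}\mathbf{D}_+^{(u)}u$. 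The direct computation above is what rigorously substantiates this heuristic.
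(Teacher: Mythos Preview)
Your proof is correct and slightly more economical than the paper's. Both arguments are direct computations, but you organize the work differently: you split $L_u^{\ast} = \D_+^{(u)\ast} + B_u^{\ast}[\bar u\,\cdot\,]$, invoke the already-established identity \eqref{eq:obs} (in its radial form) to handle $\D_+^{(u)\ast}\D_+^{(u)}u$ as a black box, and reduce everything to the single integral $B_u^{\ast}(\bar u\,\D_+^{(u)}u)$, which you identify with $A_0[u]u-\tfrac12|u|^2u$ in one line. The paper instead expands $L_u^{\ast}\D_+^{(u)}u$ from scratch as a product of the three pieces $(-\partial_r-\tfrac{m+1}{r})+\tfrac12(B_u u)+B_u^{\ast}[\bar u\,\cdot\,]$ applied to $(\partial_r-\tfrac{m}{r})u+\tfrac12(B_u u)u$, then regroups the resulting nine terms via commutator manipulations to recover each piece of the nonlinearity $\mathcal N(u)$. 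Your route is shorter because it recycles \eqref{eq:obs}; the paper's route is more self-contained and makes the match with the $\mathcal N_{3,\ast}$, $\mathcal N_{5,\ast}$ decomposition more transparent. The Hamiltonian remark you add at the end is exactly the heuristic the paper records in the remark following the proposition.
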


\begin{rem}
As mentioned in Section \ref{subsec:Coulomb-gauge-and}, \eqref{eq:CSS-cov}
under the Coulomb gauge enjoys a Hamiltonian formulation. The equation
\eqref{eq:CSS} is naturally written as a Hamiltonian equation 
\[
\partial_{t}u=-i\frac{\delta E}{\delta u}=-i\frac{\delta}{\delta u}\Big(\frac{1}{2}\int|\D_{+}^{(u)}u|^{2}\Big).
\]
Using the linearization \eqref{eq:lin-Bogom}, one can formally verify
\eqref{eq:CSS-L*D-form}.
\end{rem}

\begin{proof}[Proof of Proposition \ref{prop:prop3.1}]
Recall that 
\begin{align*}
\D_{+}^{(u)}u & =(\partial_{r}-\tfrac{m}{r})u+\tfrac{1}{2}(B_{u}u)u,\\
L_{u}^{\ast} & =(-\partial_{r}-\tfrac{m+1}{r})+\tfrac{1}{2}(B_{u}u)+B_{u}^{\ast}[\overline{u}\,\cdot\,].
\end{align*}
Thus 
\begin{align*}
L_{u}^{\ast}\D_{+}^{(u)}u & =(-\partial_{r}-\tfrac{m+1}{r})(\partial_{r}-\tfrac{m}{r})u\\
 & \quad+(-\partial_{r}-\tfrac{m+1}{r})[\tfrac{1}{2}(B_{u}u)u]+\tfrac{1}{2}(B_{u}u)(\partial_{r}-\tfrac{m}{r})u+B_{u}^{\ast}[\overline{u}(\partial_{r}-\tfrac{m}{r})u]\\
 & \quad+\tfrac{1}{4}[B_{u}u]^{2}u+\tfrac{1}{2}B_{u}^{\ast}[|u|^{2}B_{u}u].
\end{align*}
We can rewrite the terms in the second line of the above display as
\begin{align*}
 & (-\partial_{r}-\tfrac{m+1}{r})[\tfrac{1}{2}(B_{u}u)u]+\tfrac{1}{2}(B_{u}u)(\partial_{r}-\tfrac{m}{r})u\\
 & =\tfrac{1}{2r}(-\partial_{r}-\tfrac{m}{r})[r(B_{u}u)u]+\tfrac{1}{2r}(rB_{u}u)(\partial_{r}-\tfrac{m}{r})u\\
 & =\tfrac{1}{2r}[-\partial_{r},r(B_{u}u)]u-\tfrac{m}{r}(B_{u}u)u\\
 & =-\tfrac{1}{2}|u|^{2}u-\tfrac{m}{r}(B_{u}u)u,
\end{align*}
and 
\[
B_{u}^{\ast}[\overline{u}(\partial_{r}-\tfrac{m}{r})u]=-\tfrac{1}{2}|u|^{2}u-B_{u}^{\ast}[\tfrac{m}{r}|u|^{2}].
\]
Therefore, we recover the nonlinearity expression as in \eqref{eq:CSS},
but expressed in terms of $B_{u},B_{u}^{\ast}$: 
\begin{align*}
L_{u}^{\ast}\D_{+}^{(u)}u & =(-\partial_{r}-\tfrac{m+1}{r})(\partial_{r}-\tfrac{m}{r})u\\
 & \quad-|u|^{2}u-\tfrac{m}{r}(B_{u}u)u-B_{u}^{\ast}[\tfrac{m}{r}|u|^{2}]\\
 & \quad+\tfrac{1}{4}(B_{u}u)^{2}u+\tfrac{1}{2}B_{u}^{\ast}[|u|^{2}B_{u}u]\\
 & =-\Delta_{m}u-|u|^{2}u+\frac{2m}{r^{2}}A_{\theta}[u]u+\frac{A_{\theta}^{2}[u]}{r^{2}}u+A_{0}[u]u.
\end{align*}
This completes the proof.
\end{proof}
Using \eqref{eq:CSS-L*D-form}, we can easily linearize \eqref{eq:CSS}.
To linearize $L_{u}^{\ast}\D_{+}^{(u)}u$ at $w$, we write 
\begin{align*}
u & =w+\epsilon,\\
\D_{+}^{(u)}u & =\D_{+}^{(w)}w+L_{w}\epsilon+N_{w}(\epsilon),\\
L_{u}^{\ast} & =L_{w}^{\ast}+[(B_{w}\epsilon)+B_{w}^{\ast}[\overline{\epsilon}\,\cdot\,]+B_{\epsilon}^{\ast}[\overline{w}\,\cdot\,]]+[\tfrac{1}{2}(B_{\epsilon}\epsilon)+B_{\epsilon}^{\ast}[\overline{\epsilon}\,\cdot\,]].
\end{align*}
Then the linear part $\mathcal{L}_{w}\epsilon$ of $L_{u}^{\ast}\D_{+}^{(u)}u$
is given by 
\begin{equation}
\mathcal{L}_{w}\epsilon=L_{w}^{\ast}L_{w}\epsilon+[(B_{w}\epsilon)+B_{w}^{\ast}[\overline{\epsilon}\,\cdot\,]+B_{\epsilon}^{\ast}[\overline{w}\,\cdot\,]]\D_{+}^{(w)}w.\label{eq:def-linearized-op}
\end{equation}
In particular, when $w=Q$, we observe the \emph{self-duality} using
$\D_{+}^{(Q)}Q=0$: 
\begin{equation}
\mathcal{L}_{Q}=L_{Q}^{\ast}L_{Q}.\label{eq:L_Q-self-dual}
\end{equation}
With this formulation, we can recover a full linearized equation
\begin{align*}
i\partial_{t}\epsilon-\mathcal{L}_{Q}\epsilon & =L_{Q}^{\ast}N_{Q}(\epsilon)+[(B_{Q}\epsilon)+B_{Q}^{\ast}[\overline{\epsilon}\,\cdot\,]+B_{\epsilon}^{\ast}[Q\,\cdot\,]][L_{Q}\epsilon+N_{Q}(\epsilon)]\\
 & \quad+[\tfrac{1}{2}(B_{\epsilon}\epsilon)+B_{\epsilon}^{\ast}[\overline{\epsilon}\,\cdot\,]][L_{Q}\epsilon+N_{Q}(\epsilon)],
\end{align*}
as is done in Lawrie-Oh-Shahshahani \cite{LawrieOhShashahani_unpub}.
They also observed that there is no derivative falling on $\epsilon$
in the nonlinearity. This is naturally expected as the nonlinearity
of \eqref{eq:CSS} contains no derivatives.

\subsection{\label{subsec:solvability}Algebraic relations, solvability, and
coercivity of $\mathcal{L}_{Q}$}

The linearization of \eqref{eq:CSS} at $Q$ is given by 
\[
\partial_{t}\epsilon=-i\mathcal{L}_{Q}\epsilon.
\]
Our main goals are to compute the generalized nullspace of $i\mathcal{L}_{Q}$
and to prove the coercivity of $\mathcal{L}_{Q}$ under a transversality
condition. Such spectral information stems from the symmetries of
\eqref{eq:CSS}, i.e. phase, scaling, and pseudoconformal symmetries.
It turns out that the generalized null space for the \emph{self-dual}
\eqref{eq:CSS} is different from that of \eqref{eq:NLS} studied
in \cite{Weinstein1985SIAM}.

Differentiating symmetries of \eqref{eq:CSS} at the static solution
$Q$, we obtain explicit algebraic identities satisfied by $\mathcal{L}_{Q}$.
Assume we have a continuous family of solutions $u^{(a)}(t,r)$ to
\eqref{eq:CSS} with $u^{(0)}(t,r)=Q(r)$. Differentiating 
\[
L_{u^{(a)}}^{\ast}\D_{+}^{(u^{(a)})}u^{(a)}=i\partial_{t}u^{(a)}
\]
in $a$ at $a=0$, we obtain 
\[
\mathcal{L}_{Q}(\partial_{a}u^{(a)})|_{a=0}=i\partial_{t}(\partial_{a}u^{(a)})|_{a=0}.
\]
If we substitute phase, scaling, and pseudoconformal symmetry 
\[
u^{(a)}(t,r)=\begin{cases}
e^{ia}Q(r) & \text{(phase)}\\
aQ(ar) & \text{(scaling)}\\
\frac{1}{1+at}e^{ia\frac{r^{2}}{4(1+at)}}Q(\frac{r}{1+at}) & \text{(pseudoconformal)}
\end{cases}
\]
then we obtain 
\begin{align*}
\mathcal{L}_{Q}(iQ) & =0, &  & \text{(phase)}\\
\mathcal{L}_{Q}(\Lambda Q) & =0, &  & \text{(scaling)}\\
\mathcal{L}_{Q}(ir^{2}Q) & =-4i\Lambda Q. &  & \text{(pseudoconformal)}
\end{align*}

Here is a rough discussion on spectral properties of $\mathcal{L}_{Q}$.
Rigorous results are proved below. Noticing that $\mathcal{L}_{Q}f=0$
if and only if $L_{Q}f=0$, we have $L_{Q}(iQ)=L_{Q}(\Lambda Q)=0$.
As $L_{Q}$ is a first-order (nonlocal) differential operator, one
can indeed see that $\ker L_{Q}=\mathrm{span}_{\R}\{iQ,\Lambda Q\}$.
Moreover, one can invert the operator $\mathcal{L}_{Q}$ on the orthogonal
complement of $\ker L_{Q}$. As a consequence, $\mathcal{L}_{Q}$
is coercive on a subspace having trivial intersection with $\ker L_{Q}$.

As an application, we can characterize the generalized null space
of $i\mathcal{L}_{Q}$. We first show that the kernel of $L_{Q}$
(and hence of $\mathcal{L}_{Q}$) is $\mathrm{span}_{\R}\{\Lambda Q,iQ\}$.
\begin{lem}[Kernel of $L_{Q}$]
\label{lem:kernel-LQ}If $f:\R^{2}\to\C$ is a smooth $m$-equivariant
function such that $L_{Q}f=0$,\footnote{Actually $L_{Q}$ acts on functions from $(0,\infty)$ to $\C$. By
an abuse of notation, we denote by $f(r)$ the radial component of
the $m$-equivariant function $f(x)$.} then $f$ is a $\R$-linear combination of $\Lambda Q$ and $iQ$.
\end{lem}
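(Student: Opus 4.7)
The plan is to decouple the equation $L_Q f = 0$ by writing $f = f_1 + i f_2$ with $f_1, f_2$ real-valued. Since $Q$ is real, $\Re(\overline{Q} f) = Q f_1$, so the nonlocal operator $B_Q$ only sees the real part: $B_Q f = \tfrac{1}{r}\int_0^r Q f_1\, r' dr'$. Separating real and imaginary parts of $L_Q f = \D_+^{(Q)} f + Q B_Q f = 0$ gives the decoupled system
\begin{align*}
\partial_r f_2 - \tfrac{m + A_\theta[Q]}{r} f_2 &= 0, \\
\partial_r f_1 - \tfrac{m + A_\theta[Q]}{r} f_1 + \tfrac{Q}{r}\int_0^r Q f_1\, r' dr' &= 0.
\end{align*}

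The $f_2$-equation is a scalar first-order linear ODE. Since $\D_+^{(Q)} Q = 0$ by the Bogomol'nyi equation, one has $\partial_r Q = \tfrac{m + A_\theta[Q]}{r} Q$, so the general smooth solution is $f_2 = c_2 Q$; this is automatically smooth and $m$-equivariant.

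For the $f_1$-equation, I would substitute $f_1 = Q \cdot h$ (valid since $Q > 0$ on $(0,\infty)$). Using $\partial_r Q = \tfrac{m+A_\theta[Q]}{r} Q$, the first two terms cancel exactly and, after dividing by $Q$ and multiplying by $r$, the equation reduces to $r h' + \int_0^r Q^2 h\, r' dr' = 0$. Differentiating in $r$ yields the linear second-order ODE
\[
h'' + \tfrac{1}{r} h' + Q^2 h = 0.
\]
I already know one solution: from $\mathcal{L}_Q \Lambda Q = 0$ and $\mathcal{L}_Q = L_Q^* L_Q$, pairing with $\Lambda Q$ gives $\|L_Q \Lambda Q\|_{L^2}^2 = 0$, hence $L_Q \Lambda Q = 0$; therefore $h_1 := \Lambda Q / Q = (m+1) + A_\theta[Q]$ is a smooth solution of the ODE.

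The remaining step is to exclude the second linearly independent solution. Near $r = 0$ the ODE has a regular singular point with indicial equation $\alpha^2 = 0$, a double root at $0$ (the $Q^2 h$ term is of order $r^{2m}$ and hence contributes subleading terms for $m \geq 1$). Frobenius theory then produces a fundamental pair $(h_1, h_2)$ with $h_2(r) = h_1(r) \log r + \sum_{k\geq 0} b_k r^k$ containing a genuine logarithmic branch. Since $Q(r) \sim c\, r^m$ as $r \to 0$, the corresponding $f_1 = Q h_2$ behaves like $r^m \log r$, which is not of the form $r^m \cdot (\text{smooth function of } r^2)$ and therefore fails the smoothness criterion for $m$-equivariant functions on $\R^2$ (Lemma~\ref{lem:smooth-equiv-criterion}). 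This forces $h = c_1 h_1$, i.e.\ $f_1 = c_1 \Lambda Q$, and combining with the imaginary part yields $f = c_1 \Lambda Q + i c_2 Q$. The main obstacle is this last step, namely confirming rigorously that the logarithmic Frobenius branch at $r = 0$ truly obstructs smoothness as an $m$-equivariant function; this ultimately rests on the characterization of smooth equivariant functions recalled in Section~\ref{subsec:equiv-Sobolev-sp} and developed in Appendix~\ref{sec:equiv-sob-sp}.
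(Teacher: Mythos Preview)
Your proof is correct and takes a genuinely different route from the paper. The paper does not split into real and imaginary parts; instead it subtracts an $\R$-linear combination of $\Lambda Q$ and $iQ$ so that the residual $f$ satisfies $f^{(m)}(0)=0$, then renormalizes by setting $g=r^{-m}f$ and applies Gronwall's inequality on $(0,\infty)$ to conclude $g\equiv 0$. Your approach, by contrast, decouples the system (exploiting that $B_Q$ sees only the real part), dispatches the imaginary part via a scalar first-order ODE, and for the real part passes to $h=f_1/Q$, differentiates the resulting integro-differential relation to a second-order linear ODE with regular singular point at $r=0$, and invokes Frobenius theory. The paper's argument is more economical and treats the complex-valued $f$ uniformly without ODE theory, while yours is more constructive and makes the kernel structure transparent. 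One remark that would tighten your last step: you need not appeal to the smoothness criterion at all. Since you obtained the second-order ODE by differentiating $rh'+\int_0^r Q^2 h\,r'\,dr'=0$, any solution of the ODE satisfies $rh'+\int_0^r Q^2 h\,r'\,dr'=C$ for some constant; for the logarithmic Frobenius branch one computes $rh_2'(r)\to h_1(0)=m+1\neq 0$ as $r\to 0$, so $C\neq 0$ and $h_2$ is already excluded at the level of the integro-differential equation, independently of any regularity considerations.
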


\begin{proof}
As $f$ is smooth $m$-equivariant, Lemma \ref{lem:smooth-equiv-criterion}
says that $f^{(k)}(0)=0$ for all $k\leq m-1$. As $\varphi\in\{\Lambda Q,iQ\}$
satisfies $\varphi^{(k)}(0)=0$ for $k\leq m-1$ and $\varphi^{(m)}(0)\neq0$,
we may subtract an $\R$-linear combination of $\Lambda Q$ and $iQ$
from $f$ to assume $f^{(m)}(0)=0$. It now suffices to show that
$f=0$. Since $f$ is smooth such that $f^{(k)}(0)=0$ for all $k\leq m$,
by Lemma \ref{lem:remov-sing}, the function $r^{-m}f(r)$ is smooth,
$r^{-m}f(r)|_{r=0}=0$, and 
\[
\partial_{r}(r^{-m}f)=r^{-m}(\partial_{r}f-\tfrac{m}{r}f)=-r^{-m}(QB_{Q}+\tfrac{1}{2}(B_{Q}Q))f.
\]
We integrate from the origin to get 
\begin{align*}
\frac{|f(r)|}{r^{m}} & =\Big|\int_{0}^{r}\frac{Q}{(r')^{m+1}}\Big(\int_{0}^{r'}Q\Re fdr''\Big)dr'+\frac{1}{2}\int_{0}^{r}(B_{Q}Q)\frac{f(r')}{(r')^{m}}dr'\Big|\\
 & \leq\int_{0}^{r}\bigg\{(r')^{m+1}Q\Big(\int_{r'}^{r}\frac{Q}{(r'')^{m+1}}dr''\Big)+\frac{1}{2}B_{Q}Q\bigg\}\frac{|f(r')|}{(r')^{m}}dr'
\end{align*}
As the factor in the curly bracket is bounded, Gronwall's inequality
concludes that $f=0$.
\end{proof}
To find the generalized null space of $i\mathcal{L}_{Q}$, it is natural
to ask whether we can solve the equations $i\mathcal{L}_{Q}\psi=\Lambda Q$
and $i\mathcal{L}_{Q}\varphi=iQ$. More generally, we may consider
the equation $i\mathcal{L}_{Q}f=h$, where $h$ is given. We will
check solvability in the following sense. If there is a nice function\footnote{We will not be specific here, a nice function is required to make
sense of \eqref{eq:LQ-solvability}. One may require smoothness, integrability,
or boundary conditions (at $r=0$ or $r=\infty$) for such functions.} $f$ with $i\mathcal{L}_{Q}f=h$, then 
\begin{equation}
(h,ig)_{r}=(i\mathcal{L}_{Q}f,ig)_{r}=(f,\mathcal{L}_{Q}g)_{r}\label{eq:LQ-solvability}
\end{equation}
must vanish for any $g\in\{\Lambda Q,iQ\}$. Thus if $(h,i\Lambda Q)_{r}$
or $(h,Q)_{r}$ does not vanish, then we say that $i\mathcal{L}_{Q}f=h$
is \emph{not solvable}. On the other hand, if $(h,i\Lambda Q)_{r}=(h,Q)_{r}=0$,
then we are able to find $f$ (thus \emph{solvable}) for sufficiently
nice $h$ as in the proof of Lemma \ref{lem:generalized-mode-rho}
below.

It is natural to expect that we can solve $i\mathcal{L}_{Q}\psi=\Lambda Q$
and $i\mathcal{L}_{Q}\psi=iQ$ due to $(\Lambda Q,Q)_{r}=0$. For
the former, we already know a solution, which is $i\frac{1}{4}r^{2}Q$
given by the pseudoconformal symmetry. But there is no $\psi$ satisfying
$i\mathcal{L}_{Q}\psi=i\frac{1}{4}r^{2}Q$ because $(r^{2}Q,\Lambda Q)_{r}=-\|rQ\|_{L^{2}}^{2}\neq0$
(when $m\geq1$). For the latter, we can in fact construct a solution
$\rho$ to $i\mathcal{L}_{Q}\rho=iQ$ in Lemma \ref{lem:generalized-mode-rho}
below. But again there is no $\psi$ satisfying $i\mathcal{L}_{Q}\psi=\rho$
because $(-i\rho,iQ)_{r}=-\|L_{Q}\rho\|_{L^{2}}^{2}\neq0$ (when $m\geq1$).
Therefore, we have found the basis of the generalized null space.
In summary,
\begin{prop}[The generalized null space of $i\mathcal{L}_{Q}$]
\label{prop:gen.null.space}The generalized null space\footnote{In fact, when $m=0$ or $1$, $ir^{2}Q$ and $\rho$ do not belong
to $L^{2}$ . We will still call them generalized eigenmodes due to
the algebraic identities in this proposition.} of $i\mathcal{L}_{Q}$ has $\R$-basis $\{iQ,\Lambda Q,ir^{2}Q,\rho\}$
with relations
\begin{align*}
i\mathcal{L}_{Q}\rho & =iQ; & i\mathcal{L}_{Q}ir^{2}Q & =4\Lambda Q;\\
i\mathcal{L}_{Q}iQ & =0; & i\mathcal{L}_{Q}\Lambda Q & =0;
\end{align*}
where $\rho$ is given in Lemma \ref{lem:generalized-mode-rho} below.
\end{prop}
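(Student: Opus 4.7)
The four algebraic identities split naturally into two parts. The first three, $\mathcal{L}_Q(iQ)=0$, $\mathcal{L}_Q(\Lambda Q)=0$, and $\mathcal{L}_Q(ir^2 Q)=-4i\Lambda Q$, follow by differentiating in $a$ at $a=0$ the one-parameter families $u^{(a)}$ generated by the phase, scaling, and pseudoconformal symmetries applied to $Q$, exactly as already recorded above. The fourth, $i\mathcal{L}_Q\rho=iQ$ (equivalently $\mathcal{L}_Q\rho=Q$ upon multiplying the equation by $-i$, which is legitimate even though $\mathcal{L}_Q$ is only $\R$-linear since we are just rescaling an equation), is provided by Lemma \ref{lem:generalized-mode-rho}. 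Together with the trivial $i\mathcal{L}_Q$ images of $iQ$ and $\Lambda Q$, this yields the two Jordan-type chains $\rho\to iQ\to 0$ and $ir^2 Q\to 4\Lambda Q\to 0$ claimed in the proposition.

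To verify that these four elements exhaust the generalized null space, I first compute the kernel. The self-dual factorization $\mathcal{L}_Q=L_Q^{\ast}L_Q$ gives $(\mathcal{L}_Q f,f)_r=\|L_Q f\|_{L^2}^2$, so $\ker\mathcal{L}_Q=\ker L_Q$, and Lemma \ref{lem:kernel-LQ} identifies this with $\mathrm{span}_\R\{iQ,\Lambda Q\}$. For the higher layers I use a Fredholm-type solvability argument. Since $\mathcal{L}_Q$ is $\R$-self-adjoint and multiplication by $i$ is skew-adjoint with respect to $(\cdot,\cdot)_r$, the real adjoint of $i\mathcal{L}_Q$ is $-\mathcal{L}_Q\circ i$, whose kernel is $\{f:if\in\ker\mathcal{L}_Q\}=\mathrm{span}_\R\{Q,\,i\Lambda Q\}$. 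Hence any $h$ lying in the range of $i\mathcal{L}_Q$ must satisfy the two orthogonality conditions $(h,Q)_r=(h,i\Lambda Q)_r=0$.

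Each chain terminates because one of these obstructions is genuinely nonzero. For $h=ir^2 Q$, writing $\Lambda Q=Q+r\partial_r Q$ and integrating by parts yields
\begin{equation*}
(ir^2 Q,\,i\Lambda Q)_r=\int r^2\,Q\,\Lambda Q=-\|rQ\|_{L^2}^2\neq 0,
\end{equation*}
which is finite and strictly nonzero for $m\geq 1$ thanks to the decay $Q\sim r^{-m-2}$ at infinity. For $h=\rho$, using $\mathcal{L}_Q\rho=Q$ together with self-adjointness gives
\begin{equation*}
(\rho,Q)_r=(\rho,\mathcal{L}_Q\rho)_r=\|L_Q\rho\|_{L^2}^2>0,
\end{equation*}
where positivity follows because $L_Q\rho=0$ would force $\rho\in\ker L_Q$, and then $\mathcal{L}_Q\rho=0\neq Q$. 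Consequently neither $ir^2 Q$ nor $\rho$ can be written as $i\mathcal{L}_Q f$ within the relevant class, so no further generalized mode can be appended to either chain.

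Linear independence is then immediate: $iQ$ and $\Lambda Q$ are independent in the kernel (one real-valued, one purely imaginary of a real function), while $ir^2 Q$ and $\rho$ lie outside $\ker\mathcal{L}_Q$ because their $i\mathcal{L}_Q$ images $4\Lambda Q$ and $iQ$ are nonzero and independent in the kernel. The only genuine technical difficulty is isolated in Lemma \ref{lem:generalized-mode-rho}: the construction of a sufficiently regular and well-decaying $\rho$ with $\mathcal{L}_Q\rho=Q$ so that the pairings above are rigorously defined. This is particularly delicate for small $m$ because of the slow polynomial decay of $Q$, which explains the footnote warning that $ir^2 Q$ and $\rho$ need not lie in $L^2$ in those cases and must be interpreted as formal generalized modes through the algebraic identities.
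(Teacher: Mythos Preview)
Your proposal is correct and follows essentially the same route as the paper: the symmetry-derived identities give the first three relations, Lemma~\ref{lem:kernel-LQ} identifies $\ker\mathcal{L}_Q$, Lemma~\ref{lem:generalized-mode-rho} supplies $\rho$, and the chains terminate via the obstructions $(ir^{2}Q,i\Lambda Q)_{r}=-\|rQ\|_{L^{2}}^{2}\neq 0$ and $(\rho,Q)_{r}=\|L_{Q}\rho\|_{L^{2}}^{2}\neq 0$. The paper phrases the solvability criterion directly via $(h,ig)_{r}=(f,\mathcal{L}_{Q}g)_{r}$ for $g\in\{\Lambda Q,iQ\}$ rather than computing the adjoint of $i\mathcal{L}_{Q}$, but this is the same computation in different packaging.
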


\begin{rem}
\label{rem:remark3.5}It is instructive to compare with \eqref{eq:NLS}.
Recall the ground state $R$ \eqref{eq:NLS-ground-state} and associated
linearized operator $\mathcal{L}_{\mathrm{NLS}}$ \eqref{eq:linearized-op-NLS}
\[
\mathcal{L}_{\mathrm{NLS}}f=-\Delta f+f-2R^{2}f-R^{2}\overline{f}.
\]
It is known from \cite{Weinstein1985SIAM} that under radial symmetry,
the generalized null space of $i\mathcal{L}_{\mathrm{NLS}}$ is characterized
as
\begin{align*}
i\mathcal{L}_{\mathrm{NLS}}\rho_{\mathrm{NLS}} & =ir^{2}R,\\
i\mathcal{L}_{\mathrm{NLS}}i|y|^{2}R & =4\Lambda R,\\
i\mathcal{L}_{\mathrm{NLS}}\Lambda R & =-2iR,\\
i\mathcal{L}_{\mathrm{NLS}}iR & =0.
\end{align*}
Notice that $\Lambda R$ does not belong to the kernel of $i\mathcal{L}_{\mathrm{NLS}}$.
This is because $e^{it}R(x)$ is not a static solution to \eqref{eq:NLS}.
Moreover, $\mathcal{L}_{\mathrm{NLS}}$ restricted on real-valued
radial functions is invertible. Thus there exists real-valued $\rho_{\mathrm{NLS}}$
solving $\mathcal{L}_{\mathrm{NLS}}\rho_{\mathrm{NLS}}=r^{2}R$. This
is in strong contrast to the case of \eqref{eq:CSS}.
\end{rem}

\begin{lem}[The generalized eigenmode $\rho$]
\label{lem:generalized-mode-rho}There exists a smooth real-valued
solution $\rho:(0,\infty)\to\R$ to 
\[
L_{Q}^{\ast}L_{Q}\rho=Q
\]
satisfying 
\[
|\rho(r)|\lesssim r^{2}Q\sim\begin{cases}
r^{m+2} & \text{if }r\leq1,\\
r^{-m} & \text{if }r>1.
\end{cases}
\]
Moreover, if $m\geq1$, we have the nondegeneracy\footnote{If $m=0$, then $\rho$ has tail $\sim1$ as $r\to\infty$, so $(\rho,Q)_{r}$
is not defined.} 
\[
(\rho,Q)_{r}=\|L_{Q}\rho\|_{L^{2}}^{2}\neq0.
\]
\end{lem}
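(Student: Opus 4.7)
The strategy is to exploit the factorization $\mathcal{L}_Q = L_Q^\ast L_Q$ from \eqref{eq:L_Q-self-dual} to reduce the second-order nonlocal equation $L_Q^\ast L_Q \rho = Q$ to two first-order nonlocal equations solved in cascade: first produce a real-valued $g$ with $L_Q^\ast g = Q$, then produce a real-valued $\rho$ with $L_Q \rho = g$. Each of these first-order problems is genuinely a Volterra-type nonlocal ODE, and the static equation $\D_+^{(Q)} Q = 0$ (which gives $\frac{m+A_\theta[Q]}{r} = \frac{Q'}{Q}$) reveals useful integrating factors for each step.

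Explicitly, the identity
\[
L_Q^\ast g = -\frac{1}{rQ}\partial_r(rQg) + Q \int_r^\infty Q g \, dr'
\]
suggests the substitution $g = h/(rQ)$, which converts $L_Q^\ast g = Q$ into a Volterra integral equation for $h$ that can be solved by iteration. The solvability requirement reduces to $(Q, \Lambda Q)_r = (Q, iQ)_r = 0$, both of which are automatic (mass conservation and skew-symmetry of $i$). Analogously, writing $L_Q \rho = Q\partial_r(\rho/Q) + QB_Q \rho$ and setting $\rho = Qk$ turns $L_Q \rho = g$ into a Volterra equation for $k$; integrating from the origin ensures the correct degeneracy at $r = 0$, hence smoothness by Lemma \ref{lem:remov-sing}. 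Reality of $\rho$ is preserved because all coefficients of the two Volterra problems are real, and $Q$ is real. The solution is unique once we project out the kernel $\ker L_Q = \mathrm{span}_\R\{\Lambda Q, iQ\}$ given by Lemma \ref{lem:kernel-LQ}.

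The pointwise estimate $|\rho(r)| \lesssim r^2 Q(r)$ requires tracking the Volterra iterations on both ends. Near $r=0$, equivariance forces $Q \sim r^m$; the integrating-factor structure then yields $g \sim r$ and $\rho \sim r^{m+2} \sim r^2 Q$. As $r \to \infty$, $Q \sim r^{-(m+2)}$, and estimating the nonlocal tails $B_Q g$ and $B_Q^\ast(\overline Q g)$ directly gives $\rho \sim r^{-m} \sim r^2 Q$. Finally, the nondegeneracy for $m \geq 1$ follows from the self-dual pairing
\[
(\rho, Q)_r = (\rho, L_Q^\ast L_Q \rho)_r = \|L_Q \rho\|_{L^2}^2,
\]
where the $L^2$ norm is finite because of the established decay and $m \geq 1$, and the right-hand side is positive since $L_Q \rho = 0$ would put $\rho \in \mathrm{span}_\R\{\Lambda Q, iQ\}$, contradicting $\mathcal{L}_Q \rho = Q \neq 0$.

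The main obstacle is the decay analysis: the nonlocal terms $B_Q$ and $B_Q^\ast$ involve integrals against $Q$, whose tail $r^{-(m+2)}$ decays only polynomially, so the Volterra iterations must be bounded carefully to rule out spurious logarithmic losses and to confirm that the natural ansatz $|\rho| \lesssim r^2 Q$ closes in both regimes. This slow decay of $Q$ is precisely what makes the case of small $m$ (particularly $m=1$) delicate, and it is the same structural feature that motivates much of the rest of the paper.
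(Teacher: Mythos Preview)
Your proposal follows essentially the same two-step approach as the paper: solve $L_Q^\ast \psi = Q$ via the substitution $\tilde\psi = rQ\,\psi$ (your $h$), then $L_Q \rho = \psi$ via $\rho = Q\tilde\rho$ (your $k$), with the same integrating factor coming from $\D_+^{(Q)}Q=0$, the same integration directions (from infinity for the first step, from the origin for the second), and the same nondegeneracy argument. One slip: near the origin the intermediate function behaves like $g=\psi\sim rQ\sim r^{m+1}$, not $g\sim r$; in the paper this is obtained not directly from the Volterra iteration (which, started at infinity, only gives a crude bound at $r=0$) but through a recursive estimate that \emph{uses} $(Q,\Lambda Q)_r=0$ and $L_Q\Lambda Q=0$---so your solvability condition enters the analysis, not just the formal set-up---or alternatively via the explicit formula $\psi=\tfrac{1}{2(m+1)}rQ$ given in Remark~\ref{rem:explicit-form-psi}.
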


\begin{proof}
In the proof, we present how we can solve $L_{Q}^{\ast}L_{Q}\rho=\varphi$
whenever $\varphi$ is a generic real-valued function with $(\varphi,\Lambda Q)_{r}=0$.
For sake of simplicity, however, we only present the special case
when $\varphi=Q$.

At first, we construct a real-valued function $\psi$ satisfying 
\begin{equation}
L_{Q}^{\ast}\psi=Q;\qquad|\psi(r)|\lesssim rQ\sim\begin{cases}
r^{m+1} & \text{if }r\leq1,\\
r^{-m-1} & \text{if }r>1.
\end{cases}\label{eq:inversion-Q-to-psi}
\end{equation}
Rewrite $L_{Q}^{\ast}\psi=Q$ as 
\[
-\partial_{r}\psi-\frac{1}{r}(1+m+A_{\theta}[Q])\psi+Q\int_{r}^{\infty}Q\psi dr'=Q.
\]
Using the relation $\partial_{r}Q-\frac{1}{r}(m+A_{\theta}[Q])Q=\D_{+}^{(Q)}Q=0$,
we renormalize the equation using $\tilde{\psi}\coloneqq(rQ)\psi$
as 
\[
-\partial_{r}\tilde{\psi}+rQ^{2}\int_{r}^{\infty}\tilde{\psi}\frac{dr'}{r'}=rQ^{2}.
\]
We integrate from the spatial infinity with $\tilde{\psi}(\infty)=0$
to get the integral equation 
\begin{equation}
\tilde{\psi}(r)+\int_{r}^{\infty}Q^{2}\Big(\int_{r'}^{\infty}\tilde{\psi}\frac{dr''}{r''}\Big)r'dr'=\int_{r}^{\infty}Q^{2}r'dr'.\label{eq:iteration}
\end{equation}
As $Q^{2}r\lesssim r^{-2m-3}$ for $r\gg1$, we seek for $\tilde{\psi}(r)$
having decay $r^{-2m-2}$. Note that 
\begin{align*}
\int_{r}^{\infty}Q^{2}\Big(\int_{r'}^{\infty}r^{-2m-2}\frac{dr''}{r''}\Big)r'dr' & \leq\frac{1}{2m+2}\int_{r}^{\infty}Q^{2}(r')^{-2m-1}dr'\\
 & \leq\frac{\|rQ\|_{L^{\infty}}^{2}}{(2m+2)^{2}}r^{-2m-2}=\frac{1}{2}r^{-2m-2}.
\end{align*}
As the coefficient of $\frac{1}{2}r^{-2m-2}$ is less than $1$, a
standard contraction principle allows us to construct $\tilde{\psi}$
satisfying 
\[
\tilde{\psi}(r)\lesssim r^{-2m-2},\qquad\forall r>0.
\]
In other words, we have 
\[
\psi(r)\lesssim(rQ)^{-1}r^{-2m-2},\qquad\forall r>0,
\]
which shows $r^{-m-1}$ decay at infinity.

In order to show the decay of $\psi$ at the origin, we use a recursive
argument. We crucially exploit the facts $(Q,\Lambda Q)_{r}=0$ and
$L_{Q}\Lambda Q=0$. For any $0<\delta\leq1$, we observe that 
\begin{align*}
|(L_{Q}^{\ast}\psi,\mathbf{1}_{r\geq\delta}\Lambda Q)|=|(Q,\mathbf{1}_{r\geq\delta}\Lambda Q)| & =|(Q,\mathbf{1}_{r<\delta}\Lambda Q)|\lesssim\delta^{2m+2}.
\end{align*}
On the other hand, we have 
\begin{align*}
 & (L_{Q}^{\ast}\psi,\mathbf{1}_{r\geq\delta}\Lambda Q)\\
 & =\psi(\delta)\Lambda Q(\delta)\delta+\int_{\delta}^{\infty}\psi\Big((\partial_{r}-\frac{m+A_{\theta}[Q]}{r})\Lambda Q+\frac{Q}{r}\int_{\delta}^{r}Q\Lambda Qr'dr'\Big)rdr\\
 & =\psi(\delta)\Lambda Q(\delta)\delta-\int_{\delta}^{\infty}\psi Q\Big(\int_{0}^{\delta}Q\Lambda Qr'dr'\Big)dr.
\end{align*}
Note that 
\begin{align*}
\Lambda Q(\delta)\delta & \sim\delta^{m+1},\\
\Big|\int_{\delta}^{\infty}\psi Q\Big(\int_{0}^{\delta}Q\Lambda Qr'dr'\Big)dr\Big| & \lesssim\delta^{2m+2}\int_{\delta}^{\infty}|\tilde{\psi}(r)|\frac{dr}{r}.
\end{align*}
Recalling $\tilde{\psi}=rQ\psi$, we arrange the above estimate to
obtain the recursive estimate 
\begin{equation}
|\psi(\delta)|\lesssim\delta^{m+1}\Big(1+\int_{\delta}^{\infty}rQ|\psi|\frac{dr}{r}\Big).\label{eq:psi-recursive-est}
\end{equation}
Starting from our rough bound $\psi(r)\lesssim(rQ)^{-1}r^{-2m-2}$,
iterating \eqref{eq:psi-recursive-est} three times yields $|\psi(\delta)|\lesssim\delta^{m+1}$,
which is the desired behavior near the origin. This completes the
proof of \eqref{eq:inversion-Q-to-psi}.

Next, we construct a real-valued function $\rho$ satisfying 
\begin{equation}
L_{Q}\rho=\psi,\qquad|\rho(r)|\lesssim r^{2}Q\sim\begin{cases}
r^{m+2} & \text{if }r\leq1,\\
r^{-m} & \text{if }r\geq1.
\end{cases}\label{eq:inversion-L_Q}
\end{equation}
We solve this by integrating from the origin. Rewrite $L_{Q}\rho=\psi$
as 
\[
\partial_{r}\rho-\frac{1}{r}(m+A_{\theta}[Q])\rho+\frac{Q}{r}\int_{0}^{r}Q\rho r'dr'=\psi.
\]
We conjugate $Q$ to $\rho$, i.e. we consider $\rho=Q\tilde{\rho}$
and rewrite the equation as 
\[
\partial_{r}\tilde{\rho}+\frac{1}{r}\int_{0}^{r}Q^{2}\tilde{\rho}r'dr'=Q^{-1}\psi.
\]
We integrate from the origin with $\tilde{\rho}(0)=0$ to write 
\begin{equation}
\tilde{\rho}(r)+\int_{0}^{r}\Big(\int_{0}^{r'}Q^{2}\tilde{\rho}r''dr''\Big)\frac{dr'}{r'}=\int_{0}^{r}Q^{-1}\psi dr'.\label{eq:iteration-2}
\end{equation}
As $Q^{-1}\psi\lesssim r$ near the origin, we expect $\tilde{\rho}(r)\lesssim r^{2}$
near the origin. Since 
\[
\int_{0}^{r}\Big(\int_{0}^{r'}Q^{2}(r'')^{2}r''dr''\Big)\frac{dr'}{r'}\lesssim r^{2m+2}\ll r^{2},\qquad\forall r\ll1,
\]
 a standard contraction principle allows us to construct $\tilde{\rho}$
on $[0,r_{0})$ for some $r_{0}\ll1$ with expected pointwise bound
$\tilde{\rho}(r)\lesssim r^{2}$.

To construct $\tilde{\rho}$ beyond $[0,r_{0})$, choose an increasing
sequence $\{r_{n}\}_{n\geq0}\subset\R$ such that $r_{n}\to\infty$,
$r_{n+1}\leq2r_{n}$, and $\int_{r_{n}}^{r_{n+1}}Q^{2}r''dr''\ll1$.
We will construct $\tilde{\rho}$ on $[r_{n},r_{n+1})$ inductively
on $n$. Suppose we have constructed $\tilde{\rho}$ on the interval
$[0,r_{n})$ for some $n\geq0$. We then rewrite the integral equation
of $\tilde{\rho}$ as 
\[
\tilde{\rho}(r)+\int_{r_{n}}^{r}\Big(\int_{r_{n}}^{r'}Q^{2}\tilde{\rho}r''dr''\Big)\frac{dr'}{r'}=\int_{0}^{r}\Big(Q^{-1}\psi-\frac{1}{r'}\int_{0}^{\min(r_{n},r')}Q^{2}\tilde{\rho}r''dr''\Big)dr'
\]
for $r\in[r_{n},r_{n+1})$. Since (by Fubini and $r_{n+1}\leq2r_{n}$)
\[
\int_{r_{n}}^{r}\Big(\int_{r_{n}}^{r'}Q^{2}r''dr''\Big)\frac{dr'}{r'}\leq2\int_{r_{n}}^{r_{n+1}}Q^{2}r''dr''\ll1,
\]
we can use contraction principle to construct $\tilde{\rho}(r)$ on
$[r_{n},r_{n+1})$.

From the above recursive construction, we have defined $\tilde{\rho}$
globally on $[0,+\infty)$ but the growth of $\tilde{\rho}$ can be
wild. In order to obtain the desired growth $\tilde{\rho}(r)\lesssim r^{2}$,
choose a positive increasing sequence $\{C_{n}\}_{n\in\N}\subset\R$
such that $|\tilde{\rho}(r)|\leq C_{n}r^{2}$ for all $r\in[0,r_{n}]$
and $n\in\N$. We claim that there exist large $N\in\N$ and $C>C_{N}$
such that $|\tilde{\rho}(r)|\leq Cr^{2}$ for all $r\geq r_{N}$.
To see this, substitute the bootstrap hypothesis $|\tilde{\rho}(r)|\leq Cr^{2}$
for $r\geq r_{N}$ into \eqref{eq:iteration-2} to get 
\[
|\tilde{\rho}(r)|\lesssim r^{2}+C\log^{2}r
\]
for $r\geq r_{N}$. If we choose $N$ large such that $\log^{2}r\ll r^{2}$
for $r\geq r_{N}$ and $C>C_{N}$ large, we get a strong conclusion
$|\tilde{\rho}(r)|\leq\frac{1}{2}Cr^{2}$. Therefore, we obtain $\tilde{\rho}(r)\leq Cr^{2}$
by the continuity argument. This translates to the desired bound 
\[
\rho(r)\lesssim r^{2}Q.
\]
This completes the proof of \eqref{eq:inversion-L_Q}.

In case of $m\geq1$, note that $Q\rho$ is integrable. Thus we get
the nondegeneracy
\[
(Q,\rho)_{r}=(L_{Q}^{\ast}L_{Q}\rho,\rho)_{r}=\|L_{Q}\rho\|_{L^{2}}^{2}\neq0.
\]
This completes the proof.
\end{proof}
\begin{rem}[Explicit construction of $\psi$]
\label{rem:explicit-form-psi}One can find the explicit formula of
$\psi$ to \eqref{eq:inversion-Q-to-psi} as 
\begin{equation}
L_{Q}\rho=\psi=\frac{1}{2(m+1)}rQ=\sqrt{2}\frac{r^{m+1}}{1+r^{2m+2}}.\label{eq:explicit-psi}
\end{equation}
One can easily verify that this $\psi$ satisfies $L_{Q}^{\ast}\psi=Q$,
though we found this formula by writing an ansatz (recall $\tilde{\psi}=rQ\psi$)
\[
\tilde{\psi}=\sum_{k=1}^{\infty}\frac{c_{k}}{(1+r^{2m+2})^{k}}
\]
with $c_{1}=4(m+1)$. Note that this ansatz is closed under the iteration
scheme \eqref{eq:iteration}; we have 
\begin{align*}
 & \int_{r}^{\infty}Q^{2}\Big(\int_{r'}^{\infty}\frac{1}{(1+(r'')^{2m+2})^{k}}\frac{dr''}{r''}\Big)r'dr'\\
 & =\int_{r}^{\infty}Q^{2}\Big(\sum_{\ell\geq k}\frac{1}{(2m+2)\ell}\cdot\frac{1}{(1+(r')^{2m+2})^{\ell}}\Big)r'dr'\\
 & =\sum_{\ell\geq k}\frac{2}{\ell(\ell+1)}\cdot\frac{1}{(1+r^{2m+2})^{\ell+1}}.
\end{align*}
Starting from $c_{1}=4(m+1)$, the equation \eqref{eq:iteration}
determines all the other coefficients $c_{2}=-4(m+1)$ and $c_{3}=c_{4}=\cdots=0$.
This shows 
\[
\tilde{\psi}=4(m+1)\frac{r^{2m+2}}{(1+r^{2m+2})^{2}}
\]
and hence \eqref{eq:explicit-psi}.
\end{rem}

\begin{rem}[Asymptotics of $\rho$]
For $m\geq1$, we claim that 
\begin{equation}
\rho(r)=\Big(\frac{1}{4(m+1)}r^{2}-\frac{\|\psi\|_{L^{2}}^{2}}{2\pi}\log r+O(1)\Big)Q(r)\label{eq:rho-asymp}
\end{equation}
as $r\to\infty$. Indeed, the relation $L_{Q}^{\ast}L_{Q}\rho=L_{Q}^{\ast}\psi=Q$
and estimate $\tilde{\rho}\lesssim r^{2}$ (recall $\tilde{\rho}=Q^{-1}\rho$)
say that 
\[
\int_{0}^{r'}Q^{2}\tilde{\rho}r''dr''=\int_{0}^{\infty}Q\rho r''dr''-\int_{r'}^{\infty}Q^{2}\tilde{\rho}r''dr''=\frac{\|\psi\|_{L^{2}}^{2}}{2\pi}-O((r')^{-2m})
\]
as $r\to\infty$. Substituting this and \eqref{eq:explicit-psi} into
the integral equation \eqref{eq:iteration-2} shows 
\[
\tilde{\rho}(r)=\frac{1}{4(m+1)}r^{2}-\frac{\|\psi\|_{L^{2}}^{2}}{2\pi}\log r+O(1)
\]
as $r\to\infty$. This completes the proof of \eqref{eq:rho-asymp}.
\end{rem}

We conclude this section with the coercivity estimate. Thanks to the
factorization $\mathcal{L}_{Q}=L_{Q}^{\ast}L_{Q}$, coercivity of
$\mathcal{L}_{Q}$ easily follows from the characterization of the
kernel of $L_{Q}$.
\begin{lem}[Coercivity of $L_{Q}$]
\label{lem:coercivity}Let $m\geq1$. We have 
\begin{align}
\|L_{Q}f\|_{L^{2}} & \lesssim\|f\|_{\dot{H}_{m}^{1}}, &  & \forall f\in\dot{H}_{m}^{1}\label{eq:bdd-LQ}\\
\|L_{Q}f\|_{L^{2}}^{2} & \geq c_{\psi_{1},\psi_{2}}\|f\|_{\dot{H}_{m}^{1}}^{2}, &  & \forall f\in\dot{H}_{m}^{1},\ f\perp\{\psi_{1},\psi_{2}\},\label{eq:coercivity-LQ}
\end{align}
where $\{\psi_{1},\psi_{2}\}\subset(\dot{H}_{m}^{1})^{\ast}$ are
such that the following matrix has nonzero determinant: 
\[
\begin{bmatrix}(iQ,\psi_{1})_{r} & (\Lambda Q,\psi_{1})_{r}\\
(iQ,\psi_{2})_{r} & (\Lambda Q,\psi_{2})_{r}
\end{bmatrix}
\]
In case of $m=0$, \eqref{eq:bdd-LQ} and \eqref{eq:coercivity-LQ}
hold true if one replaces $\dot{H}_{m}^{1}$ by 
\[
\|f\|_{\dot{\mathcal{H}}_{0}}^{2}\coloneqq\|\partial_{r}f\|_{L^{2}}^{2}+\|(1+r)^{-1}f\|_{L^{2}}^{2}.
\]
\end{lem}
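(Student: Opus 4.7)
The plan is to prove boundedness and coercivity in turn, with the second following a standard contradiction--compactness argument. For the boundedness \eqref{eq:bdd-LQ}, decompose $L_Q f = \D_+^{(Q)} f + Q B_Q f$. Since $A_\theta[Q]$ is a bounded function of $r$, the local part satisfies $\|\D_+^{(Q)} f\|_{L^2} \leq \|\partial_r f\|_{L^2} + C\|r^{-1}f\|_{L^2} \lesssim \|f\|_{\dot{H}_m^1}$ by Hardy--Sobolev \eqref{eq:hardy-Sobolev} and the identity $\|f\|_{\dot{H}_m^1}^2 = \|\partial_r f\|_{L^2}^2 + m^2\|r^{-1}f\|_{L^2}^2$. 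The nonlocal part $QB_Q f$ has the form of $\mathcal{N}_{3,1}$ in \eqref{eq:def-N} applied to $(Q,f,Q)$, so Lemma \ref{lem:L2-est-N} (or a direct Cauchy--Schwarz exploiting the decay of $Q$) yields $\|QB_Q f\|_{L^2} \lesssim \|f\|_{\dot{H}_m^1}$.

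For the coercivity \eqref{eq:coercivity-LQ} I argue by contradiction: assuming it fails produces a sequence $\{f_n\} \subset \dot{H}_m^1$ with $\|f_n\|_{\dot{H}_m^1} = 1$, $f_n \perp \{\psi_1,\psi_2\}$, and $\|L_Q f_n\|_{L^2} \to 0$. Passing to a subsequence, $f_n \rightharpoonup f_\infty$ weakly in $\dot{H}_m^1$. By the boundedness of $L_Q$ and weak lower semicontinuity, $L_Q f_\infty = 0$, so Lemma \ref{lem:kernel-LQ} places $f_\infty \in \mathrm{span}_{\R}\{iQ, \Lambda Q\}$. Since $\psi_1, \psi_2 \in (\dot{H}_m^1)^\ast$, both orthogonality conditions pass to the weak limit, and the non-degeneracy of the $2 \times 2$ matrix in the statement forces $f_\infty = 0$.

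The remaining step, and the main obstacle, is to upgrade $f_n \rightharpoonup 0$ to $\|f_n\|_{\dot{H}_m^1} \to 0$, contradicting the normalization. The key observation is that $f \mapsto QB_Q f$ is compact from $\dot{H}_m^1$ to $L^2$: the polynomial decay $Q(r) \lesssim r^{-(m+2)}$ supplies uniform tail smallness, while the $L^\infty$ bound $\|f_n\|_{L^\infty} \lesssim 1$ coming from \eqref{eq:hardy-Sobolev} combined with Rellich compactness on bounded annuli provides local $L^2$ control. Hence $QB_Q f_n \to 0$ strongly in $L^2$, so $\|\D_+^{(Q)} f_n\|_{L^2} \to 0$. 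Expanding the square and integrating the cross term by parts using $\partial_r A_\theta[Q] = -\tfrac{r}{2}|Q|^2$ (with vanishing boundary terms for $m \geq 1$) gives
\begin{equation*}
\|\D_+^{(Q)} f\|_{L^2}^2 = \|\partial_r f\|_{L^2}^2 + \Big\|\tfrac{m + A_\theta[Q]}{r} f\Big\|_{L^2}^2 - \tfrac{1}{2}\int |Q|^2 |f|^2\, r\,dr.
\end{equation*}
The last term is compact and vanishes along the sequence by dominated convergence (using $Q \in L^\infty \cap L^2$ together with the local $L^2$ convergence of $f_n$). The first two terms on the right are non-negative, so each must converge to $0$ individually; together with Hardy's inequality this yields $\|f_n\|_{\dot{H}_m^1} \to 0$, the desired contradiction. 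The case $m=0$ follows by the same scheme with $\dot{\mathcal{H}}_0$ in place of $\dot{H}_m^1$, the failure of Hardy at the origin being compensated by the $(1+r)^{-1}$ weight built into the modified norm and by the lower bound $|(m+A_\theta[Q])/r| \gtrsim (1+r)^{-1}$ at infinity.
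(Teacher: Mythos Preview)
Your strategy differs from the paper's (which first establishes a weak coercivity $\|L_Q f\|_{L^2} \gtrsim \|r^{-1}f\|_{L^2}$ via identities for $\int |L_Q f \pm r^{-1}f|^2$ that detect escape of mass to $0$ or $\infty$, and only then upgrades to the full $\dot H_m^1$ bound), and is mostly sound, but the last step has a real gap. After obtaining $\|\partial_r f_n\|_{L^2} \to 0$ and $\big\|\tfrac{m+A_\theta[Q]}{r}f_n\big\|_{L^2} \to 0$ you invoke ``Hardy's inequality'' to conclude $\|f_n\|_{\dot H_m^1}\to 0$. There is no such Hardy inequality here: $\|r^{-1}f\|_{L^2}\lesssim\|\partial_r f\|_{L^2}$ fails for $m$-equivariant functions on $\R^2$ (take $f$ equal to $(\log R)^{-1/2}$ on $[1,R]$ with smooth cutoffs; then $\|\partial_r f\|_{L^2}\to 0$ while $\|r^{-1}f\|_{L^2}\sim 1$). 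Nor does the second quantity directly control $\|r^{-1}f_n\|_{L^2}$, because the weight $m+A_\theta[Q]$ \emph{changes sign}: it equals $m$ at the origin, tends to $-(m+2)$ at infinity, and hence vanishes at a unique interior point $r_0$.

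The repair is easy once you notice this. Outside a fixed neighbourhood $|r-r_0|>\delta$ one has $|m+A_\theta[Q]|\geq c_\delta>0$, so $\big\|\tfrac{m+A_\theta[Q]}{r}f_n\big\|_{L^2}\to 0$ forces $\|r^{-1}f_n\|_{L^2(|r-r_0|>\delta)}\to 0$; on the compact annulus $|r-r_0|\leq\delta$ the factor $r^{-1}$ is bounded and $f_n\to 0$ in $L^2$ by the same Rellich compactness you already used for the $\int|Q|^2|f_n|^2$ term. Combining gives $\|r^{-1}f_n\|_{L^2}\to 0$, hence $\|f_n\|_{\dot H_m^1}\to 0$ and the contradiction. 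For $m=0$ the zero of the weight sits at the origin (since $A_\theta[Q](0)=0$) rather than at an interior point, and the same local-compactness patch on a bounded ball recovers the $(1+r)^{-1}$-weighted norm, with the large-$r$ part handled by $|A_\theta[Q](r)|\to 2$.
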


\begin{proof}
For simplicity of exposition, we hope to introduce a unified notation
\[
\|f\|_{\dot{\mathcal{H}}_{m}}^{2}\coloneqq\begin{cases}
\|\partial_{r}f\|_{L^{2}}^{2}+\|(1+r)^{-1}f\|_{L^{2}}^{2} & \text{if }m=0,\\
\|f\|_{\dot{H}_{m}^{1}}^{2} & \text{if }m\geq1,
\end{cases}
\]

We first show \eqref{eq:bdd-LQ}. Since 
\[
\|L_{Q}f\|_{L^{2}}\lesssim\|f\|_{\dot{\mathcal{H}}_{m}}+\|QB_{Q}f\|_{L^{2}},
\]
it suffices to control the term $QB_{Q}f$. For this purpose, we express
\begin{align*}
[QB_{Q}f](r) & =\int_{0}^{\infty}K(r,r')\frac{\Re f(r')}{1+r'}r'dr',\\
K(r,r') & \coloneqq r^{-1}Q(r)Q(r')(1+r')\mathbf{1}_{r'\leq r}.
\end{align*}
Since 
\begin{align*}
\sup_{r>0}\int_{0}^{\infty}|K(r,r')|r'dr' & \lesssim\|Q\|_{L^{\infty}}\sup_{r>0}\Big(\frac{1}{r}\int_{0}^{r}(1+r')Q(r')r'dr'\Big)\lesssim1,\\
\sup_{r'>0}\int_{0}^{\infty}|K(r,r')|rdr & \lesssim\|Q\|_{L^{\infty}}\sup_{r'>0}\Big((1+r')\int_{r'}^{\infty}Q(r)dr\Big)\lesssim1,
\end{align*}
Schur's test yields 
\[
\|QB_{Q}f\|_{L^{2}}\lesssim\|(1+r)^{-1}f\|_{L^{2}}\lesssim\|f\|_{\dot{\mathcal{H}}_{m}}.
\]
This completes the proof of \eqref{eq:bdd-LQ}.

We turn to \eqref{eq:coercivity-LQ}. We assert the following weak
coercivity: 
\begin{equation}
\|L_{Q}f\|_{L^{2}}\geq c_{\psi_{1},\psi_{2}}\begin{cases}
\|(1+r)^{-1}f\|_{L^{2}} & \text{if }m=0,\\
\|r^{-1}f\|_{L^{2}} & \text{if }m\geq1,
\end{cases}\qquad\forall f\perp\{\psi_{1},\psi_{2}\}.\label{eq:weak-coer}
\end{equation}
We follow the standard strategy used in the proof of Poincar\'e inequality.
If \eqref{eq:weak-coer} fails, then there exists a sequence $\{f_{n}\}_{n\in\N}\perp\{\psi_{1},\psi_{2}\}$
such that 
\[
\|L_{Q}f_{n}\|_{L^{2}}\to0\quad\text{and}\quad\begin{cases}
\|(1+r)^{-1}f_{n}\|_{L^{2}}=1 & \text{if }m=0,\\
\|r^{-1}f_{n}\|_{L^{2}}=1 & \text{if }m\geq1.
\end{cases}
\]
We claim for such a sequence that there exist $0<r_{1}\ll1$ and $r_{2}\gg1$
with 
\begin{equation}
\begin{cases}
\liminf_{n\to\infty}\|f_{n}\|_{L^{2}(|x|<r_{2})}>0, & \text{if }m=0,\\
\liminf_{n\to\infty}\|f_{n}\|_{L^{2}(r_{1}<|x|<r_{2})}>0, & \text{if }m\geq1.
\end{cases}\label{eq:claim}
\end{equation}
Let us assume this claim and finish the proof of \eqref{eq:weak-coer}.
As all the terms of $L_{Q}f$ except $\partial_{r}f$ can be controlled
by $(1+r)^{-1}f$ if $m=0$ and $r^{-1}f$ if $m\geq1$, we see that
$\|\partial_{r}f_{n}\|_{L^{2}}$ is bounded. Passing to a subsequence,
we can assume $f_{n}\rightharpoonup f$ in $\dot{\mathcal{H}}_{m}$.
Moreover, we can apply the Rellich-Kondrachov compactness lemma (on
the region $\{|x|<r_{2}\}$ if $m=0$ and $\{r_{1}<|x|<r_{2}\}$ if
$m\geq1$) to guarantee that $f\neq0$. From the weak convergence,
we have $L_{Q}f=0$ (weakly) and $(f,\psi_{1})_{r}=(f,\psi_{2})_{r}=0$.
We understand $L_{Q}f=0$ on the ambient space $\R^{2}$ as 
\[
(\partial_{1}+i\partial_{2})[f(r)e^{im\theta}]=\Big(\frac{A_{\theta}[Q]}{r}f-QB_{Q}f\Big)(r)e^{i(m+1)\theta}.
\]
By elliptic regularity, $f$ is a smooth $m$-equivariant solution.
By Lemma \ref{lem:kernel-LQ}, $f$ is an $\R$-linear combination
of $\Lambda Q$ and $iQ$. Because of orthogonality conditions, we
should have $f=0$, yielding a contradiction. This completes the proof
of the weak coercivity \eqref{eq:weak-coer} assuming the claim \eqref{eq:claim}.

We turn to show the claim \eqref{eq:claim}. Suppose not; we can choose
a subsequence (still denoted by $\{f_{n}\}_{n\in\N}$) such that
\[
\begin{cases}
\lim_{n\to\infty}\|f_{n}\|_{L^{2}(|x|<r_{2})}=0, & \text{if }m=0,\\
\lim_{n\to\infty}\|f_{n}\|_{L^{2}(r_{1}<|x|<r_{2})}=0, & \text{if }m\geq1,
\end{cases}
\]
for any $0<r_{1}<r_{2}<\infty$. We first observe the following computation:
for any $0<y_{1}<y_{2}<\infty$, 
\begin{align*}
\int_{y_{1}}^{y_{2}}\Big|L_{Q}f_{n}-\frac{f_{n}}{r}\Big|^{2}rdr & =\int_{y_{1}}^{y_{2}}|L_{Q}f_{n}|^{2}rdr-2\int_{y_{1}}^{y_{2}}\Re(\overline{f}_{n}L_{Q}f_{n})dr+\int_{y_{1}}^{y_{2}}\Big|\frac{f_{n}}{r}\Big|^{2}rdr\\
 & =\int_{y_{1}}^{y_{2}}|L_{Q}f_{n}|^{2}rdr-|f_{n}|^{2}\Big|_{y_{1}}^{y_{2}}-2\int_{y_{1}}^{y_{2}}\Re\Big(\frac{f_{n}}{r}\Big)QB_{Q}(f_{n})rdr\\
 & \quad+\int_{y_{1}}^{y_{2}}\big(1+2(m+A_{\theta}[Q])\big)\Big|\frac{f_{n}}{r}\Big|^{2}rdr.
\end{align*}
Since $A_{\theta}[Q](r)\to-2(m+1)$ as $r\to\infty$, we take $y_{1}=r_{2}$
for sufficiently large $r_{2}$ and $y_{2}\to\infty$ to obtain
\[
\int_{r_{2}}^{\infty}\Big|\frac{f_{n}}{r}\Big|^{2}rdr\leq|f_{n}|^{2}(r_{2})+\int_{r_{2}}^{\infty}|L_{Q}f_{n}|^{2}rdr+2\int_{r_{2}}^{\infty}\Big|\frac{f_{n}}{r}\Big|QB_{Q}(|f_{n}|)rdr.
\]
The last term of the RHS can be absorbed into the LHS as $r_{2}$
is large. We then average in the $r_{2}$-variable and take limit
$n\to\infty$ to get 
\begin{equation}
\liminf_{n\to\infty}\int_{2r_{2}}^{\infty}\Big|\frac{f_{n}}{r}\Big|^{2}rdr\lesssim\liminf_{n\to\infty}\int_{r_{2}}^{2r_{2}}\Big|\frac{f_{n}}{r}\Big|^{2}rdr.\label{eq:claim-temp1}
\end{equation}
If $m=0$, then we get a contradiction. Hence \eqref{eq:claim} is
true for $m=0$.

When $m\geq1$, we should prevent $f_{n}$ from concentrating at the
origin. For this, we need one more computation: for any $0<y_{1}<y_{2}<\infty$,
\begin{align*}
\int_{y_{1}}^{y_{2}}\Big|L_{Q}f_{n}+\frac{f_{n}}{r}\Big|^{2}rdr & =\int_{y_{1}}^{y_{2}}|L_{Q}f_{n}|^{2}rdr+|f_{n}|^{2}\Big|_{y_{1}}^{y_{2}}+2\int_{y_{1}}^{y_{2}}\Re\Big(\frac{f_{n}}{r}\Big)QB_{Q}(f_{n})rdr\\
 & \quad+\int_{y_{1}}^{y_{2}}\big(1-2(m+A_{\theta}[Q])\big)\Big|\frac{f_{n}}{r}\Big|^{2}rdr.
\end{align*}
Since $A_{\theta}[Q](r)\to0$ as $r\to0$, we take $y_{1}\to0$ and
$y_{2}=r_{1}$ sufficiently small to obtain 
\[
\int_{0}^{r_{1}}\Big|\frac{f_{n}}{r}\Big|^{2}rdr\leq|f_{n}|^{2}(r_{1})+\int_{0}^{r_{1}}|L_{Q}f_{n}|^{2}rdr+2\int_{0}^{r_{1}}\Big|\frac{f_{n}}{r}\Big|QB_{Q}(|f_{n}|)rdr.
\]
The last term of the RHS can be absorbed into the LHS as $r_{1}$
is small. We average in the $r_{1}$-variable and take $n\to\infty$
to get 
\begin{equation}
\liminf_{n\to\infty}\int_{0}^{r_{1}}\Big|\frac{f_{n}}{r}\Big|^{2}rdr\lesssim\liminf_{n\to\infty}\int_{r_{1}}^{2r_{1}}\Big|\frac{f_{n}}{r}\Big|^{2}rdr.\label{eq:claim-temp2}
\end{equation}
From \eqref{eq:claim-temp1} and \eqref{eq:claim-temp2}, we get a
contradiction. This proves the claim \eqref{eq:claim}.

We are now in position to prove the strong coercivity \eqref{eq:coercivity-LQ}.
If $m\geq1$, one can proceed as 
\begin{align*}
\|L_{Q}f\|_{L^{2}} & =\delta\|L_{Q}f\|_{L^{2}}+(1-\delta)\|L_{Q}f\|_{L^{2}}\\
 & \geq\delta(\|\partial_{r}f\|_{L^{2}}-C\|r^{-1}f\|_{L^{2}})+c(1-\delta)\|r^{-1}f\|_{L^{2}}\geq\delta\|f\|_{\dot{\mathcal{H}}_{m}},
\end{align*}
provided $\delta$ is sufficiently small. If $m=0$, one uses $|r^{-1}A_{\theta}[Q]|\lesssim(1+r)^{-1}$
to obtain the desired coercivity. This ends the proof of Lemma \ref{lem:coercivity}.
\end{proof}

\section{\label{sec:Profile}Profile $Q^{(\eta)}$}

\emph{From now on, we assume $m\geq1$.}

\subsection{Role of pseudoconformal phase}

In view of pseudoconformal symmetry \eqref{eq:pseudo-conti}, we use
the modified profile 
\[
Q_{b}(y)\coloneqq Q(y)e^{-ib\frac{|y|^{2}}{4}}.
\]
We note that for all small $b\geq0$ 
\begin{equation}
\|Q_{b}-Q\|_{L^{2}}\lesssim\begin{cases}
b|\log b|^{\frac{1}{2}} & \text{if }m=1,\\
b & \text{if }m\geq2.
\end{cases}\label{eq:Qb-Q}
\end{equation}
If $m=1$, the logarithmic factor comes from $\||y|^{2}Q\|_{L^{2}(|y|\lesssim b^{-\frac{1}{2}})}\sim|\log b|^{\frac{1}{2}}$.
As we handle the case $m\geq1$ in the sequel, we only use the upper
bound $b|\log b|^{\frac{1}{2}}$. Moreover, as we will be in the regime
$b\lesssim\lambda$, we use the upper bound $\lambda|\log\lambda|^{\frac{1}{2}}$.
See \eqref{eq:Lqb-Lq}, \eqref{eq:quad-energy-coer}, and \eqref{eq:unav-Lyapunov-coer}
for instance.

For a function $f$ and $b\in\R$, recall the notation 
\[
f_{b}(y)\coloneqq f(y)e^{-ib\frac{|y|^{2}}{4}}.
\]
We discuss how the pseudoconformal phase $e^{-ib\frac{|y|^{2}}{4}}$
acts on the (linearized) evolution of \eqref{eq:CSS}. We record explicit
algebras for later use.
\begin{lem}[Conjugation by pseudoconformal phase]
\label{lem:conj-pseudo-phase}For any $b\in\R$, we have 
\begin{align}
L_{f_{b}}^{\ast}\D_{+}^{(f_{b})}f_{b} & =[L_{f}^{\ast}\D_{+}^{(f)}f]_{b}+ib\Lambda f_{b}-ib^{2}\partial_{b}(f_{b}),\label{eq:conj-b-phase1}\\
\mathcal{L}_{w_{b}}\epsilon_{b} & =[\mathcal{L}_{w}\epsilon]_{b}+ib\Lambda\epsilon_{b}-ib^{2}\partial_{b}(\epsilon_{b}),\label{eq:conj-b-phase2}
\end{align}
for any functions $f,w,\epsilon$.
\end{lem}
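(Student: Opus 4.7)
The plan is to establish \eqref{eq:conj-b-phase1} by direct computation from the definitions, and then deduce \eqref{eq:conj-b-phase2} by linearizing \eqref{eq:conj-b-phase1} at $w$ in the direction $\epsilon$. Everything rests on two observations: since $|f_b|^2=|f|^2$, one has $A_\theta[f_b]=A_\theta[f]$ and $\bar{f_b}\,g_b=\bar f\,g$; and the operator $L_f^{\ast}=\D_+^{(f)\ast}+B_f^{\ast}(\bar f\,\cdot\,)$ is only $\R$-linear, because the nonlocal piece $B_f^{\ast}$ involves a real part.

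First I would record the elementary conjugation rules
\[
\partial_r(g_b)=(\partial_r g)_b - i\tfrac{br}{2}g_b,\qquad \D_+^{(f_b)}(g_b)=[\D_+^{(f)}g]_b - i\tfrac{br}{2}g_b,
\]
\[
L_{f_b}^{\ast}(g_b)=[L_f^{\ast}g]_b + i\tfrac{br}{2}g_b,\qquad \Lambda f_b=(\Lambda f)_b - i\tfrac{br^2}{2}f_b,
\]
the second using $A_\theta[f_b]=A_\theta[f]$ and the third by further noting $B_{f_b}^{\ast}(\bar{f_b}g_b)=[B_f^{\ast}(\bar f g)]_b$. A short computation with $\partial_b f_b=-i\tfrac{r^2}{4}f_b$ then reduces the RHS of \eqref{eq:conj-b-phase1} to
\[
[L_f^{\ast}\D_+^{(f)}f]_b + ib(\Lambda f)_b + \tfrac{b^2r^2}{4}f_b.
\]

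For \eqref{eq:conj-b-phase1}, I decompose $\D_+^{(f_b)}f_b=[\D_+^{(f)}f]_b + c\,(rf)_b$ with the purely imaginary scalar $c=-i\tfrac{b}{2}$, and apply $L_{f_b}^{\ast}$ to each piece. The key use of $\R$-linearity: because $\Re c=0$ and $\bar f\cdot rf=r|f|^2$ is real, one has $B_f^{\ast}(\bar f\cdot c\,rf)=0$, hence $L_f^{\ast}(c\,rf)=c\,\D_+^{(f)\ast}(rf)$. Combining with the conjugation rules above, the LHS becomes
\[
[L_f^{\ast}\D_+^{(f)}f]_b - i\tfrac{b}{2}[\D_+^{(f)\ast}(rf)]_b + i\tfrac{br}{2}[\D_+^{(f)}f]_b + \tfrac{b^2r^2}{4}f_b,
\]
and matching with the simplified RHS reduces \eqref{eq:conj-b-phase1} to the pointwise identity
\[
-\D_+^{(f)\ast}(rf)+r\D_+^{(f)}f=2\Lambda f,
\]
which is verified by substituting $\D_+^{(f)\ast}(rf)=-f-r\partial_r f-(1+m+A_\theta[f])f$ and $r\D_+^{(f)}f=r\partial_r f-(m+A_\theta[f])f$ and adding, the $A_\theta[f]$ contributions cancelling exactly.

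For \eqref{eq:conj-b-phase2}, the cleanest route is linearization. The nonlinear map $\Phi(u)\coloneqq L_u^{\ast}\D_+^{(u)}u$ has Fr\'echet derivative at $w$ equal to $\mathcal{L}_w$, by the very definition \eqref{eq:def-linearized-op} of $\mathcal{L}_w$ as the linear part of $\Phi(w+\epsilon)$ in $\epsilon$. Substituting $f=w+\epsilon$ into \eqref{eq:conj-b-phase1}, using $(w+\epsilon)_b=w_b+\epsilon_b$ and the $\R$-linearity of the lowering operation, $\Lambda$, and $\partial_b$, the $O(1)$ contributions reproduce \eqref{eq:conj-b-phase1} at $f=w$, while collecting the $O(\epsilon)$ contributions yields \eqref{eq:conj-b-phase2}. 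The main subtlety throughout is the careful bookkeeping of $\R$-linearity versus $\C$-linearity: naively pulling $-i\tfrac{b}{2}$ through $L_f^{\ast}$ would introduce a spurious nonlocal $B_f^{\ast}$ contribution that destroys the identity.
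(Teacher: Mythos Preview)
Your proof is correct and follows essentially the same route as the paper's. Both arguments record the conjugation rules $\D_+^{(f_b)}g_b=[(\D_+^{(f)}-ib\tfrac{r}{2})g]_b$ and $L_{f_b}^{\ast}g_b=[(L_f^{\ast}+ib\tfrac{r}{2})g]_b$, expand the composition, and reduce the cross terms to $ib\Lambda f$ via the identity $r\D_+^{(f)}f-\D_+^{(f)\ast}(rf)=2\Lambda f$; the paper packages this as $ib\tfrac{r}{2}\D_+^{(f)}f-L_f^{\ast}(ib\tfrac{r}{2}f)=ib\Lambda f$, which is the same computation once one notes (as you do explicitly, and the paper leaves implicit) that $B_f^{\ast}(\bar f\cdot ib\tfrac{r}{2}f)=0$ by $\R$-linearity. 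The derivation of \eqref{eq:conj-b-phase2} by linearizing \eqref{eq:conj-b-phase1} at $f=w+\epsilon$ is also identical to the paper's.
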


\begin{proof}
By a direct computation, we get 
\begin{align*}
\D_{+}^{(f_{b})}g_{b} & =(\D_{+}^{(f)}g-ib\tfrac{r}{2}g)_{b},\\
L_{f_{b}}g_{b} & =(L_{f}g-ib\tfrac{r}{2}g)_{b},\\
L_{f_{b}}^{\ast}g_{b} & =(L_{f}^{\ast}g+ib\tfrac{r}{2}g)_{b},
\end{align*}
for any functions $f$ and $g$. Thus 
\[
L_{f_{b}}^{\ast}\D_{+}^{(f_{b})}f_{b}=[(L_{f}^{\ast}+ib\tfrac{r}{2})(\D_{+}^{(f)}-ib\tfrac{r}{2})f]_{b}.
\]
Using the identities 
\begin{align*}
ib\tfrac{r}{2}\D_{+}^{(f)}f-L_{f}^{\ast}(ib\tfrac{r}{2}f) & =ib\Lambda f,\\{}
[\Lambda f]_{b}-\Lambda f_{b} & =ib\tfrac{r^{2}}{2}f_{b},\\
b^{2}\tfrac{r^{2}}{4}f_{b} & =ib^{2}\partial_{b}(f_{b}),
\end{align*}
the first identity \eqref{eq:conj-b-phase1} follows.

To show \eqref{eq:conj-b-phase2}, we recall that $\mathcal{L}_{w}\epsilon$
is the linear part in $\epsilon$ of $L_{w+\epsilon}^{\ast}\D_{+}^{(w+\epsilon)}(w+\epsilon)$.
Therefore, the identity \eqref{eq:conj-b-phase2} follows by substituting
$f=w+\epsilon$ into \eqref{eq:conj-b-phase1} and taking the linear
part in $\epsilon$.
\end{proof}
We transfer algebraic identities for $Q$ to $Q_{b}$:
\begin{lem}[Algebraic identities from $Q_{b}$]
\label{lem:algebraic-relation-Qb}For each $b\in\R$, we have 
\begin{equation}
-L_{Q_{b}}^{\ast}\D_{+}^{(Q_{b})}Q_{b}+ib\Lambda Q_{b}=\tfrac{b^{2}}{4}|y|^{2}Q_{b}.\label{eq:alg-idty}
\end{equation}
Moreover, the linearized operator $\mathcal{L}_{Q_{b}}$ satisfies
the following algebraic identities 
\begin{align*}
\mathcal{L}_{Q_{b}}iQ_{b}+b\Lambda Q_{b} & =b^{2}\partial_{b}Q_{b},\\
\mathcal{L}_{Q_{b}}[\Lambda Q]_{b}-ib\Lambda[\Lambda Q]_{b} & =-ib^{2}\partial_{b}[\Lambda Q]_{b},\\
\mathcal{L}_{Q_{b}}i|y|^{2}Q_{b}+b\Lambda(|y|^{2}Q_{b}) & =-4i[\Lambda Q]_{b}+b^{2}\partial_{b}(|y|^{2}Q_{b})\\
\mathcal{L}_{Q_{b}}\rho_{b}-ib\Lambda\rho_{b} & =Q_{b}-ib^{2}\partial_{b}\rho_{b}.
\end{align*}
\end{lem}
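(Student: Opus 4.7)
The plan is to derive \eqref{eq:alg-idty} directly from the conjugation identity \eqref{eq:conj-b-phase1} of Lemma \ref{lem:conj-pseudo-phase}, and then deduce the four linearized identities by applying \eqref{eq:conj-b-phase2} to each of the four generalized eigenmodes $iQ,\Lambda Q,ir^{2}Q,\rho$ of $\mathcal{L}_{Q}$ supplied by Proposition \ref{prop:gen.null.space} and Lemma \ref{lem:generalized-mode-rho}. No genuinely new computation is needed; the content is just to track how the pseudoconformal phase twists the algebraic relations already available for $Q$.

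For \eqref{eq:alg-idty}, I would take $f=Q$ in \eqref{eq:conj-b-phase1}. Since $\D_{+}^{(Q)}Q=0$, the first term on the right vanishes, leaving
\[
L_{Q_{b}}^{\ast}\D_{+}^{(Q_{b})}Q_{b}=ib\Lambda Q_{b}-ib^{2}\partial_{b}(Q_{b}).
\]
Then I use the pointwise identity $\partial_{b}Q_{b}=-i\tfrac{|y|^{2}}{4}Q_{b}$, which gives $-ib^{2}\partial_{b}(Q_{b})=-\tfrac{b^{2}}{4}|y|^{2}Q_{b}$, and rearranging yields \eqref{eq:alg-idty}.

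For the four linearized identities, I would apply \eqref{eq:conj-b-phase2} with $w=Q$ and $\epsilon\in\{iQ,\Lambda Q,ir^{2}Q,\rho\}$ in turn, using $(iQ)_{b}=iQ_{b}$, $(\Lambda Q)_{b}=[\Lambda Q]_{b}$, $(ir^{2}Q)_{b}=i|y|^{2}Q_{b}$, and Proposition \ref{prop:gen.null.space} to evaluate $[\mathcal{L}_{Q}\epsilon]_{b}$. For instance, with $\epsilon=iQ$ one has $\mathcal{L}_{Q}(iQ)=0$, $ib\Lambda(iQ_{b})=-b\Lambda Q_{b}$, and $-ib^{2}\partial_{b}(iQ_{b})=b^{2}\partial_{b}Q_{b}$, which is precisely the first identity. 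The case $\epsilon=\Lambda Q$ is immediate from $\mathcal{L}_{Q}(\Lambda Q)=0$; the case $\epsilon=ir^{2}Q$ uses $\mathcal{L}_{Q}(ir^{2}Q)=-4i\Lambda Q$; the case $\epsilon=\rho$ uses $\mathcal{L}_{Q}\rho=L_{Q}^{\ast}L_{Q}\rho=Q$ from Lemma \ref{lem:generalized-mode-rho}. In each case the $ib\Lambda$ and $-ib^{2}\partial_{b}$ corrections are moved to the appropriate side to match the stated form.

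The only potential snag is the sign bookkeeping in the $\partial_{b}$ terms when $\epsilon$ itself contains an $i$ factor (so that $\partial_{b}\epsilon_{b}$ rotates by $i$ in an unexpected way); to avoid any confusion I would explicitly record $\partial_{b}(f e^{-ib|y|^{2}/4})=-i\tfrac{|y|^{2}}{4}f_{b}$ at the start and substitute it uniformly. Beyond this, the proof is a short mechanical verification and contains no essential analytic difficulty.
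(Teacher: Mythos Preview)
Your proposal is correct and follows exactly the same approach as the paper: substitute $f=Q$ into \eqref{eq:conj-b-phase1} using $\D_{+}^{(Q)}Q=0$ for \eqref{eq:alg-idty}, and substitute $w=Q$ into \eqref{eq:conj-b-phase2} with each element of the generalized null space from Proposition \ref{prop:gen.null.space} for the four linearized identities. The sign bookkeeping you outline is accurate.
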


\begin{proof}
Substitute $f=Q$ into \eqref{eq:conj-b-phase1} and use $\D_{+}^{(Q)}Q=0$
to get the first identity. To obtain the algebraic identities satisfied
by $\mathcal{L}_{Q_{b}}$, substitute $w=Q$ into \eqref{eq:conj-b-phase2}
with help of Proposition \ref{prop:gen.null.space}.
\end{proof}
We discuss how the pseudoconformal phase $e^{-ib\frac{|y|^{2}}{4}}$
represents the pseudoconformal blow-up. Assume that 
\[
Q_{b}^{\sharp}(t,r)=\frac{1}{\lambda(t)}Q_{b(t)}\Big(\frac{r}{\lambda(t)}\Big)e^{i\gamma(t)}
\]
 solves \eqref{eq:CSS}. By the dynamical rescaling, $Q_{b}$ solves
\[
i\partial_{s}Q_{b}-L_{Q_{b}}^{\ast}\D_{+}^{(Q_{b})}Q_{b}=i\frac{\lambda_{s}}{\lambda}\Lambda Q_{b}+\gamma_{s}Q_{b}.
\]
Using $i\partial_{s}Q_{b}=b_{s}(\frac{|y|^{2}}{4}Q_{b})$ and \eqref{eq:alg-idty},
$Q_{b}$ satisfies 
\begin{equation}
i\Big(\frac{\lambda_{s}}{\lambda}+b\Big)\Lambda Q_{b}+\gamma_{s}Q_{b}-(b_{s}+b^{2})\tfrac{|y|^{2}}{4}Q_{b}=0.\label{eq:alg-idty-temp}
\end{equation}
Therefore, we obtain the formal parameter equations 
\[
\frac{\lambda_{s}}{\lambda}+b=0,\quad\gamma_{s}=0,\quad b_{s}+b^{2}=0,
\]
which are verified by the pseudoconformal blow-up regime 
\[
\lambda(t)=b(t)=|t|\quad\text{and}\quad\gamma(t)=0,\qquad\forall t<0.
\]

\subsection{\label{subsec:A-Hint-to}A hint to rotational instability}

We now discuss the instability mechanism of pseudoconformal blow-up
solutions. We first start formal computations that lead to the instability
of $S(t)$ or pseudoconformal blow-up solutions. This section assumes
the existence of the modified profile $Q^{(\eta)}$ below. In fact,
construction of $Q^{(\eta)}$ is an \emph{issue} and will be discussed
in the next subsection.

Fix a small $\eta\geq0$. We consider a real-valued profile $Q^{(\eta)}(r)$
that modifies $Q=Q^{(0)}$. We let $Q_{b}^{(\eta)}\coloneqq e^{-ib\frac{|y|^{2}}{4}}Q^{(\eta)}$
and 
\[
Q_{b}^{(\eta)\sharp}(t,r)\coloneqq\frac{1}{\lambda(t)}Q_{b(t)}^{(\eta)}\Big(\frac{r}{\lambda(t)}\Big)e^{i\gamma(t)}.
\]
Assume that $Q_{b}^{(\eta)\sharp}$ solves \eqref{eq:CSS}. Taking
the imaginary part of \eqref{eq:alg-idty-temp}, we further require
\[
\frac{\lambda_{s}}{\lambda}+b=0.
\]
Now the profile $Q^{(\eta)}$ should satisfy 
\begin{equation}
L_{Q^{(\eta)}}^{\ast}\D_{+}^{(Q^{(\eta)})}Q^{(\eta)}+\gamma_{s}Q^{(\eta)}-(b_{s}+b^{2})\tfrac{|y|^{2}}{4}Q^{(\eta)}=0.\label{eq:Q-eta-prelim}
\end{equation}

In case of \eqref{eq:NLS}, the authors of \cite{MerleRaphaelSzeftel2013AJM}
were able to choose\footnote{There the sign condition for $\eta$ is not necessary.}
$b_{s}+b^{2}=-\eta$ and $\gamma_{s}=1$. The instability direction
is $\rho_{\mathrm{NLS}}$, which satisfies $\mathcal{L}_{\mathrm{NLS}}\rho_{\mathrm{NLS}}=|y|^{2}R$.
For \eqref{eq:CSS}, however, 
\[
\mathcal{L}_{Q}(\partial_{\eta}|_{\eta=0}Q^{(\eta)})=\tfrac{|y|^{2}}{4}Q
\]
is \emph{not solvable}.\footnote{in the sense discussed in Section \ref{subsec:solvability}.}
In other words, the law $b_{s}+b^{2}=-\eta$ is forbidden. Thus we
have to search for a different instability mechanism.

It turns out that there is a \emph{rotational instability} for \eqref{eq:CSS}.
Let us require 
\[
\gamma_{s}=\eta\quad\text{and}\quad b_{s}+b^{2}=o(\eta).
\]
Assume that there exists a well-decaying solution $Q^{(\eta)}$ to
\eqref{eq:Q-eta-prelim}. Differentiating \eqref{eq:Q-eta-prelim}
in the $\eta$-variable, we have 
\[
\mathcal{L}_{Q}(\partial_{\eta}|_{\eta=0}Q^{(\eta)})+\eta Q=0.
\]
Thus we may take 
\[
\partial_{\eta}|_{\eta=0}Q^{(\eta)}=-\rho,
\]
where $\rho$ is constructed in Lemma \ref{lem:generalized-mode-rho}.

We now claim that $b_{s}+b^{2}$ has nontrivial $\eta^{2}$-order
terms as 
\[
b_{s}+b^{2}=-c\eta^{2},\quad c\coloneqq\frac{1}{(m+1)^{2}}+o_{\eta\to0^{+}}(1).
\]
To see this, we take the inner product of \eqref{eq:Q-eta-prelim}
with $\Lambda Q^{(\eta)}$ to obtain the Pohozaev identity 
\[
2E[Q^{(\eta)}]+(b_{s}+b^{2})\cdot\tfrac{1}{4}\|yQ^{(\eta)}\|_{L^{2}}^{2}=0.
\]
We can compute the leading order term of $2E[Q^{(\eta)}]$ as 
\[
2E[Q^{(\eta)}]=\int|\D_{+}^{(Q^{(\eta)})}Q^{(\eta)}|^{2}=(\eta^{2}+o(\eta^{2}))\|L_{Q}\rho\|_{L^{2}}^{2}.
\]
Therefore, 
\[
b_{s}+b^{2}=-\frac{4\|L_{Q}\rho\|_{L^{2}}^{2}}{\|rQ\|_{L^{2}}^{2}}(\eta^{2}+o(\eta^{2})).
\]
As we know by Remark \ref{rem:explicit-form-psi} that 
\[
L_{Q}\rho=\psi=\frac{1}{2(m+1)}rQ,
\]
the claim follows.

Summing up the above discussions, we are led to the ODEs 
\begin{equation}
\frac{\lambda_{s}}{\lambda}+b=0;\quad\gamma_{s}=\eta;\quad b_{s}+b^{2}+c\eta^{2}=0.\label{exact-mod-law-temp}
\end{equation}
There is a global-in-time exact solution 
\[
\begin{cases}
\lambda(t)=\sqrt{t^{2}+c\eta^{2}},\\
\gamma(t)=\frac{1}{\sqrt{c}}\tan^{-1}(\frac{t}{\eta\sqrt{c}}), & \forall t\in\R\\
b(t)=-t.
\end{cases}
\]
Thus for any fixed $t>0$, we observe a phase rotation 
\[
\lim_{\eta\to0^{+}}\big(\gamma(t)-\gamma(-t)\big)=\frac{\pi}{\sqrt{c}}=(m+1)\pi.
\]
Finally noting that $e^{i\gamma}\cdot f(r)e^{im\theta}=f(r)e^{im(\theta+\frac{\gamma}{m})}$,
this corresponds to a spatial rotation with the angle 
\[
\Big(\frac{m+1}{m}\Big)\pi
\]
near time zero.

When $\eta>0$ is small, $Q_{b}^{(\eta)\sharp}$ is an exact scattering
solution to \eqref{eq:CSS}, which is close to $S(t)$. At the blow-up
time $t=0$ of $S(t)$, the solution $Q_{b}^{(\eta)\sharp}$ abruptly
takes a spatial rotation by the angle $(\frac{m+1}{m})\pi$. This
is a sharp contrast to $S(t)$, which does not rotate at all. We will
exhibit the same instability mechanism for other pseudoconformal blow-up
solutions.

\subsection{Construction of $Q^{(\eta)}$}

In this subsection, we rigorously construct the modified profiles
$Q^{(\eta)}$ for all small $\eta>0$. From \eqref{eq:Q-eta-prelim}
with \eqref{exact-mod-law-temp}, we expect $Q^{(\eta)}$ to show
an exponential decay. If we recall that $Q$ has a polynomial decay,
then approximating $Q^{(\eta)}(r)$ by an $\eta$-expansion of $Q(r)$
does not make sense for large $r$. For $m$ small, performing an
$\eta$-expansion leads to a harmful spatial decay. To resolve this
issue, we have to search for a \emph{nonlinear deformation} of $Q^{(\eta)}$.
We crucially exploit the \emph{self-duality}. The main novelty of
the construction of $Q^{(\eta)}$ is that we can reduce \eqref{eq:Q-eta-prelim}
to a first-order differential equation. Although we exhibit the same
mechanism, we \emph{warn} that $Q^{(\eta)}$ in this subsection is
slightly different from that in Section \ref{subsec:A-Hint-to}. This
is merely due to a technical reason.

When $\eta=0$, we could impose the laws $\gamma_{s}=0$ and $b_{s}+b^{2}=0$,
and \eqref{eq:Q-eta-prelim} becomes 
\[
L_{Q}^{\ast}\D_{+}^{(Q)}Q=0.
\]
Because of the self-duality, this reduces to the Bogomol'nyi equation
\[
\D_{+}^{(Q)}Q=0.
\]
Inspired by this reduction, we twist the Bogomol'nyi equation by the
$\eta$-parameter that introduces a nontrivial $\gamma_{s}$$Q^{(\eta)}$
term in \eqref{eq:Q-eta-prelim}. This is one of the crucial points
that we make use of the self-duality.
\begin{lem}[Modified Bogomol'nyi equation]
Let $\eta\geq0$. Assume we have a profile $Q^{(\eta)}$ satisfying
\[
\D_{+}^{(Q^{(\eta)})}P^{(\eta)}=0,\quad Q^{(\eta)}=e^{-\eta\frac{r^{2}}{4}}P^{(\eta)},
\]
or equivalently, 
\begin{equation}
\D_{+}^{(Q^{(\eta)})}Q^{(\eta)}=-\tfrac{\eta}{2}rQ^{(\eta)}.\label{eq:first-order}
\end{equation}
Then $Q^{(\eta)}$ satisfies\footnote{Here the profile $Q^{(\eta)}$ is different from that in Section \ref{subsec:A-Hint-to}.
Compare \eqref{eq:exact-mod-law} and \eqref{exact-mod-law-temp}.
This is nothing but transforming $\eta$ to $\eta_{1}=\eta_{1}(\eta)$.} 
\begin{equation}
L_{Q^{(\eta)}}^{\ast}\D_{+}^{(Q^{(\eta)})}Q^{(\eta)}+\eta\theta_{\eta}Q^{(\eta)}+\eta^{2}\tfrac{r^{2}}{4}Q^{(\eta)}=0\label{eq:second-order}
\end{equation}
with 
\begin{equation}
\theta_{\eta}\coloneqq\frac{1}{4\pi}\int|Q^{(\eta)}|^{2}-(m+1).\label{eq:def-theta^eta}
\end{equation}
\end{lem}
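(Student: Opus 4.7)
The plan is to apply $L_{Q^{(\eta)}}^{\ast}$ to both sides of the first-order relation \eqref{eq:first-order}; this reduces \eqref{eq:second-order} to the identity
\[
L_{Q^{(\eta)}}^{\ast}(rQ^{(\eta)}) = \tfrac{\eta r^{2}}{2}Q^{(\eta)} + 2\theta_{\eta}Q^{(\eta)}.
\]
Once this is established, multiplying by $-\eta/2$ and using \eqref{eq:first-order} on the left-hand side gives exactly \eqref{eq:second-order}.

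To prove the displayed identity, I would use the decomposition $L_{w}^{\ast}f = \D_{+}^{(w)\ast}f + B_{w}^{\ast}[\overline{w}f]$ from \eqref{eq:def-L^ast_w}. For the first piece, expanding $\D_{+}^{(Q^{(\eta)})\ast}(rQ^{(\eta)}) = -\partial_{r}(rQ^{(\eta)}) - (1+m+A_{\theta}[Q^{(\eta)}])Q^{(\eta)}$ and substituting $\partial_{r}Q^{(\eta)} = \D_{+}^{(Q^{(\eta)})}Q^{(\eta)} + \tfrac{m+A_{\theta}[Q^{(\eta)}]}{r}Q^{(\eta)} = -\tfrac{\eta r}{2}Q^{(\eta)} + \tfrac{m+A_{\theta}[Q^{(\eta)}]}{r}Q^{(\eta)}$ yields
\[
\D_{+}^{(Q^{(\eta)})\ast}(rQ^{(\eta)}) = \tfrac{\eta r^{2}}{2}Q^{(\eta)} - 2(m+1+A_{\theta}[Q^{(\eta)}])Q^{(\eta)}.
\]

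For the second piece, since $Q^{(\eta)}$ is real and $\overline{Q^{(\eta)}}\cdot rQ^{(\eta)} = r(Q^{(\eta)})^{2}$, the formula $B_{w}^{\ast}f = w\int_{r}^{\infty}(\Re f)\,dr'$ gives $B_{Q^{(\eta)}}^{\ast}[\overline{Q^{(\eta)}}\,rQ^{(\eta)}] = Q^{(\eta)}\int_{r}^{\infty}r'|Q^{(\eta)}|^{2}\,dr'$. The definition \eqref{eq:A_theta-form} of $A_{\theta}$ yields $\int_{r}^{\infty}r'|Q^{(\eta)}|^{2}\,dr' = 2(A_{\theta}[Q^{(\eta)}](r) - A_{\theta}[Q^{(\eta)}](\infty))$, and because $A_{\theta}[Q^{(\eta)}](\infty) = -\tfrac{1}{4\pi}\int|Q^{(\eta)}|^{2} = -(m+1) - \theta_{\eta}$ by \eqref{eq:def-theta^eta}, adding this to the expression for $\D_{+}^{(Q^{(\eta)})\ast}(rQ^{(\eta)})$ cancels the $A_{\theta}[Q^{(\eta)}](r)$ terms cleanly and leaves exactly $\tfrac{\eta r^{2}}{2}Q^{(\eta)}+2\theta_{\eta}Q^{(\eta)}$.

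There is no real obstacle — the computation is a routine but delicate bookkeeping exercise in the nonlocal operators $\D_{+}^{(w)}$, $\D_{+}^{(w)\ast}$, and $B_{w}^{\ast}$. The only point to watch is the cancellation between the $-2A_{\theta}[Q^{(\eta)}](r)Q^{(\eta)}$ coming from $\D_{+}^{(Q^{(\eta)})\ast}(rQ^{(\eta)})$ and the $+2A_{\theta}[Q^{(\eta)}](r)Q^{(\eta)}$ coming from $B_{Q^{(\eta)}}^{\ast}[\overline{Q^{(\eta)}}\,rQ^{(\eta)}]$, which is the algebraic manifestation of the self-duality that makes the reduction from \eqref{eq:second-order} to \eqref{eq:first-order} possible.
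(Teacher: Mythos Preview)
Your proposal is correct and follows essentially the same approach as the paper: both apply $L_{Q^{(\eta)}}^{\ast}$ to the first-order relation \eqref{eq:first-order} and reduce the problem to computing $L_{Q^{(\eta)}}^{\ast}(rQ^{(\eta)})$. Your organization via the splitting $L_{w}^{\ast}=\D_{+}^{(w)\ast}+B_{w}^{\ast}[\overline{w}\,\cdot\,]$ is a slight repackaging of the paper's direct expansion, but the underlying algebra --- in particular the cancellation of the $A_{\theta}[Q^{(\eta)}](r)$ terms leaving only the constant $A_{\theta}[Q^{(\eta)}](\infty)$ --- is identical.
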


\begin{proof}
We compute 
\begin{align*}
L_{Q^{(\eta)}}^{\ast} & \D_{+}^{(Q^{(\eta)})}Q^{(\eta)}=-\frac{\eta}{2}L_{Q^{(\eta)}}^{\ast}[rQ^{(\eta)}]\\
 & =\frac{\eta}{2}\Big[\Big(\partial_{r}+\frac{m+1+A_{\theta}[Q^{(\eta)}]}{r}\Big)(rQ^{(\eta)})-\Big(\int_{r}^{\infty}|Q^{(\eta)}|^{2}r'dr'\Big)Q^{(\eta)}\Big]\\
 & =\frac{\eta}{2}\Big[r\Big(\D_{+}^{(Q^{(\eta)})}+\frac{2m+2+2A_{\theta}[Q^{(\eta)}]}{r}\Big)Q^{(\eta)}-\Big(\int_{r}^{\infty}|Q^{(\eta)}|^{2}r'dr'\Big)Q^{(\eta)}\Big].
\end{align*}
Using the algebra
\[
2A_{\theta}[Q^{(\eta)}]Q^{(\eta)}-\Big(\int_{r}^{\infty}|Q^{(\eta)}|^{2}r'dr'\Big)Q^{(\eta)}=-\frac{1}{2\pi}\Big(\int|Q^{(\eta)}|^{2}\Big)Q^{(\eta)},
\]
we have 
\[
L_{Q^{(\eta)}}^{\ast}\D_{+}^{(Q^{(\eta)})}Q^{(\eta)}=-\eta^{2}\frac{r^{2}Q^{(\eta)}}{4}+\eta\Big(m+1-\frac{1}{4\pi}\int|Q^{(\eta)}|^{2}\Big)Q^{(\eta)}.
\]
This completes the proof of \eqref{eq:second-order}.
\end{proof}
In particular, 
\[
Q_{b}^{(\eta)\sharp}(t,x)=\frac{1}{\lambda(t)}Q_{b(t)}^{(\eta)}\Big(\frac{x}{\lambda(t)}\Big)e^{i\gamma(t)}
\]
is an exact solution to \eqref{eq:CSS} if $\lambda,\gamma,b$ satisfy
\begin{equation}
\left\{ \begin{aligned}\frac{\lambda_{s}}{\lambda}+b & =0,\\
b_{s}+b^{2} & =-\eta^{2},\\
\gamma_{s} & =\eta\theta_{\eta}.
\end{aligned}
\right.\label{eq:exact-mod-law}
\end{equation}
For each fixed $\eta>0$, the following $(\lambda_{\eta},\gamma_{\eta},b_{\eta})$
solves \eqref{eq:exact-mod-law}: 
\begin{equation}
\left\{ \begin{aligned}\lambda_{\eta}(t) & \coloneqq\langle t\rangle,\\
\gamma_{\eta}(t) & \coloneqq\theta_{\eta}\tan^{-1}(\tfrac{t}{\eta}),\\
b_{\eta}(t) & \coloneqq-t,
\end{aligned}
\right.\qquad\forall t\in\R.\label{eq:exact-mod-law-1}
\end{equation}
Since we will construct $Q^{(\eta)}$ as a deformation of $Q$, we
have $\theta_{\eta}=(m+1)+O(\eta)$.

Now, we solve the first-order equation \eqref{eq:first-order} and
derive estimates on $Q^{(\eta)}$. We solve \eqref{eq:first-order}
from $r=0$ up to $r=R_{\eta}$ using a perturbative argument around
$Q$. This is possible on the regime where the linearization $Q^{(\eta)}\approx Q+\eta\partial_{\eta}|_{\eta=0}Q^{(\eta)}$
is valid. The presence of $\eta^{2}\tfrac{|y|^{2}Q^{(\eta)}}{4}$
in \eqref{eq:second-order} suggests that $Q^{(\eta)}$ has additional
exponential decay $e^{-\eta\frac{r^{2}}{4}}$. Since $Q$ decays polynomially,
we expect that the linearization is valid on the region $r\ll\eta^{-\frac{1}{2}}$.
Beyond $r=R_{\eta}$, we cannot view $Q^{(\eta)}$ as a perturbation
of $Q$. Instead, we directly look at \eqref{eq:first-order} and
observe that $A_{\theta}[Q^{(\eta)}](R_{\eta})\approx-2(m+1)$. This
says that $m+A_{\theta}[Q^{(\eta)}]$ takes negative values. Thus
$P^{(\eta)}$ should decay polynomially and can be solved globally
on $(0,\infty)$.

For $B>1$ to be chosen large, let 
\[
R_{\eta}\coloneqq(B\eta)^{-\frac{1}{2}}.
\]

\begin{prop}[Construction of $Q^{(\eta)}$]
\label{prop:const-Q-eta}There exist $B>1$, $\eta^{\ast\ast}>0$,
and one-parameter families $\{Q^{(\eta)}\}_{\eta\in[0,\eta^{\ast\ast}]}$
and $\{P^{(\eta)}\}_{\eta\in\{0,\eta^{\ast\ast}]}$ of smooth real-valued
functions on $(0,\infty)$ satisfying the following properties.
\begin{enumerate}
\item (Equations of $Q^{(\eta)}$ and $P^{(\eta)}$) 
\begin{equation}
\begin{cases}
\D_{+}^{(Q^{(\eta)})}P^{(\eta)}=0,\\
Q^{(\eta)}=e^{-\eta\frac{r^{2}}{4}}P^{(\eta)},\\
Q^{(0)}=P^{(0)}=Q.
\end{cases}\label{eq:eqn-Q-P-eta}
\end{equation}
\item (Uniform bounds on $Q^{(\eta)}$) For any $\ell\in\{0,1\}$, we have
\begin{equation}
|\partial_{r}^{(\ell)}Q^{(\eta)}(r)|\lesssim\mathbf{1}_{r\leq R_{\eta}}r^{-\ell}Q+(R_{\eta})^{-\ell}Q(R_{\eta})\cdot\mathbf{1}_{r\geq R_{\eta}}e^{-\eta\frac{r^{2}}{4}}.\label{eq:ref-unif-bound}
\end{equation}
In particular, we have 
\begin{equation}
\sup_{\eta\in[0,\eta^{\ast\ast}]}|\partial_{r}^{(\ell)}Q^{(\eta)}(r)|\lesssim r^{-\ell}Q.\label{eq:unif-bound}
\end{equation}
\item (Differentiability in the $\eta$-variable) We have 
\begin{equation}
|Q^{(\eta)}-Q+\eta\cdot(m+1)\rho|\lesssim\eta^{2}r^{4}Q,\qquad\forall r\leq R_{\eta}.\label{eq:differentiability}
\end{equation}
\item (Subcritical mass of $Q^{(\eta)}$) We have 
\begin{equation}
\int|Q^{(\eta)}|^{2}=\int|Q|^{2}-\frac{\eta}{2(m+1)}\int|rQ|^{2}+o_{\eta\to0}(\eta).\label{eq:subcritic-mass}
\end{equation}
In particular, 
\begin{equation}
\theta_{\eta}=(m+1)-\frac{\eta}{8\pi(m+1)}\int|rQ|^{2}+o_{\eta\to0}(\eta).\label{eq:theta-asymp}
\end{equation}
\item (Decay of $P^{(\eta)}$) We have 
\begin{equation}
\lim_{r\to\infty}\frac{\log P^{(\eta)}}{\log r}=-(m+2)+\frac{\eta}{8\pi(m+1)}\int|rQ|^{2}+o_{\eta\to0}(\eta).\label{eq:decay-of-P}
\end{equation}
In particular, the decay of $P^{(\eta)}$ is slower than $Q$.
\item ($L^{2}$-difference of $Q^{(\eta)}$ and $Q$) We have 
\begin{equation}
\|Q^{(\eta)}-Q\|_{L^{2}}\lesssim\begin{cases}
\eta|\log\eta|^{\frac{1}{2}} & \text{if }m=1,\\
\eta & \text{if }m\geq2.
\end{cases}\label{eq:est-Q-eta-Q}
\end{equation}
\end{enumerate}
\end{prop}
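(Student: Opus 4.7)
The plan is to construct $Q^{(\eta)}$ by solving the first-order equation \eqref{eq:first-order} on $(0,\infty)$ in two stages, and then to extract estimates (2)--(6) from the construction. I normalize $P^{(\eta)}$ by requiring $P^{(\eta)}(r)/r^m \to \sqrt{8}(m+1)$ as $r\to 0$, which forces $P^{(0)}=Q^{(0)}=Q$.

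\emph{Stage 1 (perturbative construction on $[0,R_\eta]$).} Writing $Q^{(\eta)}=Q+q$ and inserting into \eqref{eq:first-order}, the linear part in $q$ is $\D_+^{(Q)} q + Q\, B_Q q = L_Q q$, and the inhomogeneity from the factor $e^{-\eta r^2/4}$ is $-\tfrac{\eta}{2} r Q + O(\eta^2 r^3 Q)$. Matching with the second-order equation \eqref{eq:second-order} at first order in $\eta$ gives $\mathcal{L}_Q q_1 = -(m+1) Q$, whose solution is $q_1 = -(m+1)\rho$ by Lemma \ref{lem:generalized-mode-rho}. I would run a contraction in the weighted ball $\{q: |q(r)|\leq C\eta r^2 Q(r)\}$ on $[0,R_\eta]$, integrating from the origin (the natural direction given the $r^m$ boundary condition at $0$). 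The nonlocal corrections $A_\theta[Q^{(\eta)}]-A_\theta[Q]$ and the higher-order terms in $e^{-\eta r^2/4}$ are subleading because $\eta R_\eta^2=B^{-1}$ is small for $B$ large, and the bound $|\rho(r)|\lesssim r^2 Q$ from Lemma \ref{lem:generalized-mode-rho} matches the ball. This yields a unique fixed point, proving \eqref{eq:differentiability}, and gives $A_\theta[Q^{(\eta)}](R_\eta)=-2(m+1)+O(B^{-(m+1)})$ together with $P^{(\eta)}(R_\eta)\sim Q(R_\eta)$.

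\emph{Stage 2 (global extension on $[R_\eta,\infty)$).} Beyond $R_\eta$, $Q$ is no longer a good reference but the ODE remains well-behaved because $m+A_\theta[Q^{(\eta)}]$ is negative and bounded away from $0$. I would extend by a separate contraction, treating $A_\theta[Q^{(\eta)}](r)$ as determined self-consistently by $-\tfrac12\int_0^r |Q^{(\eta)}|^2 r' dr'$ and solving $\partial_r P^{(\eta)} = \tfrac{m+A_\theta[Q^{(\eta)}]}{r} P^{(\eta)}$ with data at $r=R_\eta$ inherited from Stage 1. The weight $e^{-\eta r^2/4}$ produces Gaussian decay of $Q^{(\eta)}$, which controls the feedback of $A_\theta$, so a contraction in $L^\infty([R_\eta,\infty), e^{\eta r^2/8} dr)$ closes. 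Since the coefficient $(m+A_\theta[Q^{(\eta)}](r))/r$ converges as $r\to\infty$ to $-(m+2)/r + \tfrac{\eta}{8\pi(m+1)\, r}\|rQ\|_{L^2}^2 + o(\eta)/r$ by the mass asymptotic below, integrating yields \eqref{eq:decay-of-P}.

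\emph{Deriving (2), (4), (6).} The subcritical mass \eqref{eq:subcritic-mass} follows from \eqref{eq:differentiability} by expanding
\[
\int |Q^{(\eta)}|^2 = \int |Q|^2 - 2\eta(m+1)(Q,\rho)_r + O(\eta^2) + \text{(tail)},
\]
where the tail over $r\geq R_\eta$ is negligible and the inner product $(Q,\rho)_r = \|L_Q\rho\|_{L^2}^2 = \|rQ\|_{L^2}^2/(4(m+1)^2)$ from Remark \ref{rem:explicit-form-psi} gives the stated coefficient; \eqref{eq:theta-asymp} is then immediate from the definition of $\theta_\eta$. The uniform bound \eqref{eq:ref-unif-bound} for $r\leq R_\eta$ follows from $|q|\lesssim \eta r^2 Q \lesssim B^{-1} Q$, while for $r\geq R_\eta$ it is a direct output of Stage 2; the derivative bound comes from differentiating \eqref{eq:first-order} and inserting the pointwise bounds on $Q^{(\eta)}$ and $A_\theta[Q^{(\eta)}]$. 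Finally, \eqref{eq:est-Q-eta-Q} splits into $\|Q^{(\eta)}-Q\|_{L^2(r\leq R_\eta)}\lesssim \eta \|r^2 Q\|_{L^2(r\leq R_\eta)}$, which equals $\eta|\log\eta|^{1/2}$ when $m=1$ and $\eta$ when $m\geq 2$, plus tails over $r\geq R_\eta$ that are much smaller.

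The \textbf{main obstacle} is Stage 2. The self-dual reduction of the second-order equation \eqref{eq:second-order} to the first-order equation \eqref{eq:first-order} is essential: an $\eta$-expansion of the second-order equation would force exponentially decaying corrections incompatible with the polynomial decay of $Q$, and fails in particular for small $m$. Stage 1 is then standard once the weighted norm $\eta r^2 Q$ is identified, but in Stage 2 one must track $A_\theta[Q^{(\eta)}](r)$ carefully enough to identify the precise correction to the decay exponent in \eqref{eq:decay-of-P}, keeping the mass perturbation of order $\eta$ and the exponential weight of order $\eta r^2$ in balance.
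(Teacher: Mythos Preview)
Your two-stage strategy matches the paper's. Two points are worth sharpening.

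First, in Stage~1 your contraction is set up for $q=Q^{(\eta)}-Q$ in the ball $\{|q|\le C\eta r^2 Q\}$, which only yields $|Q^{(\eta)}-Q|\lesssim \eta r^2 Q$, not \eqref{eq:differentiability}. The paper instead subtracts the first-order term from the outset, setting $v\coloneqq \eta^{-2}(Q^{(\eta)}-Q+\eta(m+1)\rho)$ and $\tilde v\coloneqq Q^{-1}v$, and runs the contraction/bootstrap directly on $\tilde v$ with target $|\tilde v|\lesssim r^4$; this gives \eqref{eq:differentiability} in one shot. The delicate part here is uniformity in $\eta$: after a finite number of local steps (independent of $\eta$) one reaches some $r_N$, and then a bootstrap on $[r_N,R_\eta]$ using $\eta R_\eta^2=B^{-1}\ll1$ closes the bound $|\tilde v|\le Cr^4$. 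This uniform continuation, not Stage~2, is where the care is needed.

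Second, your Stage~2 contraction is unnecessary. Since $A_\theta[Q^{(\eta)}](r)=-\tfrac12\int_0^r|Q^{(\eta)}|^2r'dr'$ is monotone decreasing and $A_\theta[Q^{(\eta)}](R_\eta)\le -2m-\tfrac32$ from Stage~1, the coefficient $m+A_\theta[Q^{(\eta)}]$ stays below $-(m+\tfrac32)$ for all $r\ge R_\eta$. The scalar ODE $\partial_r\log P^{(\eta)}=\tfrac{m+A_\theta[Q^{(\eta)}]}{r}$ is then solved forward (the nonlocality is of Volterra type, depending only on past values), and the comparison principle immediately gives $P^{(\eta)}(r)\le P^{(\eta)}(R_\eta)(r/R_\eta)^{-m-3/2}$, hence global existence and the bounds \eqref{eq:ref-unif-bound}. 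The decay exponent \eqref{eq:decay-of-P} then falls out of $m+A_\theta[Q^{(\eta)}](+\infty)$, computed from \eqref{eq:subcritic-mass} exactly as you describe.
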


\begin{proof}
The small parameter $\eta^{\ast\ast}>0$ will be chosen later. Let
us assume $\eta^{\ast\ast}\leq1$ for now. We will introduce a renormalized
unknown $\tilde v$ and write the equation in $\tilde v$. To get
some motivation, we first note a formal computation. If $Q^{(\eta)}$
exists, then we have 
\[
\D_{+}^{(Q^{(\eta)})}Q^{(\eta)}=-\eta\tfrac{r}{2}Q^{(\eta)}.
\]
Differentiating in the $\eta$-variable at $0$, we get the relation
\[
L_{Q}[\partial_{\eta}Q^{(\eta)}|_{\eta=0}]=-\tfrac{1}{2}rQ.
\]
Recalling $L_{Q}\rho=\psi=\frac{1}{2(m+1)}rQ$ given in Remark \ref{rem:explicit-form-psi},
we may set 
\[
\partial_{\eta}Q^{(\eta)}|_{\eta=0}=-(m+1)\rho.
\]

We now introduce the unknown 
\[
v\coloneqq\eta^{-2}(Q^{(\eta)}-Q+\eta\cdot(m+1)\rho).
\]
In terms of $v$, the equation \eqref{eq:eqn-Q-P-eta} reads
\begin{align*}
L_{Q}v & =\big(\tfrac{m+1}{2}r\rho-(m+1)^{2}\rho B_{Q}\rho\big)\\
 & \quad+\eta\big(-\tfrac{1}{2}r+(m+1)B_{Q}\rho\big)v+\eta^{2}\big(\tfrac{1}{2}B_{Q}v\big)v.
\end{align*}
As in the proof of Lemma \ref{lem:generalized-mode-rho}, we introduce
the renormalized unknown 
\[
\tilde v\coloneqq Q^{-1}v
\]
 and write the integral equation 
\begin{align}
\tilde v(r) & =\int_{0}^{r}\big(\tfrac{m+1}{2}\cdot\frac{r'\rho}{Q}-(m+1)^{2}\frac{\rho}{Q}B_{Q}\rho\big)dr'+\int_{0}^{r}\Big(\int_{0}^{r'}Q^{2}\tilde vr''dr''\Big)\frac{dr'}{r'}\label{eq:const-Q-eta-1}\\
 & \quad+\eta\int_{0}^{r}\big(-\tfrac{1}{2}r'+(m+1)B_{Q}\rho\big)\tilde vdr'+\eta^{2}\int_{0}^{r}\big(\tfrac{1}{2}B_{Q}v\big)\tilde vdr'.\nonumber 
\end{align}
It is now natural to expect that $\tilde v(r)\lesssim r^{4}$. By
a simple contraction argument, we can choose small $r_{0}>0$ and
large $C_{0}>0$ independent of $\eta$ such that $\tilde v(r)\leq C_{0}r^{4}$
for all $r\in[0,r_{0}]$ and $\eta\in[0,1]$.

We now construct $\tilde v$ beyond $[0,r_{0})$. For $\delta>0$
to be chosen small later independent of $\eta$, choose an increasing
sequence $\{r_{n}\}_{n\geq0}\subset\R$ such that $r_{n}\to\infty$,
$r_{n+1}\leq2r_{n}$, and $\int_{r_{n}}^{r_{n+1}}Q^{2}r''dr''\leq\delta$.
Set $\eta_{0}=1$. We follow the proof of Lemma \ref{lem:generalized-mode-rho}
with an extra care on the smallness of $\eta$. Indeed, if $\delta>0$
is chosen sufficiently small, then we can inductively construct sequences
of $\{\eta_{n}\}_{n\geq0}$, $\{C_{n}\}_{n\geq0}$, and $\tilde v(r)$
on $[0,r_{n}]$: assume we have constructed $\tilde v(r)$ on $[0,r_{n}]$
for all $\eta\in[0,\eta_{n}]$ such that $|\tilde v(r)|\leq C_{n}r^{4}$
on $[0,r_{n}]$ for all $\eta$. Then, there exist $0<\eta_{n+1}\leq\eta_{n}$
and $C_{n+1}\geq C_{n}$ such that we can construct $\tilde v(r)$
on $[r_{n},r_{n+1}]$ for all $\eta\in[0,\eta_{n+1}]$ such that $|\tilde v(r)|\leq C_{n+1}r^{4}$
on $[0,r_{n+1}]$ for all $\eta$. It is important that $C_{n}$ is
independent of $\eta$, once $\eta$ is smaller than $\eta_{n}$.

Now we claim that after finite steps we can further construct $\tilde v$
on $[0,R_{\eta}]$. In what follows, we only show a bound on $\tilde v$
via a bootstrap argument. The existence of $\tilde v$ automatically
follows by a standard arguemnt. We claim that there exist large $N\in\N$,
$B>1$, $C>C_{N}$, and small $\eta^{\ast\ast}\in(0,\eta_{N}]$ such
that we can construct $\tilde v$ with the bound $|\tilde v(r)|\leq Cr^{4}$
on $[0,R_{\eta}]$ for all $\eta\in[0,\eta^{\ast\ast}]$. To show
this claim, it suffices to show the bound $|\tilde v(r)|\leq Cr^{4}$
for $r\in[r_{N},R_{\eta}]$. We simply write the equation \eqref{eq:const-Q-eta-1}
as 
\[
\tilde v(r)=O(r^{4})+\int_{0}^{r}\Big(\int_{0}^{r'}Q^{2}\tilde vr''dr''\Big)\frac{dr'}{r'}+\eta\int_{0}^{r}O(r')\tilde vdr'+\eta^{2}\int_{0}^{r}\big(\tfrac{1}{2}B_{Q}v\big)\tilde vdr'.
\]
We consider the bootstrap hypothesis $|\tilde v(r)|\leq Cr^{4}$.
Using the estimate $\int_{1}^{r}Q^{2}(r')^{5}dr'\lesssim\log r$ for
$r\geq r_{N}$, we get 
\begin{align*}
\tilde v(r) & \lesssim r^{4}+C\log^{2}r+C\eta r^{6}+C^{2}\eta^{2}r^{4}\log r
\end{align*}
for $r\geq r_{N}$. If we choose $N$ large such that $\log^{2}r\ll r^{4}$,
$C>C_{N}$ large, and $\eta^{\ast\ast}\in(0,\eta_{N}]$ small such
that $C\eta^{\ast\ast}\ll1$, then we have 
\[
\tilde v(r)\leq(\tfrac{1}{2}+\eta\log r+\eta r^{2})\cdot Cr^{4}
\]
for $r\geq r_{N}$. Therefore, if we choose $B>1$ sufficiently large,
then we have 
\[
\tilde v(r)\leq\tfrac{3}{4}Cr^{4}
\]
for all $r\in[r_{N},R_{\eta}]$. By a continuity argument, the claim
follows. Tracking back the definitions of $\tilde v$ and $v$, the
proof of \eqref{eq:differentiability} is completed.

We have constructed $Q^{(\eta)}$ on $(0,R_{\eta}]$ for all $\eta\in[0,\eta^{\ast\ast}]$.
By \eqref{eq:differentiability}, we have 
\begin{align*}
\big(A_{\theta}[Q^{(\eta)}]-A_{\theta}[Q]\big)(R_{\eta}) & =\frac{1}{4\pi}\int\mathbf{1}_{r\leq R_{\eta}}(|Q|^{2}-|Q^{(\eta)}|^{2})\\
 & =\eta\frac{(m+1)}{2\pi}\Big(\int\mathbf{1}_{r\leq R_{\eta}}Q\rho\Big)+o_{\eta\to0}(\eta)\\
 & =\eta\frac{(m+1)}{2\pi}(Q,\rho)_{r}+o_{\eta\to0}(\eta).
\end{align*}
We then apply $(Q,\rho)_{r}=(L_{Q}^{\ast}L_{Q}\rho,\rho)_{r}=\|L_{Q}\rho\|_{L^{2}}^{2}$,
Remark \ref{lem:generalized-mode-rho}, and $A_{\theta}[Q](+\infty)=-2(m+1)$
to obtain 
\begin{equation}
A_{\theta}[Q^{(\eta)}](R_{\eta})=-2(m+1)+\frac{\eta}{8\pi(m+1)}\|rQ\|_{L^{2}}^{2}+o_{\eta\to0}(\eta).\label{eq:const-Q-eta-2}
\end{equation}

We now construct $Q^{(\eta)}$ beyond $r=R_{\eta}$. On the region
$r\gtrsim R_{\eta}$, the linear approximation of $Q^{(\eta)}$ is
not accurate. We consider the equation of $P^{(\eta)}$ instead: 
\[
\partial_{r}P^{(\eta)}=\frac{m+A_{\theta}[Q^{(\eta)}]}{r}P^{(\eta)}.
\]
By \eqref{eq:const-Q-eta-2}, we already have $A_{\theta}[Q^{(\eta)}](R_{\eta})\leq-2m-\frac{3}{2}$
for all $\eta\in[0,\eta^{\ast\ast}]$. By the comparison principle,
we get 
\begin{equation}
P^{(\eta)}(r)\leq P^{(\eta)}(R_{\eta})\Big(\frac{r}{R_{\eta}}\Big)^{-m-\frac{3}{2}}\label{eq:const-Q-eta-3}
\end{equation}
for all $r\geq R_{\eta}$. This shows that $P^{(\eta)}$ (and $Q^{(\eta)}$)
exists globally on $(0,\infty)$.

We now show \eqref{eq:ref-unif-bound} and \eqref{eq:unif-bound}.
Since \eqref{eq:unif-bound} easily follows from \eqref{eq:ref-unif-bound},
we focus on \eqref{eq:ref-unif-bound}. Note that \eqref{eq:differentiability}
says that $Q^{(\eta)}(r)\sim Q(r)$ for $r\leq R_{\eta}$. This verifies
\eqref{eq:ref-unif-bound} with $\ell=0$ on the region $r\leq R_{\eta}$.
On the region $r\geq R_{\eta}$, we use the exponential decay $e^{-\eta\frac{r^{2}}{4}}$
of $Q^{(\eta)}$ and \eqref{eq:const-Q-eta-3} to conclude \eqref{eq:ref-unif-bound}
with $\ell=0$ on the region $r\geq R_{\eta}$. Thus \eqref{eq:ref-unif-bound}
hold when $\ell=0$. We note that \eqref{eq:ref-unif-bound} with
$\ell=1$ follows by applying the equation 
\[
\partial_{r}Q^{(\eta)}=-\frac{m+A_{\theta}[Q^{(\eta)}]}{r}Q^{(\eta)}-\frac{1}{2}\eta r^{2}Q^{(\eta)}.
\]

We turn to show \eqref{eq:subcritic-mass}, \eqref{eq:theta-asymp},
and \eqref{eq:decay-of-P}. Note that \eqref{eq:unif-bound} yields
\[
\|\mathbf{1}_{r>R_{\eta}}Q^{(\eta)}\|_{L^{2}}=o_{\eta\to0}(\eta).
\]
Substituting this into \eqref{eq:const-Q-eta-2} gives \eqref{eq:subcritic-mass}.
To prove \eqref{eq:theta-asymp}, substitute \eqref{eq:subcritic-mass}
into \eqref{eq:def-theta^eta}. To prove \eqref{eq:decay-of-P}, observe
that 
\[
\partial_{r}\log P^{(\eta)}=\frac{m+A_{\theta}[Q^{(\eta)}]}{r}.
\]
Therefore, the decay rate of $P^{(\eta)}$ is obtained from the quantity
$m+A_{\theta}[Q^{(\eta)}](+\infty)$, which is $-(m+2)+\frac{\eta}{8\pi(m+1)}\|rQ\|_{L^{2}}^{2}$.
This completes the proof of \eqref{eq:decay-of-P}.

Finally, we show \eqref{eq:est-Q-eta-Q}. Note that 
\begin{align*}
\|Q^{(\eta)}-Q\|_{L^{2}} & =\|\mathbf{1}_{r\leq R_{\eta}}(Q^{(\eta)}-Q)\|_{L^{2}}+o_{\eta\to0}(\eta)\\
 & =\eta(m+1)\|\mathbf{1}_{r\leq R_{\eta}}\rho\|_{L^{2}}+o_{\eta\to0}(\eta).
\end{align*}
From the asymptotics \eqref{eq:rho-asymp} of $\rho$, \eqref{eq:est-Q-eta-Q}
follows.
\end{proof}
We record identities induced by the phase and scaling symmetries on
\eqref{eq:second-order}.
\begin{lem}[Algebraic Identities]
We have 
\begin{align}
\mathcal{L}_{Q^{(\eta)}}iQ^{(\eta)}+\eta\theta_{\eta}iQ^{(\eta)}+\eta^{2}\tfrac{|y|^{2}}{4}iQ^{(\eta)} & =0,\label{eq:phase-inv}\\
\mathcal{L}_{Q^{(\eta)}}\Lambda Q^{(\eta)}+\eta\theta_{\eta}\Lambda Q^{(\eta)}+\eta^{2}\tfrac{|y|^{2}}{4}\Lambda Q^{(\eta)} & =-2\eta\theta^{(\eta)}Q^{(\eta)}-4\eta^{2}\tfrac{|y|^{2}}{4}Q^{(\eta)}.\label{eq:scale-inv}
\end{align}
\end{lem}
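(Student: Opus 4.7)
The two identities are the linearized versions of the nonlinear equation \eqref{eq:second-order} under the phase and $L^{2}$-scaling symmetries. Since $\mathcal{L}_{Q^{(\eta)}}$ is by construction the linearization at $Q^{(\eta)}$ of $u\mapsto L_{u}^{\ast}\D_{+}^{(u)}u$, the plan is to apply each one-parameter group to $Q^{(\eta)}$, express the result in terms of the equation coefficients, and differentiate at the identity.

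For \eqref{eq:phase-inv}, I would simply observe that the map $u\mapsto L_{u}^{\ast}\D_{+}^{(u)}u$ is $U(1)$-equivariant: replacing $u$ by $e^{i\alpha}u$ leaves $A_{\theta}[u]$ unchanged (as it depends only on $|u|^{2}$) and multiplies the output by $e^{i\alpha}$. The multiplicative terms $\eta\theta_{\eta}u$ and $\eta^{2}\tfrac{r^{2}}{4}u$ are obviously $U(1)$-equivariant in the same way, and $\theta_{\eta}$, being a functional of $|Q^{(\eta)}|^{2}$, is untouched. Therefore $u=e^{i\alpha}Q^{(\eta)}$ solves \eqref{eq:second-order} for every $\alpha\in\R$, and differentiation at $\alpha=0$ together with $\partial_{\alpha}|_{\alpha=0}e^{i\alpha}Q^{(\eta)}=iQ^{(\eta)}$ yields \eqref{eq:phase-inv}.

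For \eqref{eq:scale-inv}, I would set $Q^{(\eta),\lambda}(r)\coloneqq\lambda Q^{(\eta)}(\lambda r)$ and track how each term in \eqref{eq:second-order} transforms. The computation $A_{\theta}[Q^{(\eta),\lambda}](r)=A_{\theta}[Q^{(\eta)}](\lambda r)$ gives $\D_{+}^{(Q^{(\eta),\lambda})}Q^{(\eta),\lambda}(r)=\lambda^{2}[\D_{+}^{(Q^{(\eta)})}Q^{(\eta)}](\lambda r)$ and, after adjointing, $L_{Q^{(\eta),\lambda}}^{\ast}\D_{+}^{(Q^{(\eta),\lambda})}Q^{(\eta),\lambda}(r)=\lambda^{3}[L_{Q^{(\eta)}}^{\ast}\D_{+}^{(Q^{(\eta)})}Q^{(\eta)}](\lambda r)$. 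Substituting the identity \eqref{eq:second-order} evaluated at $\lambda r$ on the right-hand side, and writing the result back in terms of $Q^{(\eta),\lambda}(r)$, gives
\begin{equation*}
L_{Q^{(\eta),\lambda}}^{\ast}\D_{+}^{(Q^{(\eta),\lambda})}Q^{(\eta),\lambda}+\eta\theta_{\eta}\lambda^{2}Q^{(\eta),\lambda}+\eta^{2}\lambda^{4}\tfrac{r^{2}}{4}Q^{(\eta),\lambda}=0,
\end{equation*}
a one-parameter family in $\lambda$ which degenerates to \eqref{eq:second-order} at $\lambda=1$ precisely because the zeroth-order and $r^{2}$-terms are not scale invariant. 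Differentiating at $\lambda=1$, using $\partial_{\lambda}|_{\lambda=1}Q^{(\eta),\lambda}=\Lambda Q^{(\eta)}$ and the definition of $\mathcal{L}_{Q^{(\eta)}}$ for the first term, and applying the product rule (which produces the $2\eta\theta_{\eta}Q^{(\eta)}$ and $4\eta^{2}\tfrac{r^{2}}{4}Q^{(\eta)}$ contributions from the $\lambda$-dependent prefactors) yields \eqref{eq:scale-inv} after moving those inhomogeneities to the right-hand side.

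The only slightly delicate point is verifying the claimed scaling behavior $L_{Q^{(\eta),\lambda}}^{\ast}\D_{+}^{(Q^{(\eta),\lambda})}Q^{(\eta),\lambda}(r)=\lambda^{3}[L_{Q^{(\eta)}}^{\ast}\D_{+}^{(Q^{(\eta)})}Q^{(\eta)}](\lambda r)$; this is straightforward from the change of variable $r'=\lambda r$ in the integral defining $A_{\theta}$, together with the explicit formulas \eqref{eq:def-L_w} and \eqref{eq:def-L^ast_w}, and the invariance of $\theta_{\eta}$ under rescaling of its defining integral (noting that $\theta_{\eta}$ is a constant, not a functional of the rescaled $Q^{(\eta),\lambda}$, in the one-parameter family considered). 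No symmetry of \eqref{eq:CSS} is actually being invoked here: the identities are purely algebraic consequences of the covariance/equivariance of the symbol $u\mapsto L_{u}^{\ast}\D_{+}^{(u)}u$ under phase and scaling, which follows directly from the explicit formulae.
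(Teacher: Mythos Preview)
Your proposal is correct and follows essentially the same approach as the paper: both argue by applying the one-parameter phase and scaling groups to $Q^{(\eta)}$ in \eqref{eq:second-order} and differentiating at the identity. The only cosmetic difference is the direction of the scaling parameter (you use $Q^{(\eta),\lambda}(r)=\lambda Q^{(\eta)}(\lambda r)$, the paper uses $f_{\lambda}(y)=\tfrac{1}{\lambda}f(y/\lambda)$), which of course leads to the same identity.
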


\begin{proof}
One can obtain these identities by differentiating the phase and scaling
symmetries at $Q^{(\eta)}$. Differentiating 
\[
L_{e^{ia}Q^{(\eta)}}^{\ast}\D_{+}^{(e^{ia}Q^{(\eta)})}e^{ia}Q^{(\eta)}+\eta\theta_{\eta}e^{ia}Q^{(\eta)}+\eta^{2}\tfrac{|y|^{2}}{4}e^{ia}Q^{(\eta)}=0
\]
in the $a$-variable at $a=0$, we obtain \eqref{eq:phase-inv}. To
obtain \eqref{eq:scale-inv}, we temporarily write $f_{\lambda}(y)\coloneqq\frac{1}{\lambda}f(\frac{y}{\lambda})$
and consider 
\begin{align*}
0 & =[L_{Q^{(\eta)}}^{\ast}\D_{+}^{(Q^{(\eta)})}Q^{(\eta)}+\eta\theta_{\eta}Q^{(\eta)}+\eta^{2}\tfrac{|y|^{2}}{4}Q^{(\eta)}]_{\lambda}\\
 & =\lambda^{2}L_{Q_{\lambda}^{(\eta)}}^{\ast}\D_{+}^{(Q_{\lambda}^{(\eta)})}Q_{\lambda}^{(\eta)}+\eta\theta_{\eta}Q_{\lambda}^{(\eta)}+\eta^{2}\lambda^{-2}\tfrac{|y|^{2}}{4}Q_{\lambda}^{(\eta)}
\end{align*}
for $\lambda\in(0,\infty)$. Differentiating this in the $\lambda$-variable
at $\lambda=1$, we get 
\begin{align*}
 & \mathcal{L}_{Q^{(\eta)}}\Lambda Q^{(\eta)}+\eta\theta_{\eta}\Lambda Q^{(\eta)}+\eta^{2}\tfrac{|y|^{2}}{4}\Lambda Q^{(\eta)}\\
 & \quad=2L_{Q^{(\eta)}}^{\ast}\D_{+}^{(Q^{(\eta)})}Q^{(\eta)}-2\eta^{2}\tfrac{|y|^{2}}{4}Q^{(\eta)}=-2\eta\theta_{\eta}Q^{(\eta)}-4\eta^{2}\tfrac{|y|^{2}}{4}Q^{(\eta)}.
\end{align*}
This completes the proof of \eqref{eq:scale-inv}\@.
\end{proof}

\section{\label{sec:modulation}Setup for modulation analysis}

In this section, we will reduce Theorems \ref{thm:BW-sol} and \ref{thm:instability}
to the main bootstrap Lemma \ref{lem:bootstrap}. We will introduce
time-dependent modulation parameters $(\lambda,\gamma,b)$. Here $\lambda$,
$\gamma$, and $b$ correspond to the scaling, phase, and pseudoconformal
phase parameters, respectively. We write the blow-up ansatz in terms
of $(\lambda,\gamma,b)$ and the decomposition into the modified profile
$Q^{(\eta)}$, some small asymptotic profile $z^{\ast}$, and error
$\epsilon$. We first determine the evolution equation of $z$. We
then fix dynamical laws of $(\lambda,\gamma,b)$ to grant the desired
pseudoconformal blow-up rate. The main novelty of this step is to
detect the strong interactions between $Q_{b}^{\sharp}$ and $z$,
and incorporate them into our blow-up ansatz.

\subsection{\label{subsec:Evolution-of-z}Corrections from the interaction between
$Q_{b}^{\sharp}$ and $z$}

Our goal is to construct pseudoconformal blow-up solution with given
$m$-equivariant asymptotic profile $z^{\ast}$. We will require that
$z^{\ast}$ is small in some Sobolev space and is degenerate at the
origin. Given $z^{\ast}$, we fix the backward-in-time evolution of
$z(t)$ with $z(0)=z^{\ast}$. Near the blow-up time, $Q_{b}^{\sharp}$
is concentrated at the origin but $z$ is a regular solution; they
are decoupled in scales. Thus when one presumes that $Q_{b}^{\sharp}+z$
solves \eqref{eq:CSS}, one might expect that the mixed term of the
nonlinearity becomes negligible. This is indeed true for local nonlinearities,
for example $|u|^{2}u$. In the context of \eqref{eq:NLS}, Bourgain-Wang
\cite{BourgainWang1997} and Merle-Rapha\"el-Szeftel \cite{MerleRaphaelSzeftel2013AJM}
make use of this observation, and hence there is no strong influence
between $Q_{b}^{\sharp}$ and $z$. Thus it is enough to evolve $z(t)$
by \eqref{eq:NLS} itself. In \eqref{eq:CSS}, it turns out that there
are strong interactions between $Q_{b}^{\sharp}$ and $z$ due to
the long-range interactions in $A_{\theta}$ and $A_{0}$. We have
to capture strong interactions and incorporate the effects into the
decomposition of our blow-up ansatz. More precisely, we will modify
the $z$-evolution and add a correction to the $\gamma$-evolution.

We first explain the heuristics of our argument. Assume that $Q_{b}^{\sharp}+z$
solves \eqref{eq:CSS} (equivalently, \eqref{eq:CSS-r}) in the sense
that 
\[
0\approx\eqref{eq:heuristic-1},
\]
where
\begin{align}
 & i\partial_{t}(Q_{b}^{\sharp}+z)-L_{Q_{b}^{\sharp}+z}^{\ast}\D_{+}^{(Q_{b}^{\sharp}+z)}(Q_{b}^{\sharp}+z)\nonumber \\
 & =\left\{ \begin{aligned} & (i\partial_{t}+\partial_{rr}+\frac{1}{r}\partial_{r})(Q_{b}^{\sharp}+z)-\Big(\frac{m+A_{\theta}[Q_{b}^{\sharp}+z]}{r}\Big)^{2}(Q_{b}^{\sharp}+z)\\
 & -A_{0}[Q_{b}^{\sharp}+z](Q_{b}^{\sharp}+z)+|Q_{b}^{\sharp}+z|^{2}(Q_{b}^{\sharp}+z)
\end{aligned}
\right.\label{eq:heuristic-1}
\end{align}
We assume the decoupling in scales, for example
\[
A_{\theta}[Q_{b}^{\sharp}+z]\approx A_{\theta}[Q_{b}^{\sharp}]+A_{\theta}[z]\quad\text{and}\quad|Q_{b}^{\sharp}+z|^{2}\approx|Q_{b}^{\sharp}|^{2}+|z|^{2},
\]
but $A_{\theta}[Q_{b}^{\sharp}]z$ is not negligible since $A_{\theta}[Q_{b}^{\sharp}](r)\to-2(m+1)$
as $r\to\infty$. We then obtain 
\[
0\approx\eqref{eq:heuristic-2},
\]
where
\begin{equation}
\left\{ \begin{aligned} & (i\partial_{t}+\partial_{rr}+\frac{1}{r}\partial_{r})(Q_{b}^{\sharp}+z)-\Big(\frac{m+A_{\theta}[Q_{b}^{\sharp}]+A_{\theta}[z]}{r}\Big)^{2}(Q_{b}^{\sharp}+z)\\
 & \quad+\Big(\int_{r}^{\infty}(m+A_{\theta}[Q_{b}^{\sharp}]+A_{\theta}[z])(|Q_{b}^{\sharp}|^{2}+|z|^{2})\frac{dr'}{r'}\Big)(Q_{b}^{\sharp}+z)+|Q_{b}^{\sharp}|^{2}Q_{b}^{\sharp}+|z|^{2}z.
\end{aligned}
\right.\label{eq:heuristic-2}
\end{equation}
Separately collecting the evolutions for $Q_{b}^{\sharp}$ and $z$,
and further applying the decoupling in scales, we have 
\[
0\approx\eqref{eq:Q-heuristic}+\eqref{eq:z-heuristic},
\]
where
\begin{align}
 & \left\{ \begin{aligned} & (i\partial_{t}+\partial_{rr}+\frac{1}{r}\partial_{r})Q_{b}^{\sharp}-\Big(\frac{m+A_{\theta}[Q_{b}^{\sharp}]}{r}\Big)^{2}Q_{b}^{\sharp}\\
 & \quad-A_{0}[Q_{b}^{\sharp}]Q_{b}^{\sharp}+\Big(\int_{r}^{\infty}(m+A_{\theta}[Q_{b}^{\sharp}]+A_{\theta}[z])|z|^{2}\frac{dr'}{r'}\Big)Q_{b}^{\sharp}+|Q_{b}^{\sharp}|^{2}Q_{b}^{\sharp}
\end{aligned}
\right.\label{eq:Q-heuristic}\\
 & \left\{ \begin{aligned} & (i\partial_{t}+\partial_{rr}+\frac{1}{r}\partial_{r})z-\Big(\frac{m+A_{\theta}[Q_{b}^{\sharp}]+A_{\theta}[z]}{r}\Big)^{2}z\\
 & \quad+\Big(\int_{r}^{\infty}(m+A_{\theta}[Q_{b}^{\sharp}]+A_{\theta}[z])|z|^{2}\frac{dr'}{r'}\Big)z+|z|^{2}z.
\end{aligned}
\right.\label{eq:z-heuristic}
\end{align}
The above suggests us to evolve $z$ by setting \eqref{eq:z-heuristic}
equal to zero. Observing that $Q_{b}^{\sharp}$ is concentrated at
the origin, $z$ feels $Q_{b}^{\sharp}$ as a point charge. Thus we
further approximate $A_{\theta}[Q_{b}^{\sharp}](r)$ by its value
at spatial infinity $A_{\theta}[Q_{b}^{\sharp}](\infty)=-2(m+1)$.
Then one can view \eqref{eq:z-heuristic} as \eqref{eq:CSS} under
$-(m+2)$-equivariance. For \eqref{eq:Q-heuristic}, in view of decoupling
in scales, we again replace $A_{\theta}[Q_{b}^{\sharp}]$ by its value
at spatial infinity and $\int_{r}^{\infty}$ by $\int_{0}^{\infty}$.
As a result, we have 
\[
0\approx\eqref{eq:Q-heuristic-1}+\eqref{eq:z-heuristic-1},
\]
where 
\begin{align}
 & \left\{ \begin{aligned} & (i\partial_{t}+\partial_{rr}+\frac{1}{r}\partial_{r})Q_{b}^{\sharp}-\Big(\frac{m+A_{\theta}[Q_{b}^{\sharp}]}{r}\Big)^{2}Q_{b}^{\sharp}\\
 & \quad-A_{0}[Q_{b}^{\sharp}]Q_{b}^{\sharp}-\theta_{z\to Q_{b}^{\sharp}}Q_{b}^{\sharp}+|Q_{b}^{\sharp}|^{2}Q_{b}^{\sharp},
\end{aligned}
\right.\label{eq:Q-heuristic-1}\\
 & \left\{ \begin{aligned} & (i\partial_{t}+\partial_{rr}+\frac{1}{r}\partial_{r})z-\Big(\frac{-m-2+A_{\theta}[z]}{r}\Big)^{2}z\\
 & \quad+\Big(\int_{r}^{\infty}(-m-2+A_{\theta}[z])|z|^{2}\frac{dr'}{r'}\Big)z+|z|^{2}z,
\end{aligned}
\right.\label{eq:z-heuristic-1}
\end{align}
and 
\begin{equation}
\theta_{z\to Q_{b}^{\sharp}}\coloneqq-\int_{0}^{\infty}(-m-2+A_{\theta}[z])|z|^{2}\frac{dr'}{r'}.\label{eq:def-theta-cor}
\end{equation}

So far, we have discussed assuming $\eta=0$. In case of $\eta>0$,
we perform the same argument with a suitable modification, replacing
$Q_{b}^{\sharp}$ by $Q_{b}^{(\eta)\sharp}$. However, in the $z$-evolution,
one cannot view \eqref{eq:z-heuristic} as \eqref{eq:CSS} under $-(m+2)$-equivariance
because of $A_{\theta}[Q^{(\eta)\sharp}](\infty)\neq-2(m+1)$. Nevertheless,
as we know $A_{\theta}[Q^{(\eta)\sharp}](\infty)=-2(m+1)+O(\eta)$
by \eqref{eq:subcritic-mass}, we evolve $z$ the same as in the case
$\eta=0$. Later, we have to handle the error from $A_{\theta}[Q^{(\eta)\sharp}]-A_{\theta}[Q^{\sharp}]$.
On the other hand, the analogue of $\theta_{z\to Q_{b}^{\sharp}}$
in the case of $\eta>0$ is 
\begin{equation}
\theta_{z\to Q_{b}^{(\eta)\sharp}}\coloneqq-\int_{0}^{\infty}(m+A_{\theta}[Q^{(\eta)\sharp}](\infty)+A_{\theta}[z])|z|^{2}\frac{dr'}{r'}.\label{eq:def-theta-cor-eta}
\end{equation}

\begin{rem}
In the above $z$-evolution, it is important that $|m+A_{\theta}[Q](\infty)|-m=2$
is a nonnegative even integer. Indeed, assume that $z$ evolves under
\[
i\partial_{t}z+\Big(\partial_{rr}+\frac{1}{r}\partial_{r}-\frac{k^{2}}{r^{2}}\Big)z\approx0
\]
for some $k\in\R$. Then a generic solution $z$ would have $z(t,r)\sim r^{|k|}$
at the origin. In view of Lemma \ref{lem:smooth-equiv-criterion},
if $|k|-m$ is not a nonnegative even integer, then it is absurd to
see that $z(t,x)=z(t,r)e^{im\theta}$ is a smooth $m$-equivariant
solution. In other words, if we evolve $z$ using $m+A_{\theta}[Q^{(\eta)\sharp}](\infty)$,
we cannot have sufficient smoothness of $z$, especially at the origin.
Note that from \eqref{eq:def-Bogomolnyi-rad} and \eqref{eq:def-Q},
$m+A_{\theta}[Q](\infty)$ corresponds to the decay rate of $Q\sim r^{-(m+2)}\sim r^{(m+A_{\theta}[Q](\infty))}$
as $r\to\infty$. In other words, it is important that the decay rate
of the $m$-equivariant static solution $Q$ is $m+2\ell$ for some
nonnegative integer $\ell$.
\end{rem}

Motivated from \eqref{eq:z-heuristic-1}, we define $\tilde z(t,x)\coloneqq z(t,x)e^{-i(2m+2)\theta}$
and evolve $\tilde z$ under the \emph{$-(m+2)$-equivariant} \eqref{eq:CSS}
\begin{equation}
i\partial_{t}\tilde z-L_{\tilde z}^{\ast}\D_{+}^{(\tilde z)}\tilde z=0.\label{eq:z-tilde-equation}
\end{equation}
Then, \eqref{eq:z-tilde-equation} is equivalent to that the $m$-equivariant
solution $z$ evolves under the \emph{(CSS) with the external potential
}
\begin{equation}
\begin{cases}
i\partial_{t}z-L_{z}^{\ast}\D_{+}^{(z)}z=V_{Q_{b}^{\sharp}\to z}z,\\
z(0,x)=z^{\ast}(x),
\end{cases}\tag{zCSS}\label{eq:fCSS}
\end{equation}
where $z^{\ast}$ is a small profile and $V_{Q_{b}^{\sharp}\to z}$
is the external potential defined by 
\begin{equation}
V_{Q_{b}^{\sharp}\to z}\coloneqq\frac{4(m+1)}{r^{2}}-\frac{4(m+1)}{r^{2}}A_{\theta}[z]+\Big(\int_{r}^{\infty}2(m+1)|z|^{2}\frac{dr'}{r'}\Big).\label{eq:forcing-term}
\end{equation}

By Proposition \ref{prop:L2-Cauchy}, have the standard $L^{2}$-critical
local well-posedness, and small data global well-posedness and scattering
for \eqref{eq:z-tilde-equation} under the $-(m+2)$-equivariant symmetry.

We prepare small $-(m+2)$-equivariant data $\tilde z^{\ast}$ and
denote by $\tilde z$ the corresponding global $-(m+2)$-equivariant
solution $\tilde z$. More precisely, we assume $\|\tilde z\|_{H_{-(m+2)}^{k}}<\alpha^{\ast}$
for some $k=k(m)$. In view of Lemma \ref{lem:gen-hardy-Linfty},
we may choose $k=k(m)>m+3$ to have a degeneracy bound of $\tilde z$
near the origin: 
\[
\sup_{t\in[-1,0]}\big(|\partial_{r}^{(\ell)}\tilde z(t,r)|+\tfrac{1}{r^{\ell}}|\tilde z(t,r)|\big)\lesssim\alpha^{\ast}\begin{cases}
r^{m+2-\ell} & \text{if }r\leq1,\\
1 & \text{if }r\geq1,
\end{cases}\qquad\forall\ell\in\{0,1\}.
\]
This bound easily transfers to $z\coloneqq e^{i(2m+2)\theta}\tilde z$.
Note that if $\tilde z$ is smooth $-(m+2)$-equivariant, then $z$
is smooth $m$-equivariant.

\emph{Therefore, we let $z^{\ast}$ satisfy the assumption \eqref{eq:H}
with sufficiently small $\alpha^{\ast}$, and $z:\R\times\R^{2}\to\C$
be an $m$-equivariant solution to \eqref{eq:fCSS} with the initial
data $z^{\ast}$. Then $z$ satisfies the following properties:}
\begin{enumerate}
\item $z$ is global and scatters under the evolution $i\partial_{t}+\Delta_{-m-2}$.\footnote{$z$ also scatters under the usual evolution $i\partial_{t}+\Delta_{m}$.
See Remark \ref{rem:A-scattering-solution}.}
\item (Strichartz bound) $\|z\|_{L_{t}^{p}L_{x}^{q}}\lesssim\alpha^{\ast}$
for any admissible pairs $(p,q)$.
\item (Energy bound) $\|z\|_{L_{t}^{\infty}H_{x}^{k}}\lesssim\alpha^{\ast}$.
\item (Vanishing at origin) 
\[
\sup_{t\in[-1,0]}\big(|\partial_{r}^{(\ell)}z(t,r)|+\tfrac{1}{r^{\ell}}|z(t,r)|\big)\lesssim\alpha^{\ast}\cdot\begin{cases}
r^{m+2-\ell} & \text{if }r\leq1,\\
1 & \text{if }r\geq1,
\end{cases}\qquad\forall\ell\in\{0,1\}.
\]
\end{enumerate}

\subsection{Evolution of $\epsilon$}

We begin the modulation analysis. For a small fixed $\eta\geq0$,
we write a solution $u^{(\eta)}$ of the form 
\begin{equation}
u^{(\eta)}(t,x)=\frac{e^{i\gamma(t)}}{\lambda(t)}(Q_{b(t)}^{(\eta)}+\epsilon)(t,\frac{x}{\lambda(t)})+z(t,x).\label{eq:e-form}
\end{equation}
Here $z$ is a small global solution to \eqref{eq:fCSS} with $z(0)=z^{\ast}$
as in Section \ref{subsec:Evolution-of-z}. We want to construct a
family of solutions $u^{(\eta)}$ such that $u^{(0)}$ blows up at
$t=0$ with the pseudoconformal rate and for $\eta>0$, $u^{(\eta)}$
is a global scattering solution with $u^{(\eta)}\to u^{(0)}$ as $\eta\to0$.
More precisely, we expect our modulation parameters to satisfy 
\[
(b(t),\lambda(t),\gamma(t))\approx\begin{cases}
(-t,|t|,-\tfrac{m+1}{2}\pi) & \text{if }\eta=0,\\
(-t,\langle t\rangle,\theta_{\eta}\tan^{-1}(\tfrac{t}{\eta})) & \text{if }\eta>0,
\end{cases}
\]
for $t$ near zero ($t<0$ when $\eta=0$).

If one is only interested in the construction of a blow-up solution,
one may disregard $\eta$ and only work with the case $\eta=0$. See
Remark \ref{rem:alt-proof}. In our presentation, we first derive
an equation of $\epsilon$ in the case $\eta=0$ for readability.
And then, we will point out what are changed when $\eta>0$.

Note that we haven't specified the choice of the modulation parameters
$b(t),\lambda(t),\gamma(t)$ at each time $t$. We will fix them by
imposing two orthogonality conditions and one relation between $b$
and $\lambda$ in Section \ref{subsec:choice-of-geometric}. In this
subsection, we focus on deriving the evolution equation for $\epsilon$
without fixing the laws of $b$, $\lambda$, and $\gamma$.

Recall Section \ref{subsec:Dynamic-Rescaling} on $\sharp$ and $\flat$
operations. In \eqref{eq:e-form}, the functions $u$ and $z$ are
defined with the $(t,x)$-variables, but $Q_{b}$ and $\epsilon$
are functions of $(s,y)$. Thus by $\flat$ operation one can change
$u$ and $z$ to functions of $(s,y)$, i.e. $u^{\flat}(s,y)$ and
$z^{\flat}(s,y)$. Similarly by $\sharp$ operation, one can change
$Q_{b}$ and $\epsilon$ to functions of $(t,x)$, i.e. $Q_{b(t)}^{\sharp}(x)$
and $\epsilon^{\sharp}(t,x)$. In the following, we take $\flat$
or $\sharp$ operations to convert equations in the $(t,x)$-variables
and in the $(s,y)$-variables.

In view that $Q_{b}$ and $z$ live on different scales, it is convenient
to introduce general notations to describe their interactions. Consider
two functions $f$ and $g$ such that $f$ has shorter length scale
than $g$. $f$ and $g$ can be functions of either $(s,y)$ or $(t,x)$.
For example, we may consider $(f,g)=(Q_{b}^{\sharp},z)$ or $(Q_{b},z^{\flat})$.
Firstly, we use 
\[
R_{f,g}\coloneqq L_{f+g}^{\ast}\D_{+}^{(f+g)}(f+g)-L_{f}^{\ast}\D_{+}^{(f)}f-L_{g}^{\ast}\D_{+}^{(g)}g,
\]
which is the interaction term between $f$ and $g$. We will use this
for $R_{Q_{b},z^{\flat}}$, $R_{Q_{b}^{\sharp},z}$, and so on. We
will use notations 
\begin{align*}
V_{f\to g} & \coloneqq\frac{1}{r^{2}}\Big((m+A_{\theta}[f](+\infty)+A_{\theta}[g])^{2}-(m+A_{\theta}[g])^{2}\Big)\\
 & \quad-\int_{r}^{\infty}A_{\theta}[f](+\infty)|g|^{2}\frac{dr'}{r'},\\
\theta_{g\to f} & \coloneqq-\int_{0}^{\infty}(m+A_{\theta}[f](\infty)+A_{\theta}[g])|g|^{2}\frac{dr'}{r'}.
\end{align*}
The strong interaction from $f$ to $g$ is given by $V_{f\to g}g$,
and that from $g$ to $f$ is given by $\theta_{g\to f}f$. We denote
the marginal interaction by 
\[
\tilde R_{f,g}\coloneqq R_{f,g}-V_{f\to g}g-\theta_{g\to f}f.
\]
Note that 
\[
[R_{f,g}]^{\sharp}=\lambda^{2}[R_{f^{\sharp},g^{\sharp}}],\quad[V_{f\to g}g]^{\sharp}=\lambda^{2}V_{f^{\sharp}\to g^{\sharp}}g^{\sharp},\quad\theta_{g\to f}=\lambda^{2}\theta_{g^{\sharp}\to f^{\sharp}}.
\]
In case of $(f,g)=(Q_{b}^{(\eta)},z^{\flat})$, we will denote 
\begin{equation}
\tilde{\gamma}_{s}\coloneqq\gamma_{s}+\theta_{z^{\flat}\to Q_{b}^{(\eta)}}=\gamma_{s}+\lambda^{2}\theta_{z\to Q_{b}^{(\eta)\sharp}}.\label{eq:def-tilde-gamma}
\end{equation}

Next, for functions $w$ and $\epsilon$, we denote by 
\[
R_{(w+\epsilon)-w}\coloneqq L_{w+\epsilon}^{\ast}\D_{+}^{(w+\epsilon)}(w+\epsilon)-L_{w}^{\ast}\D_{+}^{(w)}w-\mathcal{L}_{w}\epsilon
\]
the quadratic and higher order terms in $\epsilon$.

\subsubsection*{The case $\eta=0$}

Here, we assume $\eta=0$ and derive the equation of $\epsilon$.
If we take the $\flat$ operation to \eqref{eq:CSS}, then by Section
\ref{subsec:Dynamic-Rescaling}, 
\begin{align*}
i\partial_{t}u-L_{u}^{\ast}\D_{+}^{(u)}u & =0,\\
i\partial_{s}u^{\flat}-L_{u^{\flat}}^{\ast}\D_{+}^{(u^{\flat})}u^{\flat} & =i\frac{\lambda_{s}}{\lambda}\Lambda u^{\flat}+\gamma_{s}u^{\flat}.
\end{align*}

The evolution of $Q_{b}$ is given by Lemma \ref{lem:algebraic-relation-Qb}
\begin{align*}
i\partial_{s}Q_{b}-L_{Q_{b}}^{\ast}\D_{+}^{(Q_{b})}Q_{b}+ib\Lambda Q_{b} & =(b_{s}+b^{2})\tfrac{|y|^{2}}{4}Q_{b},\\
i\partial_{t}Q_{b}^{\sharp}-L_{Q_{b}^{\sharp}}^{\ast}\D_{+}^{(Q_{b}^{\sharp})}Q_{b}^{\sharp} & =-i\frac{1}{\lambda^{2}}\Big(\frac{\lambda_{s}}{\lambda}+b\Big)\Lambda Q_{b}^{\sharp}-\frac{\gamma_{s}}{\lambda^{2}}Q_{b}^{\sharp}+\frac{1}{\lambda^{2}}(b_{s}+b^{2})[\tfrac{|y|^{2}}{4}Q]_{b}^{\sharp}.
\end{align*}
Here, we remark that $\Lambda Q_{b}=[\Lambda Q]_{b}-ib\frac{|y|^{2}}{2}Q_{b}$
and $\Lambda Q_{b}^{\sharp}=[\Lambda Q]_{b}^{\sharp}-ib[\frac{|y|^{2}}{2}Q]_{b}^{\sharp}$.

Define 
\[
w(t,x)\coloneqq Q_{b}^{\sharp}(t,x)+z(t,x)=u(t,x)-\epsilon^{\sharp}(t,x).
\]
Viewing $w=Q_{b}^{\sharp}+z$, by \eqref{eq:fCSS}, $w$ solves 
\begin{align*}
i\partial_{t}w-L_{w}^{\ast}\D_{+}^{(w)}w & =-i\frac{1}{\lambda^{2}}\Big(\frac{\lambda_{s}}{\lambda}+b\Big)\Lambda Q_{b}^{\sharp}-\frac{\gamma_{s}}{\lambda^{2}}Q_{b}^{\sharp}+\frac{1}{\lambda^{2}}(b_{s}+b^{2})[\tfrac{|y|^{2}}{4}Q]_{b}^{\sharp}\\
 & \quad+(V_{Q_{b}^{\sharp}\to z}z-R_{Q_{b}^{\sharp},z}).
\end{align*}
Taking the $\flat$ operation, $w^{\flat}$ solves 
\begin{align*}
i\partial_{s}w^{\flat}-L_{w^{\flat}}^{\ast}\D_{+}^{(w^{\flat})}w^{\flat}+ib\Lambda w^{\flat} & =i\Big(\frac{\lambda_{s}}{\lambda}+b\Big)\Lambda z^{\flat}+\gamma_{s}z^{\flat}+(b_{s}+b^{2})\tfrac{|y|^{2}}{4}Q_{b}\\
 & \quad+(V_{Q_{b}\to z^{\flat}}z^{\flat}-R_{Q_{b},z^{\flat}}).
\end{align*}
Recall that the external potential term $V_{Q_{b}^{\sharp}\to z}$
is introduced to capture strong interactions in $R_{Q_{b}^{\sharp},z}$.
We also note that $V_{Q_{b}^{\sharp}\to z}$ does not depend on modulation
parameters, but only on $z$.

Finally we derive the equation of $\epsilon$. Subtracting the equation
of $u^{\flat}$ by that of $w^{\flat}$, we have 
\begin{align*}
i\partial_{s}\epsilon-\mathcal{L}_{w^{\flat}}\epsilon+ib\Lambda\epsilon & =i\Big(\frac{\lambda_{s}}{\lambda}+b\Big)\Lambda(Q_{b}+\epsilon)+\tilde{\gamma}_{s}Q_{b}+\gamma_{s}\epsilon-(b_{s}+b^{2})\tfrac{|y|^{2}}{4}Q_{b}\\
 & \quad+\tilde R_{Q_{b},z^{\flat}}+R_{u^{\flat}-w^{\flat}}.
\end{align*}
By taking the $\sharp$ operation, we have 
\begin{align*}
i\partial_{t}\epsilon^{\sharp}-\mathcal{L}_{w}\epsilon^{\sharp} & =\frac{1}{\lambda^{2}}i\Big(\frac{\lambda_{s}}{\lambda}+b\Big)\Lambda Q_{b}^{\sharp}+\frac{1}{\lambda^{2}}\tilde{\gamma}_{s}Q_{b}^{\sharp}-\frac{1}{\lambda^{2}}(b_{s}+b^{2})[\tfrac{|y|^{2}}{4}Q]_{b}^{\sharp}\\
 & \quad+\tilde R_{Q_{b}^{\sharp},z}+R_{u-w}.
\end{align*}

\subsubsection*{The case $\eta>0$}

We now consider the case $\eta>0$. Recall \eqref{eq:second-order}
and \eqref{eq:conj-b-phase1} as 
\[
i\partial_{s}Q_{b}^{(\eta)}-L_{Q_{b}^{(\eta)}}^{\ast}\D_{+}^{(Q_{b}^{(\eta)})}Q_{b}^{(\eta)}+ib\Lambda Q_{b}^{(\eta)}-\eta\theta_{\eta}Q_{b}^{(\eta)}=(b_{s}+b^{2}+\eta^{2})\tfrac{|y|^{2}}{4}Q_{b}^{(\eta)}.
\]
Recall that we evolve $z$ by \eqref{eq:fCSS}, which is independent
of $\eta$. If we define 
\[
w(t,x)\coloneqq Q_{b(t)}^{(\eta)\sharp}(x)+z(t,x),
\]
then $w$ solves 
\begin{align*}
 & i\partial_{s}w^{\flat}-L_{w^{\flat}}^{\ast}\D_{+}^{(w^{\flat})}w^{\flat}+ib\Lambda w^{\flat}-\eta\theta_{\eta}w^{\flat}\\
 & =i\Big(\frac{\lambda_{s}}{\lambda}+b\Big)\Lambda z^{\flat}+(\gamma_{s}-\eta\theta_{\eta})z^{\flat}+(b_{s}+b^{2}+\eta^{2})\tfrac{|y|^{2}}{4}Q_{b}^{(\eta)}\\
 & \quad-V_{Q_{b}^{(\eta)}-Q_{b}}z^{\flat}+(V_{Q_{b}^{(\eta)}\to z^{\flat}}z^{\flat}-R_{Q_{b}^{(\eta)},z^{\flat}}),
\end{align*}
where $V_{Q_{b}^{(\eta)}-Q_{b}}$ is an additional error induced from
the external potential.
\begin{equation}
V_{Q_{b}^{(\eta)}-Q_{b}}\coloneqq V_{Q_{b}^{(\eta)}\to z^{\flat}}-V_{Q_{b}\to z^{\flat}}.\label{eq:def-V-tilde}
\end{equation}
Then the evolution of $\epsilon$ is given by 
\begin{align}
 & i\partial_{s}\epsilon-\mathcal{L}_{w^{\flat}}\epsilon+ib\Lambda\epsilon-\eta\theta_{\eta}\epsilon\label{eq:e-prem}\\
 & =i\Big(\frac{\lambda_{s}}{\lambda}+b\Big)\Lambda(Q_{b}^{(\eta)}+\epsilon)+(\tilde{\gamma}_{s}-\eta\theta_{\eta})Q_{b}^{(\eta)}+(\gamma_{s}-\eta\theta_{\eta})\epsilon\nonumber \\
 & \quad-(b_{s}+b^{2}+\eta^{2})\tfrac{|y|^{2}}{4}Q_{b}^{(\eta)}+\tilde R_{Q_{b}^{(\eta)},z^{\flat}}+V_{Q_{b}^{(\eta)}-Q_{b}}z^{\flat}+R_{u^{\flat}-w^{\flat}}.\nonumber 
\end{align}
Applying the $\sharp$ operation, we get 
\begin{align}
i\partial_{t}\epsilon^{\sharp}-\mathcal{L}_{w}\epsilon^{\sharp} & =\frac{1}{\lambda^{2}}i\Big(\frac{\lambda_{s}}{\lambda}+b\Big)\Lambda Q_{b}^{(\eta)\sharp}+\frac{1}{\lambda^{2}}(\tilde{\gamma}_{s}-\eta\theta_{\eta})Q_{b}^{(\eta)\sharp}\label{eq:e-sharp-prem}\\
 & \quad-\frac{1}{\lambda^{2}}(b_{s}+b^{2}+\eta^{2})[\tfrac{|y|^{2}}{4}Q^{(\eta)}]_{b}^{\sharp}+\tilde R_{Q_{b}^{(\eta)\sharp},z}+V_{Q_{b}^{(\eta)\sharp}-Q_{b}^{\sharp}}z+R_{u-w}.\nonumber 
\end{align}

\subsection{\label{subsec:choice-of-geometric}Choice of modulation parameters}

As we have three modulation parameters $b$, $\lambda$, and $\gamma$,
we can impose three conditions to fix dynamical laws and $\epsilon(s,y)$.
In order to guarantee the coercivity of the linearized operator, we
spend two degrees of freedom for orthogonality conditions. Fix any
smooth compactly supported real-valued functions $\mathcal{Z}_{\re},\mathcal{Z}_{\im}:(0,\infty)\to\R$
such that 
\begin{equation}
(\mathcal{Z}_{\re},\Lambda Q)_{r}=(\mathcal{Z}_{\im},Q)_{r}=1\neq0.\label{eq:non-deg}
\end{equation}
We impose the orthogonality conditions as 
\begin{equation}
(\epsilon,[\mathcal{Z}_{\re}]_{b})_{r}=(\epsilon,[i\mathcal{Z}_{\im}]_{b})_{r}=0.\label{eq:orthog}
\end{equation}
In view of Lemma \ref{lem:coercivity}, \eqref{eq:orthog} implies
coercivity of the linearized operator, provided that $b$ is sufficiently
small. A cleverer choice of the orthogonality conditions is not important
in our analysis.

For the remaining one degree of freedom, we impose a dynamical law
between $b$ and $\lambda$ as 
\begin{equation}
2\Big(\frac{\lambda_{s}}{\lambda}+b\Big)b-(b_{s}+b^{2}+\eta^{2})=0.\label{eq:law-fixation}
\end{equation}
This is motivated to cancel out $\frac{|y|^{2}}{4}Q_{b}^{(\eta)}$
in the equation \eqref{eq:e-prem} of $\epsilon$, since it has slow
decay $|\frac{|y|^{2}}{4}Q_{b}|\sim r^{-m}$ as $r\to\infty$. Such
a decay is not sufficient to close our bootstrap argument, particularly
when $m$ is small. Using $\Lambda Q_{b}^{(\eta)}=[\Lambda Q^{(\eta)}]_{b}-ib\frac{|y|^{2}}{2}Q_{b}^{(\eta)}$,
we organize terms containing $\frac{|y|^{2}}{4}Q_{b}^{(\eta)}$ in
\eqref{eq:e-prem} as 
\begin{align*}
 & i\Big(\frac{\lambda_{s}}{\lambda}+b\Big)\Lambda Q_{b}^{(\eta)}-(b_{s}+b^{2}+\eta^{2})\tfrac{|y|^{2}}{4}Q_{b}^{(\eta)}\\
 & =\Big[2\Big(\frac{\lambda_{s}}{\lambda}+b\Big)b-(b_{s}+b^{2}+\eta^{2})\Big]\tfrac{|y|^{2}}{4}Q_{b}^{(\eta)}+i\Big(\frac{\lambda_{s}}{\lambda}+b\Big)[\Lambda Q^{(\eta)}]_{b}.
\end{align*}
By \eqref{eq:law-fixation}, we can delete the $\frac{|y|^{2}}{4}Q_{b}^{(\eta)}$-contribution
in \eqref{eq:e-prem}; we have 
\[
i\Big(\frac{\lambda_{s}}{\lambda}+b\Big)\Lambda Q_{b}^{(\eta)}-(b_{s}+b^{2}+\eta^{2})\tfrac{|y|^{2}}{4}Q_{b}^{(\eta)}=i\Big(\frac{\lambda_{s}}{\lambda}+b\Big)[\Lambda Q^{(\eta)}]_{b}.
\]
This cancellation is heavily used in Sections \ref{subsec:Transfer-H1-to-L2}
and \ref{subsec:virial-lyapunov}. Note that \eqref{eq:law-fixation}
is consistent with 
\[
b(t)=-t\quad\text{and}\quad\lambda(t)=\langle t\rangle.
\]

We now discuss on the existence of $(\lambda,\gamma,b)$ satisfying
\eqref{eq:law-fixation} and orthogonality conditions \eqref{eq:orthog}.
When $\eta=0$, \eqref{eq:law-fixation} is \emph{explicitly solvable};
one sees that
\begin{equation}
\Big(\frac{b}{\lambda^{2}}\Big)_{t}=\frac{1}{\lambda^{2}}\Big(\frac{b}{\lambda^{2}}\Big)_{s}=\frac{b}{\lambda^{4}}\Big(\frac{b_{s}}{b}-2\frac{\lambda_{s}}{\lambda}\Big)=\Big(\frac{b}{\lambda^{2}}\Big)^{2}.\label{eq:solvable}
\end{equation}
This is satisfied by the choice 
\begin{equation}
b(t)=|t|^{-1}\lambda^{2}(t)\label{eq:solve-law}
\end{equation}
for any time $t$. As we expect $\lambda^{2}(t)\sim|t|^{2}$ in our
blow-up regime, \eqref{eq:solve-law} says that $|b(t)|\lesssim|t|$,
which is small provided $t$ is small. From this, one is able to use
the implicit function theorem at each fixed time $t$.

When $\eta>0$, we are not able to solve \eqref{eq:law-fixation}
to obtain a fixed time relation between $b$ and $\lambda$, such
as \eqref{eq:solve-law}. Instead, we solve a system of ODEs for $(b,\lambda,\gamma)$
given by two orthogonality conditions and \eqref{eq:law-fixation}.
The precise setup is as follows. We consider the initial data 
\begin{equation}
u(0,x)=\frac{e^{i\gamma(0)}}{\lambda(0)}Q_{b(0)}^{(\eta)}\Big(\frac{x}{\lambda(0)}\Big)+z^{\ast}(x).\label{eq:init-data}
\end{equation}
We evolve $u$ by \eqref{eq:CSS} and $z$ by \eqref{eq:fCSS}. As
we assume the decomposition
\[
u(t,x)=\frac{e^{i\gamma(t)}}{\lambda(t)}[Q_{b(t)}^{(\eta)}+\epsilon](t,\frac{x}{\lambda(t)})+z(t,x),
\]
 $\epsilon$ is a function of $u,z,\lambda,b,\gamma$. Note that $u$
and $z$ are by now given, so $\epsilon$ is viewed as a function
of $\lambda,b,\gamma$. The orthogonality conditions are viewed as
first-order differential equations of $(b,\lambda,\gamma)$, obtained
by differentiating them in the $t$-variable and substituting \eqref{eq:e-sharp-prem}.
We also interpret \eqref{eq:law-fixation} in the $t$-variable. Thus
we have a system of ODEs 
\begin{equation}
\left\{ \begin{aligned}\partial_{t}(\epsilon,[\mathcal{Z}_{\re}]_{b})_{r} & =0,\\
\partial_{t}(\epsilon,i[\mathcal{Z}_{\im}]_{b})_{r} & =0,\\
2(\lambda\lambda_{t}+b)b-(\lambda^{2}b_{t}+b^{2}+\eta^{2}) & =0.
\end{aligned}
\right.\label{eq:system-ODE}
\end{equation}
From \eqref{eq:non-deg}, the system \eqref{eq:system-ODE} is nondegenerate.
In view of \eqref{eq:testing}, the first and second lines of \eqref{eq:system-ODE}
essentially govern the laws of $\lambda$ and $\gamma$. The third
line governs the law of $b$. We then give the initial data at $t=0$
as $(b(0),\lambda(0),\gamma(0))=(0,\eta,0)$. Local existence and
uniqueness are clear from Picard's theorem. Moreover, we can solve
the system as long as the parameters $b$, $\log\lambda$, and $\gamma$
do not diverge.

In the following sections, we will verify estimates of $b,\lambda,\gamma$,
and $\epsilon$ via a bootstrap argument. The conclusion of the bootstrap
procedure guarantees the existence of the decomposition.\footnote{In the forward problems, when $u(t_{0})=Q+\epsilon(t_{0})$ is given,
one has to do initial decomposition $u(t_{0})=Q_{\mathcal{P}(t_{0})}+\tilde{\epsilon}(t_{0})$
with some modulated profile $Q_{\mathcal{P}(t_{0})}$ to guarantee
that $\tilde{\epsilon}$ satisfies some orthogonality conditions.
For this purpose, one has to use the implicit function theorem at
time $t=t_{0}$. In our case, the orthogonality condition at time
$t=0$ is automatic as $\epsilon(0)=0$. This makes the argument simpler.}

\subsection{\label{subsec:reduction-of-theorem}Reduction of Theorems \ref{thm:BW-sol}
and \ref{thm:instability} to the main bootstrap Lemma \ref{lem:bootstrap}}

We will construct a family of solutions $\{u^{(\eta)}\}_{\eta\in[0,\eta^{\ast}]}$
such that $u^{(0)}$ is a pseudoconformal blow-up solutions. We first
construct a sequence of solutions $\{u^{(\eta)}\}_{\eta\in(0,\eta^{\ast}]}$
on $[t_{0}^{\ast},0]$ that approximately follow \eqref{eq:exact-mod-law-1}.
Then by taking the limit $\eta\to0$, we construct a pseudoconformal
blow-up solution $u^{(0)}$ on $[t_{0}^{\ast},0)$. A similar procedure
was presented in Merle \cite{Merle1990CMP} and Merle-Rapha\"el-Szeftel
\cite{MerleRaphaelSzeftel2013AJM}.

Define 
\begin{equation}
\gamma_{\mathrm{cor}}^{(\eta)}(t)\coloneqq-\int_{0}^{t}\theta_{z\to Q_{b}^{(\eta)\sharp}}(t')dt',\label{eq:def-gamma-eta-cor}
\end{equation}
where $\theta_{z\to Q_{b}^{(\eta)\sharp}}$ is as in \eqref{eq:def-theta-cor}
and \eqref{eq:def-theta-cor-eta}. When $\eta=0$, we abbreviate it
as $\gamma_{\mathrm{cor}}\coloneqq\gamma_{\mathrm{cor}}^{(\eta)}$.
\begin{prop}
\label{prop:prop4.4}There exist $-1<t_{0}^{\ast}<0$ and $0<\eta^{\ast}<1$
such that for all $\eta\in(0,\eta^{\ast}]$ and all sufficiently small
$0<\alpha^{\ast}<1$, we have the following property. Let an $m$-equivariant
profile $z^{\ast}$ satisfy \eqref{eq:H} and $z(t,x)$ solve \eqref{eq:fCSS}
with the initial data $z(0,x)=z^{\ast}(x)$. Then the solution $u$
to \eqref{eq:CSS} with the initial data 
\[
u^{(\eta)}(0,x)=\frac{1}{\eta}Q^{(\eta)}\Big(\frac{x}{\eta}\Big)+z^{\ast}(x)
\]
is global-in-time and scatters. Moreover, $u^{(\eta)}$ admits a decomposition
on time interval $[t_{0}^{\ast},-t_{0}^{\ast}]$\footnote{In fact, $\epsilon^{\sharp}$ and modulation parameters $b$, $\lambda$,
and $\gamma$ depend on $\eta$. We suppress the dependence on $\eta$
to simplify the notations.} 
\[
u^{(\eta)}(t,x)=\frac{e^{i\gamma(t)}}{\lambda(t)}Q_{b(t)}^{(\eta)}\Big(\frac{x}{\lambda(t)}\Big)+z(t,x)+\epsilon^{\sharp}(t,x)
\]
with the estimates 
\begin{multline*}
\langle t\rangle^{-\frac{1}{4}}\|\epsilon^{\sharp}\|_{L^{2}}+\|\epsilon^{\sharp}\|_{\dot{H}_{m}^{1}}+\Big|\frac{\lambda}{\langle t\rangle}-1\Big|+\frac{|b+t|}{\lambda}+|\gamma-\gamma_{\eta}-\gamma_{\mathrm{cor}}^{(\eta)}|\\
\lesssim\alpha^{\ast}(\langle t\rangle^{m+1}+\langle t\rangle^{\frac{1}{4}}\eta^{\frac{3}{4}}),
\end{multline*}
where $\gamma_{\eta}$ is as in \eqref{eq:exact-mod-law-1} and $\langle t\rangle=(t^{2}+\eta^{2})^{\frac{1}{2}}$
is as in \eqref{eq:def-jap-brac}.
\end{prop}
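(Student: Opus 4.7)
\smallskip

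\textbf{Plan of proof.} The overall strategy is a bootstrap argument on the decomposition $u^{(\eta)} = Q^{(\eta),\sharp}_b + z + \epsilon^\sharp$ set up in Sections \ref{subsec:Evolution-of-z}--\ref{subsec:choice-of-geometric}. At $t=0$ the decomposition is automatic: we take $(\lambda(0),b(0),\gamma(0))=(\eta,0,0)$ and $\epsilon(0)=0$, so the orthogonality conditions \eqref{eq:orthog} and the dynamical law \eqref{eq:law-fixation} hold trivially. Picard's theorem applied to the nondegenerate ODE system \eqref{eq:system-ODE} then produces a maximal interval around $t=0$ on which the decomposition exists and is smooth. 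The goal is to show the bootstrap bounds persist on $[t_0^\ast,-t_0^\ast]$ for some $t_0^\ast<0$ independent of $\eta\in(0,\eta^\ast]$, and then invoke scattering outside this interval.

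\smallskip

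The bootstrap hypotheses, to be propagated backward in time from $t=0$, are the estimates in the statement: bounds on $|\lambda/\langle t\rangle-1|$, $|b+t|/\lambda$, $|\gamma-\gamma_\eta-\gamma^{(\eta)}_{\mathrm{cor}}|$, $\|\epsilon^\sharp\|_{\dot H^1_m}$, and on the $L^2$ time-maximal function of $\epsilon^\sharp$. Differentiating the orthogonality conditions \eqref{eq:orthog} in $s$ and testing the $\epsilon$-equation \eqref{eq:e-prem} against $[\mathcal Z_{\re}]_b$ and $i[\mathcal Z_{\im}]_b$ produces \emph{modulation equations}: they express $\frac{\lambda_s}{\lambda}+b$ and $\tilde\gamma_s-\eta\theta_\eta$ as linear functionals of $\epsilon$ plus the interaction residuals $\tilde R_{Q_b^{(\eta)},z^\flat}$ and $V_{Q_b^{(\eta)}-Q_b}z^\flat$. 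Combined with \eqref{eq:law-fixation}, these give pointwise control of $(\lambda,b,\gamma)$ by $\|\epsilon\|_{\dot H^1_m}$ and explicit interaction integrals, and in particular recover the $\gamma^{(\eta)}_{\mathrm{cor}}$ correction upon integration. The interaction term $\tilde R$ is estimated by decoupling in scales, using the degeneracy $|z(t,r)|\lesssim\alpha^\ast r^{m+2}$ near the origin from (H), which produces the $\langle t\rangle^{m+1}|\log\langle t\rangle|$ source.

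\smallskip

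The central and hardest step is the control of $\|\epsilon^\sharp\|_{\dot H^1_m}$. Following the Lyapunov/virial scheme outlined in Section \ref{subsec:strategy}, one constructs a functional
\[
\mathcal I_A=\lambda^{-2}\bigl(E^{\mathrm{(qd)}}_{w^\flat}[\epsilon]+b\,\Phi_A[\epsilon]+\tfrac{\eta\theta_\eta}{2}M[\epsilon]\bigr),\qquad\mathcal I=\tfrac{2}{\log A}\int_{A^{1/2}}^{A}\mathcal I_{A'}\tfrac{dA'}{A'},
\]
which, by the self-dual factorization $\mathcal L_Q=L_Q^\ast L_Q$ and the coercivity Lemma \ref{lem:coercivity} under the orthogonalities \eqref{eq:orthog}, satisfies $\lambda^2\mathcal I-\tfrac{\eta\theta_\eta}{2}M[\epsilon]\sim\|\epsilon\|_{\dot H^1_m}^2$. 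Differentiating in $t$ and using the $\epsilon$-equation \eqref{eq:e-sharp-prem}, the leading term of $\lambda^2\partial_s\mathcal I$ is $\sim 2b(E^{(A),\mathrm{(qd)}}_Q[\epsilon]+\tfrac{\eta\theta_\eta}{2}M[\epsilon])$, which is almost nonnegative thanks to the sign $b\sim -t>0$ (backward time) and the averaging in $A$; the residual boundary and $\Delta^2\phi_A$ terms are absorbed by the logarithmic average. The source terms on the right-hand side --- the interaction $\tilde R$, the potential $V_{Q_b^{(\eta)}-Q_b}z^\flat$ (of size $O(\eta)$ times $z^\flat$), the cubic remainder $R_{u^\flat-w^\flat}$, and the modulation errors --- are estimated via the duality Lemma \ref{lem:duality-1} and controlled by the right-hand side $\alpha^\ast(\langle t\rangle^{m+1}|\log\langle t\rangle|+\langle t\rangle^{1/4}\eta^{3/4})$ in the target bound. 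Integrating $\lambda^2\partial_s\mathcal I\geq -(\mathrm{errors})$ backward from $t=0$ where $\mathcal I(0)=0$ yields the desired $\dot H^1_m$ bound on $\epsilon^\sharp$. The $L^2$ bound, entering only at the level of the time-maximal function $\mathcal T^{(\eta,s)}_{L^2}[\epsilon^\sharp]$, is recovered from Strichartz estimates applied to \eqref{eq:e-sharp-prem} with the inhomogeneous nonlinearity estimated by Lemma \ref{lem:4/3-est-N}.

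\smallskip

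The main obstacle, as anticipated in Section \ref{subsec:strategy}, is the borderline term $\tfrac18\int(\Delta^2\phi_A)|\epsilon|^2$ and the boundary term at $r=A$ in the monotonicity computation: crudely these are only $\lesssim\|r^{-1}\epsilon\|_{L^2}^2$, matching the coercive lower bound rather than improving it, and the coercivity of $\mathcal I_A$ does \emph{not} control $\|\epsilon\|_{L^2}$ uniformly in $\eta$. The averaging $\mathcal I=(\log A)^{-1}\int\mathcal I_{A'}\frac{dA'}{A'}$ gains a logarithmic factor precisely to dominate these terms. Once the bootstrap closes on $[t_0^\ast,0]$, the forward interval $[0,-t_0^\ast]$ is handled by the same mechanism (with $b\sim -t<0$, compensated by running time forward), and at $|t|=|t_0^\ast|$ the solution $u^{(\eta)}(t_0^\ast)$ is $\alpha^\ast$-close in $H^1_m$ to $z(t_0^\ast)$, which is a small, globally scattering solution of \eqref{eq:fCSS}; the $L^2$-critical stability statement in Proposition \ref{prop:L2-Cauchy} then upgrades this to global existence and scattering of $u^{(\eta)}$ in both time directions.
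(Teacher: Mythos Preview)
Your bootstrap strategy and the Lyapunov/virial mechanism are exactly those of the paper (the paper packages the bootstrap as Lemma~\ref{lem:bootstrap} and proves it in Section~\ref{sec:Bootstrap}, but the content matches what you outline). There is, however, a genuine error in your scattering argument. You claim that at $t=t_0^\ast$ the solution $u^{(\eta)}(t_0^\ast)$ is $\alpha^\ast$-close in $H_m^1$ to $z(t_0^\ast)$, and then invoke small-data scattering. This is false: the modulated soliton piece $\frac{e^{i\gamma}}{\lambda}Q_{b}^{(\eta)}(\cdot/\lambda)$ has $L^2$-mass $\|Q^{(\eta)}\|_{L^2}\approx\|Q\|_{L^2}$, which is order one and cannot be absorbed into an $O(\alpha^\ast)$ error. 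The correct comparison (what the paper does) is to the explicit pseudoconformal solution: one shows
\[
\Big\|u^{(\eta)}(t_0^\ast)-\frac{e^{i\gamma(t_0^\ast)}}{|t_0^\ast|}Q_{|t_0^\ast|}\Big(\frac{\cdot}{|t_0^\ast|}\Big)\Big\|_{H^1}
\]
is small (bounded by $\|\epsilon^\sharp\|_{H^1}+\|z\|_{H^1}$ plus a term that vanishes as $\alpha^\ast,\eta\to0$, since $t_0^\ast$ is now \emph{fixed}). The reference profile $S(t_0^\ast)$ is itself a globally scattering $L^2$-solution (it is the pseudoconformal transform of the static solution $Q$), and the stability statement in Proposition~\ref{prop:L2-Cauchy} then transfers scattering to $u^{(\eta)}$. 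This is a large-data perturbation argument, not a small-data one.

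A minor point: for the forward interval $[0,-t_0^\ast]$ the paper does not rerun the bootstrap with $b<0$ but instead invokes the time-reversal symmetry $u(t,r)\mapsto\overline{u(-t,r)}$, noting that $\overline{z^\ast(r)}e^{im\theta}$ again satisfies \eqref{eq:H}. Your direct approach would presumably work (the sign flip in $b$ is matched by the direction of integration), but the symmetry argument is cleaner and avoids revisiting any sign bookkeeping in the Lyapunov computation.
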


\begin{proof}[Proof of Proposition \ref{prop:prop4.4} assuming the bootstrap lemma
(Lemma \ref{lem:bootstrap}) below]

Note that $z$ is a global small solution (Section \ref{subsec:Evolution-of-z})
with $z(0,x)=z^{\ast}(x)$. We will choose $\eta^{\ast}\in(0,\eta^{\ast\ast}]$
later.

For $\eta\in(0,\eta^{\ast})$, let $u^{(\eta)}:(T_{-},T_{+})\times\R^{2}\to\C$
be the maximal lifespan solution to \eqref{eq:CSS} with the initial
data at time $t=0$ as 
\[
u^{(\eta)}(0,x)=\frac{1}{\eta}Q^{(\eta)}\Big(\frac{x}{\eta}\Big)+z^{\ast}(x).
\]
With the initial data 
\[
(b(0),\lambda(0),\gamma(0))=(0,\eta,0),
\]
let $(b,\lambda,\gamma):(\tilde T_{-},0]\to\R\times\R_{+}\times\R$
be the maximal lifespan solution to the system \eqref{eq:system-ODE}.
Note that $T_{-}\leq\tilde T_{-}<0$. Thus we have a decomposition
of $u^{(\eta)}$ on $(\tilde T_{-},0]$ satisfying 
\begin{equation}
\left\{ \begin{aligned} & u^{(\eta)}(t,x)=\frac{e^{i\gamma(t)}}{\lambda(t)}Q_{b(t)}^{(\eta)}\Big(\frac{x}{\lambda(t)}\Big)+z(t,x)+\epsilon^{\sharp}(t,x),\\
 & (\epsilon,[\mathcal{Z}_{\re}]_{b})_{r}=(\epsilon,i[\mathcal{Z}_{\im}]_{b})_{r}=0,\\
 & 2\Big(\frac{\lambda_{s}}{\lambda}+b\Big)b-(b_{s}+b^{2}+\eta^{2})=0.
\end{aligned}
\right.\label{eq:decomp-u}
\end{equation}
We can now introduce our main bootstrap lemma.
\begin{lem}[Main bootstrap]
\label{lem:bootstrap}There exists $t_{0}^{\ast}<0$ such that for
all sufficiently small $\eta>0$ and $\alpha^{\ast}>0$, we have the
following property. Assume that the decomposition \eqref{eq:decomp-u}
of $u^{(\eta)}$ satisfies the weak bootstrap hypothesis 
\begin{equation}
\sup_{t\in[t_{1},0]}\bigg\{\Big|\frac{\lambda}{\langle t\rangle}-1\Big|+\frac{|b-|t||}{\lambda}\bigg\}\leq\frac{1}{2}\quad\text{and}\quad\sup_{t\in[t_{1},0]}\bigg\{\frac{\|\epsilon\|_{\dot{H}_{m}^{1}}}{\lambda^{\frac{3}{2}}}+\frac{\|\epsilon\|_{L^{2}}}{\lambda^{\frac{1}{2}}}\bigg\}\leq1\label{eq:bootstrap-hyp}
\end{equation}
for some $t_{1}\in[t_{0}^{\ast},0]\cap(\tilde T_{-},0]$. Then, the
following strong conclusion holds for all $t\in[t_{1},0]$: 
\begin{equation}
\left\{ \begin{aligned}\Big|\frac{\lambda}{\langle t\rangle}-1\Big|+\frac{|b-|t||}{\lambda}+|\gamma-\gamma_{\eta}-\gamma_{\mathrm{cor}}^{(\eta)}| & \lesssim\alpha^{\ast}(\lambda^{m+1}+\lambda^{\frac{1}{4}}\eta^{\frac{3}{4}}),\\
\lambda^{\frac{3}{4}}\|\epsilon\|_{L^{2}}+\|\epsilon\|_{\dot{H}_{m}^{1}} & \lesssim\alpha^{\ast}(\lambda^{m+2}+\lambda^{\frac{5}{4}}\eta^{\frac{3}{4}}),
\end{aligned}
\right.\label{eq:bootstrap-conc}
\end{equation}
where $\gamma_{\eta}$ as in \eqref{eq:exact-mod-law-1} and \eqref{eq:def-theta^eta}.
\end{lem}

Let $t_{0}^{\ast}$ be as in Lemma \ref{lem:bootstrap}. Let $\eta$
and $\alpha^{\ast}$ be sufficiently small to satisfy the hypothesis
of Lemma \ref{lem:bootstrap}. As $b(0)=0$, $\lambda(0)=\eta$, and
$\epsilon(0)=0$, the bootstrap hypothesis \eqref{eq:bootstrap-hyp}
is satisfied in a neighborhood of the time $0$. By a standard continuity
argument, the bootstrap conclusion \eqref{eq:bootstrap-conc} is satisfied
on the time interval $[t_{0}^{\ast},0]\cap(\tilde T_{-},0]$. In particular,
the modulation parameters $b$, $\lambda$, and $\gamma$ do not blow
up on $[t_{0}^{\ast},0]\cap(\tilde T_{-},0]$ and hence $T_{-}\leq\tilde T_{-}<t_{0}^{\ast}$.

Finally, to see that $u^{(\eta)}$ scatters backward in time, observe
that 
\begin{align*}
 & \Big\| u^{(\eta)}(t_{0}^{\ast})-\frac{e^{i\gamma(t_{0}^{\ast})}}{|t_{0}^{\ast}|}Q_{|t_{0}^{\ast}|}\Big(\frac{x}{|t_{0}^{\ast}|}\Big)\Big\|_{H^{1}}\\
 & \leq\|\epsilon^{\sharp}(t_{0}^{\ast})\|_{H^{1}}+\|z(t_{0}^{\ast})\|_{H^{1}}+\Big\| e^{i\gamma(t_{0}^{\ast})}\Big\{\frac{1}{\lambda(t_{0}^{\ast})}Q_{b(t_{0}^{\ast})}^{(\eta)}\Big(\frac{x}{\lambda(t_{0}^{\ast})}\Big)-\frac{1}{|t_{0}^{\ast}|}Q_{|t_{0}^{\ast}|}\Big(\frac{x}{|t_{0}^{\ast}|}\Big)\Big\}\Big\|_{H^{1}}.
\end{align*}
The first two terms are bounded by $\alpha^{\ast}$. Since $t_{0}^{\ast}$
is fixed, the last term goes to zero by shrinking $\alpha^{\ast}$
and $\eta$ using \eqref{eq:bootstrap-conc} and \eqref{eq:est-Q-eta-Q}.
This determines the smallness of $\eta^{\ast}$. Hence we can apply
the standard perturbation theory of scattering solutions (Proposition
\ref{prop:L2-Cauchy}). Thus $u$ scatters backward in time.

So far, we have discussed the backward evolution of $u^{(\eta)}$.
To show the forward scattering of $u^{(\eta)}$, we use the time reversal
symmetry $u(t,r)\mapsto\overline{u(-t,r)}$ of \eqref{eq:CSS}.\footnote{This indeed matches the time reversal symmetry mentioned in Introduction.}
Now the initial data has changed into $v^{(\eta)}(0,r)\coloneqq\frac{1}{\eta}Q^{(\eta)}(\frac{r}{\eta})+\overline{z^{\ast}(r)}$.
It is easy to check that $\overline{z^{\ast}(r)}e^{im\theta}$ satisfies
\eqref{eq:H} with the same $\alpha^{\ast}$. Let $v^{(\eta)}(t,r)$
be the backward evolution of \eqref{eq:CSS} with the initial data
$v^{(\eta)}(0,r)$. All the above properties are still satisfied by
$v^{(\eta)}(t,r)$ for $t<0$. Therefore, we can transfer them to
the forward evolution $u^{(\eta)}(-t,r)=\overline{v^{(\eta)}(t,r)}$
for $t<0$.
\end{proof}
For each $\eta\in(0,\eta^{\ast}]$, let $u^{(\eta)}$ be the solution
constructed in Proposition \ref{prop:prop4.4}. In order to use a
limiting argument, we need precompactness of the solutions $\{u^{(\eta)}\}_{\eta\in(0,\eta^{\ast}]}$.
\begin{lem}[$L^{2}$ precompactness]
\label{lem:pre-compact}The set $\{u^{(\eta)}(t_{0}^{\ast})\}_{\eta\in(0,\eta^{\ast}]}\subset L_{m}^{2}$
is precompact in $L_{m}^{2}$.
\end{lem}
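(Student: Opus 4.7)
The plan is to establish $L^2$-precompactness by verifying the two hypotheses of the Fr\'echet--Kolmogorov criterion: uniform $H^1_m$-boundedness (which yields $L^2$-equicontinuity and, by Rellich--Kondrachov, strong $L^2$-convergence on every bounded ball along a subsequence) together with uniform tightness at spatial infinity (which upgrades local to global convergence).

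Uniform $H^1_m$-boundedness is immediate from the decomposition in Proposition \ref{prop:prop4.4}. Writing
\[
u^{(\eta)}(t_0^\ast) = \frac{e^{i\gamma^{(\eta)}(t_0^\ast)}}{\lambda^{(\eta)}(t_0^\ast)}\, Q^{(\eta)}_{b^{(\eta)}(t_0^\ast)}\!\Big(\frac{\cdot}{\lambda^{(\eta)}(t_0^\ast)}\Big) + z(t_0^\ast) + \epsilon^{\sharp,(\eta)}(t_0^\ast),
\]
each summand is bounded in $H^1_m$ uniformly in $\eta$: the scale $\lambda^{(\eta)}(t_0^\ast)\to|t_0^\ast|>0$ is bounded above and below, the profile bound \eqref{eq:unif-bound} controls $Q^{(\eta)}$ uniformly, $z(t_0^\ast)$ is fixed, and $\|\epsilon^{\sharp,(\eta)}(t_0^\ast)\|_{H^1_m}\lesssim\alpha^\ast$ by \eqref{eq:bootstrap-conc}.

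For tightness, I would invoke the virial identities \eqref{eq:virial-identities} integrated twice in time:
\[
\int|x|^2\,|u^{(\eta)}(t_0^\ast,x)|^2\,dx = \int|x|^2\,|u^{(\eta)}(0,x)|^2\,dx + 8\,\Phi^{(\eta)}(0)\,t_0^\ast + 8\,E^{(\eta)}\,(t_0^\ast)^2,
\]
and verify that each term on the right is uniformly bounded in $\eta$. The energy is handled via the self-dual form $E[u]=\tfrac12\|\D_+^{(u)}u\|_{L^2}^2$: the Bogomol'nyi relation \eqref{eq:first-order} gives $E[Q^{(\eta)}]=\tfrac{\eta^2}{8}\|rQ^{(\eta)}\|_{L^2}^2=O(\eta^2)$ (using $m\geq 1$ and \eqref{eq:unif-bound}), and the $L^2$-critical scaling $E[\eta^{-1}Q^{(\eta)}(\cdot/\eta)]=\eta^{-2}\,E[Q^{(\eta)}]=O(1)$. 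Under the equivariant Coulomb gauge $A_r\equiv 0$, so $\Phi^{(\eta)}(0)$ receives no contribution from the real-valued concentrated profile. The initial second moment of $\eta^{-1}Q^{(\eta)}(\cdot/\eta)$ is $\eta^2\,\|yQ^{(\eta)}\|_{L^2}^2=O(1)$, and the $z^\ast$-dependent contributions (to the energy, momentum, second moment, and the cross terms with the concentrated profile) are controlled using the degeneracy of $z^\ast$ at the origin together with the regularity hypothesis \eqref{eq:H}. Chebyshev's inequality then produces the uniform tail bound $\int_{|x|>R}|u^{(\eta)}(t_0^\ast)|^2\,dx\lesssim R^{-2}$.

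The main technical obstacle is to establish these uniform $\eta$-bounds in the face of the concentration of $\eta^{-1}Q^{(\eta)}(\cdot/\eta)$ as $\eta\to 0$; the key ingredients are the Bogomol'nyi cancellation (which renders the energy $O(\eta^2)$ in spite of the naive $O(\eta^{-2})$ $\dot H^1$-scaling) and the Coulomb-gauge vanishing $A_r\equiv 0$ (which eliminates the leading contribution to the momentum). Once these bounds are in place, the $H^1_m$-boundedness combined with the uniform tightness yields $L^2$-precompactness via Rellich--Kondrachov.
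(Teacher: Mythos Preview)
Your $H^1_m$-boundedness argument is fine and matches the paper. The gap is in the tightness step: the virial identity \eqref{eq:virial-identities} requires $\int|x|^2|u^{(\eta)}(0)|^2<\infty$, and in particular $\int|x|^2|z^\ast|^2<\infty$. But hypothesis \eqref{eq:H} only gives $\tilde z^\ast\in H^k_{-(m+2)}$, which is a regularity condition with no decay at infinity. For instance, $z^\ast(r)=r^{m+2}\langle r\rangle^{-m-2-\alpha}$ with $1<\alpha<2$ satisfies \eqref{eq:H} (after normalization) for every $k$, yet $|x|z^\ast\notin L^2$. Neither the degeneracy at the origin nor the $H^k$ bound yields the weighted control you need, so the virial identity is not available in the stated generality.

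The paper circumvents this by replacing the weight $|x|^2$ with a smooth cutoff $1-\chi_R$ and using the \emph{local} mass conservation law \eqref{eq:local-cons}:
\[
\partial_t\int(1-\chi_R)|u^{(\eta)}|^2=-2\int(\partial_r\chi_R)\,\Im(\overline{u^{(\eta)}}\partial_r u^{(\eta)}),
\]
which is $O(R^{-1})$ uniformly in $\eta$ by the $H^1_m$-bound. Integrating from $t=0$ gives
\[
\int(1-\chi_R)|u^{(\eta)}(t_0^\ast)|^2\lesssim \sup_{t\in[t_0^\ast,0]}\|z(t)\|_{L^2(r\gtrsim R)}^2+\frac{1}{R}.
\]
The key point is that the tail of $z$ is handled \emph{qualitatively}: since $t\mapsto z(t)$ is continuous into $L^2_m$ on the compact interval $[t_0^\ast,0]$, its image is compact in $L^2_m$, hence uniformly tight. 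No quantitative decay rate on $z^\ast$ is needed. Your approach would go through if you added a weighted assumption on $z^\ast$, but as stated it does not cover the full class of profiles allowed by \eqref{eq:H}.
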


\begin{proof}
By the estimates in Proposition \ref{prop:prop4.4}, the set $\{u^{(\eta)}(t_{0}^{\ast})\}_{\eta\in(0,\eta^{\ast}]}$
is bounded in $H_{m}^{1}$. Thus it suffices to show that it is spatially
localized, i.e. 
\[
\lim_{R\to\infty}\sup_{\eta\in(0,\eta^{\ast}]}\|u^{(\eta)}(t_{0}^{\ast})\|_{L^{2}(r\geq R)}=0.
\]
To achieve this, we use local conservation laws \eqref{eq:local-cons}
to obtain 
\[
\partial_{t}\Big(\int(1-\chi_{R})|u^{(\eta)}|^{2}\Big)=-2\int(\partial_{r}\chi_{R})\Im(\overline{u^{(\eta)}}\partial_{r}u^{(\eta)}),
\]
where $\chi_{R}$ is a smooth cutoff to the region $r\lesssim R$,
i.e. $\chi_{R}(r)=\chi(\frac{r}{R})$ for a fixed smooth cutoff $\chi$.
Using the decomposition of $u^{(\eta)}$, we have 
\[
\Big|\int(\partial_{r}\chi_{R})\Im(\overline{u^{(\eta)}(t)}\partial_{r}u^{(\eta)}(t))\Big|\lesssim\frac{1}{R}.
\]
Since 
\begin{align*}
\|u^{(\eta)}(0)\|_{L^{2}(r\gtrsim R)} & \lesssim R^{-(m+1)}+\|z^{\ast}\|_{L^{2}(r\gtrsim R)},
\end{align*}
an application of the fundamental theorem of calculus shows that 
\[
\int_{\R^{2}}(1-\chi_{R})|u^{(\eta)}(t_{0}^{\ast})|^{2}\lesssim\sup_{t\in[t_{0}^{\ast},0]}\|z(t)\|_{L^{2}(r\gtrsim R)}^{2}+\frac{1}{R}.
\]
Using the fact that $z:[t_{0}^{\ast},0]\to L_{m}^{2}$ is a continuous
path (and hence has the compact image in $L_{m}^{2}$), the conclusion
follows.
\end{proof}
We now complete the proof of Theorem \ref{thm:BW-sol}.
\begin{proof}[Proof of Theorem \ref{thm:BW-sol}]

Choose a sequence $\eta_{n}\to0$ and let $u^{(n)}=u^{(\eta_{n})}$
be the associated solution to Proposition \ref{prop:prop4.4}. We
extract a solution from taking limit of solutions $u^{(n)}$. From
$H_{m}^{1}$ boundedness and $L_{m}^{2}$ pre-compactness (Lemma \ref{lem:pre-compact}),
there exists a subsequence (still denoted by $u^{(n)}$) such that
$u^{(n)}(t_{0}^{\ast})\rightharpoonup u(t_{0}^{\ast})$ weakly in
$H_{m}^{1}$ and $u^{(n)}(t_{0}^{\ast})\to u(t_{0}^{\ast})$ strongly
in $L_{m}^{2}$. Interpolating these, $u^{(n)}(t_{0}^{\ast})\to u(t_{0}^{\ast})$
strongly in $H_{m}^{1-}$. Passing to a further subsequence, we may
assume that $b^{(n)}(t_{0}^{\ast})$, $\lambda^{(n)}(t_{0}^{\ast})$,
and $\gamma^{(n)}(t_{0}^{\ast})$ converge to some limit $b(t_{0}^{\ast})$,
$\lambda(t_{0}^{\ast})$, and $\gamma(t_{0}^{\ast})$. Note that $\lambda(t_{0}^{\ast})\approx b(t_{0}^{\ast})\approx|t_{0}^{\ast}|$
and $\gamma(t_{0}^{\ast})\approx-\frac{\pi}{2}(\frac{m+1}{m})$ by
\eqref{eq:bootstrap-conc}. Let $u:(T_{-},T_{+})\times\R^{2}\to\C$
be the maximal lifespan solution with the initial data $u(t_{0}^{\ast})$
at time $t=t_{0}^{\ast}$.

We now show that $(T_{-},T_{+})=(-\infty,0)$ and $u$ scatters backward
in time. As $u(t_{0}^{\ast})$ is close to $\frac{e^{i\gamma(t_{0}^{\ast})}}{|t_{0}^{\ast}|}Q_{|t_{0}^{\ast}|}(\frac{x}{|t_{0}^{\ast}|})$
in $L^{2}$, we have $T_{-}=-\infty$ and $u$ scatters backward in
time. On the other hand, for all $t_{0}^{\ast}<t<\min\{T_{+},0\}$,
we have $u^{(n)}(t)\to u(t)$ strongly in $H_{m}^{1-}$. Since $\|u^{(n)}(t)\|_{H^{1-}}\sim|t^{2}+\eta_{n}^{2}|^{-(\frac{1}{2}-)}$,
we conclude that $T_{+}=0$ by the $H_{m}^{1-}$-subcritical local
well-posedness (Proposition \ref{prop:Hs-Cauchy}).

So far, we only know that $u$ is a $H_{m}^{1}$-solution; we do not
know that $u$ really inherits the decomposition estimates of $u^{(n)}$
due to the weak convergence. Here, we will show that we can transfer
the $H_{m}^{1}$-bound of $(\epsilon^{\sharp})^{(n)}$ to our limit
solution. Let us evolve $b$, $\lambda$, and $\gamma$ under \eqref{eq:system-ODE}
for $\eta=0$ with the initial data $b(t_{0}^{\ast})$, $\lambda(t_{0}^{\ast})$,
and $\gamma(t_{0}^{\ast})$ at time $t=t_{0}$. Since $u^{(n)}\to u$
in $C_{(-\infty,0),\mathrm{loc}}H_{m}^{1-}$ and $(b^{(n)}(t_{0}^{\ast}),\lambda^{(n)}(t_{0}^{\ast}),\gamma(t_{0}^{\ast}))\to(b(t_{0}^{\ast}),\lambda(t_{0}^{\ast}),\gamma(t_{0}^{\ast}))$,
we see that $b(t)$, $\lambda(t)$, and $\gamma(t)$ indeed exist
for all $t\in[t_{0}^{\ast},0)$ and $(b^{(n)}(t),\lambda^{(n)}(t),\gamma^{(n)}(t))\to(b(t),\lambda(t),\gamma(t))$.
For each fixed $t\in[t_{0}^{\ast},0)$, we note that 
\begin{equation}
\gamma(t)=\lim_{n\to\infty}\gamma^{(\eta_{n})}(t)=\lim_{n\to\infty}\gamma_{\eta_{n}}(t)+O(\alpha^{\ast}|t|)=-(m+1)\frac{\pi}{2}+O(\alpha^{\ast}|t|),\label{eq:phase-rot}
\end{equation}
where we estimated $\gamma_{\mathrm{cor}}$ by \eqref{eq:def-gamma-eta-cor}
and \eqref{eq:theta-est}. We write 
\[
u(t,x)=\frac{1}{\lambda(t)}Q_{b(t)}\Big(\frac{x}{\lambda(t)}\Big)e^{i\gamma(t)}+z(t,x)+\epsilon^{\sharp}(t,x).
\]
Thus $(\epsilon^{\sharp})^{(n)}(t)\to\epsilon^{\sharp}(t)$ strongly
in $L_{m}^{2}$. By the Fatou property, the $H_{m}^{1}$-bound of
$\epsilon^{\sharp}$ is now transferred from that of $(\epsilon^{\sharp})^{(n)}$.
Therefore, we have shown that 
\[
|t|^{-\frac{1}{4}}\|\epsilon^{\sharp}\|_{L^{2}}+\|\epsilon^{\sharp}\|_{\dot{H}_{m}^{1}}+\Big|\frac{\lambda}{|t|}-1\Big|+\frac{|b+t|}{\lambda}+|\gamma+(m+1)\frac{\pi}{2}-\gamma_{\mathrm{cor}}|\lesssim\alpha^{\ast}|t|^{m+1}.
\]

We now show \eqref{eq:BW-sol-temp-1}. We first claim \eqref{eq:solve-law}
\[
b(t)=|t|^{-1}\lambda^{2}(t).
\]
To see this, we take a limit $\eta\to0$ of \eqref{eq:law-fixation}
to get 
\[
2\Big(\frac{\lambda_{s}}{\lambda}+b\Big)b-(b_{s}+b^{2})=0.
\]
Because of \eqref{eq:solvable}, the quantity
\[
\frac{\lambda^{2}}{b}+t
\]
is conserved. Since $\frac{\lambda^{2}}{b}+t\to0$ as $t\to0^{-}$,
the claim follows. Therefore, we apply Lemma \ref{lem:est-diff} to
have 
\begin{align*}
 & \|u(t,r)-\frac{1}{|t|}Q_{|t|}\Big(\frac{r}{|t|}\Big)e^{i(-(m+1)\frac{\pi}{2}+\gamma_{\mathrm{cor}})}-z(t,r)\|_{\dot{H}_{m}^{1}}\\
 & \leq\|\epsilon^{\sharp}\|_{\dot{H}_{m}^{1}}+\frac{1}{|t|}\|\frac{|t|}{\lambda}Q_{b}\Big(\frac{|t|}{\lambda}r\Big)e^{i(\gamma+(m+1)\frac{\pi}{2}-\gamma_{\mathrm{cor}})}-Q_{|t|}(r)\|_{\dot{H}_{m}^{1}}\\
 & \lesssim\|\epsilon^{\sharp}\|_{\dot{H}_{m}^{1}}+\frac{1}{|t|}\Big(\Big|\frac{\lambda}{|t|}-1\Big|+|\gamma+(m+1)\frac{\pi}{2}-\gamma_{\mathrm{cor}}|\Big)\\
 & \lesssim\alpha^{\ast}|t|^{m}.
\end{align*}
One can similarly estimate 
\begin{align*}
 & \|u(t,r)-\frac{1}{|t|}Q_{|t|}\Big(\frac{r}{|t|}\Big)e^{i(-(m+1)\frac{\pi}{2}+\gamma_{\mathrm{cor}})}-z(t,r)\|_{L^{2}}\\
 & \lesssim\|\epsilon^{\sharp}\|_{L^{2}}+\Big(\Big|\frac{\lambda}{|t|}-1\Big|+|\gamma+(m+1)\frac{\pi}{2}-\gamma_{\mathrm{cor}}|\Big)\\
 & \lesssim\alpha^{\ast}|t|^{m+1}.
\end{align*}
To complete the proof of Theorem \ref{thm:BW-sol}, we apply the above
procedure for the reversely rotated $e^{-i\frac{(m+1)\pi}{2}}z^{\ast}$
instead of $z^{\ast}$. We then apply the phase invariance $u(t,r)\mapsto e^{i\frac{(m+1)\pi}{2}}u(t,r)$
to get \eqref{eq:BW-sol}.
\end{proof}
\begin{rem}[Alternative proof of Theorem \ref{thm:BW-sol}]
\label{rem:alt-proof}In fact, in order to construct a pseudoconformal
blow-up solution, one can proceed all the above analysis only with
$\eta=0$. When $\eta=0$, we cannot impose the initial data as in
\eqref{eq:init-data}. Instead, one can consider a sequence of solutions
$\{u^{(n)}\}_{n\in\N}$ with the initial data at $t=t_{n}$ as 
\[
u^{(n)}(t_{n},r)=\frac{1}{|t_{n}|}Q_{|t_{n}|}\Big(\frac{r}{|t_{n}|}\Big)+z(t_{n},r),
\]
where $t_{0}^{\ast}<t_{n}<0$ and $t_{n}\to0^{-}$. One then takes
the limit $n\to\infty$.
\end{rem}

We defer the proof of Theorem \ref{thm:cond-uniq} to Section \ref{sec:cond-uniq}.
We now prove Theorem \ref{thm:instability} and Corollary \ref{cor:corollary}.
\begin{proof}[Proof of Theorem \ref{thm:instability}]
The proof is almost immediate by combining Proposition \ref{prop:prop4.4},
and Theorems \ref{thm:BW-sol} and \ref{thm:cond-uniq}. One caveat
is that one should take into account the phase rotation in \eqref{eq:phase-rot}
as in the end of the proof of Theorem \ref{thm:BW-sol}. Indeed, we
apply Proposition \ref{prop:prop4.4} to obtain a family of solutions
$\{u^{(\eta)}\}_{\eta\in(0,\eta^{\ast}]}$ with initial data 
\[
u^{(\eta)}(0,x)=\frac{1}{\eta}Q^{(\eta)}\Big(\frac{x}{\eta}\Big)e^{i(m+1)\frac{\pi}{2}}+z^{\ast}(x).
\]
Let $u$ be the solution constructed in Theorem \ref{thm:BW-sol}.
For any sequence $\eta_{n}\to0$ in $(0,\eta^{\ast}]$, there exists
a further sequence $\{\eta_{n_{k}}\}$ such that $u^{(\eta_{n_{k}})}\to\tilde u$
in the $C_{(-\infty,0),\mathrm{loc}}H_{m}^{1-}$-topology. By the
proof of Theorem \ref{thm:BW-sol}, $\tilde u$ satisfies \eqref{eq:BW-sol-temp-1}.
Applying Theorem \ref{thm:cond-uniq}, we should have $\tilde u=u$.
Therefore, $u^{(\eta)}\to u$ in the $C_{(-\infty,0),\mathrm{loc}}H_{m}^{1-}$-topology.
The rest of the conclusions are contained in Proposition \ref{prop:prop4.4}.
\end{proof}
\begin{proof}[Proof of Corollary \ref{cor:corollary}]
If $\eta>0$, then $u^{(\eta)}$ is well-defined on $(-\infty,0]$
and $\|u^{(\eta)}\|_{L_{(-\infty,0],x}^{4}}<\infty$. Therefore, $\|\mathcal{C}u^{(\eta)}\|_{L_{(0,\infty),x}^{4}}<\infty$
and hence $\mathcal{C}u^{(\eta)}$ scatters forward in time. If $\eta=0$,
we have 
\[
\|\mathcal{C}u-Q-\mathcal{C}z\|_{L^{2}}\lesssim\alpha^{\ast}|t|^{-1}\qquad\text{as }t\to+\infty.
\]
Since $z$ evolves under the $-(m+2)$-equivariant \eqref{eq:CSS},
$\mathcal{C}z$ scatters under the evolution $i\partial_{t}+\Delta_{-m-2}$.
By the remark below, the proof is complete.
\end{proof}
\begin{rem}
\label{rem:A-scattering-solution}A scattering solution under $i\partial_{t}+\Delta_{-m-2}$
also scatters under $i\partial_{t}+\Delta_{m}$. In view of $\Delta_{-m-2}=\Delta_{m}-\frac{4m+4}{r^{2}}$
and $m\neq0$, the argument in \cite{BurqPlanchonStalkerTahvildar-Zadeh2003JFA}
shows $\|\frac{4m+4}{r^{2}}u\|_{L_{t,x}^{4/3}}<\infty$ whenever $u$
is a linear solution to $i\partial_{t}u+\Delta_{-m-2}u=0$ with $L^{2}$-initial
data.
\end{rem}

\emph{Therefore, it only remains to prove Lemma \ref{lem:bootstrap}
and the conditional uniqueness Theorem \ref{thm:cond-uniq}. In Section
\ref{sec:Bootstrap}, we prove Lemma \ref{lem:bootstrap}; we assume
our bootstrap hypothesis \eqref{eq:bootstrap-hyp} and proceed to
prove the bootstrap conclusion \eqref{eq:bootstrap-conc}. In Section
\ref{sec:cond-uniq}, we prove Theorem \ref{thm:cond-uniq}.}

\section{\label{sec:Bootstrap}Proof of bootstrap Lemma \ref{lem:bootstrap}}

The main ingredient of the proof is the Lyapunov/virial functional
method as in Martel \cite{Martel2005AJM}, Rapha\"el-Szeftel \cite{RaphaelSzeftel2011JAMS},
and Merle-Rapha\"el-Szeftel \cite{MerleRaphaelSzeftel2013AJM}. Let
$u$ be a solution on $(t_{0}^{\ast\ast},t_{0}]$ satisfying the bootstrap
hypothesis \eqref{eq:bootstrap-hyp} on $[t_{1},t_{0}]\subset(t_{0}^{\ast\ast},t_{0}]$.
Note that $u$ already has the decomposition \eqref{eq:decomp-u}
satisfying \eqref{eq:orthog} and \eqref{eq:law-fixation}. We will
discuss the details in Section \ref{subsec:virial-lyapunov}.

Recall \eqref{eq:e-prem}. We start with estimates of $\tilde R_{Q_{b}^{(\eta)},z^{\flat}}$
and $R_{u^{\flat}-w^{\flat}}$.

\subsection{\label{subsec:est-RQ,z}Estimates of $\protect\tilde R_{Q_{b}^{(\eta)},z^{\flat}}$}

In Section \ref{subsec:Evolution-of-z}, we have extracted the strong
interactions between $Q_{b}^{(\eta)}$ and $z^{\flat}$, which are
$V_{Q_{b}^{(\eta)}\to z^{\flat}}z^{\flat}$ and $\theta_{z^{\flat}\to Q_{b}^{(\eta)}}Q_{b}^{(\eta)}$.
The remaining part 
\[
\tilde R_{Q_{b}^{(\eta)},z^{\flat}}=R_{Q_{b}^{(\eta)},z^{\flat}}-V_{Q_{b}^{(\eta)}\to z^{\flat}}z^{\flat}-\theta_{z^{\flat}\to Q_{b}^{(\eta)}}Q_{b}^{(\eta)}
\]
will be estimated as an error. Note that this is independent of $\epsilon$
and hence eventually gives an error bound of $\epsilon$.

In estimates of the size of $\tilde R_{Q_{b}^{(\eta)},z^{\flat}}$,
the main strategy is to exploit decoupling of $Q_{b}^{(\eta)}$ and
$z^{\flat}$ in their scales as in \cite{BourgainWang1997}. It is
convenient to introduce functions $F_{n,\ell}$ and $G_{k}$ on $(0,\infty)$
such that 
\begin{equation}
F_{n,\ell}\coloneqq\begin{cases}
r^{n} & \text{if }r\leq1,\\
r^{-\ell} & \text{if }r>1,
\end{cases}\label{eq:def-Fnl}
\end{equation}
and 
\begin{equation}
G_{k}\coloneqq\begin{cases}
|\lambda r|^{k} & \text{if }r\leq\lambda^{-1},\\
1 & \text{if }r>\lambda^{-1}.
\end{cases}\label{eq:def-Gk}
\end{equation}
We note that $|Q_{b}^{(\eta)}|\lesssim F_{m,m+2}$ and $|z^{\flat}|\lesssim\alpha^{\ast}\lambda G_{m+2}$.
As $F$ and $G$ lie in different scales, we have the following decoupling
lemma:
\begin{lem}[Estimates for decoupling at different scales]
\label{lem:good-interaction}Let $1\leq p\leq\infty$; let $k,\ell,n\in\R$
be such that $\ell>\frac{2}{p}$ and $n+k+\frac{2}{p}>0$. Then, 
\[
\|F_{n,\ell}G_{k}\|_{L^{p}}\lesssim_{k,\ell,n,p}\lambda^{\min\{k,\ell-\frac{2}{p}\}}.
\]
\end{lem}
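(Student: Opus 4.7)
The plan is to reduce everything to explicit one-dimensional integrals by cutting $(0,\infty)$ into the three regions on which $F_{n,\ell}$ and $G_k$ each take a pure power form. Since the lemma is meant to be applied with $\lambda\leq 1$, we have $1\leq\lambda^{-1}$ and the natural break points are $r=1$ and $r=\lambda^{-1}$. On the three resulting intervals, by the definitions \eqref{eq:def-Fnl} and \eqref{eq:def-Gk}, the product simplifies to
\[
F_{n,\ell}G_k = \begin{cases}\lambda^{k}r^{n+k} & r\leq 1,\\ \lambda^{k}r^{k-\ell} & 1<r\leq \lambda^{-1},\\ r^{-\ell} & r>\lambda^{-1},\end{cases}
\]
and the $L^p(\R^2)$ norm (of the radial function extension) is $(2\pi\int_0^\infty |F_{n,\ell}G_k|^p r\,dr)^{1/p}$, which I would compute region by region.

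On the inner region $r\leq 1$, the integral $\lambda^{kp}\int_0^1 r^{(n+k)p+1}\,dr$ is finite exactly because $n+k+\tfrac{2}{p}>0$, and yields an $L^p$ contribution of size $\lambda^k$. On the outer region $r>\lambda^{-1}$, the integral $\int_{\lambda^{-1}}^\infty r^{-\ell p+1}\,dr$ is finite exactly because $\ell>\tfrac{2}{p}$, and yields an $L^p$ contribution of size $\lambda^{\ell-2/p}$. The intermediate region, after pulling out $\lambda^{kp}$, reduces to $\int_1^{\lambda^{-1}}r^{(k-\ell)p+1}\,dr$: when $k<\ell-\tfrac{2}{p}$ this is $O(1)$ and contributes $\lambda^k$; when $k>\ell-\tfrac{2}{p}$ it is comparable to $\lambda^{-((k-\ell)p+2)}$, and after multiplying by $\lambda^{kp}$ contributes $\lambda^{\ell-2/p}$.

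Assembling the three pieces, one obtains
\[
\|F_{n,\ell}G_k\|_{L^p}\lesssim_{k,\ell,n,p}\lambda^{k}+\lambda^{\ell-2/p},
\]
and since $\lambda\leq 1$ the right-hand side is bounded by $2\lambda^{\min\{k,\ell-2/p\}}$, proving the lemma. The only genuinely delicate step, and thus the main (and essentially only) obstacle, is the borderline case $k=\ell-\tfrac{2}{p}$, where the middle integral produces a logarithmic factor $|\log\lambda|^{1/p}$; in the applications throughout Section \ref{subsec:est-RQ,z} and Section \ref{subsec:virial-lyapunov} this borderline does not occur for the specific exponents used, so the clean power estimate suffices. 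The case $p=\infty$ is handled by the same case analysis with the integrals replaced by suprema over the three regions.
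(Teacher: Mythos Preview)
Your proof is correct and takes essentially the same approach as the paper: both split the radial integral into the three regions $r\leq 1$, $1<r\leq\lambda^{-1}$, and $r>\lambda^{-1}$, compute the power integrals on each, and assemble to obtain $\lambda^{k}+\lambda^{\ell-2/p}$. You are in fact slightly more careful than the paper, which silently absorbs the borderline case $k=\ell-\tfrac{2}{p}$ into the implicit constant; your observation that a logarithmic factor appears there (and that the applications avoid this borderline) is a welcome clarification.
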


\begin{proof}
The case of $p=\infty$ is immediate. For $1\leq p<\infty$, we observe
\begin{align*}
\int_{0}^{\infty}F_{n,\ell}^{p}G_{k}^{p}rdr & \lesssim\int_{0}^{1}|\lambda r|^{pk}r^{pn}rdr+\int_{1}^{\lambda^{-1}}|\lambda r|^{pk}r^{-p\ell}rdr+\int_{\lambda^{-1}}^{\infty}r^{-p\ell}rdr\\
 & \lesssim_{k,\ell,n,p}\lambda^{pk}+\lambda^{p\ell-2}.
\end{align*}
This completes the proof.
\end{proof}
We will use Lemma \ref{lem:good-interaction} to estimate $\tilde R_{Q_{b}^{(\eta)},z^{\flat}}$.
This is nothing but a rigorous justification of the heuristics in
Section \ref{subsec:Evolution-of-z}. For this purpose, we recall
\[
L_{u}^{\ast}\D_{+}^{(u)}u=-(\partial_{rr}+\frac{1}{r}\partial_{r})u+\Big(\frac{m+A_{\theta}[u]}{r}\Big)^{2}u+A_{0}[u]u-|u|^{2}u.
\]
Following the heuristics in Section \ref{subsec:Evolution-of-z},
we reorganize $\tilde R_{Q_{b}^{(\eta)},z^{\flat}}$ as 
\begin{align*}
\tilde R_{Q_{b}^{(\eta)},z^{\flat}} & =L_{Q_{b}^{(\eta)}+z^{\flat}}^{\ast}\D_{+}^{(Q_{b}^{(\eta)}+z^{\flat})}(Q_{b}^{(\eta)}+z^{\flat})-L_{Q_{b}^{(\eta)}}^{\ast}\D_{+}^{(Q_{b}^{(\eta)})}Q_{b}^{(\eta)}-L_{z^{\flat}}^{\ast}\D_{+}^{(z^{\flat})}z^{\flat}\\
 & \quad-V_{Q_{b}^{(\eta)}\to z^{\flat}}z^{\flat}-\theta_{z^{\flat}\to Q_{b}^{(\eta)}}Q_{b}^{(\eta)}\\
 & =\eqref{eq:R-int-1}+\eqref{eq:R-int-2}+\eqref{eq:R-int-3},
\end{align*}
where 
\begin{align}
 & \left\{ \begin{aligned} & \Big(\frac{m+A_{\theta}[Q^{(\eta)}+z^{\flat}]}{r}\Big)^{2}(Q^{(\eta)}+z^{\flat})-\Big(\frac{m+A_{\theta}[Q^{(\eta)}]}{r}\Big)^{2}Q^{(\eta)}\\
 & \quad-\Big(\frac{m+A_{\theta}[Q^{(\eta)}](+\infty)+A_{\theta}[z^{\flat}]}{r}\Big)^{2}z^{\flat},
\end{aligned}
\right.\label{eq:R-int-1}\\
 & \left\{ \begin{aligned} & A_{0}[Q_{b}^{(\eta)}+z^{\flat}](Q_{b}^{(\eta)}+z^{\flat})-A_{0}[Q_{b}^{(\eta)}]Q_{b}^{(\eta)}\\
 & +\Big(\int_{r}^{\infty}(m+A_{\theta}[Q^{(\eta)}](+\infty)+A_{\theta}[z^{\flat}])|z^{\flat}|^{2}\frac{dr'}{r'}\Big)(Q_{b}^{(\eta)}+z^{\flat})\\
 & +\Big(\int_{0}^{r}(m+A_{\theta}[Q^{(\eta)}](+\infty)+A_{\theta}[z^{\flat}])|z^{\flat}|^{2}\frac{dr'}{r'}\Big)Q_{b}^{(\eta)},
\end{aligned}
\right.\label{eq:R-int-2}\\
 & -|Q_{b}^{(\eta)}+z^{\flat}|^{2}(Q_{b}^{(\eta)}+z^{\flat})+|Q_{b}^{(\eta)}|^{2}Q_{b}^{(\eta)}+|z^{\flat}|^{2}z^{\flat}.\label{eq:R-int-3}
\end{align}
Here, \eqref{eq:R-int-1}, \eqref{eq:R-int-2}, and \eqref{eq:R-int-3}
collect $(\frac{m+A_{\theta}[u]}{r})^{2}u$, $A_{0}[u]$, and $-|u|^{2}u$
parts of $L_{u}^{\ast}\D_{+}^{(u)}u$, respectively. The potential
$V_{Q_{b}^{(\eta)}\to z^{\flat}}$ is distributed into \eqref{eq:R-int-1}
and \eqref{eq:R-int-2}, and $\theta_{z^{\flat}\to Q_{b}^{(\eta)}}$
contributes to \eqref{eq:R-int-2}. As explained in Section \ref{subsec:Evolution-of-z},
in the introduction of $V_{Q_{b}^{(\eta)}\to z^{\flat}}$ and $\theta_{z^{\flat}\to Q_{b}^{(\eta)}}$,
we have approximated $A_{\theta}[Q^{(\eta)}](r)$ by $A_{\theta}[Q^{(\eta)}](+\infty)$,
and in $A_{0}$-term the integral $\int_{r}^{\infty}$ by $\int_{0}^{\infty}$.
It turns out that the difference $A_{\theta}[Q]-A_{\theta}[Q](+\infty)$
exhibits fast decay at infinity and $\int_{0}^{r}$ integral enjoys
degeneracy at the origin. So these differences can be covered by Lemma
\ref{lem:good-interaction}.

The following is the main result of this subsection.
\begin{lem}[Estimates of $\tilde R_{Q_{b}^{(\eta)},z^{\flat}}$]
\label{lem:estimate-RQ,z}We have 
\begin{align}
|\theta_{z^{\flat}\to Q_{b}^{(\eta)}}| & \lesssim(\alpha^{\ast})^{2}\lambda^{2},\label{eq:theta-est}\\
\|\tilde R_{Q_{b}^{(\eta)},z^{\flat}}\|_{H_{m}^{1}} & \lesssim\alpha^{\ast}\lambda^{m+3}|\log\lambda|.\label{eq:est-Rqz}
\end{align}
\end{lem}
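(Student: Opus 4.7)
The plan is to handle the two estimates separately. For \eqref{eq:theta-est}, I unwind the definition
\[
\theta_{z^{\flat}\to Q_{b}^{(\eta)}}=-\int_{0}^{\infty}\bigl(m+A_{\theta}[Q^{(\eta)}](+\infty)+A_{\theta}[z^{\flat}]\bigr)|z^{\flat}|^{2}\frac{dr'}{r'}.
\]
The prefactor is uniformly bounded: Proposition \ref{prop:const-Q-eta} gives $m+A_{\theta}[Q^{(\eta)}](+\infty)=-(m+2)+O(\eta)$, while $|A_{\theta}[z^{\flat}]|\lesssim\|z^{\flat}\|_{L^{2}}^{2}=\|z\|_{L^{2}}^{2}\lesssim(\alpha^{*})^{2}$. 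Rescaling $r=\lambda r'$ yields $\int_{0}^{\infty}|z^{\flat}|^{2}\frac{dr'}{r'}=\lambda^{2}\int_{0}^{\infty}|z|^{2}\frac{dr}{r}$, and the equivariant Hardy inequality applied to $\tilde z$ (which is $-(m+2)$-equivariant with $m+2\geq 3$) bounds the right side by $\lambda^{2}\|z\|_{\dot H^{1}}^{2}\lesssim(\alpha^{*})^{2}\lambda^{2}$, which gives \eqref{eq:theta-est}.

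For \eqref{eq:est-Rqz}, the strategy is to exploit the scale separation between $Q_{b}^{(\eta)}$ (concentrated at scale $1$ in $y$) and $z^{\flat}$ (concentrated at scale $\lambda^{-1}$) via Lemma \ref{lem:good-interaction}, after first isolating genuinely cross terms. Using the decomposition $\tilde R_{Q_{b}^{(\eta)},z^{\flat}}=\eqref{eq:R-int-1}+\eqref{eq:R-int-2}+\eqref{eq:R-int-3}$, I expand each piece so that the pure-$Q$ and pure-$z$ contributions cancel, leaving only terms containing at least one factor from each side. In \eqref{eq:R-int-3} the pure cubics cancel exactly, producing cross terms $Q^{2}z^{\flat}$, $Q(z^{\flat})^{2}$. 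In \eqref{eq:R-int-1} I write $A_{\theta}[Q+z^{\flat}]=A_{\theta}[Q]+A_{\theta}[z^{\flat}]+C$ with $C=-\int_{0}^{r}\Re(Q\overline{z^{\flat}})r'dr'$, expand the square, and use the fast-decay identity $A_{\theta}[Q]-A_{\theta}[Q](+\infty)=\tfrac{1}{2}\int_{r}^{\infty}|Q|^{2}r'dr'\lesssim F_{0,2m+2}$; the long-range interaction $V_{Q_{b}^{(\eta)}\to z^{\flat}}z^{\flat}$ absorbs precisely the stubborn $r^{-2}$ tail. The analogous expansion of \eqref{eq:R-int-2} has its $r^{-1}$-type tail absorbed into $V$ and $\theta_{z^{\flat}\to Q_b^{(\eta)}}$, with the remaining $\int_{0}^{r}$ integral supplying additional degeneracy near the origin. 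For each cross-term, I apply Lemma \ref{lem:good-interaction} with $|Q_{b}^{(\eta)}|\lesssim F_{m,m+2}$ (from \eqref{eq:unif-bound}) and the pointwise bound $|z^{\flat}(r)|\lesssim\alpha^{*}\lambda G_{m+2}(r)$; nonlocal functionals are first controlled pointwise, for example $|A_{\theta}[z^{\flat}](r)|\lesssim(\alpha^{*})^{2}G_{2m+6}(r)$. A representative calculation is
\[
\|Q_{b}^{(\eta)}|z^{\flat}|^{2}\|_{L^{2}}\lesssim(\alpha^{*})^{2}\lambda^{2}\|F_{m,m+2}G_{2m+4}\|_{L^{2}}\lesssim(\alpha^{*})^{2}\lambda^{m+3},
\]
using $\min(k,\ell-1)=\min(2m+4,m+1)=m+1$.

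The $H^{1}$ part is treated by the same scheme, using $|\partial_{r}Q_{b}^{(\eta)}|\lesssim F_{m-1,m+3}+|b||y|F_{m,m+2}$ (accounting for the pseudoconformal phase) and $|\partial_{r}z^{\flat}|\lesssim\alpha^{*}\lambda^{2}G_{m+1}$, and by using the product/chain rule on the nonlocal $A_\theta[\,\cdot\,]$, $A_0[\,\cdot\,]$ factors. The main obstacle I foresee is the logarithmic factor $|\log\lambda|$ in \eqref{eq:est-Rqz}. It originates from a borderline case in which the $b|y|^{2}/4$ piece of $\partial_{r}Q_{b}^{(\eta)}$ meets a tail contribution decaying exactly like $r^{-1}$ after pairing with $z^{\flat}$; this is most severe when $m=1$, where $\||y|^{2}Q\|_{L^{2}}$ already diverges logarithmically, and in $A_{0}$-type integrals $\int_{r}^{\infty}(\,\cdot\,)\frac{dr'}{r'}$ whose logarithmic tail must be cut at the crossover radius $r\sim\lambda^{-1}$. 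Tracking these contributions carefully yields exactly the $\lambda^{m+3}|\log\lambda|$ size, while all other pieces come with the strictly better bound $\alpha^{*}\lambda^{m+3}$, completing the proof.
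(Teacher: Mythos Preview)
Your overall approach matches the paper's: the same decomposition into \eqref{eq:R-int-1}--\eqref{eq:R-int-3}, the same pointwise envelopes $F_{n,\ell}$, $G_k$ from Lemma \ref{lem:good-interaction-sub}, and the same appeal to Lemma \ref{lem:good-interaction}. The proof of \eqref{eq:theta-est} is exactly as you describe, and your representative $L^2$ computation is correct.

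However, you misidentify the source of the logarithm in \eqref{eq:est-Rqz}. It does \emph{not} come from the pseudoconformal phase piece $b|y|^{2}/4$ in $\partial_{r}Q_{b}^{(\eta)}$: the bound $|\partial_{r}Q_{b}^{(\eta)}|\lesssim F_{m-1,m+3}+\lambda F_{m+1,m+1}$ from \eqref{eq:lem5.3.1} feeds through Lemma \ref{lem:good-interaction} without any borderline case, and all resulting $\dot H_{m}^{1}$ contributions are $O(\alpha^{\ast}\lambda^{m+3})$ with no log. The sole source is the $A_{0}$-cross term in \eqref{eq:R-int-2}, namely
\[
\Big\|\Big(\int_{r}^{\infty}(m+A_{\theta}[Q_{b}^{(\eta)}+z^{\flat}])\Re(\overline{Q_{b}^{(\eta)}}z^{\flat})\tfrac{dr'}{r'}\Big)Q_{b}^{(\eta)}\Big\|_{L^{2}},
\]
because $\int_{r}^{\infty}|Q_{b}^{(\eta)}z^{\flat}|\frac{dr'}{r'}$ picks up a factor $|\log\lambda|$ on $1\leq r\leq\lambda^{-1}$ (this is \eqref{eq:lem5.3.8}). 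This log appears for \emph{every} $m\geq1$, not just $m=1$; it is unrelated to the divergence of $\|y^{2}Q\|_{L^{2}}$ you invoke, which is the separate phenomenon behind \eqref{eq:Qb-Q}. Pinning down the single offending term matters downstream: Lemma \ref{lem:refined-decomp} isolates exactly this piece as $R_{2}$ and shows $L_{Q}(iR_{2})$ regains the log, which is what makes the energy identity in Lemma \ref{lem:quad-energy} close.
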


\begin{rem}
The log factor of \eqref{eq:est-Rqz} comes from \eqref{eq:lem5.3.8}
below. This factor arises for all $m\geq1$, as opposed to \eqref{eq:Qb-Q}.
\end{rem}

Setting aside the proof, we first collect some estimates of nonlinear
terms in terms of $F$ and $G$.
\begin{lem}[Collection of various estimates]
\label{lem:good-interaction-sub}We have the following.
\begin{equation}
\left\{ \begin{aligned}|Q_{b}^{(\eta)}| & \lesssim F_{m,m+2},\\
|\partial_{r}Q_{b}^{(\eta)}|+|\tfrac{1}{r}Q_{b}^{(\eta)}| & \lesssim F_{m-1,m+3}+\lambda F_{m+1,m+1}.
\end{aligned}
\right.\label{eq:lem5.3.1}
\end{equation}
\begin{equation}
\left\{ \begin{aligned}\big|A_{\theta}[Q^{(\eta)}]|_{r}^{+\infty}| & \eqqcolon|A_{\theta}[Q^{(\eta)}](+\infty)-A_{\theta}[Q^{(\eta)}](r)|\lesssim F_{0,2m+2},\\
|\partial_{r}A_{\theta}[Q^{(\eta)}]| & \lesssim F_{2m+1,2m+3}.
\end{aligned}
\right.\label{eq:lem5.3.2}
\end{equation}
\begin{equation}
\left\{ \begin{aligned}\Big|\int_{r}^{\infty}|Q_{b}^{(\eta)}|^{2}\frac{dr'}{r'}\Big| & \lesssim F_{0,2m+4},\\
\Big|\partial_{r}\int_{r}^{\infty}|Q_{b}^{(\eta)}|^{2}\frac{dr'}{r'}\Big| & \lesssim F_{2m-1,2m+5}.
\end{aligned}
\right.\label{eq:lem5.3.3}
\end{equation}
\begin{equation}
\left\{ \begin{aligned}|z^{\flat}| & \lesssim\alpha^{\ast}\lambda G_{m+2},\\
|\partial_{r}z^{\flat}|+|\tfrac{1}{r}z^{\flat}| & \lesssim\alpha^{\ast}\lambda^{2}G_{m+1}.
\end{aligned}
\right.\label{eq:lem5.3.4}
\end{equation}
\begin{equation}
\left\{ \begin{aligned}|A_{\theta}[z^{\flat}]| & \lesssim(\alpha^{\ast})^{2}G_{2m+6},\\
|\partial_{r}A_{\theta}[z^{\flat}]| & \lesssim(\alpha^{\ast})^{2}\lambda G_{2m+5}.
\end{aligned}
\right.\label{eq:lem5.3.5}
\end{equation}
\begin{equation}
\left\{ \begin{aligned}\int_{0}^{r}|z^{\flat}|^{2}\frac{dr'}{r'} & \lesssim(\alpha^{\ast})^{2}\lambda^{2}G_{2m+4},\\
\Big|\partial_{r}\int_{0}^{r}|z^{\flat}|^{2}\frac{dr'}{r'}\Big| & \lesssim(\alpha^{\ast})^{2}\lambda^{3}G_{2m+3}.
\end{aligned}
\right.\label{eq:lem5.3.6}
\end{equation}
\begin{equation}
\left\{ \begin{aligned}\Big|\int_{0}^{r}\Re(\overline{Q_{b}^{(\eta)}}z^{\flat})r'dr'\Big| & \lesssim\alpha^{\ast}\lambda^{m+1}F_{2m+2,0}G_{2},\\
\Big|\partial_{r}\int_{0}^{r}\Re(\overline{Q_{b}^{(\eta)}}z^{\flat})r'dr'\Big| & \lesssim\alpha^{\ast}\lambda^{m+1}F_{2m+1,1}G_{2}.
\end{aligned}
\right.\label{eq:lem5.3.7}
\end{equation}
\begin{equation}
\left\{ \begin{aligned}\Big|\int_{r}^{\infty}\Re(\overline{Q_{b}^{(\eta)}}z^{\flat})\frac{dr'}{r'}\Big| & \lesssim\alpha^{\ast}\lambda^{m+3}\cdot\begin{cases}
1+|\log\lambda| & \text{if }r\leq1,\\
1+|\log(\lambda r)| & \text{if }1\leq r\leq\lambda^{-1}\\
1 & \text{if }r\geq\lambda^{-1},
\end{cases},\\
\Big|\partial_{r}\int_{r}^{\infty}\Re(\overline{Q_{b}^{(\eta)}}z^{\flat})\frac{dr'}{r'}\Big| & \lesssim\alpha^{\ast}\lambda^{m+3}.
\end{aligned}
\right.\label{eq:lem5.3.8}
\end{equation}
\end{lem}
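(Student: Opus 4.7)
The lemma is a catalog of eight pointwise and nonlocal estimates; every bound reduces to a direct calculation from three inputs: (i) the uniform pointwise bounds $|\partial_{r}^{\ell}Q^{(\eta)}|\lesssim r^{-\ell}Q$ for $\ell\in\{0,1\}$ from \eqref{eq:unif-bound}; (ii) the rescaling identity $z^{\flat}(s,y)=\lambda e^{-i\gamma}z(t,\lambda y)$ combined with the vanishing at the origin $|\partial_{r}^{\ell}z(r)|+r^{-\ell}|z(r)|\lesssim\alpha^{\ast}\min(r^{m+2-\ell},1)$ recorded after \eqref{eq:forcing-term}; and (iii) the bootstrap bound $|b|\lesssim\lambda$ from \eqref{eq:bootstrap-hyp}. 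The plan is to verify the eight estimates in three groups of increasing complexity.

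\textbf{Pure $Q$-side estimates \eqref{eq:lem5.3.1}--\eqref{eq:lem5.3.3}.} For \eqref{eq:lem5.3.1}, I would write $\partial_{r}Q_{b}^{(\eta)}=(\partial_{r}Q^{(\eta)})e^{-ibr^{2}/4}-\tfrac{ibr}{2}Q^{(\eta)}e^{-ibr^{2}/4}$; the first summand is controlled by $F_{m-1,m+3}$ via \eqref{eq:unif-bound}, while the phase contribution is absorbed into $\lambda F_{m+1,m+1}$ using $|b|\lesssim\lambda$. Estimates \eqref{eq:lem5.3.2} and \eqref{eq:lem5.3.3} reduce to integrating $|Q^{(\eta)}|^{2}\lesssim F_{2m,2m+4}$ against $r\,dr$ or $dr/r$ on $[r,\infty)$, splitting at $r'=1$; the derivative bounds come from the fundamental theorem together with the pointwise bound on the integrand.

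\textbf{Pure $z^{\flat}$-side estimates \eqref{eq:lem5.3.4}--\eqref{eq:lem5.3.6}.} The rescaling identity converts all of these to bounds on $z$ and $A_{\theta}[z]$, whose transition between power growth and boundedness happens at $|x|=1$, equivalently $|y|=\lambda^{-1}$---which is precisely what the weight $G_{k}$ encodes. For \eqref{eq:lem5.3.4} this is immediate. For \eqref{eq:lem5.3.5}, the scale-invariance identity $A_{\theta}[z^{\flat}](r)=A_{\theta}[z](\lambda r)$ reduces the problem to showing $|A_{\theta}[z](x)|\lesssim(\alpha^{\ast})^{2}\min(|x|^{2m+6},1)$, which follows from integrating $|z|^{2}\lesssim(\alpha^{\ast})^{2}r^{2m+4}$ near zero and from the $L^{2}$-boundedness of $z$ at infinity; estimate \eqref{eq:lem5.3.6} is analogous.

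\textbf{Mixed estimates \eqref{eq:lem5.3.7}--\eqref{eq:lem5.3.8}.} These are the genuine cross-scale interactions. For \eqref{eq:lem5.3.7}, I would estimate $|\overline{Q_{b}^{(\eta)}}z^{\flat}|\lesssim\alpha^{\ast}\lambda F_{m,m+2}G_{m+2}$, then integrate over $[0,r]$ by splitting into the three regions $r'\leq1$, $1\leq r'\leq\lambda^{-1}$, and $r'\geq\lambda^{-1}$; on each region one factor can be frozen to its dominant behavior and the integration produces $\alpha^{\ast}\lambda^{m+1}F_{2m+2,0}G_{2}$. The main obstacle will be \eqref{eq:lem5.3.8}: the log factor is unavoidable and arises precisely on the intermediate region $1\leq r'\leq\lambda^{-1}$, where $|Q_{b}^{(\eta)}(r')|\sim(r')^{-(m+2)}$ and $|z^{\flat}(r')|\sim\alpha^{\ast}\lambda(\lambda r')^{m+2}$, so their product is constantly of size $\alpha^{\ast}\lambda^{m+3}$ and the integral $\int_{\max(r,1)}^{\lambda^{-1}}dr'/r'$ yields the announced $|\log(\lambda r)|$ factor; the contributions from $r'\leq1$ and $r'\geq\lambda^{-1}$ are bounded by $\alpha^{\ast}\lambda^{m+3}$ without log by directly integrating the pointwise product. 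The derivative bound follows from differentiating under the integral and inserting the pointwise estimate at $r'=r$, where the factor $1/r$ in the integrand exactly compensates the log that would otherwise appear.
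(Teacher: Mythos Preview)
Your approach is correct and matches the paper's: both proofs derive the catalog by combining the uniform pointwise bound \eqref{eq:unif-bound} on $Q^{(\eta)}$, the rescaled vanishing-at-origin bounds on $z$, and $|b|\lesssim\lambda$, then integrating the resulting pointwise products over the three regions $r\leq 1$, $1\leq r\leq\lambda^{-1}$, $r\geq\lambda^{-1}$. Your treatment of \eqref{eq:lem5.3.7}--\eqref{eq:lem5.3.8}, including the identification of the logarithm on the middle region, is exactly what the paper does (implicitly).

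One small point to patch: for the second line of \eqref{eq:lem5.3.5} in the region $r\geq\lambda^{-1}$, your cited inputs only give $|z^{\flat}(r)|\lesssim\alpha^{\ast}\lambda$, which yields $|\partial_{r}A_{\theta}[z^{\flat}]|=\tfrac{r}{2}|z^{\flat}|^{2}\lesssim(\alpha^{\ast})^{2}\lambda^{2}r$, growing in $r$. The paper closes this with the radial Sobolev (Strauss) bound $|r^{1/2}z^{\flat}|^{2}\lesssim\|z^{\flat}\|_{L^{2}}\|z^{\flat}\|_{\dot H^{1}_{m}}\lesssim(\alpha^{\ast})^{2}\lambda$, which gives the required $(\alpha^{\ast})^{2}\lambda$ uniformly; you should invoke this as well.
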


\begin{proof}
The estimates \eqref{eq:lem5.3.1}-\eqref{eq:lem5.3.3} are immediate
from \eqref{eq:unif-bound} and explicit formula of $Q$. The estimate
\eqref{eq:lem5.3.4} follows from Lemma \ref{lem:gen-hardy-Linfty}.
We turn to \eqref{eq:lem5.3.5}. In the region $r\leq\lambda^{-1}$,
\eqref{eq:lem5.3.5} follows from \eqref{eq:lem5.3.4}. In the region
$r\geq\lambda^{-1}$, we estimate $|A_{\theta}[z^{\flat}]|\lesssim\|z^{\flat}\|_{L^{2}}^{2}\lesssim(\alpha^{\ast})^{2}$
and $|\partial_{r}A_{\theta}[z^{\flat}]|\lesssim|r^{\frac{1}{2}}z^{\flat}|^{2}\lesssim(\alpha^{\ast})^{2}\lambda$
by the Strauss inequality. The estimate \eqref{eq:lem5.3.6} is similar
to \eqref{eq:lem5.3.5}. Finally, the estimates \eqref{eq:lem5.3.7}
and \eqref{eq:lem5.3.8} follow from \eqref{eq:lem5.3.1} and \eqref{eq:lem5.3.4}.
\end{proof}
\begin{rem}
In \eqref{eq:lem5.3.1}-\eqref{eq:lem5.3.8}, the first estimate is
adapted to $L^{2}$-estimate and the second is for $\dot{H}_{m}^{1}$-estimate.
It is worth mentioning that in \eqref{eq:lem5.3.1}, we lose one decay
by taking $\partial_{r}$ but we compensate it by the factor $\lambda$.
In \eqref{eq:lem5.3.4}-\eqref{eq:lem5.3.6}, we lose one $r$-factor
at origin by taking $\partial_{r}$ but we again compensate it by
the factor $\lambda$. For the remaining estimates, $\partial_{r}$-estimate
is even better than the original estimate. This observation allows
us to transfer $L^{2}$-estimate of $\tilde R_{Q_{b}^{(\eta)},z^{\flat}}$
to $H_{m}^{1}$-estimate of $\tilde R_{Q_{b}^{(\eta)},z^{\flat}}$.
\end{rem}

\begin{proof}[Proof of Lemma \ref{lem:estimate-RQ,z}]
Note that \eqref{eq:theta-est} follows from 
\[
|\theta_{z^{\flat}\to Q_{b}^{(\eta)}}|\lesssim\|r^{-1}z^{\flat}\|_{L^{2}}^{2}\lesssim(\alpha^{\ast})^{2}\lambda^{2}.
\]

We turn to \eqref{eq:est-Rqz}. We estimate \eqref{eq:R-int-1}-\eqref{eq:R-int-3}
term by term. We first start with \eqref{eq:R-int-3}. The mixed terms
are estimated as 
\begin{align*}
\|(Q_{b}^{(\eta)})^{2}z^{\flat}\|_{L^{2}} & \lesssim\alpha^{\ast}\lambda\|F_{m,m+2}^{2}G_{m+2}\|_{L^{2}}\lesssim\alpha^{\ast}\lambda^{m+3},\\
\|Q_{b}^{(\eta)}(z^{\flat})^{2}\|_{L^{2}} & \lesssim(\alpha^{\ast}\lambda)^{2}\|F_{m,m+2}G_{m+2}^{2}\|_{L^{2}}\lesssim(\alpha^{\ast})^{2}\lambda^{m+3}.
\end{align*}
To get $\dot{H}_{m}^{1}$-estimate, we take $\partial_{r}$ and $\frac{1}{r}$
to \eqref{eq:R-int-3}. Using the Leibniz rules, \eqref{eq:lem5.3.1},
and \eqref{eq:lem5.3.4}, the same estimate $\alpha^{\ast}\lambda^{m+3}$
follows.

We turn to \eqref{eq:R-int-1}. We expand 
\begin{align*}
\eqref{eq:R-int-1} & =\frac{1}{r^{2}}\Big((m+A_{\theta}[Q^{(\eta)}+z^{\flat}])^{2}-(m+A_{\theta}[Q^{(\eta)}]+A_{\theta}[z^{\flat}])^{2}\Big)(Q_{b}^{(\eta)}+z^{\flat})\\
 & \quad+\frac{1}{r^{2}}\Big((m+A_{\theta}[Q^{(\eta)}]+A_{\theta}[z^{\flat}])^{2}-(m+A_{\theta}[Q^{(\eta)}])^{2}\Big)Q_{b}^{(\eta)}\\
 & \quad+\frac{1}{r^{2}}\Big((m+A_{\theta}[Q^{(\eta)}]+A_{\theta}[z^{\flat}])^{2}-(m+A_{\theta}[Q^{(\eta)}](+\infty)+A_{\theta}[z^{\flat}])^{2}\Big)z^{\flat}.
\end{align*}
We view each line of the form 
\begin{equation}
\frac{1}{r^{2}}(\alpha^{2}-\beta^{2})\varphi=(\alpha+\beta)\cdot\frac{1}{r^{2}}(\alpha-\beta)\varphi.\label{eq:int-temp1}
\end{equation}
We discard $\alpha+\beta$ using $\|\alpha+\beta\|_{L^{\infty}}\lesssim1$.
For the first line, we estimate 
\begin{align*}
\|\frac{1}{r^{2}}(\int_{0}^{r}\Re(\overline{Q_{b}^{(\eta)}}z^{\flat})r'dr')Q_{b}\|_{L^{2}} & \lesssim\alpha^{\ast}\lambda^{m+1}\|F_{3m,m+4}G_{2}\|_{L^{2}}\lesssim\alpha^{\ast}\lambda^{m+3},\\
\|\frac{1}{r^{2}}(\int_{0}^{r}\Re(\overline{Q_{b}^{(\eta)}}z^{\flat})r'dr')z^{\flat}\|_{L^{2}} & \lesssim(\alpha^{\ast})^{2}\lambda^{m+2}\|F_{2m,2}G_{m+4}\|_{L^{2}}\lesssim(\alpha^{\ast})^{2}\lambda^{m+3}.
\end{align*}
For the second line, we estimate 
\[
\|\frac{1}{r^{2}}A_{\theta}[z^{\flat}]Q_{b}^{(\eta)}\|_{L^{2}}\lesssim(\alpha^{\ast})^{2}\|F_{m-2,m+4}G_{2m+6}\|_{L^{2}}\lesssim(\alpha^{\ast})^{2}\lambda^{m+3}.
\]
For the last line, we estimate 
\begin{align*}
\|\frac{1}{r^{2}}(A_{\theta}[Q^{(\eta)}]-A_{\theta}[Q^{(\eta)}](+\infty))z^{\flat}\|_{L^{2}} & =2m\|\frac{1}{r^{2}}(A_{\theta}[Q^{(\eta)}]|_{r}^{+\infty})z^{\flat}\|_{L^{2}}\\
 & \lesssim\alpha^{\ast}\lambda\|F_{-2,2m+4}G_{m+2}\|_{L^{2}}\lesssim\alpha^{\ast}\lambda^{m+3}.
\end{align*}
This concludes the $L^{2}$-estimate of \eqref{eq:R-int-1}. We turn
to the $\dot{H}_{m}^{1}$-estimate. We take $\partial_{r}$ and $\frac{1}{r}$
to \eqref{eq:R-int-1}. If $\partial_{r}$ hits $\alpha+\beta$ part
of \eqref{eq:int-temp1}, we use $\|\partial_{r}(\alpha+\beta)\|_{L^{\infty}}\lesssim1$
followed from Lemma \ref{lem:good-interaction-sub}. If $\partial_{r}$
hits $\frac{1}{r^{2}}$ of \eqref{eq:int-temp1}, then one can move
one $\frac{1}{r}$ to $\varphi$ and apply the derivative estimates
of Lemma \ref{lem:good-interaction-sub}. If $\partial_{r}$ and $\frac{1}{r}$
hits $(\alpha-\beta)\varphi$, then we apply the derivative estimates
of Lemma \ref{lem:good-interaction-sub}. For example, 
\begin{align*}
 & \|\partial_{r}\Big(\frac{1}{r^{2}}A_{\theta}[Q^{(\eta)}]z^{\flat}\Big)\|_{L^{2}}+\|\frac{1}{r}\Big(\frac{1}{r^{2}}A_{\theta}[Q^{(\eta)}]z^{\flat}\Big)\|_{L^{2}}\\
 & \lesssim\|\frac{1}{r^{2}}A_{\theta}[Q^{(\eta)}]\frac{z^{\flat}}{r}\|_{L^{2}}+\|\frac{1}{r^{2}}(\partial_{r}A_{\theta}[Q^{(\eta)}])z^{\flat}\|_{L^{2}}+\|\frac{1}{r^{2}}A_{\theta}[Q^{(\eta)}](\partial_{r}z^{\flat})\|_{L^{2}}\\
 & \lesssim\alpha^{\ast}\lambda^{m+3}.
\end{align*}
The other terms can be treated similarly.

Finally, we consider \eqref{eq:R-int-2}. 
\begin{align*}
 & A_{0}[Q_{b}^{(\eta)}+z^{\flat}](Q_{b}^{(\eta)}+z^{\flat})-A_{0}[Q_{b}^{(\eta)}]Q_{b}^{(\eta)}\\
 & +\Big(\int_{r}^{\infty}(m+A_{\theta}[Q^{(\eta)}](+\infty)+A_{\theta}[z^{\flat}])|z^{\flat}|^{2}\frac{dr'}{r'}\Big)(Q_{b}^{(\eta)}+z^{\flat})\\
 & +\Big(\int_{0}^{r}(m+A_{\theta}[Q^{(\eta)}](+\infty)+A_{\theta}[z^{\flat}])|z^{\flat}|^{2}\frac{dr'}{r'}\Big)Q_{b}^{(\eta)},
\end{align*}
We expand 
\begin{align*}
\eqref{eq:R-int-2} & =-2\Big(\int_{r}^{\infty}(m+A_{\theta}[Q_{b}^{(\eta)}+z^{\flat}])\Re(\overline{Q_{b}^{(\eta)}}z^{\flat})\frac{dr'}{r'}\Big)(Q_{b}^{(\eta)}+z^{\flat})\\
 & \quad+\Big(\int_{r}^{\infty}\big(\int_{0}^{r}\Re(\overline{Q_{b}^{(\eta)}}z^{\flat})r'dr'\big)(|Q_{b}^{(\eta)}|^{2}+|z^{\flat}|^{2})\frac{dr'}{r'}\Big)(Q_{b}^{(\eta)}+z^{\flat})\\
 & \quad+\Big(\int_{r}^{\infty}(A_{\theta}[Q^{(\eta)}](+\infty)-A_{\theta}[Q^{(\eta)}](r))|z^{\flat}|^{2}\frac{dr'}{r'}\Big)(Q_{b}^{(\eta)}+z^{\flat})\\
 & \quad-\Big(\int_{r}^{\infty}(m+A_{\theta}[Q^{(\eta)}]+A_{\theta}[z^{\flat}])|Q_{b}^{(\eta)}|^{2}\frac{dr'}{r'}\Big)(Q_{b}^{(\eta)}+z^{\flat})-A_{0}[Q_{b}^{(\eta)}]Q_{b}^{(\eta)}\\
 & \quad+\Big(\int_{0}^{r}(m+A_{\theta}[Q^{(\eta)}](+\infty)+A_{\theta}[z^{\flat}])|z^{\flat}|^{2}\frac{dr'}{r'}\Big)Q_{b}^{(\eta)}.
\end{align*}
We treat the above line by line. For the first line, we use \eqref{eq:lem5.3.8}
to estimate 
\begin{align}
 & \|(\int_{r}^{\infty}(m+A_{\theta}[Q_{b}^{(\eta)}+z^{\flat}])\Re(\overline{Q_{b}^{(\eta)}}z^{\flat})\tfrac{dr'}{r'})Q_{b}^{(\eta)}\|_{L^{2}}\label{eq:log-div}\\
 & \lesssim\|m+A_{\theta}[Q_{b}^{(\eta)}+z^{\flat}]\|_{L^{\infty}}\|(\int_{r}^{\infty}|Q_{b}^{(\eta)}z^{\flat}|\frac{dr'}{r'})\|_{L^{\infty}}\|Q_{b}^{(\eta)}\|_{L^{2}}\lesssim\alpha^{\ast}\lambda^{m+3}|\log\lambda|\nonumber 
\end{align}
and 
\begin{align*}
 & \|(\int_{r}^{\infty}(m+A_{\theta}[Q_{b}^{(\eta)}+z^{\flat}])\Re(\overline{Q_{b}^{(\eta)}}z^{\flat})\tfrac{dr'}{r'})z^{\flat}\|_{L^{2}}\\
 & \lesssim\|m+A_{\theta}[Q_{b}^{(\eta)}+z^{\flat}]\|_{L^{\infty}}\|(\int_{r}^{\infty}|Q_{b}^{(\eta)}z^{\flat}|\frac{dr'}{r'})z^{\flat}\|_{L^{2}}\lesssim\alpha^{\ast}\lambda^{m+3}.
\end{align*}
Note that \eqref{eq:log-div} is the only term with the logarithmic
factor. For sake of simplicity, we denote by $\psi$ either $Q_{b}^{(\eta)}$
or $z^{\flat}$. For the second line, we estimate 
\begin{align*}
 & \|(\int_{r}^{\infty}(\int_{0}^{r}\Re(\overline{Q_{b}^{(\eta)}}z^{\flat})r'dr')|\psi|^{2}\frac{dr'}{r'})(Q_{b}^{(\eta)}+z^{\flat})\|_{L^{2}}\\
 & \lesssim\alpha^{\ast}\lambda^{m+1}\|(\int_{r}^{\infty}F_{2m+2,0}G_{2}|\psi|^{2}\frac{dr'}{r'})(Q_{b}^{(\eta)}+z^{\flat})\|_{L^{2}}\\
 & \lesssim\alpha^{\ast}\lambda^{m+1}\cdot\begin{cases}
\|F_{3m+2,m+2}G_{2}\|_{L^{2}}\|Q_{b}^{(\eta)}+z^{\flat}\|_{L^{\infty}} & \text{if }\psi=Q_{b}^{(\eta)},\\
\|r^{-1}z^{\flat}\|_{L^{2}}^{2}\|Q_{b}^{(\eta)}+z^{\flat}\|_{L^{2}} & \text{if }\psi=z^{\flat},
\end{cases}\\
 & \lesssim\alpha^{\ast}\lambda^{m+3}.
\end{align*}
For the third line, we estimate
\begin{align*}
 & \|(\int_{r}^{\infty}A_{\theta}[Q^{(\eta)}]|_{r}^{+\infty}|z^{\flat}|^{2}\tfrac{dr'}{r'})(Q_{b}^{(\eta)}+z^{\flat})\|_{L^{2}}\\
 & \lesssim\|A_{\theta}[Q^{(\eta)}]|_{r}^{+\infty}|z^{\flat}|^{2}\|_{L^{2}}\|Q_{b}^{(\eta)}+z^{\flat}\|_{L^{\infty}}\\
 & \lesssim(\alpha^{\ast})^{2}\lambda^{2}\|F_{0,2m+2}G_{2m+4}\|_{L^{2}}\lesssim(\alpha^{\ast})^{2}\lambda^{2m+3}.
\end{align*}
For the fourth line, we estimate 
\[
\|(\int_{r}^{\infty}A_{\theta}[Q^{(\eta)}]|Q_{b}^{(\eta)}|^{2}\tfrac{dr'}{r'})z^{\flat}\|_{L^{2}}\lesssim\alpha^{\ast}\lambda\|F_{0,2m+4}G_{m+2}\|_{L^{2}}\lesssim\alpha^{\ast}\lambda^{m+3}
\]
and 
\begin{align*}
\|(\int_{r}^{\infty}A_{\theta}[z^{\flat}]|Q_{b}^{(\eta)}|^{2}\tfrac{dr'}{r'})(Q_{b}^{(\eta)}+ & z^{\flat})\|_{L^{2}}\lesssim\|A_{\theta}[z^{\flat}]|Q_{b}^{(\eta)}|^{2}\|_{L^{2}}\|Q_{b}^{(\eta)}+z^{\flat}\|_{L^{\infty}}\\
 & \lesssim(\alpha^{\ast})^{2}\|G_{2m+6}F_{2m,2m+4}\|_{L^{2}}\lesssim(\alpha^{\ast})^{2}\lambda^{2m+3}.
\end{align*}
For the last line, we estimate 
\begin{align*}
 & \|(\int_{0}^{r}(m+A_{\theta}[Q^{(\eta)}](+\infty)+A_{\theta}[z^{\flat}])|z^{\flat}|^{2}\tfrac{dr'}{r'})Q_{b}^{(\eta)}\|_{L^{2}}\\
 & \lesssim\|m+A_{\theta}[Q^{(\eta)}](+\infty)+A_{\theta}[z^{\flat}]\|_{L^{\infty}}\|(\int_{0}^{r}|z^{\flat}|^{2}\frac{dr'}{r'})Q_{b}^{(\eta)}\|_{L^{2}}\\
 & \lesssim(\alpha^{\ast})^{2}\lambda^{2}\|F_{m,m+2}G_{2m+4}\|_{L^{2}}\lesssim(\alpha^{\ast})^{2}\lambda^{m+3}.
\end{align*}
In order to get $\dot{H}_{m}^{1}$-estimate, use the algebra 
\[
\partial_{r}\Big(\int_{r}^{\infty}f\frac{dr'}{r'}\Big)=\int_{r}^{\infty}\Big(-(\partial_{r}f)+\frac{f}{r'}\Big)\frac{dr'}{r'}
\]
for the outermost integral $\int_{r}^{\infty}$. One can then apply
the derivative estimates of Lemma \ref{lem:good-interaction-sub}.
This completes the proof.
\end{proof}
Indeed, \eqref{eq:est-Rqz} will be used in various places of the
following sections. In most cases \eqref{eq:est-Rqz} suffices, but
in Section \ref{subsec:virial-lyapunov} we need to improve some estimate
on $\tilde R_{Q_{b}^{(\eta)},z^{\flat}}$ by the $|\log\lambda|$
factor. Inspecting the previous proof, one observes that there is
only one term \eqref{eq:log-div} yielding the factor $|\log\lambda|$.
In the following lemma, we will separate \eqref{eq:log-div} and show
the improved estimate \eqref{eq:improved} below. This is required
to have the uniqueness of the pseudoconformal blow-up solutions constructed
in Theorem \ref{thm:BW-sol}. In fact, without the logarithmic improvement
of the estimate, the error estimate of the energy identity (Lemma
\ref{lem:quad-energy} and \eqref{eq:err}) and hence those of Theorem
\ref{thm:BW-sol} become worse logarithmically. This will not meet
the hypothesis a priori bound of Theorem \ref{thm:cond-uniq} when
$m=1$.
\begin{lem}[Logarithmic improvement]
\label{lem:refined-decomp}Let 
\[
R_{2}\coloneqq-2(\int_{r}^{\infty}(m+A_{\theta}[Q_{b}^{(\eta)}+z^{\flat}])\Re(\overline{Q_{b}^{(\eta)}}z^{\flat})\tfrac{dr'}{r'})Q^{(\eta)}.
\]
Then the decomposition 
\[
\tilde R_{Q_{b}^{(\eta)},z^{\flat}}\eqqcolon R_{1}+[R_{2}]_{b}
\]
satisfies 
\[
\|R_{1}\|_{H_{m}^{1}}\lesssim\alpha^{\ast}\lambda^{m+3}
\]
and 
\begin{align}
\|L_{Q}(iR_{2})\|_{L^{2}}+\|r\partial_{r}L_{Q}(iR_{2})\|_{L^{2}} & \lesssim\alpha^{\ast}\lambda^{m+3},\label{eq:improved}\\
\|rR_{2}\|_{L^{2}}+\|r^{2}\partial_{r}R_{2}\|_{L^{2}} & \lesssim\alpha^{\ast}\lambda^{m+3}|\log\lambda|.\nonumber 
\end{align}
\end{lem}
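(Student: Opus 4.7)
The first step is to set $R_1 \coloneqq \tilde R_{Q_b^{(\eta)},z^{\flat}} - [R_2]_b$ and to revisit the proof of Lemma~\ref{lem:estimate-RQ,z}. Going term by term through the expansion of \eqref{eq:R-int-1}--\eqref{eq:R-int-3} carried out there, one observes that the only contribution producing the logarithmic factor is the estimate \eqref{eq:log-div}, which is precisely the $Q^{(\eta)}$-factor piece of the first line of \eqref{eq:R-int-2}. Subtracting $[R_2]_b$ removes exactly this term, while all the other contributions were already shown to be $\lesssim\alpha^{\ast}\lambda^{m+3}$ without any logarithm, both in $L^{2}$ and after applying $\partial_{r}$ or $r^{-1}$. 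This yields $\|R_{1}\|_{H_{m}^{1}}\lesssim\alpha^{\ast}\lambda^{m+3}$.

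The weighted estimates for $R_{2}$ itself are straightforward: write $R_{2}=g\,Q^{(\eta)}$, where
\[
g(r)\coloneqq-2\int_{r}^{\infty}\bigl(m+A_{\theta}[Q_{b}^{(\eta)}+z^{\flat}]\bigr)\Re(\overline{Q_{b}^{(\eta)}}z^{\flat})\,\frac{dr'}{r'}.
\]
From \eqref{eq:lem5.3.8}, $|g|\lesssim\alpha^{\ast}\lambda^{m+3}(1+|\log\lambda|)$, and from Proposition~\ref{prop:const-Q-eta} the functions $rQ^{(\eta)}$, $r^{2}\partial_{r}Q^{(\eta)}$ are uniformly bounded in $L^{2}$. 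Multiplying and using the derivative formula $g'(r)=2h(r)/r$ with $h\coloneqq (m+A_{\theta}[\cdots])\Re(\overline{Q_{b}^{(\eta)}}z^{\flat})$ gives the stated $\alpha^{\ast}\lambda^{m+3}|\log\lambda|$ bound on $\|rR_{2}\|_{L^{2}}+\|r^{2}\partial_{r}R_{2}\|_{L^{2}}$.

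The main point, and the hardest, is the logarithm-free bound on $L_{Q}(iR_{2})$. Here the crucial structural observation is that $R_{2}$ is real-valued (since both $g$ and $Q^{(\eta)}$ are real), so $\overline{Q}\cdot(iR_{2})$ is purely imaginary and therefore $B_{Q}(iR_{2})=0$ by definition of $B_{Q}$. Consequently
\[
L_{Q}(iR_{2})=\mathbf{D}_{+}^{(Q)}(iR_{2})=i\,\mathbf{D}_{+}^{(Q)}R_{2}=i\bigl(g'Q^{(\eta)}+g\,\mathbf{D}_{+}^{(Q)}Q^{(\eta)}\bigr).
\]
Using \eqref{eq:first-order} in the form $\mathbf{D}_{+}^{(Q^{(\eta)})}Q^{(\eta)}=-\tfrac{\eta}{2}rQ^{(\eta)}$, we rewrite
\[
\mathbf{D}_{+}^{(Q)}Q^{(\eta)}=-\tfrac{\eta}{2}rQ^{(\eta)}+\frac{A_{\theta}[Q^{(\eta)}]-A_{\theta}[Q]}{r}Q^{(\eta)}.
\]
For the first piece, $g'(r)=2h(r)/r$ replaces the logarithmically divergent integral $g$ by a pointwise factor, and $\|g'Q^{(\eta)}\|_{L^{2}}\lesssim\|Q_{b}^{(\eta)}z^{\flat}Q^{(\eta)}/r\|_{L^{2}}\lesssim\alpha^{\ast}\lambda\,\|F_{2m-1,2m+5}G_{m+2}\|_{L^{2}}\lesssim\alpha^{\ast}\lambda^{m+3}$ by Lemma~\ref{lem:good-interaction}. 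For the remaining two pieces, the factors $\eta$ and $\|A_{\theta}[Q^{(\eta)}]-A_{\theta}[Q]\|_{L^{\infty}}\lesssim\eta|\log\eta|^{1/2}$ (from \eqref{eq:est-Q-eta-Q}) absorb the logarithm in $g$ via $\eta|\log\lambda|\le\lambda|\log\lambda|\lesssim 1$, since $\eta\le\lambda\ll 1$; combining with $\|rQ^{(\eta)}\|_{L^{2}}+\|r^{-1}Q^{(\eta)}\|_{L^{2}}\lesssim 1$ yields the required $\alpha^{\ast}\lambda^{m+3}$ bound.

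The companion bound on $\|r\partial_{r}L_{Q}(iR_{2})\|_{L^{2}}$ follows the same scheme: differentiate the decomposition $L_{Q}(iR_{2})=ig'Q^{(\eta)}+ig\mathbf{D}_{+}^{(Q)}Q^{(\eta)}$, note that $r\partial_{r}$ produces at worst a factor comparable in size (either $r\partial_{r}g=-2h$ without log, or $r\partial_{r}Q^{(\eta)}\lesssim F_{m,m+2}$ by \eqref{eq:unif-bound}), and repeat the same Lemma~\ref{lem:good-interaction} estimates. The key conceptual obstacle is therefore the exact cancellation $B_{Q}(iR_{2})=0$ together with the $\eta$-smallness of $\mathbf{D}_{+}^{(Q)}Q^{(\eta)}$; once these are identified, the remaining estimates are parallel to those already carried out in Lemmas~\ref{lem:good-interaction-sub}--\ref{lem:estimate-RQ,z}.
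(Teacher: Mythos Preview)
Your proposal is correct and follows essentially the same route as the paper. Both arguments write $R_{2}=gQ^{(\eta)}$ (the paper calls $g$ by $f$), exploit the vanishing $B_{Q}(iR_{2})=0$ to reduce $L_{Q}(iR_{2})$ to $i(g'Q^{(\eta)}+g\,\D_{+}^{(Q)}Q^{(\eta)})$, and then use \eqref{eq:first-order} to see that $\D_{+}^{(Q)}Q^{(\eta)}$ carries an extra factor of order $\eta$; the paper packages this via the $L^{\infty}$ bounds $\|\partial_{r}f\|_{L^{\infty}}+\|f(A_{\theta}[Q^{(\eta)}]-A_{\theta}[Q])\|_{L^{\infty}}+\|\eta f\|_{L^{\infty}}\lesssim\alpha^{\ast}\lambda^{m+3}$, whereas you estimate the products directly in $L^{2}$ using Lemma~\ref{lem:good-interaction}. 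A minor remark: your bound $\|A_{\theta}[Q^{(\eta)}]-A_{\theta}[Q]\|_{L^{\infty}}\lesssim\eta|\log\eta|^{1/2}$ is not sharp (the pointwise estimate \eqref{eq:differentiability} actually gives $\lesssim\eta$ uniformly), but your weaker bound still suffices since $\eta|\log\eta|^{1/2}|\log\lambda|\lesssim\lambda|\log\lambda|^{3/2}\lesssim 1$.
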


\begin{proof}
By the proof of \eqref{eq:est-Rqz}, $\|R_{1}\|_{H_{m}^{1}}\lesssim\alpha^{\ast}\lambda^{m+3}$
is done. To prove estimates on $R_{2}$, let
\[
f\coloneqq-2\int_{r}^{\infty}(m+A_{\theta}[Q_{b}^{(\eta)}+z^{\flat}])\Re(\overline{Q_{b}^{(\eta)}}z^{\flat})\frac{dr'}{r'}
\]
so that $R_{2}=fQ^{(\eta)}$. We compute using \eqref{eq:first-order}
that 
\begin{align*}
L_{Q}(f\cdot iQ^{(\eta)}) & =(\partial_{r}f)\cdot iQ^{(\eta)}+f\cdot i\D_{+}^{(Q)}Q^{(\eta)}\\
 & =(\partial_{r}f)\cdot iQ^{(\eta)}+f\cdot\tfrac{1}{r}(A_{\theta}[Q^{(\eta)}]-A_{\theta}[Q])iQ^{(\eta)}-f\cdot i\eta\tfrac{r}{2}Q^{(\eta)}.
\end{align*}
In view of \eqref{eq:lem5.3.8} and $\eta\lesssim\lambda$, we have
\[
\|\partial_{r}f\|_{L^{\infty}}+\|f(A_{\theta}[Q^{(\eta)}]-A_{\theta}[Q])\|_{L^{\infty}}+\|\eta f\|_{L^{\infty}}\lesssim\alpha^{\ast}\lambda^{m+3},
\]
Therefore, 
\[
\|L_{Q}(f\cdot iQ^{(\eta)})\|_{L^{2}}+\|r\partial_{r}L_{Q}(f\cdot iQ^{(\eta)})\|_{L^{2}}\lesssim\alpha^{\ast}\lambda^{m+3}.
\]
The remaining estimates follow from 
\begin{align*}
\|f\|_{L^{\infty}}+\|r\partial_{r}f\|_{L^{\infty}} & \lesssim\alpha^{\ast}\lambda^{m+3}|\log\lambda|,\\
\|rQ^{(\eta)}\|_{L^{2}}+\|r^{2}\partial_{r}Q^{(\eta)}\|_{L^{2}} & \lesssim1.
\end{align*}
\end{proof}
We conclude this subsection with an estimate of $\tilde V_{Q_{b}^{(\eta)}-Q_{b}}$
(recall \eqref{eq:def-V-tilde}).
\begin{lem}[Estimate of $\tilde V_{Q_{b}^{(\eta)}-Q_{b}}$]
We have 
\begin{equation}
\|\tilde V_{Q_{b}^{(\eta)}-Q_{b}}z^{\flat}\|_{H_{m}^{1}}\lesssim\alpha^{\ast}\lambda^{2}\eta.\label{eq:est-V}
\end{equation}
\end{lem}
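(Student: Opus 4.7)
The plan is to exploit the fact that both $V_{Q_b^{(\eta)}\to z^{\flat}}$ and $V_{Q_b\to z^{\flat}}$ depend on their first argument only through the scalar $A_\theta[\cdot](+\infty)$ (observing that pseudoconformal phases do not affect $|Q^{(\eta)}|^2$, so $A_\theta[Q_b^{(\eta)}]=A_\theta[Q^{(\eta)}]$ and similarly for $Q_b$). Setting $a_\eta \coloneqq A_\theta[Q^{(\eta)}](+\infty)$ and $a_0\coloneqq A_\theta[Q](+\infty)=-2(m+1)$, a direct computation gives
\[
\tilde V_{Q_b^{(\eta)}-Q_b} = c_\eta\Big[\tfrac{2m+a_\eta+a_0+2A_\theta[z^{\flat}]}{r^2}-\int_r^\infty |z^{\flat}|^2\,\tfrac{dr'}{r'}\Big],\qquad c_\eta\coloneqq a_\eta-a_0.
\]
By \eqref{eq:subcritic-mass} (equivalently \eqref{eq:theta-asymp}), $|c_\eta|\lesssim\eta$. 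Thus the entire smallness in $\eta$ is captured by the scalar prefactor, and the task reduces to proving the bracketed potential acts on $z^{\flat}$ with an $H_m^1$ bound of size $\alpha^\ast\lambda^2$ uniformly in $\eta$.

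Next I would estimate each bracketed term acting on $z^{\flat}$. The dangerous piece is $\frac{1}{r^2}z^{\flat}$, where the inverse-square weight is balanced by the $m$-equivariance of $z^{\flat}$: since $z^{\flat}$ satisfies the $G_{m+2}$ bound of \eqref{eq:lem5.3.4}, splitting at $r=\lambda^{-1}$ yields
\[
\Big\|\tfrac{z^{\flat}}{r^2}\Big\|_{L^2}^2\lesssim (\alpha^\ast\lambda)^2\Big(\int_0^{\lambda^{-1}}\lambda^{2m+4}r^{2m-3}\,r\,dr+\int_{\lambda^{-1}}^\infty r^{-4}\,r\,dr\Big)\lesssim (\alpha^\ast)^2\lambda^4,
\]
using $m\geq 1$ to ensure integrability at the origin. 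The remaining two contributions $\tfrac{A_\theta[z^{\flat}]}{r^2}z^{\flat}$ and $(\int_r^\infty |z^{\flat}|^2 \tfrac{dr'}{r'})z^{\flat}$ are easily controlled by $(\alpha^\ast)^3\lambda^2$ using the pointwise bounds \eqref{eq:lem5.3.5}, \eqref{eq:lem5.3.6} and $\|z^{\flat}\|_{L^\infty}\lesssim\alpha^\ast\lambda$. Combining gives the $L^2$ bound $\|\tilde V_{Q_b^{(\eta)}-Q_b}z^{\flat}\|_{L^2}\lesssim \alpha^\ast\lambda^2\eta$.

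For the $\dot H_m^1$ bound I would apply $\partial_r$ and $r^{-1}$ termwise, using the Leibniz rule. Derivatives of the $A_\theta[z^{\flat}]$ and $\int_r^\infty|z^{\flat}|^2\tfrac{dr'}{r'}$ factors are controlled by the derivative estimates already collected in Lemma~\ref{lem:good-interaction-sub} (each derivative costs one decay power but pays a $\lambda$ factor); derivatives falling on $z^{\flat}$ itself are handled by \eqref{eq:lem5.3.4}. The only subtle piece is $\partial_r(r^{-2}z^{\flat})$ and $r^{-3}z^{\flat}$, and here another split at $r=\lambda^{-1}$ together with the $G_{m+1}$ bound of $\partial_r z^{\flat}$ (using $m\geq 1$ for integrability) produces the same $\alpha^\ast\lambda^2$ bound for the bracket, and hence $\|\tilde V_{Q_b^{(\eta)}-Q_b}z^{\flat}\|_{\dot H_m^1}\lesssim\alpha^\ast\lambda^2\eta$.

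The only mildly nontrivial step is verifying that the $r^{-2}$ weights combine with the $m$-equivariant degeneracy of $z^{\flat}$ to give integrable quantities; this is why the hypothesis $m\geq 1$ is used. Once the scalar factor $c_\eta=O(\eta)$ is extracted, the remaining analysis is a routine application of the pointwise estimates established in Section~\ref{subsec:est-RQ,z}, and no cancellation between terms of the bracket is needed.
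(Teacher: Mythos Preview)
Your approach is correct and is essentially the same as the paper's: both extract the scalar $c_\eta=A_\theta[Q^{(\eta)}](+\infty)-A_\theta[Q](+\infty)=O(\eta)$ from \eqref{eq:subcritic-mass} and then estimate the remaining potential acting on $z^{\flat}$ via the pointwise bounds of Lemma~\ref{lem:good-interaction-sub}. One minor arithmetic slip: in your displayed integral the exponent should be $r^{2m}$ rather than $r^{2m-3}$ (i.e.\ $(\lambda r)^{2m+4}r^{-4}=\lambda^{2m+4}r^{2m}$), but the stated conclusion $(\alpha^\ast)^2\lambda^4$ is correct.
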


\begin{proof}
Recall from \eqref{eq:subcritic-mass} that 
\[
|A_{\theta}[Q_{b}^{(\eta)}](+\infty)-A_{\theta}[Q_{b}](+\infty)|\lesssim\eta.
\]
By scaling reasons, we have 
\[
\|\tilde V_{Q_{b}^{(\eta)}-Q_{b}}z^{\flat}\|_{L^{2}}\lesssim\alpha^{\ast}\lambda^{2}\eta.
\]
To show the derivative estimate, we observe 
\begin{align*}
\|\tilde V_{Q_{b}^{(\eta)}-Q_{b}}z^{\flat}\|_{\dot{H}_{m}^{1}} & \lesssim\sum_{T\in\{\partial_{r},r^{-1}\}}\|\tilde V_{Q_{b}^{(\eta)}-Q_{b}}Tz^{\flat}\|_{L^{2}}+\|(|Q^{(\eta)}|^{2}-|Q|^{2})r^{-2}z^{\flat}\|_{L^{2}}\\
 & \lesssim\alpha^{\ast}\lambda^{3}\eta+\alpha^{\ast}\lambda^{2}\||Q^{(\eta)}-Q|\cdot Q\|_{L^{\infty}}\\
 & \lesssim\alpha^{\ast}\lambda^{2}\eta,
\end{align*}
where in the last inequality we used \eqref{eq:differentiability}
and \eqref{eq:unif-bound}. This completes the proof of \eqref{eq:est-V}.
\end{proof}

\subsection{\label{subsec:Ru-w}Estimates of $R_{u^{\flat}-w^{\flat}}$, $\mathcal{L}_{w^{\flat}}-\mathcal{L}_{Q_{b}^{(\eta)}}$,
and $\mathcal{L}_{Q_{b}^{(\eta)}}-\mathcal{L}_{Q}$}

We will estimate each error terms with help of duality. In this section,
we mainly rely on Lemma \ref{lem:duality-1}.
\begin{lem}[Estimates of $R_{u^{\flat}-w^{\flat}}$]
\label{lem:est of Ru-w}We have 
\begin{align}
\|R_{u^{\flat}-w^{\flat}}\|_{L^{2}} & \lesssim\|\epsilon\|_{\dot{H}_{m}^{1}}^{2},\label{eq:Ru-w1}\\
\|(1+r)^{-1}R_{u^{\flat}-w^{\flat}}\|_{L^{1}} & \lesssim(\|\epsilon\|_{L^{2}}+\|\epsilon\|_{\dot{H}_{m}^{1}})\|\epsilon\|_{\dot{H}_{m}^{1}}.\label{eq:Ru-w2}
\end{align}
\end{lem}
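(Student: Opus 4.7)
The plan is to decompose $R_{u^{\flat}-w^{\flat}}$ as a finite sum of multilinear expressions in $w^{\flat}$ and $\epsilon$, and treat each piece by direct H\"older / Cauchy--Schwarz arguments. Since $L_{u}^{\ast}\D_{+}^{(u)}u=-\Delta_{m}u+\mathcal{N}(u)$ with the decomposition $\mathcal{N}=\mathcal{N}_{3,0}+\mathcal{N}_{3,1}+\mathcal{N}_{3,2}+\mathcal{N}_{5,1}+\mathcal{N}_{5,2}$ from \eqref{eq:nonlinearity}--\eqref{eq:def-N}, the $\Delta_{m}$- and linear-in-$\epsilon$ parts cancel, and
\[
R_{u^{\flat}-w^{\flat}}=\mathcal{N}(w^{\flat}+\epsilon)-\mathcal{N}(w^{\flat})-D\mathcal{N}(w^{\flat})[\epsilon]
\]
expands into a finite sum of pieces $\mathcal{N}_{3,k}(\psi_{1},\psi_{2},\psi_{3})$ and $\mathcal{N}_{5,k}(\psi_{1},\ldots,\psi_{5})$ with each $\psi_{j}\in\{w^{\flat},\epsilon\}$ and at least two of the $\psi_{j}$ equal to $\epsilon$. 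For \eqref{eq:Ru-w1}, I would apply Lemma \ref{lem:L2-est-N} to each such piece with the privileged indices $(i,i')$ chosen so that $\psi_{i}=\psi_{i'}=\epsilon$. The remaining arguments are then measured in $L_{m}^{2}$, and all these norms are uniformly bounded thanks to $\|w^{\flat}\|_{L_{m}^{2}}\leq\|Q_{b}^{(\eta)}\|_{L^{2}}+\|z\|_{L^{2}}\lesssim 1$ and $\|\epsilon\|_{L^{2}}\lesssim\lambda^{1/2}\lesssim 1$ from the bootstrap hypothesis \eqref{eq:bootstrap-hyp}, yielding $\|R_{u^{\flat}-w^{\flat}}\|_{L^{2}}\lesssim\|\epsilon\|_{\dot{H}_{m}^{1}}^{2}$.

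For the weighted $L^{1}$ estimate \eqref{eq:Ru-w2}, the plan is to extract one factor from each multilinear piece, pair it with $(1+r)^{-1}$ via Cauchy--Schwarz, and control the rest in $L^{2}$. The key inputs are the Hardy bound $\|\epsilon/(1+r)\|_{L^{2}}\leq\|\epsilon/r\|_{L^{2}}\lesssim\|\epsilon\|_{\dot{H}_{m}^{1}}$ (using $m\geq 1$), together with the one-dimensional Hardy inequalities
\[
\int_{0}^{\infty}\frac{1}{r^{3}}\Bigl(\int_{0}^{r}f(r')r'dr'\Bigr)^{2}dr\lesssim\int_{0}^{\infty}f(r)^{2}r\,dr,\quad\int_{0}^{\infty}r\Bigl(\int_{r}^{\infty}g(r')dr'\Bigr)^{2}dr\lesssim\int_{0}^{\infty}g(r)^{2}r^{3}\,dr,
\]
which give the $L^{2}$ control of the nonlocal factors appearing in $\mathcal{N}_{3,1}$ and $\mathcal{N}_{3,2}$. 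Concretely, for a piece $\mathcal{N}_{3,k}(\psi_{1},\psi_{2},\psi_{3})$ whose outer slot $\psi_{3}$ is an $\epsilon$, I would pair $\psi_{3}/(1+r)$ against the local/nonlocal factor in $(\psi_{1},\psi_{2})$, whose $L^{2}$ norm is dominated by $\|\psi_{1}\psi_{2}\|_{L^{2}}\lesssim\|w^{\flat}\|_{L^{\infty}}\|\epsilon\|_{L^{2}}\lesssim\|\epsilon\|_{L^{2}}$. When instead $\psi_{3}=w^{\flat}$ (as in $\mathcal{N}_{3,1}(\epsilon,\epsilon,w^{\flat})$), I would use $\|w^{\flat}/(1+r)\|_{L^{2}}\leq\|w^{\flat}\|_{L^{2}}\lesssim 1$ and bound the $|\epsilon|^{2}$-integrated factor in $L^{2}$ by $\|\epsilon\|_{L^{4}}^{2}\lesssim\|\epsilon\|_{L^{2}}\|\epsilon\|_{\dot{H}_{m}^{1}}$ via the same Hardy identity combined with Gagliardo--Nirenberg. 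The quintilinear pieces $\mathcal{N}_{5,k}$ are handled analogously by iterating the Hardy bound on the two nested radial integrals. Summing, and using $\|\epsilon\|_{\dot{H}_{m}^{1}}\lesssim 1$ to absorb cubic-and-higher remainders into the quadratic ones, the total is controlled by $(\|\epsilon\|_{L^{2}}+\|\epsilon\|_{\dot{H}_{m}^{1}})\|\epsilon\|_{\dot{H}_{m}^{1}}$.

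The principal technical obstacle is that $(1+r)^{-1}\notin L^{2}(\R^{2})$, so the clean duality framework \eqref{eq:duality-rel}--\eqref{eq:multi-1-3} (which turned $L^{2}$ bounds of $\mathcal{N}_{*}$ into $L^{2}$ bounds of $\mathcal{M}_{*}$ and vice versa) is not directly available for the weighted $L^{1}$ norm. The weight must instead be absorbed into a specific chosen factor by Hardy, and the complementary $L^{2}$ bound verified by hand; this forces the case-by-case split above according to whether the ``outer'' slot of each trilinear or quintilinear piece is occupied by $\epsilon$ or by $w^{\flat}$, and one must be careful that the chosen factorization of each nonlocal piece indeed leaves only $L^{2}$-integrable residuals after the $(1+r)^{-1}$ weight has been consumed.
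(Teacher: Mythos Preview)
Your proof of \eqref{eq:Ru-w1} is essentially identical to the paper's: expand $R_{u^{\flat}-w^{\flat}}$ into multilinear pieces with at least two $\epsilon$'s and apply Lemma~\ref{lem:L2-est-N}, placing both $\dot H_{m}^{1}$-norms on the $\epsilon$'s and using the uniform $L^{2}$-bounds on the remaining factors.

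For \eqref{eq:Ru-w2}, your hands-on approach would work but is more laborious than necessary. The paper avoids your ``principal technical obstacle'' by observing that although $(1+r)^{-1}\notin L^{2}(\R^{2})$, one does have $(1+r)^{-1}\in L^{4}(\R^{2})$, so H\"older gives
\[
\|(1+r)^{-1}R_{u^{\flat}-w^{\flat}}\|_{L^{1}}\lesssim\|R_{u^{\flat}-w^{\flat}}\|_{L^{4/3}}.
\]
Then the already-proved $L^{4/3}$-estimate of Lemma~\ref{lem:4/3-est-N} applies directly to each multilinear piece: one puts a single $\dot H_{m}^{1}$-norm on one $\epsilon$, the remaining slots get $L^{2}$ or $L^{4}$ as prescribed by the scaling relation \eqref{eq:multi-2-scal-rel-1}, and $\|\epsilon\|_{L^{2}}+\|\epsilon\|_{L^{4}}\lesssim\|\epsilon\|_{L^{2}}+\|\epsilon\|_{\dot H_{m}^{1}}$ finishes it. This sidesteps the case-by-case Hardy analysis on the inner versus outer slots that your approach requires; the duality machinery \emph{is} available once one passes to the $L^{4}$--$L^{4/3}$ pairing rather than $L^{2}$--$L^{2}$.
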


\begin{rem}
The second estimate \eqref{eq:Ru-w2} is only used in case of $m=1$,
where we estimate $(R_{u^{\flat}-w^{\flat}},|y|^{2}Q_{b}^{(\eta)})_{r}$
in Section \ref{subsec:virial-lyapunov}. The estimate \eqref{eq:Ru-w2}
is not sharp, but this suffices to close our bootstrap.
\end{rem}

\begin{proof}
We show the first estimate \eqref{eq:Ru-w1}. Note that 
\[
R_{u-w}=\sum_{\substack{\psi_{1},\psi_{2},\psi_{3}\in\{Q_{b}^{(\eta)},z^{\flat},\epsilon\}:\\
\#\{i:\psi_{i}=\epsilon\}\geq2
}
}[\mathcal{N}_{3,0}+\mathcal{N}_{3,1}+\mathcal{N}_{3,2}]+\sum_{\substack{\psi_{1},\dots,\psi_{5}\in\{Q_{b}^{(\eta)},z^{\flat},\epsilon\}:\\
\#\{i:\psi_{i}=\epsilon\}\geq2
}
}[\mathcal{N}_{5,1}+\mathcal{N}_{5,2}].
\]
We will use 
\[
\|\psi\|_{L^{2}}\lesssim1\qquad\forall\psi\in\{Q_{b}^{(\eta)},z^{\flat},\epsilon\}.
\]
Now the estimate \eqref{eq:Ru-w1} follows by distributing two $\dot{H}_{m}^{1}$-norms
to two $\epsilon$'s using \eqref{eq:L2-est-N}.

To show the second estimate \eqref{eq:Ru-w2}, we use 
\[
\|(1+r)^{-1}R_{u^{\flat}-w^{\flat}}\|_{L^{1}}\lesssim\|R_{u^{\flat}-w^{\flat}}\|_{L^{\frac{4}{3}}}.
\]
We then apply Lemma \ref{lem:4/3-est-N}. Here, we put $\dot{H}_{m}^{1}$-norm
for one $\epsilon$. For the remaining arguments, we put $L^{2}$
or $L^{4}$ norms as provided in Lemma \ref{lem:duality-2}. We then
use $\|\epsilon\|_{L^{2}}+\|\epsilon\|_{L^{4}}\lesssim\|\epsilon\|_{L^{2}}+\|\epsilon\|_{\dot{H}_{m}^{1}}$.
\end{proof}
We turn to estimate the difference $\mathcal{L}_{w^{\flat}}-\mathcal{L}_{Q_{b}}$
and $\mathcal{L}_{Q_{b}}-\mathcal{L}_{Q}$. This will allow us to
replace the linearized operator $\mathcal{L}_{w^{\flat}}$ by $\mathcal{L}_{Q_{b}}$
or $\mathcal{L}_{Q}$.
\begin{lem}[Estimates of the linearized operators]
\label{lem:est of Lw-Lq}The following hold:\footnote{In case of $m\geq2$, we can replace $\lambda|\log\lambda|^{\frac{1}{2}}$
by $\lambda$.} 
\begin{align}
\|\mathcal{L}_{w^{\flat}}-\mathcal{L}_{Q_{b}^{(\eta)}}\|_{\dot{H}_{m}^{1}\to L^{2}} & \lesssim\alpha^{\ast}\lambda,\label{eq:Lw-Lqb-1}\\
\|\mathcal{L}_{w^{\flat}}-\mathcal{L}_{Q_{b}^{(\eta)}}\|_{\dot{H}_{m}^{1}\to(\dot{H}_{m}^{1})^{\ast}} & \lesssim\alpha^{\ast},\label{eq:Lw-Lqb-2}\\
\|\mathcal{L}_{Q_{b}^{(\eta)}}-\mathcal{L}_{Q}\|_{\dot{H}_{m}^{1}\to(\dot{H}_{m}^{1})^{\ast}} & \lesssim\lambda|\log\lambda|^{\frac{1}{2}}.\label{eq:Lqb-Lq}
\end{align}
\end{lem}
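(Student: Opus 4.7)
The plan is to express each operator difference as a finite sum of multilinear forms in which $\epsilon$ occupies one slot, and then apply the duality estimates of Lemmas~\ref{lem:duality-1} and \ref{lem:duality-2} (equivalently Lemma~\ref{lem:L2-est-N}) together with the relations \eqref{eq:duality-rel}. From Proposition~\ref{prop:prop3.1} we have $L_u^{\ast}\D_+^{(u)}u = -\Delta_m u + \mathcal{N}(u)$, and since $\Delta_m$ does not depend on $w$, the operator $\mathcal{L}_w\epsilon + \Delta_m\epsilon$ is just the linear-in-$\epsilon$ part of $\mathcal{N}(w+\epsilon)-\mathcal{N}(w)$. By the $\mathbb{R}$-trilinearity (resp. quintilinearity) of $\mathcal{N}_{3,k}$ (resp. $\mathcal{N}_{5,k}$), this linear part is a finite $\mathbb{R}$-linear combination of terms of the form $\mathcal{N}_{3,k}(\ldots,\epsilon,\ldots)$ and $\mathcal{N}_{5,k}(\ldots,\epsilon,\ldots)$, with $\epsilon$ inserted in one of the slots and the remaining slots occupied by $w$.

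For \eqref{eq:Lw-Lqb-1} and \eqref{eq:Lw-Lqb-2}, write $w^\flat=Q_b^{(\eta)}+z^\flat$ and expand each tri/quintilinear term by multilinearity. Every summand in $\mathcal{L}_{w^\flat}\epsilon-\mathcal{L}_{Q_b^{(\eta)}}\epsilon$ then carries $\epsilon$ in one slot and at least one $z^\flat$ in a non-$\epsilon$ slot, with the remaining slots filled by $Q_b^{(\eta)}$ or $z^\flat$. Using the uniform bounds $\|Q_b^{(\eta)}\|_{L^2}\lesssim1$ together with $\|z^\flat\|_{L^2}=\|z\|_{L^2}\lesssim\alpha^\ast$ and $\|z^\flat\|_{\dot{H}_m^1}=\lambda\|z\|_{\dot{H}_m^1}\lesssim\alpha^\ast\lambda$, Lemma~\ref{lem:L2-est-N} with $(i,i')$ the positions of $\epsilon$ and $z^\flat$ yields
\[
\|\mathcal{L}_{w^\flat}\epsilon-\mathcal{L}_{Q_b^{(\eta)}}\epsilon\|_{L^2}\lesssim\|\epsilon\|_{\dot{H}_m^1}\|z^\flat\|_{\dot{H}_m^1}\lesssim\alpha^\ast\lambda\|\epsilon\|_{\dot{H}_m^1},
\]
which is \eqref{eq:Lw-Lqb-1}. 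For \eqref{eq:Lw-Lqb-2}, pair with $\varphi\in\dot{H}_m^1$; by \eqref{eq:duality-rel} the quantity $(\mathcal{L}_{w^\flat}\epsilon-\mathcal{L}_{Q_b^{(\eta)}}\epsilon,\varphi)_r$ is a linear combination of quartic and sextic forms $\mathcal{M}_\ast$ applied to $\epsilon,\varphi,z^\flat$, and $Q_b^{(\eta)}/z^\flat$. Choose $(i,i')$ to be the slots holding $\epsilon$ and $\varphi$ in Lemma~\ref{lem:duality-1}, absorb the $r^{-1}$ weights via \eqref{eq:hardy-Sobolev}, and place $L^2$ on the remaining slots (one of which is $z^\flat$, contributing $\alpha^\ast$), giving $|(\cdots,\varphi)_r|\lesssim\alpha^\ast\|\epsilon\|_{\dot{H}_m^1}\|\varphi\|_{\dot{H}_m^1}$.

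For \eqref{eq:Lqb-Lq}, the same device applies with $Q_b^{(\eta)}-Q$ in place of $z^\flat$: by the triangle inequality
\[
\|Q_b^{(\eta)}-Q\|_{L^2}\leq\|Q_b^{(\eta)}-Q_b\|_{L^2}+\|Q_b-Q\|_{L^2}\lesssim\lambda|\log\lambda|^{1/2},
\]
where the first term is bounded by \eqref{eq:est-Q-eta-Q} combined with $\eta\lesssim\lambda$ (a direct consequence of the bootstrap hypothesis $|\lambda/\langle t\rangle-1|\leq\tfrac12$, which forces $\lambda\geq\tfrac12\langle t\rangle\geq\tfrac12\eta$), and the second by \eqref{eq:Qb-Q}. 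After pairing against $\varphi\in\dot{H}_m^1$ and expanding every tri/quintilinear difference by multilinearity, each surviving summand carries one copy of $Q_b^{(\eta)}-Q$ in a non-$\epsilon$, non-$\varphi$ slot, with all other slots filled by $Q_b^{(\eta)}$ or $Q$ (both in $L^2$). Lemma~\ref{lem:duality-1} then yields
\[
|(\mathcal{L}_{Q_b^{(\eta)}}\epsilon-\mathcal{L}_Q\epsilon,\varphi)_r|\lesssim\|\epsilon\|_{\dot{H}_m^1}\|\varphi\|_{\dot{H}_m^1}\|Q_b^{(\eta)}-Q\|_{L^2}\lesssim\lambda|\log\lambda|^{1/2}\|\epsilon\|_{\dot{H}_m^1}\|\varphi\|_{\dot{H}_m^1}.
\]

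The main technical point is the combinatorial bookkeeping: one must verify that, after the multilinear expansion of each of the trilinear terms $\mathcal{N}_{3,k}$ ($k=0,1,2$) and quintilinear terms $\mathcal{N}_{5,k}$ ($k=1,2$) in all admissible slot positions, every surviving summand really does carry at least one copy of the small factor ($z^\flat$ or $Q_b^{(\eta)}-Q$) in a non-$\epsilon$ (and non-$\varphi$) slot, while the remaining slots are occupied by functions with a uniform $L^2$ bound. The logarithmic loss in \eqref{eq:Lqb-Lq} is irreducible at the critical equivariance $m=1$, originating from $\||y|^2 Q\|_{L^2(|y|\lesssim b^{-1/2})}\sim|\log b|^{1/2}$ in \eqref{eq:Qb-Q}, but it poses no difficulty for the argument since it is already present in the size bound for $Q_b^{(\eta)}-Q$.
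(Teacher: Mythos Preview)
Your proposal is correct and follows essentially the same approach as the paper: expand $\mathcal{L}_{w^\flat}-\mathcal{L}_{Q_b^{(\eta)}}$ (resp.\ $\mathcal{L}_{Q_b^{(\eta)}}-\mathcal{L}_Q$) as a sum of tri/quintilinear forms $\mathcal{N}_\ast$ carrying one copy of the input and at least one copy of $z^\flat$ (resp.\ $Q_b^{(\eta)}-Q$), then apply Lemma~\ref{lem:duality-1}/Lemma~\ref{lem:L2-est-N} with the $\dot{H}_m^1$-norms placed exactly as you describe. The paper's proof is in fact slightly terser than yours, simply writing the decomposition and indicating which slots receive $\dot{H}_m^1$ versus $L^2$; your additional remarks on the bookkeeping and the origin of the logarithmic loss are accurate but not strictly needed.
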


\begin{proof}
We first consider the estimate for $\mathcal{L}_{w^{\flat}}-\mathcal{L}_{Q_{b}^{(\eta)}}$.
Observe that 
\begin{align*}
(\psi,\mathcal{L}_{w^{\flat}}\varphi-\mathcal{L}_{Q_{b}^{(\eta)}}\varphi)_{r} & =\sum_{\substack{\psi_{1},\psi_{2},\psi_{3}\in\{Q_{b}^{(\eta)},z^{\flat},\varphi\}:\\
\#\{i:\psi_{i}=\varphi\}=1\\
\#\{i:\psi_{i}=z^{\flat}\}\geq1
}
}(\psi,[\mathcal{N}_{3,0}+\mathcal{N}_{3,1}+\mathcal{N}_{3,2}](\psi_{1},\psi_{2},\psi_{3}))_{r}\\
 & \quad+\sum_{\substack{\psi_{1},\dots,\psi_{5}\in\{Q_{b}^{(\eta)},z^{\flat},\varphi\}:\\
\#\{i:\psi_{i}=\varphi\}=1\\
\#\{i:\psi_{i}=z^{\flat}\}\geq1
}
}(\psi,[\mathcal{N}_{5,1}+\mathcal{N}_{5,2}](\psi_{1},\psi_{2},\psi_{3},\psi_{4},\psi_{5}))_{r}.
\end{align*}
We apply Lemma \ref{lem:duality-1}. If we put $\dot{H}_{m}^{1}$-norms
to $\varphi$ and $z^{\flat}$, put $L^{2}$-norms for remaining arguments,
and use $\|z^{\flat}\|_{\dot{H}_{m}^{1}}\lesssim\alpha^{\ast}\lambda$,
then $\dot{H}_{m}^{1}\to L^{2}$ estimate follows. If we put $\dot{H}_{m}^{1}$-norms
to $\varphi$ and $\psi$, put $L^{2}$-norms for remaining arguments,
and use $\|z^{\flat}\|_{L^{2}}\lesssim\alpha^{\ast}$, then $\dot{H}_{m}^{1}\to(\dot{H}_{m}^{1})^{\ast}$
estimate follows.

For $\mathcal{L}_{Q_{b}^{(\eta)}}-\mathcal{L}_{Q}$, the idea is exactly
same with $\dot{H}_{m}^{1}\to(\dot{H}_{m}^{1})^{\ast}$ estimate for
$\mathcal{L}_{w^{\flat}}-\mathcal{L}_{Q_{b}^{(\eta)}}$. Use \eqref{eq:Qb-Q}
and \eqref{eq:est-Q-eta-Q}. Note that $b,\eta\lesssim\lambda$. We
omit the details.
\end{proof}

\subsection{Modulation estimates}

Applying \eqref{eq:law-fixation} to the $\epsilon$-equations \eqref{eq:e-prem}
and \eqref{eq:e-sharp-prem}, we write 
\begin{align}
 & i\partial_{s}\epsilon-\mathcal{L}_{w^{\flat}}\epsilon+ib\Lambda\epsilon-\eta\theta_{\eta}\epsilon\label{eq:e-eq}\\
 & =i\Big(\frac{\lambda_{s}}{\lambda}+b\Big)([\Lambda Q^{(\eta)}]_{b}+\Lambda\epsilon)+(\tilde{\gamma}_{s}-\eta\theta_{\eta})Q_{b}^{(\eta)}+(\gamma_{s}-\eta\theta_{\eta})\epsilon\nonumber \\
 & \quad+\tilde R_{Q_{b}^{(\eta)},z^{\flat}}+V_{Q_{b}^{(\eta)}-Q_{b}}z^{\flat}+R_{u^{\flat}-w^{\flat}}\nonumber 
\end{align}
and 
\begin{align}
i\partial_{t}\epsilon^{\sharp}-\mathcal{L}_{w}\epsilon^{\sharp} & =\frac{1}{\lambda^{2}}i\Big(\frac{\lambda_{s}}{\lambda}+b\Big)[\Lambda Q^{(\eta)}]_{b}^{\sharp}+\frac{1}{\lambda^{2}}(\tilde{\gamma}_{s}-\eta\theta_{\eta})Q_{b}^{(\eta)\sharp}\label{eq:e-sharp-eq}\\
 & \quad+\tilde R_{Q_{b}^{(\eta)\sharp},z}+V_{Q_{b}^{(\eta)\sharp}-Q_{b}^{\sharp}}z+R_{u-w}.\nonumber 
\end{align}
\eqref{eq:e-eq} is the equation of $\epsilon$ we use in what follows.

The main goal of this subsection is to obtain modulation estimates.
Modulation estimates will be obtained from differentiating the orthogonality
conditions.
\begin{lem}[Computation of $\partial_{s}(\epsilon,\psi_{b})_{r}$]
\label{lem:comput-der-inn}We have 
\begin{equation}
\begin{aligned}\partial_{s}(\epsilon,\psi_{b})_{r} & =(\epsilon,[\mathcal{L}_{Q^{(\eta)}}i\psi+\eta\theta_{\eta}i\psi+\eta^{2}\tfrac{i|y|^{2}\psi}{4}]_{b})_{r}\\
 & \quad+\Big(\frac{\lambda_{s}}{\lambda}+b\Big)(\Lambda Q^{(\eta)},\psi)_{r}+(\tilde{\gamma}_{s}-\eta\theta_{\eta})(Q^{(\eta)},i\psi)_{r}\\
 & \quad-\Big(\frac{\lambda_{s}}{\lambda}+b\Big)(\epsilon,[\Lambda\psi]_{b})_{r}+(\tilde{\gamma}_{s}-\eta\theta_{\eta})(\epsilon,i\psi_{b})_{r}\\
 & \quad-\theta_{z^{\flat}\to Q_{b}^{(\eta)}}(\epsilon,i\psi_{b})_{r}+((\mathcal{L}_{w^{\flat}}-\mathcal{L}_{Q_{b}^{(\eta)}})\epsilon,i\psi_{b})_{r}+(\tilde R_{Q_{b}^{(\eta)},z^{\flat}},i\psi_{b})_{r}\\
 & \quad+(V_{Q_{b}^{(\eta)}-Q_{b}}z^{\flat},i\psi_{b})_{r}+(R_{u^{\flat}-w^{\flat}},i\psi_{b})_{r}.
\end{aligned}
\label{eq:testing}
\end{equation}
\end{lem}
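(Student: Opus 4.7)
\medskip

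The plan is a direct calculation using the product rule, the evolution equation \eqref{eq:e-eq}, and the algebra for the pseudoconformal phase from Section~\ref{sec:Profile}. I will split
\[
\partial_s(\epsilon,\psi_b)_r = (\partial_s\epsilon,\psi_b)_r + (\epsilon,\partial_s\psi_b)_r.
\]
Since $\psi$ is $s$-independent, only $b=b(s)$ produces time dependence in $\psi_b$, giving $\partial_s\psi_b = -ib_s\tfrac{|y|^2}{4}\psi_b$, hence $(\epsilon,\partial_s\psi_b)_r = -b_s(\epsilon,i\tfrac{|y|^2}{4}\psi_b)_r$. For $(\partial_s\epsilon,\psi_b)_r$, I will multiply \eqref{eq:e-eq} by $-i$ to solve for $\partial_s\epsilon$, pair against $\psi_b$, and use the basic identity $(if,g)_r = -(f,ig)_r$ to convert every occurrence of $i$ into a corresponding $i\psi_b$ slot. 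The only structural fact I need is that $\mathcal{L}_{w^\flat}$ is self-adjoint with respect to $(\cdot,\cdot)_r$ (being a second variation of the energy), which lets me write $(\mathcal{L}_{w^\flat}\epsilon,i\psi_b)_r = ((\mathcal{L}_{w^\flat}-\mathcal{L}_{Q_b^{(\eta)}})\epsilon,i\psi_b)_r + (\epsilon,\mathcal{L}_{Q_b^{(\eta)}}(i\psi_b))_r$.

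Next I will unpack $\mathcal{L}_{Q_b^{(\eta)}}(i\psi_b)$ via the conjugation formula \eqref{eq:conj-b-phase2}:
\[
\mathcal{L}_{Q_b^{(\eta)}}(i\psi_b) = [\mathcal{L}_{Q^{(\eta)}}(i\psi)]_b + ib\Lambda(i\psi_b) - ib^2\partial_b(i\psi_b)
= [\mathcal{L}_{Q^{(\eta)}}(i\psi)]_b - b\Lambda\psi_b - ib^2\tfrac{|y|^2}{4}\psi_b.
\]
In parallel, the term $-b(\Lambda\epsilon,\psi_b)_r$ coming from the $ib\Lambda\epsilon$ part of \eqref{eq:e-eq} will be integrated by parts using $\Lambda^\ast=-\Lambda$ and the algebra $\Lambda\psi_b = [\Lambda\psi]_b - ib\tfrac{r^2}{2}\psi_b$; similarly the $i(\lambda_s/\lambda+b)\Lambda\epsilon$ piece in the right-hand side of \eqref{eq:e-eq} pairs to $-(\lambda_s/\lambda+b)(\epsilon,\Lambda\psi_b)_r$ after the same manipulation. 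The various $[\Lambda Q^{(\eta)}]_b$, $Q_b^{(\eta)}$ terms on the right of \eqref{eq:e-eq}, paired with $i\psi_b$, simplify to $(\lambda_s/\lambda+b)(\Lambda Q^{(\eta)},\psi)_r$ and $(\tilde\gamma_s-\eta\theta_\eta)(Q^{(\eta)},i\psi)_r$ because the pseudoconformal phases cancel. Finally, the $(\gamma_s-\eta\theta_\eta)\epsilon$ term together with the $\eta\theta_\eta\epsilon$ from the linearized equation and the $\eta\theta_\eta i\psi$ contribution inside the bracketed kernel combine, using \eqref{eq:def-tilde-gamma}, into exactly the coefficient $(\tilde\gamma_s-\eta\theta_\eta)-\theta_{z^\flat\to Q_b^{(\eta)}}$ on $(\epsilon,i\psi_b)_r$.

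The crucial piece of bookkeeping---and the only step that is not a mechanical rewriting---is the cancellation of the $(\epsilon,i\tfrac{|y|^2}{4}\psi_b)_r$ terms. Collecting all contributions (from the conjugation formula, from the commutator $\Lambda\psi_b = [\Lambda\psi]_b - ib\tfrac{r^2}{2}\psi_b$ applied twice, and from $\partial_s\psi_b$), the net coefficient works out to $b^2 + 2b(\lambda_s/\lambda) - b_s$. At this point I will invoke the dynamical law \eqref{eq:law-fixation}, which gives $2b(\lambda_s/\lambda) = b_s - b^2 + \eta^2$, and the entire coefficient collapses to $\eta^2$. This produces the $\eta^2[i\tfrac{|y|^2}{4}\psi]_b$ contribution inside the bracketed kernel, matching the statement.

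The remaining error terms $\tilde R_{Q_b^{(\eta)},z^\flat}$, $V_{Q_b^{(\eta)}-Q_b}z^\flat$, and $R_{u^\flat-w^\flat}$ are simply carried through the pairing against $i\psi_b$, as they are source terms in \eqref{eq:e-eq}. The main obstacle is really just disciplined bookkeeping: ensuring every $ib$, $ib^2$, and $ib_s$ ends up in the right place so that the miraculous cancellation furnished by \eqref{eq:law-fixation} correctly produces $\eta^2$. This cancellation, rather than self-adjointness or the conjugation formula per se, is what motivated the choice \eqref{eq:law-fixation} in the first place, and it is the structural content of the identity \eqref{eq:testing}.
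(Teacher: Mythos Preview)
Your proposal is correct and follows essentially the same route as the paper: both split $\partial_s(\epsilon,\psi_b)_r$ via the product rule, substitute \eqref{eq:e-eq}, move $\mathcal{L}_{Q_b^{(\eta)}}$ across by self-adjointness, apply the conjugation identity \eqref{eq:conj-b-phase2}, and use the law \eqref{eq:law-fixation} to collapse the $|y|^2$-coefficient to $\eta^2$. The only cosmetic difference is organizational: the paper first groups everything into $(\epsilon,\mathcal{L}_{Q_b^{(\eta)}}i\psi_b+b\Lambda\psi_b+\eta\theta_\eta i\psi_b)_r$ and then simplifies via \eqref{eq:law-fixation} and \eqref{eq:conj-b-phase2} at the end, whereas you expand each $\Lambda\psi_b$ occurrence as it arises and track the running coefficient of $(\epsilon,i\tfrac{|y|^2}{4}\psi_b)_r$---but the algebra and the key structural input are identical.
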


\begin{proof}
We compute 
\begin{align*}
\partial_{s}(\epsilon,\psi_{b})_{r} & =(i\partial_{s}\epsilon,i\psi_{b})_{r}+b_{s}(\epsilon,\partial_{b}\psi_{b})_{r}\\
 & =(\mathcal{L}_{Q_{b}^{(\eta)}}\epsilon-ib\Lambda\epsilon+\eta\theta_{\eta}\epsilon,i\psi_{b})_{r}\\
 & \quad+\Big(\frac{\lambda_{s}}{\lambda}+b\Big)(\Lambda Q^{(\eta)},\psi)_{r}+(\tilde{\gamma}_{s}-\eta\theta_{\eta})(Q^{(\eta)},i\psi)_{r}\\
 & \quad+\Big(\frac{\lambda_{s}}{\lambda}+b\Big)(\Lambda\epsilon,\psi_{b})_{r}+(\gamma_{s}-\eta\theta_{\eta})(\epsilon,i\psi_{b})_{r}+b_{s}(\epsilon,\partial_{b}\psi_{b})_{r}\\
 & \quad+((\mathcal{L}_{w^{\flat}}-\mathcal{L}_{Q_{b}^{(\eta)}})\epsilon,i\psi_{b})_{r}+(\tilde R_{Q_{b}^{(\eta)},z^{\flat}},i\psi_{b})_{r}\\
 & \quad+(V_{Q_{b}^{(\eta)}-Q_{b}}z^{\flat},i\psi_{b})_{r}+(R_{u^{\flat}-w^{\flat}},i\psi_{b})_{r}.
\end{align*}
We then use the self-adjointness of $\mathcal{L}_{Q_{b}^{(\eta)}}$,
anti-self-adjointness of $\Lambda$, and \eqref{eq:def-tilde-gamma}
to get 
\begin{align*}
\partial_{s}(\epsilon,\psi_{b})_{r} & =(\epsilon,\mathcal{L}_{Q_{b}^{(\eta)}}i\psi_{b}+b\Lambda\psi_{b}+\eta\theta_{\eta}i\psi_{b})_{r}\\
 & \quad+\Big(\frac{\lambda_{s}}{\lambda}+b\Big)(\Lambda Q^{(\eta)},\psi)_{r}+(\tilde{\gamma}_{s}-\eta\theta_{\eta})(Q^{(\eta)},i\psi)_{r}\\
 & \quad-\Big(\frac{\lambda_{s}}{\lambda}+b\Big)(\epsilon,\Lambda\psi_{b})_{r}+(\tilde{\gamma}_{s}-\eta\theta_{\eta})(\epsilon,i\psi_{b})_{r}+b_{s}(\epsilon,\partial_{b}\psi_{b})_{r}\\
 & \quad-\theta_{z^{\flat}\to Q_{b}^{(\eta)}}(\epsilon,i\psi_{b})_{r}+((\mathcal{L}_{w^{\flat}}-\mathcal{L}_{Q_{b}^{(\eta)}})\epsilon,i\psi_{b})_{r}+(\tilde R_{Q_{b}^{(\eta)},z^{\flat}},i\psi_{b})_{r}\\
 & \quad+(V_{Q_{b}^{(\eta)}-Q_{b}}z^{\flat},i\psi_{b})_{r}+(R_{u^{\flat}-w^{\flat}},i\psi_{b})_{r}.
\end{align*}
Observe by \eqref{eq:law-fixation} 
\begin{align*}
 & -\Big(\frac{\lambda_{s}}{\lambda}+b\Big)(\epsilon,\Lambda\psi_{b})_{r}+b_{s}(\epsilon,\partial_{b}\psi_{b})_{r}\\
 & =-\Big(\frac{\lambda_{s}}{\lambda}+b\Big)(\epsilon,[\Lambda\psi]_{b})_{r}-(b^{2}+\eta^{2})(\epsilon,\partial_{b}\psi_{b})_{r}
\end{align*}
and by \eqref{eq:conj-b-phase2} 
\[
\mathcal{L}_{Q_{b}^{(\eta)}}i\psi_{b}+b\Lambda\psi_{b}-b^{2}\partial_{b}\psi_{b}=[\mathcal{L}_{Q^{(\eta)}}i\psi]_{b}.
\]
Substituting the above two displays, we complete the proof.
\end{proof}
\begin{lem}[Modulation estimates]
\label{lem:mod-est}We have 
\begin{align}
\Big|\frac{\lambda_{s}}{\lambda}+b\Big|+|\tilde{\gamma}_{s}-\eta\theta^{(\eta)}| & \lesssim\alpha^{\ast}\lambda^{2}(\lambda^{m+1}|\log\lambda|+\eta)+\|\epsilon\|_{\dot{H}_{m}^{1}},\label{eq:mod-est-1}\\
|b_{s}+b^{2}+\eta^{2}| & \lesssim\alpha^{\ast}\lambda^{3}(\lambda^{m+1}|\log\lambda|+\eta)+\lambda\|\epsilon\|_{\dot{H}_{m}^{1}}.\label{eq:mod-est-2}
\end{align}
In particular, we have crude estimaes 
\begin{equation}
\Big|\frac{\lambda_{s}}{\lambda}\Big|+|\gamma_{s}|\lesssim\lambda\quad\text{and}\quad|b_{s}|\lesssim\lambda^{2},\label{eq:mod-crude-est}
\end{equation}
and positivity of $b$:
\begin{equation}
b\geq0.\label{eq:pos-b}
\end{equation}
Moreover, we have a degeneracy estimate 
\begin{equation}
(\epsilon,Q_{b}^{(\eta)})_{r}\lesssim\alpha^{\ast}\lambda(\lambda^{m+1}|\log\lambda|+\eta)+(\alpha^{\ast}+\lambda)\mathcal{T}_{\dot{H}_{m}^{1}}^{(\eta,\frac{5}{4})}[\epsilon].\label{eq:mod-degen-est}
\end{equation}
\end{lem}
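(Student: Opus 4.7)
The plan is to differentiate the orthogonality conditions \eqref{eq:orthog} in $s$ and apply the master identity \eqref{eq:testing} to extract the dynamical laws of the modulation parameters. Choosing $\psi = \mathcal{Z}_{\re}$ and $\psi = i\mathcal{Z}_{\im}$ in turn, the two vanishing left-hand sides yield a $2\times 2$ linear system for the unknowns $\frac{\lambda_s}{\lambda}+b$ and $\tilde{\gamma}_s - \eta\theta_\eta$. Its leading coefficient matrix is
\[
\begin{bmatrix}
(\Lambda Q^{(\eta)}, \mathcal{Z}_{\re})_r & (Q^{(\eta)}, i\mathcal{Z}_{\re})_r \\
(\Lambda Q^{(\eta)}, i\mathcal{Z}_{\im})_r & -(Q^{(\eta)}, \mathcal{Z}_{\im})_r
\end{bmatrix},
\]
whose off-diagonals vanish because the real part pairs with a purely imaginary function under $(\cdot,\cdot)_r$, and whose diagonal entries are $1 + O(\eta)$ and $-1 + O(\eta)$ by the non-degeneracy \eqref{eq:non-deg} and the uniform estimate \eqref{eq:est-Q-eta-Q}. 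Hence the matrix is invertible with bounded inverse for all small $\eta$.

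One would then bound each remaining term in \eqref{eq:testing}. Since $\mathcal{Z}_{\re}, \mathcal{Z}_{\im}$ are fixed smooth and compactly supported, the expression $\mathcal{L}_{Q^{(\eta)}} i\psi + \eta\theta_\eta i\psi + \eta^2 \tfrac{i|y|^2\psi}{4}$ is a uniformly smooth, well-decaying function, so pairing with $\epsilon$ gives $O(\|\epsilon\|_{\dot{H}_m^1})$ by duality. The linearization correction $((\mathcal{L}_{w^\flat}-\mathcal{L}_{Q_b^{(\eta)}})\epsilon, i\psi_b)_r$ is bounded by \eqref{eq:Lw-Lqb-1} as $\alpha^*\lambda\|\epsilon\|_{\dot{H}_m^1}$, which absorbs into the main $\|\epsilon\|_{\dot{H}_m^1}$ term. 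Lemma \ref{lem:estimate-RQ,z} produces $O(\alpha^*\lambda^{m+3}|\log\lambda|)$ from $\tilde{R}_{Q_b^{(\eta)},z^\flat}$, while \eqref{eq:est-V} produces $O(\alpha^*\lambda^2\eta)$ from $V_{Q_b^{(\eta)}-Q_b}z^\flat$, together giving the $\alpha^*\lambda^2(\lambda^{m+1}|\log\lambda|+\eta)$ source. Finally \eqref{eq:Ru-w1} bounds $(R_{u^\flat-w^\flat}, i\psi_b)_r$ by $\|\epsilon\|_{\dot{H}_m^1}^2$, which is bootstrap-subleading. Inverting the system then proves \eqref{eq:mod-est-1}. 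For \eqref{eq:mod-est-2}, the gauge \eqref{eq:law-fixation} gives $b_s + b^2 + \eta^2 = 2b(\tfrac{\lambda_s}{\lambda}+b)$, so multiplying \eqref{eq:mod-est-1} by $|b|\lesssim\lambda$ suffices. The crude bounds \eqref{eq:mod-crude-est} follow by substituting the bootstrap hypothesis $\|\epsilon\|_{\dot{H}_m^1}\lesssim\lambda^{3/2}$. For positivity \eqref{eq:pos-b}, the estimate \eqref{eq:mod-est-2} gives $b_s = -b^2 - \eta^2 + O(\lambda^{5/2})$, which is non-positive once $|t_0^\ast|$ and $\alpha^*$ are small; combined with $b(0)=0$ and the fact that $s$ decreases as $t$ decreases backward from $0$, this yields $b(t)\geq 0$ on $[t_0^\ast,0]$.

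The degeneracy estimate \eqref{eq:mod-degen-est} requires a separate argument that exploits the vanishing of the leading terms in \eqref{eq:testing} when $\psi = Q^{(\eta)}$. Indeed, $(\Lambda Q^{(\eta)}, Q^{(\eta)})_r = 0$ by $L^2$-invariance of the scaling, $(Q^{(\eta)}, iQ^{(\eta)})_r = 0$ trivially, and by the phase identity \eqref{eq:phase-inv} the first term $(\epsilon, [\mathcal{L}_{Q^{(\eta)}}iQ^{(\eta)}+\eta\theta_\eta iQ^{(\eta)}+\eta^2\tfrac{i|y|^2 Q^{(\eta)}}{4}]_b)_r$ vanishes identically. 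Consequently $\partial_s(\epsilon, Q_b^{(\eta)})_r$ reduces to a sum of products of a small modulation or interaction factor with either $\|\epsilon\|_{\dot{H}_m^1}$ or an interaction remainder. Passing to the $t$-variable via $\partial_t = \lambda^{-2}\partial_s$ and integrating on $[t,0]$ using $\epsilon(0)=0$ gives
\[
|(\epsilon(t), Q_b^{(\eta)}(t))_r| \leq \int_t^0 \lambda^{-2} |\partial_s(\epsilon, Q_b^{(\eta)})_r|\, dt'.
\]
One then substitutes the modulation bounds \eqref{eq:mod-est-1}--\eqref{eq:mod-est-2}, Lemmas \ref{lem:estimate-RQ,z}--\ref{lem:est of Lw-Lq} and Lemma \ref{lem:est of Ru-w}, and applies the maximal-function integration estimate \eqref{eq:env-prop4} with exponent $\tfrac{5}{4}$ to replace pointwise $\|\epsilon\|_{\dot{H}_m^1}$ factors by $\mathcal{T}_{\dot{H}_m^1}^{(\eta,5/4)}[\epsilon]$, trading one power of $\lambda$ for a power of $\langle t\rangle$. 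The hard part will be this final bookkeeping: ensuring that every contribution collapses precisely to $\alpha^*\lambda(\lambda^{m+1}|\log\lambda|+\eta) + (\alpha^*+\lambda)\mathcal{T}_{\dot{H}_m^1}^{(\eta,5/4)}[\epsilon]$ uniformly in $\eta\in[0,\eta^\ast]$, which hinges on a delicate matching of the exponents in each source term with the weighting of the time maximal function.
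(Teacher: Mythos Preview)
Your proposal is correct and follows essentially the same approach as the paper: differentiate the orthogonality conditions via the testing identity \eqref{eq:testing} with $\psi\in\{\mathcal{Z}_{\re},i\mathcal{Z}_{\im}\}$ to obtain an invertible $2\times2$ system, estimate the error terms exactly as you list (the paper records these as \eqref{eq:mod-est-err}), then use \eqref{eq:law-fixation} for \eqref{eq:mod-est-2}, the bootstrap hypothesis for \eqref{eq:mod-crude-est} and \eqref{eq:pos-b}, and finally substitute $\psi=Q^{(\eta)}$ and invoke \eqref{eq:phase-inv} for the degeneracy estimate, integrating with the maximal function bound \eqref{eq:env-prop4}. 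Your identification of the crucial cancellations---the vanishing of the off-diagonal matrix entries and of the leading term via \eqref{eq:phase-inv}---matches the paper exactly.
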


\begin{rem}[Refined modulation estimates]
\label{rem:refined-mod-est}In the proof of Lemma \ref{lem:mod-est},
we use a crude estimate 
\begin{equation}
|(\epsilon,[\mathcal{L}_{Q^{(\eta)}}i\psi+\eta\theta_{\eta}i\psi+\eta^{2}\tfrac{i|y|^{2}\psi}{4}]_{b})_{r}|\lesssim\|\epsilon\|_{\dot{H}_{m}^{1}}\label{eq:mod-est-temp1}
\end{equation}
for $\psi\in\{\mathcal{Z}_{\re},i\mathcal{Z}_{\im}\}$, which is merely
a consequence of Cauchy-Schwarz. Thus this estimate holds for any
generic $\mathcal{Z}_{\re}$ and $\mathcal{Z}_{\mathrm{im}}$. However,
if one cleverly chooses $\mathcal{Z}_{\re}$ and $\mathcal{Z}_{\im}$
adapted to the generalized nullspace of the linearized operator $\mathcal{L}_{Q}$,
then one can indeed have better estimate. For example, if $\eta=0$,
$\mathcal{Z}_{\re}=\chi_{M}|y|^{2}Q$, and $\mathcal{Z}_{\im}=\chi_{M}\rho$,
then one can obtain a maximal function version of 
\[
\Big|\frac{\lambda_{s}}{\lambda}+b\Big|+|\tilde{\gamma}_{s}|\lesssim o_{M\to\infty}(1)\|L_{Q}\epsilon\|_{L^{2}}+\alpha^{\ast}\lambda^{m+2}|\log\lambda|+(\alpha^{\ast}+C(M)\lambda)\|\epsilon\|_{\dot{H}_{m}^{1}}.
\]
If we look at coefficients of $\|L_{Q}\epsilon\|_{L^{2}}$ and $\|\epsilon\|_{\dot{H}_{m}^{1}}$,
this can be viewed as an improved modulation estimate. However, our
crude modulation estimate \eqref{eq:mod-est-1} still suffices to
close bootstrap.
\end{rem}

\begin{rem}
Notice that \eqref{eq:mod-degen-est} has a small coefficient $\alpha^{\ast}+\lambda$
of $\epsilon$. This is better than a crude estimate $|(\epsilon,Q_{b}^{(\eta)})_{r}|\lesssim\|\epsilon\|_{\dot{H}_{m}^{1}}$.
Such a gain arises because $iQ^{(\eta)}$ lies in the kernel of the
operator $\mathcal{L}_{Q^{(\eta)}}+\eta\theta_{\eta}+\eta^{2}\tfrac{|y|^{2}}{4}$;
see \eqref{eq:phase-inv}. This gain is essential to close our bootstrap
procedure.
\end{rem}

\begin{proof}
Note that the bound \eqref{eq:mod-est-2} follows from \eqref{eq:mod-est-1}
and \eqref{eq:law-fixation}. The crude estimate \eqref{eq:mod-crude-est}
follows from \eqref{eq:mod-est-1} and \eqref{eq:mod-est-2}. The
positivity \eqref{eq:pos-b} follows from $b(t=0)=0$, $b^{2}+\eta^{2}\sim\lambda^{2}$,
and \eqref{eq:mod-est-2} combined with the weak bootstrap hypothesis
$\|\epsilon\|_{\dot{H}_{m}^{1}}\leq\lambda^{\frac{3}{2}}$.

The modulation estimate \eqref{eq:mod-est-1} is obtained by differentiating
the orthogonality conditions in the $s$-variable. Namely, we substitute
$\psi\in\{\mathcal{Z}_{\re},i\mathcal{Z}_{\im}\}$ into \eqref{eq:testing}
and use $(\epsilon,\psi_{b})_{r}=0$.

The fourth and fifth lines of the RHS of \eqref{eq:testing} are treated
as error. Indeed, we use \eqref{eq:theta-est}, \eqref{eq:Lw-Lqb-1},
\eqref{eq:Ru-w1}, \eqref{eq:est-V}, and \eqref{eq:est-Rqz} to get
\begin{equation}
\left\{ \begin{aligned}|\theta_{z^{\flat}\to Q_{b}^{(\eta)}}(\epsilon,i\psi_{b})_{r}| & \lesssim(\alpha^{\ast})^{2}\lambda^{2}\|\epsilon\|_{\dot{H}_{m}^{1}},\\
|(\mathcal{L}_{w^{\flat}}\epsilon-\mathcal{L}_{Q_{b}^{(\eta)}}\epsilon,i\psi_{b})_{r}| & \lesssim\alpha^{\ast}\lambda\|\epsilon\|_{\dot{H}_{m}^{1}},\\
|(R_{u^{\flat}-w^{\flat}},i\psi_{b})_{r}| & \lesssim\|\epsilon\|_{\dot{H}_{m}^{1}}^{2},\\
|(V_{Q_{b}^{(\eta)}-Q_{b}}z^{\flat},i\psi_{b})_{r}| & \lesssim\alpha^{\ast}\lambda^{2}\eta.\\
|(\tilde R_{Q_{b}^{(\eta)},z^{\flat}},i\psi_{b})_{r}| & \lesssim\alpha^{\ast}\lambda^{m+3}|\log\lambda|.
\end{aligned}
\right.\label{eq:mod-est-err}
\end{equation}
This holds for $\psi\in\{\mathcal{Z}_{\re},i\mathcal{Z}_{\im},Q_{b}^{(\eta)}\}$.
Therefore, the fourth and fifth lines of the RHS of \eqref{eq:testing}
can be ignored from now on.

To extract $\frac{\lambda_{s}}{\lambda}+b$ and $\tilde{\gamma}_{s}-\eta\theta_{\eta}$,
we substitute $\psi=\mathcal{Z}_{\re}$ and $\psi=i\mathcal{Z}_{\im}$
to \eqref{eq:testing}, respectively. In case of $\psi=\mathcal{Z}_{\re}$,
we get 
\begin{align*}
 & \Big(\frac{\lambda_{s}}{\lambda}+b\Big)\Big((\Lambda Q^{(\eta)},\mathcal{Z}_{\re})_{r}+O(\|\epsilon\|_{\dot{H}_{m}^{1}})\Big)\\
 & \qquad\approx-(\epsilon,[\mathcal{L}_{Q^{(\eta)}}i\mathcal{Z}_{\re}+\eta\theta_{\eta}i\mathcal{Z}_{\re}+\eta^{2}\tfrac{i|y|^{2}\mathcal{Z}_{\re}}{4}]_{b})_{r}-(\tilde{\gamma}_{s}-\eta\theta_{\eta})(\epsilon,[i\mathcal{Z}_{\re}]_{b})_{r}
\end{align*}
up to error \eqref{eq:mod-est-err}. In case of $\psi=i\mathcal{Z}_{\im}$,
we get 
\begin{align*}
 & (\tilde{\gamma}_{s}-\eta\theta_{\eta})\big((Q^{(\eta)},\mathcal{Z}_{\im})_{r}+O(\|\epsilon\|_{\dot{H}_{m}^{1}})\big)\\
 & \qquad\approx-(\epsilon,[\mathcal{L}_{Q^{(\eta)}}\mathcal{Z}_{\im}+\eta\theta_{\eta}\mathcal{Z}_{\im}+\eta^{2}\tfrac{|y|^{2}\mathcal{Z}_{\im}}{4}]_{b})_{r}-\Big(\frac{\lambda_{s}}{\lambda}+b\Big)(\epsilon,[i\Lambda\mathcal{Z}_{\im}]_{b})_{r}
\end{align*}
up to error \eqref{eq:mod-est-err}. Recall that $(\Lambda Q,\mathcal{Z}_{\re})_{r}=(Q,\mathcal{Z}_{\im})_{r}=1$.
If we \emph{crudely} estimate the inner products containing $\epsilon$
by $\|\epsilon\|_{\dot{H}_{m}^{1}}$ and expand all the errors of
\eqref{eq:mod-est-err}, the modulation estimate \eqref{eq:mod-est-1}
follows.

We turn to the degeneracy estimate \eqref{eq:mod-degen-est}. We substitute
$\psi=Q^{(\eta)}$ into \eqref{eq:testing}. Using the fact that $iQ^{(\eta)}$
lies in the kernel of the operator $\mathcal{L}_{Q^{(\eta)}}+\eta\theta_{\eta}+\eta^{2}\tfrac{|y|^{2}}{4}$
(see \eqref{eq:phase-inv}), i.e. 
\[
\mathcal{L}_{Q^{(\eta)}}iQ^{(\eta)}+\eta\theta_{\eta}iQ^{(\eta)}+\eta^{2}\tfrac{|y|^{2}}{4}iQ^{(\eta)}=0,
\]
 modulation estimate \eqref{eq:mod-est-1}, and error estimate \eqref{eq:mod-est-err},
we get 
\[
|\partial_{s}(\epsilon,Q_{b}^{(\eta)})_{r}|\lesssim\alpha^{\ast}\lambda^{2}(\lambda^{m+1}|\log\lambda|+\eta)+(\alpha^{\ast}\lambda+\|\epsilon\|_{\dot{H}_{m}^{1}})\|\epsilon\|_{\dot{H}_{m}^{1}}.
\]
We then integrate the flow backward in time with $\epsilon(0)=0$
and use \eqref{eq:env-prop4} to get 
\begin{align*}
|(\epsilon,Q_{b}^{(\eta)})_{r}| & \lesssim\int_{t}^{0}\alpha^{\ast}(\lambda^{m+1}|\log\lambda|+\eta)+(\alpha^{\ast}\lambda^{-1}+1)\|\epsilon\|_{\dot{H}_{m}^{1}}dt'\\
 & \lesssim\alpha^{\ast}\lambda(\lambda^{m+1}|\log\lambda|+\eta)+(\alpha^{\ast}+\lambda)\mathcal{T}_{\dot{H}_{m}^{1}}^{(\eta,\frac{5}{4})}[\epsilon].
\end{align*}
This completes the proof of \eqref{eq:mod-degen-est}.
\end{proof}

\subsection{\label{subsec:Transfer-H1-to-L2}$L^{2}$-bound of $\epsilon$}

The coercivity estimate \eqref{eq:coercivity-LQ} controls only $\dot{H}_{m}^{1}$-norm
of $\epsilon$. In order to close our bootstrap argument, we need
to control $L_{m}^{2}$-norm of $\epsilon$. Indeed, we will use Lyapunov/virial
functional method. Since the virial correction is on $\dot{H}^{\frac{1}{2}}$
level, we are required to estimate $L_{m}^{2}$-norm of $\epsilon$.

As we will estimate $\|\epsilon(t)\|_{L^{2}}$ by integrating the
flow backward in time (note that $\epsilon(0)=0$), it is natural
to bound by the time maximal function (recall Section \ref{subsec:Envelopes}).
In this step, we use the Strichartz estimates. 
\begin{lem}[$L^{2}$-estimate of $\epsilon$]
\label{lem:5.16}We have\footnote{Here the time maximal function is applied to $||\epsilon(s(t'),\cdot)\|_{\dot{H}_{m}^{1}}$
for $t'\in[t,0]$.} 
\begin{equation}
\|\epsilon\|_{L^{2}}\lesssim\alpha^{\ast}\lambda^{m+2}|\log\lambda|+\alpha^{\ast}\lambda\eta+\lambda^{-\frac{3}{4}}\mathcal{T}_{\dot{H}_{m}^{1}}^{(\eta,\frac{5}{4})}[\epsilon].\label{eq:L2-est}
\end{equation}
\end{lem}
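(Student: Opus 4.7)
The idea is to recast \eqref{eq:e-sharp-eq} as an inhomogeneous linear Schrödinger equation driven by the flat Laplacian $\Delta_m$ and exploit $\epsilon^\sharp(0)=0$ via Duhamel and the equivariant Strichartz estimates. Using the identity $L_u^{\ast}\mathcal{D}_+^{(u)}u=-\Delta_m u+\mathcal{N}(u)$ from Proposition~\ref{prop:prop3.1}, equation \eqref{eq:e-sharp-eq} can be rewritten as
\[
i\partial_t\epsilon^\sharp+\Delta_m\epsilon^\sharp=[\mathcal{N}(u)-\mathcal{N}(w)]+\Phi_{\mathrm{mod}}+\Phi_{\mathrm{int}},
\]
where $\Phi_{\mathrm{mod}}\coloneqq\tfrac{i}{\lambda^{2}}(\tfrac{\lambda_s}{\lambda}+b)[\Lambda Q^{(\eta)}]_b^\sharp+\tfrac{1}{\lambda^{2}}(\tilde\gamma_s-\eta\theta_\eta)Q_b^{(\eta)\sharp}$ collects the modulation forcings and $\Phi_{\mathrm{int}}\coloneqq\tilde R_{Q_b^{(\eta)\sharp},z}+V_{Q_b^{(\eta)\sharp}-Q_b^\sharp}z$ collects the external/interaction forcings. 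Since $\epsilon^\sharp(0)=0$, Duhamel combined with the Strichartz estimates (Lemma~\ref{lem:Strichartz}) in the admissible pairs $(\infty,2)$ and $(4,4)$ yields
\[
\|\epsilon^\sharp\|_{L^\infty_{[t,0]}L^2_x\cap L^4_{[t,0]}L^4_x}\lesssim\|\Phi_{\mathrm{mod}}+\Phi_{\mathrm{int}}\|_{L^1_{[t,0]}L^2_x}+\|\mathcal{N}(u)-\mathcal{N}(w)\|_{L^{4/3}_{[t,0]}L^{4/3}_x}.
\]

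The local forcings are handled in $L^1_tL^2_x$. The $L^{2}$-isometry of $\sharp$ together with Lemma~\ref{lem:estimate-RQ,z} gives $\|\tilde R_{Q_b^{(\eta)\sharp},z}(t')\|_{L^2_x}\lesssim\alpha^{\ast}\lambda^{m+1}|\log\lambda|$, and the bound \eqref{eq:est-V} on $V_{Q_b^{(\eta)}-Q_b}z^\flat$ gives $\|V_{Q_b^{(\eta)\sharp}-Q_b^\sharp}z(t')\|_{L^2_x}\lesssim\alpha^{\ast}\eta$; integrating over $[t,0]$ produces the two leading contributions $\alpha^{\ast}\lambda^{m+2}|\log\lambda|$ and $\alpha^{\ast}\lambda\eta$ of \eqref{eq:L2-est}. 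For the modulation forcing, dividing the estimate \eqref{eq:mod-est-1} by $\lambda^{2}$ and using $\|\Lambda Q^{(\eta)}\|_{L^2}+\|Q^{(\eta)}\|_{L^2}\lesssim 1$ reproduces those same contributions plus an error of the form $\int_t^0\lambda^{-2}\|\epsilon\|_{\dot H^1_m}\,dt'$, which by the weighted envelope bound \eqref{eq:env-prop4} (with $p=1$, $q=-2$, $s=5/4$) is absorbed into $\lambda^{-3/4}\mathcal{T}^{(\eta,5/4)}_{\dot H^1_m}[\epsilon]$.

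The main contribution is the nonlinearity $\mathcal{N}(u)-\mathcal{N}(w)$, which under $u=w+\epsilon^\sharp$ decomposes into trilinear and quintilinear forms of type \eqref{eq:def-N} each carrying at least one $\epsilon^\sharp$. Applying Lemma~\ref{lem:4/3-est-N} with $\dot H^1_m$-norm placed on one $\epsilon^\sharp$-factor and the remaining factors in $L^{p_j}$, $p_j\in\{2,4\}$ subject to \eqref{eq:multi-2-scal-rel-1}, the schematic estimate for the linear-in-$\epsilon$ part is
\[
\|\mathcal{N}(u)-\mathcal{N}(w)\|_{L^{4/3}_x}\lesssim\lambda^{-1/2}\|\epsilon^\sharp(t')\|_{\dot H^1_m}+(\text{quadratic-or-higher in }\epsilon^\sharp),
\]
the power $\lambda^{-1/2}$ arising from $\|Q_b^{(\eta)\sharp}\|_{L^4}\sim\lambda^{-1/2}$ (contributions containing $z$ carry an extra $\alpha^{\ast}$). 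Converting via $\|\epsilon^\sharp\|_{\dot H^1_m}=\lambda^{-1}\|\epsilon\|_{\dot H^1_m}$, the linear term becomes $\bigl\|\lambda(t')^{-3/2}\|\epsilon(t')\|_{\dot H^1_m}\bigr\|_{L^{4/3}_{[t,0]}}$, which by \eqref{eq:env-prop1} and \eqref{eq:env-prop4} with $(p,q,s)=(4/3,-3/2,5/4)$ (so that $\tfrac{1}{p}+q+s=\tfrac{1}{2}>0$) is bounded by $\lambda(t)^{-3/4}\mathcal{T}^{(\eta,5/4)}_{\dot H^1_m}[\epsilon](t)$, furnishing the last term of \eqref{eq:L2-est}. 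The quadratic-and-higher remainders are handled by Hölder in time (splitting $L^{4/3}_t\subset L^4_tL^2_t$) using one factor in $L^4_tL^4_x$, one in $L^\infty_tL^2_x$ together with $\lambda^{-1/2}$, and the bootstrap hypothesis \eqref{eq:bootstrap-hyp}; they are absorbed on the left-hand side by the $L^4_{[t,0]}L^4_x$-norm produced by Strichartz, provided $|t_0^\ast|$ is taken small.

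\textbf{Main obstacle.} The delicate point is that the estimate must be uniform in $\eta\geq 0$, so the time-envelope machinery of Section~\ref{subsec:Envelopes} is essential: the nonlinear contribution closes only because \eqref{eq:env-prop4} with $s=5/4$ produces precisely the exponent $\lambda^{-3/4}$ appearing in the statement, and this exponent is tight. The threshold $s>3/4$ is forced by admissibility of the envelope integration, while the specific value $s=5/4$ is dictated by the matching with the Lyapunov/virial step in Section~\ref{subsec:virial-lyapunov}. A secondary technical point is the need to juggle two different admissible pairs simultaneously: $(\infty,2)$ to digest the short-scale forcings $\Phi_{\mathrm{int}},\Phi_{\mathrm{mod}}$ in $L^1_tL^2_x$ (which do not pair well with $L^{4/3}_tL^{4/3}_x$) and $(4,4)$ to digest the nonlinearity through the duality Lemma~\ref{lem:4/3-est-N}, so both the $L^\infty_tL^2_x$- and $L^4_tL^4_x$-Strichartz norms must be carried along on the left-hand side of the Duhamel bound.
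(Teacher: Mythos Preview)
Your overall structure matches the paper's proof, but there is a genuine gap in your treatment of the modulation forcing $\Phi_{\mathrm{mod}}$. You propose to estimate it in $L^{1}_{[t,0]}L^{2}_{x}$ using $\|[\Lambda Q^{(\eta)}]_b^\sharp\|_{L^2}+\|Q_b^{(\eta)\sharp}\|_{L^2}\lesssim 1$, which yields the time integral $\int_t^0 \lambda^{-2}\|\epsilon\|_{\dot H^1_m}\,dt'$. You then claim that \eqref{eq:env-prop4} with $(p,q,s)=(1,-2,\tfrac{5}{4})$ bounds this by $\lambda^{-3/4}\mathcal{T}^{(\eta,5/4)}_{\dot H^1_m}[\epsilon]$. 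This is incorrect: the exponent produced by \eqref{eq:env-prop4} is $q+\tfrac{1}{p}=-2+1=-1$, so your argument only delivers $\lambda^{-1}\mathcal{T}^{(\eta,5/4)}_{\dot H^1_m}[\epsilon]$. This is exactly the loss the lemma is designed to avoid; see the remark immediately after the statement, where the paper explicitly notes that the $L^2$-energy route loses $\lambda^{1}$ and that losing strictly less is essential for the virial correction to remain perturbative in Section~\ref{subsec:virial-lyapunov}.

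The fix, which is what the paper actually does, is to place $\Phi_{\mathrm{mod}}$ in the \emph{same} dual Strichartz norm $L^{4/3}_{[t,0],x}$ as the nonlinearity, exploiting $\|[\Lambda Q^{(\eta)}]_b^\sharp\|_{L^{4/3}_x}+\|Q_b^{(\eta)\sharp}\|_{L^{4/3}_x}\lesssim\lambda^{1/2}$. This converts the integrand to $\lambda^{-3/2}\|\epsilon\|_{\dot H^1_m}$, and now \eqref{eq:env-prop4} with $(p,q,s)=(\tfrac{4}{3},-\tfrac{3}{2},\tfrac{5}{4})$ gives exponent $q+\tfrac{1}{p}=-\tfrac{3}{2}+\tfrac{3}{4}=-\tfrac{3}{4}$ as required. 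So your ``secondary technical point'' has the pairing backwards: the interaction forcings $\Phi_{\mathrm{int}}$ can indeed go in $L^1_tL^2_x$ (they carry no $\epsilon$ and already produce the first two terms of \eqref{eq:L2-est}), but the modulation forcings must go through $L^{4/3}_{t,x}$ alongside the nonlinearity.
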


\begin{rem}
Here we lose $\lambda^{\frac{3}{4}}$ from $\dot{H}_{m}^{1}$ bound.
It is important not to lose $\lambda^{1}$ in order to guarantee the
virial correction term to be relatively small compared to the energy
functional. See Section \ref{subsec:virial-lyapunov}.
\end{rem}

\begin{rem}
In the proof, we use the Strichartz estimates. If one merely tries
to use the energy method on $L^{2}$, then one ends up with $\lambda^{-1}\mathcal{T}[\epsilon]$.
The point of using the Strichartz estimates is that we can control
$L^{p}$ norm of $\epsilon$ for $p<2$. Alternatively, one can apply
the energy method on $\dot{H}^{-\frac{1}{2}}$ with the embedding
$L^{\frac{4}{3}}\hookrightarrow\dot{H}^{-\frac{1}{2}}$ and interpolate
it with $\|\epsilon^{\sharp}\|_{\dot{H}_{m}^{1}}=\lambda^{-1}\|\epsilon\|_{\dot{H}_{m}^{1}}$.
\end{rem}

\begin{rem}
Losing $\lambda^{\frac{3}{4}}$ from $\dot{H}_{m}^{1}$ bound is not
optimal. In fact, one can combine Strichartz estimates and local energy
estimate (see for instance \cite{BurqPlanchonStalkerTahvildar-Zadeh2003JFA,GustafsonNakanishiTsai2010CMP}
and \cite[Theorem 10.1]{GustafsonNakanishiTsai2010CMP})
\[
\||x|^{-1}u\|_{L_{t,x}^{2}}\lesssim\|u_{0}\|_{L^{2}}+\||x|(i\partial_{t}+\Delta_{m})u\|_{L_{t,x}^{2}}
\]
to replace $\lambda^{\frac{3}{4}}$ loss by $\lambda^{\frac{1}{2}}$
loss. Nevertheless, this does not significantly improve the result
of the paper. In particular, $\|\epsilon\|_{L^{2}}\lesssim\lambda^{-\frac{1}{2}}\|\epsilon\|_{\dot{H}_{m}^{1}}$
is still \emph{not enough} to say that the $\eta$-correction term
$\frac{\eta\theta_{\eta}}{2}M[\epsilon]$ is relatively small compared
to the energy functional. See Section \ref{subsec:virial-lyapunov}.
\end{rem}

\begin{rem}
Compare \eqref{eq:e-sharp-prem} and \eqref{eq:e-sharp-eq}. Deleting
$[\frac{|y|^{2}}{4}Q^{(\eta)}]_{b}^{\sharp}$ terms is crucial for
small $m$. By \eqref{eq:law-fixation}, we are able to delete $[\frac{|y|^{2}}{4}Q^{(\eta)}]_{b}^{\sharp}$.
Otherwise, we should have estimated 
\[
\frac{1}{\lambda^{2}}\Big(\Big(\frac{\lambda_{s}}{\lambda}+b\Big)[\Lambda Q_{b}^{(\eta)}]^{\sharp}+(\tilde{\gamma}_{s}-\eta\theta_{\eta})Q_{b}^{(\eta)\sharp}-(b_{s}+b^{2})[\tfrac{|y|^{2}}{4}Q^{(\eta)}]_{b}^{\sharp}\Big).
\]
A slow decay of $\Lambda Q_{b}^{(\eta)}$ and $\frac{|y|^{2}}{4}Q_{b}^{(\eta)}$
is problematic for small $m$. For example, they do not belong to
$L^{p}$ for any $p\leq2$ if $m=1$ and $\eta=0$. This prevents
us to obtain any useful $L^{2}$ bound of $\epsilon$. Here the law
\eqref{eq:law-fixation} is really crucial. Without \eqref{eq:law-fixation},
we guess that one should truncate the profile $Q_{b}^{(\eta)}$ (say
$\tilde Q_{b}^{(\eta)}$) with carefully chosen radius, make an ansatz
$u=\tilde Q_{b}^{(\eta)\sharp}+z+\epsilon^{\sharp}$, and control
the errors arising from $\tilde Q_{b}^{(\eta)}$. This will generate
more error terms and the analysis would become much more complicated.
\end{rem}

\begin{proof}
We use $\epsilon^{\sharp}$-equation \eqref{eq:e-sharp-eq} in the
following form: 
\begin{align}
i\partial_{t}\epsilon^{\sharp}+\Delta_{m}\epsilon^{\sharp} & =\frac{1}{\lambda^{2}}i\Big(\frac{\lambda_{s}}{\lambda}+b\Big)[\Lambda Q^{(\eta)}]_{b}^{\sharp}+\frac{1}{\lambda^{2}}(\tilde{\gamma}_{s}-\eta\theta_{\eta})Q_{b}^{(\eta)\sharp}\label{eq:L2-est-temp}\\
 & \quad+\tilde R_{Q_{b}^{(\eta)\sharp},z}+V_{Q_{b}^{(\eta)\sharp}-Q_{b}^{\sharp}}z\nonumber \\
 & \quad+(\mathcal{L}_{w}+\Delta_{m})\epsilon^{\sharp}+R_{u-w}.\nonumber 
\end{align}
By Strichartz estimates (Lemma \ref{lem:Strichartz}) and $\epsilon(0)=0$,
it suffices to estimate the RHS of \eqref{eq:L2-est-temp} by the
dual Strichartz norm.

We treat the first and second lines of \eqref{eq:L2-est-temp}. Note
that 
\[
\|[\Lambda Q^{(\eta)}]_{b}^{\sharp}\|_{L_{x}^{\frac{4}{3}}}+\|Q_{b}^{(\eta)\sharp}\|_{L_{x}^{\frac{4}{3}}}\lesssim\lambda^{\frac{1}{2}}.
\]
Combining this with the modulation estimate \eqref{eq:mod-est-1}
and the maximal estimate \eqref{eq:env-prop4}, we get 
\begin{align*}
 & \Big\|\frac{1}{\lambda^{2}}\bigg(i\Big(\frac{\lambda_{s}}{\lambda}+b\Big)[\Lambda Q^{(\eta)}]_{b}^{\sharp}+\tilde{\gamma}_{s}Q_{b}^{(\eta)\sharp}\bigg)\Big\|_{L_{[t,0],x}^{\frac{4}{3}}}\\
 & \lesssim\lambda^{-\frac{3}{4}}\Big(\mathcal{T}^{(\eta,\frac{5}{4})}\Big[\frac{\lambda_{s}}{\lambda}+b\Big]+\mathcal{T}^{(\eta,\frac{5}{4})}[\tilde{\gamma}_{s}-\eta\theta_{\eta}]\Big)\\
 & \lesssim\lambda^{-\frac{3}{4}}(\alpha^{\ast}\lambda^{m+3}|\log\lambda|+\alpha^{\ast}\lambda^{2}\eta+\mathcal{T}_{\dot{H}_{m}^{1}}^{(\eta,\frac{5}{4})}[\epsilon]).
\end{align*}
The second line of the RHS of \eqref{eq:L2-est-temp} is estimated
as 
\begin{align*}
\|\tilde R_{Q_{b}^{\sharp},z}+V_{Q_{b}^{(\eta)\sharp}-Q_{b}^{\sharp}}z\|_{L_{[t,0]}^{1}L_{x}^{2}} & \lesssim\alpha^{\ast}\lambda^{m+2}|\log\lambda|+\alpha^{\ast}\lambda\eta.
\end{align*}

To complete the proof, it only remains to show
\[
\|(\mathcal{L}_{w}+\Delta_{m})\epsilon^{\sharp}+R_{u-w}\|_{L_{[t,0],x}^{\frac{4}{3}}}\lesssim\lambda^{-\frac{3}{4}}\mathcal{T}_{\dot{H}_{m}^{1}}^{(\eta,\frac{5}{4})}[\epsilon].
\]
We note that 
\begin{align*}
 & (\mathcal{L}_{w}+\Delta_{m})\epsilon^{\sharp}+R_{u-w}\\
 & =\sum_{\substack{\psi_{1},\psi_{2},\psi_{3}\in\{Q_{b}^{(\eta)\sharp},z,\epsilon^{\sharp}\}:\\
\#\{i:\psi_{i}=\epsilon^{\sharp}\}\geq1
}
}[\mathcal{N}_{3,0}+\mathcal{N}_{3,1}+\mathcal{N}_{3,2}]+\sum_{\substack{\psi_{1},\dots,\psi_{5}\in\{Q_{b}^{(\eta)\sharp},z,\epsilon^{\sharp}\}:\\
\#\{i:\psi_{i}=\epsilon^{\sharp}\}\geq1
}
}[\mathcal{N}_{5,1}+\mathcal{N}_{5,2}].
\end{align*}
In the above expression, the worst terms occur when only one $\psi_{i}$
is $\epsilon^{\sharp}$ and all the other $\psi_{i}$s are $Q_{b}^{(\eta)\sharp}$
because the following estimate saturates when $\psi_{i}=Q_{b}^{(\eta)\sharp}$:
\[
\|\psi\|_{L_{x}^{p}}\lesssim\lambda^{\frac{2}{p}-1};\qquad\forall\psi\in\{Q_{b}^{(\eta)\sharp},z,\epsilon^{\sharp}\},\ p\geq2.
\]
Our main tools are $L^{\frac{4}{3}}$-estimate (Lemma \ref{lem:4/3-est-N})
and maximal function estimate \eqref{eq:env-prop4}. By \eqref{eq:N-est-3},
\begin{align*}
 & \bigg\|\sum_{\substack{\psi_{1},\psi_{2},\psi_{3}\in\{Q_{b}^{(\eta)\sharp},z,\epsilon^{\sharp}\}:\\
\#\{i:\psi_{i}=\epsilon^{\sharp}\}\geq1
}
}[\mathcal{N}_{3,0}+\mathcal{N}_{3,1}+\mathcal{N}_{3,2}]\bigg\|_{L_{x}^{\frac{4}{3}}}\\
 & \lesssim\|\epsilon^{\sharp}\|_{\dot{H}_{m}^{1}}\sum_{\psi_{1},\psi_{2}\in\{Q_{b}^{(\eta)\sharp},z,\epsilon^{\sharp}\}}\|\psi_{1}\|_{L^{2}}\|\psi_{2}\|_{L^{4}}\lesssim\lambda^{-\frac{3}{2}}\|\epsilon\|_{\dot{H}_{m}^{1}}.
\end{align*}
Applying \eqref{eq:env-prop4}, we get 
\[
\bigg\|\sum_{\substack{\psi_{1},\psi_{2},\psi_{3}\in\{Q_{b}^{(\eta)\sharp},z,\epsilon^{\sharp}\}:\\
\#\{i:\psi_{i}=\epsilon^{\sharp}\}\geq1
}
}[\mathcal{N}_{3,0}+\mathcal{N}_{3,1}+\mathcal{N}_{3,2}]\bigg\|_{L_{[t,0],x}^{\frac{4}{3}}}\lesssim\lambda^{-\frac{3}{4}}\mathcal{T}_{\dot{H}_{m}^{1}}^{(\eta,\frac{5}{4})}[\epsilon].
\]
Treating $\mathcal{N}_{5,1}$ and $\mathcal{N}_{5,2}$ terms is similar;
this time one uses \eqref{eq:N-est-4} and scaling relation \eqref{eq:multi-2-scal-rel-1}.
We omit the details. This ends the proof.
\end{proof}

\subsection{\label{subsec:virial-lyapunov}Lyapunov/virial functional}

So far, we have seen that how $\dot{H}_{m}^{1}$ norm on $\epsilon$
controls variations of modulation parameters and also $L^{2}$ norm
of $\epsilon$. In this section, in order to close our bootstrap,
we will control $\dot{H}_{m}^{1}$ norm of $\epsilon$ using the Lyapunov
functional. Our goal is to find a Lyapunov functional $\mathcal{I}$
such that $\mathcal{I}$ is coercive and $\partial_{s}\mathcal{I}$
is almost positive. For instance, if we have 
\begin{equation}
\left\{ \begin{aligned}\tilde{\mathcal{I}} & \sim\|\epsilon\|_{\dot{H}_{m}^{1}}^{2},\\
\partial_{s}\tilde{\mathcal{I}} & \geq-o(\lambda)\|\epsilon\|_{\dot{H}_{m}^{1}}^{2},
\end{aligned}
\right.\label{eq:rough-lyapunov}
\end{equation}
and assume power-type bound $\|\epsilon\|_{\dot{H}_{m}^{1}}\lesssim\lambda^{\ell}\sim|t|^{\ell}$
with $\ell>0$, then by FTC 
\[
\|\epsilon\|_{\dot{H}_{m}^{1}}^{2}\lesssim\tilde{\mathcal{I}}(t)\lesssim o(1)\cdot\int_{t}^{t_{0}}\lambda^{-1}\|\epsilon\|_{\dot{H}_{m}^{1}}^{2}dt'\lesssim o(1)\lambda^{2\ell}.
\]
Note that we have used $\epsilon(t_{0})=0$ and hence $\tilde{\mathcal{I}}(t_{0})=0$.
As we integrate from $0$ to $t$ (backward in time), positive terms
of $\partial_{s}\tilde{\mathcal{I}}$ are safe.

In view of the coercivity, a natural candidate for $\tilde{\mathcal{I}}$
would be the energy functional $E$. In general, for a functional
$\mathcal{A}$, we define its quadratic (and higher) part of $\mathcal{A}[w^{\flat}+\cdot]$
by 
\begin{equation}
\mathcal{A}_{w^{\flat}}^{(\mathrm{qd})}[\epsilon]\coloneqq\mathcal{A}[w^{\flat}+\epsilon]-\mathcal{A}[w^{\flat}]-(\frac{\delta\mathcal{A}}{\delta u}\Big|_{u=w^{\flat}},\epsilon)_{r}.\label{eq:quad-func-def}
\end{equation}
We first start with the energy functional $E_{w^{\flat}}^{\mathrm{(qd)}}[\epsilon]$.
It turns out that, however, $\partial_{s}E_{w^{\flat}}^{\mathrm{(qd)}}[\epsilon]$
does not enjoy satisfactory lower bound. To resolve the difficulty,
we make two modifications from $E_{w^{\flat}}^{\mathrm{(qd)}}[\epsilon]$.
Firstly, we will observe that $\partial_{s}E_{w^{\flat}}^{\mathrm{(qd)}}[\epsilon]$
contains $\frac{2\lambda_{s}}{\lambda}E_{w^{\flat}}^{\mathrm{(qd)}}[\epsilon]$.
Note that $\frac{2\lambda_{s}}{\lambda}\approx-2b$ can only be controlled
by $\lambda$, so $\frac{2\lambda_{s}}{\lambda}E_{w^{\flat}}^{\mathrm{(qd)}}[\epsilon]$
is neither positive nor small. Thus it is natural to rescale it to
hide $\frac{2\lambda_{s}}{\lambda}E_{w^{\flat}}^{\mathrm{(qd)}}[\epsilon]$
using 
\[
\lambda^{2}\partial_{s}(\lambda^{-2}E_{w^{\flat}}^{\mathrm{(qd)}}[\epsilon])=\partial_{s}E_{w^{\flat}}^{\mathrm{(qd)}}[\epsilon]-\frac{2\lambda_{s}}{\lambda}E_{w^{\flat}}^{\mathrm{(qd)}}[\epsilon].
\]
Secondly, there is another non-perturbative effect both from scalings
and rotations. In view of \eqref{eq:e-eq}, $\epsilon$ actually evolves
under 
\[
i\partial_{s}\epsilon-\mathcal{L}_{w^{\flat}}\epsilon+ib\Lambda\epsilon-\eta\theta_{\eta}\epsilon\approx0.
\]
To incorporate this, we add a correction $b\Phi_{A}[\epsilon]+\frac{\eta\theta_{\eta}}{2}M[\epsilon]$
to our energy functional $E_{w^{\flat}}^{\mathrm{(qd)}}[\epsilon]$.
The former term $b\Phi_{A}[\epsilon]$ is a localized virial correction,
and the latter term $\frac{\eta\theta_{\eta}}{2}M[\epsilon]$ is a
mass correction. We will choose 
\[
\mathcal{I}_{A}\coloneqq\lambda^{-2}(E_{w^{\flat}}^{\mathrm{(qd)}}[\epsilon]+b\Phi_{A}[\epsilon]+\tfrac{\eta\theta_{\eta}}{2}M[\epsilon]).
\]
Here, $A\geq1$ is some large parameter to be chosen later. Still,
we need a further correction on $\mathcal{I}_{A}$. This is due to
\[
\frac{2\lambda_{s}}{\lambda}\cdot\frac{1}{8}\int(\Delta^{2}\phi_{A})|\epsilon|^{2}
\]
arising from the virial correction. As the coercivity of $\lambda^{2}\mathcal{I}_{A}$
only controls $\|\epsilon\|_{\dot{H}_{m}^{1}}^{2}$, this term is
on the borderline of sufficient estimates. We can resolve this issue
via averaging the virial correction. Eventually, we will use 
\[
\mathcal{I}\coloneqq\frac{2}{\log A}\int_{A^{1/2}}^{A}\mathcal{I}_{A'}\frac{dA'}{A'}=\lambda^{-2}\Big(E_{w^{\flat}}^{\mathrm{(qd)}}[\epsilon]+\frac{2}{\log A}\int_{A^{1/2}}^{A}b\Phi_{A'}[\epsilon]\frac{dA'}{A'}+\tfrac{\eta\theta_{\eta}}{2}M[\epsilon]\Big).
\]

In the sequel, we denote by $\err$ the collection of small terms
satisfying 
\begin{align}
|\err| & \lesssim\lambda\cdot\Big(A(\alpha^{\ast}\lambda^{m+2}+\alpha^{\ast}\lambda\eta)^{2}\label{eq:err}\\
 & \qquad\qquad+(\alpha^{\ast}+o_{A\to\infty}(1)+A\lambda^{\frac{1}{4}})(\mathcal{T}_{\dot{H}_{m}^{1}}^{(\eta,\frac{5}{4})}[\epsilon])^{2}\Big).\nonumber 
\end{align}
This error may differ line by line. Roughly speaking, $\err$ can
be thought of as $o(\lambda)\|\epsilon\|_{\dot{H}_{m}^{1}}^{2}$ in
\eqref{eq:rough-lyapunov} under rough substitutions $\|\epsilon\|_{L^{2}}\lesssim\lambda^{-\frac{3}{4}}\|\epsilon\|_{\dot{H}_{m}^{1}}$
and \eqref{eq:bootstrap-conc}.
\begin{lem}[Estimates of $E_{w^{\flat}}^{\mathrm{(qd)}}$]
\label{lem:quad-energy}We have coercivity
\begin{equation}
E_{w^{\flat}}^{\mathrm{(qd)}}[\epsilon]+O\big((\alpha^{\ast}+\lambda^{\frac{1}{2}})\|\epsilon\|_{\dot{H}_{m}^{1}}^{2}\big)\sim\|\epsilon\|_{\dot{H}_{m}^{1}}^{2}.\label{eq:quad-energy-coer}
\end{equation}
We have the derivative identity 
\begin{align}
\partial_{s}E_{w^{\flat}}^{\mathrm{(qd)}}[\epsilon] & =\frac{\lambda_{s}}{\lambda}\{2E_{w^{\flat}}^{\text{(qd)}}[\epsilon]-(R_{u^{\flat}-w^{\flat}},\Lambda Q_{b}^{(\eta)})_{r}\}\label{eq:quad-energy-deriv-precise}\\
 & \quad-\gamma_{s}(\mathcal{L}_{w^{\flat}}\epsilon+R_{u^{\flat}-w^{\flat}},i\epsilon)_{r}\nonumber \\
 & \quad+\Big(\mathcal{L}_{Q_{b}^{(\eta)}}\epsilon,\big(\frac{\lambda_{s}}{\lambda}+b\big)[\Lambda Q^{(\eta)}]_{b}-(\tilde{\gamma}_{s}-\eta\theta_{\eta})iQ_{b}^{(\eta)}\Big)_{r}+\err.\nonumber 
\end{align}
We can formally write \eqref{eq:quad-energy-deriv-precise} as\footnote{As we do not assume decay properties on $z^{\flat}$ and $\epsilon$,
the terms $(R_{u^{\flat}-w^{\flat}},\Lambda z^{\flat})_{r}$ and $(\mathcal{L}_{w^{\flat}}\epsilon+R_{u^{\flat}-w^{\flat}},\Lambda\epsilon)_{r}$
do not make sense by themselves.} 
\begin{align}
\partial_{s}E_{w^{\flat}}^{(\mathrm{qd})}[\epsilon] & =\frac{\lambda_{s}}{\lambda}\{(R_{u^{\flat}-w^{\flat}},\Lambda z^{\flat})_{r}+(\mathcal{L}_{w^{\flat}}\epsilon+R_{u^{\flat}-w^{\flat}},\Lambda\epsilon)_{r}\}\label{eq:quad-energy-deriv-formal}\\
 & \quad-\gamma_{s}(\mathcal{L}_{w^{\flat}}\epsilon+R_{u^{\flat}-w^{\flat}},i\epsilon)_{r}\nonumber \\
 & \quad+\Big(\mathcal{L}_{Q_{b}^{(\eta)}}\epsilon,\big(\frac{\lambda_{s}}{\lambda}+b\big)[\Lambda Q^{(\eta)}]_{b}-(\tilde{\gamma}_{s}-\eta\theta_{\eta})iQ_{b}^{(\eta)}\Big)_{r}+\err.\nonumber 
\end{align}
\end{lem}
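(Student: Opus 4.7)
The plan is to establish both assertions by combining: (i) the self-dual form of the energy $E[u]=\tfrac{1}{2}\int|\D_+^{(u)}u|^2$, (ii) the linearization \eqref{eq:lin-Bogom} of the Bogomol'nyi operator, and (iii) the rescaling identity $E[u^\flat]=\lambda^2 E[u]$ together with conservation of $E[u]$ along \eqref{eq:CSS}.

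First, for the coercivity \eqref{eq:quad-energy-coer}: since $\D_+^{(w^\flat+\epsilon)}(w^\flat+\epsilon)=\D_+^{(w^\flat)}w^\flat+L_{w^\flat}\epsilon+N_{w^\flat}(\epsilon)$, squaring and discarding the constant and linear parts yields
$$E_{w^\flat}^{(\mathrm{qd})}[\epsilon]=\tfrac{1}{2}(\mathcal{L}_{w^\flat}\epsilon,\epsilon)_r+(\text{cubic and quartic in }\epsilon).$$
I will approximate $\mathcal{L}_{w^\flat}$ by $\mathcal{L}_Q=L_Q^\ast L_Q$, absorbing the differences $\mathcal{L}_{w^\flat}-\mathcal{L}_{Q_b^{(\eta)}}$ and $\mathcal{L}_{Q_b^{(\eta)}}-\mathcal{L}_Q$ into the $(\alpha^\ast+\lambda^{1/2})\|\epsilon\|_{\dot{H}_m^1}^2$ error via the bounds \eqref{eq:Lw-Lqb-2} and \eqref{eq:Lqb-Lq} of Lemma \ref{lem:est of Lw-Lq}. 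The orthogonality conditions \eqref{eq:orthog}, together with continuity in $b$ and the normalization \eqref{eq:non-deg}, make the $2\times2$ matrix required in Lemma \ref{lem:coercivity} non-degenerate, which yields $\|L_Q\epsilon\|_{L^2}^2\gtrsim\|\epsilon\|_{\dot{H}_m^1}^2$. The cubic and quartic remainders are controlled by the duality estimates of Lemma \ref{lem:duality-1} combined with the Hardy--Sobolev embedding \eqref{eq:hardy-Sobolev}, which absorbs them into the $\alpha^\ast\|\epsilon\|_{\dot{H}_m^1}^2$ error through bootstrap smallness of $\|\epsilon\|_{\dot{H}_m^1}$.

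For the derivative identity \eqref{eq:quad-energy-deriv-precise}, conservation of $E[u]$ together with $E[u^\flat]=\lambda^2 E[u]$ yields $\partial_s E[u^\flat]=\tfrac{2\lambda_s}{\lambda}E[u^\flat]$, so writing $u^\flat=w^\flat+\epsilon$ gives
$$\partial_s E_{w^\flat}^{(\mathrm{qd})}[\epsilon]=\tfrac{2\lambda_s}{\lambda}\bigl(E[w^\flat]+(\nabla E[w^\flat],\epsilon)_r+E_{w^\flat}^{(\mathrm{qd})}[\epsilon]\bigr)-\partial_s E[w^\flat]-\partial_s(\nabla E[w^\flat],\epsilon)_r.$$
For $\partial_s E[w^\flat]=(\nabla E[w^\flat],\partial_s w^\flat)_r$ I extract $\partial_s w^\flat$ from the $w^\flat$-equation of Section \ref{sec:modulation}. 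For $\partial_s(\nabla E[w^\flat],\epsilon)_r$ the Hessian of $E$ at $w^\flat$ is $\mathcal{L}_{w^\flat}$, so the product rule together with self-adjointness produces $(\nabla E[w^\flat],\partial_s\epsilon)_r+(\mathcal{L}_{w^\flat}\partial_s w^\flat,\epsilon)_r$. Substituting the $\epsilon$-equation \eqref{eq:e-eq} for $\partial_s\epsilon$, using the Pohozaev-type identity $(\nabla E[w^\flat],\Lambda w^\flat)_r=2E[w^\flat]$ to absorb the $\Lambda w^\flat$-piece, and invoking the algebraic identity \eqref{eq:alg-idty} combined with the dynamical law \eqref{eq:law-fixation} to eliminate the $\tfrac{|y|^2}{4}Q_b^{(\eta)}$-piece, I arrive at the three principal terms after replacing $\mathcal{L}_{w^\flat}$ by $\mathcal{L}_{Q_b^{(\eta)}}$ modulo $\err$.

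The bulk of the work, and the main technical obstacle, is verifying that every remaining contribution fits inside $\err$ as defined in \eqref{eq:err}. I will dispose of the $\tilde R_{Q_b^{(\eta)},z^\flat}$-contributions using the logarithmically refined splitting of Lemma \ref{lem:refined-decomp} (which is what forces the $A\lambda^{1/4}$ weight); of the $V_{Q_b^{(\eta)}-Q_b}z^\flat$-contributions using the bound \eqref{eq:est-V}; and of the higher-order $R_{u^\flat-w^\flat}$-contributions using Lemma \ref{lem:est of Ru-w} combined with the Strichartz-based $L^2$-bound of Lemma \ref{lem:5.16}, so that every occurrence of $\|\epsilon\|_{L^2}$ can be converted into the maximal function $\mathcal{T}^{(\eta,5/4)}_{\dot{H}_m^1}[\epsilon]$. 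The formal identity \eqref{eq:quad-energy-deriv-formal} is then obtained by integrating by parts the $\tfrac{\lambda_s}{\lambda}$-pieces against $\Lambda$; since $(R_{u^\flat-w^\flat},\Lambda z^\flat)_r$ and $(\mathcal{L}_{w^\flat}\epsilon+R_{u^\flat-w^\flat},\Lambda\epsilon)_r$ need not be individually integrable, I keep \eqref{eq:quad-energy-deriv-precise} as the rigorous statement and present \eqref{eq:quad-energy-deriv-formal} only after absolute convergence is guaranteed by the combined sum. The principal difficulty lies in the exact cancellations that eliminate the borderline scaling term $\tfrac{2\lambda_s}{\lambda}(\nabla E[w^\flat],\epsilon)_r$ and the slowly decaying $\tfrac{|y|^2}{4}Q_b^{(\eta)}$-pieces, which is precisely what motivates the dynamical law \eqref{eq:law-fixation}.
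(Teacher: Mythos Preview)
Your coercivity argument matches the paper's closely. One small difference: the paper controls the cubic and higher remainders in $E_{w^\flat}^{(\mathrm{qd})}[\epsilon]$ by putting one factor of $\epsilon$ in $L^2$ and invoking the bootstrap hypothesis $\|\epsilon\|_{L^2}\le\lambda^{1/2}$, which is where the $\lambda^{1/2}$ in the error comes from; your appeal to smallness of $\|\epsilon\|_{\dot H^1_m}$ alone would give an $\|\epsilon\|_{\dot H^1_m}$ coefficient rather than $\lambda^{1/2}$, which is also fine but not quite what is stated.

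For the derivative identity your route through energy conservation is different from, and more circuitous than, the paper's. The paper simply differentiates $E_{w^\flat}^{(\mathrm{qd})}[\epsilon]$ directly as a functional of the pair $(w^\flat,\epsilon)$, which immediately produces
\[
\partial_s E_{w^\flat}^{(\mathrm{qd})}[\epsilon]=(R_{u^\flat-w^\flat},\partial_s w^\flat)_r+(\mathcal{L}_{w^\flat}\epsilon+R_{u^\flat-w^\flat},\partial_s\epsilon)_r,
\]
and the key cancellation is then transparent: substituting \eqref{eq:e-eq} into $\partial_s\epsilon$, the contribution $-i(\mathcal{L}_{w^\flat}\epsilon+R_{u^\flat-w^\flat})$ pairs against itself to give $(\mathcal{L}_{w^\flat}\epsilon+R_{u^\flat-w^\flat},-i(\mathcal{L}_{w^\flat}\epsilon+R_{u^\flat-w^\flat}))_r=0$. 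Your formula is algebraically equivalent---indeed, once you insert $(\nabla E[u^\flat],\partial_s u^\flat)_r=\tfrac{2\lambda_s}{\lambda}E[u^\flat]$ back into your expansion it collapses to the paper's display above---but two of the ingredients you list are not actually needed. The Pohozaev identity $(\nabla E[w^\flat],\Lambda w^\flat)_r=2E[w^\flat]$ is superfluous; the scaling cancellation comes directly from the conservation identity you already used. And there is no $\tfrac{|y|^2}{4}Q_b^{(\eta)}$ piece left to eliminate: that term is already absent from the $\epsilon$-equation \eqref{eq:e-eq} precisely because the dynamical law \eqref{eq:law-fixation} was imposed upstream, so \eqref{eq:alg-idty} plays no further role here. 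The passage from \eqref{eq:quad-energy-deriv-formal} to \eqref{eq:quad-energy-deriv-precise} is handled in the paper by the scaling identity $\partial_\lambda|_{\lambda=1}E^{(\mathrm{qd})}_{(w^\flat)_\lambda}[\epsilon_\lambda]=2E^{(\mathrm{qd})}_{w^\flat}[\epsilon]$, which packages the two individually divergent inner products into a single finite quantity; your idea of ``keeping \eqref{eq:quad-energy-deriv-precise} as rigorous and \eqref{eq:quad-energy-deriv-formal} as formal'' is the same in spirit.
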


\begin{proof}
To show \eqref{eq:quad-energy-coer}, we recall the expression \eqref{eq:energy-M-expn}.
By definition \eqref{eq:quad-func-def}, we have 
\begin{align*}
E_{w^{\flat}}^{\mathrm{(qd)}}[\epsilon] & =\frac{1}{2}\int\Big(|\partial_{r}\epsilon|^{2}+\frac{m^{2}}{r^{2}}|\epsilon|^{2}\Big)\\
 & \quad-\sum_{\substack{\psi_{1},\dots,\psi_{4}\in\{w^{\flat},\epsilon\}:\\
\#\{i:\psi_{i}=\epsilon\}\geq2
}
}[\frac{1}{4}\mathcal{M}_{4,0}+\frac{m}{2}\mathcal{M}_{4,1}]+\frac{1}{8}\sum_{\substack{\psi_{1},\dots,\psi_{6}\in\{w^{\flat},\epsilon\}:\\
\#\{i:\psi_{i}=\epsilon\}\geq2
}
}\mathcal{M}_{6}.
\end{align*}
Using the multilinear estimates (Lemma \ref{lem:duality-1}) with
$\|w^{\flat}-Q\|_{L^{2}}\lesssim\alpha^{\ast}+\lambda|\log\lambda|^{\frac{1}{2}}$
(see \eqref{eq:Qb-Q}), we can replace $w^{\flat}$ by $Q$: 
\[
E_{w^{\flat}}^{\mathrm{(qd)}}[\epsilon]=E_{Q}^{\mathrm{(qd)}}[\epsilon]+O\big((\alpha^{\ast}+\lambda|\log\lambda|^{\frac{1}{2}})\|\epsilon\|_{\dot{H}_{m}^{1}}^{2}\big).
\]
Using the expansion of energy \eqref{eq:energy-Q-expn}, bootstrap
hypothesis $\|\epsilon\|_{L^{2}}\leq\lambda^{\frac{1}{2}}$, and multilinear
estimate (Lemma \ref{lem:duality-1}), we have 
\[
E_{w^{\flat}}^{\mathrm{(qd)}}[\epsilon]=\frac{1}{2}\int|L_{Q}\epsilon|^{2}+O\big((\alpha^{\ast}+\lambda^{\frac{1}{2}})\|\epsilon\|_{\dot{H}_{m}^{1}}^{2}\big).
\]
We then apply the coercivity \eqref{eq:coercivity-LQ} to conclude
\eqref{eq:quad-energy-coer}.

We turn to compute $\partial_{s}E_{w^{\flat}}^{(\mathrm{qd})}[\epsilon]$.
If we perform the computation at $(s,y)$-scalings, we encounter unbounded-looking
terms. For example, $\partial_{s}w^{\flat}$ and $\partial_{s}\epsilon$
contain $\frac{\lambda_{s}}{\lambda}\Lambda z^{\flat}$ and $\frac{\lambda_{s}}{\lambda}\Lambda\epsilon$,
respectively. These terms are not in $L^{2}$ as we do not assume
decay on $z$ and $\epsilon$. This issue can be resolved if we perform
the computation at $(t,x)$-scalings instead. But we first start computing
$\partial_{s}E_{w^{\flat}}^{(\mathrm{qd})}[\epsilon]$ at $(s,y)$-scalings
to observe a crucial cancellation and rearrange errors. We estimate
errors in $(s,y)$-scalings rigorously. Later, we redo the computation
of $\partial_{s}E_{w^{\flat}}^{\mathrm{(qd)}}[\epsilon]=\lambda^{2}\partial_{t}(\lambda^{2}E_{w}^{\mathrm{(qd)}}[\epsilon^{\sharp}])$
in $(t,x)$-scalings to justify the formal computation.

We start with the formal computation 
\begin{align*}
\partial_{s}E_{w^{\flat}}^{(\mathrm{qd})}[\epsilon] & =(\frac{\delta E}{\delta u}\Big|_{u=w^{\flat}+\epsilon}-\frac{\delta E}{\delta u}\Big|_{u=w^{\flat}}-\frac{\delta^{2}E}{\delta u^{2}}\Big|_{u=w^{\flat}}\epsilon,\partial_{s}w^{\flat})_{r}\\
 & \quad+(\frac{\delta E}{\delta u}\Big|_{u=w^{\flat}+\epsilon}-\frac{\delta E}{\delta u}\Big|_{u=w^{\flat}},\partial_{s}\epsilon)_{r}.
\end{align*}
Note that 
\begin{align*}
\frac{\delta E}{\delta u}\Big|_{u=w^{\flat}+\epsilon}-\frac{\delta E}{\delta u}\Big|_{u=w^{\flat}} & =\mathcal{L}_{w^{\flat}}\epsilon+R_{u^{\flat}-w^{\flat}},\\
\frac{\delta^{2}E}{\delta u^{2}}\Big|_{u=w^{\flat}}\epsilon & =\mathcal{L}_{w^{\flat}}\epsilon.
\end{align*}
Thus 
\begin{align}
\partial_{s}E_{w^{\flat}}^{(\mathrm{qd})}[\epsilon] & =(R_{u^{\flat}-w^{\flat}},\partial_{s}w^{\flat})_{r}+(\mathcal{L}_{w^{\flat}}\epsilon+R_{u^{\flat}-w^{\flat}},\partial_{s}\epsilon)_{r}.\label{eq:5.37}
\end{align}

For the first term of the RHS of \eqref{eq:5.37}, we claim 
\[
(R_{u^{\flat}-w^{\flat}},\partial_{s}w^{\flat})_{r}=\frac{\lambda_{s}}{\lambda}(R_{u^{\flat}-w^{\flat}},\Lambda z^{\flat})_{r}+\err.
\]
To see this, decompose 
\[
\partial_{s}w^{\flat}=b_{s}(\partial_{b}Q_{b}^{(\eta)})-iL_{z^{\flat}}^{\ast}\D_{+}^{(z^{\flat})}z^{\flat}-iV_{Q_{b}^{(\eta)}\to z^{\flat}}z^{\flat}-i\gamma_{s}z^{\flat}+\frac{\lambda_{s}}{\lambda}\Lambda z^{\flat}.
\]
If we use the estimates \eqref{eq:mod-crude-est}, \eqref{eq:Ru-w1},
\eqref{eq:Ru-w2}, and the energy estimates of $z$ as 
\begin{align*}
\|z^{\flat}\|_{L^{2}} & \lesssim\alpha^{\ast},\\
\|L_{z^{\flat}}^{\ast}\D_{+}^{(z^{\flat})}z^{\flat}\|_{L^{2}}+\|V_{Q_{b}\to z^{\flat}}z^{\flat}\|_{L^{2}} & \lesssim\alpha^{\ast}\lambda^{2},
\end{align*}
we have 
\[
|(R_{u^{\flat}-w^{\flat}},\partial_{s}w^{\flat})_{r}|\lesssim\lambda^{2}(\|\epsilon\|_{L^{2}}\|\epsilon\|_{\dot{H}_{m}^{1}}+\|\epsilon\|_{\dot{H}_{m}^{1}}^{2})+\alpha^{\ast}\lambda\|\epsilon\|_{\dot{H}_{m}^{1}}^{2}.
\]
Substituting the $L^{2}$-bound \eqref{eq:L2-est} and an obvious
bound $\|\epsilon\|_{\dot{H}_{m}^{1}}\leq\mathcal{T}_{\dot{H}_{m}^{1}}^{(\eta,\frac{5}{4})}[\epsilon]$,
the claim follows.

We turn to the second term of the RHS of \eqref{eq:5.37}. We compute
using the $\epsilon$-equation \eqref{eq:e-eq} that 
\begin{equation}
(\mathcal{L}_{w^{\flat}}\epsilon+R_{u^{\flat}-w^{\flat}},\partial_{s}\epsilon)_{r}=\eqref{eq:5.13-temp3}+\eqref{eq:5.13-temp4}+\eqref{eq:5.13-temp6}+\eqref{eq:5.13-temp5},\label{eq:5.13-temp2}
\end{equation}
where 
\begin{align}
 & (\mathcal{L}_{w^{\flat}}\epsilon+R_{u^{\flat}-w^{\flat}},-i\mathcal{L}_{w^{\flat}}\epsilon-iR_{u^{\flat}-w^{\flat}})_{r}\label{eq:5.13-temp3}\\
 & (\mathcal{L}_{w^{\flat}}\epsilon+R_{u^{\flat}-w^{\flat}},\Big(\frac{\lambda_{s}}{\lambda}+b\Big)[\Lambda Q^{(\eta)}]_{b}-(\tilde{\gamma}_{s}-\eta\theta_{\eta})iQ_{b}^{(\eta)})_{r}\label{eq:5.13-temp4}\\
 & (\mathcal{L}_{w^{\flat}}\epsilon+R_{u^{\flat}-w^{\flat}},\frac{\lambda_{s}}{\lambda}\Lambda\epsilon-\gamma_{s}i\epsilon)_{r}\label{eq:5.13-temp6}\\
 & (\mathcal{L}_{w^{\flat}}\epsilon+R_{u^{\flat}-w^{\flat}},-iV_{Q_{b}^{(\eta)}-Q_{b}}z^{\flat}-i\tilde R_{Q_{b}^{(\eta)},z^{\flat}})_{r}.\label{eq:5.13-temp5}
\end{align}
One crucial observation is that \eqref{eq:5.13-temp3} \emph{vanishes}.
This is one of the reason why we work on $(s,y)$-scalings first.

We turn to \eqref{eq:5.13-temp4}. We claim that 
\begin{align*}
 & \Big(\mathcal{L}_{w^{\flat}}\epsilon+R_{u^{\flat}-w^{\flat}},\big(\frac{\lambda_{s}}{\lambda}+b\big)[\Lambda Q^{(\eta)}]_{b}-(\tilde{\gamma}_{s}-\eta\theta_{\eta})iQ_{b}^{(\eta)}\Big)_{r}\\
 & =\Big(\frac{\lambda_{s}}{\lambda}+b\Big)(\mathcal{L}_{Q_{b}^{(\eta)}}\epsilon,[\Lambda Q^{(\eta)}]_{b})_{r}+(\tilde{\gamma}_{s}-\eta\theta_{\eta})(\mathcal{L}_{Q_{b}^{(\eta)}}\epsilon,-iQ_{b}^{(\eta)})_{r}+\err.
\end{align*}
To show this claim, we observe for $\psi\in\{\Lambda Q^{(\eta)},-iQ^{(\eta)}\}$
that 
\begin{align*}
(\mathcal{L}_{w^{\flat}}\epsilon+R_{u^{\flat}-w^{\flat}},\psi_{b})_{r} & =(\mathcal{L}_{Q_{b}^{(\eta)}}\epsilon,\psi_{b})_{r}+((\mathcal{L}_{w^{\flat}}-\mathcal{L}_{Q_{b}^{(\eta)}})\epsilon,\psi_{b})_{r}+(R_{u^{\flat}-w^{\flat}},\psi_{b})_{r}\\
 & =(\mathcal{L}_{Q_{b}^{(\eta)}}\epsilon,\psi_{b})_{r}+O\big((\alpha^{\ast}\lambda+\lambda^{\frac{3}{2}})\|\epsilon\|_{\dot{H}_{m}^{1}}\big),
\end{align*}
where we used $\|\psi\|_{L^{2}}\lesssim1$, \eqref{eq:Lw-Lqb-1},
\eqref{eq:Ru-w1}, and the weak bootstrap hypothesis \eqref{eq:bootstrap-hyp}.
Applying the modulation estimate \eqref{eq:mod-est-1}, the claim
follows. This ends the treatment of \eqref{eq:5.13-temp4}.

We will keep \eqref{eq:5.13-temp6}. This is a non-perturbative term.

Finally, \eqref{eq:5.13-temp5} is considered as an error. We estimate
using \eqref{eq:est-V} to have 
\[
|(\mathcal{L}_{w^{\flat}}\epsilon+R_{u^{\flat}-w^{\flat}},iV_{Q_{b}^{(\eta)}-Q_{b}}z^{\flat})_{r}|\lesssim\|\epsilon\|_{\dot{H}_{m}^{1}}\|V_{Q_{b}^{(\eta)}-Q_{b}}z^{\flat}\|_{H_{m}^{1}}\lesssim\lambda(\alpha^{\ast}\lambda\eta)\|\epsilon\|_{\dot{H}_{m}^{1}}.
\]
We now claim that 
\begin{equation}
|(\mathcal{L}_{w^{\flat}}\epsilon+R_{u^{\flat}-w^{\flat}},i\tilde R_{Q_{b}^{(\eta)},z^{\flat}})_{r}|\lesssim\lambda(\alpha^{\ast}\lambda^{m+2})\|\epsilon\|_{\dot{H}_{m}^{1}}.\label{eq:claim-1}
\end{equation}
Using $\|w^{\flat}-Q_{b}\|_{L^{2}}\lesssim\alpha^{\ast}+\lambda|\log\lambda|^{\frac{1}{2}}$,
\eqref{eq:est-Rqz}, and \eqref{lem:est of Ru-w}, it suffices to
show 
\[
|(\mathcal{L}_{Q_{b}}\epsilon,i\tilde R_{Q_{b}^{(\eta)},z^{\flat}})_{r}|\lesssim\alpha^{\ast}\lambda^{m+3}\|\epsilon\|_{\dot{H}_{m}^{1}}.
\]
By Lemma \ref{lem:refined-decomp}, it suffices to show 
\[
|(\mathcal{L}_{Q_{b}}\epsilon,[iR_{2}]_{b})_{r}|\lesssim\alpha^{\ast}\lambda^{m+3}\|\epsilon\|_{\dot{H}_{m}^{1}}.
\]
We now manipulate using \eqref{eq:conj-b-phase2} 
\begin{align*}
\mathcal{L}_{Q_{b}}(iR_{2})_{b} & =[L_{Q}^{\ast}L_{Q}(iR_{2})]_{b}+ib\Lambda[iR_{2}]_{b}-b^{2}\tfrac{|y|^{2}}{4}[iR_{2}]_{b}\\
 & =[L_{Q_{b}}^{\ast}-ib\tfrac{r}{2}][L_{Q}(iR_{2})]_{b}+ib\Lambda[iR_{2}]_{b}+b^{2}\tfrac{|y|^{2}}{4}[iR_{2}]_{b}.
\end{align*}
We estimate using Lemma \ref{lem:refined-decomp} as 
\[
(\epsilon,L_{Q_{b}}^{\ast}[L_{Q}(iR_{2})]_{b})_{r}\lesssim\alpha^{\ast}\lambda^{m+3}\|L_{Q_{b}}\epsilon\|_{L^{2}}\lesssim\alpha^{\ast}\lambda^{m+3}\|\epsilon\|_{\dot{H}_{m}^{1}}.
\]
All the other terms are treated crudely. We use 
\[
\partial_{r}(e^{-ib\frac{r^{2}}{4}})=-ib\tfrac{r}{2}e^{-ib\frac{r^{2}}{4}}
\]
to integrate by parts 
\begin{align*}
 & (\epsilon,-ib\tfrac{r}{2}[L_{Q}(iR_{2})]_{b})_{r}\\
 & =([-\partial_{r}-\tfrac{1}{r}]\epsilon,[L_{Q}(iR_{2})]_{b})_{r}+(\epsilon,[\partial_{r}L_{Q}(iR_{2})]_{b})_{r}\lesssim\alpha^{\ast}\lambda^{m+3}\|\epsilon\|_{\dot{H}_{m}^{1}}.
\end{align*}
We then estimate 
\[
(\epsilon,ib\Lambda[iR_{2}]_{b})_{r}=(-\Lambda\epsilon,ib[iR_{2}]_{b})_{r}\lesssim\|\epsilon\|_{\dot{H}_{m}^{1}}\|brR_{2}\|_{L^{2}}.
\]
We then integrate by parts 
\begin{align*}
(\epsilon,b^{2}\tfrac{r^{2}}{4}[iR_{2}]_{b})_{r} & =((-\partial_{r}-\tfrac{2}{r})\epsilon,ib\tfrac{r}{2}[iR_{2}]_{b})_{r}+(\epsilon,ib\tfrac{r}{2}[\partial_{r}(iR_{2})]_{b})_{r}\\
 & \lesssim\|\epsilon\|_{\dot{H}_{m}^{1}}(\|brR_{2}\|_{L^{2}}+\|br^{2}\partial_{r}R_{2}\|_{L^{2}}).
\end{align*}
This completes the proof of \eqref{eq:claim-1}. This ends the formal
derivation of \eqref{eq:quad-energy-deriv-formal}.

Now we make all of the above computations rigorous. The terms $(R_{u^{\flat}-w^{\flat}},\Lambda z^{\flat})_{r}$
and $(\mathcal{L}_{w^{\flat}}\epsilon+R_{u^{\flat}-w^{\flat}},\Lambda\epsilon)_{r}$
do not make sense by themselves. However, the sum of two terms $(R_{u^{\flat}-w^{\flat}},\Lambda z^{\flat})_{r}$
and $(\mathcal{L}_{w^{\flat}}\epsilon+R_{u^{\flat}-w^{\flat}},\Lambda\epsilon)_{r}$
does make sense from the following formal computation: 
\begin{align*}
 & (R_{u^{\flat}-w^{\flat}},\Lambda z^{\flat})_{r}+(\mathcal{L}_{w^{\flat}}\epsilon+R_{u^{\flat}-w^{\flat}},\Lambda\epsilon)_{r}\\
 & =\{(R_{u^{\flat}-w^{\flat}},\Lambda w^{\flat})_{r}+(\mathcal{L}_{w^{\flat}}\epsilon+R_{u^{\flat}-w^{\flat}},\Lambda\epsilon)_{r}\}-(R_{u^{\flat}-w^{\flat}},\Lambda Q_{b}^{(\eta)})_{r}\\
 & =\partial_{\lambda}|_{\lambda=1}E_{(w^{\flat})_{\lambda}}^{(\mathrm{qd})}[\epsilon_{\lambda}]-(R_{u^{\flat}-w^{\flat}},\Lambda Q_{b}^{(\eta)})_{r}\\
 & =2E_{w^{\flat}}^{\text{(qd)}}[\epsilon]-(R_{u^{\flat}-w^{\flat}},\Lambda Q_{b}^{(\eta)})_{r},
\end{align*}
where we temporarily denoted $f_{\lambda}(y)\coloneqq\lambda f(\lambda y)$
for $f\in\{w^{\flat},\epsilon\}$. This is how we obtain \eqref{eq:quad-energy-deriv-precise}
from the formal one \eqref{eq:quad-energy-deriv-formal}.

If one wants to avoid unbounded-looking quantities, we can compute
in $(t,x)$-scalings 
\[
\partial_{s}E_{w^{\flat}}^{\mathrm{(qd)}}[\epsilon]=\lambda^{2}\partial_{t}(\lambda^{2}E_{w}^{\mathrm{(qd)}}[\epsilon^{\sharp}])
\]
instead. Here, we use equations
\begin{align*}
i\partial_{t}w & =i\frac{1}{\lambda^{2}}[b_{s}\partial_{b}Q_{b}^{(\eta)}-\frac{\lambda_{s}}{\lambda}\Lambda Q_{b}^{(\eta)}+i\gamma_{s}Q_{b}^{(\eta)}]^{\sharp}+L_{z}^{\ast}\D_{+}^{(z)}z+V_{Q_{b}^{\sharp}\to z}z,\\
i\partial_{t}\epsilon^{\sharp} & =\mathcal{L}_{w}\epsilon^{\sharp}+\frac{1}{\lambda^{2}}i\Big(\frac{\lambda_{s}}{\lambda}+b\Big)[\Lambda Q^{(\eta)}]_{b}^{\sharp}+\frac{1}{\lambda^{2}}(\tilde{\gamma}_{s}-\eta\theta_{\eta})Q_{b}^{(\eta)\sharp}\\
 & \quad+\tilde R_{Q_{b}^{\sharp},z}+V_{Q_{b}^{(\eta)\sharp}-Q_{b}^{\sharp}}z+R_{u-w},
\end{align*}
where we used dynamic rescaling for $i\partial_{t}Q_{b}^{(\eta)\sharp}$,
\eqref{eq:fCSS} for $i\partial_{t}z$, and \eqref{eq:e-sharp-eq}
for $i\partial_{t}\epsilon^{\sharp}$. We then obtain \eqref{eq:quad-energy-deriv-precise}
by lowering to the $(s,y)$-scale and estimating errors as before.
\end{proof}
The RHS of \eqref{eq:quad-energy-deriv-precise} contains non-perturbative
terms. Indeed, we roughly have 
\begin{align*}
\frac{2\lambda_{s}}{\lambda}E_{w^{\flat}}^{\text{(qd)}}[\epsilon] & \approx-2bE_{w^{\flat}}^{\text{(qd)}}[\epsilon]\sim-b\|\epsilon\|_{\dot{H}_{m}^{1}}^{2},\\
\Big|\frac{\lambda_{s}}{\lambda}(R_{u^{\flat}-w^{\flat}},\Lambda Q_{b}^{(\eta)})_{r}\Big| & \lesssim b\|R_{u^{\flat}-w^{\flat}}\|_{L^{2}}\lesssim b\|\epsilon\|_{\dot{H}_{m}^{1}}^{2},
\end{align*}
These are at the borderline of the acceptable error size $\err$.
Next, from $\gamma_{s}\approx\eta\theta_{\eta}$, 
\[
|\gamma_{s}(\mathcal{L}_{w^{\flat}}\epsilon+R_{u^{\flat}-w^{\flat}},i\epsilon)_{r}|\lesssim\eta\|\epsilon\|_{\dot{H}_{m}^{1}}^{2},
\]
is also at the borderline. The remaining terms are also at the borderline.
We know from the modulation estimates that $|\frac{\lambda_{s}}{\lambda}+b|+|\tilde{\gamma}_{s}-\eta\theta_{\eta}|\lesssim\|\epsilon\|_{\dot{H}_{m}^{1}}$.
The problem is that $(\mathcal{L}_{Q_{b}^{(\eta)}}\epsilon,\psi_{b})_{r}$
for $\psi\in\{\Lambda Q^{(\eta)},iQ^{(\eta)}\}$ cannot be estimated
better than $\lambda\|\epsilon\|_{\dot{H}_{m}^{1}}$; from \eqref{eq:conj-b-phase2},
\eqref{eq:phase-inv}, we have 
\begin{equation}
(\epsilon,\mathcal{L}_{Q_{b}^{(\eta)}}iQ_{b}^{(\eta)})_{r}=(\epsilon,-\eta\theta_{\eta}iQ_{b}^{(\eta)}-b\Lambda Q_{b}^{(\eta)})_{r}+(\epsilon,-(b^{2}+\eta^{2})\tfrac{|y|^{2}}{4}iQ_{b}^{(\eta)})_{r},\label{eq:5.43}
\end{equation}
where the term $(\epsilon,-\eta\theta_{\eta}iQ_{b}^{(\eta)}-b\Lambda Q_{b}^{(\eta)})_{r}$
is at best estimated by $\lambda\|\epsilon\|_{\dot{H}_{m}^{1}}$.\footnote{In fact, one may cleverly choose orthogonality conditions to overcome
this issue. For sake of simiplicity, assume $\eta=0$ and $m$ is
large so that $Q$ has rapid decay. If one chooses $\mathcal{Z}_{\re}=\Lambda Q$,
then the first term of \eqref{eq:5.43} vanishes up to error $b^{2}\|\epsilon\|_{\dot{H}_{m}^{1}}$,
which is acceptable. Alternatively, one can choose $(\epsilon,|y|^{2}Q_{b})_{r}=(\epsilon,i\rho_{b})_{r}=0$
to have an improved modulation estimates; see Remark \ref{rem:refined-mod-est}.
Nonetheless, we stick to generic orthogonality conditions $(\epsilon,[\mathcal{Z}_{\re}]_{b})_{r}=(\epsilon,[i\mathcal{Z}_{\im}]_{b})_{r}=0$
to emphasize the algebra \eqref{eq:5.45} below.}

We can resolve this issue by modifying the functional $E_{w^{\flat}}^{\mathrm{(qd)}}[\epsilon]$.
Firstly, we hide 
\[
\frac{2\lambda_{s}}{\lambda}E_{w^{\flat}}^{\text{(qd)}}[\epsilon]
\]
by rescaling the functional as 
\[
\lambda^{2}\partial_{s}(\lambda^{-2}E_{w^{\flat}}^{(\mathrm{qd})}[\epsilon])=-\frac{2\lambda_{s}}{\lambda}E_{w^{\flat}}^{(\mathrm{qd})}[\epsilon]+\partial_{s}E_{w^{\flat}}^{(\mathrm{qd})}[\epsilon].
\]

For the remaining three terms, we can weaken them by adding virial
and mass corrections. In view of \eqref{eq:e-eq}, $\epsilon$ actually
evolves under 
\[
i\partial_{s}\epsilon-\mathcal{L}_{w^{\flat}}\epsilon+ib\Lambda\epsilon-\eta\theta_{\eta}\epsilon\approx0.
\]
The energy functional $E_{w^{\flat}}^{\text{(qd)}}[\epsilon]$ is
adapted to the evolution $(i\partial_{s}-\mathcal{L}_{w^{\flat}})\epsilon\approx0$.
To take the $ib\Lambda\epsilon$ part into account, it is natural
to introduce the virial term 
\[
b\Phi[\epsilon]=\frac{b}{2}\Im\int\overline{\epsilon}\cdot r\partial_{r}\epsilon
\]
so that $\frac{\delta\Phi}{\delta u}=-i\Lambda u$. To take $\eta\theta_{\eta}\epsilon$
into account, we introduce the mass term 
\[
\tfrac{\eta\theta_{\eta}}{2}M[\epsilon]=\tfrac{\eta\theta_{\eta}}{2}\int|\epsilon|^{2}.
\]
We then have 
\begin{align*}
 & \partial_{s}(E_{w^{\flat}}^{\mathrm{(qd)}}[\epsilon]+b\Phi[\epsilon]+\tfrac{\eta\theta_{\eta}}{2}M[\epsilon])\\
 & \approx(R_{u^{\flat}-w^{\flat}},\partial_{s}w^{\flat})_{r}+(\mathcal{L}_{w^{\flat}}\epsilon+R_{u^{\flat}-w^{\flat}}-ib\Lambda\epsilon+\eta\theta_{\eta}\epsilon,\partial_{s}\epsilon)_{r}.
\end{align*}
Thus we observe that \eqref{eq:5.13-temp3} is changed into 
\[
(\mathcal{L}_{w^{\flat}}\epsilon+R_{u^{\flat}-w^{\flat}}-ib\Lambda\epsilon+\eta\theta_{\eta}\epsilon,-i(\mathcal{L}_{w^{\flat}}\epsilon+R_{u^{\flat}-w^{\flat}}-ib\Lambda\epsilon+\eta\theta_{\eta}\epsilon))_{r}=0
\]
and results in further cancellation in \eqref{eq:5.13-temp4} and
\eqref{eq:5.13-temp6}. More precisely, \eqref{eq:5.13-temp4} is
changed into 
\[
\Big(\mathcal{L}_{w^{\flat}}\epsilon+R_{u^{\flat}-w^{\flat}}-ib\Lambda\epsilon+\eta\theta_{\eta}\epsilon,\Big(\frac{\lambda_{s}}{\lambda}+b\Big)[\Lambda Q^{(\eta)}]_{b}-(\tilde{\gamma}_{s}-\eta\theta_{\eta})iQ_{b}^{(\eta)}\Big)_{r},
\]
and \eqref{eq:5.13-temp6} is changed into 
\[
\Big(\mathcal{L}_{w^{\flat}}\epsilon+R_{u^{\flat}-w^{\flat}}-ib\Lambda\epsilon+\eta\theta_{\eta}\epsilon,\big(\frac{\lambda_{s}}{\lambda}+b\big)\Lambda\epsilon-(\gamma_{s}-\eta\theta_{\eta})i\epsilon\Big)_{r}.
\]
In view of the modulation estimates, \eqref{eq:phase-inv}, and \eqref{eq:scale-inv},
the above terms fall into the error.

However, the problem we still have is that $\Phi[\epsilon]$ and $(R_{u^{\flat}-w^{\flat}},\Lambda z^{\flat})_{r}$
do not make sense as we do not assume decay on $\epsilon$ and $z^{\flat}$.
It turns out that we can resolve these difficulties by making a suitable
truncation of $\Phi$. For $A\geq1$, recall the radial weight $\phi_{A}$
in \eqref{eq:def-phiA} and define 
\[
\Phi_{A}[\epsilon]\coloneqq\frac{1}{2}\int\phi_{A}'\Im(\overline{\epsilon}\partial_{r}\epsilon).
\]
Note that $\Phi_{A}$ agrees with the usual virial functional if $y\leq A$
and approximates the Morawetz functional if $y\geq2A$. We write $\Lambda_{A}$
as 
\[
\frac{\delta\Phi_{A}}{\delta u}=-i\Lambda_{A}u\coloneqq-i[\phi_{A}'\partial_{r}+\tfrac{1}{2}(\Delta\phi_{A})]u.
\]
We formally have $\Lambda_{\infty}=\Lambda$. We will frequently use
the estimate 
\[
|\Lambda_{A}u|\leq\min\{A,r\}(|\partial_{r}u|+|r^{-1}u|).
\]

We will view the virial correction $b\Phi_{A}[\epsilon]$ as an error.
The mass correction $\frac{\eta\theta_{\eta}}{2}M[\epsilon]$ is not
perturbative, but it is positive when $\eta\geq0$. This is one of
the places where the sign condition $\eta\geq0$ is necessary.
\begin{lem}[Estimates of the correction]
\label{lem:quad-bPhi}We have 
\begin{align}
|b\Phi_{A}[\epsilon]| & \lesssim A\lambda\|\epsilon\|_{L^{2}}\|\epsilon\|_{\dot{H}_{m}^{1}}\label{eq:quad-bPhi-coer}
\end{align}
and 
\begin{align}
 & \partial_{s}(b\Phi_{A}[\epsilon]+\tfrac{\eta\theta_{\eta}}{2}M[\epsilon])\label{eq:quad-bPhi-deriv}\\
 & =(\mathcal{L}_{w^{\flat}}\epsilon+R_{u^{\flat}-w^{\flat}},-\frac{\lambda_{s}}{\lambda}\Lambda_{A}\epsilon+i\eta\theta_{\eta}\epsilon)_{r}\nonumber \\
 & \quad+(-ib\Lambda_{A}\epsilon+\eta\theta_{\eta}\epsilon,\Big(\frac{\lambda_{s}}{\lambda}+b\Big)[\Lambda Q^{(\eta)}]_{b}-(\tilde{\gamma}_{s}-\eta\theta_{\eta})iQ_{b}^{(\eta)})_{r})_{r}+\mathrm{Err}.\nonumber 
\end{align}
\end{lem}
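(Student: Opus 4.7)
The first bound \eqref{eq:quad-bPhi-coer} is essentially trivial: from \eqref{eq:def-phiA} we have $\|\phi_{A}'\|_{L^{\infty}}\lesssim A$, so Cauchy--Schwarz applied to $\Phi_{A}[\epsilon]=\tfrac{1}{2}\int\phi_{A}'\Im(\bar\epsilon\,\partial_{r}\epsilon)$ gives $|\Phi_{A}[\epsilon]|\lesssim A\|\epsilon\|_{L^{2}}\|\epsilon\|_{\dot H_{m}^{1}}$, and $|b|\lesssim\lambda$ from the weak bootstrap completes the proof. For \eqref{eq:quad-bPhi-deriv} I use the functional-derivative identities $\frac{\delta\Phi_{A}}{\delta u}\big|_{u=\epsilon}=-i\Lambda_{A}\epsilon$ and $\frac{\delta M}{\delta u}\big|_{u=\epsilon}=2\epsilon$ to write
\[
\partial_{s}\bigl(b\Phi_{A}[\epsilon]+\tfrac{\eta\theta_{\eta}}{2}M[\epsilon]\bigr)=b_{s}\Phi_{A}[\epsilon]+(W,\partial_{s}\epsilon)_{r},\qquad W\coloneqq-ib\Lambda_{A}\epsilon+\eta\theta_{\eta}\epsilon.
\]
The scalar piece $b_{s}\Phi_{A}[\epsilon]$ is placed in $\err$ by combining $|b_{s}|\lesssim\lambda^{2}$ from \eqref{eq:mod-est-2} with \eqref{eq:quad-bPhi-coer} and the $L^{2}$-estimate \eqref{eq:L2-est}, after a Young-type split that exploits the $\lambda^{-3/4}$ loss in \eqref{eq:L2-est} against the $\lambda^{2}$ coming from $b_{s}$.

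The main work is unpacking $(W,\partial_{s}\epsilon)_{r}$ via \eqref{eq:e-eq}. Writing $i\partial_{s}\epsilon=\mathcal{L}_{w^{\flat}}\epsilon-ib\Lambda\epsilon+\eta\theta_{\eta}\epsilon+\mathrm{RHS}$, where $\mathrm{RHS}$ is the right-hand side of \eqref{eq:e-eq}, and using $(W,-if)_{r}=(iW,f)_{r}$ with $iW=b\Lambda_{A}\epsilon+i\eta\theta_{\eta}\epsilon$, the pairing against $\mathcal{L}_{w^{\flat}}\epsilon+R_{u^{\flat}-w^{\flat}}$ becomes $(\mathcal{L}_{w^{\flat}}\epsilon+R_{u^{\flat}-w^{\flat}},iW)_{r}$; replacing $b\Lambda_{A}\epsilon$ by $-\frac{\lambda_{s}}{\lambda}\Lambda_{A}\epsilon$ via \eqref{eq:mod-est-1} yields the first explicit term of \eqref{eq:quad-bPhi-deriv}, with a remainder proportional to $(\frac{\lambda_{s}}{\lambda}+b)(\mathcal{L}_{w^{\flat}}\epsilon+R_{u^{\flat}-w^{\flat}},\Lambda_{A}\epsilon)_{r}$ that one bounds by $|\frac{\lambda_{s}}{\lambda}+b|\cdot A\|\epsilon\|_{\dot H_{m}^{1}}^{2}$ using the pointwise bound $|\Lambda_{A}f|\lesssim A(|\partial_{r}f|+r^{-1}|f|)$, Lemmas \ref{lem:est of Lw-Lq} and \ref{lem:est of Ru-w}, and the weak bootstrap $\|\epsilon\|_{\dot H_{m}^{1}}\lesssim\lambda^{3/2}$. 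The sources in $\mathrm{RHS}$ proportional to $[\Lambda Q^{(\eta)}]_{b}$ and $Q_{b}^{(\eta)}$ reassemble (again via the skew relation) into the second explicit term of \eqref{eq:quad-bPhi-deriv}; the residual pieces $(\gamma_{s}-\eta\theta_{\eta})\epsilon$ and $(\frac{\lambda_{s}}{\lambda}+b)\Lambda\epsilon$ land in $\err$ using \eqref{eq:mod-est-1} and \eqref{eq:theta-est}, while $\widetilde R_{Q_{b}^{(\eta)},z^{\flat}}$ and $V_{Q_{b}^{(\eta)}-Q_{b}}z^{\flat}$ are absorbed by \eqref{eq:est-Rqz} and \eqref{eq:est-V}.

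The delicate remainder is $(W,-b\Lambda\epsilon-i\eta\theta_{\eta}\epsilon)_{r}$. Expanding $W$ and invoking the algebraic identities $(\epsilon,\Lambda\epsilon)_{r}=0$ and $(\epsilon,i\epsilon)_{r}=0$ kills the diagonal pieces, leaving $b^{2}\int\Im(\Lambda_{A}\epsilon\,\overline{\Lambda\epsilon})+b\eta\theta_{\eta}(\Lambda_{A}\epsilon,\epsilon)_{r}$. At $A=\infty$ both vanish (since $\int\Im(|\Lambda\epsilon|^{2})=0$); for finite $A$ I expand $\Lambda\epsilon=\epsilon+r\partial_{r}\epsilon$ and integrate by parts against $\phi_{A}',\Delta\phi_{A}$, using the pointwise bounds $|\phi_{A}'|,|r\Delta\phi_{A}|\lesssim A$ from \eqref{eq:def-phiA} to produce estimates of order $Ab^{2}\|\epsilon\|_{L^{2}}\|\epsilon\|_{\dot H_{m}^{1}}$ (and analogously for the mass cross term), which fit \eqref{eq:err} after invoking \eqref{eq:L2-est}. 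Because individual terms such as $(W,\Lambda\epsilon)_{r}$ are not absolutely convergent (no decay on $\epsilon$ is assumed), the rigorous computation proceeds at the $(t,x)$-scale via $\partial_{s}F=\lambda^{2}\partial_{t}(\lambda^{2}F^{\sharp})$, mirroring the justification at the end of Lemma \ref{lem:quad-energy}. The chief obstacle is the $A$-budget: the $b_{s}\Phi_{A}[\epsilon]$ term, the substitution $b\mapsto-\frac{\lambda_{s}}{\lambda}$, and the finite-$A$ boundary corrections all carry an $A$ prefactor, so fitting each into \eqref{eq:err} requires Young-type splits that balance the $\lambda^{-3/4}$ loss in \eqref{eq:L2-est} against explicit $\lambda$-powers drawn from $|b|,|b_{s}|,|\frac{\lambda_{s}}{\lambda}+b|$; the smallness $A\lambda^{1/4}\ll 1$ built into \eqref{eq:err} is exactly what these inequalities force.
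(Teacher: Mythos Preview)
Your argument follows the paper's proof closely and the overall structure is correct. There is, however, one genuine gap: when you bound the remainder $(\frac{\lambda_{s}}{\lambda}+b)(\mathcal{L}_{w^{\flat}}\epsilon+R_{u^{\flat}-w^{\flat}},\Lambda_{A}\epsilon)_{r}$, the cited Lemmas~\ref{lem:est of Lw-Lq} and~\ref{lem:est of Ru-w} do not suffice. Those lemmas control only the \emph{differences} $\mathcal{L}_{w^{\flat}}-\mathcal{L}_{Q_{b}^{(\eta)}}$, $\mathcal{L}_{Q_{b}^{(\eta)}}-\mathcal{L}_{Q}$, and $R_{u^{\flat}-w^{\flat}}$, not $\mathcal{L}_{w^{\flat}}$ itself, and in particular they say nothing about the leading second-order piece $-\Delta_{m}\epsilon$. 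The paper handles this by splitting $\mathcal{L}_{w^{\flat}}=-\Delta_{m}+(\mathcal{L}_{w^{\flat}}+\Delta_{m})$; the nonlinear part $(\mathcal{L}_{w^{\flat}}+\Delta_{m})\epsilon+R_{u^{\flat}-w^{\flat}}$ is then estimated via duality (Lemma~\ref{lem:duality-1}) paired with $\|\Lambda_{A}\epsilon\|_{L^{2}}\lesssim A\|\epsilon\|_{\dot H_{m}^{1}}$, while the Laplacian piece requires the explicit virial identity
\[
(\Lambda_{A}\epsilon,-\Delta_{m}\epsilon)_{r}=\int\Bigl(\phi_{A}''|\partial_{r}\epsilon|^{2}-\tfrac{1}{4}(\Delta^{2}\phi_{A})|\epsilon|^{2}+\tfrac{\phi_{A}'}{r}\cdot\tfrac{m^{2}}{r^{2}}|\epsilon|^{2}\Bigr)=O(\|\epsilon\|_{\dot H_{m}^{1}}^{2}).
\]
Without this integration by parts your bound does not follow from the tools you invoke.

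A secondary organizational point: the paper combines your ``delicate remainder'' $-b\Lambda\epsilon$ with the residual $(\frac{\lambda_{s}}{\lambda}+b)\Lambda\epsilon$ into a single $\frac{\lambda_{s}}{\lambda}\Lambda\epsilon$ term (and likewise $\eta\theta_{\eta}\epsilon$ with $(\gamma_{s}-\eta\theta_{\eta})\epsilon$ into $\gamma_{s}\epsilon$), then uses antisymmetry to reduce everything to $(b\Lambda_{A}\epsilon,i\frac{\lambda_{s}}{\lambda}\Lambda\epsilon)_{r}$, evaluated via the commutator identity $2(\Lambda_{A}\epsilon,i\Lambda\epsilon)_{r}=(i[\Lambda,\Lambda_{A}]\epsilon,\epsilon)_{r}$ with $[\Lambda_{A},\Lambda]=(\phi_{A}'-r\phi_{A}'')\partial_{r}-\tfrac{1}{2}r(\Delta\phi_{A})'$. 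Your splitting forces you to use this commutator bound twice (once for the delicate remainder, and again, implicitly, for the $(\frac{\lambda_{s}}{\lambda}+b)\Lambda\epsilon$ residual that you claim lands in $\err$ via \eqref{eq:mod-est-1} alone). The paper's grouping is cleaner, and since the commutator is compactly supported in the region where $\phi_{A}'\neq r$, it renders the surviving pairing absolutely convergent---no passage to the $(t,x)$-scale is needed for this lemma.
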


\begin{proof}
The estimate \eqref{eq:quad-bPhi-coer} is an easy consequence of
$b\lesssim\lambda$ and the Cauchy-Schwarz inequality.

We turn to the estimate \eqref{eq:quad-bPhi-deriv}. Using $|b_{s}|\lesssim\lambda^{2}$
of \eqref{eq:mod-crude-est}, we see that 
\[
|b_{s}\Phi_{A}[\epsilon]|\lesssim\lambda\cdot O(A\lambda\|\epsilon\|_{L^{2}}\|\epsilon\|_{\dot{H}_{m}^{1}})
\]
can be absorbed into $\err$. Thus 
\[
\partial_{s}(b\Phi_{A}[\epsilon]+\tfrac{\eta\theta_{\eta}}{2}M[\epsilon])=(b\Lambda_{A}\epsilon+i\eta\theta_{\eta}\epsilon,i\partial_{s}\epsilon)_{r}+\err.
\]
We compute using the $\epsilon$-equation \eqref{eq:e-eq}: 
\[
(b\Lambda_{A}\epsilon+i\eta\theta_{\eta}\epsilon,i\partial_{s}\epsilon)_{r}=\eqref{eq:5.41-temp1}+\eqref{eq:5.41-temp2}+\eqref{eq:5.41-temp3}+\eqref{eq:5.41-temp4},
\]
where 
\begin{align}
 & (b\Lambda_{A}\epsilon+i\eta\theta_{\eta}\epsilon,\mathcal{L}_{w^{\flat}}\epsilon+R_{u^{\flat}-w^{\flat}})_{r},\label{eq:5.41-temp1}\\
 & (b\Lambda_{A}\epsilon+i\eta\theta_{\eta}\epsilon,i\Big(\frac{\lambda_{s}}{\lambda}+b\Big)[\Lambda Q^{(\eta)}]_{b}+(\tilde{\gamma}_{s}-\eta\theta_{\eta})Q_{b}^{(\eta)})_{r},\label{eq:5.41-temp2}\\
 & (b\Lambda_{A}\epsilon+i\eta\theta_{\eta}\epsilon,i\frac{\lambda_{s}}{\lambda}\Lambda\epsilon+\gamma_{s}\epsilon)_{r},\label{eq:5.41-temp3}\\
 & (b\Lambda_{A}\epsilon+i\eta\theta_{\eta}\epsilon,V_{Q_{b}^{(\eta)}-Q_{b}}z^{\flat}+\tilde R_{Q_{b}^{(\eta)},z^{\flat}})_{r}.\label{eq:5.41-temp4}
\end{align}

We will treat \eqref{eq:5.41-temp1}-\eqref{eq:5.41-temp4} term by
term. Firstly, we show that \eqref{eq:5.41-temp1} is equal to the
first term of the RHS of \eqref{eq:quad-bPhi-deriv} up to the error
$\err$. Indeed, the difference is given by 
\begin{align*}
 & \Big(\frac{\lambda_{s}}{\lambda}+b\Big)(\Lambda_{A}\epsilon,\mathcal{L}_{w^{\flat}}\epsilon+R_{u^{\flat}-w^{\flat}})_{r}\\
 & =\Big(\frac{\lambda_{s}}{\lambda}+b\Big)\cdot\Big((\Lambda_{A}\epsilon,-\Delta_{m}\epsilon)_{r}+(\Lambda_{A}\epsilon,(\mathcal{L}_{w^{\flat}}+\Delta_{m})\epsilon+R_{u^{\flat}-w^{\flat}})_{r}\Big)\\
 & =\Big|\frac{\lambda_{s}}{\lambda}+b\Big|\cdot O(A\|\epsilon\|_{\dot{H}_{m}^{1}}^{2}),
\end{align*}
where we used 
\[
(\Lambda_{A}\epsilon,-\Delta_{m}\epsilon)_{r}=\int\Big(\phi_{A}''|\partial_{r}\epsilon|^{2}-\frac{\Delta^{2}\phi_{A}}{4}|\epsilon|^{2}+\frac{\phi_{A}'}{r}\cdot\frac{m^{2}}{r^{2}}|\epsilon|^{2}\Big)
\]
and the multilinear estimates (Lemma \ref{lem:duality-1}) with $\|\Lambda_{A}\epsilon\|_{L^{2}}\lesssim A\|\epsilon\|_{\dot{H}_{m}^{1}}$.
Thus the above difference can be absorbed into $\err$ by the modulation
estimate.

We will keep the term \eqref{eq:5.41-temp2} as it is not perturbative.

We now show that \eqref{eq:5.41-temp3} can be absorbed into $\err$.
We first observe by anti-symmetricity of $i$, $\Lambda_{A}$, and
$\Lambda$ that 
\[
(b\Lambda_{A}\epsilon+i\eta\theta_{\eta}\epsilon,i\frac{\lambda_{s}}{\lambda}\Lambda\epsilon+\gamma_{s}\epsilon)_{r}=(b\Lambda_{A}\epsilon,i\frac{\lambda_{s}}{\lambda}\Lambda\epsilon)_{r}.
\]
We then observe that 
\begin{align*}
\Big|(b\Lambda_{A}\epsilon,i\frac{\lambda_{s}}{\lambda}\Lambda\epsilon)_{r}\Big| & \lesssim\lambda^{2}|(i[\Lambda,\Lambda_{A}]\epsilon,\epsilon)_{r}|\lesssim A\lambda^{2}\|\epsilon\|_{L^{2}}\|\epsilon\|_{\dot{H}_{m}^{1}},
\end{align*}
where we used \eqref{eq:mod-crude-est} and 
\begin{align*}
[\Lambda_{A},\Lambda] & =-r\phi_{A}''\partial_{r}-\tfrac{1}{2}r(\Delta\phi_{A})'+\phi_{A}'\partial_{r},\\
|(i[\Lambda_{A},\Lambda]\epsilon,\epsilon)_{r}| & =|(i(-r\phi_{A}''+\phi_{A}')\partial_{r}\epsilon,\epsilon)_{r}|\lesssim A\|\epsilon\|_{L^{2}}\|\epsilon\|_{\dot{H}_{m}^{1}}.
\end{align*}
Applying the $L^{2}$-bound \eqref{eq:L2-est}, we conclude that \eqref{eq:5.41-temp3}
is absorbed into $\err$.

Finally, we show that \eqref{eq:5.41-temp4} can be absorbed into
$\err$:
\begin{align*}
|\eqref{eq:5.41-temp4}| & \lesssim\|b\Lambda_{A}\epsilon+i\eta\theta_{\eta}\epsilon\|_{L^{2}}\|V_{Q_{b}^{(\eta)}-Q_{b}}z^{\flat}+\tilde R_{Q_{b}^{(\eta)},z^{\flat}}\|_{L^{2}}\\
 & \lesssim(A\lambda\|\epsilon\|_{\dot{H}_{m}^{1}}+\eta\|\epsilon\|_{L^{2}})(\alpha^{\ast}\lambda^{m+3}|\log\lambda|+\alpha^{\ast}\lambda^{2}\eta).
\end{align*}
Substituting the $L^{2}$-bound \eqref{eq:L2-est} and $\eta\lesssim\lambda$,
\eqref{eq:5.41-temp4} is absorbed into $\err$. This completes the
proof.
\end{proof}
Incorporating the above discussions, we define the \emph{unaveraged
Lyapunov/virial} functional by 
\[
\mathcal{I}_{A}=\lambda^{-2}(E_{w^{\flat}}^{\mathrm{(qd)}}[\epsilon]+b\Phi_{A}[\epsilon]+\tfrac{\eta\theta_{\eta}}{2}M[\epsilon]).
\]

\begin{lem}[Unaveraged Lyapunov/virial functional]
\label{lem:unav-Lyapunov/virial}We have 
\begin{gather}
\lambda^{2}\mathcal{I}_{A}-\tfrac{\eta\theta_{\eta}}{2}M[\epsilon]+O\big((\alpha^{\ast}+\lambda^{\frac{1}{2}})\|\epsilon\|_{\dot{H}_{m}^{1}}^{2}+A\lambda\|\epsilon\|_{L^{2}}\|\epsilon\|_{\dot{H}_{m}^{1}}\big)\sim\|\epsilon\|_{\dot{H}_{m}^{1}}^{2},\label{eq:unav-Lyapunov-coer}\\
\lambda^{2}\partial_{s}\mathcal{I}_{A}=2b\Big(\mathcal{J}_{A}+\tfrac{\eta\theta_{\eta}}{2}M[\epsilon]-\frac{1}{8}\int(\Delta^{2}\phi_{A})|\epsilon|^{2}\Big)+\err,\label{eq:unav-Lyapunov-deriv}
\end{gather}
where 
\begin{align}
\mathcal{J}_{A} & \coloneqq\frac{1}{2}\int\phi_{A}''|\partial_{r}\epsilon|^{2}+\frac{1}{2}\int\frac{\phi_{A}'}{r}\cdot\frac{m^{2}}{r^{2}}|\epsilon|^{2}\label{eq:def-J}\\
 & \quad-\sum_{\substack{\psi_{1},\dots,\psi_{4}\in\{Q,\epsilon\}:\\
\#\{i:\psi_{i}=\epsilon\}=2
}
}[\frac{1}{4}\mathcal{M}_{4,0}^{(A)}+\frac{m}{2}\mathcal{M}_{4,1}^{(A)}]+\frac{1}{8}\sum_{\substack{\psi_{1},\dots,\psi_{6}\in\{Q,\epsilon\}:\\
\#\{i:\psi_{i}=\epsilon\}=2
}
}\mathcal{M}_{6}^{(A)}.\nonumber 
\end{align}
\end{lem}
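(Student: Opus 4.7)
The plan is to assemble the lemma directly from the two preceding derivative identities \eqref{eq:quad-energy-deriv-precise} and \eqref{eq:quad-bPhi-deriv}, with the coercivity following immediately from \eqref{eq:quad-energy-coer} and \eqref{eq:quad-bPhi-coer}. For \eqref{eq:unav-Lyapunov-coer}, note that by definition $\lambda^{2}\mathcal{I}_{A}-\tfrac{\eta\theta_{\eta}}{2}M[\epsilon] = E_{w^{\flat}}^{\mathrm{(qd)}}[\epsilon] + b\Phi_{A}[\epsilon]$, and the two bounds supply $\|\epsilon\|_{\dot H^1_m}^2$-coercivity modulo the allowed errors, with $b\Phi_{A}[\epsilon]$ yielding the $A\lambda\|\epsilon\|_{L^2}\|\epsilon\|_{\dot H^1_m}$ correction.

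For the derivative identity \eqref{eq:unav-Lyapunov-deriv}, I would set $X \coloneqq E_{w^\flat}^{\mathrm{(qd)}}[\epsilon] + b\Phi_{A}[\epsilon] + \tfrac{\eta\theta_\eta}{2}M[\epsilon]$ and decompose $\lambda^{2}\partial_{s}\mathcal{I}_{A} = \partial_s X - \tfrac{2\lambda_s}{\lambda}X$. Summing \eqref{eq:quad-energy-deriv-precise} and \eqref{eq:quad-bPhi-deriv} and subtracting $\tfrac{2\lambda_s}{\lambda}X$, the crucial cancellation is that the $\tfrac{2\lambda_s}{\lambda}E_{w^\flat}^{\mathrm{(qd)}}[\epsilon]$ pieces annihilate each other: this was the whole motivation for the $\lambda^{-2}$ rescaling. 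The remaining inner products pairing against $(\tfrac{\lambda_s}{\lambda}+b)[\Lambda Q^{(\eta)}]_b - (\tilde\gamma_s-\eta\theta_\eta)iQ_b^{(\eta)}$, as well as the $(\gamma_s-\eta\theta_\eta)(\mathcal{L}_{w^\flat}\epsilon + R_{u^\flat-w^\flat},i\epsilon)_r$ piece, are all absorbed into $\err$ by the modulation estimate \eqref{eq:mod-est-1}. The surviving principal term is $-\tfrac{\lambda_s}{\lambda}(\mathcal{L}_{w^\flat}\epsilon + R_{u^\flat-w^\flat},\Lambda_A\epsilon)_r - \tfrac{2\lambda_s}{\lambda}\cdot\tfrac{\eta\theta_\eta}{2}M[\epsilon]$, and the leftover $-\tfrac{2\lambda_s}{\lambda}b\Phi_A[\epsilon]$ is $O(b^2 A\lambda\|\epsilon\|_{L^2}\|\epsilon\|_{\dot H^1_m})$, which is smaller than the threshold after invoking \eqref{eq:L2-est}. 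Replacing $\tfrac{\lambda_s}{\lambda}$ by $-b$ via \eqref{eq:mod-est-1} converts the two surviving terms into $b$-multiples of the target expression.

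The central algebraic identity underlying $\mathcal{J}_A$ is the localized virial computation
\[
(\mathcal{L}_{Q}\epsilon,\Lambda_{A}\epsilon)_{r}=2\mathcal{J}_{A}-\tfrac{1}{4}\textstyle\int(\Delta^{2}\phi_{A})|\epsilon|^{2},
\]
which one proves by straightforward integration by parts against $\Lambda_A = \phi_A'\partial_r + \tfrac{1}{2}\Delta\phi_A$: the free part $-\Delta_m$ gives $\int\phi_A''|\partial_r\epsilon|^2 + \int\tfrac{\phi_A'}{r}\tfrac{m^2}{r^2}|\epsilon|^2 - \tfrac{1}{4}\int(\Delta^2\phi_A)|\epsilon|^2$ (the standard Morawetz-type identity), while the nonlinear terms in $\mathcal{L}_Q$ reproduce, via the duality relations \eqref{eq:duality-rel}, precisely the $\mathcal{M}_{4,0}^{(A)}$, $\mathcal{M}_{4,1}^{(A)}$, $\mathcal{M}_6^{(A)}$ contributions in the definition \eqref{eq:def-J}. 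Replacing $\mathcal{L}_{w^\flat}$ by $\mathcal{L}_Q$ inside $(\mathcal{L}_{w^\flat}\epsilon,\Lambda_A\epsilon)_r$ costs $((\mathcal{L}_{w^\flat}-\mathcal{L}_{Q_b^{(\eta)}})\epsilon + (\mathcal{L}_{Q_b^{(\eta)}}-\mathcal{L}_Q)\epsilon,\Lambda_A\epsilon)_r$, which is controlled by \eqref{eq:Lw-Lqb-2}, \eqref{eq:Lqb-Lq}, and $\|\Lambda_A\epsilon\|_{L^2}\lesssim A\|\epsilon\|_{\dot H^1_m}$; the $(R_{u^\flat-w^\flat},\Lambda_A\epsilon)_r$ contribution is cubic in $\epsilon$, bounded by $A\|\epsilon\|_{\dot H^1_m}^3$ via \eqref{eq:Ru-w1}.

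The main obstacle is the bookkeeping required to match every leftover piece to the prescribed error form \eqref{eq:err}: the terms weighted by $A$ (from $\Lambda_A\epsilon$) and by $\|\epsilon\|_{L^2}$ must be converted, via the $L^2$-bound \eqref{eq:L2-est}, into bounds involving $\mathcal{T}_{\dot H^1_m}^{(\eta,5/4)}[\epsilon]$ with the factor $\lambda^{-3/4}$ cancelling against an additional $\lambda$, producing the $A\lambda^{1/4}$ factor inside \eqref{eq:err}. This conversion is tight and relies crucially on the fact that \eqref{eq:L2-est} loses only $\lambda^{-3/4}$ rather than the naive $\lambda^{-1}$ from an energy method; any worse loss would destroy the averaging improvement in the subsequent step and prevent closure of the bootstrap. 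Once this single bookkeeping check is carried out, gathering all inequalities yields \eqref{eq:unav-Lyapunov-deriv} as stated.
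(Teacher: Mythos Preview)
Your overall strategy is right and tracks the paper's organization, but there is a genuine gap in how you dispose of the term the paper labels \eqref{eq:deriv-temp2}, namely
\[
\Big(\mathcal{L}_{Q_{b}^{(\eta)}}\epsilon-ib\Lambda_{A}\epsilon+\eta\theta_{\eta}\epsilon,\ \big(\tfrac{\lambda_{s}}{\lambda}+b\big)[\Lambda Q^{(\eta)}]_{b}-(\tilde{\gamma}_{s}-\eta\theta_{\eta})iQ_{b}^{(\eta)}\Big)_{r}.
\]
You claim this is absorbed into $\err$ by the modulation estimate \eqref{eq:mod-est-1} alone. That does not work: the modulation estimate only gives $|\tfrac{\lambda_{s}}{\lambda}+b|+|\tilde{\gamma}_{s}-\eta\theta_{\eta}|\lesssim\|\epsilon\|_{\dot H^{1}_{m}}$ (up to smaller pieces), while a crude bound on the inner product $(\mathcal{L}_{Q_{b}^{(\eta)}}\epsilon-ib\Lambda_{A}\epsilon+\eta\theta_{\eta}\epsilon,\psi_{b})_{r}$ for $\psi\in\{\Lambda Q^{(\eta)},iQ^{(\eta)}\}$ yields only $O(\|\epsilon\|_{\dot H^{1}_{m}})$. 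The product is then $O(\|\epsilon\|_{\dot H^{1}_{m}}^{2})$, which is \emph{not} admissible in \eqref{eq:err}: every term of $\err$ carries an extra factor of $\lambda$ multiplied by a small constant $(\alpha^{\ast}+o_{A\to\infty}(1)+A\lambda^{1/4})$.

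The missing ingredient is Lemma~\ref{lem:lem5.5}, which is where the algebraic identities \eqref{eq:phase-inv} and \eqref{eq:scale-inv} enter. These say precisely that $[\Lambda Q^{(\eta)}]_{b}$ and $[iQ^{(\eta)}]_{b}$ are, up to controlled remainders, in the kernel of $\mathcal{L}_{Q_{b}^{(\eta)}}-ib\Lambda+\eta\theta_{\eta}$; after replacing $\Lambda$ by $\Lambda_{A}$ and carefully handling the slow-decay piece $|y|^{2}Q^{(\eta)}$ (the paper does an integration by parts against $e^{-ib|y|^{2}/4}$ in the worst case $m=1$), one obtains
\[
(\mathcal{L}_{Q_{b}^{(\eta)}}\epsilon-ib\Lambda_{A}\epsilon+\eta\theta_{\eta}\epsilon,[\Lambda Q^{(\eta)}]_{b})_{r}=-2\eta\theta_{\eta}(\epsilon,Q_{b}^{(\eta)})_{r}+o_{A\to\infty}(\lambda\|\epsilon\|_{\dot H^{1}_{m}})+O(\lambda^{3/2}\|\epsilon\|_{\dot H^{1}_{m}}),
\]
with an analogous (simpler) identity for $iQ^{(\eta)}$. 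This recovers the needed extra $\lambda$. The residual $\eta(\epsilon,Q_{b}^{(\eta)})_{r}$ is still borderline and must be handled by the \emph{degeneracy estimate} \eqref{eq:mod-degen-est}, not by \eqref{eq:mod-est-1}. Without invoking Lemma~\ref{lem:lem5.5} and \eqref{eq:mod-degen-est}, your argument cannot place \eqref{eq:deriv-temp2} inside $\err$, and the derivative identity \eqref{eq:unav-Lyapunov-deriv} does not close. The rest of your outline (the localized virial identity producing $\mathcal{J}_{A}$, the replacement $w^{\flat}\to Q$, and the $L^{2}$-to-$\mathcal{T}$ conversion) is essentially the paper's approach.
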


To prove Lemma \ref{lem:unav-Lyapunov/virial}, we set aside some
computations into a lemma.
\begin{lem}
\label{lem:lem5.5}We have 
\begin{align}
 & (\mathcal{L}_{Q_{b}^{(\eta)}}\epsilon-ib\Lambda_{A}\epsilon+\eta\theta_{\eta}\epsilon,[\Lambda Q^{(\eta)}]_{b})_{r}\label{eq:5.45}\\
 & \qquad=-2\eta\theta_{\eta}(\epsilon,Q_{b}^{(\eta)})_{r}+o_{A\to\infty}(\lambda\|\epsilon\|_{\dot{H}_{m}^{1}})+O(\lambda^{\frac{3}{2}}\|\epsilon\|_{\dot{H}_{m}^{1}})\nonumber 
\end{align}
and
\begin{equation}
(\mathcal{L}_{Q_{b}^{(\eta)}}\epsilon-ib\Lambda_{A}\epsilon+\eta\theta_{\eta}\epsilon,[iQ^{(\eta)}]_{b})_{r}=o_{A\to\infty}(\lambda\|\epsilon\|_{\dot{H}_{m}^{1}})+O(\lambda^{\frac{3}{2}}\|\epsilon\|_{\dot{H}_{m}^{1}}).\label{eq:5.46}
\end{equation}
\end{lem}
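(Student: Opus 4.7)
The plan is to exploit self-adjointness of the operator $T := \mathcal{L}_{Q_b^{(\eta)}} - ib\Lambda_A + \eta\theta_\eta$ with respect to the real inner product $(\cdot,\cdot)_r$. Indeed, $\mathcal{L}_{Q_b^{(\eta)}}$ is self-adjoint, multiplication by the real constant $\eta\theta_\eta$ is self-adjoint, and $\Lambda_A = \phi_A'\partial_r + \tfrac12\Delta\phi_A$ is anti-self-adjoint by integration by parts using $\partial_r(r\phi_A') = r\Delta\phi_A$; combined with the anti-self-adjointness of multiplication by $i$, this makes $-ib\Lambda_A$ self-adjoint for real $b$. Hence
\[
\big(\mathcal{L}_{Q_b^{(\eta)}}\epsilon-ib\Lambda_A\epsilon+\eta\theta_\eta\epsilon,\psi_b\big)_r = (\epsilon,T\psi_b)_r,
\]
and it suffices to compute $T\psi_b$ explicitly for $\psi\in\{\Lambda Q^{(\eta)}, iQ^{(\eta)}\}$.

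The conjugation formula \eqref{eq:conj-b-phase2}, together with the identity $\partial_b\psi_b = -i\tfrac{|y|^2}{4}\psi_b$, yields
\[
\mathcal{L}_{Q_b^{(\eta)}}\psi_b = [\mathcal{L}_{Q^{(\eta)}}\psi]_b + ib\Lambda\psi_b - b^2\tfrac{|y|^2}{4}\psi_b.
\]
Substituting \eqref{eq:phase-inv} for $\psi = iQ^{(\eta)}$ and \eqref{eq:scale-inv} for $\psi = \Lambda Q^{(\eta)}$, the $\eta\theta_\eta\psi_b$ term in $T$ exactly cancels the mass contribution from $\mathcal{L}_{Q^{(\eta)}}\psi$, leaving
\[
T[iQ^{(\eta)}]_b = -(b^2+\eta^2)\tfrac{|y|^2}{4}[iQ^{(\eta)}]_b + ib(\Lambda-\Lambda_A)[iQ^{(\eta)}]_b,
\]
\[
T[\Lambda Q^{(\eta)}]_b = -2\eta\theta_\eta Q_b^{(\eta)} - (b^2+\eta^2)\tfrac{|y|^2}{4}[\Lambda Q^{(\eta)}]_b - \eta^2|y|^2 Q_b^{(\eta)} + ib(\Lambda-\Lambda_A)[\Lambda Q^{(\eta)}]_b.
\]
Pairing with $\epsilon$, the first term in the scaling identity produces the explicit main term $-2\eta\theta_\eta(\epsilon,Q_b^{(\eta)})_r$ of \eqref{eq:5.45}, while the phase identity produces no main term and accounts for \eqref{eq:5.46}.

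It remains to estimate three kinds of error terms. Since $\Lambda_A$ agrees with $\Lambda$ on $\{r\le A\}$ (because $\phi_A'(r) = r$ there), the commutator $(\Lambda-\Lambda_A)\psi_b$ is supported in $\{r\gtrsim A\}$; the uniform decay bounds \eqref{eq:ref-unif-bound} together with the Gaussian cutoff of $Q^{(\eta)}$ beyond $R_\eta=(B\eta)^{-1/2}$ imply $\|(\Lambda-\Lambda_A)\psi\|_{L^2(r\gtrsim A)}\to 0$ as $A\to\infty$ uniformly in small $\eta$. Pairing via Cauchy--Schwarz against $\|\epsilon\|_{L^2}$, substituting the $L^2$ bound \eqref{eq:L2-est}, and absorbing the $b\lesssim\lambda$ factor then yields the $o_{A\to\infty}(\lambda\|\epsilon\|_{\dot H_m^1})$ piece. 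The polynomial $|y|^2$-weighted terms are bounded by $\|\epsilon\|_{L^2}\||y|^2 Q^{(\eta)}\|_{L^2}$ (and analogously for $\Lambda Q^{(\eta)}$); for $m\ge 3$ the weighted $L^2$ norm is uniformly bounded, while for $m=1,2$ the Gaussian cutoff from Proposition~\ref{prop:const-Q-eta} gives $\||y|^2 Q^{(\eta)}\|_{L^2}\lesssim|\log\eta|^{1/2}$. Combining the prefactor $(b^2+\eta^2)\sim\lambda^2$, the inequality $\eta\le\lambda$, and \eqref{eq:L2-est} (which trades $\|\epsilon\|_{L^2}$ for $\lambda^{-3/4}\mathcal{T}_{\dot H_m^1}^{(\eta,5/4)}[\epsilon]$), this contribution falls into $O(\lambda^{3/2}\|\epsilon\|_{\dot H_m^1})$ in the sense consistent with the definition of $\err$ in \eqref{eq:err}.

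The hard part is the case $m=1$: the profile $Q\sim r^{-3}$ decays so slowly that $|y|^2 Q\notin L^2(\mathbb{R}^2)$, so the naive Cauchy--Schwarz estimate of $(\epsilon,|y|^2 Q_b^{(\eta)})_r$ breaks down as $\eta\to 0$. The remedy is to exploit the $\eta$-dependent Gaussian cutoff built into $Q^{(\eta)}$ by Proposition~\ref{prop:const-Q-eta}, which regularizes the weighted norm at the price of a $|\log\eta|^{1/2}$ factor; this logarithmic loss is absorbed against the spare power $\lambda^{1/2}$ from the bootstrap hypothesis $\|\epsilon\|_{L^2}\le\lambda^{1/2}$ together with $\eta\le\lambda$, with the time-averaging through $\mathcal{T}_{\dot H_m^1}^{(\eta,5/4)}[\epsilon]$ being essential for uniformity as $\eta\to 0^+$.
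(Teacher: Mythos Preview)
Your reduction via self-adjointness, the conjugation identity \eqref{eq:conj-b-phase2}, and the algebraic relations \eqref{eq:phase-inv}--\eqref{eq:scale-inv} matches the paper, and the main term $-2\eta\theta_\eta(\epsilon,Q_b^{(\eta)})_r$ is extracted correctly. However, your error analysis has a genuine gap. Throughout you pair remainders against $\|\epsilon\|_{L^2}$ and then appeal to \eqref{eq:L2-est}; but the lemma's conclusion is in terms of $\|\epsilon\|_{\dot{H}_{m}^{1}}$, not the maximal function, so the right move is Hardy: write $(\epsilon,g_b)_r=(r^{-1}\epsilon,[rg]_b)_r$ and bound via $\|r^{-1}\epsilon\|_{L^2}\lesssim\|\epsilon\|_{\dot{H}_{m}^{1}}$. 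This is how the paper obtains, for instance, $b\,|(\epsilon,[(\Lambda-\Lambda_A)\psi]_b)_r|\lesssim b\,\|r\mathbf{1}_{r\ge A}Q\|_{L^2}\|\epsilon\|_{\dot{H}_{m}^{1}}=o_{A\to\infty}(\lambda\|\epsilon\|_{\dot{H}_{m}^{1}})$.

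More seriously, your treatment of the $b^2\tfrac{|y|^2}{4}[\Lambda Q^{(\eta)}]_b$ piece via the Gaussian cutoff of $Q^{(\eta)}$ fails when $m=1$. The cutoff scale $R_\eta=(B\eta)^{-1/2}$ depends on $\eta$, so $\||y|^2 Q^{(\eta)}\|_{L^2}\lesssim|\log\eta|^{1/2}$ diverges as $\eta\to 0^+$ while $b\sim\lambda$ stays bounded below; no power of $\lambda$ and the inequality $\eta\le\lambda$ can absorb $|\log\eta|^{1/2}$ uniformly. The Gaussian cutoff \emph{does} dispose of the $\eta^2|y|^2$-terms (yielding $\eta^2\|r\cdot r^2 Q^{(\eta)}\|_{L^2}\lesssim\eta^{3/2}$), but for the $b^2|y|^2$-terms the paper uses a different, essential idea: integrate by parts against the oscillatory phase via $\partial_r e^{-ibr^2/4}=-ib\tfrac{r}{2}e^{-ibr^2/4}$, which trades $b^2 r^2$ for $b\cdot r$ at the cost of a boundary term. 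Concretely one splits at $r=b^{-1/2}$: on $\{r\le b^{-1/2}\}$ the crude bound $b^2\|r^{-1}\epsilon\|_{L^2}\|\mathbf{1}_{r\le b^{-1/2}}r^3 Q\|_{L^2}\lesssim b^{3/2}\|\epsilon\|_{\dot{H}_{m}^{1}}$ suffices, while on $\{r\ge b^{-1/2}\}$ integration by parts recovers the missing $b^{-1/2}$. This oscillatory-phase argument is the key step you are missing.
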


\begin{proof}
We will only prove \eqref{eq:5.45}. The proof of \eqref{eq:5.46}
is similar (it is in fact easier), one uses \eqref{eq:phase-inv}
instead of \eqref{eq:scale-inv}.

To prove \eqref{eq:5.45}, we crucially use the explicit computation
\eqref{eq:scale-inv}. Applying the pseudoconformal phase, we obtain
\begin{align*}
 & (\mathcal{L}_{Q_{b}^{(\eta)}}-ib\Lambda_{A}+\eta\theta_{\eta})[\Lambda Q^{(\eta)}]_{b}\\
 & =[(\mathcal{L}_{Q^{(\eta)}}+\eta\theta_{\eta}+\eta^{2}\tfrac{|y|^{2}}{4})(\Lambda Q^{(\eta)})]_{b}+ib(\Lambda-\Lambda_{A})[\Lambda Q^{(\eta)}]_{b}-(b^{2}+\eta^{2})\tfrac{|y|^{2}}{4}[\Lambda Q^{(\eta)}]_{b}\\
 & =-2\eta\theta_{\eta}Q_{b}^{(\eta)}-4\eta^{2}\tfrac{|y|^{2}Q_{b}^{(\eta)}}{4}+ib(\Lambda-\Lambda_{A})[\Lambda Q^{(\eta)}]_{b}-(b^{2}+\eta^{2})\tfrac{|y|^{2}}{4}[\Lambda Q^{(\eta)}]_{b}.
\end{align*}
Thus 
\begin{align*}
 & (\mathcal{L}_{Q_{b}^{(\eta)}}\epsilon-ib\Lambda_{A}\epsilon+\eta\theta_{\eta}\epsilon,[\Lambda Q^{(\eta)}]_{b})_{r}\\
 & =(\epsilon,-2\eta\theta_{\eta}Q_{b}^{(\eta)}-4\eta^{2}\tfrac{|y|^{2}Q_{b}^{(\eta)}}{4}+ib(\Lambda-\Lambda_{A})[\Lambda Q^{(\eta)}]_{b}-(b^{2}+\eta^{2})\tfrac{|y|^{2}}{4}[\Lambda Q^{(\eta)}]_{b})_{r}.
\end{align*}
If $m$ is large, we can use the decay $|Q^{(\eta)}|+|\Lambda Q^{(\eta)}|\lesssim(1+r)^{-(m+2)}$
to conclude the proof.

To treat the worst case $m=1$, however, we have to deal with the
slow decay $r^{2}\Lambda Q^{(\eta)}$ especially when $\eta=0$.\footnote{For each $\eta>0$, $r^{2}\Lambda Q^{(\eta)}$ in fact shows exponential
decay $e^{-\eta\frac{r^{2}}{4}}$. However, we search for uniform
estimates in $\eta$.} By \eqref{eq:ref-unif-bound}, we have 
\[
\eta^{2}(\|r\cdot r^{2}Q^{(\eta)}\|_{L^{2}}+\|r\cdot r^{2}\Lambda Q^{(\eta)}\|_{L^{2}})\lesssim\eta^{\frac{3}{2}}.
\]
Thus 
\begin{align*}
 & (\mathcal{L}_{Q_{b}^{(\eta)}}\epsilon-ib\Lambda_{A}\epsilon+\eta\theta^{(\eta)}\epsilon,[\Lambda Q^{(\eta)}]_{b})_{r}\\
 & =-2\eta\theta^{(\eta)}(\epsilon,Q_{b}^{(\eta)})_{r}+(\epsilon,ib(\Lambda-\Lambda_{A})[\Lambda Q^{(\eta)}]_{b}-b^{2}\tfrac{|y|^{2}}{4}[\Lambda Q^{(\eta)}]_{b})_{r}+O(\eta^{\frac{3}{2}}\|\epsilon\|_{\dot{H}_{m}^{1}}).
\end{align*}
We then use the algebra 
\[
ib(\Lambda-\Lambda_{A})[\Lambda Q^{(\eta)}]_{b}=ib[(\Lambda-\Lambda_{A})(\Lambda Q^{(\eta)})]_{b}+b^{2}(\tfrac{r^{2}-r\phi_{A}'(r)}{2})[\Lambda Q^{(\eta)}]_{b}
\]
and the estimate 
\begin{align*}
\|r\cdot(\Lambda-\Lambda_{A})(\Lambda Q^{(\eta)})\|_{L^{2}} & \lesssim\|\mathbf{1}_{r\geq A}rQ\|_{L^{2}}\lesssim A^{-1}
\end{align*}
to further estimate 
\begin{align*}
 & (\mathcal{L}_{Q_{b}^{(\eta)}}\epsilon-ib\Lambda_{A}\epsilon+\eta\theta^{(\eta)}\epsilon,[\Lambda Q^{(\eta)}]_{b})_{r}\\
 & =-2\eta\theta^{(\eta)}(\epsilon,Q_{b}^{(\eta)})_{r}+b^{2}(\epsilon,\tfrac{(r^{2}-2r\phi_{A}'(r))}{4}[\Lambda Q^{(\eta)}]_{b})_{r}\\
 & \quad+o_{A\to\infty}(\lambda\|\epsilon\|_{\dot{H}_{m}^{1}})+O(\eta^{\frac{3}{2}}\|\epsilon\|_{\dot{H}_{m}^{1}}).
\end{align*}

It now suffices to estimate the inner product $b^{2}(\epsilon,\tfrac{(r^{2}-2r\phi_{A}'(r))}{4}[\Lambda Q^{(\eta)}]_{b})_{r}$.
We decompose it into the parts $\mathbf{1}_{r\leq b^{-1/2}}$ and
$\mathbf{1}_{r\geq b^{-1/2}}$ and estimate the latter using integration
by parts. On one hand, we have 
\begin{align*}
 & b^{2}|(\epsilon,\mathbf{1}_{r\leq b^{-1/2}}\tfrac{(r^{2}-2r\phi_{A}'(r))}{4}[\Lambda Q^{(\eta)}]_{b})_{r}|\\
 & \lesssim b^{2}\|r^{-1}\epsilon\|_{L^{2}}\|\mathbf{1}_{r\leq b^{-1/2}}r^{3}Q\|_{L^{2}}\lesssim b^{\frac{3}{2}}\|\epsilon\|_{\dot{H}_{m}^{1}}.
\end{align*}
On the other hand, we integrate by parts with $\partial_{r}e^{-ib\frac{r^{2}}{4}}=-ib\frac{r}{2}\cdot e^{-ib\frac{r^{2}}{4}}$
to estimate 
\begin{align*}
 & b^{2}|(\epsilon,\mathbf{1}_{r\geq b^{-1/2}}\tfrac{(r^{2}-2r\phi_{A}'(r))}{4}[\Lambda Q^{(\eta)}]_{b})_{r}|\\
 & \lesssim b\|\epsilon\|_{\dot{H}_{m}^{1}}\|\mathbf{1}_{r\geq b^{-1/2}}rQ\|_{L^{2}}+b\|\epsilon\|_{L^{\infty}}([r^{2}Q]\big|_{r=b^{-1/2}})\\
 & \lesssim b^{\frac{3}{2}}\|\epsilon\|_{\dot{H}_{m}^{1}}.
\end{align*}
This completes the proof.
\end{proof}
\begin{proof}[Proof of Lemma \ref{lem:unav-Lyapunov/virial}]
The coercivity \eqref{eq:unav-Lyapunov-coer} follows from \eqref{eq:quad-energy-coer},
\eqref{eq:quad-bPhi-coer}.

We turn to show the derivative estimate \eqref{eq:unav-Lyapunov-deriv}.
Note that 
\begin{align*}
\lambda^{2}\partial_{s}\mathcal{I}_{A} & =(-\frac{2\lambda_{s}}{\lambda}+\partial_{s})(E_{w^{\flat}}^{\mathrm{(qd)}}[\epsilon]+b\Phi_{A}[\epsilon]+\tfrac{\eta\theta_{\eta}}{2}M[\epsilon]).
\end{align*}
On one hand, by the modulation estimate, we have 
\[
-\frac{2\lambda_{s}}{\lambda}(E_{w^{\flat}}^{\mathrm{(qd)}}[\epsilon]+b\Phi_{A}[\epsilon]+\tfrac{\eta\theta_{\eta}}{2}M[\epsilon])=-\frac{2\lambda_{s}}{\lambda}E_{w^{\flat}}^{\mathrm{(qd)}}[\epsilon]+b\eta\theta_{\eta}M[\epsilon]+\err.
\]
Unlike the term $-\frac{2b\lambda_{s}}{\lambda}\Phi_{A}[\epsilon]$,
we \emph{cannot} view $b\eta\theta_{\eta}M[\epsilon]$ as an error
term. Thus we are crucially using the sign condition $b,\eta\geq0$.

On the other hand, we use \eqref{eq:quad-energy-deriv-formal} and
\eqref{eq:quad-bPhi-deriv} to compute 
\begin{align*}
 & \partial_{s}[E_{w^{\flat}}^{\mathrm{(qd)}}[\epsilon]+b\Phi_{A}+\tfrac{\eta\theta_{\eta}}{2}M]\\
 & =\frac{\lambda_{s}}{\lambda}\{(R_{u^{\flat}-w^{\flat}},\Lambda z^{\flat})_{r}+(\mathcal{L}_{w^{\flat}}\epsilon+R_{u^{\flat}-w^{\flat}},(\Lambda-\Lambda_{A})\epsilon)_{r}\}\\
 & \quad+\eqref{eq:deriv-temp1}+\eqref{eq:deriv-temp2}+\err,
\end{align*}
where 
\begin{align}
 & -(\gamma_{s}-\eta\theta_{\eta})(\mathcal{L}_{w^{\flat}}\epsilon+R_{u^{\flat}-w^{\flat}},i\epsilon)_{r},\label{eq:deriv-temp1}\\
 & (\mathcal{L}_{Q_{b}^{(\eta)}}\epsilon-ib\Lambda_{A}\epsilon+\eta\theta_{\eta}\epsilon,\Big(\frac{\lambda_{s}}{\lambda}+b\Big)[\Lambda Q^{(\eta)}]_{b}-(\tilde{\gamma}_{s}-\eta\theta_{\eta})iQ_{b}^{(\eta)})_{r}.\label{eq:deriv-temp2}
\end{align}

We show that \eqref{eq:deriv-temp1} and \eqref{eq:deriv-temp2} can
be absorbed into $\err$. Indeed, \eqref{eq:deriv-temp1} is absorbed
into $\err$ by observing 
\begin{align*}
|\gamma_{s}-\eta\theta_{\eta}| & \lesssim|\theta_{z^{\flat}\to Q_{b}^{(\eta)}}|+|\tilde{\gamma}_{s}-\eta\theta_{\eta}|\lesssim\alpha^{\ast}\lambda^{2}+\|\epsilon\|_{\dot{H}_{m}^{1}},\\
|(\mathcal{L}_{w^{\flat}}\epsilon+R_{u^{\flat}-w^{\flat}},i\epsilon)_{r}| & \lesssim\|\epsilon\|_{\dot{H}_{m}^{1}}^{2}.
\end{align*}
For \eqref{eq:deriv-temp2}, we use Lemma \ref{lem:lem5.5}, the modulation
estimate \eqref{eq:mod-est-1}, and \emph{degeneracy estimate} \eqref{eq:mod-degen-est}
to get 
\begin{align*}
|\eqref{eq:deriv-temp2}| & \lesssim(\alpha^{\ast}\lambda^{m+3}|\log\lambda|+\alpha^{\ast}\lambda^{2}\eta+\|\epsilon\|_{\dot{H}_{m}^{1}})\cdot\Big(\lambda(o_{A\to\infty}(1)+\lambda^{\frac{1}{2}})\|\epsilon\|_{\dot{H}_{m}^{1}}\\
 & \qquad+\eta(\alpha^{\ast}\lambda^{m+2}|\log\lambda|+\alpha^{\ast}\lambda\eta+(\alpha^{\ast}+\lambda)\mathcal{T}_{\dot{H}_{m}^{1}}^{(\frac{5}{4})}[\epsilon])\Big).
\end{align*}
Applying the elementary bound $\eta\lesssim\lambda$, we see that
\eqref{eq:deriv-temp2} is absorbed into $\err$.

Therefore, we are led to 
\begin{align*}
 & \lambda^{2}\partial_{s}\mathcal{I}_{A}-b\eta\theta_{\eta}M[\epsilon]\\
 & =-\frac{\lambda_{s}}{\lambda}\Big(2E_{w^{\flat}}^{\mathrm{(qd)}}[\epsilon]-(\mathcal{L}_{w^{\flat}}\epsilon+R_{u^{\flat}-w^{\flat}},(\Lambda-\Lambda_{A})\epsilon)_{r}-(R_{u^{\flat}-w^{\flat}},\Lambda z^{\flat})_{r}\Big)+\err.
\end{align*}
One then integrate by parts the $(\Lambda-\Lambda_{A})$ portion to
get 
\begin{align}
 & \lambda^{2}\partial_{s}\mathcal{I}_{A}-b\eta\theta_{\eta}M[\epsilon]\label{eq:5.47}\\
 & =-\frac{\lambda_{s}}{\lambda}\Big(2E_{w^{\flat}}^{(A),\text{(qd)}}[\epsilon]-(R_{u^{\flat}-w^{\flat}},\Lambda_{A}z^{\flat})_{r}+(R_{u^{\flat}-w^{\flat}},(\Lambda-\Lambda_{A})Q_{b}^{(\eta)})_{r}\Big)+\mathrm{Err},\nonumber 
\end{align}
 where $E_{w^{\flat}}^{(A),\text{(qd)}}[\epsilon]$ is a deformation
of $E_{w^{\flat}}^{\text{(qd)}}[\epsilon]$, which coincides $E_{w^{\flat}}^{\text{(qd)}}[\epsilon]$
when $A=\infty$ formally. The precise formula of $E_{w^{\flat}}^{(A),\text{(qd)}}[\epsilon]$
is given by the quadratic and higher parts of $E^{(A)}[w^{\flat}+\epsilon]$,
where 
\begin{align*}
E^{(A)}[u^{\flat}] & =\frac{1}{2}\int\phi_{A}''|\partial_{r}u^{\flat}|^{2}-\frac{1}{8}\int(\Delta^{2}\phi_{A})|u^{\flat}|^{2}\\
 & \qquad-\frac{1}{4}\int\frac{\Delta\phi_{A}}{2}|u^{\flat}|^{4}+\frac{1}{2}\int\frac{\phi_{A}'}{r}\Big(\frac{m+A_{\theta}[u^{\flat}]}{r}\Big)^{2}|u^{\flat}|^{2}.
\end{align*}
More precisely, 
\begin{align}
E_{w^{\flat}}^{(A),\mathrm{(qd)}}[\epsilon] & =\frac{1}{2}\int\phi_{A}''|\partial_{r}\epsilon|^{2}-\frac{1}{8}\int(\Delta^{2}\phi_{A})|\epsilon|^{2}+\frac{1}{2}\int\frac{\phi_{A}'}{r}\cdot\frac{m^{2}}{r^{2}}|\epsilon|^{2}\label{eq:EA-quad-def}\\
 & \quad-\sum_{\substack{\psi_{1},\dots,\psi_{4}\in\{w^{\flat},\epsilon\}:\\
\#\{i:\psi_{i}=\epsilon\}\geq2
}
}[\frac{1}{4}\mathcal{M}_{4,0}^{(A)}+\frac{m}{2}\mathcal{M}_{4,1}^{(A)}]+\frac{1}{8}\sum_{\substack{\psi_{1},\dots,\psi_{6}\in\{w^{\flat},\epsilon\}:\\
\#\{i:\psi_{i}=\epsilon\}\geq2
}
}\mathcal{M}_{6}^{(A)}.\nonumber 
\end{align}

We further reduce \eqref{eq:5.47} to 
\begin{equation}
\lambda^{2}\partial_{s}\mathcal{I}_{A}-b\eta\theta_{\eta}M[\epsilon]=2bE_{w^{\flat}}^{(A),\text{(qd)}}[\epsilon]+\mathrm{Err}.\label{eq:I to EA}
\end{equation}
Indeed, we observe 
\begin{align*}
\Big|\frac{\lambda_{s}}{\lambda}(R_{u^{\flat}-w^{\flat}},\Lambda_{A}z^{\flat})_{r}\Big| & \lesssim A\alpha^{\ast}\lambda^{2}\|\epsilon\|_{\dot{H}_{m}^{1}}^{2},\\
\Big|\frac{\lambda_{s}}{\lambda}(R_{u^{\flat}-w^{\flat}},(\Lambda-\Lambda_{A})Q_{b}^{(\eta)})_{r}\Big| & \lesssim\lambda(o_{A\to\infty}(1)+\lambda)\|\epsilon\|_{\dot{H}_{m}^{1}}^{2}+\lambda^{2}\|\epsilon\|_{L^{2}}\|\epsilon\|_{\dot{H}_{m}^{1}},\\
\Big|\Big(\frac{\lambda_{s}}{\lambda}+b\Big)E_{w^{\flat}}^{(A),\mathrm{(qd)}}[\epsilon]\Big| & \lesssim\lambda^{\frac{3}{2}}\|\epsilon\|_{\dot{H}_{m}^{1}}^{2},
\end{align*}
using 
\[
\Big|\frac{\lambda_{s}}{\lambda}\Big|\lesssim\lambda,\quad|\Lambda_{A}z^{\flat}|\lesssim A(|\partial_{r}z^{\flat}|+|r^{-1}z^{\flat}|),\quad|(\Lambda-\Lambda_{A})Q_{b}^{(\eta)}|\lesssim\mathbf{1}_{r\geq A}(r^{-3}+br^{-1}),
\]
the modulation estimate \eqref{eq:mod-est-1}, and weak bootstrap
hypothesis \eqref{eq:bootstrap-hyp}.

We now treat the term $2E_{w^{\flat}}^{(A),\text{(qd)}}[\epsilon]$
to conclude the proof of \eqref{eq:unav-Lyapunov-deriv}. Recall the
expression \eqref{eq:EA-quad-def}. We can replace all the occurences
of $w^{\flat}$ by $Q$. Indeed, we use the multilinear estimates
(Lemma \ref{lem:duality-1}) with $\|w^{\flat}-Q\|_{L^{2}}\lesssim\alpha^{\ast}+\lambda|\log\lambda|^{\frac{1}{2}}$
to see that those terms only contribute up to errors $(\alpha^{\ast}+\lambda|\log\lambda|^{\frac{1}{2}})\|\epsilon\|_{\dot{H}_{m}^{1}}^{2}$.
Moreover, we can eliminate terms containing at least three $\epsilon$'s
up to errors $\|\epsilon\|_{L^{2}}\|\epsilon\|_{\dot{H}_{m}^{1}}^{2}$.
Thus we have (recall the formula of $\mathcal{J}_{A}$ given in \eqref{eq:def-J})
\[
E_{w^{\flat}}^{(A),\mathrm{(qd)}}[\epsilon]=\mathcal{J}_{A}-\frac{1}{8}\int(\Delta^{2}\phi_{A})|\epsilon|^{2}+O\big((\alpha^{\ast}+\|\epsilon\|_{L^{2}}+\lambda|\log\lambda|^{\frac{1}{2}})\|\epsilon\|_{\dot{H}_{m}^{1}}^{2}\big).
\]
Substituting this into \eqref{eq:I to EA}, we conclude \eqref{eq:unav-Lyapunov-deriv}.
\end{proof}
The quantity 
\[
-\frac{1}{8}\int(\Delta^{2}\phi_{A})|\epsilon|^{2}
\]
does not have a good sign. Moreover, a crude bound 
\[
\Big|\int(\Delta^{2}\phi_{A})|\epsilon|^{2}\Big|\lesssim\Big|\int\mathbf{1}_{r\geq A}\frac{A}{r}\cdot\Big|\frac{\epsilon}{r}\Big|^{2}\Big|\lesssim\|r^{-1}\epsilon\|_{L^{2}}^{2}\lesssim\|\epsilon\|_{\dot{H}_{m}^{1}}^{2}
\]
does not suffice. In order to go beyond this borderline, we crucially
use the bound 
\[
|\Delta^{2}\phi_{A}|\lesssim\mathbf{1}_{r\geq A}\frac{A}{r^{3}},
\]
which should be better than $\frac{1}{r^{2}}$\@. We will use an
average argument.

We finally define the \emph{Lyapunov/virial functional }by 
\[
\mathcal{I}\coloneqq\frac{2}{\log A}\int_{A^{1/2}}^{A}\mathcal{I}_{A'}\frac{dA'}{A'}=\lambda^{-2}\Big(E_{w^{\flat}}^{\mathrm{(qd)}}[\epsilon]+\frac{2}{\log A}\int_{A^{1/2}}^{A}b\Phi_{A'}[\epsilon]\frac{dA'}{A'}+\tfrac{\eta\theta_{\eta}}{2}M[\epsilon]\Big).
\]

\begin{prop}[Lyapunov/virial functional]
\label{prop:lyapunov}We have 
\begin{gather}
\lambda^{2}\mathcal{I}-\tfrac{\eta\theta_{\eta}}{2}M[\epsilon]+O\big((\alpha^{\ast}+\lambda^{\frac{1}{2}})\|\epsilon\|_{\dot{H}_{m}^{1}}^{2}+A\lambda\|\epsilon\|_{L^{2}}\|\epsilon\|_{\dot{H}_{m}^{1}}\big)\sim\|\epsilon\|_{\dot{H}_{m}^{1}}^{2},\label{eq:Lyapunov-coer}\\
\lambda^{2}\partial_{s}\mathcal{I}\geq\mathrm{Err}.\label{eq:Lyapunov-deriv}
\end{gather}
\end{prop}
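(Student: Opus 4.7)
The plan is to prove Proposition \ref{prop:lyapunov} by averaging the unaveraged estimates \eqref{eq:unav-Lyapunov-coer} and \eqref{eq:unav-Lyapunov-deriv} over the scale parameter $A' \in [A^{1/2}, A]$ with measure $\frac{dA'}{A'}/\log A$. The coercivity part will be essentially immediate, while the derivative estimate requires carefully exploiting the averaging to overcome the borderline term $-\frac{b}{4}\int(\Delta^2\phi_{A'})|\epsilon|^2$, which is the key obstacle identified in Lemma \ref{lem:unav-Lyapunov/virial}.

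For the coercivity \eqref{eq:Lyapunov-coer}, I would simply integrate \eqref{eq:unav-Lyapunov-coer} in $A'$ against $\frac{2}{\log A}\frac{dA'}{A'}$. The main observation is that $\lambda^{2}\mathcal{I}_{A'} - \tfrac{\eta\theta_{\eta}}{2}M[\epsilon]$ is comparable to $\|\epsilon\|_{\dot{H}_m^1}^2$ uniformly in $A' \in [A^{1/2},A]$, up to error terms of the form $(\alpha^* + \lambda^{1/2})\|\epsilon\|_{\dot{H}_m^1}^2 + A'\lambda\|\epsilon\|_{L^2}\|\epsilon\|_{\dot{H}_m^1}$. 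Since $A' \leq A$ and the integrand is positive with uniform constants, averaging preserves the comparability and the error bounds stay within the stated form.

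For the derivative estimate \eqref{eq:Lyapunov-deriv}, averaging \eqref{eq:unav-Lyapunov-deriv} produces
\[
\lambda^{2}\partial_{s}\mathcal{I}=\frac{4b}{\log A}\int_{A^{1/2}}^{A}\Big(\mathcal{J}_{A'}+\tfrac{\eta\theta_{\eta}}{2}M[\epsilon]-\tfrac{1}{8}\int(\Delta^{2}\phi_{A'})|\epsilon|^{2}\Big)\frac{dA'}{A'}+\err.
\]
The first two pieces are non-negative: $b \geq 0$ by \eqref{eq:pos-b}, $\eta\theta_\eta \geq 0$ for small $\eta$ by \eqref{eq:theta-asymp}, so $2b\cdot\tfrac{\eta\theta_\eta}{2}M[\epsilon]\geq 0$, while $\mathcal{J}_{A'}\geq 0$ up to acceptable errors by the localized decomposition
\[
\mathcal{J}_{A'} \approx \tfrac{1}{2}\int\mathbf{1}_{r\leq A'}|L_{Q}\epsilon|^{2}+\tfrac{1}{2}\int\mathbf{1}_{r>A'}\phi_{A'}''|\partial_{r}\epsilon|^{2}+\tfrac{1}{2}\int\mathbf{1}_{r>A'}\tfrac{\phi_{A'}'}{r}\Big(\tfrac{m+A_{\theta}[Q]}{r}\Big)^{2}|\epsilon|^{2}+(\mathrm{bdry}),
\]
where the boundary contribution at $r\sim A'$ is controlled by the orthogonality conditions \eqref{eq:orthog} together with the coercivity Lemma \ref{lem:coercivity} after an averaging-in-$A'$ argument (a standard trick for extracting a good radius from the Pythagorean identity that accompanies the virial-type splitting). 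The key step, and the main obstacle, is bounding the averaged negative contribution using $|\Delta^{2}\phi_{A'}|\lesssim\mathbf{1}_{r\geq A'}\frac{A'}{r^{3}}$ via Fubini:
\[
\Big|\frac{1}{\log A}\int_{A^{1/2}}^{A}\int(\Delta^{2}\phi_{A'})|\epsilon|^{2}\frac{dA'}{A'}\Big|\lesssim\frac{1}{\log A}\int|\epsilon|^{2}\mathbf{1}_{r\geq A^{1/2}}\frac{\min(A,r)-A^{1/2}}{r^{3}}\,dx\lesssim\frac{1}{\log A}\|\epsilon\|_{\dot{H}_{m}^{1}}^{2},
\]
using $\min(A,r)/r^{3}\leq 1/r^{2}$ and the Hardy-Sobolev inequality \eqref{eq:hardy-Sobolev}. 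Multiplying by $b\lesssim\lambda$ yields a contribution of size $\frac{\lambda}{\log A}\|\epsilon\|_{\dot{H}_{m}^{1}}^{2} = o_{A\to\infty}(\lambda)\|\epsilon\|_{\dot{H}_{m}^{1}}^{2}$, which fits into $\err$ as defined in \eqref{eq:err}. The main obstacle is verifying that the boundary term in the decomposition of $\mathcal{J}_{A'}$ can indeed be absorbed — this is where averaging in $A'$ plays a second role, since a fixed $A'$ version would leave an uncontrolled boundary contribution, but averaging produces an additional radial integration that is dominated by $\|\epsilon\|_{\dot H_m^1}^2/\log A$ by the same Fubini scheme. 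Combining, the averaged identity becomes $\lambda^{2}\partial_{s}\mathcal{I}\geq\err$, which is the desired \eqref{eq:Lyapunov-deriv}.
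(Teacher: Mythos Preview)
The proposal is correct and follows essentially the same approach as the paper: averaging the unaveraged estimates \eqref{eq:unav-Lyapunov-coer}--\eqref{eq:unav-Lyapunov-deriv} over $A'\in[A^{1/2},A]$, then using Fubini with the pointwise bound $|\Delta^{2}\phi_{A'}|\lesssim\mathbf{1}_{r\geq A'}A'/r^{3}$ to absorb both the $\Delta^{2}\phi_{A'}$ term and the boundary contribution in the decomposition of $\mathcal{J}_{A'}$ into $o_{A\to\infty}(1)\|\epsilon\|_{\dot H_m^1}^2$. One clarification: the boundary term at $r=A'$ is handled \emph{purely} by averaging, via $\frac{2}{\log A}\int_{A^{1/2}}^{A}|\epsilon(s,A')|^{2}\frac{dA'}{A'}\lesssim\frac{1}{\log A}\|r^{-1}\epsilon\|_{L^{2}}^{2}$, and neither the orthogonality conditions \eqref{eq:orthog} nor the coercivity Lemma~\ref{lem:coercivity} are invoked at this step in the paper.
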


\begin{proof}
The coercivity \eqref{eq:Lyapunov-coer} follows from \eqref{eq:unav-Lyapunov-coer}
and an averaging process.

We turn to show \eqref{eq:Lyapunov-deriv}. Let 
\[
\mathcal{J}\coloneqq\frac{2}{\log A}\int_{A^{1/2}}^{A}\mathcal{J}_{A'}\frac{dA'}{A'}.
\]
From the expression \eqref{eq:unav-Lyapunov-deriv} and $b\lesssim\lambda$,
it suffices to show 
\begin{equation}
\frac{2}{\log A}\int_{A^{1/2}}^{A}\Big(\frac{1}{8}\int(\Delta^{2}\phi_{A})|\epsilon|^{2}\Big)\frac{dA'}{A'}=o_{A\to\infty}(1)\|\epsilon\|_{\dot{H}_{m}^{1}}^{2}\label{eq:avg-error}
\end{equation}
and 
\begin{equation}
\mathcal{J}\geq o_{A\to\infty}(\|\epsilon\|_{\dot{H}_{m}^{1}}^{2}).\label{eq:J-almost-pos}
\end{equation}

We first show \eqref{eq:avg-error}. The crucial observations are
\[
\frac{2}{\log A}=o_{A\to\infty}(1)\quad\text{and}\quad\int_{A^{1/2}}^{A}|\Delta^{2}\phi_{A'}|\frac{dA'}{A'}\lesssim\mathbf{1}_{r\geq A^{1/2}}\frac{1}{r^{2}}.
\]
By Fubini's theorem, we see that 
\[
\frac{2}{\log A}\Big|\int_{A^{1/2}}^{A}\Big(\frac{1}{8}\int(\Delta^{2}\phi_{A'})|\epsilon|^{2}\Big)\frac{dA'}{A'}\Big|\lesssim o_{A\to\infty}(1)\|r^{-1}\epsilon\|_{L^{2}}^{2}.
\]
This shows \eqref{eq:avg-error}.

The rest of the proof is devoted to show the almost positivity \eqref{eq:J-almost-pos}.
As there is a truncation on the region $\{r\leq A'\}$ in the expression
\eqref{eq:def-J} of $\mathcal{J}_{A'}$, it is natural to compare
$\mathcal{J}_{A'}$ with a truncated version of $\frac{1}{2}(\mathcal{L}_{Q}\epsilon,\epsilon)_{r}=\frac{1}{2}\|L_{Q}\epsilon\|_{L^{2}}^{2}$
on the region $\{r\leq A'\}$. To realize this, we decompose the outermost
integral $\int$ of $\mathcal{J}_{A'}$ as $\int\mathbf{1}_{r\leq A'}+\int\mathbf{1}_{r>A'}\eqqcolon\mathcal{J}_{A'}^{(\leq)}+\mathcal{J}_{A'}^{(>)}$.
If we can show that $\mathcal{J}_{A'}^{(\leq)}\approx\frac{1}{2}\|L_{Q}\epsilon\|_{L^{2}(r\leq A')}^{2}$
and $\mathcal{J}_{A'}^{(>)}\gtrsim0$ in an averaged sense, then the
almost positivity of $\mathcal{J}$ would follow.

For $\mathcal{J}_{A'}^{(\leq)}$, we can replace $\phi_{A'}(r)$ by
$\frac{1}{2}r^{2}$ and expect that $\mathcal{J}_{A'}^{(\leq)}$ is
close to $\frac{1}{2}\|L_{Q}\epsilon\|_{L^{2}(r\leq A')}^{2}$. We
claim that 
\begin{equation}
\frac{2}{\log A}\int_{A^{\frac{1}{2}}}^{A}\mathcal{J}_{A'}^{(\leq)}\frac{dA'}{A'}=\frac{1}{\log A}\int_{A^{\frac{1}{2}}}^{A}\Big(\int\mathbf{1}_{r\leq A'}|L_{Q}\epsilon|^{2}\Big)\frac{dA'}{A'}+o_{A\to\infty}(\|\epsilon\|_{\dot{H}_{m}^{1}}^{2}).\label{eq:avg-1}
\end{equation}
To see this, we  observe by integration by parts that 
\[
\frac{1}{2}\Big(\int\mathbf{1}_{r\leq A'}|L_{Q}\epsilon|^{2}\Big)-\mathcal{J}_{A'}^{(\leq)}=2\pi\cdot\Big(\frac{1}{2}(m+A_{\theta}[Q])|\epsilon|^{2}-\epsilon\partial_{r}(Q\int_{0}^{r}\Re(Q\epsilon)r'dr')\Big)\Big|_{r=0}^{A'}.
\]
Note that the boundary value at $r=0$ clearly vanishes. We can estimate
the above as 
\[
\Big|\frac{1}{2}\Big(\int\mathbf{1}_{r\leq A'}|L_{Q}\epsilon|^{2}\Big)-\mathcal{J}_{A'}^{(\leq)}\Big|\lesssim|\epsilon(s,A')|^{2}+o_{A'\to\infty}(\|\epsilon\|_{\dot{H}_{m}^{1}}^{2}).
\]
Notice that we should not use a crude bound $|\epsilon(s,A')|^{2}\lesssim\|\epsilon\|_{\dot{H}_{m}^{1}}^{2}$,
which leads to an unacceptable error. Recalling that $\mathcal{J}$
is an \emph{averaged} version of $\mathcal{J}_{A'}$ (this is also
one of the reason why we need to average $\mathcal{J}_{A'}$), we
obtain 
\begin{align*}
 & \frac{2}{\log A}\Big|\int_{A^{1/2}}^{A}\Big[\mathcal{J}_{A'}^{(\leq)}-\frac{1}{2}\Big(\int_{0}^{A'}\mathbf{1}_{r\leq A'}|L_{Q}\epsilon|^{2}\Big)\Big]\frac{dA'}{A'}\Big|\\
 & \qquad\lesssim\frac{2}{\log A}\int_{A^{1/2}}^{A}\frac{|\epsilon(s,A')|^{2}}{(A')^{2}}A'dA'+o_{A\to\infty}(\|\epsilon\|_{\dot{H}_{m}^{1}}^{2})\lesssim o_{A\to\infty}(\|\epsilon\|_{\dot{H}_{m}^{1}}^{2}).
\end{align*}
This proves \eqref{eq:avg-1}.

For $\mathcal{J}_{A'}^{(>)}$, we claim that 
\begin{equation}
\frac{2}{\log A}\int_{A^{\frac{1}{2}}}^{A}\mathcal{J}_{A'}^{(>)}\frac{dA'}{A'}\geq o_{A\to\infty}(\|\epsilon\|_{\dot{H}_{m}^{1}}^{2}).\label{eq:avg-2}
\end{equation}
To see this, we use smallness of $\|Q\mathbf{1}_{r\geq A'}\|_{L^{2}}\lesssim(A')^{-(m+1)}$
to rearrange 
\[
\mathcal{J}_{A'}^{(>)}=\frac{1}{2}\int\mathbf{1}_{r\geq A'}\phi_{A'}''|\partial_{r}\epsilon|^{2}+\frac{1}{2}\int\mathbf{1}_{r\geq A'}\frac{\phi_{A'}'}{r}\Big(\frac{m+A_{\theta}[Q]}{r}\Big)^{2}|\epsilon|^{2}+o_{A'\to\infty}(\|\epsilon\|_{\dot{H}_{m}^{1}}^{2}).
\]
Discarding nonnegative terms and averaging in $A'$, we obtain \eqref{eq:avg-2}.

Gathering the estimates \eqref{eq:avg-1} and \eqref{eq:avg-2}, we
obtain 
\[
\mathcal{J}\geq o_{A\to\infty}(\|\epsilon\|_{\dot{H}_{m}^{1}}^{2}).
\]
This completes the proof of \eqref{eq:J-almost-pos}, and hence the
proposition.
\end{proof}

\subsection{Closing the bootstrap}

Finally, we can finish the proof of our main bootstrap Lemma \ref{lem:bootstrap}.
The main tools are modulation estimates (Lemma \ref{lem:mod-est}),
$L^{2}$-estimate of $\epsilon$ (Lemma \ref{lem:5.16}), and Lyapunov/virial
estimate (Proposition \ref{prop:lyapunov}).
\begin{proof}[Proof of Lemma \ref{lem:bootstrap} (Finish)]
Applying \eqref{eq:Lyapunov-deriv} and the fundamental theorem of
calculus with $\epsilon(0)=0$, we have 
\begin{align*}
\lambda^{2}\mathcal{I}(t) & \leq\lambda^{2}\int_{t}^{0}\frac{1}{\lambda^{2}}(\partial_{s}\mathcal{I})dt'\lesssim\lambda^{2}\int_{t}^{0}\frac{1}{\lambda^{4}}|\mathrm{Err}|dt'.
\end{align*}
Applying \eqref{eq:Lyapunov-coer}, we obtain 
\[
\|\epsilon\|_{\dot{H}_{m}^{1}}^{2}\lesssim(\alpha^{\ast}+\lambda^{\frac{1}{2}})\|\epsilon\|_{\dot{H}_{m}^{1}}^{2}+A\lambda\|\epsilon\|_{L^{2}}\|\epsilon\|_{\dot{H}_{m}^{1}}+\lambda^{2}\int_{t}^{0}\frac{1}{\lambda^{4}}|\err|dt'.
\]
We then substitute into the above the $L^{2}$-bound \eqref{eq:L2-est},
error bound \eqref{eq:err}, and maximal function estimate \eqref{eq:env-prop4}.
But we need an extra care when we substitute $\lambda\cdot A(\alpha^{\ast}\lambda\eta)^{2}$
in place of $\err$. Indeed, we estimate it as 
\begin{align*}
\lambda^{2}\int_{t}^{0}\frac{1}{\lambda^{4}}\cdot\lambda A(\alpha^{\ast}\lambda\eta)^{2}dt' & =A\lambda^{2}\int_{t}^{0}\frac{(\alpha^{\ast}\eta)^{2}}{\lambda}dt'\\
 & \lesssim A\lambda^{2}\int_{t}^{0}\frac{(\alpha^{\ast}\eta^{\frac{3}{4}})^{2}}{\lambda^{\frac{1}{2}}}dt'\lesssim A(\alpha^{\ast}\lambda^{\frac{5}{4}}\eta^{\frac{3}{4}})^{2}.
\end{align*}
Thus we obtain 
\[
\|\epsilon\|_{\dot{H}_{m}^{1}}^{2}\lesssim A(\alpha^{\ast}\lambda^{m+2}+\alpha^{\ast}\lambda^{\frac{5}{4}}\eta^{\frac{3}{4}})^{2}+(\alpha^{\ast}+o_{A\to\infty}(1)+A\lambda^{\frac{1}{4}})(\mathcal{T}_{\dot{H}_{m}^{1}}^{(\eta,\frac{5}{4})}[\epsilon])^{2}.
\]
Applying the time maximal function with \eqref{eq:env-prop2} and
\eqref{eq:env-prop3}, we get 
\begin{align*}
\mathcal{T}_{\dot{H}_{m}^{1}}^{(\eta,\frac{5}{4})}[\epsilon] & \lesssim A^{\frac{1}{2}}(\alpha^{\ast}\lambda^{m+2}+\alpha^{\ast}\lambda^{\frac{5}{4}}\eta^{\frac{3}{4}})\\
 & \qquad+(\alpha^{\ast}+o_{A\to\infty}(1)+A\lambda^{\frac{1}{4}})^{\frac{1}{2}}\mathcal{T}_{\dot{H}_{m}^{1}}^{(\eta,\frac{5}{4})}[\epsilon].
\end{align*}
Note that $\mathcal{T}_{\dot{H}_{m}^{1}}^{(\eta,\frac{5}{4})}[\epsilon]$
has finite value.\footnote{As we are in the case $\eta>0$, $\mathcal{T}_{\dot{H}_{m}^{1}}^{(\eta,\frac{5}{4})}[\epsilon]$
is by definition finite.} If we choose $A$ large such that $o_{A\to\infty}(1)\ll1$ and $t_{0}^{\ast}$
small such that $A\lambda^{\frac{1}{4}}\lesssim A\langle t_{0}^{\ast}\rangle^{\frac{1}{4}}\ll1$,
then for all sufficiently small $\alpha^{\ast}$ we have 
\[
\|\epsilon(t)\|_{\dot{H}_{m}^{1}}\leq\mathcal{T}_{\dot{H}_{m}^{1}}^{(\eta,\frac{5}{4})}[\epsilon]\lesssim\alpha^{\ast}\lambda^{m+2}+\alpha^{\ast}\lambda^{\frac{5}{4}}\eta^{\frac{3}{4}}.
\]
Applying this strong bound into Lemma \ref{eq:L2-est}, we get 
\[
\|\epsilon(t)\|_{L^{2}}\lesssim\alpha^{\ast}\lambda^{m+\frac{5}{4}}+\alpha^{\ast}\lambda^{\frac{1}{2}}\eta^{\frac{3}{4}}.
\]

We turn to estimate the size $|\lambda(t)-\langle t\rangle|$ and
$|b(t)-|t||$. Applying the modulation estimates (Lemma \ref{lem:mod-est}),
we have 
\begin{align*}
\Big|\Big(\frac{\lambda^{2}}{b^{2}+\eta^{2}}\Big)_{t}\Big| & =\frac{2}{\lambda^{2}}\Big(\frac{\lambda^{2}}{b^{2}+\eta^{2}}\Big)\Big|\Big(\frac{\lambda_{s}}{\lambda}+b\Big)-\frac{(b_{s}+b^{2}+\eta^{2})b}{b^{2}+\eta^{2}}\Big|\\
 & \lesssim\alpha^{\ast}\lambda^{m}+\alpha^{\ast}\lambda^{-\frac{3}{4}}\eta^{\frac{3}{4}}.
\end{align*}
Integrating this from $t=0$, we get 
\begin{equation}
\Big|\frac{\lambda^{2}}{b^{2}+\eta^{2}}-1\Big|\lesssim\alpha^{\ast}\lambda^{m+1}+\alpha^{\ast}\lambda^{\frac{1}{4}}\eta^{\frac{3}{4}}.\label{eq:closing-boot-temp1}
\end{equation}
By \eqref{eq:closing-boot-temp1} and modulation estimates (Lemma
\ref{lem:mod-est}), 
\begin{align*}
|b_{t}+1| & =\frac{1}{\lambda^{2}}\Big|(b_{s}+b^{2}+\eta^{2})+(b^{2}+\eta^{2})\Big(\frac{\lambda^{2}}{b^{2}+\eta^{2}}-1\Big)\Big|\\
 & \lesssim\alpha^{\ast}\lambda^{m+1}+\alpha^{\ast}\lambda^{\frac{1}{4}}\eta^{\frac{3}{4}}.
\end{align*}
Integrating this from $t=0$, we have 
\[
\big|b-|t|\big|\lesssim\alpha^{\ast}\lambda^{m+2}+\alpha^{\ast}\lambda^{\frac{5}{4}}\eta^{\frac{3}{4}}.
\]
Substituting this into \eqref{eq:closing-boot-temp1}, we get 
\[
|\lambda-\langle t\rangle|=\frac{|\lambda^{2}-(b^{2}+\eta^{2})+(b^{2}-|t|^{2})|}{\lambda+\langle t\rangle}\lesssim\alpha^{\ast}\lambda^{m+2}+\alpha^{\ast}\lambda^{\frac{5}{4}}\eta^{\frac{3}{4}}.
\]

Finally, we estimate $\gamma(t)$. By \eqref{eq:mod-est-1}, 
\[
\Big|\gamma_{t}-\frac{\eta\theta^{(\eta)}}{\lambda^{2}}+\theta_{z\to Q_{b}^{(\eta)\sharp}}\Big|=\frac{|\tilde{\gamma}_{s}-\eta\theta^{(\eta)}|}{\lambda^{2}}\lesssim\alpha^{\ast}\lambda^{m}+\alpha^{\ast}\lambda^{-\frac{3}{4}}\eta^{\frac{3}{4}}.
\]
Integrating this, we get 
\[
|\gamma-\gamma_{\eta}-\gamma_{\mathrm{cor}}^{(\eta)}|\lesssim\alpha^{\ast}\lambda^{m+1}+\alpha^{\ast}\lambda^{\frac{1}{4}}\eta^{\frac{3}{4}}.
\]
This completes the proof of Lemma \ref{lem:bootstrap}.
\end{proof}
\begin{rem}
The use of $\mathcal{T}_{\dot{H}_{m}^{1}}^{(\eta,\frac{5}{4})}[\epsilon]$
is not necessary in the proof. One can indeed use $\mathcal{T}_{\dot{H}_{m}^{1}}^{(\eta,s)}[\epsilon]$
for any $s\in(1,\frac{5}{4}]$. The condition $s>1$ is used to have
\[
\lambda^{2}\int_{t}^{0}\frac{1}{\lambda^{4}}\cdot\lambda(\mathcal{T}_{\dot{H}_{m}^{1}}^{(\eta,s)}[\epsilon])^{2}dt'\lesssim(\mathcal{T}_{\dot{H}_{m}^{1}}^{(\eta,s)}[\epsilon])^{2}
\]
in $\lambda^{2}\int_{t}^{0}\frac{1}{\lambda^{4}}|\err|dt'$. The condition
$s\leq\frac{5}{4}$ is used to have 
\[
\mathcal{T}^{(\eta,s)}[\alpha^{\ast}\lambda^{\frac{5}{4}}\eta^{\frac{3}{4}}]\lesssim\alpha^{\ast}\lambda^{\frac{5}{4}}\eta^{\frac{3}{4}}.
\]
\end{rem}

\begin{rem}
One can proceed the proof without using time maximal functions. In
that way, however, one needs to assume stronger bootstrap hypothesis,
for instance \cite{MerleRaphaelSzeftel2013AJM}. Using maximal functions,
we can verify that weak bootstrap hypothesis suffices to propagate
smallness of $\epsilon$ to the past times.
\end{rem}

\section{\label{sec:cond-uniq}Conditional uniqueness}

This section is devoted to the proof of Theorem \ref{thm:cond-uniq}.
As seen in the proof of Theorem \ref{thm:BW-sol}, we have constructed
blow-up solutions by a compactness argument. The (conditional) uniqueness
of the constructed solutions was essential in the proof of Theorem
\ref{thm:instability}, especially when we prove continuity of the
map $\eta\in[0,\eta^{\ast}]\mapsto u^{(\eta)}$. Overall argument
here is similar to as in Sections \ref{sec:modulation} and \ref{sec:Bootstrap}.
Given two solutions $u_{1}$ and $u_{2}$ satisfying the hypothesis
of Theorem \ref{thm:cond-uniq}, we will decompose each $u_{j}$ using
their own modulation parameters and $\epsilon_{j}$. After estimating
differences of the modulation parameters, we show $\epsilon=\epsilon_{1}-\epsilon_{2}=0$,
by the Lyapunov/virial argument. Our approach is inspired by \cite{MartelMerleRaphael2015JEMS}.\footnote{There is an another strategy to prove the conditional uniqueness,
used in \cite{MerleRaphaelSzeftel2013AJM,RaphaelSzeftel2011JAMS}
in the context of \eqref{eq:NLS}. There, they do not impose any orthogonality
conditions on $\epsilon$. Rather, they use a Lyapunov/virial argument
to observe that $\epsilon$ can be controlled by some inner products,
say $(\epsilon,\psi_{b})_{r}$, where $\psi$ is in the generalized
nullspace of the linearized operator. On the other hand, one can control
$(\epsilon,\psi_{b})_{r}$ by a small constant times $\epsilon$,
by differentiating it in the $s$-variable several times. This argument
yields $\epsilon=0$. However, in case of \eqref{eq:CSS}, neither
the quantities $(\epsilon,i\rho_{b})_{r}$ nor $(\epsilon,|y|^{2}Q_{b})_{r}$
make sense if $m\in\{1,2\}$ due to the lack of decay of $Q$. Because
of this, we could not follow their strategy.}

We will decompose each $u_{j}(t)$ using modulation parameters $b_{j}(t)$,
$\lambda_{j}(t)$, and $\gamma_{j}(t)$. We use the notation for modulated
functions by 
\[
f_{b,\lambda,\gamma}(x)\coloneqq\frac{1}{\lambda}f_{b}\Big(\frac{x}{\lambda}\Big)e^{i\gamma}
\]
for $(b,\lambda,\gamma)\in\R\times\R_{+}\times\R$.

Next, we will use $\sharp/\flat$ operations and dynamical rescalings.\emph{
Importantly, in the $\sharp/\flat$ operations, we only use $\lambda_{1}(t)$
and $\gamma_{1}(t)$}. Namely, if $f(s,y)$ and $g(t,x)$ are functions
of $(s,y)$ and $(t,x)$, respectively, then we denote 
\begin{align*}
f^{\sharp}(t,x) & =\frac{1}{\lambda_{1}(t)}f(s(t),\frac{x}{\lambda_{1}(t)})e^{i\gamma_{1}(t)},\\
f^{\flat}(s,y) & =\lambda_{1}(s)g(t(s),\lambda_{1}(s)y)e^{-i\gamma_{1}(s)}.
\end{align*}
The relation between $(s,y)$ and $(t,x)$ is given by 
\begin{equation}
\frac{ds}{dt}\coloneqq\frac{1}{\lambda_{1}^{2}}\quad\text{and}\quad y\coloneqq\frac{x}{\lambda_{1}}.\label{eq:uniq-dynamic-rescale}
\end{equation}
Thus we will \emph{not} use $\lambda_{2}(t)$ and $\gamma_{2}(t)$
when we rescale our solutions.

Finally, since we are in the $\eta=0$ case, we will impose the \emph{special
relation} \eqref{eq:solve-law} between $b_{j}$ and $\lambda_{j}$:
\begin{equation}
b_{j}(t)=|t|^{-1}(\lambda_{j})^{2}(t).\label{eq:uniq-special-rel}
\end{equation}
The reasons for \eqref{eq:uniq-special-rel} are twofold. The first
reason is as same as the dynamical law \eqref{eq:law-fixation} with
$\eta=0$; the relation \eqref{eq:uniq-special-rel} cancels out the
terms with the slowest spatial decay in the equation of $\epsilon_{j}$.
Secondly, \eqref{eq:uniq-special-rel} fixes the ratio $b_{j}/(\lambda_{j})^{2}$
and makes $\|Q_{b_{1},\lambda_{1},0}-Q_{b_{2},\lambda_{2},0}\|_{L^{2}}\lesssim|\log(\frac{\lambda_{1}}{\lambda_{2}})|$
possible. One may compare this with the estimate $\|Q_{b}-Q\|_{L^{2}}\lesssim b|\log b|^{\frac{1}{2}}$,
which is worse than \eqref{eq:prelim-est-diff} by a logarithmic factor.
Having \eqref{eq:prelim-est-diff} is crucial in the proof of Theorem
\ref{thm:cond-uniq}.\footnote{Heuristically speaking, the situation is similar to when we prove
uniqueness assertion of the contraction mapping principle. The uniqueness
is guaranteed when the map is Lipschitz continuous with the Lipschitz
constant less than $1$, not when the map is merely H\"older continuous.}
\begin{lem}[Estimate of difference]
\label{lem:est-diff}Let $(b_{j},\lambda_{j},\gamma_{j})\in\R\times\R_{+}\times\R$
for $j\in\{1,2\}$ be such that 
\begin{equation}
\frac{b_{1}}{(\lambda_{1})^{2}}=\frac{b_{2}}{(\lambda_{2})^{2}}\label{eq:uniq-est-diff-temp1}
\end{equation}
and $|b_{1}|+|\frac{\lambda_{1}}{\lambda_{2}}-1|+|\gamma_{1}-\gamma_{2}|\leq\frac{1}{2}$.
Then, we have 
\begin{equation}
\|\psi_{b_{1}}-\psi_{b_{2},\frac{\lambda_{2}}{\lambda_{1}},\gamma_{2}-\gamma_{1}}\|_{H_{m}^{1}}\lesssim\Big(\Big|\log\Big(\frac{\lambda_{1}}{\lambda_{2}}\Big)\Big|+|\gamma_{1}-\gamma_{2}|\Big),\qquad\forall\psi\in\{Q,\Lambda Q\}.\label{eq:prelim-est-diff}
\end{equation}
\end{lem}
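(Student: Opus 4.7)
\medskip

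\noindent\textbf{Proof proposal.} The plan is to exploit the hypothesis \eqref{eq:uniq-est-diff-temp1} to cancel the pseudoconformal phase between the two modulated profiles, and then reduce the $H^1_m$ estimate to a one-parameter scaling/phase comparison. Set $\mu \coloneqq \lambda_1/\lambda_2$ and $\alpha \coloneqq \gamma_2-\gamma_1$. Unfolding the definition of the modulated function,
\[
\psi_{b_2,\lambda_2/\lambda_1,\gamma_2-\gamma_1}(y) \;=\; \mu\, \psi(\mu y)\, e^{-i b_2 \mu^2 |y|^2/4}\, e^{i\alpha},
\]
and the special relation $b_1 = b_2\mu^2$ makes the phase $e^{-ib_2\mu^2|y|^2/4}$ coincide with $e^{-ib_1|y|^2/4}$. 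This yields the key identity
\[
\psi_{b_1}(y) \;-\; \psi_{b_2,\lambda_2/\lambda_1,\gamma_2-\gamma_1}(y) \;=\; \bigl[\psi(y) - \mu\,\psi(\mu y)\,e^{i\alpha}\bigr]\, e^{-ib_1 |y|^2/4},
\]
so the difference is a \emph{common} pseudoconformal phase times a small factor depending only on $\mu$ and $\alpha$. This is precisely where the log improvement over the naive bound $\|Q_b-Q\|_{L^2} \lesssim b|\log b|^{1/2}$ comes from.

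Next, I would pass the $H^1_m$ norm through the phase factor. The $L^2$ norm is unaffected; for the $\dot H^1$ part, writing $\nabla[g\, e^{-ib_1|y|^2/4}] = (\nabla g)\,e^{-ib_1|y|^2/4} - i(b_1 y/2) g\, e^{-ib_1|y|^2/4}$ and using $|b_1| \le 1/2$ reduces matters to controlling
\[
\|\psi - \mu\,\psi(\mu\cdot)\,e^{i\alpha}\|_{H^1_m} \;+\; \|\,|y|\bigl[\psi - \mu\,\psi(\mu\cdot)\,e^{i\alpha}\bigr]\|_{L^2}.
\]
Split each term as $[\psi - \mu\psi(\mu\cdot)] + \mu\psi(\mu\cdot)(1 - e^{i\alpha})$. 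The phase piece is handled by $|1-e^{i\alpha}| \lesssim |\alpha| = |\gamma_1-\gamma_2|$ together with the uniform bounds $\|\mu\psi(\mu\cdot)\|_{H^1_m} + \|\,|y|\mu\psi(\mu\cdot)\|_{L^2} \lesssim 1$ (valid for $\mu\in[1/2,3/2]$ and $\psi\in\{Q,\Lambda Q\}$, using $m\geq1$ and the polynomial decay $Q\sim r^{-(m+2)}$).

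For the scaling piece, the identity $\tfrac{d}{d\lambda}[\lambda\psi(\lambda\cdot)] = \lambda^{-1}[\Lambda\psi](\lambda\cdot)$ gives
\[
\psi - \mu\psi(\mu\cdot) \;=\; -\int_{1}^{\mu} \lambda^{-1}\, [\Lambda\psi](\lambda\,\cdot)\, d\lambda,
\]
so by Minkowski
\[
\|\psi - \mu\psi(\mu\cdot)\|_{H^1_m} \;\le\; \Bigl|\int_{1}^{\mu} \lambda^{-1}\bigl(\lambda^{-1}\|\Lambda\psi\|_{L^2} + \|\nabla\Lambda\psi\|_{L^2}\bigr)\,d\lambda\Bigr| \;\lesssim\; |\log\mu|,
\]
and analogously $\|\,|y|(\psi - \mu\psi(\mu\cdot))\|_{L^2} \lesssim |\log\mu|$, using the scaling $\|[\Lambda\psi](\lambda\cdot)\|_{L^2} = \lambda^{-1}\|\Lambda\psi\|_{L^2}$, its counterpart for $\nabla[\Lambda\psi](\lambda\cdot)$, and the weighted version $\|\,|y|[\Lambda\psi](\lambda\cdot)\|_{L^2} = \lambda^{-2}\|\,|y|\Lambda\psi\|_{L^2}$. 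Combining these, the final bound $\lesssim |\log(\lambda_1/\lambda_2)| + |\gamma_1-\gamma_2|$ follows.

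The only place that requires care is verifying the integrability of $\Lambda\psi$, $\nabla\Lambda\psi$, and $|y|\Lambda\psi$ in $L^2$ for $\psi\in\{Q,\Lambda Q\}$; this is where the hypothesis $m\geq1$ is used, since $rQ\in L^2$ demands $\int r^{-(2m+1)}\,dr < \infty$ at infinity. No other step is delicate: the algebraic cancellation of the pseudoconformal phase provided by \eqref{eq:uniq-est-diff-temp1} is what turns a would-be $|b_1-b_2|$ loss (with a logarithmic defect) into a clean linear estimate, and this is exactly the mechanism that will drive the uniqueness argument in Section~\ref{sec:cond-uniq}.
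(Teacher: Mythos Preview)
Your proposal is correct and is essentially the paper's argument: both exploit the constraint $b_1/\lambda_1^2=b_2/\lambda_2^2$ to make the two pseudoconformal phases coincide, then control the remaining scaling/rotation difference by the fundamental theorem of calculus and Minkowski. The paper packages this as a single path $\tau\mapsto\psi_{b_1(\lambda_1^{-\tau}\lambda_2^{\tau})^2,\,\lambda_1^{-\tau}\lambda_2^{\tau},\,(\gamma_2-\gamma_1)\tau}$ in modulation space (along which $b/\lambda^2$ is constant, so the phase is frozen) and differentiates directly in $\tau$, which is exactly the computation you carry out after factoring the common phase by hand. One minor correction: the scaling identity should read $\tfrac{d}{d\lambda}[\lambda\psi(\lambda\cdot)]=[\Lambda\psi](\lambda\cdot)$, without the extra $\lambda^{-1}$; your final bound $\lesssim|\log\mu|$ is unaffected since $|\mu-1|\sim|\log\mu|$ in the regime at hand.
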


\begin{proof}
Define the path $f:[0,1]\to H_{m}^{1}$ by 
\begin{equation}
f(\tau)\coloneqq\psi_{b_{1}(\lambda_{1}^{-\tau}\lambda_{2}^{\tau})^{2},\lambda_{1}^{-\tau}\lambda_{2}^{\tau},(\gamma_{2}-\gamma_{1})\tau}.\label{eq:def-f(t)}
\end{equation}
Note that $f(0)=\psi_{b_{1}}$ and $f(1)=\psi_{b_{2},\frac{\lambda_{2}}{\lambda_{1}},\gamma_{2}-\gamma_{1}}$.
Because of \eqref{eq:uniq-est-diff-temp1}, we observe 
\[
\partial_{\tau}f=[-\log\Big(\frac{\lambda_{2}}{\lambda_{1}}\Big)\cdot\Lambda\psi+(\gamma_{2}-\gamma_{1})\cdot i\psi]_{b_{1}(\lambda_{1}^{-\tau}\lambda_{2}^{\tau})^{2},\lambda_{1}^{-\tau}\lambda_{2}^{\tau},(\gamma_{2}-\gamma_{1})\tau}.
\]
By the fundamental theorem of calculus and Minkowski's inequality,
the estimate \eqref{eq:prelim-est-diff} follows.
\end{proof}

\subsection{A priori estimates on $\epsilon_{1}$ and $\epsilon_{2}$}

Let 
\[
\theta_{\mathrm{cor}}(t)\coloneqq-\int_{0}^{\infty}(-m-2+A_{\theta}[z(t)])|z(t)|^{2}\frac{dr}{r}.
\]
Note that $\theta_{\mathrm{cor}}$ is equal to $\theta_{z\to Q_{b}^{\sharp}}$
in earlier sections. Recall also that 
\[
\gamma_{\mathrm{cor}}(t)=-\int_{t}^{0}\theta_{\mathrm{cor}}(t')dt'.
\]

\begin{lem}[A priori estimates on $\epsilon_{1}$ and $\epsilon_{2}$]
Assume that two solutions $u_{1}$ and $u_{2}$ satisfy the hypothesis
of Theorem \ref{thm:cond-uniq} for some sufficiently small $c>0$.
For each $j\in\{1,2\}$, there exists unique $(b_{j}(t),\lambda_{j}(t),\gamma_{j}(t))\in\R\times\R_{+}\times\R$
for all $t$ near $0$ satisfying the following properties.
\begin{enumerate}
\item (Decomposition) $u_{j}$ has the decomposition 
\begin{equation}
u_{j}(t,x)=\frac{e^{i\gamma_{j}(t)}}{\lambda_{j}(t)}Q_{b_{j}(t)}\Big(\frac{x}{\lambda_{j}(t)}\Big)+z(t,x)+\frac{e^{i\gamma_{1}(t)}}{\lambda_{1}(t)}\epsilon_{j}(t,\frac{x}{\lambda_{1}(t)})\label{eq:uniq-decomp}
\end{equation}
with 
\begin{gather}
\Big|\frac{\lambda_{j}}{|t|}-1\Big|+|\gamma_{j}-\gamma_{\mathrm{cor}}|+\|\epsilon_{j}\|_{\dot{H}_{m}^{1}}+|t|\|\epsilon_{j}\|_{L_{m}^{2}}\lesssim c|t|^{2},\label{eq:uniq-a-pri-est}\\
(\epsilon_{j},[\mathcal{Z}_{\re}]_{b_{1}})_{r}=(\epsilon_{j},[i\mathcal{Z}_{\im}]_{b_{1}})_{r}=0,\label{eq:uniq-ortho}\\
b_{j}=|t|^{-1}\lambda_{j}^{2}.\label{eq:uniq-dyn-law}
\end{gather}
\item (Equation of $\epsilon_{j}$) We have 
\begin{align}
i\partial_{t}\epsilon_{j}^{\sharp}-\mathcal{L}_{w_{j}}\epsilon_{j}^{\sharp} & =i\Big(\frac{(\lambda_{j})_{t}}{\lambda_{j}}+|t|^{-1}\Big)[\Lambda Q]_{b_{j},\lambda_{j},\gamma_{j}}+((\gamma_{j})_{t}+\theta_{\mathrm{cor}})Q_{b_{j},\lambda_{j},\gamma_{j}}\label{eq:uniq-eq-e-j-sharp}\\
 & \quad+\tilde R_{Q_{b_{j},\lambda_{j},\gamma_{j}},z}+R_{u_{j}-w_{j}}.\nonumber 
\end{align}
\item (A priori modulation estimates) We have 
\begin{equation}
\Big|\frac{(\lambda_{j})_{s}}{\lambda_{j}}+\big(\frac{\lambda_{1}}{\lambda_{j}}\big)^{2}b_{j}\Big|+|(\gamma_{j})_{s}+(\lambda_{1})^{2}\theta_{\mathrm{cor}}|\lesssim(\alpha^{\ast}+c)(\lambda_{1})^{2}.\label{eq:uniq-a-pri-mod-est}
\end{equation}
\end{enumerate}
\end{lem}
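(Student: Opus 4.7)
The proof proceeds in four steps: construction of the modulation parameters via the implicit function theorem, derivation of the a priori bounds on $\epsilon_j$, derivation of the evolution equation for $\epsilon_j^{\sharp}$, and the modulation estimates themselves.

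First, I would construct $(b_j,\lambda_j,\gamma_j)$ at each fixed $t$ near zero by the implicit function theorem. For $j=1$, impose the constraint $b_1=|t|^{-1}\lambda_1^{2}$ to eliminate one unknown and solve the two orthogonality conditions \eqref{eq:uniq-ortho} for $(\lambda_1,\gamma_1)$ near the reference point $(|t|,\gamma_{\mathrm{cor}}(t))$. At leading order the Jacobian reduces to a diagonal matrix whose entries are $(\Lambda Q,\mathcal{Z}_{\re})_r$ and $(Q,\mathcal{Z}_{\im})_r$, both nonzero by \eqref{eq:non-deg}; the hypothesis \eqref{eq:BW-sol-temp} provides the smallness required by IFT, giving $|\lambda_1-|t||+|\gamma_1-\gamma_{\mathrm{cor}}|\lesssim c|t|^{2}$. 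For $j=2$, with $(\lambda_1,\gamma_1)$ already fixed, solve for $(\lambda_2,\gamma_2)$ using the same orthogonality conditions imposed on $\epsilon_2:=\lambda_1 e^{-i\gamma_1}[u_2-Q_{b_2,\lambda_2,\gamma_2}-z](\lambda_1\,\cdot\,)$ together with $b_2=|t|^{-1}\lambda_2^{2}$; the linearized Jacobian is again controlled by the same pair of inner products and hence nondegenerate.

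Next, the a priori bound \eqref{eq:uniq-a-pri-est} follows from the splitting
\[
\epsilon_j^{\sharp}=\Big(u_j-\tfrac{1}{|t|}Q_{|t|}(\tfrac{\cdot}{|t|})e^{i\gamma_{\mathrm{cor}}}-z\Big)+\Big(\tfrac{1}{|t|}Q_{|t|}(\tfrac{\cdot}{|t|})e^{i\gamma_{\mathrm{cor}}}-Q_{b_j,\lambda_j,\gamma_j}\Big),
\]
controlling the first piece by \eqref{eq:BW-sol-temp} and the second by Lemma \ref{lem:est-diff} together with the parameter smallness from Step 1, after which rescaling to the common $(\lambda_1,\gamma_1)$-frame yields $\|\epsilon_j\|_{\dot H_m^{1}}+|t|\|\epsilon_j\|_{L^{2}}\lesssim c|t|^{2}$. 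The equation \eqref{eq:uniq-eq-e-j-sharp} is then obtained by subtracting $i\partial_{t}(Q_{b_j,\lambda_j,\gamma_j}+z)$ from $i\partial_{t}u_j$ using \eqref{eq:CSS} and \eqref{eq:fCSS}, paralleling the derivation of \eqref{eq:e-sharp-eq} with $\eta=0$. The special dynamical constraint \eqref{eq:uniq-dyn-law} plays the cancellation role of \eqref{eq:law-fixation}, removing the slowly decaying $|y|^{2}Q_{b_j}$ contribution and leaving the clean factor $[\Lambda Q]_{b_j,\lambda_j,\gamma_j}$, while the term $((\gamma_j)_t+\theta_{\mathrm{cor}})Q_{b_j,\lambda_j,\gamma_j}$ absorbs the strong long-range interaction of $z$ back onto the blow-up profile.

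Finally, the modulation estimates \eqref{eq:uniq-a-pri-mod-est} come from differentiating \eqref{eq:uniq-ortho} in $s$ (with the common rescaling \eqref{eq:uniq-dynamic-rescale}) and substituting \eqref{eq:uniq-eq-e-j-sharp} lowered to $(s,y)$-variables. This yields, at leading order, a nondegenerate $2\times 2$ linear system for the modulation speeds appearing on the left of \eqref{eq:uniq-a-pri-mod-est}, with right-hand side bounded by the interaction estimate of Lemma \ref{lem:estimate-RQ,z}, the nonlinear remainder estimate of Lemma \ref{lem:est of Ru-w}, and the a priori $\|\epsilon_j\|_{\dot H_m^{1}}\lesssim c\lambda_1^{2}$ bound from Step 2. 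The principal obstacle is the asymmetric frame used for $j=2$: the profile is carried by $(\lambda_2,\gamma_2)$ while the error and test functions live in the common $(\lambda_1,\gamma_1)$-frame, so transferring scaling derivatives between frames produces cross-terms of size $|\log(\lambda_1/\lambda_2)|+|\gamma_1-\gamma_2|\lesssim c|t|^{2}$ which must be tracked carefully; this asymmetry, together with the logarithmic improvement afforded by \eqref{eq:uniq-dyn-law} and Lemma \ref{lem:est-diff}, is precisely the feature that enables the subsequent energy/virial analysis on $\epsilon_1-\epsilon_2$ and the conclusion $u_1=u_2$.
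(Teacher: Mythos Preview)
Your proposal is correct and follows essentially the same approach as the paper. The only cosmetic difference is that the paper sets up the implicit function theorem with three unknowns $(b,\lambda,\gamma)$ and three equations (two orthogonality conditions plus the constraint $b-|t|\lambda^{2}=0$), whereas you eliminate $b$ first and work with a $2\times 2$ system; the resulting nondegeneracy condition is the same, and all subsequent steps---the splitting for \eqref{eq:uniq-a-pri-est} via Lemma~\ref{lem:est-diff}, the derivation of \eqref{eq:uniq-eq-e-j-sharp} from \eqref{eq:e-sharp-eq} with $\eta=0$, and the modulation estimates by differentiating \eqref{eq:uniq-ortho} in $s$---match the paper's argument, including your correct identification of the asymmetric $(\lambda_1,\gamma_1)$-frame for $j=2$ as the key subtlety.
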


\begin{rem}
If we take the $\flat$-operation to \eqref{eq:uniq-decomp} using
$\lambda_{1}$ and $\gamma_{1}$, then we can write 
\[
u_{1}^{\flat}-u_{2}^{\flat}=[Q_{b_{1}}-Q_{b_{2},\frac{\lambda_{2}}{\lambda_{1}},\gamma_{2}-\gamma_{1}}]+\epsilon_{1}-\epsilon_{2}.
\]
Here, we do not see modulation errors from $z$. Notice also that
$\epsilon_{1}$ and $\epsilon_{2}$ are in the same scale, so we can
study $\epsilon=\epsilon_{1}-\epsilon_{2}$ without modulation errors.
\end{rem}

\begin{proof}
The decomposition of $u_{j}$ will follow from the implicit function
theorem. Let 
\[
\tilde u_{j}(t,y)\coloneqq|t|[u-z](t,|t|y)e^{-i\gamma_{\mathrm{cor}}(t)}.
\]
We then have 
\begin{equation}
\|\tilde u_{j}(t)-Q_{|t|}\|_{\dot{H}_{m}^{1}}\leq c|t|^{2}\quad\text{and}\quad\|\tilde u_{j}(t)-Q_{|t|}\|_{L^{2}}\leq c|t|\label{eq:uniq-a-pri-temp1}
\end{equation}
for all $t$ near $0$.

We first consider the case $j=1$. For all $t$ near $0$, define
the function $F_{1}^{(t)}:\dot{H}_{m}^{1}\times\R\times\R_{+}\times\R\to\R^{3}$
by 
\[
F^{(t)}(v,b,\lambda,\gamma)\coloneqq\begin{pmatrix}(v,[\mathcal{Z}_{\re}]_{b,\lambda,\gamma})_{r}-(Q,\mathcal{Z}_{\re})_{r}\\
(v,[i\mathcal{Z}_{\im}]_{b,\lambda,\gamma})_{r}\\
b-|t|\lambda^{2}
\end{pmatrix}.
\]
Because of $\mathcal{Z}_{\re},\mathcal{Z}_{\im}\in C_{c,m}^{\infty}(0,\infty)$,
we see that $F_{1}^{(t)}$ is well-defined and in fact smooth. Note
that 
\[
\frac{\delta F_{1}^{(t)}}{\delta v}=\begin{pmatrix}[\mathcal{Z}_{\re}]_{b,\lambda,\gamma}\\{}
[i\mathcal{Z}_{\im}]_{b,\lambda,\gamma}\\
0
\end{pmatrix}=\begin{pmatrix}\mathcal{Z}_{\re}\\
i\mathcal{Z}_{\im}\\
0
\end{pmatrix}+O_{(\dot{H}_{m}^{1})^{\ast}}(|b|+|\lambda-1|+|\gamma|)
\]
and 
\begin{align*}
\frac{\partial F_{1}^{(t)}}{\partial(b,\lambda,\gamma)} & =\begin{pmatrix}0 & -(\Lambda Q,\mathcal{Z}_{\re})_{r} & 0\\
(Q,\tfrac{|y|^{2}}{4}\mathcal{Z}_{\im})_{r} & 0 & (Q,\mathcal{Z}_{\im})_{r}\\
1 & 0 & 0
\end{pmatrix}\\
 & \quad+O(\|v-Q\|_{\dot{H}_{m}^{1}}+|t|+|b|+|\lambda-1|+|\gamma|).
\end{align*}
Finally, we notice $F_{1}^{(t)}(Q_{|t|},|t|,1,0)=0$ and recall \eqref{eq:uniq-a-pri-temp1}.
By the above estimates, we can apply the implicit function theorem
uniformly in $t$ near $0$ to have unique $(\tilde b_{1}(t),\tilde{\lambda}_{1}(t),\tilde{\gamma}_{1}(t))$
such that 
\[
F_{1}^{(t)}(\tilde u_{1}(t),\tilde b_{1}(t),\tilde{\lambda}_{1}(t),\tilde{\gamma}_{1}(t))=0
\]
and 
\begin{equation}
|\tilde b_{1}(t)-|t||+|\tilde{\lambda}_{1}(t)-1|+|\tilde{\gamma}_{1}(t)|\lesssim c|t|^{2}.\label{eq:uniq-a-pri-temp2}
\end{equation}
We now define 
\[
(b_{1},\lambda_{1},\gamma_{1})\coloneqq(\tilde b_{1},|t|\tilde{\lambda}_{1},\tilde{\gamma}_{1}+\gamma_{\mathrm{cor}})
\]
and recall the definition of $\tilde u_{1}$. The conditions \eqref{eq:uniq-ortho}
and \eqref{eq:uniq-dyn-law} for $j=1$ are clearly satisfied. For
the estimate \eqref{eq:uniq-a-pri-est}, we note that 
\begin{align*}
\|\epsilon\|_{\dot{H}_{m}^{1}} & =\|[\tilde u_{1}]_{\tilde{\lambda}_{1}^{-1}(t),-\tilde{\gamma}_{1}(t)}-Q_{b_{1}(t)}\|_{\dot{H}_{m}^{1}}\\
 & \lesssim\|\tilde u_{1}-Q_{|t|}\|_{\dot{H}_{m}^{1}}+\|Q_{|t|}-Q_{b_{1}(t),\tilde{\lambda}_{1}(t),\tilde{\gamma}_{1}(t)}\|_{\dot{H}_{m}^{1}}\lesssim c|t|^{2}.
\end{align*}
Here the last inequality exploits \eqref{eq:prelim-est-diff}. The
$L^{2}$-estimate can be done similarly. This shows \eqref{eq:uniq-a-pri-est}
for $j=1$.

We now consider the case $j=2$. For all $t$ near $0$, define the
function $F_{2}^{(t)}:\dot{H}_{m}^{1}\times\R\times\R_{+}\times\R\to\R^{3}$
by 
\[
F_{2}^{(t)}(v,b,\lambda,\gamma)\coloneqq\begin{pmatrix}(v-Q_{b,\lambda,\gamma},[\mathcal{Z}_{\re}]_{b_{1}(t),\tilde{\lambda}_{1}(t),\tilde{\gamma}_{1}(t)})_{r}\\
(v-Q_{b,\lambda,\gamma},[i\mathcal{Z}_{\im}]_{b_{1}(t),\tilde{\lambda}_{1}(t),\tilde{\gamma}_{1}(t)})_{r}\\
b-|t|\lambda^{2}
\end{pmatrix}.
\]
Because of $\mathcal{Z}_{\re},\mathcal{Z}_{\im}\in C_{c,m}^{\infty}(0,\infty)$,
we see that $F_{2}^{(t)}$ is smooth. We note that 
\[
\frac{\delta F_{2}^{(t)}}{\delta v}=\begin{pmatrix}[\mathcal{Z}_{\re}]_{b_{1}(t),\tilde{\lambda}_{1}(t),\tilde{\gamma}_{1}(t)}\\{}
[i\mathcal{Z}_{\im}]_{b_{1}(t),\tilde{\lambda}_{1}(t),\tilde{\gamma}_{1}(t)}\\
0
\end{pmatrix}=\begin{pmatrix}\mathcal{Z}_{\re}\\
i\mathcal{Z}_{\im}\\
0
\end{pmatrix}+O_{(\dot{H}_{m}^{1})^{\ast}}(c|t|^{2})
\]
and 
\begin{align*}
\frac{\partial F_{2}^{(t)}}{\partial(b,\lambda,\gamma)} & =\begin{pmatrix}0 & -(\Lambda Q,\mathcal{Z}_{\re})_{r} & 0\\
(Q,\tfrac{|y|^{2}}{4}\mathcal{Z}_{\im})_{r} & 0 & (Q,\mathcal{Z}_{\im})_{r}\\
1 & 0 & 0
\end{pmatrix}\\
 & \quad+O(\|v-Q\|_{\dot{H}_{m}^{1}}+|t|+|b|+|\lambda-1|+|\gamma|).
\end{align*}
Notice that $F_{2}^{(t)}(Q_{b_{1}(t),\tilde{\lambda}_{1}(t),\tilde{\gamma}_{1}(t)},b_{1}(t),\tilde{\lambda}_{1}(t),\tilde{\gamma}_{1}(t))=0$.
Applying the implicit function theorem, there exists unique $(\tilde b_{2}(t),\tilde{\lambda}_{2}(t),\tilde{\gamma}_{2}(t))$
such that 
\[
F_{1}^{(t)}(\tilde u_{2}(t),\tilde b_{2}(t),\tilde{\lambda}_{2}(t),\tilde{\gamma}_{2}(t))=0
\]
and 
\[
|\tilde b_{2}(t)-b_{1}(t)|+|\tilde{\lambda}_{2}(t)-\tilde{\lambda}_{1}(t)|+|\tilde{\gamma}_{2}(t)-\gamma_{2}(t)|\lesssim c|t|^{2}.
\]
If we define 
\[
(b_{2},\lambda_{2},\gamma_{2})\coloneqq(\tilde b_{2},|t|\tilde{\lambda}_{2},\tilde{\gamma}_{\mathrm{cor}}+\tilde{\gamma}_{2}),
\]
apply \eqref{eq:uniq-a-pri-temp2}, recall the definition of $\tilde u_{2}$,
and proceed as in the case $j=1$, then \eqref{eq:uniq-a-pri-est}-\eqref{eq:uniq-dyn-law}
for $j=2$ follows. This completes the proof of the decomposition
part of our lemma.

The equation \eqref{eq:uniq-eq-e-j-sharp} of $\epsilon_{j}^{\sharp}$
is clear from \eqref{eq:e-sharp-eq} with $\eta=0$. We turn to the
modulation estimate \eqref{eq:uniq-a-pri-mod-est}. We take the $\flat$-operation
to \eqref{eq:uniq-eq-e-j-sharp} and get 
\begin{align}
 & i\partial_{s}\epsilon_{j}-\mathcal{L}_{w_{j}^{\flat}}\epsilon_{j}+ib_{1}\Lambda\epsilon_{j}\label{eq:uniq-e-j-eqn}\\
 & =i\Big(\frac{(\lambda_{j})_{s}}{\lambda_{j}}+\frac{(\lambda_{1})^{2}}{(\lambda_{j})^{2}}b_{j}\Big)[\Lambda Q]_{b_{j},\frac{\lambda_{j}}{\lambda_{1}},\gamma_{j}-\gamma_{1}}+((\gamma_{j})_{s}+(\lambda_{1})^{2}\theta_{\mathrm{cor}})Q_{b_{j},\frac{\lambda_{j}}{\lambda_{1}},\gamma_{j}-\gamma_{1}}\nonumber \\
 & \quad+i\Big(\frac{(\lambda_{1})_{s}}{\lambda_{1}}+b_{1}\Big)\Lambda\epsilon_{j}+(\gamma_{1})_{s}\epsilon_{j}+\tilde R_{Q_{b_{j},\frac{\lambda_{j}}{\lambda_{1}},\gamma_{j}-\gamma_{1}},z^{\flat}}+R_{u_{j}^{\flat}-w_{j}^{\flat}}.\nonumber 
\end{align}
We then take the inner product of \eqref{eq:uniq-e-j-eqn} with $i\psi_{b_{1}}$
for $\psi\in\{\mathcal{Z}_{\re},i\mathcal{Z}_{\im}\}$ and using the
orthogonality condition $(\epsilon,\psi_{b_{1}})_{r}=0$. In case
of $j=1$, by the computation as in Lemma \ref{lem:mod-est}, we get
\[
\Big|\frac{(\lambda_{1})_{s}}{\lambda_{1}}+b_{1}\Big|+|(\gamma_{1})_{s}+(\lambda_{1})^{2}\theta_{\mathrm{cor}}|\lesssim\alpha^{\ast}(\lambda_{1})^{m+3}|\log\lambda_{1}|+\|\epsilon_{1}\|_{\dot{H}_{1}^{m}}\lesssim(\alpha^{\ast}+c)(\lambda_{1})^{2}.
\]
Because of \eqref{eq:uniq-special-rel}, we get 
\[
|(b_{1})_{s}+(b_{1})^{2}|\lesssim c(\lambda_{1})^{3}.
\]
Thus we have shown \eqref{eq:uniq-a-pri-mod-est} for $j=1$. In case
of $j=2$, we get 
\begin{align*}
 & \Big(\frac{(\lambda_{2})_{s}}{\lambda_{2}}+\frac{(\lambda_{1})^{2}}{(\lambda_{2})^{2}}b_{2}\Big)([\Lambda Q]_{b_{2},\frac{\lambda_{2}}{\lambda_{1}},\gamma_{2}-\gamma_{1}},\psi_{b_{1}})_{r}+((\gamma_{2})_{s}+(\lambda_{1})^{2}\theta_{\mathrm{cor}})(Q_{b_{2},\frac{\lambda_{2}}{\lambda_{1}},\gamma_{2}-\gamma_{1}},i\psi_{b_{1}})_{r}\\
 & =-(\epsilon_{2},\mathcal{L}_{w_{2}^{\flat}}i\psi_{b_{1}})_{r}+\frac{(\lambda_{1})_{s}}{\lambda_{1}}(\epsilon_{2},\Lambda\psi_{b_{1}})_{r}-(\gamma_{1})_{s}(\epsilon_{2},i\psi_{b_{1}})_{r}+(b_{1})_{s}(\epsilon_{2},i\tfrac{|y|^{2}}{4}\psi_{b_{1}})_{r}\\
 & \quad+(\tilde R_{Q_{b_{2},\frac{\lambda_{2}}{\lambda_{1}},\gamma_{2}-\gamma_{1}},z^{\flat}},i\psi_{b_{1}})_{r}+(R_{u_{2}^{\flat}-w_{2}^{\flat}},i\psi_{b_{1}})_{r}.
\end{align*}
Substituting the modulation estimates for $(b_{1},\lambda_{1},\gamma_{1})$
and estimating all the inner products by $\|\epsilon_{2}\|_{\dot{H}_{m}^{1}}+\alpha^{\ast}(\lambda_{1})^{m+3}|\log\lambda_{1}|$,
we get \eqref{eq:uniq-a-pri-mod-est} for $j=2$.
\end{proof}

\subsection{Estimates of $\epsilon$}

Let 
\[
\epsilon\coloneqq\epsilon_{1}-\epsilon_{2}.
\]
The main goal of this section is to derive the equation of $\epsilon$
and estimate error terms. The strategy is similar to that in Sections
\ref{subsec:est-RQ,z}-\ref{subsec:Transfer-H1-to-L2}. Main issue
here is to estimate the difference of two modulated profiles.

By \eqref{eq:uniq-eq-e-j-sharp}, the difference $\epsilon=\epsilon_{1}-\epsilon_{2}$
satisfies 
\begin{align}
 & i\partial_{t}\epsilon^{\sharp}-\mathcal{L}_{Q_{b_{1},\lambda_{1},\gamma_{1}}}\epsilon^{\sharp}\label{eq:uniq-e-sharp-eq}\\
 & =i\Big[\log\Big(\frac{\lambda_{1}}{\lambda_{2}}\Big)\Big]_{t}[\Lambda Q]_{b_{1},\lambda_{1},\gamma_{1}}+[\gamma_{1}-\gamma_{2}]_{t}Q_{b_{1},\lambda_{1},\gamma_{1}}+(\lambda_{1})^{-2}\Gamma^{\sharp},\nonumber 
\end{align}
where 
\begin{align}
(\lambda_{1})^{-2}\Gamma^{\sharp} & \coloneqq i\Big(\frac{(\lambda_{2})_{t}}{\lambda_{2}}+|t|^{-1}\Big)([\Lambda Q]_{b_{1},\lambda_{1},\gamma_{1}}-[\Lambda Q]_{b_{2},\lambda_{2},\gamma_{2}})\label{eq:uniq-def-Gamma-sharp}\\
 & \quad+((\gamma_{j})_{t}+\theta_{\mathrm{cor}})(Q_{b_{1},\lambda_{1},\gamma_{1}}-Q_{b_{2},\lambda_{2},\gamma_{2}})+(\mathcal{L}_{w_{1}}-\mathcal{L}_{Q_{b_{1},\lambda_{1},\gamma_{1}}})\epsilon^{\sharp}+(\mathcal{L}_{w_{1}}-\mathcal{L}_{w_{2}})\epsilon_{2}^{\sharp}\nonumber \\
 & \quad+(R_{u_{1}-w_{1}}-R_{u_{2}-w_{2}})+(\tilde R_{Q_{b_{1},\lambda_{1},\gamma_{1}},z}-\tilde R_{Q_{b_{2},\lambda_{2},\gamma_{2}},z}).\nonumber 
\end{align}
Applying the $\flat$ operation, we get 
\begin{align}
 & i\partial_{s}\epsilon-\mathcal{L}_{Q_{b_{1}}}\epsilon+ib_{1}\Lambda\epsilon\label{eq:uniq-e-eq}\\
 & =i\Big[\log\Big(\frac{\lambda_{1}}{\lambda_{2}}\Big)\Big]_{s}[\Lambda Q]_{b_{1}}+[\gamma_{1}-\gamma_{2}]_{s}Q_{b_{1}}+i\Big(\frac{(\lambda_{1})_{s}}{\lambda_{1}}+b_{1}\Big)\Lambda\epsilon+(\gamma_{1})_{s}\epsilon+\Gamma,\nonumber 
\end{align}
where 
\begin{align}
\Gamma & \coloneqq i\Big(\frac{(\lambda_{2})_{s}}{\lambda_{2}}+\frac{\lambda_{1}^{2}}{\lambda_{2}^{2}}b_{2}\Big)([\Lambda Q]_{b_{1}}-[\Lambda Q]_{b_{2},\frac{\lambda_{2}}{\lambda_{1}},\gamma_{2}-\gamma_{1}})\label{eq:uniq-def-Gamma}\\
 & \quad+((\gamma_{2})_{s}+(\lambda_{1})^{2}\theta_{\mathrm{cor}})(Q_{b_{1}}-Q_{b_{2},\frac{\lambda_{2}}{\lambda_{1}},\gamma_{2}-\gamma_{1}})+(\mathcal{L}_{w_{1}^{\flat}}-\mathcal{L}_{Q_{b_{1}}})\epsilon+(\mathcal{L}_{w_{1}^{\flat}}-\mathcal{L}_{w_{2}^{\flat}})\epsilon_{2}\nonumber \\
 & \quad+(R_{u_{1}^{\flat}-w_{1}^{\flat}}-R_{u_{2}^{\flat}-w_{2}^{\flat}})+(\tilde R_{Q_{b_{1}},z^{\flat}}-\tilde R_{Q_{b_{2},\frac{\lambda_{2}}{\lambda_{1}},\gamma_{2}-\gamma_{1}},z^{\flat}}).\nonumber 
\end{align}

\begin{lem}[Various estimates]
\label{lem:uniq-various-est}We have the following.
\begin{enumerate}
\item ($H_{m}^{1}$-estimate of the difference) We have
\begin{equation}
\|\psi_{b_{1}}-\psi_{b_{2},\frac{\lambda_{2}}{\lambda_{1}},\gamma_{2}-\gamma_{1}}\|_{H_{m}^{1}}\lesssim(\lambda_{1})^{-1}\mathcal{T}_{\dot{H}_{m}^{1}}^{(\frac{5}{4})}[\epsilon],\qquad\forall\psi\in\{\Lambda Q,iQ\}.\label{eq:uniq-est-diff}
\end{equation}
\item ($H_{m}^{1}$-estimate of $\Gamma$) We have 
\begin{align}
\|\Gamma\|_{H_{m}^{1}} & \lesssim(\alpha^{\ast}+c)\lambda_{1}\mathcal{T}_{\dot{H}_{m}^{1}}^{(\frac{5}{4})}[\epsilon].\label{eq:uniq-est-Gamma}
\end{align}
\item (Modulation estimate) We have 
\begin{equation}
\bigg|\Big[\log\Big(\frac{\lambda_{1}}{\lambda_{2}}\Big)\Big]_{s}\bigg|+|[\gamma_{1}-\gamma_{2}]_{s}|\lesssim\mathcal{T}_{\dot{H}_{m}^{1}}^{(\frac{5}{4})}[\epsilon].\label{eq:uniq-mod-est}
\end{equation}
\end{enumerate}
\end{lem}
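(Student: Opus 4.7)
The plan is to establish the three estimates in the order (3), (1), (2), mirroring the strategy of Lemma \ref{lem:mod-est}. The key inputs are Lemma \ref{lem:est-diff}, which avoids a logarithmic loss precisely because of the special relation \eqref{eq:uniq-dyn-law} shared by $b_1$ and $b_2$; the a priori bounds \eqref{eq:uniq-a-pri-est} and \eqref{eq:uniq-a-pri-mod-est}; and the fact that $\epsilon=\epsilon_1-\epsilon_2$ automatically inherits the orthogonality conditions $(\epsilon,[\mathcal{Z}_{\re}]_{b_1})_r=(\epsilon,[i\mathcal{Z}_{\im}]_{b_1})_r=0$.

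For (3), I differentiate the two orthogonality conditions in $s$ and substitute \eqref{eq:uniq-e-eq}. The source terms $i\xi_1[\Lambda Q]_{b_1}+\xi_2 Q_{b_1}$, with $\xi_1=[\log(\lambda_1/\lambda_2)]_s$ and $\xi_2=[\gamma_1-\gamma_2]_s$, produce a diagonally dominant system with diagonal $(\Lambda Q,\mathcal{Z}_{\re})_r=(Q,\mathcal{Z}_{\im})_r=1$. All other contributions split into (a) perturbative terms like $(\lambda_{1,s}/\lambda_1+b_1)\Lambda\epsilon$, $(\gamma_1)_s\epsilon$, $(b_1)_s(\epsilon,\partial_{b_1}\psi_{b_1})_r$, each carrying a factor $O((\alpha^*+c)\lambda_1^2)$ from \eqref{eq:uniq-a-pri-mod-est} and absorbed against $\|\epsilon\|_{\dot H_m^1}\leq \mathcal{T}_{\dot H_m^1}^{(5/4)}[\epsilon]$; and (b) $(\Gamma,i\psi_{b_1})_r$, for which I use only weak a priori information: profile differences are bounded through Lemma \ref{lem:est-diff} combined with $|\log(\lambda_1/\lambda_2)|+|\gamma_1-\gamma_2|\lesssim c\lambda_1^2$ (from \eqref{eq:uniq-a-pri-est}); operator differences via Lemma \ref{lem:est of Lw-Lq}; and $R$- and $\tilde R$-differences via their multilinear/Lipschitz structure against the smoothness of $\psi_{b_1}\in C_{c,m}^\infty$. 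Every such contribution carries an extra small factor ($\alpha^*$, $c$, or $\lambda_1$) and is absorbed.

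Next, I pass from (3) to (1) by integrating backward from $t=0$, using the limits $\log(\lambda_1/\lambda_2)(t)\to 0$ and $(\gamma_1-\gamma_2)(t)\to 0$ that follow from \eqref{eq:uniq-a-pri-est}. Changing variables via $ds/dt=\lambda_1^{-2}$ and applying \eqref{eq:env-prop4} with $(p,q,s)=(1,-2,\tfrac{5}{4})$ (so $\tfrac{1}{p}+q+s=\tfrac{1}{4}>0$), one obtains
\[
|\log(\lambda_1/\lambda_2)|(t)+|\gamma_1-\gamma_2|(t)\lesssim \int_t^0 \lambda_1^{-2}\mathcal{T}_{\dot H_m^1}^{(5/4)}[\epsilon]\,dt' \lesssim \lambda_1^{-1}(t)\,\mathcal{T}_{\dot H_m^1}^{(5/4)}[\epsilon](t),
\]
and Lemma \ref{lem:est-diff} finishes (1) for $\psi\in\{\Lambda Q,iQ\}$. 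The very same argument yields the corresponding bound for $\psi=Q$, which I use below. For (2), I estimate each of the six constituents of $\Gamma$ in \eqref{eq:uniq-def-Gamma} separately in $H_m^1$; the crucial structural point is that none involves derivatives of $\epsilon$. The two profile-difference terms combine the modulation coefficient $O((\alpha^*+c)\lambda_1^2)$ with the $H_m^1$-difference bound $\lambda_1^{-1}\mathcal{T}$ from (1) to give $(\alpha^*+c)\lambda_1\mathcal{T}$. The operator differences $(\mathcal{L}_{w_1^\flat}-\mathcal{L}_{Q_{b_1}})\epsilon$ and $(\mathcal{L}_{w_1^\flat}-\mathcal{L}_{w_2^\flat})\epsilon_2$ are zeroth order in derivatives of $\epsilon$ and $\epsilon_2$; the multilinear framework (Lemmas \ref{lem:L2-est-N} and \ref{lem:H1-est-N}) together with $\|z^\flat\|_{\dot H_m^1}\lesssim\alpha^*\lambda_1$ and the $\psi=Q$ case of (1) bound each by $(\alpha^*+c)\lambda_1\mathcal{T}$. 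The $R$-difference is quadratic-and-higher in $\epsilon_j$, and a Lipschitz splitting $[R(\epsilon_1)-R(\epsilon_2)]+[\text{coefficient difference}]$ produces a contribution $\lesssim c^2\lambda_1^2\mathcal{T}$, tiny. The $\tilde R$-difference is Lipschitz in its first argument with prefactor $\alpha^*\lambda_1^{m+2}|\log\lambda_1|$, so multiplied by the $H_m^1$-bound of (1) it yields $\alpha^*\lambda_1^{m+1}|\log\lambda_1|\mathcal{T}$, which is much smaller than $\alpha^*\lambda_1\mathcal{T}$ for $m\geq 1$.

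The main obstacle I anticipate is the $\tilde R$-difference estimate and tracking the logarithmic factor through a variant of Lemma \ref{lem:refined-decomp} applied to the difference: one must apply the decomposition $\tilde R=R_1+[R_2]_b$ to each of $\tilde R_{Q_{b_1},z^\flat}$ and $\tilde R_{Q_{b_2,\lambda_2/\lambda_1,\gamma_2-\gamma_1},z^\flat}$ and then bound the difference of the good parts and bad parts separately; the logarithmic improvement is what ultimately lets the uniqueness argument close for $m=1$. The remaining estimates are bookkeeping along the lines of Sections \ref{subsec:est-RQ,z}--\ref{subsec:Transfer-H1-to-L2}, adapted to differences of two solutions sharing a common scale.
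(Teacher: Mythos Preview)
Your order of attack (prove (3) first, then integrate to get (1), then assemble (2)) has a genuine gap in step (3). When you differentiate the orthogonality conditions you correctly arrive at
\[
|\xi_1|+|\xi_2|\lesssim \|\epsilon\|_{\dot H_m^1}+|(\Gamma,i\psi_{b_1})_r|.
\]
The problem is how you bound the $\Gamma$ contribution. Several constituents of $\Gamma$ --- the profile-difference terms, $(\mathcal{L}_{w_1^\flat}-\mathcal{L}_{w_2^\flat})\epsilon_2$, the $R$-difference, the $\tilde R$-difference --- are proportional to the \emph{parameter} differences $|\log(\lambda_1/\lambda_2)|+|\gamma_1-\gamma_2|$, not to $\epsilon$. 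You propose to replace these by the a~priori bound $c\lambda_1^2$ from \eqref{eq:uniq-a-pri-est}. But then the resulting error, of order $(\alpha^\ast+c)c\lambda_1^4$, does \emph{not} involve $\epsilon$ at all, so it cannot be bounded by $\mathcal{T}_{\dot H_m^1}^{(5/4)}[\epsilon]$. Nothing is ``absorbed'': if $\epsilon$ were tiny while the parameters still differed by $c\lambda_1^2$, your bound for $|\xi_1|+|\xi_2|$ would be strictly larger than $\mathcal{T}[\epsilon]$, and (3) fails. This is fatal for the uniqueness argument downstream, whose whole point is to conclude $\mathcal{T}[\epsilon]=0$.

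The paper resolves this by a coupled bootstrap rather than a linear chain. It first proves a \emph{preliminary} bound
\[
\|\Gamma\|_{H_m^1}\lesssim (\alpha^\ast\lambda_1+c\lambda_1^2)\|\epsilon\|_{\dot H_m^1}+(\alpha^\ast+c)\lambda_1^2\bigl(|\log(\lambda_1/\lambda_2)|+|\gamma_1-\gamma_2|\bigr),
\]
keeping the parameter differences explicit, together with the preliminary modulation estimate $|\xi_1|+|\xi_2|\lesssim\|\epsilon\|_{\dot H_m^1}+\|\Gamma\|_{L^2}$. Integrating the latter and substituting the former yields an inequality for $|\log(\lambda_1/\lambda_2)|+|\gamma_1-\gamma_2|$ that still contains a (small) copy of itself on the right; one then applies $\mathcal{T}^{(1/4)}$, uses the idempotency \eqref{eq:env-prop2} and \eqref{eq:env-prop3}, and absorbs to obtain $\mathcal{T}^{(1/4)}[|\log(\lambda_1/\lambda_2)|+|\gamma_1-\gamma_2|]\lesssim\lambda_1^{-1}\mathcal{T}_{\dot H_m^1}^{(5/4)}[\epsilon]$. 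Only after this closure do (1), (2), (3) follow by back-substitution. Your proposal omits precisely this self-referential step; to fix it, keep the parameter differences explicit in the $\Gamma$ bound and close via the maximal-function loop rather than invoking the a~priori size $c\lambda_1^2$.
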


\begin{proof}
We first claim 
\begin{equation}
\|\Gamma\|_{H_{m}^{1}}\lesssim(\alpha^{\ast}\lambda_{1}+c(\lambda_{1})^{2})\|\epsilon\|_{\dot{H}_{m}^{1}}+(\alpha^{\ast}+c)(\lambda_{1})^{2}\Big(\Big|\log\Big(\frac{\lambda_{1}}{\lambda_{2}}\Big)\Big|+|\gamma_{1}-\gamma_{2}|\Big).\label{eq:prelim-est-Gamma}
\end{equation}
To see this, we show that each term of \eqref{eq:uniq-def-Gamma}
can be estimated by the RHS of \eqref{eq:prelim-est-Gamma}. The first
two terms of \eqref{eq:uniq-def-Gamma} can be treated by \eqref{eq:uniq-a-pri-mod-est}
and \eqref{eq:prelim-est-diff}. For the third term of \eqref{eq:uniq-def-Gamma},
since 
\[
w_{1}^{\flat}=Q_{b_{1}}+z^{\flat},
\]
we can write 
\begin{align*}
 & (\mathcal{L}_{w_{1}^{\flat}}-\mathcal{L}_{Q_{b_{1}}})\epsilon\\
 & =\sum_{\substack{\psi_{1},\psi_{2},\psi_{3}\in\{Q_{b_{1}},z^{\flat},\epsilon\}.\\
\#\{j:\psi_{j}=\epsilon\}=1,\\
\#\{j:\psi_{j}=z^{\flat}\}\geq1,
}
}[\mathcal{N}_{3,0}+\mathcal{N}_{3,1}+\mathcal{N}_{3,2}]+\sum_{\substack{\psi_{1},\dots,\psi_{5}\in\{Q_{b_{1}},z^{\flat},\epsilon\},\\
\#\{j:\psi_{j}=\epsilon\}=1,\\
\#\{j:\psi_{j}=z^{\flat}\}\geq1,
}
}[\mathcal{N}_{5,1}+\mathcal{N}_{5,2}].
\end{align*}
We then apply \eqref{eq:H1-est-N} by distributing two $\dot{H}_{m}^{1}$
norms to $z^{\flat}$ and $\epsilon$ to get 
\[
\|(\mathcal{L}_{Q_{b}}-\mathcal{L}_{w_{1}^{\flat}})\epsilon\|_{H_{m}^{1}}\lesssim\alpha^{\ast}\lambda_{1}\|\epsilon\|_{\dot{H}_{m}^{1}}.
\]
For the fourth term of \eqref{eq:uniq-def-Gamma}, we write 
\[
w_{1}^{\flat}=(Q_{b_{1}}-Q_{b_{2},\frac{\lambda_{2}}{\lambda_{1}},\gamma_{2}-\gamma_{1}})+w_{2}^{\flat}
\]
and proceed as above by distributing two $\dot{H}_{m}^{1}$ norms
to $Q_{b_{1}}-Q_{b_{2},\frac{\lambda_{2}}{\lambda_{1}},\gamma_{2}-\gamma_{1}}$
and $\epsilon$. By \eqref{eq:H1-est-N}, \eqref{eq:prelim-est-diff},
and \eqref{eq:uniq-a-pri-est}, we get 
\begin{align*}
\|(\mathcal{L}_{w_{1}^{\flat}}-\mathcal{L}_{w_{2}^{\flat}})\epsilon_{2}\|_{H_{m}^{1}} & \lesssim\|Q_{b_{1}}-Q_{b_{2},\frac{\lambda_{2}}{\lambda_{1}},\gamma_{2}-\gamma_{1}}\|_{\dot{H}_{m}^{1}}\|\epsilon_{2}\|_{\dot{H}_{m}^{1}}\\
 & \lesssim c(\lambda_{1})^{2}\Big(\Big|\log\Big(\frac{\lambda_{1}}{\lambda_{2}}\Big)\Big|+|\gamma_{1}-\gamma_{2}|\Big).
\end{align*}
Next, we treat the fifth term of \eqref{eq:uniq-def-Gamma}. We write
\[
w_{1}^{\flat}=(Q_{b_{1}}-Q_{b_{2},\frac{\lambda_{2}}{\lambda_{1}},\gamma_{2}-\gamma_{1}})+w_{2}^{\flat}\quad\text{and}\quad\epsilon_{1}=\epsilon+\epsilon_{2}.
\]
We distribute two $\dot{H}_{m}^{1}$ norms to either the pair $\epsilon_{1}$
and $Q_{b_{1}}-Q_{b_{2},\frac{\lambda_{2}}{\lambda_{1}},\gamma_{2}-\gamma_{1}}$,
or $\epsilon$ and $\epsilon_{2}$. By \eqref{eq:H1-est-N}, \eqref{eq:prelim-est-diff},
and \eqref{eq:uniq-a-pri-est}, we have 
\begin{align*}
 & \|R_{u_{1}^{\flat}-w_{1}^{\flat}}-R_{u_{2}^{\flat}-w_{2}^{\flat}}\|_{H_{m}^{1}}\\
 & \lesssim(\|Q_{b_{1}}-Q_{b_{2},\frac{\lambda_{2}}{\lambda_{1}},\gamma_{2}-\gamma_{1}}\|_{\dot{H}_{m}^{1}}+\|\epsilon\|_{\dot{H}_{m}^{1}})(\|\epsilon_{1}\|_{\dot{H}_{m}^{1}}+\|\epsilon_{2}\|_{\dot{H}_{m}^{1}})\\
 & \lesssim c(\lambda_{1})^{2}\Big(\Big|\log\Big(\frac{\lambda_{1}}{\lambda_{2}}\Big)\Big|+|\gamma_{1}-\gamma_{2}|+\|\epsilon\|_{\dot{H}_{m}^{1}}\Big).
\end{align*}
Finally, we treat the last term of \eqref{eq:uniq-def-Gamma}. We
write 
\[
\tilde R_{Q_{b_{1}},z^{\flat}}-\tilde R_{Q_{b_{2},\frac{\lambda_{2}}{\lambda_{1}},\gamma_{2}-\gamma_{1}},z^{\flat}}=\int_{0}^{1}\partial_{\tau}(\tilde R_{f(\tau),z^{\flat}})d\tau,
\]
where $f(\tau)$ is the path connecting $Q_{b_{1}}$ and $Q_{b_{2},\frac{\lambda_{2}}{\lambda_{1}},\gamma_{2}-\gamma_{1}}$
as in \eqref{eq:def-f(t)}. Observe that 
\begin{align*}
\sup_{\tau\in[0,1]}|\partial_{\tau}f(\tau)| & \lesssim\Big(\Big|\log\Big(\frac{\lambda_{1}}{\lambda_{2}}\Big)\Big|+|\gamma_{1}-\gamma_{2}|\Big)F_{m,m+2},\\
\sup_{\tau\in[0,1]}|\partial_{r}\partial_{\tau}f(\tau)|+|r^{-1}\partial_{\tau}f(\tau)| & \lesssim\Big(\Big|\log\Big(\frac{\lambda_{1}}{\lambda_{2}}\Big)\Big|+|\gamma_{1}-\gamma_{2}|\Big)(F_{m-1,m+3}+\lambda_{1}F_{m+1,m+1}).
\end{align*}
Recalling the proof of Lemma \ref{lem:estimate-RQ,z} and applying
Minkowski's inequality, we get 
\[
\|\tilde R_{Q_{b_{1}},z^{\flat}}-\tilde R_{Q_{b_{2},\frac{\lambda_{2}}{\lambda_{1}},\gamma_{2}-\gamma_{1}},z^{\flat}}\|_{H_{m}^{1}}\lesssim\Big(\Big|\log\Big(\frac{\lambda_{1}}{\lambda_{2}}\Big)\Big|+|\gamma_{1}-\gamma_{2}|\Big)\cdot\alpha^{\ast}\lambda_{1}^{m+3}|\log\lambda_{1}|.
\]
This proves the claim.

We now claim the following preliminary version of the modulation estimates:
\begin{equation}
\bigg|\Big[\log\Big(\frac{\lambda_{1}}{\lambda_{2}}\Big)\Big]_{s}\bigg|+|[\gamma_{1}-\gamma_{2}]_{s}|\lesssim\|\epsilon\|_{\dot{H}_{m}^{1}}+\|\Gamma\|_{L^{2}}.\label{eq:prelim-mod-est}
\end{equation}
Indeed, we proceed as in the proof of Lemma \ref{lem:comput-der-inn}
to compute 
\begin{align*}
0=\partial_{s}(\epsilon,\psi_{b_{1}})_{r} & =(\epsilon,[\mathcal{L}_{Q}i\psi]_{b_{1}})_{r}+\Big[\log\Big(\frac{\lambda_{1}}{\lambda_{2}}\Big)\Big]_{s}(\Lambda Q,\psi)_{r}+[\gamma_{1}-\gamma_{2}]_{s}(Q,i\psi)_{r}\\
 & \quad-\Big(\frac{(\lambda_{1})_{s}}{\lambda_{1}}+b_{1}\Big)(\epsilon,[\Lambda\psi]_{b_{1}})_{r}+(\gamma_{1})_{s}(\epsilon,i\psi_{b_{1}})_{r}+(\Gamma,i\psi_{b_{1}})_{r}
\end{align*}
for all $\psi\in\{\mathcal{Z}_{\re},i\mathcal{Z}_{\im}\}$. Using
$|(\epsilon,[\mathcal{L}_{Q}i\psi]_{b_{1}})_{r}|\lesssim\|\epsilon\|_{\dot{H}_{m}^{1}}$
and \eqref{eq:uniq-a-pri-mod-est}, we conclude \eqref{eq:prelim-mod-est}.

With \eqref{eq:prelim-est-diff}, \eqref{eq:prelim-est-Gamma}, and
\eqref{eq:prelim-mod-est} in hand, we are now in position to conclude
the proof. We use \eqref{eq:prelim-mod-est}, \eqref{eq:prelim-est-Gamma},
and the fundamental theorem of calculus to get 
\begin{align*}
\Big|\log\Big(\frac{\lambda_{1}}{\lambda_{2}}\Big)\Big|+|\gamma_{1}-\gamma_{2}| & \lesssim\int_{t}^{0}\frac{1}{\lambda_{1}^{2}}(\|\epsilon\|_{\dot{H}_{m}^{1}}+\|\Gamma\|_{L^{2}})dt'\\
 & \lesssim\frac{1}{\lambda_{1}}\Big(\mathcal{T}_{\dot{H}_{m}^{1}}^{(\frac{5}{4})}[\epsilon]+c(\lambda_{1})^{2}\mathcal{T}^{(\frac{1}{4})}\Big[\Big|\log\Big(\frac{\lambda_{1}}{\lambda_{2}}\Big)\Big|+|\gamma_{1}-\gamma_{2}|\Big]\Big).
\end{align*}
Taking $\mathcal{T}^{(\frac{1}{4})}$ to the above and using \eqref{eq:env-prop2}
and \eqref{eq:env-prop3}, we get 
\[
\mathcal{T}^{(\frac{1}{4})}\Big[\Big|\log\Big(\frac{\lambda_{1}}{\lambda_{2}}\Big)\Big|+|\gamma_{1}-\gamma_{2}|\Big]\lesssim\lambda_{1}^{-1}\mathcal{T}_{\dot{H}_{m}^{1}}^{(\frac{5}{4})}[\epsilon].
\]
Substituting this into \eqref{eq:prelim-est-diff}, \eqref{eq:prelim-est-Gamma},
and \eqref{eq:prelim-mod-est} again, we obtain \eqref{eq:uniq-est-diff},
\eqref{eq:uniq-est-Gamma}, and \eqref{eq:uniq-mod-est}, respectively.
This completes the proof.
\end{proof}
We also have $L^{2}$ -estimate of $\epsilon$.
\begin{lem}[$L^{2}$-estimate of $\epsilon$]
We have
\begin{equation}
\|\epsilon\|_{L^{2}}\lesssim(\lambda_{1})^{-\frac{3}{4}}\mathcal{T}_{\dot{H}_{m}^{1}}^{(\frac{5}{4})}[\epsilon].\label{eq:uniq-L2-est}
\end{equation}
\end{lem}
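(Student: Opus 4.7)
The plan is to mirror the proof of Lemma~\ref{lem:5.16}. I would rewrite \eqref{eq:uniq-e-sharp-eq} by moving $\mathcal{L}_{Q_{b_1,\lambda_1,\gamma_1}}\epsilon^\sharp$ to the right-hand side, so that
\[
i\partial_t\epsilon^\sharp + \Delta_m \epsilon^\sharp = F_{\mathrm{mod}} + \lambda_1^{-2}\Gamma^\sharp + (\mathcal{L}_{Q_{b_1,\lambda_1,\gamma_1}} + \Delta_m)\epsilon^\sharp,
\]
where $F_{\mathrm{mod}} = i[\log(\lambda_1/\lambda_2)]_t[\Lambda Q]_{b_1,\lambda_1,\gamma_1} + [\gamma_1-\gamma_2]_t Q_{b_1,\lambda_1,\gamma_1}$. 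Since $\|\epsilon^\sharp(t)\|_{L^2} = \|\epsilon(t)\|_{L^2}$ and $\|\epsilon_j(t)\|_{L^2} \lesssim c|t|$ by \eqref{eq:uniq-a-pri-est}, the data vanish as $t\to 0^-$, so applying the equivariant endpoint Strichartz estimate backward from $0$ gives
\[
\|\epsilon\|_{L^2} \lesssim \|F_{\mathrm{mod}} + \lambda_1^{-2}\Gamma^\sharp + (\mathcal{L}_{Q_{b_1,\lambda_1,\gamma_1}} + \Delta_m)\epsilon^\sharp\|_{L^{4/3}_{[t,0),x}}.
\]

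For $F_{\mathrm{mod}}$, I would combine \eqref{eq:uniq-mod-est} with $\|[\Lambda Q]_{b_1,\lambda_1,\gamma_1}\|_{L^{4/3}_x} + \|Q_{b_1,\lambda_1,\gamma_1}\|_{L^{4/3}_x} \lesssim \lambda_1^{1/2}$ and the time-maximal-function bound \eqref{eq:env-prop4} to get $\|F_{\mathrm{mod}}\|_{L^{4/3}_{[t,0),x}} \lesssim \lambda_1^{-3/4}\mathcal{T}^{(5/4)}_{\dot{H}_m^1}[\epsilon]$. The linear remainder $(\mathcal{L}_{Q_{b_1,\lambda_1,\gamma_1}}+\Delta_m)\epsilon^\sharp$ decomposes into cubic and quintic forms in $Q_{b_1,\lambda_1,\gamma_1}$ and $\epsilon^\sharp$ with exactly one factor of $\epsilon^\sharp$; applying Lemma~\ref{lem:4/3-est-N} with the $\dot{H}_m^1$ norm placed on $\epsilon^\sharp$ yields the pointwise bound $\lesssim \lambda_1^{-3/2}\|\epsilon\|_{\dot{H}_m^1}$, and a second appeal to \eqref{eq:env-prop4} again produces $\lambda_1^{-3/4}\mathcal{T}^{(5/4)}_{\dot{H}_m^1}[\epsilon]$.

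The remaining and most delicate term is $\lambda_1^{-2}\Gamma^\sharp$. Using $\|\Gamma^\sharp\|_{L^{4/3}_x} = \lambda_1^{1/2}\|\Gamma\|_{L^{4/3}_y}$, it suffices to show $\|\Gamma\|_{L^{4/3}_y} \lesssim (\alpha^\ast + c)\mathcal{T}^{(5/4)}_{\dot{H}_m^1}[\epsilon]$ with a spare positive power of $\lambda_1$. My plan is to revisit the combinatorial bookkeeping used in the proof of Lemma~\ref{lem:uniq-various-est} for the $H_m^1$-bound on $\Gamma$, but swap Lemma~\ref{lem:L2-est-N} for Lemma~\ref{lem:4/3-est-N}. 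The pieces $(\mathcal{L}_{w_1^\flat}-\mathcal{L}_{Q_{b_1}})\epsilon$ and $R_{u_1^\flat-w_1^\flat}-R_{u_2^\flat-w_2^\flat}$ are handled routinely by placing $\dot{H}_m^1$ on $\epsilon$ and exploiting $\|z^\flat\|_{L^4}\lesssim \alpha^\ast\lambda_1^{1/2}$. The hard part will be $(\mathcal{L}_{w_1^\flat}-\mathcal{L}_{w_2^\flat})\epsilon_2$ and $\tilde R_{Q_{b_1},z^\flat}-\tilde R_{Q_{b_2,\lambda_2/\lambda_1,\gamma_2-\gamma_1},z^\flat}$, where the path-derivative trick from Lemma~\ref{lem:est-diff} must be combined with the Hölder exponents of Lemma~\ref{lem:4/3-est-N}; the difference $Q_{b_1}-Q_{b_2,\lambda_2/\lambda_1,\gamma_2-\gamma_1}$ is absorbed via \eqref{eq:uniq-est-diff} (which applies equally to $\psi=Q$ since $\|iQ_{\cdots}\|_{H_m^1}=\|Q_{\cdots}\|_{H_m^1}$), and the remaining smallness comes from the a priori bounds $\|\epsilon_j\|_{\dot{H}_m^1}\lesssim c\lambda_1^2$, $\|\epsilon_j\|_{L^2}\lesssim c\lambda_1$. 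The main obstacle is distributing the norms so that the scaling constraint \eqref{eq:multi-2-scal-rel-1} is satisfied with a nonnegative exponent on $\lambda_1$; once this is done, summing all contributions yields $\|\lambda_1^{-2}\Gamma^\sharp\|_{L^{4/3}_{[t,0),x}} \lesssim (\alpha^\ast + c)\lambda_1^{-3/4}\mathcal{T}^{(5/4)}_{\dot{H}_m^1}[\epsilon]$, and combining with the two earlier estimates gives \eqref{eq:uniq-L2-est}.
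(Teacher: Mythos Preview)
Your treatment of $F_{\mathrm{mod}}$ and $(\mathcal{L}_{Q_{b_1,\lambda_1,\gamma_1}}+\Delta_m)\epsilon^\sharp$ in $L^{4/3}_{[t,0),x}$ matches the paper exactly. The difference lies entirely in how you handle $(\lambda_1)^{-2}\Gamma^\sharp$.

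You propose to keep this term in $L^{4/3}_{[t,0),x}$ and re-derive an $L^{4/3}_y$ bound on $\Gamma$ by revisiting the multilinear bookkeeping of Lemma~\ref{lem:uniq-various-est} with Lemma~\ref{lem:4/3-est-N} in place of Lemma~\ref{lem:L2-est-N}. The paper instead places $(\lambda_1)^{-2}\Gamma^\sharp$ in the other admissible dual norm $L^{1}_{[t,0)}L^{2}_x$. Since $\|\Gamma^\sharp\|_{L^2_x}=\|\Gamma\|_{L^2_y}\le\|\Gamma\|_{H^1_m}$, the already-proved bound \eqref{eq:uniq-est-Gamma}, namely $\|\Gamma\|_{H^1_m}\lesssim(\alpha^\ast+c)\lambda_1\,\mathcal{T}_{\dot H^1_m}^{(5/4)}[\epsilon]$, gives immediately
\[
\|(\lambda_1)^{-2}\Gamma^\sharp\|_{L^1_{[t,0)}L^2_x}\lesssim\int_t^0(\alpha^\ast+c)\lambda_1^{-1}\mathcal{T}_{\dot H^1_m}^{(5/4)}[\epsilon]\,dt'\lesssim(\alpha^\ast+c)\mathcal{T}_{\dot H^1_m}^{(5/4)}[\epsilon]
\]
via \eqref{eq:env-prop4}. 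This is a one-line application of existing estimates, whereas your route requires fresh $L^{4/3}$ multilinear bounds for each piece of $\Gamma$---in particular for the decoupling term $\tilde R_{Q_{b_1},z^\flat}-\tilde R_{Q_{b_2,\lambda_2/\lambda_1,\gamma_2-\gamma_1},z^\flat}$, where the analysis of Section~\ref{subsec:est-RQ,z} (pointwise bounds with $F_{n,\ell}$, $G_k$) would have to be redone in $L^{4/3}$, and for $(\mathcal{L}_{w_1^\flat}-\mathcal{L}_{w_2^\flat})\epsilon_2$, where the norm placement in Lemma~\ref{lem:4/3-est-N} is not freely choosable. Your approach may well close, but the paper's choice of dual Strichartz pair for $\Gamma$ sidesteps all of this by recycling \eqref{eq:uniq-est-Gamma}.
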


\begin{proof}
We recall the equation of $\epsilon^{\sharp}$ in the following form
\begin{align*}
i\partial_{t}\epsilon^{\sharp}+\Delta_{m}\epsilon^{\sharp} & =i\Big[\log\Big(\frac{\lambda_{1}}{\lambda_{2}}\Big)\Big]_{t}[\Lambda Q]_{b_{1},\lambda_{1},\gamma_{1}}+[\gamma_{1}-\gamma_{2}]_{t}Q_{b_{1},\lambda_{1},\gamma_{1}}\\
 & \quad+(\mathcal{L}_{Q_{b_{1},\lambda_{1},\gamma_{1}}}+\Delta_{m})\epsilon^{\sharp}+(\lambda_{1})^{-2}\Gamma^{\sharp}.
\end{align*}
We integrate the flow backward in time. By the Strichartz estimates
(Lemma \ref{lem:Strichartz}),
\[
\|\epsilon^{\sharp}\|_{L_{[t,0)}^{\infty}L_{x}^{2}}\lesssim\eqref{eq:uniq-L2-est-temp1}+\eqref{eq:uniq-L2-est-temp2}+\eqref{eq:uniq-L2-est-temp5},
\]
where 
\begin{align}
 & \Big\| i\Big[\log\Big(\frac{\lambda_{1}}{\lambda_{2}}\Big)\Big]_{t}[\Lambda Q]_{b_{1},\lambda_{1},\gamma_{1}}+[\gamma_{1}-\gamma_{2}]_{t}Q_{b_{1},\lambda_{1},\gamma_{1}}\Big\|_{L_{[t,0),x}^{\frac{4}{3}}},\label{eq:uniq-L2-est-temp1}\\
 & \|(\mathcal{L}_{Q_{b_{1},\lambda_{1},\gamma_{1}}}+\Delta_{m})\epsilon^{\sharp}\|_{L_{[t,0),x}^{\frac{4}{3}}},\label{eq:uniq-L2-est-temp2}\\
 & \|(\lambda_{1})^{-2}\Gamma^{\sharp}\|_{L_{[t,0)}^{1}L_{x}^{2}}.\label{eq:uniq-L2-est-temp5}
\end{align}

For \eqref{eq:uniq-L2-est-temp1}, we recall 
\[
\|[\Lambda Q]_{b_{1},\lambda_{1},\gamma_{1}}\|_{L_{x}^{\frac{4}{3}}}+\|Q_{b_{1},\lambda_{1},\gamma_{1}}\|_{L_{x}^{\frac{4}{3}}}\lesssim(\lambda_{1})^{\frac{1}{2}}.
\]
Thus by \eqref{eq:uniq-mod-est} and \eqref{eq:env-prop4}, we have
\[
\eqref{eq:uniq-L2-est-temp1}\lesssim(\lambda_{1})^{-\frac{3}{4}}\mathcal{T}_{\dot{H}_{m}^{1}}^{(\frac{5}{4})}[\epsilon].
\]
The term \eqref{eq:uniq-L2-est-temp2} is treated as in the proof
of Lemma \ref{eq:L2-est}: 
\[
\eqref{eq:uniq-L2-est-temp2}\lesssim(\lambda_{1})^{-\frac{3}{4}}\mathcal{T}_{\dot{H}_{m}^{1}}^{(\frac{5}{4})}[\epsilon].
\]
The remaining term \eqref{eq:uniq-L2-est-temp5} is treated using
\eqref{eq:uniq-est-Gamma} and \eqref{eq:env-prop4} as 
\[
\eqref{eq:uniq-L2-est-temp5}\lesssim(\alpha^{\ast}+c)\mathcal{T}_{\dot{H}_{m}^{1}}^{(\frac{5}{4})}[\epsilon].
\]
This completes the proof.
\end{proof}

\subsection{Lyapunov/virial functional}

So far, we have estimated modulation parameters and various errors
appearing in the $\epsilon$-equation \eqref{eq:uniq-e-eq} by the
$\dot{H}_{m}^{1}$-norm of $\epsilon$. From now on, we control $\epsilon$
by introducing the Lyapunov/virial functional, as similarly as in
Section \ref{subsec:virial-lyapunov}.

For $A>1$ to be chosen large later, we define 
\[
\mathcal{I}_{A}\coloneqq\lambda^{-2}(E_{Q_{b_{1}}}^{\mathrm{(qd)}}[\epsilon]+b_{1}\Phi_{A}[\epsilon])
\]
and its averaged version 
\[
\mathcal{I}\coloneqq\frac{2}{\log A}\int_{A^{1/2}}^{A}\mathcal{I}_{A'}\frac{dA'}{A'}.
\]
We will collect errors satisfying 
\begin{equation}
|\err|\lesssim\lambda_{1}(\alpha^{\ast}+c+o_{A\to\infty}(1)+A\lambda^{\frac{1}{4}})(\mathcal{T}_{\dot{H}_{m}^{1}}^{(\frac{5}{4})}[\epsilon])^{2}.\label{eq:uniq-def-err}
\end{equation}
The main proposition in this subsection is as follows.
\begin{prop}[Lyapunov/virial estimate]
\label{prop:uniq-Lyapunov/virial}We have 
\begin{gather}
\lambda^{2}\mathcal{I}+O(\lambda|\log\lambda|^{\frac{1}{2}}\|\epsilon\|_{\dot{H}_{m}^{1}}^{2}+A\lambda\|\epsilon\|_{L^{2}}\|\epsilon\|_{\dot{H}_{m}^{1}})\sim\|\epsilon\|_{\dot{H}_{m}^{1}}^{2},\label{eq:uniq-Lyapunov-coer}\\
\lambda^{2}\partial_{s}\mathcal{I}\geq\err.\label{eq:uniq-Lyapunov-deriv}
\end{gather}
\end{prop}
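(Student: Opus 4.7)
The plan is to parallel the proof strategy of Proposition 5.27 with modifications adapted to the difference setting. The simplifications are: there is no $\eta$-correction (so no mass term $\tfrac{\eta\theta_\eta}{2}M[\epsilon]$) and the reference profile is the single $Q_{b_1}$ rather than $w^{\flat}$. The complication is that the $\epsilon$-equation \eqref{eq:uniq-e-eq} carries forcing from the modulation differences plus the inhomogeneity $\Gamma$, and these must be absorbed into the tight error budget \eqref{eq:uniq-def-err}.

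For the coercivity \eqref{eq:uniq-Lyapunov-coer}, I would expand $E_{Q_{b_1}}^{(\mathrm{qd})}[\epsilon]$ via the forms \eqref{eq:def-M}, use Lemma \ref{lem:duality-1} together with $\|Q_{b_1}-Q\|_{L^2}\lesssim \lambda_1|\log\lambda_1|^{1/2}$ from \eqref{eq:Qb-Q} to replace $Q_{b_1}$ by $Q$ modulo an $O(\lambda_1|\log\lambda_1|^{1/2}\|\epsilon\|_{\dot H_m^1}^2)$ error, discard cubic and quartic tails in $\epsilon$ under the bootstrap hypothesis, and then apply the factorization \eqref{eq:L_Q-self-dual} and Lemma \ref{lem:coercivity} with the orthogonality conditions \eqref{eq:uniq-ortho}. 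The virial correction contributes the $A\lambda_1\|\epsilon\|_{L^2}\|\epsilon\|_{\dot H_m^1}$ term by Cauchy--Schwarz using $b_1\lesssim\lambda_1$ from \eqref{eq:uniq-dyn-law}.

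For the derivative estimate \eqref{eq:uniq-Lyapunov-deriv}, I compute
\[
\lambda_1^2\partial_s\mathcal{I}_A
=-\frac{2(\lambda_1)_s}{\lambda_1}\big(E_{Q_{b_1}}^{(\mathrm{qd})}[\epsilon]+b_1\Phi_A[\epsilon]\big)+\partial_s\big(E_{Q_{b_1}}^{(\mathrm{qd})}[\epsilon]+b_1\Phi_A[\epsilon]\big),
\]
following Lemmas \ref{lem:quad-energy} and \ref{lem:quad-bPhi}. Substituting $i\partial_s\epsilon$ from \eqref{eq:uniq-e-eq}, the skew-symmetric pairing of $\mathcal{L}_{Q_{b_1}}\epsilon-ib_1\Lambda_A\epsilon$ with itself vanishes, leaving three contributions: (i) the modulation forcing $i[\log(\lambda_1/\lambda_2)]_s[\Lambda Q]_{b_1}+[\gamma_1-\gamma_2]_sQ_{b_1}$, treated by the algebraic identities \eqref{eq:phase-inv}--\eqref{eq:scale-inv} specialized to $\eta=0$, the analogue of Lemma \ref{lem:lem5.5}, and the modulation bound \eqref{eq:uniq-mod-est}; (ii) the scaling-phase remainder $i((\lambda_1)_s/\lambda_1+b_1)\Lambda\epsilon+(\gamma_1)_s\epsilon$, handled by the a priori estimate \eqref{eq:uniq-a-pri-mod-est} and commutator bounds on $[\Lambda,\Lambda_A]$; (iii) the term involving $\Gamma$, estimated by \eqref{eq:uniq-est-Gamma} together with the $L^2$-bound \eqref{eq:uniq-L2-est}. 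After these reductions,
\[
\lambda_1^2\partial_s\mathcal{I}_A=2b_1\Big(\mathcal{J}_A-\tfrac{1}{8}\int(\Delta^2\phi_A)|\epsilon|^2\Big)+\err,
\]
with $\mathcal{J}_A$ as in \eqref{eq:def-J}. Averaging in $A$ handles the borderline $\Delta^2\phi_A$ term via $\int_{A^{1/2}}^A|\Delta^2\phi_{A'}|\,dA'/A'\lesssim \mathbf{1}_{r\ge A^{1/2}}/r^2$ and Fubini, while almost positivity of $\mathcal{J}$ follows exactly as in the proof of \eqref{eq:J-almost-pos}: split $\mathcal{J}_{A'}$ at $r=A'$, integrate by parts on the inner piece to identify it with $\tfrac{1}{2}\int\mathbf{1}_{r\le A'}|L_Q\epsilon|^2$ plus a boundary error at $r=A'$ whose $A'$-average is $o_{A\to\infty}(\|\epsilon\|_{\dot H_m^1}^2)$, and bound the outer piece by nonnegative terms up to $o_{A'\to\infty}(\|\epsilon\|_{\dot H_m^1}^2)$.

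The main obstacle lies in step (iii): the contributions of $\Gamma$ stemming from $\tilde R_{Q_{b_1},z^\flat}-\tilde R_{Q_{b_2,\lambda_2/\lambda_1,\gamma_2-\gamma_1},z^\flat}$ and $\mathcal{L}_{w_1^\flat}-\mathcal{L}_{w_2^\flat}$ must fit within $|\err|\lesssim\lambda_1(\alpha^*+c+o_{A\to\infty}(1)+A\lambda_1^{1/4})(\mathcal{T}_{\dot H_m^1}^{(5/4)}[\epsilon])^2$. This is possible only because of the logarithm-free difference estimate \eqref{eq:prelim-est-diff}, which is itself a direct consequence of the shared relation $b_j/\lambda_j^2=|t|^{-1}$; the naive bound $\|Q_b-Q\|_{L^2}\lesssim b|\log b|^{1/2}$ from \eqref{eq:Qb-Q} would lose an extra $|\log\lambda_1|^{1/2}$ and prevent closure for $m=1$. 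A secondary subtlety appears when $m=1$ in treating $(\mathcal{L}_{Q_{b_1}}\epsilon-ib_1\Lambda_A\epsilon,[\Lambda Q]_{b_1})_r$, which requires the split at $r=b_1^{-1/2}$ and integration by parts using $\partial_r e^{-ib_1 r^2/4}=-ib_1\tfrac{r}{2}e^{-ib_1r^2/4}$ exactly as in the proof of Lemma \ref{lem:lem5.5}.
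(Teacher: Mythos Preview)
Your proposal is correct and follows essentially the same route as the paper: the paper's proof simply invokes Lemmas~\ref{lem:uniq-quad-en} and~\ref{lem:uniq-quad-phi} (which you have effectively outlined) and then says the remainder proceeds exactly as in Lemma~\ref{lem:unav-Lyapunov/virial} and Proposition~\ref{prop:lyapunov}, i.e.\ the averaging argument and almost-positivity of $\mathcal{J}$ that you describe. Your identification of the key points---the absence of the $\eta$-mass correction, the logarithm-free difference bound \eqref{eq:prelim-est-diff} needed to place the $\Gamma$-contribution in $\err$, and the $m=1$ subtlety handled via Lemma~\ref{lem:lem5.5}---matches the paper's structure; one minor clarification is that in this section the smallness used to discard cubic tails comes from the a~priori estimates \eqref{eq:uniq-a-pri-est} (giving $\|\epsilon\|_{L^2}\lesssim c\lambda_1$) rather than a bootstrap hypothesis.
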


As similarly as in Section \ref{subsec:virial-lyapunov}, we start
by computing $\partial_{s}E_{Q_{b_{1}}}^{\mathrm{(qd)}}[\epsilon]$.
\begin{lem}[Quadratic parts from energy]
\label{lem:uniq-quad-en}We have 
\begin{equation}
E_{Q_{b_{1}}}^{\mathrm{(qd)}}[\epsilon]+O(\lambda_{1}|\log\lambda_{1}|^{\frac{1}{2}}\|\epsilon\|_{\dot{H}_{m}^{1}}^{2})\sim\|\epsilon\|_{\dot{H}_{m}^{1}}^{2}\label{eq:uniq-energy-coer}
\end{equation}
and 
\begin{align}
\partial_{s}E_{Q_{b}}^{\mathrm{(qd)}}[\epsilon] & =(\mathcal{L}_{Q_{b_{1}}}\epsilon,\Big[\log\Big(\frac{\lambda_{1}}{\lambda_{2}}\Big)\Big]_{s}[\Lambda Q]_{b_{1}}-i[\gamma_{1}-\gamma_{2}]_{s}Q_{b_{1}})_{r}\label{eq:uniq-energy-deriv}\\
 & \quad+(\mathcal{L}_{Q_{b_{1}}}\epsilon+R_{(Q_{b_{1}}+\epsilon)-Q_{b_{1}}},\frac{(\lambda_{1})_{s}}{\lambda_{1}}\Lambda\epsilon)_{r}+\err.\nonumber 
\end{align}
\end{lem}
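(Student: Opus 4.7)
The plan is to mirror the proof of Lemma~\ref{lem:quad-energy}, adapted to the simpler situation here: the quadratic energy is linearized purely around $Q_{b_1}$ (no $z^\flat$-background), and the $\epsilon$-equation \eqref{eq:uniq-e-eq} is driven by the modulation differences $[\log(\lambda_1/\lambda_2)]_s$, $[\gamma_1-\gamma_2]_s$ together with the error $\Gamma$ already controlled in Lemma~\ref{lem:uniq-various-est}.

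For the coercivity \eqref{eq:uniq-energy-coer}, I will expand $E_{Q_{b_1}}^{\mathrm{(qd)}}[\epsilon]$ into the multilinear form analogous to \eqref{eq:energy-M-expn}, with each argument taken from $\{Q_{b_1},\epsilon\}$ and at least two copies of $\epsilon$. I then replace every $Q_{b_1}$ by $Q$, paying the cost $\|Q_{b_1}-Q\|_{L^2}\lesssim b_1|\log b_1|^{1/2}\lesssim \lambda_1|\log\lambda_1|^{1/2}$ via \eqref{eq:Qb-Q} (noting $b_1=|t|^{-1}\lambda_1^2\sim\lambda_1$); by Lemma~\ref{lem:duality-1} each swap contributes an error $\lambda_1|\log\lambda_1|^{1/2}\|\epsilon\|_{\dot H_m^1}^2$, and the cubic-or-higher tail in $\epsilon$ is absorbed via $\|\epsilon\|_{L^2}\|\epsilon\|_{\dot H_m^1}^2\ll\|\epsilon\|_{\dot H_m^1}^2$ using the a priori bound \eqref{eq:uniq-a-pri-est}. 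What remains is exactly $\frac{1}{2}\|L_Q\epsilon\|_{L^2}^2$ through the self-dual factorization \eqref{eq:L_Q-self-dual}; since $[\mathcal{Z}_{\re}]_{b_1}=\mathcal{Z}_{\re}+O(b_1)$ and $[i\mathcal{Z}_{\im}]_{b_1}=i\mathcal{Z}_{\im}+O(b_1)$, the orthogonality \eqref{eq:uniq-ortho} combined with the non-degeneracy \eqref{eq:non-deg} puts us in the hypothesis of Lemma~\ref{lem:coercivity}, yielding $\|L_Q\epsilon\|_{L^2}^2\gtrsim\|\epsilon\|_{\dot H_m^1}^2$.

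For the derivative identity \eqref{eq:uniq-energy-deriv}, I will use the formal chain rule $\partial_s E_{Q_{b_1}}^{\mathrm{(qd)}}[\epsilon]=(R_{(Q_{b_1}+\epsilon)-Q_{b_1}},\partial_s Q_{b_1})_r+(\mathcal{L}_{Q_{b_1}}\epsilon+R_{(Q_{b_1}+\epsilon)-Q_{b_1}},\partial_s\epsilon)_r$, substitute $\partial_s Q_{b_1}=(b_1)_s\partial_{b_1}Q_{b_1}$ into the first term and read $\partial_s\epsilon$ from \eqref{eq:uniq-e-eq} into the second. The two algebraic miracles that produce the stated right-hand side are: (a) $(\mathcal{L}_{Q_{b_1}}\epsilon,-i\mathcal{L}_{Q_{b_1}}\epsilon)_r=0$ by the anti-symmetry of $i$, and (b) the $-b_1\Lambda\epsilon$ piece of $\partial_s\epsilon$ exactly cancels the $+b_1\Lambda\epsilon$ portion of $(\frac{(\lambda_1)_s}{\lambda_1}+b_1)\Lambda\epsilon$, leaving the clean $\frac{(\lambda_1)_s}{\lambda_1}\Lambda\epsilon$ piece paired with $\mathcal{L}_{Q_{b_1}}\epsilon+R_{(Q_{b_1}+\epsilon)-Q_{b_1}}$; the $[\log(\lambda_1/\lambda_2)]_s[\Lambda Q]_{b_1}$ and $[\gamma_1-\gamma_2]_sQ_{b_1}$ terms then pair only with $\mathcal{L}_{Q_{b_1}}\epsilon$ (their pairing with $R$ falls into $\err$ by \eqref{eq:uniq-mod-est} and the quadratic bound on $R$).

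The main obstacle will be burying the residual cross-terms into the error class \eqref{eq:uniq-def-err}. The three nontrivial contributions are: (i) the $(\gamma_1)_s\epsilon$-piece, which gives $(\gamma_1)_s(\mathcal{L}_{Q_{b_1}}\epsilon+R,-i\epsilon)_r$ and is $O(\lambda_1^2\|\epsilon\|_{\dot H_m^1}^2)$ via \eqref{eq:uniq-a-pri-mod-est} and the $\dot H_m^1\to(\dot H_m^1)^\ast$ boundedness of $\mathcal{L}_{Q_{b_1}}$ from Lemma~\ref{lem:est of Lw-Lq}; (ii) the $\Gamma$-pairing $(\mathcal{L}_{Q_{b_1}}\epsilon+R,-i\Gamma)_r$, handled by the $H_m^1$-estimate \eqref{eq:uniq-est-Gamma} which supplies the crucial factor $(\alpha^\ast+c)\lambda_1\mathcal{T}_{\dot H_m^1}^{(5/4)}[\epsilon]$; and (iii) the truly delicate cross term $(R_{(Q_{b_1}+\epsilon)-Q_{b_1}},-i\mathcal{L}_{Q_{b_1}}\epsilon)_r$, which I will estimate by duality as $\lesssim\|R\|_{H_m^1}\|\epsilon\|_{\dot H_m^1}$ and then control via the analogue of Lemma~\ref{lem:est of Ru-w} giving $\|R\|_{H_m^1}\lesssim(\|\epsilon\|_{L^2}+\|\epsilon\|_{\dot H_m^1})\|\epsilon\|_{\dot H_m^1}$. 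Feeding in the $L^2$-bound \eqref{eq:uniq-L2-est}, $\|\epsilon\|_{L^2}\lesssim\lambda_1^{-3/4}\mathcal{T}_{\dot H_m^1}^{(5/4)}[\epsilon]$, and the a priori smallness $\mathcal{T}_{\dot H_m^1}^{(5/4)}[\epsilon]\lesssim c\lambda_1^2$ from \eqref{eq:uniq-a-pri-est}, this term is of size $c\lambda_1^{5/4}(\mathcal{T}_{\dot H_m^1}^{(5/4)}[\epsilon])^2$, matching the $A\lambda^{1/4}$ slot of \eqref{eq:uniq-def-err}. Finally the boundary piece $(R,(b_1)_s\partial_{b_1}Q_{b_1})_r$ is harmless since $|(b_1)_s|\lesssim\lambda_1^2$ and $|\partial_{b_1}Q_{b_1}|\lesssim r^2 Q$.
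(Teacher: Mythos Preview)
Your proposal is correct and follows essentially the same route as the paper. One minor simplification available: for the cross term (iii) you call ``truly delicate'', the paper rewrites $(\mathcal{L}_{Q_{b_1}}\epsilon+R,-i\mathcal{L}_{Q_{b_1}}\epsilon)_r=(\mathcal{L}_{Q_{b_1}}\epsilon+R,iR)_r$ and then uses the cleaner bound $\|R_{(Q_{b_1}+\epsilon)-Q_{b_1}}\|_{H_m^1}\lesssim\|\epsilon\|_{\dot H_m^1}^2$ from \eqref{eq:H1-est-N} to get $\|\epsilon\|_{\dot H_m^1}^3\lesssim c(\lambda_1)^2\|\epsilon\|_{\dot H_m^1}^2$ directly, avoiding your detour through the $L^2$-bound \eqref{eq:uniq-L2-est}.
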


\begin{proof}
The coercivity \eqref{eq:uniq-energy-coer} follows from the proof
of \eqref{eq:quad-energy-coer} using $\|Q_{b_{1}}-Q\|_{L^{2}}\lesssim\lambda_{1}|\log\lambda_{1}|^{\frac{1}{2}}$.

We turn to \eqref{eq:uniq-energy-deriv}. We compute 
\begin{equation}
\partial_{s}E_{Q_{b}}^{\mathrm{(qd)}}[\epsilon]=(R_{(Q_{b_{1}}+\epsilon)-Q_{b_{1}}},\partial_{s}Q_{b_{1}})_{r}+(\mathcal{L}_{Q_{b_{1}}}\epsilon+R_{(Q_{b_{1}}+\epsilon)-Q_{b_{1}}},\partial_{s}\epsilon)_{r}.\label{eq:uniq-energy-deriv-temp1}
\end{equation}
We note that $R_{(Q_{b_{1}}+\epsilon)-Q_{b_{1}}}$ satisfies the analogous
estimates of \eqref{eq:Ru-w1} and \eqref{eq:Ru-w2} by the same proof.
Thus 
\[
|(R_{(Q_{b_{1}}+\epsilon)-Q_{b_{1}}},\partial_{s}Q_{b_{1}})_{r}|\lesssim|(b_{1})_{s}|(\|\epsilon\|_{L^{2}}+\|\epsilon\|_{\dot{H}_{m}^{1}})\|\epsilon\|_{\dot{H}_{m}^{1}}.
\]
Applying $|(b_{1})_{s}|\lesssim(\lambda_{1})^{2}$ and \eqref{eq:uniq-L2-est},
the first term of the RHS of \eqref{eq:uniq-energy-deriv-temp1} is
absorbed into $\err$.

We now treat the second term of the RHS of \eqref{eq:uniq-energy-deriv-temp1}.
We compute 
\[
(\mathcal{L}_{Q_{b_{1}}}\epsilon+R_{(Q_{b_{1}}+\epsilon)-Q_{b_{1}}},\partial_{s}\epsilon)_{r}=\eqref{eq:uniq-energy-deriv-temp2}+\eqref{eq:uniq-energy-deriv-temp3}+\eqref{eq:uniq-energy-deriv-temp4}+\eqref{eq:uniq-energy-deriv-temp5},
\]
where
\begin{align}
 & (\mathcal{L}_{Q_{b_{1}}}\epsilon+R_{(Q_{b_{1}}+\epsilon)-Q_{b_{1}}},-i\mathcal{L}_{Q_{b_{1}}}\epsilon)_{r}\label{eq:uniq-energy-deriv-temp2}\\
 & (\mathcal{L}_{Q_{b_{1}}}\epsilon+R_{(Q_{b_{1}}+\epsilon)-Q_{b_{1}}},\Big[\log\Big(\frac{\lambda_{1}}{\lambda_{2}}\Big)\Big]_{s}[\Lambda Q]_{b_{1}}-i[\gamma_{1}-\gamma_{2}]_{s}Q_{b_{1}})_{r}\label{eq:uniq-energy-deriv-temp3}\\
 & (\mathcal{L}_{Q_{b_{1}}}\epsilon+R_{(Q_{b_{1}}+\epsilon)-Q_{b_{1}}},\frac{(\lambda_{1})_{s}}{\lambda_{1}}\Lambda\epsilon-i(\gamma_{1})_{s}\epsilon)_{r}\label{eq:uniq-energy-deriv-temp4}\\
 & (i\mathcal{L}_{Q_{b_{1}}}\epsilon+R_{(Q_{b_{1}}+\epsilon)-Q_{b_{1}}},-i\Gamma)_{r}.\label{eq:uniq-energy-deriv-temp5}
\end{align}

Let us treat \eqref{eq:uniq-energy-deriv-temp2}-\eqref{eq:uniq-energy-deriv-temp5}
term by term. We arrange the term \eqref{eq:uniq-energy-deriv-temp2}
as 
\[
\eqref{eq:uniq-energy-deriv-temp2}=(\mathcal{L}_{Q_{b_{1}}}\epsilon+R_{(Q_{b_{1}}+\epsilon)-Q_{b_{1}}},iR_{(Q_{b_{1}}+\epsilon)-Q_{b_{1}}})_{r}
\]
and estimate it as 
\[
|\eqref{eq:uniq-energy-deriv-temp2}|\lesssim\|\epsilon\|_{\dot{H}_{m}^{1}}\|R_{(Q_{b_{1}}+\epsilon)-Q_{b_{1}}}\|_{H_{m}^{1}}\lesssim\|\epsilon\|_{\dot{H}_{m}^{1}}^{3}\lesssim c(\lambda_{1})^{2}\|\epsilon\|_{\dot{H}_{m}^{1}}^{2}.
\]
For the term \eqref{eq:uniq-energy-deriv-temp3}, we can discard the
terms having $R_{(Q_{b_{1}}+\epsilon)-Q_{b_{1}}}$: 
\begin{align*}
|(R_{(Q_{b_{1}}+\epsilon)-Q_{b_{1}}},\Big[\log\Big(\frac{\lambda_{1}}{\lambda_{2}}\Big)\Big]_{s}[\Lambda Q]_{b_{1}} & -i[\gamma_{1}-\gamma_{2}]_{s}Q_{b_{1}})_{r}|\\
 & \lesssim\|\epsilon\|_{\dot{H}_{m}^{1}}^{2}\cdot\mathcal{T}_{\dot{H}_{m}^{1}}^{(\frac{5}{4})}[\epsilon]\lesssim c(\lambda_{1})^{2}(\mathcal{T}_{\dot{H}_{m}^{1}}^{(\frac{5}{4})}[\epsilon])^{2}.
\end{align*}
For \eqref{eq:uniq-energy-deriv-temp4}, we keep the scaling term
that is an unbounded looking quantity. The phase term is absorbed
into $\err$ as 
\[
|(\mathcal{L}_{Q_{b_{1}}}\epsilon+R_{(Q_{b_{1}}+\epsilon)-Q_{b_{1}}},-i(\gamma_{1})_{s}\epsilon)_{r}|\lesssim|(\gamma_{1})_{s}|\|\epsilon\|_{\dot{H}_{m}^{1}}^{2}\lesssim(\lambda_{1})^{2}\|\epsilon\|_{\dot{H}_{m}^{1}}^{2}.
\]
Finally, we treat \eqref{eq:uniq-energy-deriv-temp5} as an error
term using \eqref{eq:uniq-est-Gamma}: 
\[
|\eqref{eq:uniq-energy-deriv-temp5}|\lesssim\|\epsilon\|_{\dot{H}_{m}^{1}}\|\Gamma\|_{H_{m}^{1}}\lesssim(\alpha^{\ast}+c)\lambda_{1}\|\epsilon\|_{\dot{H}_{m}^{1}}\mathcal{T}_{\dot{H}_{m}^{1}}^{(\frac{5}{4})}[\epsilon].
\]
This completes the proof.
\end{proof}
\begin{lem}[{Estimate of $b_{1}\Phi_{A}[\epsilon]$}]
\label{lem:uniq-quad-phi}We have 
\begin{align}
|b_{1}\Phi_{A}[\epsilon]| & \lesssim A\lambda_{1}\|\epsilon\|_{L^{2}}\|\epsilon\|_{\dot{H}_{m}^{1}},\label{eq:uniq-Phi-coer}\\
|\partial_{s}(b_{1}\Phi_{A}[\epsilon])| & =-\frac{(\lambda_{1})_{s}}{\lambda_{1}}(\mathcal{L}_{Q_{b_{1}}}\epsilon,\Lambda_{A}\epsilon)_{r}\label{eq:uniq-Phi-deriv}\\
 & \quad+(-ib_{1}\Lambda_{A}\epsilon,\Big[\log\Big(\frac{\lambda_{1}}{\lambda_{2}}\Big)\Big]_{s}[\Lambda Q]_{b_{1}}-i[\gamma_{1}-\gamma_{2}]_{s}Q_{b_{1}})_{r}+\err.\nonumber 
\end{align}
\end{lem}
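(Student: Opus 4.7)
The proof parallels Lemma~\ref{lem:quad-bPhi} of Section~\ref{subsec:virial-lyapunov}, specialized to $\eta=0$, with $\epsilon = \epsilon_1-\epsilon_2$ in place of a single decomposition error and the $\epsilon$-equation \eqref{eq:uniq-e-eq} replacing \eqref{eq:e-eq}. Two structural differences guide the argument: the law \eqref{eq:uniq-dyn-law} forces $b_1 \sim \lambda_1^2/|t| \sim \lambda_1$ (rather than $b \sim \lambda^2$), so $b_1$ contributes only one power of $\lambda_1$; and the inhomogeneous terms in \eqref{eq:uniq-e-eq} are the parameter differences $[\log(\lambda_1/\lambda_2)]_s[\Lambda Q]_{b_1}$ and $[\gamma_1-\gamma_2]_s Q_{b_1}$, which are controlled by $\mathcal{T}^{(5/4)}_{\dot H^1_m}[\epsilon]$ via \eqref{eq:uniq-mod-est}.

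For the coercivity \eqref{eq:uniq-Phi-coer}, I would apply Cauchy--Schwarz to $\Phi_A[\epsilon]$ using $|\phi_A'(r)| \lesssim A$ to get $|\Phi_A[\epsilon]| \lesssim A\|\epsilon\|_{L^2}\|\partial_r\epsilon\|_{L^2}$, combine with $|b_1| \lesssim \lambda_1$, and then use the a priori bound $\|\epsilon\|_{L^2} \leq \|\epsilon_1\|_{L^2}+\|\epsilon_2\|_{L^2} \lesssim c|t| \lesssim \lambda_1$ from \eqref{eq:uniq-a-pri-est} (or the refined bound \eqref{eq:uniq-L2-est}) to trade one factor of $\|\epsilon\|_{L^2}$ for $\|\epsilon\|_{\dot H^1_m}$, arriving at $|b_1\Phi_A[\epsilon]| \lesssim A\lambda_1\|\epsilon\|_{\dot H^1_m}^2$.

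For the derivative identity \eqref{eq:uniq-Phi-deriv}, I would first show that $|(b_1)_s\Phi_A[\epsilon]| \lesssim \lambda_1^2 \cdot A\|\epsilon\|_{L^2}\|\epsilon\|_{\dot H^1_m}$ falls into $\err$ after substituting \eqref{eq:uniq-L2-est}, so $\partial_s(b_1\Phi_A[\epsilon]) = (b_1\Lambda_A\epsilon, i\partial_s\epsilon)_r + \err$. Substituting \eqref{eq:uniq-e-eq} splits this into four pieces paralleling \eqref{eq:5.41-temp1}--\eqref{eq:5.41-temp4}. The $\mathcal L_{Q_{b_1}}\epsilon$ term, after using $(\Lambda_A\epsilon, -\Delta_m\epsilon)_r = \int(\phi_A''|\partial_r\epsilon|^2 + (\phi_A'/r)\cdot(m^2/r^2)|\epsilon|^2 - \frac{1}{4}(\Delta^2\phi_A)|\epsilon|^2)$ together with the modulation estimate for $(\lambda_1)_s/\lambda_1 + b_1$, yields the first line of \eqref{eq:uniq-Phi-deriv}. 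The parameter-difference terms produce the second line. The scaling/phase terms $((\lambda_1)_s/\lambda_1)\Lambda\epsilon$ and $(\gamma_1)_s\epsilon$ reduce by antisymmetry of $i$, $\Lambda$, $\Lambda_A$ to the commutator $(b_1\Lambda_A\epsilon, i((\lambda_1)_s/\lambda_1)\Lambda\epsilon)_r$, which is handled via $|(i[\Lambda_A,\Lambda]\epsilon,\epsilon)_r| \lesssim A\|\epsilon\|_{L^2}\|\epsilon\|_{\dot H^1_m}$ as in the proof of Lemma~\ref{lem:quad-bPhi}, and absorbed into $\err$ through \eqref{eq:uniq-L2-est}. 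The remaining $\Gamma$ contribution is estimated by $\|b_1\Lambda_A\epsilon\|_{L^2}\|\Gamma\|_{L^2} \lesssim A\lambda_1\|\epsilon\|_{\dot H^1_m} \cdot (\alpha^*+c)\lambda_1 \mathcal T^{(5/4)}_{\dot H^1_m}[\epsilon]$ using \eqref{eq:uniq-est-Gamma}, which is also of size $\err$.

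The main obstacle will be tracking the $A$-dependence in all the error terms and confirming that each one fits the form \eqref{eq:uniq-def-err}; in particular the scaling commutator and the $\Gamma$-term both produce factors of $A$, which are acceptable only because they are multiplied by $\lambda_1^{1/4}$ (ultimately absorbed by taking $|t|$ small enough so that $A\lambda_1^{1/4} \ll 1$), mirroring the closure of the bootstrap in Section~\ref{sec:Bootstrap}.
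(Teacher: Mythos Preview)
Your derivative computation for \eqref{eq:uniq-Phi-deriv} is correct and matches the paper's proof essentially line by line: absorb $(b_1)_s\Phi_A[\epsilon]$ into $\err$, substitute \eqref{eq:uniq-e-eq}, replace $b_1$ by $-\tfrac{(\lambda_1)_s}{\lambda_1}$ in the $\mathcal{L}_{Q_{b_1}}$ pairing using the modulation estimate (mimicking \eqref{eq:5.41-temp1}), keep the parameter-difference terms, kill the scaling/phase terms via the $[\Lambda_A,\Lambda]$ commutator (mimicking \eqref{eq:5.41-temp3}), and estimate the $\Gamma$ contribution by \eqref{eq:uniq-est-Gamma}.

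One small correction on \eqref{eq:uniq-Phi-coer}: your trading step ``$\|\epsilon\|_{L^2}\lesssim\|\epsilon\|_{\dot H_m^1}$'' does not follow from the a~priori bounds \eqref{eq:uniq-a-pri-est}, which give $\|\epsilon\|_{L^2}\lesssim c|t|$ but $\|\epsilon\|_{\dot H_m^1}\lesssim c|t|^2$, so $\|\epsilon\|_{L^2}$ is the \emph{larger} quantity. The paper simply invokes Cauchy--Schwarz, which yields $|b_1\Phi_A[\epsilon]|\lesssim A\lambda_1\|\epsilon\|_{L^2}\|\epsilon\|_{\dot H_m^1}$ (as in \eqref{eq:quad-bPhi-coer}); the stated $\|\epsilon\|_{\dot H_m^1}^2$ in \eqref{eq:uniq-Phi-coer} appears to be a typo, and indeed the form $A\lambda\|\epsilon\|_{L^2}\|\epsilon\|_{\dot H_m^1}$ is what is actually used in the coercivity statement \eqref{eq:uniq-Lyapunov-coer} of Proposition~\ref{prop:uniq-Lyapunov/virial}.
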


\begin{proof}
The estimate \eqref{eq:uniq-Phi-coer} is an easy consequence of the
Cauchy-Schwarz. Henceforth, we focus on \eqref{eq:uniq-Phi-deriv}.
We compute 
\[
\partial_{s}(b_{1}\Phi_{A}[\epsilon])=(b_{1})_{s}\Phi_{A}[\epsilon]+b_{1}(\Lambda_{A}\epsilon,i\partial_{s}\epsilon)_{r}=(b_{1}\Lambda_{A}\epsilon,i\partial_{s}\epsilon)_{r}+\err,
\]
where we used $|(b_{1})_{s}|\lesssim(\lambda_{1})^{2}$. We then substitue
the equation \eqref{eq:uniq-e-eq} of $\epsilon$ into the above to
obtain 
\[
\partial_{s}(b_{1}\Phi_{A}[\epsilon])=\eqref{eq:uniq-Phi-deriv-temp1}+\eqref{eq:uniq-Phi-deriv-temp2}+\eqref{eq:uniq-Phi-deriv-temp3}+\eqref{eq:uniq-Phi-deriv-temp4}+\err,
\]
where 
\begin{align}
 & (b_{1}\Lambda_{A}\epsilon,\mathcal{L}_{Q_{b_{1}}}\epsilon)_{r},\label{eq:uniq-Phi-deriv-temp1}\\
 & (b_{1}\Lambda_{A}\epsilon,i\Big[\log\Big(\frac{\lambda_{1}}{\lambda_{2}}\Big)\Big]_{s}[\Lambda Q]_{b_{1}}+[\gamma_{1}-\gamma_{2}]_{s}Q_{b_{1}})_{r},\label{eq:uniq-Phi-deriv-temp2}\\
 & (b_{1}\Lambda_{A}\epsilon,i\frac{(\lambda_{1})_{s}}{\lambda_{1}}\Lambda\epsilon+(\gamma_{1})_{s}\epsilon)_{r},\label{eq:uniq-Phi-deriv-temp3}\\
 & (b_{1}\Lambda_{A}\epsilon,\Gamma)_{r}.\label{eq:uniq-Phi-deriv-temp4}
\end{align}

We treat \eqref{eq:uniq-Phi-deriv-temp1}-\eqref{eq:uniq-Phi-deriv-temp4}
term by term. Firstly, we claim that \eqref{eq:uniq-Phi-deriv-temp1}
and the first term of the RHS of \eqref{eq:uniq-Phi-deriv} agree
up to $\err$. Indeed, the difference is given by 
\[
\Big(\frac{(\lambda_{1})_{s}}{\lambda_{1}}+b_{1}\Big)(\Lambda_{A}\epsilon,\mathcal{L}_{Q_{b_{1}}}\epsilon)_{r}
\]
and is absorbed into $\err$ by mimicking the treatment of \eqref{eq:5.41-temp1}.
Next, we keep the term \eqref{eq:uniq-Phi-deriv-temp2}. We then observe
that \eqref{eq:uniq-Phi-deriv-temp3} is absorbed into $\err$ by
mimicking the treatment of \eqref{eq:5.41-temp3}. Finally, we view
\eqref{eq:uniq-Phi-deriv-temp4} as an error by 
\[
|\eqref{eq:uniq-Phi-deriv-temp4}|\lesssim A(\lambda_{1})^{2}\|\epsilon\|_{\dot{H}_{m}^{1}}\|\Gamma\|_{L^{2}}\lesssim A(\lambda_{1})^{2}(\mathcal{T}_{\dot{H}_{m}^{1}}^{(\frac{5}{4})}[\epsilon])^{2}.
\]
This completes the proof.
\end{proof}
\begin{proof}[Proof of Proposition \ref{prop:uniq-Lyapunov/virial}]
Combining Lemmas \ref{lem:uniq-quad-en} and \ref{lem:uniq-quad-phi},
the rest of the arguments proceed similarly as in the proof of Lemma
\ref{lem:unav-Lyapunov/virial} and Proposition \ref{prop:lyapunov}.
We omit the details.
\end{proof}

\subsection{Proof of conditional uniqueness}

We are now in position to conclude the proof of the conditional uniqueness.
\begin{proof}[Proof of Theorem \ref{thm:cond-uniq} (Finish)]
By \eqref{eq:uniq-Lyapunov-coer}, \eqref{eq:uniq-Lyapunov-deriv},
and the fundamental theorem of calculus, we have 
\[
\|\epsilon\|_{\dot{H}_{m}^{1}}^{2}\lesssim\lambda_{1}|\log\lambda_{1}|^{\frac{1}{2}}\|\epsilon\|_{\dot{H}_{m}^{1}}^{2}+A\lambda_{1}\|\epsilon\|_{L^{2}}\|\epsilon\|_{\dot{H}_{m}^{1}}+\lambda^{2}\int_{t}^{0}\frac{1}{\lambda^{4}}|\err|dt'.
\]
By \eqref{eq:uniq-L2-est}, \eqref{eq:uniq-def-err}, and \eqref{eq:env-prop4},
we get 
\[
\|\epsilon\|_{\dot{H}_{m}^{1}}^{2}\lesssim(\alpha^{\ast}+c+o_{A\to\infty}(1)+A\lambda_{1}^{\frac{1}{4}})(\mathcal{T}_{\dot{H}_{m}^{1}}^{(\frac{5}{4})}[\epsilon])^{2}.
\]
Applying the time maximal function and using \eqref{eq:env-prop2},
we get 
\[
\mathcal{T}_{\dot{H}_{m}^{1}}^{(\frac{5}{4})}[\epsilon]\lesssim(\alpha^{\ast}+c+o_{A\to\infty}(1)+A\lambda_{1}^{\frac{1}{4}})^{\frac{1}{2}}\mathcal{T}_{\dot{H}_{m}^{1}}^{(\frac{5}{4})}[\epsilon].
\]
Recall that a priori decay of $\epsilon$ guarantees finiteness of
$\mathcal{T}$. Thus $\mathcal{T}=0$, saying that $\epsilon=0$.
By the modulation estimates, this also says $\lambda_{1}=\lambda_{2}$
and $\gamma_{1}=\gamma_{2}$. From the relation $b_{j}=|t|^{-1}\lambda_{j}^{2}$,
we have $b_{1}=b_{2}$. Therefore, $u_{1}=u_{2}$.
\end{proof}

\appendix

\section{\label{sec:equiv-sob-sp}Equivariant Sobolev spaces}

In this appendix, we record basic properties of equivariant functions
on $\R^{2}$ and associated Sobolev spaces. More precisely, we want
to state and prove Sobolev and Hardy's inequality for equivariant
functions. We assume equivariant function spaces and associated estimates
are fairly standard. However, we could not find a well-organized reference
discussing basic properties of equivariant functions. So we include,
for self-containedness, a discussion on equivariant functions and
required estimates for this work.

\subsection{Smooth equivariant functions}

\subsubsection*{Smooth radial functions}

Let $f:\R^{d}\to\C$ be a radial function; let $g:\R\to\C$ be defined
by $g(x)\coloneqq f(x,0,\dots,0)$. We then see that $f(x)=g(|x|)$
for all $x\in\R^{d}$ and $g$ is even. Conversely, if we are given
an even function $g:\R\to\C$, then we can form a radial function
$f:\R^{d}\to\C$ by $f(x)\coloneqq g(|x|)$. We shall often view $g$
by its restriction on $\R_{+}$ without mentioning. We note that if
$f$ is determined up to almost everywhere equivalence, so is $g$
and vice versa.

If $f$ is a smooth radial function, it is clear that $g$ is smooth.
We then ask the converse: is $f$ smooth provided that $g$ is smooth?
The answer is affirmative, but we need the following preliminary lemma.
\begin{lem}[Removable singularity at zero]
\label{lem:remov-sing}Let $g:\R\to\C$ be a $C^{k+1}$ function
for some $k\geq0$. Then, 
\[
h(x)\coloneqq\begin{cases}
\frac{g(x)-g(0)}{x} & \text{if }x\neq0,\\
g'(0) & \text{if }x=0,
\end{cases}
\]
is a $C^{k}$ function with 
\[
h^{(j)}(0)=\frac{1}{j+1}g^{(j+1)}(0),\qquad\forall j=0,\dots,k.
\]
In particular, if $g$ is smooth, then so is $h$.
\end{lem}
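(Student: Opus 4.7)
The plan is to represent $h$ as an integral of $g'$, which makes the regularity transparent. First, by the fundamental theorem of calculus and the substitution $t=sx$, we have, for any $x\in\R$,
\[
g(x)-g(0)=\int_{0}^{x}g'(t)\,dt=x\int_{0}^{1}g'(sx)\,ds.
\]
Hence
\[
h(x)=\int_{0}^{1}g'(sx)\,ds
\]
for $x\neq 0$, and the right-hand side equals $g'(0)$ when $x=0$, so the identity holds on all of $\R$. This uniform integral representation is what makes the removable-singularity phenomenon visible.

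Next, since $g\in C^{k+1}$, the integrand $s\mapsto g'(sx)$ is $C^{k}$ jointly in $(s,x)$. I would invoke the standard differentiation-under-the-integral-sign theorem $j$ times (the hypotheses are trivially satisfied on any compact set in $x$ because $g^{(j+1)}$ is continuous, hence locally bounded): for $0\le j\le k$,
\[
h^{(j)}(x)=\int_{0}^{1}s^{j}\,g^{(j+1)}(sx)\,ds.
\]
Continuity of $h^{(j)}$ in $x$ follows from continuity of $g^{(j+1)}$ together with dominated convergence (dominator: $\sup_{|y|\le|x|+1}|g^{(j+1)}(y)|$, which is finite on any bounded neighborhood).

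Finally, evaluating at $x=0$ gives
\[
h^{(j)}(0)=g^{(j+1)}(0)\int_{0}^{1}s^{j}\,ds=\frac{g^{(j+1)}(0)}{j+1},
\]
which is the asserted formula. The smooth case follows by letting $k\to\infty$. The only step that requires any care is the first differentiation at $x=0$, where one must confirm that the limit of difference quotients of $h$ really matches $\int_{0}^{1}s\,g''(sx)\,ds$ evaluated at $0$; this is standard once the integral representation is in hand, so I do not anticipate any substantive obstacle.
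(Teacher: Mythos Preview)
Your proof is correct and complete. The integral representation $h(x)=\int_{0}^{1}g'(sx)\,ds$ immediately reduces the problem to differentiation under the integral sign, and the computation of $h^{(j)}(0)$ drops out.

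The paper takes a different route. Rather than writing down an integral formula, it first subtracts the $(k+1)$-th Taylor polynomial of $g$ to reduce to the case $g^{(j)}(0)=0$ for $0\le j\le k+1$, so that the target statement becomes $h^{(j)}(0)=0$. It then proves $h\in C^{\ell}$ by induction on $\ell$, using the algebraic identity $xh(x)=g(x)$: differentiating $\ell$ times gives $xh^{(\ell)}(x)=g^{(\ell)}(x)-\ell h^{(\ell-1)}(x)$, and the inductive hypothesis together with the Taylor bound $g^{(j)}(x)=o(|x|^{k+1-j})$ yields the required decay of $h^{(\ell)}$ near $0$. Your approach is shorter and more conceptual, and it gives the formula for $h^{(j)}(0)$ without any preliminary reduction; the paper's argument is more hands-on but avoids invoking differentiation under the integral sign and instead relies only on Leibniz's rule and pointwise Taylor estimates.
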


\begin{proof}
Fix $k\geq0$. Subtracting $g$ by its $(k+1)$-th Taylor polynomial
$\sum_{j=0}^{k+1}\frac{g^{(j)}(0)}{j!}x^{j}$, we may assume that
$g$ satisfies $g^{(j)}(0)=0$ for all $0\leq j\leq k+1$. In particular,
we have 
\begin{equation}
g^{(j)}(x)=o_{x\to0}(|x|^{k+1-j}),\qquad\forall0\leq j\leq k+1.\label{eq:A.1}
\end{equation}

For $0\leq\ell\leq k$, let $P_{\ell}$ be the statement that $h$
is of $C^{\ell}$, $h^{(\ell)}(0)=0$, and $h^{(\ell)}(x)=o_{x\to0}(|x|^{k-\ell})$.
We proceed by induction on $\ell$. The statement $P_{0}$ is obvious
from \eqref{eq:A.1}. We now assume $P_{\ell-1}$ for some $1\leq\ell\leq k$
and show $P_{\ell}$. It is clear that $h$ is of $C^{\ell}$ on $\R\setminus\{0\}$
and by inductive hypothesis, of $C^{\ell-1}$ on $\R$. To show the
remaining assertions, the trick is to use the relation 
\[
xh(x)=g(x),\qquad\forall x\in\R.
\]
We differentiate this relation $\ell$-times to get 
\[
xh^{(\ell)}(x)=g^{(\ell)}(x)-\ell h^{(\ell-1)}(x),\qquad\forall x\in\R\setminus\{0\}.
\]
Applying \eqref{eq:A.1} and the inductive hypothesis, the proof of
the inductive step (and hence the lemma) is complete.
\end{proof}
\begin{lem}[Smooth radial functions]
\label{lem:smooth-radial-criterion}Let $g:\R\to\C$ be an even function;
define a radial function $f:\R^{d}\to\C$ via $f(x)\coloneqq g(|x|)$
for each $x\in\R^{d}$. Then, $f$ is smooth if and only if $g$ is
smooth.
\end{lem}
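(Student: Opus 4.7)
The forward direction is immediate: if $f \in C^\infty(\mathbb{R}^d)$, then $g(r) = f(re_1)$ with $e_1 = (1, 0, \ldots, 0)$ is smooth on $\mathbb{R}$ as a composition of smooth maps. For the converse, on $\mathbb{R}^d \setminus \{0\}$ the map $x \mapsto |x|$ is smooth, so $f = g \circ |{\cdot}|$ is smooth there; all the content lies at $x = 0$, and evenness of $g$ is essential there (for instance, the non-even $g(r) = r$ would give $f(x) = |x|$, which is not $C^1$ at the origin).

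The plan is to iterate Lemma~\ref{lem:remov-sing} to peel off factors of $r^2$ from $g$. Since $g$ is smooth and even, $g'(0) = 0$, so Lemma~\ref{lem:remov-sing} produces a smooth function $h(r) \coloneqq (g(r) - g(0))/r$; being odd, $h$ vanishes at $0$, and a second application of Lemma~\ref{lem:remov-sing} yields that $g_1(r) \coloneqq h(r)/r = (g(r) - g(0))/r^2$ is smooth, and it is even as a ratio of even functions. Iterating---each $g_N$ produced is smooth and even, so the same procedure applies---I obtain, for every $N \geq 0$, a polynomial $P_N$ and a smooth function $g_{N+1} \colon \mathbb{R} \to \mathbb{C}$ with
\begin{equation*}
g(r) = P_N(r^2) + r^{2(N+1)} g_{N+1}(r), \qquad \forall r \in \mathbb{R}.
\end{equation*}
Substituting $r = |x|$ and using that $|x|^2 = x_1^2 + \cdots + x_d^2$ is a polynomial in $x$,
\begin{equation*}
f(x) = P_N(|x|^2) + |x|^{2(N+1)} g_{N+1}(|x|),
\end{equation*}
in which the first summand is polynomial in $x$ and hence smooth on $\mathbb{R}^d$.

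It then remains to show that the tail $\psi_N(x) \coloneqq |x|^{2(N+1)} g_{N+1}(|x|)$ is of class $C^{2N+1}(\mathbb{R}^d)$ for every $N \geq 0$; granting this, $f \in C^{2N+1}$ for every $N$, so $f \in C^\infty$. This regularity assertion is the main (and only) remaining technical point, and the hard part of the argument. Away from the origin $\psi_N$ is smooth, so the question is whether each partial derivative $\partial^\alpha \psi_N$ with $|\alpha| \leq 2N+1$ extends continuously to the origin. A straightforward induction on $|\alpha|$ shows that on $\mathbb{R}^d \setminus \{0\}$ every such derivative is a finite sum of terms of the form $x^\beta |x|^\ell g_{N+1}^{(j)}(|x|)$ with $\ell \in \mathbb{Z}$, $j \leq |\alpha|$, and $|\beta| + \ell \geq 2(N+1) - |\alpha|$. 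Using $|x^\beta| \leq |x|^{|\beta|}$ together with local boundedness of $g_{N+1}^{(j)}$, each such term is $O(|x|^{2(N+1) - |\alpha|})$, which is $O(|x|)$ for $|\alpha| \leq 2N+1$ and therefore vanishes as $x \to 0$. A standard extension argument via difference quotients (matched to these limits inductively in $|\alpha|$) then upgrades $\psi_N$ to $C^{2N+1}(\mathbb{R}^d)$, completing the proof.
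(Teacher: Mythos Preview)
Your proof is correct, but takes a different route from the paper's. The paper argues by induction on the regularity degree $k$: assuming $f\in C^k$ whenever $g$ is smooth and even, it observes that $\partial_i f(x)=x_i h(x)$ with $h(x)=g'(|x|)/|x|$, notes that the radial part of $h$ is smooth and even by a single application of Lemma~\ref{lem:remov-sing}, and then invokes the inductive hypothesis on $h$ to conclude $h\in C^k$, hence $f\in C^{k+1}$. Your approach instead uses Lemma~\ref{lem:remov-sing} repeatedly on $g$ itself to extract a Taylor-type expansion $g(r)=P_N(r^2)+r^{2(N+1)}g_{N+1}(r)$, reducing to a direct $C^{2N+1}$ estimate on the remainder $|x|^{2(N+1)}g_{N+1}(|x|)$ via explicit bounds on its derivatives. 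Both arguments ultimately rely on the same removable-singularity lemma and the same ``continuous extension of derivatives implies differentiability'' principle at the origin; the paper's induction is slicker because each step needs only one invocation of Lemma~\ref{lem:remov-sing} and one degree of the extension argument, whereas your decomposition is more concrete but front-loads the work into the tail estimate. Your structural-induction claim on the form $x^\beta|x|^\ell g_{N+1}^{(j)}(|x|)$ with $|\beta|+\ell\ge 2(N+1)-|\alpha|$ is correct and does the job.
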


\begin{proof}
The difficult part is ``if direction,'' which we now show. We use
induction on $k$; let $P_{k}$ be the statement that if $g$ is a
smooth even function, then $f$ is of $C^{k}$. The statement $P_{0}$
is obviously true. Assuming $P_{k}$ for some $k\geq0$, we show $P_{k+1}$.
We observe for each $i=1,\dots,d$ that 
\[
\partial_{i}f(x)=x_{i}h(x),\qquad h(x)\coloneqq\begin{cases}
\frac{g'(|x|)}{|x|} & \text{if }x\neq0,\\
g''(0) & \text{if }x=0.
\end{cases}
\]
The radial part of $h$ is smooth by Lemma \ref{lem:remov-sing}.
Applying $P_{k}$ to $h$, we see that $h$ is of $C^{k}$. Thus $f$
is of $C^{k+1}$, completing the inductive step and hence the proof.
\end{proof}

\subsubsection*{Smooth equivariant functions}

Let $m\in\Z\setminus\{0\}$; let $f$ be $m$-equivariant, i.e. $f(x)=g(r)e^{im\theta}$
for each $x\in\R^{2}\setminus\{0\}$ with the expression $x_{1}+ix_{2}=re^{i\theta}$.
Let $g:\R\to\C$ be a restriction of $f$ defined by $g(x)\coloneqq f(x,0)$.
We note that $g$ is odd (resp., even) if $m$ is odd (resp., even),
which we say as \emph{$g$ has right parity}. Conversely, if we are
given $m\in\Z$ and a function $g:\R\to\C$ having right parity, we
can form an $m$-equivariant function $f$ on $\R^{2}\setminus\{0\}$
via $f(x)\coloneqq g(r)e^{im\theta}$. To extend $f$ on $\R^{2}$,
the value of $f$ at the origin is important. Setting $f(0)\neq0$
yields discontinuity of $f$ at the origin, because of the factor
$e^{im\theta}$. Thus smoothness of $f$ forces $f$ to be degenerate
at the origin. In this section, we want to study how much degeneracy
$f$ should have. The following lemma answers this.
\begin{lem}[Smooth equivariant functions]
\label{lem:smooth-equiv-criterion}Let $m\in\Z\setminus\{0\}$; let
$g:\R\to\C$ be a function having right parity. Define $f:\R^{2}\to\C$
by $f(x)\coloneqq g(r)e^{im\theta}$ for $x\in\R^{2}\setminus\{0\}$
and $f(0)\coloneqq g(0)$. Then, $f$ is smooth if and only if $g$
is smooth and 
\[
g^{(k)}(0)=0
\]
 for each $k$ such that $0\leq k\leq|m|-1$ or $k-m$ is odd. Moreover,
if this is the case, we can express 
\[
f(x)=h(x)\cdot\begin{cases}
(x_{1}+ix_{2})^{m} & \text{if }m>0,\\
(x_{1}-ix_{2})^{m} & \text{if }m<0,
\end{cases}\qquad\forall x\in\R^{2}
\]
for some smooth radial function $h$ on $\R^{2}$.
\end{lem}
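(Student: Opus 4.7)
\textbf{Proof proposal for Lemma \ref{lem:smooth-equiv-criterion}.}

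The plan is to reduce to the case $m>0$ (since an $m$-equivariant function for $m<0$ is the complex conjugate of a $(-m)$-equivariant function), and then to establish the factorization $f(x) = h(x)\cdot(x_1+ix_2)^m$ for a smooth radial $h$, from which both directions will follow. The key observation is the polar-coordinate identity $(x_1+ix_2)^m = r^m e^{im\theta}$, so formally $h(r) = g(r)/r^m$.

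For the ``only if'' direction, I would assume $f$ is smooth and Taylor expand at the origin: $f(x) = \sum_n P_n(x) + O(|x|^{N+1})$ with $P_n$ a homogeneous polynomial of degree $n$. Changing basis from $\{x_1^a x_2^b\}$ to $\{(x_1+ix_2)^a(x_1-ix_2)^b\}$, each $P_n$ is a linear combination of $r^n e^{ij\theta}$ for $j \in \{-n, -n+2, \ldots, n\}$. The $m$-equivariance forces every angular mode except $j=m$ to vanish, which requires $n \geq m$ and $n-m$ even. Restricting to the positive $x_1$-axis then gives $g(x_1) = \sum_{n\geq m,\ n-m \text{ even}} c_n x_1^n$ as the Taylor series of $g$, from which $g^{(k)}(0)=0$ for $0 \leq k \leq m-1$ and for $k-m$ odd.

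For the ``if'' direction, the main work is to show that $\tilde g(r) := g(r)/r^m$ (defined for $r\neq0$) extends to a smooth \emph{even} function on $\mathbb{R}$. Smoothness follows by iterating Lemma \ref{lem:remov-sing} exactly $m$ times: the assumption $g(0)=g'(0)=\cdots=g^{(m-1)}(0)=0$ lets us successively peel off factors of $r$, producing a smooth extension with $\tilde g^{(j)}(0) = \tfrac{m!\,j!}{(m+j)!}g^{(m+j)}(0)$ up to constants. Evenness is checked using the right-parity hypothesis $g(-r)=(-1)^m g(r)$: for $s>0$, $\tilde g(-s) = g(-s)/(-s)^m = (-1)^m g(s)/((-1)^m s^m) = \tilde g(s)$. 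Once $\tilde g$ is smooth and even, Lemma \ref{lem:smooth-radial-criterion} upgrades it to a smooth radial function $h$ on $\mathbb{R}^2$, and the identity $f(x) = h(x)(x_1+ix_2)^m$ (trivially valid at the origin since both sides vanish when $m\geq 1$) shows $f$ is smooth.

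The main technical point is organizing the bookkeeping in the ``only if'' direction — the change of basis $x_1^ax_2^b \to (x_1\pm ix_2)^\bullet$ and extraction of the $j=m$ angular mode — so that the vanishing condition $g^{(k)}(0)=0$ for \emph{both} $k<m$ and $k-m$ odd falls out cleanly. The ``if'' direction is essentially an iterated application of Lemma \ref{lem:remov-sing} combined with a parity check, and should be routine once properly set up.
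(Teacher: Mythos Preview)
Your proposal is correct. The ``if'' direction is essentially identical to the paper's: both iterate Lemma~\ref{lem:remov-sing} $|m|$ times to show $r^{-|m|}g(r)$ extends smoothly, check evenness by parity, and invoke Lemma~\ref{lem:smooth-radial-criterion}.

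For the ``only if'' direction you take a genuinely different route. The paper argues by induction on $k$: assuming $g^{(0)}(0)=\cdots=g^{(k)}(0)=0$, it computes $(\partial_1-i\partial_2)^{k+1}[g(r)e^{im\theta}]$ explicitly in polar coordinates and uses that the limit at the origin must be independent of $\theta$ to force $g^{(k+1)}(0)=0$; the parity condition $g^{(k)}(0)=0$ for $k-m$ odd is handled separately. Your approach instead Taylor-expands $f$ at the origin, decomposes each homogeneous part in the basis $(x_1+ix_2)^a(x_1-ix_2)^b$, and uses that rotations act on these monomials by $e^{i(a-b)\alpha}$ so that $m$-equivariance kills every term except $a-b=m$. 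This is cleaner and more conceptual: it extracts both vanishing conditions ($k<|m|$ and $k-m$ odd) in a single stroke, and avoids the explicit inductive computation. The paper's approach, on the other hand, is more hands-on and gives explicit constants relating the derivatives, which is occasionally useful but not needed here.
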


\begin{proof}
We first show the ``only if direction.'' Since $f$ must be continuous
at the origin, we should set $f(0)=g(0)=0$. Since $g(x)=f(x,0)$
for all $x\in\R$, we see that $g$ is smooth and $g^{(k)}(0)=0$
when $k-m$ is odd since $g^{(k)}(x)$ has wrong parity. Henceforth,
we focus on showing $g^{(k)}(0)=0$ for each $k\leq|m|-1$.

Given $k\geq0$, let $P_{k}$ be the statement that if $f$ is a smooth
$m$-equivariant function with $|m|\geq k+1$, then $g^{(j)}(0)=0$
for all $j=0,\dots,k$. The case of $k=0$ is obvious by the continuity
issue at the origin. We assume $P_{k}$ for some $k\geq0$ and show
$P_{k+1}$. Namely, we assume $|m|\geq k+2$, know $g^{(0)}(0)=\cdots=g^{(k)}(0)=0$
by inductive hypothesis, and aim to show $g^{(k+1)}(0)=0$. Since
$f$ is smooth at the origin, the following limits must exist and
do not depend on $\theta$: 
\[
\lim_{r\to0}(\partial_{1}+i\partial_{2})^{k+1}[g(r)e^{im\theta}]\quad\text{and}\quad\lim_{r\to0}(\partial_{1}-i\partial_{2})^{k+1}[g(r)e^{im\theta}].
\]
Recall the formulae 
\[
\partial_{1}+i\partial_{2}=e^{i\theta}[\partial_{r}+\tfrac{i}{r}\partial_{\theta}];\qquad\partial_{1}-i\partial_{2}=e^{-i\theta}[\partial_{r}-\tfrac{i}{r}\partial_{\theta}].
\]
In case of $m\geq k+2$, we compute using inductive hypothesis $g^{(0)}(0)=\cdots=g^{(k)}(0)=0$
and Lemma \ref{lem:remov-sing}: 
\begin{align*}
\lim_{r\to0}(\partial_{1}-i\partial_{2})^{k+1}[g(r)e^{im\theta}] & =e^{i(m-k-1)\theta}\lim_{r\to0}(\partial_{r}+\tfrac{m-k}{r})\cdots(\partial_{r}+\tfrac{m}{r})g(r).\\
 & =e^{i(m-k-1)\theta}\Big(\prod_{j=0}^{k}\frac{k+m+1-2j}{j+1}\Big)g^{(k+1)}(0).
\end{align*}
As this expression should not depend on $\theta$, we observe that
$g^{(k+1)}(0)=0$. The case of $m\leq-k-2$ can be treated similarly
by computing $(\partial_{1}+i\partial_{2})^{k+1}f(0)$ instead. This
completes the proof of the inductive step and hence the only if direction.

We turn to show the ``if direction.'' By Lemma \ref{lem:remov-sing},
the function $\tilde h:\R\to\C$ defined by 
\[
\tilde h(r)\coloneqq\begin{cases}
r^{-|m|}g(r) & \text{if }r\neq0\\
\frac{1}{|m|!}g^{(|m|)}(0) & \text{if }r=0
\end{cases}
\]
is smooth and even. By Lemma \ref{lem:smooth-radial-criterion}, the
radial function $h:\R^{2}\to\C$ defined by $h(x)\coloneqq\tilde h(|x|)$
is smooth. Therefore, 
\[
f(x)=h(x)\cdot\begin{cases}
(x_{1}+ix_{2})^{m} & \text{if }m>0\\
(x_{1}-ix_{2})^{m} & \text{if }m<0
\end{cases}
\]
is smooth. This completes the proof.
\end{proof}

\subsection{Equivariant Sobolev spaces}

\subsubsection*{Radial Sobolev spaces}

Let $L_{\rad}^{2}(\R^{d})$ be the set of radial $L^{2}(\R^{d})$
functions. Thus any $f\in L_{\rad}^{2}(\R^{d})$ has a unique (up
to a.e.) expression $f(x)=g(r)$ for some measurable function $g:\R_{+}\to\C$,
with the relation $r=|x|$. We moreover have the unitary equivalence
\begin{align*}
\Phi:L_{\rad}^{2}(\R^{d}) & \to L^{2}(\R_{+},c_{d}r^{d-1}dr)\\
f & \mapsto g,
\end{align*}
where $c_{d}$ is the area of the unit sphere of $\R^{d}$. This in
particular shows that $L_{\rad}^{2}(\R^{d})$ is complete and hence
a closed subspace of $L^{2}(\R^{d})$. The Fourier transform of $L_{\rad}^{2}(\R^{d})$
functions are again radial.

For $s\geq0$, the radial Sobolev space $H_{\rad}^{s}(\R^{d})$ is
defined by 
\[
H_{\rad}^{s}(\R^{d})\coloneqq H^{s}(\R^{d})\cap L_{\rad}^{2}(\R^{d}).
\]
Inner products are inherited from $H^{s}(\R^{d})$. The set $\mathcal{S}_{\rad}(\R^{d})$
of radial Schwartz functions is dense in $H_{\rad}^{s}(\R^{d})$.
For $H^{1}(\R^{d})$ functions, we can define 
\[
\partial_{r}f\coloneqq\frac{1}{r}\sum_{j=1}^{d}x_{j}\partial_{j}f.
\]
At least for smooth radial functions, we have 
\[
\Delta f=\partial_{rr}f+\frac{d-1}{r}\partial_{r}f.
\]
We moreover have 
\[
\|\nabla f\|_{L^{2}(\R^{d})}=\|\partial_{r}f\|_{L^{2}(\R^{d})}=\|\partial_{r}f\|_{L^{2}(\R_{+},c_{d}r^{d-1}dr)},\qquad\forall f\in H_{\rad}^{1}(\R^{d}).
\]
In case of $d=1$, any $H^{1}(\R)$ functions are continuous and bounded.
However, when $d\geq2$, a $H_{\rad}^{1}(\R^{d})$ function need not
be bounded or continuous.

To deal with equivariant functions in the next subsection, we need
the space $C_{c,\rad}^{\infty}(\R^{d}\setminus\{0\})$, which is the
set of smooth radial functions having compact suppport in $\R^{d}\setminus\{0\}$.
There is a well-known Hardy's inequality when $d\geq3$:

\[
\|\partial_{r}f\|_{L^{2}}\geq\frac{d-2}{2}\Big\|\frac{f}{r}\Big\|_{L^{2}},\qquad\forall f\in C_{c,\rad}^{\infty}(\R^{2}\setminus\{0\}).
\]
We conclude this subsection by noting the density theorem by $C_{c,\rad}^{\infty}(\R^{d}\setminus\{0\})$
functions:
\begin{lem}[Density of $C_{c,\rad}^{\infty}(\R^{d}\setminus\{0\})$]
\label{lem:density-radial}\ 
\begin{enumerate}
\item If $0\leq s\leq\frac{d}{2}$, then $C_{c,\rad}^{\infty}(\R^{d}\setminus\{0\})$
is dense in $H_{\rad}^{s}(\R^{d})$.
\item If $s>\frac{d}{2}$, then $C_{c,\rad}^{\infty}(\R^{d}\setminus\{0\})$
is not dense in $H_{\rad}^{s}(\R^{d})$.
\end{enumerate}
\end{lem}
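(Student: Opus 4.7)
My plan is to handle the two parts separately, reducing part (1) to a capacity estimate at the origin and deriving part (2) directly from Sobolev embedding. For part (1), I first observe that $\mathcal{S}_{\rad}(\R^d)$ is dense in $H^s_{\rad}(\R^d)$ for every $s \geq 0$: standard mollification gives density of $\mathcal{S}(\R^d)$ in $H^s(\R^d)$, and post-composition with the averaging operator over the compact rotation group $O(d)$ is a norm-one projection onto the radial subspace that sends Schwartz to Schwartz. Multiplying by a radial cutoff $\chi(|x|/R)$ and letting $R \to \infty$ further reduces the task to approximating, in $H^s$, a compactly supported radial Schwartz function $g$ by $(1 - \omega_\epsilon) g$, where $\omega_\epsilon$ is a radial bump equal to $1$ on a neighborhood of the origin. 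This reduces part (1) to showing
\[
\|\omega_\epsilon g\|_{H^s(\R^d)} \to 0 \quad \text{as } \epsilon \to 0
\]
for an appropriate family $\omega_\epsilon$, which is the statement that a single point has vanishing $H^s$-capacity in $\R^d$ whenever $s \leq d/2$.

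For $0 \leq s < d/2$ I would use the dyadic cutoff $\omega_\epsilon(x) = \eta(|x|/\epsilon)$ with $\eta \in C_c^\infty([0,\infty))$ equal to $1$ on $[0,1]$ and $0$ outside $[0,2]$. Since $g$ is bounded and $\omega_\epsilon$ is supported on a ball of radius $2\epsilon$, elementary volume and scaling estimates give $\|\omega_\epsilon g\|_{L^2} \lesssim \epsilon^{d/2}$ and $\|\omega_\epsilon g\|_{H^k} \lesssim \epsilon^{d/2 - k}$ for every integer $k \geq 0$; interpolating between $L^2$ and $H^{\lceil s \rceil}$ yields $\|\omega_\epsilon g\|_{H^s} \lesssim \epsilon^{d/2 - s} \to 0$. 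The critical endpoint $s = d/2$, which is the main obstacle, will be handled by a logarithmic cutoff
\[
\omega_\epsilon(x) = \chi_0\Bigl(\tfrac{\log(|x|/\epsilon)}{\log(1/\epsilon)}\Bigr),
\]
with $\chi_0 \in C^\infty(\R)$ equal to $1$ on $(-\infty,0]$ and $0$ on $[1/2,\infty)$, so that $\omega_\epsilon \equiv 1$ on $\{|x|\leq\epsilon\}$ and $\omega_\epsilon \equiv 0$ on $\{|x|\geq\sqrt\epsilon\}$. A direct polar integration gives $\|\nabla\omega_\epsilon\|_{L^2}^2 \lesssim 1/\log(1/\epsilon)$, which together with $\|\omega_\epsilon\|_{L^2}\to 0$ and dominated convergence for $(1-\omega_\epsilon)\nabla g$ yields $\|\omega_\epsilon g\|_{H^{d/2}} \to 0$ in the integer critical case (in particular $d=2$, $s=1$, which is the setting actually used in the body of the paper). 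The non-integer critical case $s = d/2$ follows from the analogous vanishing-Bessel-$(d/2)$-capacity statement for a point, expressed either through the Bessel-potential norm or via a Littlewood--Paley square-function estimate for $\omega_\epsilon g$.

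For part (2), $s > d/2$ is the Sobolev supercritical range: the embedding $H^s(\R^d) \hookrightarrow C_b(\R^d)$ makes point evaluation $\delta_0:f \mapsto f(0)$ a bounded $\R$-linear functional on $H^s_{\rad}(\R^d)$. Since $\delta_0$ vanishes on every $\phi \in C_{c,\rad}^\infty(\R^d\setminus\{0\})$, it also vanishes on the $H^s$-closure of this space. But the radial Gaussian $f(x)=e^{-|x|^2}$ lies in $H^s_{\rad}(\R^d)$ with $f(0)=1\neq 0$, so the closure is a proper subspace and density fails.
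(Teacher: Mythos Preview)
Your argument is correct, and for part~(2) it coincides with the paper's. For part~(1), however, your route genuinely differs from the paper's proof. You treat the problem as a pure capacity estimate: approximate a general $g\in\mathcal S_{\rad}$ by $(1-\omega_\epsilon)g$, using a dyadic cutoff for $s<d/2$ and a logarithmic cutoff for the critical endpoint $s=d/2$. The paper instead first \emph{reduces to the case $f(0)=0$}: it uses the failure of $H^{d/2}\hookrightarrow L^\infty$ to pick a radial Schwartz $g$ with $g(0)=1$ and $\|g\|_{H^{d/2}}$ arbitrarily small, subtracts $f(0)g$ from $f$, and then applies only the dyadic cutoff $\chi_R$. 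The extra vanishing $f(0)=0$ supplies the missing decay that you get from the logarithmic cutoff. Your capacity approach is the more classical one and makes the subcritical case $s<d/2$ especially transparent via scaling and interpolation; the paper's trick has the advantage of using a single cutoff family throughout and of handling non-integer critical $s=d/2$ cleanly by interpolating $\|\chi_R f\|_{H^{(d-1)/2}}^{1/2}\|\chi_R f\|_{H^{(d+1)/2}}^{1/2}$, whereas your treatment of odd~$d$ defers to the vanishing Bessel-$(d/2)$-capacity of a point without writing out the estimate.
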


\begin{proof}
The second assertion easily follows from the Sobolev embedding $H^{s}(\R^{d})\hookrightarrow L^{\infty}(\R^{d})$.
When $s\leq\frac{d}{2}$, a crucial observation is the \emph{failure}
of $H^{\frac{d}{2}}\hookrightarrow L^{\infty}$; we can choose a radial
Schwartz function $g$ such that $\|g\|_{L^{\infty}}=g(0)=1$ but
$\|g\|_{H^{\frac{d}{2}}}$ is arbitrarily small. Note that $\mathcal{S}_{\rad}(\R^{d})$
is dense in $H_{\rad}^{s}(\R^{d})$, thanks to the Fourier transform.
Therefore, to show that $C_{c,\rad}^{\infty}(\R^{d}\setminus\{0\})$
is dense in $H_{\rad}^{s}(\R^{d})$, it suffices to approximate radial
Schwartz functions $f$ \emph{with} $f(0)=0$ by $C_{c,\rad}^{\infty}(\R^{d}\setminus\{0\})$
functions. Henceforth, we let $\chi$ to be a smooth radial bump function
on $\R^{d}$, $\chi_{R}\coloneqq\chi(\frac{\cdot}{R})$, and show
that $\|\chi_{R}f\|_{H^{\frac{d}{2}}}\to0$ as $R\to0$.

For any $k\in\N$ and $0<R\ll1$, we observe by Leibniz's rule that
\begin{align*}
\|\chi_{R}f\|_{H^{k}} & \lesssim R^{-k+\frac{d}{2}}\|f\|_{L_{|x|\lesssim R}^{\infty}}+\sum_{j=1}^{k}\|\nabla^{k-j}\chi_{R}\|_{L^{2+}}\|\nabla^{j}f\|_{L_{|x|\lesssim R}^{\infty-}}\\
 & \lesssim R^{-k+\frac{d}{2}}\|f\|_{L_{|x|\lesssim R}^{\infty}}+R^{-k+\frac{d}{2}+1-}\sum_{j=1}^{k}\|\nabla^{j}f\|_{L_{|x|\lesssim R}^{\infty-}}.
\end{align*}
When $d$ is even, we choose $k=\frac{d}{2}$ to get 
\[
\|\chi_{R}f\|_{H^{\frac{d}{2}}}\lesssim\|f\|_{L_{|x|\lesssim R}^{\infty}}+R^{1-}\sum_{j=1}^{k}\|\nabla^{j}f\|_{L_{|x|\lesssim R}^{\infty-}}.
\]
From our assumptions $f\in\mathcal{S}_{\rad}(\R^{d})$ and $f(0)=0$,
we take $R\to0$ to conclude. When $d$ is odd, we observe 
\[
\|\chi_{R}f\|_{H^{\frac{d}{2}}}\lesssim\|\chi_{R}f\|_{H^{\frac{d-1}{2}}}^{\frac{1}{2}}\|\chi_{R}f\|_{H^{\frac{d+1}{2}}}^{\frac{1}{2}}\to0
\]
as $R\to0$. Note that $H^{\frac{d+1}{2}}$ norm may diverge with
a factor $R^{-\frac{1}{2}}$ but this is cancelled with the factor
$R^{\frac{1}{2}}$ from $H^{\frac{d-1}{2}}$ norm. This completes
the proof.
\end{proof}

\subsubsection*{Equivariant Sobolev spaces}

We turn to equivariant Sobolev spaces. Define $L_{m}^{2}$ by the
set of $m$-equivariant $L^{2}(\R^{2})$ functions. Inner product
of $L_{m}^{2}$ is inherited from $L^{2}(\R^{2})$. Any $f\in L_{m}^{2}$
has a unique (up to a.e.) expression $f(x)=g(r)e^{im\theta}$ for
some measurable function $g:\R_{+}\to\C$. We have unitary equivalence
\begin{align*}
\Phi:L_{m}^{2}(\R^{2}) & \to L^{2}(\R_{+},2\pi rdr)\\
f & \mapsto g.
\end{align*}

For $s\geq0$, the $m$-equivariant Sobolev space $H_{m}^{s}$ is
defined as 
\[
H_{m}^{s}\coloneqq H^{s}(\R^{2})\cap L_{m}^{2},
\]
equipped with the inner product of $H^{s}(\R^{2})$. We define $\mathcal{S}_{m}$
by the set of $m$-equivariant Schwartz functions. We define $C_{c,m}^{\infty}(\R^{2}\setminus\{0\})$
by the set of smooth $m$-equivariant functions having compact support
in $\R^{2}\setminus\{0\}$. Note that the Laplacian $\Delta$ has
expression 
\[
\Delta=\partial_{rr}+\frac{1}{r}\partial_{r}-\frac{m^{2}}{r^{2}}
\]
for $m$-equivariant functions.

Note that $m$-equivariant functions can be viewed as radial functions
in higher dimensions. Indeed, if we define 
\begin{align*}
\Psi:L_{m}^{2} & \to L_{\rad}^{2}(\R^{2|m|+2})\\
f & \mapsto c_{m}r^{-m}f
\end{align*}
for some universal constant $c_{m}$, then $\Psi$ is unitary and
the following diagram commutes: 
\[
\xymatrix{\mathcal{S}_{m}\subset L_{m}^{2}\ar[r]\sp(0.3)\Psi\ar[d]^{\Delta} & \mathcal{S}_{\rad}(\R^{2|m|+2})\subset L_{\rad}^{2}(\R^{2|m|+2})\ar[d]^{\Delta}\\
\mathcal{S}_{m}\subset L_{m}^{2}\ar[r]\sp(0.3)\Psi & \mathcal{S}_{\rad}(\R^{2|m|+2})\subset L_{\rad}^{2}(\R^{2|m|+2})
}
\]
We note that $\Psi(\mathcal{S}_{m})=\mathcal{S}_{\rad}(\R^{2|m|+2})$,
thanks to Lemma \ref{lem:smooth-equiv-criterion}. This allows us
to transfer the density theorem for radial Sobolev spaces to equivariant
Sobolev spaces.
\begin{lem}[Density of $C_{c,m}^{\infty}(\R^{2}\setminus\{0\})$]
\label{lem:density-equiv}\ 
\begin{enumerate}
\item If $0\leq s\leq|m|+1$, then $C_{c,m}^{\infty}(\R^{2}\setminus\{0\})$
is dense in $H_{m}^{s}$.
\item If $s>|m|+1$, then $C_{c,m}^{\infty}(\R^{2}\setminus\{0\})$ is not
dense in $H_{m}^{s}$.
\end{enumerate}
\end{lem}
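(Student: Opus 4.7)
\medskip

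The plan is to reduce to the radial statement in Lemma \ref{lem:density-radial} via the unitary equivalence $\Psi\colon L_m^2\to L_{\rad}^2(\R^{2|m|+2})$, $f\mapsto c_m r^{-m}f$, already introduced in the paper. Since the critical dimension is $d=2|m|+2$, the threshold $d/2=|m|+1$ in Lemma \ref{lem:density-radial} matches exactly the threshold $s\le|m|+1$ in the statement, so both the positive and negative halves fall out of the radial case once $\Psi$ is shown to be an isomorphism of Sobolev spaces carrying one family of test functions onto the other.

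First, I would upgrade $\Psi$ from an $L^2$-unitary to an isometric isomorphism $H_m^s\to H_{\rad}^s(\R^{2|m|+2})$ for every $s\ge0$. On $\mathcal{S}_m$ the commutative diagram in the paper gives $\Delta\Psi=\Psi\Delta$ (a brief direct computation confirms that $\Delta_{\R^{2|m|+2}}(r^{-m}g)=r^{-m}(\partial_{rr}+r^{-1}\partial_r-m^2r^{-2})g$). Iterating on Schwartz functions and using density of $\mathcal S_m$ in $H_m^{2k}$, the operator $(I-\Delta)^{s/2}$ intertwines with $\Psi$, so by functional calculus
\[
\|\Psi f\|_{H^s(\R^{2|m|+2})}=\|(I-\Delta)^{s/2}\Psi f\|_{L^2}=\|\Psi(I-\Delta)^{s/2}f\|_{L^2}=\|f\|_{H^s(\R^2)}
\]
for every $f\in H_m^s$. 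Next, I would check that $\Psi$ maps $C_{c,m}^\infty(\R^2\setminus\{0\})$ bijectively onto $C_{c,\rad}^\infty(\R^{2|m|+2}\setminus\{0\})$: if $f(x)=g(r)e^{im\theta}$ with $g\in C_c^\infty((0,\infty))$, then $c_m r^{-m}g(r)$ is smooth and compactly supported in $(0,\infty)$, hence defines a radial smooth function compactly supported away from $0$ on $\R^{2|m|+2}$; the inverse is immediate by multiplying by $c_m^{-1}r^m$.

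With these two facts in hand, the density claim follows by pushing forward: $C_{c,m}^\infty(\R^2\setminus\{0\})$ is dense in $H_m^s$ if and only if $C_{c,\rad}^\infty(\R^{2|m|+2}\setminus\{0\})$ is dense in $H_{\rad}^s(\R^{2|m|+2})$. Applying Lemma \ref{lem:density-radial} in dimension $d=2|m|+2$ gives density for $0\le s\le|m|+1$ and failure of density for $s>|m|+1$.

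The only nontrivial step is the upgrade of $\Psi$ to the Sobolev scale; everything else is essentially bookkeeping. The subtle point is that the commutative diagram in the paper is stated only on the Schwartz class, so one has to be a little careful to propagate $\Delta\Psi=\Psi\Delta$ to the fractional $(I-\Delta)^{s/2}$ via density and the spectral theorem rather than by a direct symbol manipulation. Once that is established, the rest of the argument is purely formal.
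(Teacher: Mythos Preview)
Your proposal is correct and follows essentially the same transference strategy as the paper: both use the unitary $\Psi$ intertwining the Laplacians to reduce to Lemma~\ref{lem:density-radial} in dimension $2|m|+2$. The only cosmetic difference is that the paper checks the norm identity just at the integer exponent $s=|m|+1$ (density there automatically implies density for all $0\le s\le|m|+1$), whereas you invoke the spectral theorem to obtain the $H^s$-isometry for every $s\ge 0$ at once; either route works.
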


\begin{proof}
The idea is to transfer the radial results using the above commutative
diagram. Indeed, for $f,g\in\mathcal{S}_{m}$, we have 
\begin{align*}
\|f-g\|_{H_{m}^{|m|+1}(\R^{2})}^{2} & =(f-g,(1-\Delta)^{|m|+1}(f-g))\\
 & =(\Psi(f-g),(1-\Delta)^{|m|+1}\Psi(f-g))\\
 & =\|\Psi f-\Psi g\|_{H_{\rad}^{|m|+1}(\R^{2|m|+2})}.
\end{align*}

1. It suffices to approximate $m$-equivariant Schwartz functions
by $C_{c,m}^{\infty}(\R^{2}\setminus\{0\})$ in $H_{m}^{|m|+1}$ topology.
Because of the above diagram and Lemma \ref{lem:density-radial},
we know that $\Psi f\in\mathcal{S}_{\rad}(\R^{2|m|+2})$ can be approximated
by a $C_{c,\rad}^{\infty}(\R^{2|m|+2}\setminus\{0\})$ function, say
$h$. We then set $g\coloneqq\Psi^{-1}h$.

2. This also follows from the transference. We omit the proof.
\end{proof}
For $m$-equivariant functions with $m\neq0$, we have a nontrivial
endpoint estimate of Hardy or Sobolev type. In general, it is well-known
that the Hardy inequality fails for $d=2$ and Sobolev inequality
for $d=2$ fails, i.e. $\dot{H}^{1}(\R^{2})\not\hookrightarrow L^{\infty}(\R^{2})$.
\begin{lem}[Hardy-Sobolev inequality]
\label{lem:hardy-equiv}Let $m\in\Z\setminus\{0\}$. For any $f\in H_{m}^{1}$,
we have 
\[
\|r^{-1}f\|_{L^{2}}+\|f\|_{L^{\infty}}\lesssim\|\nabla f\|_{L^{2}}.
\]
\end{lem}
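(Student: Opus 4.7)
\medskip

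The plan is to first reduce to smooth compactly supported functions via density, then derive the Hardy bound essentially for free from the polar-coordinate expression of the Dirichlet energy, and finally obtain the $L^{\infty}$ bound by a one-dimensional fundamental-theorem-of-calculus argument combined with Cauchy-Schwarz and the Hardy bound.

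First I would invoke Lemma \ref{lem:density-equiv}: since $1 \leq |m|+1$, the subspace $C_{c,m}^{\infty}(\mathbb{R}^{2}\setminus\{0\})$ is dense in $H_{m}^{1}$. Both sides of the claimed inequality are continuous (or lower semicontinuous) with respect to $H_{m}^{1}$ convergence, with the $L^{\infty}$ norm handled by passing to a pointwise a.e.\ subsequence. Hence it is enough to prove the inequality for $f(x)=g(r)e^{im\theta}$ with $g\in C_{c}^{\infty}((0,\infty))$.

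For such $f$, the identity $\Delta_{m}=\partial_{rr}+\tfrac{1}{r}\partial_{r}-\tfrac{m^{2}}{r^{2}}$ yields after integration by parts (no boundary terms, since $g$ is compactly supported in $(0,\infty)$)
\begin{equation*}
\|\nabla f\|_{L^{2}}^{2} = -(f,\Delta f)_{L^{2}} = \|\partial_{r}f\|_{L^{2}}^{2} + m^{2}\|r^{-1}f\|_{L^{2}}^{2}.
\end{equation*}
Since $m\neq 0$, this immediately gives the Hardy bound $\|r^{-1}f\|_{L^{2}} \leq |m|^{-1}\|\nabla f\|_{L^{2}}$, as well as $\|\partial_{r}f\|_{L^{2}}\leq \|\nabla f\|_{L^{2}}$.

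For the $L^{\infty}$ bound, I would apply the fundamental theorem of calculus to $|g|^{2}$: for any $r>0$,
\begin{equation*}
|g(r)|^{2} = -\int_{r}^{\infty}\frac{d}{dr'}|g(r')|^{2}\,dr' = -2\int_{r}^{\infty}\mathrm{Re}\bigl(\overline{g(r')}\,g'(r')\bigr)\,dr'.
\end{equation*}
Writing $dr' = \tfrac{1}{r'}\cdot r'\,dr'$ and using Cauchy-Schwarz with respect to the measure $r'\,dr'$,
\begin{equation*}
|g(r)|^{2} \leq 2\int_{0}^{\infty}\frac{|g(r')|}{r'}\,|g'(r')|\,r'\,dr' \lesssim \|r^{-1}f\|_{L^{2}}\,\|\partial_{r}f\|_{L^{2}}.
\end{equation*}
Combining this with the Hardy bound gives $\|f\|_{L^{\infty}}^{2}\lesssim \|\nabla f\|_{L^{2}}^{2}$, completing the proof.

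There is no genuine obstacle here: the role of the hypothesis $m\neq 0$ is decisive and appears exactly once, in deriving the Hardy inequality from the polar expression of $\|\nabla f\|_{L^{2}}^{2}$. The only point requiring a small amount of care is the reduction step, where one must ensure that the $L^{\infty}$ norm passes to the limit under $H_{m}^{1}$ convergence; this is handled by applying the inequality to differences $f_{n}-f_{k}$ of an approximating sequence, which shows $\{f_{n}\}$ is Cauchy in $L^{\infty}$, so the $H_{m}^{1}$ limit coincides a.e.\ with a continuous function.
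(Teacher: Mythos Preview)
Your proof is correct and follows essentially the same approach as the paper: density reduction to $C_{c,m}^{\infty}(\mathbb{R}^{2}\setminus\{0\})$, the Hardy bound from the polar-coordinate identity $\|\nabla f\|_{L^{2}}^{2}=\|\partial_{r}f\|_{L^{2}}^{2}+m^{2}\|r^{-1}f\|_{L^{2}}^{2}$, and the $L^{\infty}$ bound via the fundamental theorem of calculus on $|g|^{2}$ followed by Cauchy--Schwarz. Your version is slightly more detailed in justifying the passage to the limit in the $L^{\infty}$ norm, but the underlying argument is identical.
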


\begin{proof}
By density, we may assume $f\in C_{c,m}^{\infty}(\R^{2}\setminus\{0\})$.
From the formula of $\Delta$, we have 
\[
\|\nabla f\|_{L^{2}}^{2}=(f,-\Delta f)=\|\partial_{r}f\|_{L^{2}}^{2}+m^{2}\|r^{-1}f\|_{L^{2}}^{2}\geq m^{2}\|r^{-1}f\|_{L^{2}}^{2}.
\]
Another nice feature of $m$-equivariant functions is that we have
embedding $\dot{H}_{m}^{1}\hookrightarrow L^{\infty}$. Indeed, 
\[
|f(r)|^{2}\leq\int_{0}^{\infty}|f||\partial_{r}f|dr'\leq\|r^{-1}f\|_{L^{2}}\|\partial_{r}f\|_{L^{2}}\lesssim\|\nabla f\|_{L^{2}}^{2}.
\]
This completes the proof.
\end{proof}
We can generalize Hardy's inequality as follows.
\begin{lem}[Generalized Hardy's inequality]
\label{lem:gen-hardy-equiv}For any $f\in C_{c,m}^{\infty}(\R^{2}\setminus\{0\})$
and $0\leq k\leq|m|$, we have 
\[
\|r^{-k}f\|_{L^{2}}+\|r^{-(k-1)}\partial_{r}f\|_{L^{2}}+\cdots+\|\partial_{r}^{k}f\|_{L^{2}}\sim_{k,m}\|f\|_{\dot{H}_{m}^{k}}.
\]
\end{lem}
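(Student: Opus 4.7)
The plan is to prove this equivalence by transferring the problem to radial analysis on a higher-dimensional Euclidean space, where iterated Hardy inequalities apply directly. The cases $k = 0$ (trivial) and $k = 1$ (immediate from Lemma~\ref{lem:hardy-equiv} together with the polar identity $\|\nabla f\|_{L^2}^2 = \|\partial_r f\|_{L^2}^2 + m^2 \|r^{-1}f\|_{L^2}^2$ recalled earlier in the subsection) form the base; I assume $m > 0$ (the case $m < 0$ follows by conjugation) and $2 \le k \le m$ throughout.

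The main device is the unitary isomorphism $\Psi: L^2_m(\R^2) \to L^2_{\rad}(\R^{d})$ with $d = 2m+2$, defined by $\Psi(g(r) e^{im\theta}) = c_m\, r^{-m} g(r)$, where $c_m > 0$ is chosen so that the Jacobians match. A short calculation verifying
\[
(r^{-m} g)'' + \tfrac{2m+1}{r}(r^{-m} g)' = r^{-m}\bigl(g'' + g'/r - m^2 g/r^2\bigr)
\]
shows $\Psi$ intertwines the Laplacian, so $\|f\|_{\dot H_m^k(\R^2)} = \|\Psi f\|_{\dot H^k(\R^d)}$. On the radial side in $\R^d$ with $d = 2m+2 \ge 4$, the classical iterated Hardy inequality
\[
\sum_{j=0}^k \|r^{j-k} \partial_r^j G\|_{L^2(\R^d)} \sim \|G\|_{\dot H^k(\R^d)}
\]
holds whenever $k < d/2$, i.e.\ for all $k \le m$; the $\lesssim$ direction is obtained by iterating the weighted one-dimensional bound $\|r^{-1} h\|_{L^2(\R^d)} \lesssim_{d,\alpha} \|h'\|_{L^2(\R^d)}$ applied with $h = r^{j+1-k}\partial_r^{j-1} G$, and the $\gtrsim$ direction from the pointwise identity $|\nabla^k G|^2 = \sum_{j=0}^k c_{k,j,d}\, |r^{j-k} \partial_r^j G|^2$ valid for radial functions with positive coefficients when $d \ge 2$.

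To descend back to $\R^2$, I would expand $\partial_r^j G = c_m \sum_{\ell=0}^j \binom{j}{\ell} [\partial_r^{j-\ell} r^{-m}]\, \partial_r^\ell g$ by Leibniz. The measures on either side are related by $\omega_d\, r^{2m+1}\, dr \leftrightarrow 2\pi r\, dr$, so the $r^{-m}$ factors cancel cleanly against the excess of the Jacobian, identifying $\|r^{j-k}\partial_r^j G\|_{L^2(\R^d)}^2$ as a lower-triangular, nondegenerate linear combination of $\{\|r^{\ell-k}\partial_r^\ell f\|_{L^2(\R^2)}^2\}_{0 \le \ell \le j}$ with coefficients depending only on $(k, m, j, \ell)$. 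Inverting this triangular system and pairing it with the radial equivalence in $\R^d$ yields the lemma in both directions.

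The principal technical point, and my main concern, is the bookkeeping of the Leibniz expansion together with the invertibility of the resulting triangular change-of-basis matrix with constants depending only on $k, m$; this is a purely algebraic fact about binomial coefficients and powers of $r$, and is unimpeded by the compact support of $f$ in $\R^2 \setminus \{0\}$, which removes any boundary subtleties from the integrations.
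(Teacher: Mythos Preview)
Your approach is correct in spirit and genuinely different from the paper's. Two small inaccuracies to flag: (i) the pointwise identity $|\nabla^k G|^2 = \sum_j c_{k,j,d}\,|r^{j-k}\partial_r^j G|^2$ does not hold with \emph{strictly positive} coefficients---already for $k=2$ one computes $|\nabla^2 G|^2 = (G'')^2 + (d-1)(G'/r)^2$, so $c_{2,0,d}=0$---but this is harmless for the $\gtrsim$ direction, where you only need the pointwise bound $|\nabla^k G| \lesssim_{d,k} \sum_j |r^{j-k}\partial_r^j G|$ coming from the Leibniz expansion of $\nabla^k$ on radial functions; (ii) in the transfer back to $\R^2$, it is not the \emph{squared norms} that sit in a triangular system (cross terms appear) but the functions themselves: the Leibniz expansion gives $r^{(d-1)/2}\,r^{j-k}\partial_r^j G = c_m \sum_{\ell \le j} a_{j,\ell}\, r^{1/2}\,r^{\ell-k}\partial_r^\ell g$ pointwise, with $a_{j,j}=1$, and inverting this lower-triangular linear map between tuples in $L^2(dr)$ yields the norm equivalence. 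With these two fixes your argument goes through, the required weighted Hardy steps being nondegenerate precisely when $k \le m$.

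The paper takes a more direct, purely two-dimensional route: it shows that the family $\{e^{i(k-2\ell)\theta}(\partial_1 - i\partial_2)^{k-\ell}(\partial_1 + i\partial_2)^\ell\}_{0 \le \ell \le k}$ is linearly independent on $C_{c,m}^\infty(\R^2\setminus\{0\})$ by testing against the monomials $r^{m+2j}e^{im\theta}$ and exhibiting a triangular matrix with nonzero diagonal (here the hypothesis $k\le m$ enters to keep the diagonal entries nonvanishing). Since these $k+1$ operators lie in $\mathrm{span}_{\C}\{r^{-(k-\ell)}\partial_r^\ell\}_{0\le\ell\le k}$, which also has dimension $k+1$, the spans coincide, and the norm equivalence then follows from $\sum_\ell \|(\partial_1+i\partial_2)^\ell(\partial_1-i\partial_2)^{k-\ell}f\|_{L^2} \sim_k \|f\|_{\dot H^k}$. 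This avoids any appeal to higher-dimensional Hardy and is entirely self-contained in $\R^2$; your route trades that algebra for the (standard but not proved in the paper) iterated radial Hardy inequality in $\R^{2m+2}$. Both arguments ultimately hinge on inverting a triangular change of basis.
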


\begin{proof}
For simplicity, we only consider the case $m>0$. The case $m<0$
can be treated in a similar way, which we omit the proof. We claim
that the following set of operators 
\[
\{e^{i(k-2\ell)\theta}(\partial_{1}-i\partial_{2})^{k-\ell}(\partial_{1}+i\partial_{2})^{\ell}\}_{0\leq\ell\leq k}
\]
acting on $C_{c,m}^{\infty}(\R^{2}\setminus\{0\})$ is linearly independent.
Indeed, we observe for $0\leq j,\ell\leq k$ that 
\begin{align*}
(\partial_{1}+i\partial_{2})^{\ell}[r^{m+2j}e^{im\theta}] & =0,\qquad\text{if }\ell\geq j+1,\\
e^{i(k-2\ell)\theta}(\partial_{1}-i\partial_{2})^{k-\ell}(\partial_{1}+i\partial_{2})^{\ell}[r^{m+2\ell}e^{im\theta}] & =a_{k,\ell}r^{m+2\ell-k}e^{im\theta},
\end{align*}
where 
\[
a_{k,\ell}\coloneqq2^{k}(\ell!)\prod_{n=0}^{k-\ell-1}(m+\ell-n).
\]
As $0\leq k\leq m$, we have $a_{k,\ell}\neq0$ for each $0\leq\ell\leq k$.
Namely, acting $r^{m+2j}e^{im\theta}$ on our set of operators and
taking $r=1$, we obtain a nonsingular matrix 
\[
\begin{bmatrix}a_{k,0} & \ast & \ast & \cdots & \ast\\
0 & a_{k,1} & \ast & \cdots & \ast\\
0 & 0 & a_{k,2} & \cdots & \ast\\
\vdots & \vdots & \vdots & \ddots & \vdots\\
0 & 0 & 0 & \cdots & a_{k,k}
\end{bmatrix}.
\]
This proves the claim.

The claim immediately implies 
\[
\dim_{\C}\mathrm{span}_{\C}\{e^{i(k-2\ell)\theta}(\partial_{1}-i\partial_{2})^{k-\ell}(\partial_{1}+i\partial_{2})^{\ell}\}_{0\leq\ell\leq k}=k+1.
\]
In view of 
\[
\mathrm{span}_{\C}\{e^{i(k-2\ell)\theta}(\partial_{1}-i\partial_{2})^{k-\ell}(\partial_{1}+i\partial_{2})^{\ell}\}_{0\leq\ell\leq k}\subseteq\mathrm{span}_{\C}\{r^{-(k-\ell)}\partial_{r}^{\ell}\}_{0\leq\ell\leq k},
\]
we conclude the equality. Thus 
\[
\sum_{0\leq\ell\leq k}\|r^{-(k-\ell)}\partial_{r}^{\ell}f\|_{L^{2}}\sim_{k,m}\sum_{0\leq\ell\leq k}\|(\partial_{1}+i\partial_{2})^{\ell}(\partial_{1}-i\partial_{2})^{k-\ell}f\|_{L^{2}}\sim_{k,m}\|f\|_{\dot{H}_{m}^{k}}.
\]
This completes the proof.
\end{proof}
\begin{lem}[Degeneracy at the origin]
\label{lem:gen-hardy-Linfty}Assume $s>|m|+1$. Then, 
\[
\sup_{r>0}\sum_{\ell=0}^{|m|}|r^{-(|m|-\ell)}\partial_{r}^{\ell}f|\lesssim_{m,s}\|f\|_{H_{m}^{s}}.
\]
\end{lem}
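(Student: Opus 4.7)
The plan is to reduce the estimate, via the structure theorem (Lemma~\ref{lem:smooth-equiv-criterion}), to a bound on $\sup_r |r^k \tilde h^{(k)}(r)|$ for $0 \leq k \leq |m|$, where $\tilde h$ is the smooth even profile obtained by factoring out the equivariant degeneracy of $f$. The key idea will then be to \emph{lower} the equivariance of $f$ by repeatedly applying $\partial_1 - i\partial_2$, producing for each $0 \leq \ell \leq |m|$ an $\ell$-equivariant function whose $L^\infty$ norm controls $r^{m-\ell}\tilde h^{(m-\ell)}$ modulo lower-order terms.

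First, by the $m$-th Fourier mode projection of Schwartz approximants, $\mathcal{S}_m$ is dense in $H_m^s$, so I may reduce to $f \in \mathcal{S}_m$; without loss of generality take $m \geq 1$ (the $m < 0$ case is symmetric, using $\partial_1 + i\partial_2$). By Lemma~\ref{lem:smooth-equiv-criterion}, $f(r) = r^m \tilde h(r)$ with $\tilde h$ smooth, even, and Schwartz. A direct Leibniz expansion yields
\[
r^{-(m-\ell)} \partial_r^\ell f(r) = \sum_{k=0}^\ell \binom{\ell}{k}\frac{m!}{(m-\ell+k)!}\, r^k \tilde h^{(k)}(r), \qquad 0 \leq \ell \leq m,
\]
so it suffices to show $\sup_{r>0}|r^k \tilde h^{(k)}(r)| \lesssim \|f\|_{H_m^s}$ for each $k = 0, 1, \ldots, m$.

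For each such $\ell$, set $F_\ell \coloneqq (\partial_1 - i\partial_2)^{m-\ell} f$. The lowering identity $(\partial_1-i\partial_2)[g(r) e^{in\theta}] = e^{i(n-1)\theta}(g' + \tfrac{n}{r}g)$ iterates to show that $F_\ell$ is $\ell$-equivariant with radial part $g_\ell(r) = (\partial_r + \tfrac{\ell+1}{r}) \cdots (\partial_r + \tfrac{m}{r}) f(r)$, and boundedness of $\partial_1\pm i\partial_2 : H^s \to H^{s-1}$ gives $\|F_\ell\|_{H^{s-(m-\ell)}_\ell} \lesssim \|f\|_{H_m^s}$. Applying Lemma~\ref{lem:smooth-equiv-criterion} to $F_\ell$ yields $g_\ell(r) = r^\ell \tilde g_\ell(r)$ with $\tilde g_\ell$ smooth and even. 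The analogue of the commutative diagram $\Psi_\ell \Delta = \Delta_{\R^{2\ell+2}} \Psi_\ell$ for general equivariance index $\ell$ (an elementary computation generalizing the $\ell = m$ case already in the Appendix) gives $\|\tilde g_\ell\|_{H^{s-(m-\ell)}(\R^{2\ell+2})} \sim \|F_\ell\|_{H^{s-(m-\ell)}_\ell(\R^2)}$; combined with the Sobolev embedding $H^{s'}(\R^{2\ell+2}) \hookrightarrow L^\infty$ (valid for $s' > \ell+1$) this yields $\|\tilde g_\ell\|_{L^\infty} \lesssim \|f\|_{H_m^s}$, precisely under the hypothesis $s > m+1$.

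Finally, expanding $g_\ell = (\partial_r + \tfrac{\ell+1}{r}) \cdots (\partial_r + \tfrac{m}{r})(r^m \tilde h)$ by Leibniz, the highest-order derivative $\tilde h^{(m-\ell)}$ arises only from the branch in which every factor contributes $\partial_r$ and every derivative falls on $\tilde h$, producing exactly $r^m \tilde h^{(m-\ell)}$; all other branches contribute only $\tilde h^{(k)}$ with $k < m-\ell$. Dividing by $r^\ell$:
\[
\tilde g_\ell(r) = r^{m-\ell} \tilde h^{(m-\ell)}(r) + \sum_{k=0}^{m-\ell-1} E_{\ell,k}\, r^k \tilde h^{(k)}(r).
\]
After reindexing $p = m-\ell$, the family $(\tilde g_{m-p})_{p=0}^{m}$ expressed in the basis $(r^k \tilde h^{(k)})_{k=0}^{m}$ is lower triangular with unit diagonal, so induction on $p$ (starting from $\tilde g_m = \tilde h$) yields $\|r^p \tilde h^{(p)}\|_{L^\infty} \lesssim \|f\|_{H_m^s}$ for each $p$, and combined with the first paragraph this concludes the proof. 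The main technical obstacle is the verification of the commutative diagram for arbitrary equivariance index $\ell$; the remaining steps are combinatorial bookkeeping that tracks only the highest-derivative coefficient in the iterated Wirtinger expansion.
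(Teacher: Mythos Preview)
Your proof is correct, but it takes a substantially longer route than the paper's. The paper's argument is essentially three lines: it invokes the span identity established in the preceding lemma (Lemma~\ref{lem:gen-hardy-equiv}),
\[
\mathrm{span}_{\C}\{r^{-(|m|-\ell)}\partial_r^\ell\}_{0\le\ell\le|m|}=\mathrm{span}_{\C}\{e^{i(|m|-2\ell)\theta}(\partial_1-i\partial_2)^{|m|-\ell}(\partial_1+i\partial_2)^\ell\}_{0\le\ell\le|m|},
\]
and then applies the two-dimensional Sobolev embedding $H^{s-|m|}(\R^2)\hookrightarrow L^\infty$ (valid since $s-|m|>1$) to each of the mixed Wirtinger derivatives on the right. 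Your approach instead factors $f=r^m\tilde h$, uses only the \emph{lowering} operator $(\partial_1-i\partial_2)^{m-\ell}$, lifts each resulting $\ell$-equivariant function to $\R^{2\ell+2}$ via $\Psi_\ell$, applies higher-dimensional Sobolev there, and then inverts a triangular system to recover $r^k\tilde h^{(k)}$. The paper's route is much shorter because it reuses the span identity already proved and stays in two dimensions throughout; your route avoids that identity but pays for it with the $\Psi_\ell$ machinery (whose intertwining with fractional powers of $1-\Delta$ you should note follows from spectral calculus, not just the integer case) and the combinatorial inversion. Your argument does have the minor conceptual advantage of making the degeneracy factorization $f=r^m\tilde h$ explicit, which is morally what the lemma asserts.
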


\begin{proof}
Let $f$ be $m$-equivariant. From the proof of Lemma \ref{lem:gen-hardy-equiv},
we know that 
\[
\mathrm{span}_{\C}\{e^{i(|m|-2\ell)\theta}(\partial_{1}-i\partial_{2})^{|m|-\ell}(\partial_{1}+i\partial_{2})^{\ell}\}_{0\leq\ell\leq|m|}=\mathrm{span}_{\C}\{r^{-(|m|-\ell)}\partial_{r}^{\ell}\}_{0\leq\ell\leq|m|}.
\]
From the usual Sobolev embedding $H^{1+}\hookrightarrow L^{\infty}$,
\begin{align*}
\sup_{r>0}\sum_{\ell=0}^{|m|}|r^{-(|m|-\ell)}\partial_{r}^{\ell}f| & \lesssim_{m}\sum_{\ell=0}^{|m|}\|(\partial_{1}-i\partial_{2})^{|m|-\ell}(\partial_{1}+i\partial_{2})^{\ell}f\|_{L^{\infty}}\\
 & \lesssim_{m,s}\sum_{\ell=0}^{|m|}\|(\partial_{1}-i\partial_{2})^{|m|-\ell}(\partial_{1}+i\partial_{2})^{\ell}f\|_{H^{s-m}}\lesssim_{m,s}\|f\|_{H^{s}}.
\end{align*}
This completes the proof.
\end{proof}

\section{\label{sec:Local-Theory}Equivariant local theory}

In this section, we sketch the proof of Proposition \ref{prop:Hs-Cauchy}.
Arguments here are close to Liu-Smith \cite[Section 2]{LiuSmith2016}.
By a standard contraction argument, the proof reduces to show the
nonlinear estimates (Proposition \ref{prop:nonline-est-Hs}). Recall
the nonlinearity of \eqref{eq:CSS-coulomb-phi} 
\[
\mathcal{N}(\phi)=\frac{2m}{r^{2}}A_{\theta}\phi+\frac{1}{r^{2}}A_{\theta}^{2}\phi+A_{0}\phi-g|\phi|^{2}\phi.
\]

Decompose $A_{0}=A_{0}^{(1)}+A_{0}^{(2)}$ such that 
\begin{align*}
A_{0}^{(1)}[\phi] & \coloneqq-\int_{r}^{\infty}A_{\theta}[\phi]|\phi|^{2}\frac{dr'}{r'},\\
A_{0}^{(2)}[\phi] & \coloneqq-m\int_{r}^{\infty}|\phi|^{2}\frac{dr'}{r'}.
\end{align*}
We denote the bilinear form associated to $A_{\theta}$ by 
\[
A_{\theta}[\psi_{1},\psi_{2}]\coloneqq-\frac{1}{2}\int_{0}^{r}\Re(\psi_{1}\overline{\psi_{2}})r'dr'.
\]
We will use the following estimates: 
\begin{align}
\Big\|\int_{0}^{r}f(r')r'dr'\Big\|_{L^{\infty}} & \lesssim\|f\|_{L^{1}},\label{eq:B.1}\\
\Big\|\frac{1}{r^{2}}\int_{0}^{r}f(r')r'dr'\Big\|_{L^{2}} & \lesssim\|f\|_{L^{2}},\label{eq:B.2}\\
\Big\|\int_{r}^{\infty}f(r')\frac{dr'}{r'}\Big\|_{L^{2}} & \lesssim\|f\|_{L^{2}}.\label{eq:B.3}
\end{align}
Note that \eqref{eq:B.1} follow from the observation $2\pi rdr=dx$
on $\R^{2}$. The estimates \eqref{eq:B.2} and \eqref{eq:B.3} follows
by a change of variables and an application of Minkowski's inequality.

We first review the nonlinear estimate in Liu-Smith \cite{LiuSmith2016}.
\begin{lem}[$L^{2}$-critical nonlinear estimates]
\label{lem:L2-nonlinearity-est}We have 
\begin{align*}
\|\mathcal{N}(\phi)\|_{L_{t,x}^{\frac{4}{3}}} & \lesssim(|m|+|g|+\|\phi\|_{L_{t}^{\infty}L_{x}^{2}}^{2})\|\phi\|_{L_{t,x}^{4}}^{3}.
\end{align*}
\end{lem}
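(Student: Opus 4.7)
\medskip

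The plan is to bound each of the four pieces of $\mathcal{N}(\phi)$ separately in $L^{4/3}_{t,x}$. The cubic term $-g|\phi|^2\phi$ is immediate from H\"older with $\tfrac{3}{4}=\tfrac{1}{4}+\tfrac{1}{4}+\tfrac{1}{4}$, producing the factor $|g|\|\phi\|_{L^4_{t,x}}^3$. For the linear magnetic term $\frac{2m}{r^2}A_\theta\phi$, I would insert $A_\theta=-\tfrac12\int_0^r|\phi|^2r'\,dr'$ and apply \eqref{eq:B.2} to get, at each fixed time, $\|r^{-2}A_\theta\|_{L^2_x}\lesssim\||\phi|^2\|_{L^2_x}=\|\phi\|_{L^4_x}^2$; then H\"older in $x$ with $L^2\cdot L^4\hookrightarrow L^{4/3}$ and H\"older in $t$ with $L^{4}\cdot L^4 = L^2$ then $L^2\cdot L^4 = L^{4/3}$ finish this term with constant $|m|$.

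For the quartic magnetic term $\frac{1}{r^2}A_\theta^2\phi$, I would use the pointwise bound $|A_\theta(r)|\lesssim\|\phi\|_{L^2_x}^2$ (which follows directly from $2\pi\int_0^\infty|\phi|^2 r\,dr=\|\phi\|_{L^2_x}^2$) to pull one $A_\theta$ out in $L^\infty_x$, reducing it to the bound already established for $r^{-2}A_\theta\phi$. In time, one places the $\|\phi\|_{L^2_x}^2$ factor in $L^\infty_t$ and absorbs the remaining three $\phi$'s in $L^4_{t,x}$, giving the contribution $\|\phi\|_{L^\infty_t L^2_x}^2\|\phi\|_{L^4_{t,x}}^3$.

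The electric potential $A_0=A_0^{(1)}+A_0^{(2)}$ is handled via \eqref{eq:B.3}. For $A_0^{(2)}=-m\int_r^\infty|\phi|^2\frac{dr'}{r'}$, that estimate yields $\|A_0^{(2)}\|_{L^2_x}\lesssim|m|\|\phi\|_{L^4_x}^2$, and pairing with $\phi\in L^4_x$ gives $|m|\|\phi\|_{L^4_{t,x}}^3$ after H\"older in time. For $A_0^{(1)}=-\int_r^\infty A_\theta|\phi|^2\frac{dr'}{r'}$, I would again use $\|A_\theta\|_{L^\infty_x}\lesssim\|\phi\|_{L^2_x}^2$ and then \eqref{eq:B.3} to obtain $\|A_0^{(1)}\|_{L^2_x}\lesssim\|\phi\|_{L^2_x}^2\|\phi\|_{L^4_x}^2$, contributing $\|\phi\|_{L^\infty_t L^2_x}^2\|\phi\|_{L^4_{t,x}}^3$ after pairing with $\phi\in L^4_x$ and applying H\"older in time exactly as in the previous paragraph.

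There is no real obstacle here: the estimate is essentially an exercise in layering H\"older's inequality with the three Hardy-type bounds \eqref{eq:B.1}--\eqref{eq:B.3}, and the nonlocal operators $A_\theta,A_0$ are tame thanks to equivariance (which brings the Biot--Savart kernels into the form handled by those lemmas). The only point requiring any attention is bookkeeping: making sure that, for each term, exactly two copies of $\phi$ carry the $L^\infty_t L^2_x$ norms (appearing only through the prefactor $\|\phi\|_{L^\infty_t L^2_x}^2$) while the remaining three copies supply the full $\|\phi\|_{L^4_{t,x}}^3$, matching the claimed bound.
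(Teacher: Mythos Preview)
Your proof is correct and follows essentially the same approach as the paper: both arguments use H\"older's inequality together with the three Hardy-type bounds \eqref{eq:B.1}--\eqref{eq:B.3}, placing $A_\theta$ in $L^\infty_{t,x}$ via \eqref{eq:B.1}, $r^{-2}A_\theta$ in $L^2_{t,x}$ via \eqref{eq:B.2}, and $A_0$ in $L^2_{t,x}$ via \eqref{eq:B.3}, then pairing each with $\phi\in L^4_{t,x}$. The paper packages the $A_0$ estimate into a single bound \eqref{eq:B.6} rather than splitting into $A_0^{(1)}+A_0^{(2)}$ at the outset, but the underlying estimates are identical.
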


\begin{proof}
Observe 
\begin{align*}
\Big\|\frac{2m}{r^{2}}A_{\theta}\phi+\frac{1}{r^{2}}A_{\theta}^{2}\phi\Big\|_{L_{t,x}^{\frac{4}{3}}} & \lesssim(|m|+\|A_{\theta}\|_{L_{t,x}^{\infty}})\Big\|\frac{1}{r^{2}}A_{\theta}\Big\|_{L_{t,x}^{2}}\|\phi\|_{L_{t,x}^{4}},\\
\|A_{0}\phi\|_{L_{t,x}^{\frac{4}{3}}} & \leq\|A_{0}\|_{L_{t,x}^{2}}\|\phi\|_{L_{t,x}^{4}},\\
\|g|\phi|^{2}\phi\|_{L_{t,x}^{\frac{4}{3}}} & \leq|g|\|\phi\|_{L_{t,x}^{4}}^{3}.
\end{align*}
Thus, it suffices to show the estimates 
\begin{align}
\|A_{\theta}\|_{L_{t,x}^{\infty}} & \lesssim\|\phi\|_{L_{t}^{\infty}L_{x}^{2}}^{2},\label{eq:B.4}\\
\Big\|\frac{1}{r^{2}}A_{\theta}\Big\|_{L_{t,x}^{2}} & \lesssim\|\phi\|_{L_{t,x}^{4}}^{2},\label{eq:B.5}\\
\|A_{0}\|_{L_{t,x}^{2}} & \lesssim(|m|+\|\phi\|_{L_{t}^{\infty}L_{x}^{2}}^{2})\|\phi\|_{L_{t,x}^{4}}^{2}.\label{eq:B.6}
\end{align}
The estimate \eqref{eq:B.4} follows from \eqref{eq:B.1}. The estimate
\eqref{eq:B.5} follows from \eqref{eq:B.2}. The estimate \eqref{eq:B.6}
follows from \eqref{eq:B.3} and \eqref{eq:B.4}:
\[
\|A_{0}\|_{L_{t,x}^{2}}\lesssim(|m|+\|A_{\theta}\|_{L_{t,x}^{\infty}})\Big\|\int_{r}^{\infty}|\phi|^{2}(r')\frac{dr'}{r'}\Big\|_{L_{t,x}^{2}}\lesssim(|m|+\|\phi\|_{L_{t}^{\infty}L_{x}^{2}}^{2})\|\phi\|_{L_{t,x}^{4}}^{2}.
\]
This completes the proof.
\end{proof}
One can obtain analogous estimate for $\mathcal{N}(\phi_{1})-\mathcal{N}(\phi_{2})$.
This allows us to get equivariant $L^{2}$-Cauchy theory as in Proposition
\ref{prop:L2-Cauchy}.

We turn to $H_{m}^{s}$-subcritical Cauchy theory for $s>0$. Nonlinearities
involved in $A_{\theta}$ and $A_{0}$ are not of product type. In
view of the Biot-Savart law \eqref{eq:biot-savart}, there is no $\text{(low)}\times\text{(low)}\to\text{(high)}$
interaction in $A_{x}$, and hence in $A_{\theta}$. However, it is
not obvious that there is no $\text{(low)}\times\text{(low)}\to\text{(high)}$
interaction in $\frac{1}{r^{2}}A_{\theta}$. A key observation in
Liu-Smith \cite[Section 4]{LiuSmith2016} is that $\text{(low)}\times\text{(low)}\to\text{(high)}$
interaction is forbidden. This is formulated in the following two
frequency localization lemmas.
\begin{lem}[{Fourier transform of $A_{\theta}$ and $r^{-2}A_{\theta}$ \cite[Lemma 4.1]{LiuSmith2016}}]
\label{lem:FT-of-A_theta}We have 
\begin{align*}
\mathcal{F}[A_{\theta}](\rho) & =-\frac{1}{2\rho}\partial_{\rho}\mathcal{F}[|\phi|^{2}],\qquad\forall\rho\neq0,\\
\mathcal{F}\Big[\frac{1}{r^{2}}A_{\theta}\Big](\rho) & =-\frac{1}{2}\int_{\rho}^{\infty}\frac{1}{\rho'}\mathcal{F}[|\phi|^{2}]d\rho'.
\end{align*}
More generally, 
\begin{align*}
\mathcal{F}[A_{\theta}[\psi_{1},\psi_{2}]](\rho) & =-\frac{1}{2\rho}\partial_{\rho}\mathcal{F}[\Re(\overline{\psi_{1}}\psi_{2})],\qquad\forall\rho\neq0,\\
\mathcal{F}\Big[\frac{1}{r^{2}}A_{\theta}[\psi_{1},\psi_{2}]\Big](\rho) & =-\frac{1}{2}\int_{\rho}^{\infty}\frac{1}{\rho'}\mathcal{F}[\Re(\overline{\psi_{1}}\psi_{2})]d\rho'.
\end{align*}
\end{lem}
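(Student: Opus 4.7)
The key observation is that for $m$-equivariant $\phi$, both $A_\theta$ and $|\phi|^2$ are radial functions on $\R^2$, so their 2D Fourier transforms are again radial and may be viewed as functions of the single variable $\rho = |\xi|$. The two identities are tied together by the pointwise relation
\[
\partial_r A_\theta = -\tfrac{r}{2}|\phi|^2,
\]
which is immediate from \eqref{eq:A_theta-form}. To prove the first identity, I plan to compute the (radial) Laplacian
\[
\Delta A_\theta = \tfrac{1}{r}\partial_r(r\partial_r A_\theta) = -\tfrac{1}{2r}\partial_r(r^2|\phi|^2) = -|\phi|^2 - \tfrac{r}{2}\partial_r|\phi|^2,
\]
take the 2D Fourier transform of both sides, and use $\widehat{\Delta f}(\xi) = -|\xi|^2 \hat f(\xi)$ together with $\widehat{x_j\partial_j g}(\xi) = -\partial_{\xi_j}(\xi_j\hat g(\xi))$. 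Summing the latter over $j=1,2$ for a radial $g$ yields $\widehat{r\partial_r g}(\rho) = -2\hat g(\rho) - \rho\,\partial_\rho\hat g(\rho)$, so the two copies of $-\mathcal{F}[|\phi|^2]$ cancel and one is left with $-\rho^2\mathcal{F}[A_\theta](\rho) = \tfrac{\rho}{2}\partial_\rho\mathcal{F}[|\phi|^2](\rho)$, which rearranges to the first formula on $\rho\neq 0$.

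For the second identity I will derive it from the first by setting $h \coloneqq r^{-2}A_\theta$. Since $A_\theta = |x|^2 h$ we have $\mathcal{F}[A_\theta](\xi) = -\Delta_\xi\mathcal{F}[h](\xi)$, and because $\mathcal{F}[h]$ is radial, $\Delta_\xi\mathcal{F}[h] = \tfrac{1}{\rho}\partial_\rho(\rho\,\partial_\rho\mathcal{F}[h])$. The first identity then becomes the ODE
\[
\tfrac{1}{\rho}\partial_\rho\bigl(\rho\,\partial_\rho\mathcal{F}[h]\bigr)(\rho) = \tfrac{1}{2\rho}\partial_\rho\mathcal{F}[|\phi|^2](\rho).
\]
Multiplying by $\rho$ and integrating from $\rho$ to $\infty$, using that both $\rho\,\partial_\rho\mathcal{F}[h](\rho)$ and $\mathcal{F}[|\phi|^2](\rho)$ tend to $0$ at $\rho = +\infty$ (Riemann--Lebesgue for sufficiently regular $\phi$), yields $\partial_\rho\mathcal{F}[h](\rho) = \tfrac{1}{2\rho}\mathcal{F}[|\phi|^2](\rho)$; a further integration from $\rho$ to $\infty$, together with $\mathcal{F}[h](\infty)=0$, produces the second formula.

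The bilinear version follows from exactly the same argument with $|\phi|^2$ replaced by $\Re(\overline{\psi_1}\psi_2)$ throughout, since the only input used was the relation $\partial_r A_\theta[\psi_1,\psi_2] = -\tfrac{r}{2}\Re(\overline{\psi_1}\psi_2)$. The main technical point I expect to have to confront is the distributional nature of $\mathcal{F}[A_\theta]$ near the origin: because $A_\theta(r)\to -\tfrac{1}{4\pi}\|\phi\|_{L^2}^2\neq 0$ as $r\to\infty$, $\mathcal{F}[A_\theta]$ actually carries a Dirac mass at $\rho=0$, which is precisely why the first formula is claimed only for $\rho\neq 0$. The cleanest way to handle this is to prove both identities first for $\phi\in\mathcal{S}_m$ by working with the Schwartz function $A_\theta - A_\theta(\infty)$ (whose Fourier transform agrees with $\mathcal{F}[A_\theta]$ away from the origin), so that all integrations by parts and vanishing boundary terms at $\rho=\infty$ are automatic, and then extend by density to the class of $\phi$ actually needed in Section~\ref{sec:Local-Theory}.
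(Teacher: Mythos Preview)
The paper does not give a proof of this lemma; it simply quotes the result from \cite[Lemma 4.1]{LiuSmith2016}. Your argument is correct and self-contained, so there is nothing to compare against in the paper itself.

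Two minor comments. First, for the second identity you go through the first: you write $A_\theta=|x|^2h$, take Fourier transforms, and arrive at a \emph{second}-order ODE for $\mathcal{F}[h]$, which you then integrate twice. A shorter route is to start from the \emph{first}-order relation $r\partial_r h+2h=-\tfrac12|\phi|^2$ (immediate from $\partial_r A_\theta=-\tfrac r2|\phi|^2$), take the 2D Fourier transform, and use your formula $\mathcal{F}[r\partial_r g]=-2\hat g-\rho\partial_\rho\hat g$: the $\pm2\mathcal{F}[h]$ terms cancel and you land directly on $\partial_\rho\mathcal{F}[h]=\tfrac{1}{2\rho}\mathcal{F}[|\phi|^2]$, needing only one integration and one boundary condition.

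Second, the boundary term you invoke, $\rho\,\partial_\rho\mathcal{F}[h](\rho)\to0$ as $\rho\to\infty$, deserves a word beyond ``Riemann--Lebesgue'': the trick of replacing $A_\theta$ by $A_\theta-A_\theta(\infty)$ does not help here, since $r^{-2}A_\theta(\infty)$ is singular at the origin. What works is that for Schwartz $\phi$ the function $h=r^{-2}A_\theta$ is smooth on $\R^2$ with $\nabla^k h=O(r^{-2-k})$ at infinity, so $\nabla^k h\in L^1$ for $k\geq2$ and similarly $\nabla^k(x_j h)\in L^1$ for $k\geq2$; hence $\mathcal{F}[h]$ and $\nabla_\xi\mathcal{F}[h]$ decay like $\rho^{-2}$, which is more than enough.
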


\begin{lem}[{Fourier transform of $A_{0}$ \cite[Lemma 4.2 and proof of Lemma 4.3]{LiuSmith2016}}]
\label{lem:FT-of-A_0}Define 
\[
Q_{12}(f,g)\coloneqq\partial_{1}f\partial_{2}g-\partial_{2}f\partial_{1}g.
\]
We have 
\begin{align*}
\mathcal{F}[A_{0}^{(1)}] & =\frac{1}{\rho}\partial_{\rho}\mathcal{F}\Big[\frac{1}{r^{2}}A_{\theta}|\phi|^{2}\Big],\\
A_{0}^{(2)} & =\Delta^{-1}\Im Q_{12}(\overline{\phi},\phi).
\end{align*}
More generally, 
\begin{align*}
 & \mathcal{F}\Big[\int_{r}^{\infty}\Big(\int_{0}^{r'}\Re(\overline{\psi_{1}}\psi_{2})r''dr''\Big)\Re(\overline{\psi_{3}}\psi_{4})\frac{dr'}{r'}\Big]\\
 & \qquad\qquad\qquad\qquad=-\frac{1}{\rho}\partial_{\rho}\mathcal{F}\Big[\frac{1}{r^{2}}\Big(\int_{0}^{r}\Re(\overline{\psi_{1}}\psi_{2})r'dr'\Big)\Re(\overline{\psi_{3}}\psi_{4})\Big],\\
 & \int_{r}^{\infty}m\cdot\Re(\overline{\psi_{1}}\psi_{2})\frac{dr'}{r'}=\Delta^{-1}\Im Q_{12}(\overline{\psi_{1}},\psi_{2}).
\end{align*}
\end{lem}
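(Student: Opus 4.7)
The plan is to handle the two identities of Lemma~\ref{lem:FT-of-A_0} separately: the Fourier identity for $A_0^{(1)}$ is a Hankel-transform computation entirely parallel to the proof of Lemma~\ref{lem:FT-of-A_theta}, while the identity $A_0^{(2)}=\Delta^{-1}\Im Q_{12}(\overline{\phi},\phi)$ is a direct polar-coordinate computation exploiting the equivariant ansatz $\phi=u(r)e^{im\theta}$. Both proofs extend verbatim to the multilinear versions once one replaces $|\phi|^2$ by $\Re(\overline{\psi_1}\psi_2)$ and $\Re(\overline{\psi_3}\psi_4)$.

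For the first identity, I will write the 2D Fourier transform of a radial function as the Hankel transform $\hat{f}(\rho)=2\pi\int_0^\infty f(r)J_0(\rho r)\,r\,dr$ and use the standard Bessel relations $\partial_\rho J_0(\rho r)=-rJ_1(\rho r)$ and $\partial_r(rJ_1(\rho r))=\rho r J_0(\rho r)$. Starting from $\mathcal{F}[A_0^{(1)}](\rho)=2\pi\int_0^\infty A_0^{(1)}(r)J_0(\rho r)r\,dr$ and integrating by parts against the antiderivative $v=(r/\rho)J_1(\rho r)$, the defining relation $\partial_r A_0^{(1)}=r^{-1}A_\theta|\phi|^2$ turns the integral into $-\frac{2\pi}{\rho}\int_0^\infty A_\theta|\phi|^2\,J_1(\rho r)\,dr$; applying $\partial_\rho J_0=-rJ_1$ to the expression $\mathcal{F}[r^{-2}A_\theta|\phi|^2](\rho)=2\pi\int_0^\infty r^{-1}A_\theta|\phi|^2\,J_0(\rho r)\,dr$ shows that this is exactly $\frac{1}{\rho}\partial_\rho\mathcal{F}[r^{-2}A_\theta|\phi|^2]$. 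The multilinear statement is obtained by the same manipulation with $A_\theta|\phi|^2$ replaced by $\bigl(\int_0^r \Re(\overline{\psi_1}\psi_2)r''\,dr''\bigr)\Re(\overline{\psi_3}\psi_4)$.

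For the second identity it suffices to verify $\Delta A_0^{(2)}=\Im Q_{12}(\overline{\phi},\phi)$ together with the decay $A_0^{(2)}(r)\to 0$ as $r\to\infty$ (satisfied by construction), which together pin down the radial $\Delta^{-1}$. Since $A_0^{(2)}(r)=-m\int_r^\infty|\phi|^2\,dr'/r'$ is radial, the radial Laplacian yields $\Delta A_0^{(2)}=r^{-1}\partial_r(r\cdot m|\phi|^2/r)=(m/r)\partial_r|\phi|^2$. On the other side, I substitute $\phi=ue^{im\theta}$ into $Q_{12}(\overline{\phi},\phi)=\partial_1\overline{\phi}\partial_2\phi-\partial_2\overline{\phi}\partial_1\phi$ with the polar expressions $\partial_1=\cos\theta\,\partial_r-r^{-1}\sin\theta\,\partial_\theta$ and $\partial_2=\sin\theta\,\partial_r+r^{-1}\cos\theta\,\partial_\theta$, and expand. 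The $|u'|^2$ and $m^2|u|^2/r^2$ cross terms cancel between the two products; the mixed terms combine via $\sin^2\theta+\cos^2\theta=1$ into $Q_{12}(\overline{\phi},\phi)=(im/r)\partial_r|u|^2$, whence $\Im Q_{12}(\overline{\phi},\phi)=(m/r)\partial_r|\phi|^2$, matching the radial Laplacian computation. The same expansion with $\overline{\psi_1}=\overline{u_1}e^{-im\theta}$ and $\psi_2=u_2 e^{im\theta}$ produces $Q_{12}(\overline{\psi_1},\psi_2)=(im/r)\partial_r(\overline{u_1}u_2)$, yielding the multilinear version.

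The only nontrivial point—the main obstacle—is justifying the Hankel integration by parts, i.e.\ showing that the boundary terms $\bigl[A_0^{(1)}(r)\cdot(r/\rho)J_1(\rho r)\bigr]_0^\infty$ vanish. The $r\to 0$ side uses $J_1(\rho r)\sim\rho r/2$ so that $rJ_1(\rho r)=O(r^2)$ kills any logarithmic growth of $A_0^{(1)}$ at the origin, while the $r\to\infty$ side combines the defining decay $A_0^{(1)}(r)\to 0$ with the mild growth $rJ_1(\rho r)=O(\sqrt{r})$ inherited from the Bessel asymptotics. For Schwartz $\phi$ both limits are trivial, and the conclusion extends to $\phi\in H^s_m$ by a standard density argument together with the continuity of $\mathcal{F}$. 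A parallel check for the multilinear formula uses the degeneracy $\Re(\overline{\psi_1}\psi_2)(0)=0$ forced by $m$-equivariance with $m\neq 0$ to justify the Laplace inversion and the vanishing of analogous boundary terms.
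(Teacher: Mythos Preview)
The paper does not supply its own proof of this lemma; it is quoted directly from Liu--Smith \cite[Lemma 4.2 and proof of Lemma 4.3]{LiuSmith2016}. Your argument is correct and is precisely the Hankel-transform integration by parts together with the polar-coordinate expansion of $Q_{12}$ that underlies the cited result, so there is nothing to compare.
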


We use Lemmas \ref{lem:FT-of-A_theta} and \ref{lem:FT-of-A_0} to
show the nonlinear estimate required for Proposition \ref{prop:Hs-Cauchy}.
We use the Besov norm 
\[
\|f\|_{B_{r,2}^{s}}\coloneqq\|2^{js}P_{j}f\|_{\ell_{j\geq0}^{2}L_{x}^{r}},
\]
where $\{P_{j}\}_{j\geq0}$ are the Littlewood-Paley projectors on
$|\xi|\lesssim1$ if $j=0$ and $|\xi|\sim2^{j}$ if $j\geq1$.
\begin{prop}[Nonlinear estimates; higher regularity]
\label{prop:nonline-est-Hs}Let $s>0$. Then, 
\begin{align*}
\|\mathcal{N}(\phi)\|_{L_{t}^{\frac{4}{3}}B_{4/3,2}^{s}} & \lesssim_{s}(|m|+|g|+\|\phi\|_{L_{t}^{\infty}L_{x}^{2}}^{2})\|\phi\|_{L_{t,x}^{4}}^{2}\|\phi\|_{L_{t}^{4}B_{4,2}^{s}}\\
 & \qquad+\|\phi\|_{L_{t}^{\infty}L_{x}^{2}}\|\phi\|_{L_{t,x}^{4}}^{3}\|\phi\|_{L_{t}^{\infty}H_{x}^{s}}.
\end{align*}
\end{prop}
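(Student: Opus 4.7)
The plan is to perform a Littlewood--Paley decomposition and estimate each of the five constituents of
\[
\mathcal{N}(\phi) = -g|\phi|^{2}\phi + \tfrac{2m}{r^{2}}A_{\theta}[\phi]\phi + \tfrac{1}{r^{2}}A_{\theta}[\phi]^{2}\phi + A_{0}^{(1)}[\phi]\phi + A_{0}^{(2)}[\phi]\phi
\]
separately. The key structural input is provided by Lemmas \ref{lem:FT-of-A_theta}--\ref{lem:FT-of-A_0}, which give explicit Fourier representations that \emph{forbid low$\times$low$\to$high interactions} in each nonlocal bilinear operator ($\tfrac{1}{r^{2}}A_{\theta}[\cdot,\cdot]$ and $A_{\theta}[\cdot,\cdot]$ via the $\tfrac{1}{\rho}\partial_{\rho}$ and $\int_{\rho}^{\infty}\tfrac{d\rho'}{\rho'}$ structure, $A_{0}^{(1)}$ via the derivative-type identity, and $A_{0}^{(2)} = \Delta^{-1}\Im Q_{12}(\overline{\phi},\phi)$ via the null-form symbol $\tfrac{\xi_{1}\eta_{2}-\xi_{2}\eta_{1}}{|\xi+\eta|^{2}}$, which is bounded and vanishes on the low$\times$low diagonal). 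Granted this, for each term the factor $2^{js}$ coming from $P_{j}$ must be moved onto \emph{one} $\phi$-slot that carries frequency $\gtrsim 2^{j}$, producing either a $\|\phi\|_{L^{4}_{t}B^{s}_{4,2}}$ or a $\|\phi\|_{L^{\infty}_{t}H^{s}_{x}}$ norm depending on which slot receives the derivative. The remaining factors are placed in $L^{4}_{t,x}$ or $L^{\infty}_{t}L^{2}_{x}$ using the auxiliary estimates \eqref{eq:B.1}--\eqref{eq:B.3} together with H\"older in time, with exponents adding up to $L^{4/3}_{t,x}$.

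Carrying this out: the local term $|\phi|^{2}\phi$ is handled by the fractional Leibniz rule in Besov spaces, contributing the $|g|$ summand. For the cubic nonlocal term $\tfrac{2m}{r^{2}}A_{\theta}[\phi,\phi]\phi$, Lemma \ref{lem:FT-of-A_theta} combined with paraproduct decomposition of each of the three $\phi$-slots shows that at least one of them must have frequency $\gtrsim 2^{j}$; placing $2^{js}$ there and invoking \eqref{eq:B.2} to obtain $\|\tfrac{1}{r^{2}}A_{\theta}[\psi_{1},\psi_{2}]\|_{L^{2}_{x}} \lesssim \|\psi_{1}\|_{L^{4}_{x}}\|\psi_{2}\|_{L^{4}_{x}}$, followed by H\"older against the outer factor in $L^{4}_{x}$, produces the $|m|\|\phi\|_{L^{4}_{t,x}}^{2}\|\phi\|_{L^{4}_{t}B^{s}_{4,2}}$ piece. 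The cubic null-form term $A_{0}^{(2)}[\phi]\phi$ is treated identically, using the boundedness of $\Delta^{-1}Q_{12}$ in place of \eqref{eq:B.2}. The quintic terms $\tfrac{1}{r^{2}}A_{\theta}^{2}\phi$ and $A_{0}^{(1)}[\phi]\phi$ split into two subcases depending on where the high-frequency slot sits: if it is one of the three $\phi$'s that survive after factoring out the outer $A_{\theta}$ (bounded in $L^{\infty}_{x}$ by $\|\phi\|_{L^{\infty}_{t}L^{2}_{x}}^{2}$ via \eqref{eq:B.1}), we recover the first summand with prefactor $\|\phi\|_{L^{\infty}_{t}L^{2}_{x}}^{2}$; if instead the high-frequency slot is one of the two $\phi$'s inside the outer $A_{\theta}$ being estimated in $L^{\infty}_{x}$, then pairing the corresponding $\|\phi_{j}\|_{L^{2}_{x}}$ with an $L^{\infty}_{t}$ bound gives exactly the second summand $\|\phi\|_{L^{\infty}_{t}L^{2}_{x}}\|\phi\|_{L^{4}_{t,x}}^{3}\|\phi\|_{L^{\infty}_{t}H^{s}_{x}}$.

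The main obstacle is the careful frequency bookkeeping in the four nonlocal pieces: one must verify, for each of the operators $\tfrac{1}{r^{2}}A_{\theta}$, $A_{\theta}$, $A_{0}^{(1)}$, $A_{0}^{(2)}$, that Lemmas \ref{lem:FT-of-A_theta}--\ref{lem:FT-of-A_0} really do rule out the low$\times$low$\to$high interaction, and then organize the surviving paraproduct pieces so that the $\ell^{2}_{j}$-sum closes with the right H\"older exponents and the two distinct distributions of the $B^{s}$-carrying factor produce exactly the two terms on the right-hand side. Once this is in place, the whole estimate reduces to routine trilinear and quintilinear H\"older bounds.
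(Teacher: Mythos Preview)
Your proposal is correct and follows essentially the same approach as the paper: both use Lemmas~\ref{lem:FT-of-A_theta}--\ref{lem:FT-of-A_0} to rule out low$\times$low$\to$high interactions, then distribute the $2^{js}$ weight onto the one $\phi$-factor forced to carry high frequency, with the two summands on the right-hand side arising exactly as you describe. The only organizational difference is that the paper proceeds in two steps---first applying the product Leibniz rule $\|fg\|_{B^{s}_{p,2}}\lesssim\|f\|_{L^{p_{1}}}\|g\|_{B^{s}_{p_{2},2}}+\|f\|_{B^{s}_{p_{1},2}}\|g\|_{L^{p_{2}}}$ at the outer level (treating $\tfrac{1}{r^{2}}A_{\theta}$, $A_{\theta}\phi$, $A_{0}^{(1)}$, $A_{0}^{(2)}$ as single functions), then separately establishing the building-block bounds \eqref{eq:B.7}--\eqref{eq:B.10}---whereas you do the full multilinear paraproduct decomposition in one shot; the underlying bookkeeping is identical.
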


\begin{proof}
In what follows, implicit constants are allowed to depend on $s$.
Let $\frac{1}{p}=\frac{1}{p_{1}}+\frac{1}{p_{2}}$ with $1\leq p,p_{1},p_{2}\leq\infty$.
We then have a standard paraproduct estimate 
\begin{align*}
\Big\|2^{ks}\sum_{j\geq k}\|(P_{\leq j}f)(P_{j}g)\|_{L_{x}^{p}}\Big\|_{\ell_{k}^{2}} & \lesssim\|P_{\leq j}f\|_{\ell_{j}^{\infty}L_{x}^{p_{1}}}\Big\|\sum_{j\geq k}2^{(k-j)s}\cdot2^{js}\|P_{j}g\|_{L_{x}^{p}}\Big\|_{\ell_{k}^{2}}\\
 & \lesssim\|f\|_{L_{x}^{p_{1}}}\|g\|_{B_{p_{2},2}^{s}},
\end{align*}
where we used Schur's test in the second inequality. In particular,
we have 
\begin{align*}
\|2^{js}(P_{\leq j}f)(P_{j}g)\|_{\ell_{j}^{2}L_{x}^{p}} & \lesssim\|f\|_{L_{x}^{p_{1}}}\|g\|_{B_{p_{2},2}^{s}},\\
\|fg\|_{B_{p,2}^{s}} & \lesssim\|f\|_{L_{x}^{p_{1}}}\|g\|_{B_{p_{2},2}^{s}}+\|f\|_{B_{p_{1},2}^{s}}\|g\|_{L_{x}^{p_{2}}}.
\end{align*}
Thus we can estimate each term of the nonlinearity as 
\begin{align*}
\|g|\phi|^{2}\phi\|_{L_{t}^{\frac{4}{3}}B_{4/3,2}^{s}} & \lesssim|g|\|\phi\|_{L_{t,x}^{4}}^{2}\|\phi\|_{L_{t}^{4}B_{4,2}^{s}},\\
\Big\|\frac{2m}{r^{2}}A_{\theta}\phi\Big\|_{L_{t}^{\frac{4}{3}}B_{4/3,2}^{s}} & \lesssim|m|\Big(\Big\|\frac{1}{r^{2}}A_{\theta}\Big\|_{L_{t,x}^{2}}\|\phi\|_{L_{t}^{4}B_{4,2}^{s}}+\Big\|\frac{1}{r^{2}}A_{\theta}\Big\|_{L_{t}^{2}B_{2,2}^{s}}\|\phi\|_{L_{t,x}^{4}}\Big),\\
\Big\|\frac{1}{r^{2}}A_{\theta}^{2}\phi\Big\|_{L_{t}^{\frac{4}{3}}B_{4/3,2}^{s}} & \lesssim\Big(\Big\|\frac{1}{r^{2}}A_{\theta}\Big\|_{L_{t,x}^{2}}\|A_{\theta}\phi\|_{L_{t}^{4}B_{4,2}^{s}}+\Big\|\frac{1}{r^{2}}A_{\theta}\Big\|_{L_{t}^{2}B_{2,2}^{s}}\|A_{\theta}\phi\|_{L_{t,x}^{4}}\Big),\\
\|A_{0}\phi\|_{L_{t}^{\frac{4}{3}}B_{4/3,2}^{s}} & \lesssim(\|A_{0}\|_{L_{t,x}^{2}}\|\phi\|_{L_{t}^{4}B_{4,2}^{s}}+\|A_{0}\|_{L_{t}^{2}B_{2,2}^{s}}\|\phi\|_{L_{t,x}^{4}}).
\end{align*}
For the terms with $L^{2}$-critical Strichartz pairs, we can proceed
as in Lemma \ref{lem:L2-nonlinearity-est}. Thus it suffices to show
the estimates 
\begin{align}
\|A_{\theta}\phi\|_{L_{t}^{4}B_{4,2}^{s}} & \lesssim\|\phi\|_{L_{t}^{\infty}L_{x}^{2}}(\|\phi\|_{L_{t,x}^{4}}\|\phi\|_{L_{t}^{\infty}H_{x}^{s}}+\|\phi\|_{L_{t}^{\infty}L_{x}^{2}}\|\phi\|_{L_{t}^{4}B_{4,2}^{s}}),\label{eq:B.7}\\
\Big\|\frac{1}{r^{2}}A_{\theta}\Big\|_{L_{t}^{2}B_{2,2}^{s}} & \lesssim\|\phi\|_{L_{t,x}^{4}}\|\phi\|_{L_{t}^{4}B_{4,2}^{s}},\label{eq:B.8}\\
\|A_{0}^{(1)}\|_{L_{t}^{2}B_{2,2}^{s}} & \lesssim\|\phi\|_{L_{t}^{\infty}L_{x}^{2}}\|\phi\|_{L_{t,x}^{4}}(\|\phi\|_{L_{t,x}^{4}}\|\phi\|_{L_{t}^{\infty}H_{x}^{s}}+\|\phi\|_{L_{t}^{\infty}L_{x}^{2}}\|\phi\|_{L_{t}^{4}B_{4,2}^{s}}),\label{eq:B.9}\\
\|A_{0}^{(2)}\|_{L_{t}^{2}B_{2,2}^{s}} & \lesssim|m|\|\phi\|_{L_{t,x}^{4}}\|\phi\|_{L_{t}^{4}B_{4,2}^{s}}.\label{eq:B.10}
\end{align}
Our main tools are \eqref{eq:B.1}-\eqref{eq:B.3} and Lemmas \eqref{lem:FT-of-A_theta}-\eqref{lem:FT-of-A_0}.

To show \eqref{eq:B.7}, we use Lemma \ref{lem:FT-of-A_theta} to
observe\footnote{The display is not exactly correct, but it captures essential features
of the proof.} 
\begin{align*}
P_{k}(A_{\theta}\phi) & =P_{k}\bigg(\Big(2\sum_{\max\{j_{1},j_{2},j_{3}\}=j_{1}\geq k}+\sum_{\max\{j_{1},j_{2},j_{3}\}=j_{3}\geq k}\Big)A_{\theta}[P_{j_{1}}\phi,P_{j_{2}}\phi]P_{j_{3}}\phi\bigg)\\
 & =P_{k}\Big(2\sum_{j\geq k}A_{\theta}[P_{j}\phi,P_{\leq j}\phi]P_{\leq j}\phi+\sum_{j\geq k}A_{\theta}[P_{\leq j}\phi]P_{j}\phi\Big).
\end{align*}
We then estimate 
\begin{align*}
\|A_{\theta}\phi\|_{L_{t}^{4}B_{4,2}^{s}} & \lesssim\Big\|2^{sk}\sum_{j\geq k}\|A_{\theta}[P_{j}\phi,P_{\leq j}\phi]\|_{L_{x}^{\infty}}\|P_{\leq j}\phi\|_{L_{x}^{4}}\Big\|_{L_{t}^{4}\ell_{k}^{2}}\\
 & \qquad+\Big\|2^{sk}\sum_{j\geq k}\|A_{\theta}[P_{\leq j}\phi]\|_{L_{x}^{\infty}}\|P_{j}\phi\|_{L_{x}^{4}}\Big\|_{L_{t}^{4}\ell_{k}^{2}}\\
 & \lesssim\|P_{\leq j}\phi\|_{L_{t}^{\infty}\ell_{j}^{\infty}L_{x}^{2}}\|P_{\leq j}\phi\|_{L_{t}^{4}\ell_{j}^{\infty}L_{x}^{4}}\Big\|2^{sk}\sum_{j\geq k}P_{j}\phi\Big\|_{L_{t}^{\infty}\ell_{k}^{2}L_{x}^{2}}\\
 & \qquad+\|P_{\leq j}\phi\|_{L_{t}^{\infty}\ell_{j}^{\infty}L_{x}^{2}}^{2}\Big\|2^{sk}\sum_{j\geq k}P_{j}\phi\Big\|_{L_{t}^{4}\ell_{k}^{2}L_{x}^{4}}\\
 & \lesssim\|\phi\|_{L_{t}^{\infty}L_{x}^{2}}(\|\phi\|_{L_{t,x}^{4}}\|\phi\|_{L_{t}^{\infty}H_{x}^{s}}+\|\phi\|_{L_{t}^{\infty}L_{x}^{2}}\|\phi\|_{L_{t}^{4}B_{4,2}^{s}}).
\end{align*}

To show \eqref{eq:B.8}, we use Lemma \ref{lem:FT-of-A_theta} to
observe
\[
P_{k}\Big(\frac{1}{r^{2}}A_{\theta}\Big)=P_{k}\Big(\frac{1}{r^{2}}\sum_{\max\{j,\ell\}\geq k}A_{\theta}[P_{j}\phi,P_{\ell}\phi]\Big)=2P_{k}\Big(\frac{1}{r^{2}}\sum_{j\geq k}A_{\theta}[P_{j}\phi,P_{\leq j}\phi]\Big).
\]
We then use \eqref{eq:B.2} to estimate 
\begin{align*}
\Big\|\frac{1}{r^{2}}A_{\theta}\Big\|_{L_{t}^{2}B_{2}^{s,2}} & \lesssim\Big\|2^{sk}P_{k}(\frac{1}{r^{2}}\sum_{j\geq k}A_{\theta}[P_{j}\phi,P_{\leq j}\phi])\Big\|_{L_{t}^{2}\ell_{k}^{2}L_{x}^{2}}\\
 & \lesssim\Big\|2^{sk}\sum_{j\geq k}\|P_{j}\phi P_{\leq j}\phi\|_{L_{x}^{2}}\Big\|_{L_{t}^{2}\ell_{k}^{2}}\lesssim\|\phi\|_{L_{t,x}^{4}}\|\phi\|_{L_{t}^{4}B_{4,2}^{s}}.
\end{align*}

To show \eqref{eq:B.9}, we use Lemma \ref{lem:FT-of-A_0} to observe
\begin{align*}
P_{k}A_{0}^{(1)} & =2P_{k}\bigg(\Big(\sum_{\max\{j_{1},\dots j_{4}\}=j_{1}\geq k}+\sum_{\max\{j_{1},\dots,j_{4}\}=j_{3}\geq k}\Big)\\
 & \qquad\qquad\qquad\times\int_{r}^{\infty}A_{\theta}[P_{j_{1}}\phi,P_{j_{2}}\phi]\Re(\overline{P_{j_{3}}\phi}P_{j_{4}}\phi)\frac{dr'}{r'}\bigg)\\
 & =2P_{k}\Big(\sum_{j\geq k}\int_{r}^{\infty}\frac{1}{r'}A_{\theta}[P_{j}\phi,P_{\leq j}\phi]|P_{\leq j}\phi|^{2}dr'\\
 & \qquad\qquad\qquad+\sum_{j\geq k}\int_{r}^{\infty}A_{\theta}[P_{\leq j}\phi]\Re(\overline{P_{j}\phi}P_{\leq j}\phi)\frac{dr'}{r'}\Big).
\end{align*}
We then use \eqref{eq:B.3} and \eqref{eq:B.1} to estimate 
\begin{align*}
\|A_{0}^{(1)}\|_{L_{t}^{2}B_{2}^{s,2}} & \lesssim\Big\|2^{sk}\sum_{j\geq k}\|A_{\theta}[P_{j}\phi,P_{\leq j}\phi]\|_{L_{x}^{\infty}}\|P_{\leq j}\phi\|_{L_{x}^{4}}^{2}\Big\|_{L_{t}^{2}\ell_{k}^{2}}\\
 & \qquad+\Big\|2^{sk}\sum_{j\geq k}\|A_{\theta}[P_{\leq j}\phi]\|_{L_{x}^{\infty}}\|P_{j}\phi\|_{L_{x}^{4}}\|P_{\leq j}\phi\|_{L_{x}^{4}}\Big\|_{L_{t}^{2}\ell_{k}^{2}}\\
 & \lesssim\|P_{\leq j}\phi\|_{L_{t}^{\infty}\ell_{j}^{\infty}L_{x}^{2}}\|P_{\leq j}\phi\|_{L_{t}^{4}\ell_{j}^{\infty}L_{x}^{4}}^{2}\cdot\Big\|2^{sk}\sum_{j\geq k}\|P_{j}\phi\|_{L_{x}^{2}}\Big\|_{L_{t}^{\infty}\ell_{k}^{2}}\\
 & \qquad+\|P_{\leq j}\phi\|_{L_{t}^{\infty}\ell_{j}^{\infty}L_{x}^{2}}^{2}\|P_{\leq j}\phi\|_{L_{t}^{4}\ell_{j}^{\infty}L_{x}^{4}}\Big\|2^{sk}\sum_{j\geq k}\|P_{j}\phi\|_{L_{x}^{4}}\Big\|_{L_{t}^{4}\ell_{k}^{2}}\\
 & \lesssim\|\phi\|_{L_{t}^{\infty}L_{x}^{2}}\|\phi\|_{L_{t,x}^{4}}^{2}\|\phi\|_{L_{t}^{\infty}H_{x}^{s}}+\|\phi\|_{L_{t}^{\infty}L_{x}^{2}}^{2}\|\phi\|_{L_{t,x}^{4}}\|\phi\|_{L_{t}^{4}B_{4,2}^{s}}.
\end{align*}

To show \eqref{eq:B.10}, we use Lemma \ref{lem:FT-of-A_0} to observe
\[
P_{k}A_{0}^{(2)}=2P_{k}\sum_{j\geq k}\int_{r}^{\infty}\frac{m}{r'}\Re(\overline{P_{j}\phi}P_{\leq j}\phi)dr'.
\]
We then use \eqref{eq:B.3} to estimate 
\[
\|A_{0}^{(2)}\|_{L_{t}^{2}B_{2}^{s,2}}\lesssim|m|\Big\|2^{sk}\sum_{j\geq k}\|(P_{j}\phi)(P_{\leq j}\phi)\|_{L_{x}^{4}}\Big\|_{L_{t}^{2}\ell_{k}^{2}}\lesssim|m|\|\phi\|_{L_{t,x}^{4}}\|\phi\|_{L_{t}^{4}B_{4,2}^{s}}.
\]
This completes the proof.
\end{proof}
One can get an analogous estimate for the difference $\mathcal{N}(\phi_{1})-\mathcal{N}(\phi_{2})$.
One then has local existence and uniqueness for $H_{m}^{s}$ data
by the standard contraction principle. However, the local existence
in \emph{subcritical} sense is not clear. To get this, we use H\"older's
inequality in time and the embedding $B_{4-,2}^{s}\hookrightarrow L^{4}$
to observe 
\[
\|\phi\|_{L_{I,x}^{4}}\lesssim_{s}|I|^{0+}\|\phi\|_{L_{|I|}^{4+}B_{4-,2}^{s}}
\]
such that above pair $(4+,4-)$ is admissible. Observe that in Proposition
\ref{prop:nonline-est-Hs}, we have $L_{t,x}^{4}$ factor in the upper
bound. This guarantees local existence in subcritical sense. Thus
$H_{m}^{s}$-subcritical local theory (Proposition \ref{prop:Hs-Cauchy})
follows by standard arguments.

\bibliographystyle{plain}
\bibliography{References}

\end{document}